\newtheorem{thm}{Theorem}[section]
\newtheorem{lem}[thm]{Lemma}
\newtheorem{prop}[thm]{Proposition}
\newtheorem{cor}[thm]{Corollary}
\theoremstyle{definition}
\newtheorem{rem}[thm]{Remark}
\newtheorem{exa}[thm]{Example}
\newtheorem{definition}[thm]{Definition}
\newcommand\numberthis{\addtocounter{equation}{1}\tag{\theequation}}
\def \P{P}
\def \E{\mathbb{E}}
\def \R{\mathbb{R}}
\def \Q{Q}
\def \myD{\ensuremath{D}}
\def \Ac{\mathcal{A}}
\def \Bc{\mathcal{B}}
\def \Ec{\mathcal{E}}
\def \Gc{\mathcal{G}}
\def \Hc{\mathcal{H}}
\def \Nc{\mathcal{N}}
\def \1{{\bf 1}}
\def \ive{\bm{i}}
\def \xve{x}
\def \yve{y}
\def \zve{z}
\def \zetave{\zeta}
\def \thetave{\theta}
\def \Xma{X}
\def \Yma{Y}
\def \Zma{Z}
\def \Hfun{\Psi}
\def \n{{m}}
\def \m{{N}}
\def \Nf{\mathfrak{N}}
\def \Ecal{\mathcal{E}}
\def \Ncal{\mathcal{N}}
\def \Acalcal{\mathcal{A}}
\def \varrho{\gamma}
\def \thetaeta{\theta} 
\def\i{\bm{i}}
\def\1ve{\bm{1}}
\def\Xfrak{\mathfrak{X}}
\def\dtheta{d_\Theta} 
\newcommand{\LN}[1]{{\color{red} \textit{LN: #1}}}
\newcommand{\YW}[1]{{\color{red}  #1}}
\newcommand{\myeoe}{\hfill $\Diamond$}
\DeclareMathOperator*{\argmax}{arg\,max} 
\begin{document}

	\title{Convergence of de Finetti's mixing measure in\\ latent structure models for observed exchangeable sequences}
	\author{Yun Wei and XuanLong Nguyen \\
	University of Michigan}
	\maketitle

\begin{abstract}
    Mixtures of product distributions are a powerful device for learning about heterogeneity within data populations. In this class of latent structure models, de Finetti's mixing measure plays the central role for describing the uncertainty about the latent parameters representing heterogeneity. In this paper posterior contraction theorems for de Finetti's mixing measure arising from finite mixtures of product distributions will be established, under the setting the number of exchangeable sequences of observed variables increases while sequence length(s) may be either fixed or varied. The role of both the number of sequences and the sequence lengths will be carefully examined. In order to obtain concrete rates of convergence, a first-order identifiability theory for finite mixture models and a family of sharp inverse bounds for mixtures of product distributions will be developed via a harmonic analysis of such latent structure models. This theory is applicable to broad classes of probability kernels composing the mixture model of product distributions for both continuous and discrete domain $\Xfrak$. Examples of interest include the case the probability kernel is only weakly identifiable in the sense of~\cite{ho2016convergence}, the case where the kernel is itself a mixture distribution as in hierarchical models, and the case the kernel may not have a density with respect to a dominating measure on an abstract domain $\Xfrak$ such as Dirichlet processes.
\end{abstract}

\tableofcontents

\paragraph{Acknowledgement} The authors are grateful for the support provided by NSF Grants DMS-1351362, DMS-2015361 and the Toyota Research Institute. We would like to sincerely thank Xianghong Chen, Danqing He and Qingtang Su for valuable discussions, and Judith Rousseau and Nhat Ho for helpful comments. We also thank the anonymous referees and associate editor for valuable comments and suggestions.

\section{Introduction}
\label{sec:introduction}

Latent structure models with many observed variables are among the most powerful and widely used tools in statistics for learning about heterogeneity within data population(s). An important canonical example of such models is the mixture of product distributions, which may be motivated by de Finetti's celebrated theorem for exchangeable sequences of random variables~\cite{aldous1985exchangeability,kallenberg2006probabilistic}. 
The theorem of de Finetti states roughly that if $X_1,X_2,\ldots$ is an infinite exchangeable sequence of random variables defined in a measure space $(\Xfrak,\Acalcal)$, then there exists a random variable $\theta$ in some space $\Theta$, where $\theta$ is distributed according to a probability measure $G$, such that $X_1,X_2,\ldots$ are conditionally i.i.d. given $\theta$. Denote by $P_\theta$ the conditional distribution of $X_i$ given $\theta$, we may express the joint distribution of a $\m$-sequence $X_{[\m]}:= (X_1,\ldots, X_\m)$, for any $\m\geq 1$, as a mixture of product distributions in the following sense: for any $A_1,\ldots, A_\m \subset \Acalcal$,
\[ 
\label{eqn:mixtureofiid}
P(X_1 \in A_1,\ldots, X_\m \in A_\m) = \int \prod_{n=1}^{\m} P_\theta(X_n \in A_n) G(d\theta).
\]
The probability measure $G$ is also known as de Finetti mixing measure for the exchangeable sequence. It captures the uncertainty about the latent variable $\theta$, which describes the mechanism according to which the sequence $(X_i)_{i}$ is generated via $P_\theta$. In other words, the de Finetti mixing measure $G$ can be seen as representing the heterogeneity within the data populations observed via sequences $X_{[\m]}$. A statistician typically makes some assumption about the family $\{P_\theta\}_{\theta\in \Theta}$, and proceeds to draw inference about the nature of heterogeneity represented by $G$ based on data samples $X_{[\m]}$.

In order to obtain an estimate of the mixing measure $G$, one needs multiple copies of the exchangeable sequences $X_{[\m]}$. As mentioned, some assumption will be required of the probability distributions $P_{\theta}$, as well as the mixing measure $G$.  Throughout this paper it is assumed that the map $\theta\mapsto P_{\theta}$ is injective. Moreover,
we will confine ourselves to the setting of exact-fitted finite mixtures, i.e., $G$ is assumed to be an element of $\Ec_{k}(\Theta)$, the space of discrete measures with $k$ distinct supporting atoms on $\Theta$, where $\Theta$ is a subset of $\R^q$. Accordingly, we may express $G = \sum_{j=1}^{k} p_{j} \delta_{\theta_j}$.  We may write the distribution for $X_{[\m]}$ in the following form, where we include the subscripts $G$ and $\m$ to signify their roles:
\begin{equation}
    \label{eqn:mixprod}
    P_{G,\m}(X_1 \in A_1,\ldots, X_{\m} \in A_{\m}) = \sum_{j=1}^{k} p_j \biggr \{\prod_{n=1}^{\m} P_{\theta_j}(X_{n} \in A_{n}) \biggr \}.
\end{equation}    
Note that when $\m=1$, we are reduced to a mixture distribution $P_G:=P_{G,1}= \sum_{j=1}^{k} p_j P_{\theta_j}$. Due to the role they play in the composition of the distribution $P_{G,\m}$, we also refer to $\{P_\theta\}_{\theta\in \Theta}$ as a family of  \emph{probability kernels} on $\Xfrak$.  
Given $\n$ \emph{independent} copies of exchangeable sequences $\{X_{[\m_i]}^i\}_{i=1}^{\n}$ each of which is respectively distributed according to $P_{G,\m_i}$ 
given in~\eqref{eqn:mixprod}, where $\m_i$ denotes the possibly variable length of the $i$-th sequence.
The primary question of interest in this paper is the efficiency of the estimation of the true mixing measure $G=G_0 \in \Ec_{k}(\Theta)$, for some known $k=k_0$, as sample size $(m, \m_1,\ldots, \m_\n)$ increases in a certain sense. 

Models described by Eq.~\eqref{eqn:mixtureofiid} are also known in the literature as mixtures of repeated measurements, or mixtures of grouped observations \cite{hettmansperger2000almost, elmore2004estimating,cruz2004semiparametric,Jochmans1996Nonparametric,vandermeulen2019operator,ritchie2020consistent}, with applications to domains such as psychological analysis, educational assessment, and topic modeling in machine learning. The random effects model described in Section 1.3.3 of \cite{lindsay1995mixture} in which the mixing measure is a discrete measure with finite number of atoms is also a special case of \eqref{eqn:mixtureofiid} with $P_\theta$ a normal distribution with mean $\theta$. While \cite{hettmansperger2000almost,cruz2004semiparametric} consider the case that the number of components $k$ is unknown,  \cite{elmore2004estimating,Jochmans1996Nonparametric,vandermeulen2019operator,ritchie2020consistent} focus on the case that $k$ is known, the same as our set up.  In many of the aforementioned works the models are nonparametric, i.e., no parametric forms for the probability kernels are assumed, and the focus is on the problem of density estimation due to the nonparametric setup. 
By contrast, in this paper we study mixture of product distributions \eqref{eqn:mixtureofiid} with the parametric form of component distribution imposed, since in practice prior knowledge on the component distribution $\P_{\theta}$ might be available. Moreover, we investigate the behavior of parameter estimates --- the convergence of the parameters $p_j$ and $\theta_j$, which are generally more challenging than density estimation in mixture models \cite{nguyen2013convergence, ho2016strong, ho2016convergence, ho2019singularity}.  

Before the efficiency question can be addressed, one must consider the issue of identifiability: under what conditions does the data distribution $P_{G,\m}$ uniquely identify the true mixing measure $G_0$? 
This question has occupied the interest of a number of authors ~\cite{teicher1967identifiability,elmore2005application,hall2005nonparametric}, with decisive results obtained recently by~\cite{allman2009identifiability} on finite mixture models for conditionally independent observations and by \cite{vandermeulen2019operator} on finite mixtures for conditionally i.i.d. observations (given by Eq.~\eqref{eqn:mixtureofiid}). Here, the condition is in the form of $\m \geq n_0$, for some natural constant $n_0 \geq 1$ possibly depending on $G_0$. We shall refer to $n_0$ as (minimal) \emph{zero-order identifiable length} 
or 0-identifiable length for short (a formal definition will be given later). For the conditionally i.i.d. case as in model \eqref{eqn:mixtureofiid}, \cite{vandermeulen2019operator} proves that as long as $N\geq 2k-1$, model \eqref{eqn:mixtureofiid} will be identifiable for any $G_0$. Note that $2k-1$ is only an upper bound of $n_0$. For a given parametric form of $\{P_\theta\}_{\theta\in \Theta}$ and a given truth $G_0$, the $0$-identifiable length might be smaller than $2k-1$.

Drawing from existing identifiability results,  
it is quite apparent that the observed sequence length $\m$ (or more precisely, $\m_1,\ldots, \m_\n$, in case of variable length sequences) must play a crucial role in the estimation of mixing measure $G$, in addition to the number $\n$ of sequences. Moreover, it is also quite clear that in order to have a consistent estimate of $G=G_0$, the number of sequences $\n$ must tend to infinity, whereas $\m$ may be allowed to be fixed. It remains an open question as to the precise roles $\n$ and $\m$ play in estimating $G$ and on the different types of mixing parameters: the component parameters (atoms $\theta_j$) and mixing proportions (probability mass $p_j$), and the rates of convergence of a given estimation procedure. 

Partial answers to this question were obtained in several settings of mixtures of product distributions. ~\cite{hettmansperger2000almost} proposed to discretize data so that the model in consideration becomes a finite mixture of product of identical binomial or multinomial distributions. Restricting to this class of models, a maximum likelihood estimator was applied, and a standard asymptotic analysis establishes root-$\n$ rate for mixing proportion estimates.  ~\cite{hall2003nonparametric,hall2005nonparametric} investigated a number of nonparametric estimators for $G$, and obtained the root-$\n$ convergence rate for both mixing proportion and component parameters in the setting of $k=2$ mixture components under suitable identifiability conditions. It seems challenging to extend their method and theory to a more general setting, e.g., $k > 2$. Moreover, 
no result on the effect of $\m$ on parameter estimation efficiency seems to be available.  Recently,
~\cite{nguyen2016borrowing,nguyen2015posterior} studied the posterior contraction behavior of several classes of Bayesian hierarchical model where the sample is also specified by  $\n$ sequences of $\m$ observations. His approach requires that both $\n$ and $\m$ tend to infinity and thus cannot be applied to our present setting where $\m$ may be  fixed.

In this paper we shall present a parameter estimation theory for general classes of finite mixtures of product distributions. An application of this theory will be posterior contraction theorems established for a standard Bayesian estimation procedure, according to which the de Finetti's mixing measure $G$ tends toward the truth $G_0$, as $\n$ tends to infinity, under suitable conditions. In a standard Bayesian procedure, the statistician endows the space of parameters $\Ec_{k_0}(\Theta)$ with a prior distribution $\Pi$, which is assumed to have compact support in these theorems, 
and applies Bayes' rule to obtain the posterior distribution on $\Ec_{k_0}(\Theta)$, to be denoted by $\Pi(G | \{X_{[\m_i]}^i\}_{i=1}^{\n})$. %
To anticipate the distinct convergence behaviors for the atoms and probability mass parameters, for any
$G=\sum_{i=1}^k p_i\delta_{\theta_i}, G'=\sum_{i=1}^k p'_i\delta_{\theta'_i} \in \Ec_k(\Theta)$, define
 $$
 \myD_{\m}(G,G') =  \min_{\tau\in S_{k}} \sum_{i=1}^{k}(\sqrt{\m}\|\theta_{\tau(i)}-\theta'_i\|_2+|p_{\tau(i)}-p'_i|), 
 $$ 
 where $S_{k}$ denotes all the permutations on the set $[k] := \{1,2,\ldots,k\}$. 
(The suitability of $D_N$ over other choices of metric will be discussed in Section~\ref{sec:prelim}).

Given $\n$ independent exchangeable sequences denoted by $\{X^i_{[\m_i]}\}_{i=1}^{\n}$. We naturally require that $\min_{i} \m_i \geq n_0$, where $n_0$ is the zero-order identifiable length depending on $G_0$. Moreover, to obtain concrete rates of convergence, we need also $\min_{i}\m_i \geq n_1$ for some minimal natural number $n_1:=n_1(G_0) \geq 1$. We shall call $n_1$ the \emph{minimal first-order identifiable length depending on $G_0$}, or 1-identifiable length for short (a formal definition will be given later).
Assume that $\{\m_i\}_{i=1}^{\n}$ are uniformly bounded from above by an arbitrary unknown constant, in Theorem~\ref{thm:posconnotid} it is established that under suitable regularity conditions on $P_\theta$, the posterior contraction rate for the mixing proportions is bounded above by $\n^{-1/2}$, up to a logarithmic quantity. For mixture components' supporting atoms, the contraction rate is 
   $$O_P\biggr (\sqrt{\frac{\ln (\sum_{i=1}^{\n} \m_i)}{\sum_{i=1}^{\n} \m_i}} \biggr).$$ 
Note that $\sum_{i=1}^{\n} \m_i$ represents the full volume of the observed data set. More precisely, for suitable kernel families $P_\theta$, as long as $\min_{i}\m_i \geq \max\{n_0, n_1\}$ and $\sup_{i} \m_i < \infty$, there holds
	\begin{align*}
	&\Pi \biggr (G\in \Ec_{k_0}(\Theta): D_{\sum_{i=1}^{\n}\m_i/\n}(G,G_0) \leq C(G_0)\bar{M}_\n \sqrt{ \frac{\ln(\sum_{i=1}^{\n}\m_i)}{\n} } \biggr |X_{[\m_1]}^1, \ldots,X_{[\m_{\n}]}^{\n} \biggr ) \to  1
	\end{align*}
	in $\P_{G_0,\m_1}\otimes\cdots \otimes \P_{G_0,\m_{\n}}$-probability as $\n\to \infty$ for any sequence $\bar{M}_m\to\infty$. The point here is that constant $C(G_0)$ is independent of $\n$, sequence lengths $\{\m_i\}_{i=1}^{\n}$ and their supremum.
In plain terms, we may say that with finite mixtures of product distributions, the posterior inference of atoms of each individual mixture component receives the full benefit of "borrowing strength" across sampled sequences; while the mixing probabilities gain efficiency from only the number of such sequences. This appears to be the first work in which such a posterior contraction theorem is established for de Finetti mixing measure arising from finite mixtures of product distributions. 

The Bayesian learning rates established appear intuitive, given the parameter space $\Theta \in \R^q$ is of finite dimension. On the role of $\n$, they are somewhat compatible to the previous partial results~\cite{hettmansperger2000almost,hall2003nonparametric,hall2005nonparametric}. However, we wish to make several brief remarks at this juncture.
\begin{itemize}
\item First, even for exact-fitted parametric mixture models, 
"parametric-like" learning rates of the form root-$\n$ or root-$(\n\m)$ should not to be taken for granted, because they do not always hold~\cite{ho2016convergence,ho2019singularity}. This is due to the fact that the kernel family $\{P_{\theta}\}_{\theta\in \Theta}$ may easily violate assumptions of strong identifiability often required for the root-$\n$ rate to take place. In other words, the kernel family $\{\P_{\theta}\}$ may be only \emph{weakly identifiable}, resulting in poor learning rates for a standard mixture, i.e., when $\m=1$. 
\item Second, the fact that by increasing the observed exchangeable sequence's length $\m$ so that $\m \geq n_1 \vee n_0$, one may obtain parametric-like learning rates in terms of both $\m$ and $\n$ is a remarkable testament of how repeated measurements can help to completely overcome a latent variable model's potential pathologies: parameter non-identifiability is overcome by making $\m \geq n_0$, while inefficiency of parameter estimation inherent in weakly identifiable mixture models is overcome by $\m \geq n_1$. For a deeper appreciation of this issue, see Section~\ref{sec:background} for a background on the role of identifiability notions in parameter estimation. 
\end{itemize}

Although the posterior contraction theorems for finite mixtures of product distributions presented in this paper are new, such results do not adequately capture the rather complex behavior of the convergence of parameters for a finite mixture of $\m$-product distributions. In fact, the heart of the matter lies in the establishment of a collection of general \emph{inverse bounds}, i.e., inequalities of the form
\begin{equation}
    \label{eqn:inverse}
    D_{\m}(G,G_0) \leq C(G_0) V(P_{G,\m},P_{G_0,\m}),
\end{equation}
where $V(\cdot,\cdot)$ is the variational distance. Note that \eqref{eqn:inverse} provides an upper bound on distance $D_\m$ of mixing measures in terms of the variational distance between the corresponding mixture of $\m$-product distributions. 
Inequalities of this type allow one to transfer the convergence (and learning rates) of a data population's distribution into that of the corresponding distribution's parameters (therefore the term "inverse bounds"). Several points to highlight are:
\begin{itemize}
    \item The local nature of~\eqref{eqn:inverse}, which may hold only for $G$ residing in a suitably small $D_{\m}$-neighborhood of $G_0$ whose radius may also depend on $G_0$ and $\m$, while constant $C(G_0)>0$ depends on $G_0$ but is independent of $\m$. In addition, the bound holds only when $\m$ exceeds threshold $n_1\geq 1$, 
    unless further assumptions are imposed. For instance,  under a first-order identifiability condition of $P_\theta$, $n_1 = 1$, 
    so this bound holds  for all $\m \geq 1$ while remaining local in nature. Moreover, inequality~\eqref{eqn:inverse} is sharp: the quantity $\m$ in $D_\m$ cannot be improved by $D_{\psi(\m)}$ for any sequence $\psi(\m)$ such that $\psi(\m)/\m \rightarrow \infty$ (see Lemma \ref{lem:optimalsquaretootN}).

    \item 
    The inverse bounds of the form~\eqref{eqn:inverse} are established without any overt assumption of identifiability. However, they carry striking consequences on both first-order and classical identifiability, which can be deduced from~\eqref{eqn:inverse} under a compactness condition (see Proposition~\ref{lem:n0nbar}): using the notation $n_0(G,\cup_{k\leq k_0}\Ec_k(\Theta_1))$ and $n_1(G, \Ec_{2k_0}(\Theta_1))$ to denote explicitly the dependence of 0- and 1-identifiable lengths on $G$ in the first argument and its ambient space in the second argument, respectively, we have  
    $$
     \sup_{G \in \cup_{k\leq k_0} \Ec_{k}(\Theta_1)} n_0(G,\cup_{k\leq k_0}\Ec_k(\Theta_1)) 
    \leq \sup_{G \in  \Ec_{2k_0}(\Theta_1)} n_1(G, \Ec_{2k_0}(\Theta_1)) <  \infty.
    $$  
    Note that classical identifiability captured by $n_0(G,\cup_{k\leq k_0} \Ec_{k}(\Theta_1))$ describes a global property of the model family while first-order identifiability captured by $n_1(G, \Ec_{2k_0}(\Theta_1))$ 
    is local in nature. The connection between these two concepts is possible because when the number of exchangeable variables $\m$ gets large, the force of the central limit theorem for product distributions comes into effect to make the mixture model eventually become identifiable, either in the classical or the first-order sense, even if the model may be initially non-identifiable or weakly identifiable (when $\m =1$).

    \item These inverse bounds hold for very broad classes of probability kernels $\{P_\theta\}_{\theta \in \Theta}$. In particular, they are established under mild regularity assumptions on the family of probability kernel $P_{\theta}$ on $\Xfrak$, when either $\Xfrak=\R^d$, or $\Xfrak$ is a finite set, or $\Xfrak$ is an abstract space. A standard but non-trivial example of our theory is the case the kernels $P_{\theta}$ belong to the exponential families of distributions. A more unusual example is the case where $P_{\theta}$ is itself a mixture distribution on $\Xfrak$. Kernels of this type are rarely examined in theory, partly because when we set $\m=1$ a mixture model using such kernels typically would \emph{not} 
    be parameter-identifiable. However, such "mixture-distribution" kernels are frequently employed by practitioners of hierarchical models (i.e., mixtures of mixture distributions). As the inverse bounds entail, this makes sense since the parameters become more strongly identifiable and efficiently estimable with repeated exchangeable measurements. 
    \item More generally, inverse bounds hold when $P_\theta$ does not necessarily admit a density with respect to a dominating measure on $\Xfrak$. An example considered in the paper is the case $P_\theta$ represents probability distribution on the space of probability distributions, namely, $P_\theta$ represents (mixtures of) Dirichlet processes. As such, the general inverse bounds are expected to be useful for models with nonparametric mixture components represented by $P_\theta$, the kind of models that have attracted much recent attention, e.g.,~\cite{Teh-etal-06,Rodriguez-etal-08,camerlenghi2019distribution,camerlenghi2019latent}.
\end{itemize}
The above highlights should make clear the central roles of the inverse bounds obtained in Section~\ref{sec:firstidentifiable} and Section~\ref{sec:inversebounds}, which deepen our understanding of the questions of parameter identifiability and provide detailed information about the convergence behavior of parameter estimation. In addition to an asymptotic analysis of Bayesian estimation for mixtures of product distributions that will be carried out in this paper, such inverse bounds may also be useful for deriving rates of convergence for non-Bayesian parameter estimation procedures, including maximum likelihood estimation and distance based estimation methods. 

The rest of the paper will proceed as follows. Section~\ref{sec:background} presents related work in the literature and a high-level overview of our approach and techniques. Section~\ref{sec:prelim} prepares the reader with basic setups and several useful concepts of distances on space of mixing measures that arise in mixtures of product distributions. Section~\ref{sec:firstidentifiable} is a self-contained treatment of first-order identifiability theory for finite mixture models, leading to several new  results that are useful for subsequent developments. Section~\ref{sec:inversebounds} presents inverse bounds for broad classes of finite mixtures of product distributions, along with specific examples. An immediate application of these bounds are posterior contraction theorems for de Finetti's mixing measures, the main focus of Section~\ref{sec:poscon}. Particular examples of interest for the inverse bounds established in Section~\ref{sec:inversebounds} include the case the probability kernel $P_\theta$ is itself a mixture distribution on $\Xfrak = \R$, and the case $P_\theta$ is a mixture of Dirichlet processes. These examples require development of new tools and are deferred to Section~\ref{sec:mixofmix}. 
Section \ref{sec:minimax} gives several technical results
demonstrating the sharpness of the established inverse bounds, which is then used to derive minimax
lower bounds for estimation procedures of de Finetti’s mixing parameters.
Section \ref{sec:extensions} discusses extensions and several future directions. 
Finally, (most) proofs of all theorems and lemmas will be provided in the Appendix. 

	\noindent {\bf Notation}
    For any probability measure $\P$ and $\Q$ on measure space       $(\Xfrak,\Ac)$ with densities respectively $p$ and $q$  with respect to    some base measure $\mu$, the variational distance between them is
	$V(\P,\Q) = \sup_{A\in \Ac} |\P(A)-\Q(A)| = \int_{\Xfrak} \frac{1}{2} |p(x)-q(x)|d\mu$. The Hellinger distance is given by
	$ h(\P,\Q) = \left(\int_{\Xfrak} \frac{1}{2} |\sqrt{p(x)}-\sqrt{q(x)}|^2d\mu\right)^{\frac{1}{2}}.
	$ The Kullback-Leibler divergence of $Q$ from $P$ is $K(p,q)=\int_{\Xfrak}p(x)\ln\frac{p(x)}{q(x)} d\mu$.
	Write $P\otimes Q$ to be the product measure of $P$ and $Q$ and $\otimes^N P$ for the $N$-fold product of $P$. 
	Any vector $x\in \R^d$ is a column vector with its $i$-th coordinate denoted by $x^{(i)}$. 
The inner product between two vectors $a$ and $b$ is denoted by $a^\top b$ or $\langle a, b\rangle$.  
%
%
	Denote by $C(\cdot)$ or $c(\cdot)$  a positive finite constant depending only on its parameters and the probability kernel $\{P_\theta\}_{\theta\in \Theta}$. In the presentation of inequality bounds and proofs,  they may differ from line to line. Write $a \lesssim b$ if $a\leq  c  b$ for some universal constant $c$; write $a \lesssim_\xi b$ if $a\leq c(\xi) b$. Write $a \asymp b$ if $a \lesssim b$ and $b \lesssim a$; write $a {\asymp}_{\xi} b$ (or $a \overset{\xi}{\asymp} b$) if $a \lesssim_{\xi} b$ and $b \lesssim_{\xi} a$.  


\section{Background and overview}
\label{sec:background}
\subsection{First-order identifiability and inverse inequalities}
In order to shed light on the convergence behavior of model parameters as data sample size increases, stronger forms of identifiability conditions shall be required of the family of probability kernels $P_\theta$. For finite mixture models, such conditions are often stated in terms of a suitable derivative of the density of $P_\theta$ with respect to parameter $\theta$, and the linear independence of such derivatives as $\theta$ varies in $\Theta$. The impacts of such identifiability conditions, or the lack thereof, on the convergence of parameter estimation can be quite delicate. Specifically, let $\Xfrak=\R^d$ and fix $\m = 1$, so we have $P_G = \sum_{j=1}^{k} p_j P_{\theta_j}$.  Assume that $P_{\theta}$ admits a density function $f(\cdot|\theta)$ with respect to Lebesgue measure on $\R^d$, and for all $x\in \R^d$, $f(\cdot|\theta)$ is differentiable with respect to $\theta$; moreover the combined collection of functions $\{f(\cdot|\theta)\}_{\theta \in \Theta}$ and $\{\nabla f(\cdot|\theta)\}_{\theta \in \Theta}$ are linearly independent. This type of condition, which concerns linear independence of the first derivatives of the likelihood functions with respect to parameter $\theta$, shall be generically referred to as \emph{first-order} identifiability condition of the probability kernel family $\{P_\theta\}_{\theta\in \Theta}$. A version of such condition was investigated by~\cite{ho2016strong}, who showed that their condition will be sufficient for establishing an inverse bound of the form
\begin{equation}
\label{eqn:inverseliminf}
\liminf_{\substack{G\overset{W_1}{\to} G_0\\ G\in \Ec_{k_0}(\Theta)}} \frac{V(P_G,P_{G_0})}{W_1(G,G_0)}>0. 
\end{equation}
where $W_1$ denotes the first-order Wasserstein distance metric on $\Ec_{k_0}(\Theta)$. The infimum limit quantifier should help to clarify somewhat the local nature of the inverse bound~\eqref{eqn:inverse} mentioned earlier. The development of this local inverse bound and its variants plays the fundamental role in the analysis of parameter estimation with finite mixtures in a variety of settings in previous studies, where stronger forms of identifiability conditions based on higher order derivatives may be required~\cite{chen1995optimal,nguyen2013convergence,Rousseau-Mengersen-11,ho2016strong,ho2016convergence,heinrich2018strong,ho2019singularity}. In addition,~\cite{nguyen2013convergence,nguyen2016borrowing} studied inverse bounds of this type for infinite mixture and hierarchical models.

As noted by~\cite{ho2016strong}, for exact-fitted setting of mixtures, i.e., the number of mixture components $k=k_0$ is known, conditions based on only first-order derivatives of $P_\theta$ will suffice.  Under a suitable first-order identifiability condition based on linear independence of $\{f(\cdot|\theta), \nabla_\theta f(\cdot|\theta)\}_{\theta \in \Theta}$, 
along with several additional regularity conditions, the mixing measure $G=G_0$ may be estimated via $\n$-i.i.d. sample $(X_{[1]}^1,\ldots,X_{[1]}^{\n})$ at the parametric rate of convergence $\n^{-1/2}$, due to ~\eqref{eqn:inverseliminf} and the fact that the data population density $p_{G_0}$ is typically estimated at the same parametric rate. However, first-order identifiability may not be satisfied, as is the case of two-parameter gamma kernel, or three-parameter skewnormal kernel, following from the fact that these kernels are governed by certain partial differential equations. In such situations, not only does the resulting Fisher information matrix of the mixture model become singular, the singularity structure of the matrix can be extremely complex --- an in-depth treatment of weakly identifiable mixture models can be found in~\cite{ho2019singularity}. Briefly speaking, in such situations~\eqref{eqn:inverseliminf} may not hold and the rate $\n^{-1/2}$ may not be achieved~\cite{ho2016convergence,ho2019singularity}. In particular, in the case of skewnormal kernels, extremely slow rates of convergence for the component parameters $\theta_j$ (e.g., $\n^{-1/4}, \n^{-1/6}, \n^{-1/8}$ and so on) may be established depending on the actual parameter values of the true $G_0$ for a standard Bayesian estimation or maximum likelihood estimation procedure ~\cite{ho2019singularity}. It remains unknown whether it is possible to devise an estimation procedure to achieve the parametric rate of convergence $\n^{-1/2}$ when the finite mixture model is only weakly identifiable, i.e., when first-order identifiability condition fails. 

In Section~\ref{sec:firstidentifiable} we shall revisit the described first-order identifiability notions, and then present considerable improvements upon the existing theory and deliver several novel results. First, we identify a tightened set of conditions concerning linear independence of $f(x|\theta)$ and $\nabla_\theta f(x|\theta)$
according to which the inverse bound~\eqref{eqn:inverse} holds. This set of conditions turns out to be substantially weaker than the identifiability condition of~\cite{ho2016strong}, most notably by requiring $f(x|\theta)$ be differentiable with respect to $\theta$ only for $x$ in a subset of $\Xfrak$ with positive measure. This weaker notion of first-order identifiability allows us to broaden the scope of probability kernels for which the inverse bound~\eqref{eqn:inverseliminf} continues to apply (see Lemma~\ref{lem:firstidentifiable}). Second, in a precise sense we show that this notion is in fact necessary for~\eqref{eqn:inverseliminf} to hold (see Lemma~\ref{lem:nececondition}), giving us an arguably complete characterization of first-order identifiability and its relations to the parametric learning rate for model parameters. Among other new results, it is worth mentioning that when the kernel family $\{P_{\theta}\}_{\theta \in \Theta}$ belongs to an exponential family of distributions on $\Xfrak$, there is a remarkable equivalence among our notion of first-order identifiability condition and the inverse bound of the form~\eqref{eqn:inverseliminf}, and the inverse bound in which variational distance $V$ is replaced by Hellinger distance $h$ (see Lemma~\ref{cor:expD1equivalent}).

Turning our attention to finite mixtures of product distributions, a key question is on the effect of number $\m$ of repeated measurements in overcoming weak identifiability (e.g., the violation of first-order identifiability). One way to formally define the first-order identifiable length (1-identifiable length) $n_1= n_1(G_0)$ is to make it the minimal natural number such that the following inverse bound holds for any $\m \geq n_1$ 
    \begin{equation}
     \liminf_{\substack{G\overset{W_1}{\to} G_0\\ G\in \Ecal_{k_0}(\Theta) }} \frac{V(P_{G,\m },P_{G_0,\m })}{W_1(G,G_0)}>0.\label{eqn:VNW1}
    \end{equation}
The key question is whether (finite) $1$-identifiable length exists, and how can we characterize it. The significance of this concept is that one can achieve first-order identifiability by allowing at least $\m \geq n_1$ repeated measurements and obtain the $\n^{-1/2}$ learning rate for the mixing measure. In fact, the component parameters can be learned at the rate $(\n\m)^{-1/2}$, the square root of the full volume of exchangeable data (modulo a logarithmic term).
The resolution of the question of existence and characterization of $n_1$ leads us to establish a collection inverse bounds involving mixtures of product distributions that we will describe next. Moreover, such inverse bounds are essential in deriving learning rates for mixing measure $G$ from a collection of exchangeable sequences of observations.

\subsection{General approach and techniques}
\label{sec:overview}
For finite mixtures of $\m$-product distributions, for $\m\geq 1$, the precise expression for the inverse bound to be established takes the form: under certain conditions of the probability kernel $\{P_\theta\}_{\theta \in \Theta}$, for a given $G_0 \in \Ecal_{k_0}(\Theta^\circ)$, 
\begin{equation}
\label{eqn:inverseliminfliminf}
\liminf_{\m\to \infty}\liminf_{\substack{G\overset{W_1}{\to} G_0\\ G\in \Ecal_{k_0}(\Theta) }} \frac{V(\P_{G,\m},\P_{G_0,\m})}{\myD_{\m}(G,G_0)}>0. 
\end{equation}
Compared to inverse bound~\eqref{eqn:inverseliminf} for a standard finite mixture, the double infimum limits reveals the challenge for analyzing mixtures of $\m$-product distributions; they express the delicate nature of the inverse bound informally described via~\eqref{eqn:inverse}. Moreover, \eqref{eqn:inverseliminfliminf} entails that the finite 1-identifiable length $n_1$ defined by~\eqref{eqn:VNW1} exists. 

Inverse bound~\eqref{eqn:inverseliminfliminf} will be established for broad classes of kernel $P_\theta$ and it can be shown that this bound is sharp. Among the settings of kernel that the bound is applicable, there is a setting when $\P_\theta$ belongs to any regular exponential family of distributions. More generally, this includes the setting where $\Xfrak$ may be an abstract space; no parametric assumption on $\P_\theta$ will be required. 
Instead, we appeal to a set of mild regularity conditions on the characteristic function of a push-forward measure produced by a measurable map $T$ acting on the measure space $(\Xfrak, \Acalcal)$. Actually, a stronger bound is established relating to the positivity of a notion of curvature on the space of mixtures of product distributions (see~\eqref{eqn:curvatureprodbound}).
We will see that this collection of inverse bounds, which are presented in Section~\ref{sec:inversebounds}, enables the study for a very broad range of mixtures of product distributions for exchangeable sequences.

The theorems establishing~\eqref{eqn:inverseliminfliminf} and~\eqref{eqn:curvatureprodbound} represent the core of the paper. For simplicity, let us describe the gist of our proof techniques by considering the case kernel $P_\theta$ belongs to an exponential family of distribution on $\Xfrak$ (see Theorem~\ref{thm:expfam}). Suppose the kernel admits a density function $f(x|\theta)$ with respect to a dominating measure $\mu$ on $\Xfrak$. At a high-level, this is a proof of contradiction: if ~\eqref{eqn:inverseliminfliminf} does not hold, then there exists a strictly increasing subsequence $\{\m_\ell\}_{\ell=1}^{\infty} $ of natural numbers  according to which there exists a sequence of mixing measures $\{G_\ell\}_{\ell=1}^{\infty}\subset \Ec_{k_0}(\Theta)\backslash\{G_0\}$ such that $D_{\m_\ell}(G_\ell,G_0) \rightarrow 0$ as $\ell \rightarrow\infty$ and
the integral form 
\begin{equation}
\label{eqn:integrallim}
    \frac{V(P_{G_\ell,\m_\ell},P_{G_0,\m_\ell})}{D_{\m_\ell}(G_\ell,G_0)} 
    = \int_{\Xfrak^{\m_\ell}} \biggr |\frac{p_{G_\ell,\m_\ell}(x_1,\ldots,x_{\m_\ell}) - p_{G_0,\m_\ell}(x_1,\ldots,x_{\m_\ell})}
    {D_{\m_\ell}(G_\ell,G_0)}  \biggr | d\otimes^{\m_\ell} \mu(x_1,\ldots,x_{\m_\ell})
\end{equation}
tends to zero. One may be tempted to apply Fatou's lemma to deduce that the integrand must vanish as $\ell\rightarrow \infty$, and from that one may hope to derive a contradiction with specified hypothesis on the probability kernel $f(x|\theta)$ (e.g. first-order identifiability). This is basically the proof technique of Lemma~\ref{lem:firstidentifiable} for establishing inverse bound~\eqref{eqn:inverseliminf} for finite mixtures. But this would not work here, because the integration domain's dimensionality increases with $\ell$. Instead we can exploit the structure of the mixture of $\m_\ell$-product densities in $p_{G_\ell,\m_\ell}$, and rewrite the integral as an expectation with respect to a suitable random variable of fixed domain.  What comes to our rescue is the central limit theorem, which is applied to a $\R^q$-valued random variable $Z_\ell = \left(\sum_{n=1}^{\m_{\ell}} T(X_n) - \m_{\ell} \E_{\theta_\alpha^0}T(X_1)\right)/\sqrt{\m_{\ell}}$, where $\E_{\theta_\alpha^0}$ denotes the expectation taken with respect to the probability distribution $P_{\theta}$ for some suitable $\theta=\theta_\alpha^0$ chosen among the support of true mixing measure $G_0$. Here $T:\Xfrak \rightarrow \R^q$ denotes the sufficient statistic for the exponential family distribution $P_{\theta}(dx_n)$, for each $n=1,\ldots,\m_\ell$. 

Continuing with this plan, by a change of measure the integral in~\eqref{eqn:integrallim} may be expressed as the expectation of the form $\E |\Hfun_\ell(Z_\ell)|$ for some suitable function $\Hfun_\ell:\R^q \rightarrow \R$. By exploiting the structure of the exponential families dictating the form of $\Hfun_\ell$, it is possible to obtain that for any sequence $z_\ell \rightarrow z$, there holds $\Hfun_\ell(z_\ell) \rightarrow \Hfun(z)$ for a certain function $\Hfun:\R^q \rightarrow \R$.  Since $Z_\ell$ converges in distribution to $Z$ a non-degenerate zero-mean Gaussian random vector in $\R^q$, it entails that $\Hfun_\ell(Z_\ell)$ converges to $\Hfun(Z)$ in distribution by a generalized continuous mapping theorem~\cite{wellner2013weak}. Coupled with a generalized Fatou's lemma~\cite{billingsley2008probability}, we arrive at $\E_{\theta_\alpha} |\Hfun(Z)| = 0$, which can be verified as a contradiction.

For the general setting where $\{P_{\theta}\}_{\theta\in \Theta}$ is a family of probability on measure space $(\Xfrak,\Acalcal)$, the basic proof structure remains the same, but we can no longer exploit the (explicit) parametric assumption on the kernel family $P_{\theta}$ (see Theorem~\ref{thm:genthm}). Since the primary object of inference is parameter $\theta \in \Theta \subset \R^q$, the assumptions on the kernel $P_\theta$ will center on the existence of a measurable map  $T:(\Xfrak, \Acalcal) \to (\R^{s},\mathcal{B}(\R^s))$ for
some $s\geq q$, and regularity conditions on the push-forward measure on $\R^s$: $T_{\#}P_\theta  := P_\thetave \circ T^{-1}$. This measurable map plays the same role as that of sufficient statistic $T$ when $P_\theta$ belongs to the exponential family. The main challenge lies in the analysis of function $\Hfun_\ell$ described in the previous paragraph. It is here that the power of Fourier analysis is brought to bear on the analysis of $\Hfun_\ell$ and the expectation $\E_{\theta_\alpha^0} \Hfun_\ell(Z_\ell)$. By the Fourier inversion theorem, $\Hfun_\ell$ may be expressed entirely in terms of the characteristic function of the push-forward measure $T_{\#}P_\theta$. Provided regularity conditions on such characteristic function hold, one is able to establish the convergence of $\Hfun_\ell$ toward a certain function $\Hfun:\R^s\rightarrow \R$ as before.

We shall provide a variety of examples demonstrating the broad applicability of Theorem~\ref{thm:genthm}, focusing on the cases $P_\theta$ does not belong to an exponential family of distributions. In some cases, checking for the existence of map $T$ is straightforward. When $P_\theta$ is a complex object, in particular, when $P_\theta$ is itself a mixture distribution, this requires substantial work, as should be expected. In this example, the burden of checking the applicability of Theorem~\ref{thm:genthm} lies primarily in evaluating certain oscillatory integrals composed of the map $T$ in question. Tools from harmonic analysis of oscillatory integrals will be developed for such a purpose and presented in Section~\ref{sec:mixofmix}. We expect that the tools developed here present a useful stepping stone toward a more satisfactory theoretical treatment of complex hierarchical models (models that may be viewed as mixtures of mixtures of distributions, e.g.~\cite{Teh-etal-06,Rodriguez-etal-08,nguyen2016borrowing,camerlenghi2019distribution}), which have received broad and increasingly deepened attention in the literature.

\section{Preliminaries}
\label{sec:prelim}
 
We start by setting up basic notions required for the analysis of mixtures of product distributions. Given exchangeable data sequences denoted by $X^i_{[\m_i]}:= (X_1^i,\ldots,X_{\m_i}^i)$ for $i=1,\ldots,\n$, while $\m_i$ denotes the length of sequence $X_{[\m_i]}^i$. For ease of presentation, for now, we shall assume that $N_i=N$ for all $i$. Later we will allow variable length sequences. These sequences are composed of elements in a measurable space $(\Xfrak,\Acalcal)$. Examples include $\Xfrak = \R^d$, $\Xfrak$ is a discrete space, and $\Xfrak$ is a space of measures. 
Regardless, parameters of interest are always encapsulated by discrete mixing measures $G \in \Ec_{k}(\Theta)$, the space of discrete measures with $k$ distinct support atoms residing in $\Theta\subset \R^q$. 
 
The linkage between parameters of interest, i.e., the mixing measure $G$, and the observed data sequences   is achieved via the mixture of product distributions that we now define. 
Consider a family of probability distributions  $\{\P_{\thetave}\}_{\thetave\in \Theta}$ on measurable space $(\Xfrak,\Ac)$, where $\thetave$ is the parameter of the family and $\Theta\subset \R^q$ is the parameter space. Throughout this paper it is assumed that the map $\theta\mapsto P_{\theta}$ is injective. For $N\in \mathbb{N}$, the $\m$-product probability family is denoted by $\{\P_{\thetave,\m}:= \bigotimes^{\m}\P_{\thetave}\}_{\thetave\in \Theta}$ on $(\Xfrak^{\m},\Ac^{\m})$, where $\Ac^{\m}$ is the product sigma-algebra. Given a mixing measure $G=\sum_{i=1}^{k} p_i \delta_{\theta_i} \in \Ec_{k}(\Theta)$, the mixture of $\m$-product distributions induced by $G$ is given by 
\[P_{G,\m} = \sum_{i=1}^{k} p_i P_{\theta_i,\m}.\]
Each exchangeable sequence $X^i_{[N]} = (X_1^i,\ldots,X_{\m}^i)$, for $i=1,\ldots,\n$, is an independent sample distributed according to $P_{G,\m}$.  Due to the role they play in the composition of  distribution $P_{G,\m}$, we also refer to $\{P_\theta\}_{\theta\in \Theta}$ as a family of  \emph{probability kernels} on $(\Xfrak,\Ac)$.

In order to quantify the convergence of mixing measures arising in mixture models, an useful device is a suitably defined optimal transport distance~\cite{nguyen2013convergence,nguyen2011wasserstein}.
Consider the Wasserstein-$p$ distance w.r.t. distance $\dtheta$ on $\Theta$: $\forall G=\sum_{i=1}^{k}p_i\delta_{\theta_i}, G'=\sum_{i=1}^{k'}p'_i\delta_{\theta'_i} 
$, define
\begin{equation}
W_p(G,G';\dtheta) = \left(\min_{\bm{q}} \sum_{i=1}^{k}\sum_{j=1}^{k'} q_{ij}\dtheta^p(\theta_i,\theta'_j)\right)^{1/p}, \label{eqn:Wpdef}
\end{equation}
where the infimum is taken over all joint probability distributions $\bm{q}$ on $[k]\times [k']$ such that, when expressing $\bm{q}$ as a $k\times k'$ matrix, the marginal constraints hold: $\sum_{j=1}^{k'} q_{ij} = p_i$ and $\sum_{i=1}^{k} q_{ij} = p'_j$. 
For the special case when $\dtheta$ is the Euclidean distance, write simply $W_p(G,G')$ instead of $W_p(G,G';\dtheta)$. Write $G_{\ell}\overset{W_p}{\to}G$ if $G_{\ell}$ converges to $G$ under the $W_p$ distance w.r.t. the Euclidean distance on $\Theta$.

For mixing measures arising in mixtures of $\m$-product distributions, a more useful notion is the following. For any $G=\sum_{i=1}^kp_i\delta_{\theta_i} \in \Ec_k(\Theta)$ and $G'=\sum_{i=1}^kp'_i\delta_{\theta'_i} \in \Ec_k(\Theta)$, define
 $$
 \myD_{\m}(G,G') =  \min_{\tau\in S_{k}} \sum_{i=1}^{k}(\sqrt{\m}\|\theta_{\tau(i)}-\theta'_i\|_2+|p_{\tau(i)}-p'_i|) 
 $$ 
 where $S_{k}$ denote all the permutations on the set $[k]$.
 It is simple to verify that $\myD_{\m}(\cdot,\cdot)$ 
is a valid metric on $\Ecal_{k}(\Theta)$ for each $\m$ and relate it to a suitable optimal transport distance metric. Indeed, $G = \sum_{i=1}^{k}p_i \delta_{\theta_i} \in \Ec_k(\Theta)$, due to the permutations invariance of its atoms, can be identified as a set $\{ (\theta_i,p_i): 1\leq i \leq k\}$, which can further be identified as $\tilde{G} = \sum_{i=1}^{k}\frac{1}{k} \delta_{(\theta_i,p_i)}\in \Ec_{k}(  \Theta \times \R)$. Formally, we define a map $\Ec_k(\Theta)\to \Ec_k( \Theta\times \R) $ by
\begin{equation}
G = \sum_{i=1}^{k}p_i \delta_{\theta_i} \mapsto \tilde{G} = \sum_{i=1}^{k}\frac{1}{k} \delta_{(\theta_i,p_i)}\in \Ec_{k}( \Theta \times \R). \label{eqn:a}
\end{equation}
Now, endow $ \Theta \times \R$ with a metric $M_{\m}$ defined by $M_{\m}((\theta,p), (\theta',p')) = \sqrt{\m}\|\theta-\theta'\|_2+|p-p'|$ and note the following fact.

\begin{lem}\label{lem:rotationinvariance}
	For any $ G=\sum_{i=1}^k\frac{1}{k}\delta_{\bar{\theta}_i},G'=\sum_{i=1}^k\frac{1}{k}\delta_{\bar{\theta}'_i} \in \Ec_k(\bar{\Theta})$ and distance $d_{\bar{\Theta}}$ on $\bar{\Theta}$, 
	$$W_p^p(G,G'; d_{\bar{\Theta}}) =  \min_{\tau\in S_{k}} \frac{1}{k}\sum_{i=1}^k d_{\bar{\Theta}}^p(\theta_i, \theta'_{\tau(i)}).$$
\end{lem}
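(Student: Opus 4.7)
The plan is to reduce the optimization problem defining $W_p^p(G,G'; d_{\bar\Theta})$ to a minimization of a linear functional over the Birkhoff polytope, and then invoke the Birkhoff--von Neumann theorem to conclude that the minimum is attained at a permutation matrix.

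First, by the definition in~\eqref{eqn:Wpdef}, we have
\[
W_p^p(G,G'; d_{\bar\Theta}) = \min_{\bm q \in \Pi} \sum_{i=1}^{k}\sum_{j=1}^{k} q_{ij}\, d_{\bar\Theta}^{\,p}(\bar\theta_i,\bar\theta'_j),
\]
where $\Pi$ is the set of nonnegative $k\times k$ matrices $\bm q$ satisfying $\sum_j q_{ij} = 1/k$ and $\sum_i q_{ij} = 1/k$. Writing $\bm q = \tfrac{1}{k}\bm r$, the constraint becomes that $\bm r$ is a doubly stochastic matrix, so $\Pi$ is (up to the $1/k$ rescaling) exactly the Birkhoff polytope $\Bc_k$ of $k\times k$ doubly stochastic matrices.

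Next I would observe that the objective $\bm q \mapsto \sum_{i,j} q_{ij}\, d_{\bar\Theta}^{\,p}(\bar\theta_i,\bar\theta'_j)$ is a linear functional of $\bm q$, hence of $\bm r$. A linear functional on a compact convex polytope attains its minimum at an extreme point. The Birkhoff--von Neumann theorem states that the extreme points of $\Bc_k$ are exactly the $k!$ permutation matrices $\{P_\tau : \tau \in S_k\}$, where $(P_\tau)_{ij} = \mathbf{1}\{j = \tau(i)\}$. Consequently the minimum in the display above is attained at some $\bm q = \tfrac{1}{k}P_\tau$, which yields
\[
W_p^p(G,G'; d_{\bar\Theta}) \;=\; \min_{\tau\in S_k}\, \frac{1}{k}\sum_{i=1}^{k} d_{\bar\Theta}^{\,p}(\bar\theta_i,\bar\theta'_{\tau(i)}).
\]
Conversely, each $\tfrac{1}{k}P_\tau \in \Pi$ supplies a feasible coupling realizing the right-hand side, confirming equality in both directions.

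There is no real obstacle here; the only small point to be careful about is verifying the claim that $\Pi$ is precisely $\tfrac{1}{k}\Bc_k$ (immediate from the marginal constraints) and that the objective is linear in $\bm q$ (it is a sum with fixed coefficients $d_{\bar\Theta}^{\,p}(\bar\theta_i,\bar\theta'_j)$). Everything else is a direct invocation of the Birkhoff--von Neumann theorem, which can be cited as a standard fact.
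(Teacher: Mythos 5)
Your proof is correct, and it is the standard argument for this fact: reduce to a linear program over the (rescaled) Birkhoff polytope and invoke Birkhoff--von Neumann to restrict the minimizer to permutation matrices. The paper does not present an in-line proof but instead cites Proposition~2 of \cite{nguyen2011wasserstein}, which uses essentially this same argument, so there is nothing further to compare.
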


A proof of the preceding lemma is available as Proposition 2 in \cite{nguyen2011wasserstein}. By applying Lemma \ref{lem:rotationinvariance} with $\bar{\Theta}$, $d_{\bar{\Theta}}$ replaced respectively by $\Theta\times \R$ and  $M_\m$, then for any $ G,G' \in \Ec_k(\Theta)$, $W_1(\tilde{G},\tilde{G'}; M_{\m})= \frac{1}{k} D_{\m}(G,G') $, which validates that $D_{\m} $ is indeed a metric on $\Ec_k(\Theta)$, and moreover it does not depend on the specific representations of $G$ and $G'$. 

The next lemma establishes the relationship between $D_{\m}$ and $W_1$ on $\Ec_k(\Theta)$. 
\begin{lem}\label{lem:relW1D1}
The following statements hold.
	\begin{enumerate}[label=\alph*)]
		\item A sequence $G_n\in \Ec_k(\Theta)$ converges to $G_0\in \Ec_k(\Theta)$ under $W_p$ if and only if  $G_n$ converges to $G_0$ under $D_N$. That is, $W_p$ and $D_N$ generate the same topology.
		\item \label{item:relW1D1b}
		Let $\Theta$ be bounded.  Then $W_1(G,G')\leq \max\left\{1,\frac{\text{diam}(\Theta)}{2}\right\} D_1(G,G')$ for any $G,G'  \in \Ec_k(\Theta)$. \\ 
		More generally for any $G=\sum_{i=1}^kp_i\delta_{\theta_i}$ and $G'=\sum_{i=1}^kp'_i\delta_{\theta'_i}$,
		$$ W_p^p(G,G')\leq \max\left\{1,\frac{\text{diam}^p(\Theta)}{2}\right\} \min_{\tau\in S_k} \sum_{i=1}^{k}\left(\|\theta_{\tau(i)}-\theta'_{i}\|_2^p + |p_{\tau(i)}-p'_i|  \right). $$ 
		
		
		\item 
		\label{item:relW1D1d}  
		Fix $G_0\in \Ec_k(\Theta)$. Then $\liminf\limits_{\substack{G \overset{W_1}{\to} G_0\\G\in \Ec_k(\Theta)}} \frac{W_1(G,G_0)}{D_1(G,G_0)}>0$ and $\liminf\limits_{\substack{G \overset{W_1}{\to} G_0\\G\in \Ec_k(\Theta)}} \frac{D_1(G,G_0)}{W_1(G,G_0)}>0$. That is, in a neighborhood of $G_0$ in $\Ec_k(\Theta)$, $D_1(G,G_0)\asymp_{G_0} W_1(G,G_0) $.
		
		\item 
		\label{item:relW1D1e}	   
		Fix $G_0\in \Ec_k(\Theta)$ and suppose $\Theta$ is bounded. Then $W_1(G,G_0)\geq C(G_0,\text{diam}(\Theta))D_1(G,G_0)$ for any $G\in  \Ec_{k}(\Theta)$, where constant $C(G_0,\text{diam}(\Theta))>0$ depends on $G_0$ and $\text{diam}(\Theta)$.
	\end{enumerate}
	
\end{lem}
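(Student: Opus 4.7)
The plan is to establish the four parts more or less in order, with (c) being the technical heart from which (a) and (d) follow quickly, while (b) is a direct transport construction. For (b), fix any permutation $\tau \in S_k$ and couple $G$ and $G'$ as follows: transport $\min(p_{\tau(i)}, p'_i)$ units of mass from $\theta_{\tau(i)}$ to $\theta'_i$ for each $i$, leaving $\tfrac{1}{2}\sum_i |p_{\tau(i)} - p'_i|$ units of residual mass on each side, and distribute this residual arbitrarily. The matched portion contributes at most $\sum_i \|\theta_{\tau(i)} - \theta'_i\|_2^p$ (since $\min(p_{\tau(i)}, p'_i) \le 1$), while each unit of residual mass pays at most $\text{diam}^p(\Theta)$, giving
\[
W_p^p(G, G') \leq \sum_i \|\theta_{\tau(i)} - \theta'_i\|_2^p + \frac{\text{diam}^p(\Theta)}{2} \sum_i |p_{\tau(i)} - p'_i|.
\]
Minimizing over $\tau$ yields the general bound, and the $W_1$ version is the special case $p=1$. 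Part (a) then follows: the bound $W_p \lesssim D_N^{1/p}$ holds on any bounded region by (b), while conversely if $G_n \to G_0$ in $W_p$ the atoms of $G_n$ stay bounded, and a subsequence argument combined with the fact that the limiting atomic measure must equal $G_0$ (which has $k$ distinct atoms) forces the atoms and weights of $G_n$ to converge to those of $G_0$ under an eventually fixed matching, which is exactly $D_N$-convergence.

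Part (c) is the main obstacle. The direction $W_1 \lesssim_{G_0} D_1$ near $G_0$ follows from (b) applied on a bounded $W_1$-neighborhood, since $W_1$-closeness keeps the atoms of $G$ inside a bounded set. For the converse $D_1 \lesssim_{G_0} W_1$, write $G_0 = \sum_i p_i^0 \delta_{\theta_i^0}$ and set $r = \tfrac{1}{2}\min_{i\neq j}\|\theta_i^0 - \theta_j^0\|_2$ and $p_{\min} = \min_i p_i^0$; take $G = \sum_i p_i \delta_{\theta_i} \in \Ec_k(\Theta)$ with $W_1(G, G_0)$ small, and let $\pi = (\pi_{ij})$ be an optimal coupling (with row sums $p_i$ and column sums $p_j^0$). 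Any $\pi_{ij}$ with $\theta_i \notin B(\theta_j^0, r)$ transports mass over a distance $\geq r$, so the total such ``far'' mass is at most $W_1/r$. Once $W_1 < p_{\min} r / 2$, each $\theta_j^0$ must receive at least $p_{\min}/2$ mass from within $B(\theta_j^0, r)$; the balls $\{B(\theta_j^0, r)\}_{j=1}^k$ being disjoint, $k$ atoms of $G$ distributed among $k$ balls each requiring an atom forces the nearest-atom map $\sigma : [k] \to [k]$ to be a bijection. Relabel so $\sigma = \mathrm{id}$. For $i \neq j$ we then have $\|\theta_i - \theta_j^0\|_2 \geq r$, so $\sum_{i\neq j}\pi_{ij} \leq W_1/r$, and since $p_i - p_i^0 = \sum_{j\neq i}(\pi_{ij} - \pi_{ji})$ this yields $\sum_i |p_i - p_i^0| \leq 2 W_1/r$. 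For the atom part, $\pi_{ii} \geq p_i - W_1/r \geq p_{\min} - 3W_1/r \geq p_{\min}/4$ provided $W_1 \leq p_{\min} r / 4$, and $\pi_{ii} \|\theta_i - \theta_i^0\|_2 \leq W_1$ then gives $\|\theta_i - \theta_i^0\|_2 \leq 4W_1/p_{\min}$. Summing delivers $D_1(G, G_0) \leq (4k/p_{\min} + 2/r)\, W_1(G, G_0)$, as desired.

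For (d), combine (c) with a crude global bound: since $\Theta$ is bounded, every $G \in \Ec_k(\Theta)$ satisfies $D_1(G, G_0) \leq k\,\text{diam}(\Theta) + 2$. By (c) there exists $\delta > 0$ such that whenever $W_1(G, G_0) < \delta$ one has $W_1 \geq C(G_0)^{-1} D_1$; on the complementary range $W_1(G, G_0) \geq \delta$, one simply uses $W_1/D_1 \geq \delta/(k\,\text{diam}(\Theta) + 2)$. Taking the smaller of the two constants delivers the uniform lower bound with some $C(G_0, \text{diam}(\Theta)) > 0$. The crux of the entire lemma is the argument in (c): ruling out atom-coalescence and pathological couplings hinges on both the separation $r$ and the minimum weight $p_{\min}$ of $G_0$, and this interplay is what forces the constant to depend on $G_0$.
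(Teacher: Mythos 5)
Your proof is correct throughout. Parts (a), (b), and (d) follow essentially the same lines as the paper's proof: (b) via the same coupling that matches $\min(p_{\tau(i)},p'_i)$ mass along the permutation $\tau$ and charges $\mathrm{diam}(\Theta)^p$ for the residual, (d) by combining (c) on a small ball with the crude global bound $D_1 \leq k\,\mathrm{diam}(\Theta)+O(1)$ on the complement.

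Part (c) is where your route diverges usefully. The paper simply asserts that for $G_n\overset{W_1}{\to}G_0$ one can relabel so that $\theta_i^n\to\theta_i^0$ and $p_i^n\to p_i^0$, and then extracts the bound by the slick inequality $\sum_{ij}q^n_{ij}\|\theta^n_i-\theta^0_j\|_2 \geq \sum_{j}p^0_j\|\theta^n_j-\theta^0_j\|_2$ (valid because the off-diagonal terms are eventually $\geq\rho/2$ while the diagonal ones vanish), followed by the total-variation lower bound $\sum_{\alpha\neq\beta}q^n_{\alpha\beta}\geq\frac12\sum_j|p^n_j-p^0_j|$, and finally averaging. You instead make the relabeling step explicit: using the disjoint balls $B(\theta^0_j,r)$ and the threshold $W_1 < p_{\min}r/2$ you show each ball captures exactly one atom of $G$, so the matching is a bijection; then you control $\sum_i|p_i-p^0_i|$ via the far-mass bound $\sum_{i\neq j}\pi_{ij}\leq W_1/r$ and control $\|\theta_i-\theta^0_i\|_2$ via the single diagonal term $\pi_{ii}\|\theta_i-\theta^0_i\|_2\leq W_1$ together with the lower bound $\pi_{ii}\geq p_{\min}/4$. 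This yields an explicit quantitative constant $D_1\leq(4k/p_{\min}+2/r)W_1$ valid on an explicit $W_1$-ball, at the price of a factor of $k$ compared to the paper's atom bound; for the purpose of proving the liminf is positive this is irrelevant, and your version is self-contained in a way the paper's is not (since it does not justify the relabeling). Both approaches correctly isolate the same two sources of dependence on $G_0$: the atom separation and the minimum weight.
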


We see that $W_1$ and $D_1$ generate the same topology on $\Ec_k(\Theta)$, and they are equivalent 
while fixing one argument. The benefit of $W_p$ is that it is defined on $\bigcup_{k=1}^\infty \Ec_k(\Theta)$ while $D_\m$ is only defined on $\Ec_k(\Theta)$ for each $k$ since its definition requires the two arguments have the same number of atoms. $D_\m$ allows us to quantify the distinct convergence behavior for atoms and probability mass, by placing different factors 
on the atoms and the probability mass parameters, while $W_p$ on $\bigcup_{k=1}^\infty \Ec_{k}(\Theta)$ would fail to do so, because $W_p$ couples the atoms and probability mass parameters (see Example \ref{exa:wassersteinandDN} for such an attempt).

The factor $\sqrt{N}$ present in the definition of $D_N$ arises from the anticipation that when we have independent exchangeable sequences of length $N$, the dependence of the standard estimation rate on $N$ for component parameters $\theta$ will be of order $1/\sqrt{N}$. Indeed, given one single exchangeable sequence from some component parameter $\theta_i$, as the coordinates in this sequence are conditionally independent and identically distributed, the standard rate for estimating $\theta_i$ is $1/\sqrt{N}$. On the other hand, the mixing proportions parameters $p_i$ cannot be estimated from a single such sequence (i.e., if $m=1$). One expects that for such parameters the number of sequences coming from the $\theta_i$ among all exchangeable sequences plays a more important role.  
In summary, the distance $D_N$ will be used to capture precisely the distinct convergence behavior due to the length $N$ of observed exchangeable sequences.

	\section{First-order identifiability theory}\label{sec:firstidentifiable}
	
	Let $\m=1$, a finite mixture of $\m$-product distributions is reduced to a standard finite mixture of distributions. Mixture components are modeled by a family of probability kernels $\{\P_{\thetave}\}_{\thetave\in \Theta}$ on $\Xfrak$, where $\thetave$ is the parameter of the family and $\Theta\subset \R^q$ is the parameter space. As discussed in the introduction, throughout the paper we assume that the map $\theta \mapsto \P_\theta$ is injective;
	it is the nature of the map $G \mapsto P_G$ that we are after. 
	Within this section, we further assume that $\{\P_{\thetave}\}_{\thetave\in \Theta}$ has density $\{f(x|\theta)\}_{\thetave\in \Theta}$ w.r.t. a dominating measure $\mu$ on $(\Xfrak,\Ac)$. Combining multiple mixture components using a mixing measure $G$ on $\Theta$ results in the finite mixture distribution, which admits the following density with respect to $\mu$: 
	$p_G(x) = \int f(x|\theta) G(d\theta)$. The goal of this section is to provide a concise and self-contained treatment of identifiability of finite mixture models. We lay down basic foundations and present new results that will prove useful for the general theory of mixtures of product distributions to be developed in the subsequent sections.

\subsection{Basic theory}	
\label{subsec:basictheory}

The classical identifiability condition posits that $P_G$ uniquely identifies $G$ for all $G\in \Ec_{k_0}(\Theta)$. This condition is satisfied if the collection of density functions $\{f(x|\theta)\}_{\theta \in \Theta}$ are linearly independent. To obtain rates of convergence for the model parameters, it is natural to consider the following condition concerning the first-order derivative of $f$ with respect to $\theta$.
	
	\begin{definition}
	\label{def:firstident}
	The family $\{f(x|\theta)\}_{\thetave\in \Theta}$ is $(\{\theta_i\}_{i=1}^k, \Nc)$ \textbf{first-order identifiable}  if 
	\begin{enumerate}[label=(\roman*)]
	    \item \label{item:firstidenti}
	    for every $x$ in the $\mu$-positive subset $\Xfrak\backslash \Nc$ where $\Nc\in \mathcal{A}$, $f(x|\theta)$ is first-order differentiable w.r.t. $\theta$ at $\{\theta_i\}_{i=1}^k$; and 
	    \item  \label{item:firstidentii}
	    $\{\theta_i\}_{i=1}^k\subset \Theta^\circ$ is a set of $k$ distinct elements and the system of two equations with variable $(a_1,b_1,\ldots,a_k,b_k)$:
	\begin{subequations}
	\begin{align}
	\sum_{i=1}^k \left(a_i^\top  \nabla_{\theta} f(x|\theta_i)+b_i f(x|\theta_i) \right) &=0, \quad \mu-a.e.\  x\in \Xfrak \backslash \Nc,  \label{eqn:conlininda}\\
	  \sum_{i=1}^{k}b_i & =0  \label{eqn:conlinindb}
	\end{align} 
	\end{subequations}
	has only the zero solution: 
	$b_i =0 \in \R \text{ and } a_i =\bm{0} \in \R^q, \quad \forall 1\leq i\leq k$.
 	\end{enumerate}
 	\end{definition}
 	
	This definition specifies a condition that is weaker than the definition of \emph{identifiable in the first-order} in \cite{ho2016strong} since it only requires $f(x|\theta)$ to be differentiable at a finite number of points $\{\theta_i\}_{i=1}^k$ and it requires linear independence of functions at those points. 
	Moreover, it does \emph{not} require $f(x|\theta)$ as a function of $\theta$ to be differentiable for $\mu$-a.e. $x$.  Our definition requires only linear independence between the density and its derivative w.r.t. the parameter over the constraints of the coefficients specified by \eqref{eqn:conlinindb}. (Having said that, we are not aware of any simple example that differentiates the \eqref{eqn:conlininda} \eqref{eqn:conlinindb} from \eqref{eqn:conlininda}. Actually, it is established in Lemma \ref{lem:firstidentifiableelaborate} \ref{item:firstidentifiableelaborateb}  that: under some regularity condition, \eqref{eqn:conlininda} \eqref{eqn:conlinindb} has the same solution set as \eqref{eqn:conlininda}.) We will see shortly that in a precise sense that the conditions given Definition~\ref{def:firstident} are also necessary.

	The significance of first-order identifiability conditions is that they entail a collection of inverse bounds that relate the behavior of some form of distances on mixture densities $P_G, P_{G_0}$ to a distance between corresponding parameters described by $D_1(G,G_0)$, as $G$ tends toward $G_0$. Denote $\Theta^\circ$ the interior of $\Theta$. For any $G_0 \in \Ec_{k_0}(\Theta^\circ)$, define 
	\begin{equation}
	B_{W_1}(G_0,r) = \biggr \{ G\in \bigcup_{k=1}^{k_0}\Ec_{k}(\Theta) \biggr | W_1(G,G_0)<r \biggr \}. \label{eqn:BW1def}
	\end{equation}
	It is obvious that $B_{W_1}(G_0,r)\subset \Ec_{k_0}(\Theta)$ for small $r$.
	
	\begin{lem} [Consequence of first-order identifiability] \label{lem:firstidentifiable}
	Let $G_0=\sum_{i=1}^{k_0}p_i^0\delta_{\theta_i^0} \in \Ec_{k_0}(\Theta^\circ)$.
	Suppose that the family $\{f(x|\theta)\}_{\theta \in \Theta}$ is $(\{\theta_i^0\}_{i=1}^{k_0}, \Nc)$ first-order identifiable in the sense of Definition~\ref{def:firstident} for some  $\Nc\in \Ac$. 
	\begin{enumerate}[label=\alph*)]
	\item  \label{item:firstidentifiablea}
	Then 
	    \begin{equation}
	    \liminf_{\substack{G\overset{W_1}{\to} G_0\\ G\in \Ec_{k_0}(\Theta)}} \frac{V(P_G,P_{G_0})}{D_1(G,G_0)} > 0. \label{eqn:noproductlowbou} 
	    \end{equation}
	    \item \label{item:firstidentifiableb}
	    If in addition,  for every $x$ in $\Xfrak\backslash \Nc$  $f(x|\theta)$ is continuously differentiable w.r.t. $\theta$ in a neighborhood of  $\theta_i^0$ for $i\in [k_0]
	    :=\{1,2,\ldots,k_0\}$, then
	    \begin{equation}
	     \lim_{r\to 0}\ \inf_{\substack{G, H\in B_{W_1}(G_0,r)\\ G\not = H }} \frac{V(\P_{G},\P_{H})}{\myD_{1}(G,H)}>0. \label{eqn:curvaturebound} 
	    \end{equation}
	\end{enumerate}
	\end{lem}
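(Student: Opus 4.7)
The plan is a proof by contradiction in both parts: extract from the assumed failing sequence a normalized limiting ``direction'' $(a_i,b_i)_{i=1}^{k_0}$, use Fatou's lemma to convert the vanishing of the integrated ratio $V/D_1$ into a pointwise vanishing identity on $\Xfrak\setminus\Nc$, and then invoke the first-order identifiability of Definition \ref{def:firstident} to kill this direction, contradicting its normalization.

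For part (a), suppose \eqref{eqn:noproductlowbou} fails, so there exist $G_\ell = \sum_{i=1}^{k_0} p_i^\ell \delta_{\theta_i^\ell} \in \Ec_{k_0}(\Theta)$ with $W_1(G_\ell,G_0)\to 0$ and $V(P_{G_\ell},P_{G_0})/D_1(G_\ell,G_0)\to 0$. Since $G_0$ has exactly $k_0$ distinct atoms in $\Theta^\circ$ with $p_i^0>0$, for large $\ell$ one may relabel so that $\theta_i^\ell\to\theta_i^0$, $p_i^\ell\to p_i^0$, and $D_1(G_\ell,G_0)=\sum_i(\|\theta_i^\ell-\theta_i^0\|_2+|p_i^\ell-p_i^0|)$. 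Set
\[
a_i^\ell := \frac{p_i^\ell(\theta_i^\ell-\theta_i^0)}{D_1(G_\ell,G_0)}, \qquad b_i^\ell := \frac{p_i^\ell-p_i^0}{D_1(G_\ell,G_0)}.
\]
These are bounded; along a subsequence $a_i^\ell\to a_i\in\R^q$ and $b_i^\ell\to b_i\in\R$. The normalization built into the definition of $D_1$ together with $p_i^0>0$ forces $\sum_i(\|a_i\|_2+|b_i|)>0$, while $\sum_i p_i^\ell=\sum_i p_i^0=1$ gives $\sum_i b_i=0$. For each $x\in\Xfrak\setminus\Nc$, differentiability of $f(x|\cdot)$ at $\theta_i^0$ and the bound $o(\|\theta_i^\ell-\theta_i^0\|)/D_1(G_\ell,G_0) = (\|\theta_i^\ell-\theta_i^0\|/D_1)\cdot o(1)\to 0$ yield
\[
\frac{p_{G_\ell}(x)-p_{G_0}(x)}{D_1(G_\ell,G_0)} \longrightarrow L(x) := \sum_{i=1}^{k_0}\bigl(\langle a_i,\nabla_\theta f(x|\theta_i^0)\rangle + b_i f(x|\theta_i^0)\bigr).
\]
Applying Fatou's lemma to the non-negative ratio $|p_{G_\ell}-p_{G_0}|/D_1(G_\ell,G_0)$ gives $\int|L|\,d\mu=0$, hence $L(x)=0$ for $\mu$-a.e.\ $x\in\Xfrak\setminus\Nc$. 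Combined with $\sum_i b_i=0$, Definition \ref{def:firstident} forces $a_i=0$ and $b_i=0$ for every $i$, contradicting the non-vanishing normalization.

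For part (b), rerun the argument with a pair of sequences $G_\ell=\sum_i p_i^\ell\delta_{\theta_i^\ell}$ and $H_\ell=\sum_i q_i^\ell\delta_{\eta_i^\ell}$, both converging to $G_0$ in $W_1$, such that $V(P_{G_\ell},P_{H_\ell})/D_1(G_\ell,H_\ell)\to 0$; the atoms of both families then cluster around $\theta_1^0,\ldots,\theta_{k_0}^0$. Decompose
\[
p_{G_\ell}(x)-p_{H_\ell}(x) = \sum_i(p_i^\ell-q_i^\ell)f(x|\eta_i^\ell) + \sum_i p_i^\ell\bigl(f(x|\theta_i^\ell)-f(x|\eta_i^\ell)\bigr),
\]
and invoke a mean-value expansion $f(x|\theta_i^\ell)-f(x|\eta_i^\ell)=\langle \nabla_\theta f(x|\xi_i^\ell(x)),\theta_i^\ell-\eta_i^\ell\rangle$ for some intermediate $\xi_i^\ell(x)$ on the segment joining $\theta_i^\ell$ and $\eta_i^\ell$. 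Here the strengthening in (b) --- continuous differentiability of $f(x|\cdot)$ in a neighborhood of each $\theta_i^0$ for $x\in\Xfrak\setminus\Nc$ --- is indispensable, since it provides $\nabla_\theta f(x|\xi_i^\ell(x))\to\nabla_\theta f(x|\theta_i^0)$ pointwise on $\Xfrak\setminus\Nc$. Defining $a_i := \lim p_i^\ell(\theta_i^\ell-\eta_i^\ell)/D_1(G_\ell,H_\ell)$ and $b_i := \lim(p_i^\ell-q_i^\ell)/D_1(G_\ell,H_\ell)$ along a subsequence, the Fatou and identifiability steps from (a) then apply verbatim.

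The main obstacle is the pointwise-limit step: one must verify that the Taylor (or mean-value) residuals, once normalized by $D_1$, vanish for each fixed $x\in\Xfrak\setminus\Nc$ without assuming any integrability. In (a) pointwise differentiability at the single point $\theta_i^0$ suffices because $\|\theta_i^\ell-\theta_i^0\|/D_1$ is uniformly bounded in $\ell$; in (b) the mean-value form introduces the intermediate point $\xi_i^\ell(x)$ which is only guaranteed to tend to $\theta_i^0$, so only the continuous-differentiability hypothesis provides the pointwise continuity of the gradient needed to close the argument. The remaining bookkeeping --- the matching of atoms induced by $W_1$-convergence and the application of Fatou to a non-negative integrand --- is routine.
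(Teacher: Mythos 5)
Your proof is correct and follows essentially the same approach as the paper: a proof by contradiction, passing to a subsequence that defines a limiting direction $(a_i,b_i)$, then applying Fatou's lemma to convert the vanishing of the $L^1$-normalized ratio into pointwise vanishing of the limiting linear combination on $\Xfrak\setminus\Nc$, and finally invoking the identifiability hypothesis to kill that direction. The only cosmetic differences are your absorbing the factor $p_i^\ell$ into the definition of $a_i^\ell$ (the paper keeps $p_i^0$ as an explicit coefficient in the limit), and, in part (b), your use of the mean-value form $f(x|\theta_i^\ell)-f(x|\eta_i^\ell)=\langle\nabla f(x|\xi_i^\ell(x)),\theta_i^\ell-\eta_i^\ell\rangle$ with an intermediate point, whereas the paper uses a Taylor-residual lemma (Lemma~\ref{lem:taylortwisted}\ref{item:taylortwisteda}) that bounds $|g(x)-g(y)-\langle\nabla g(x_0),x-y\rangle|=o(\|x-y\|)$ as $x,y\to x_0$ without an intermediate point; both are valid under the $C^1$-in-a-neighborhood hypothesis of part (b).
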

To put the above claims in context, note that the following inequality holds generally for \emph{any} probability kernel family $\{P_\theta\}_{\theta \in \Theta}$ (even those without a density w.r.t. a dominating measure, see Lemma~\ref{lem:VD1liminfuppbou}):
	\begin{equation}
\sup_{G_0\in \Ec_{k_0}(\Theta)} \liminf_{\substack{G\overset{W_1}{\to} G_0\\ G\in \Ecal_{k_0}(\Theta) }} \frac{V(\P_{G},\P_{G_0})}{\myD_{1}(G,G_0)} \leq 1/2. \label{eqn:liminfVDNup}
\end{equation}
Note also that
\begin{equation}
\lim_{r\to 0}\ \inf_{\substack{G, H\in B_{W_1}(G_0,r)\\ G\not = H }} \frac{V(\P_{G},\P_{H})}{\myD_{1}(G,H)} \leq \liminf_{\substack{G\overset{W_1}{\to} G_0\\ G\in \Ec_{k_0}(\Theta)}} \frac{V(P_G,P_{G_0})}{D_1(G,G_0)}  \label{eqn:reltwoinversebounds}
\end{equation}
for any probability kernel $P_\theta$ and any $G_0\in \Ec_{k_0}(\Theta^\circ)$. Thus ~\eqref{eqn:curvaturebound} entails~\eqref{eqn:noproductlowbou}. However,~\eqref{eqn:noproductlowbou} is sufficient for translating a learning rate for estimating a population distribution $P_G$ into that of the corresponding mixing measure $G$. To be concrete, if we are given an $\n$-i.i.d. sample from a parametric model $P_{G_0}$, a standard estimation method yields root-$\n$ rate of convergence for density $p_G$, which means that the corresponding estimate of $G$ admits root-$\n$ rate as well. 
	\begin{rem}
	\label{rem:generalizationoffirstidentifiable}
	Lemma \ref{lem:firstidentifiable} \ref{item:firstidentifiablea}  is a generalization of the Theorem 3.1 in \cite{ho2016strong} in several features. Firstly, $(\{\theta_i^0\}_{i=1}^{k_0},\Nc)$ first-order identifiable assumption in Lemma \ref{lem:firstidentifiable} is weaker since identifiability in the first-order in the sense of \cite{ho2016strong} implies $(\{\theta_i^0\}_{i=1}^{k_0},\Nc)$ first-order identifiability with $\Nc=\emptyset$. Example~\ref{exa:locgamma} gives a specific instance which satisfies the notion of first-order identifiability specified by Definition~\ref{def:firstident} but not the condition specified by~\cite{ho2016strong}. Secondly, it turns out that uniform Lipschitz assumption in Theorem 3.1 in \cite{ho2016strong} is redundant and 
	Lemma \ref{lem:firstidentifiable} \ref{item:firstidentifiablea} does not require it. 
    Lemma \ref{lem:firstidentifiable} \ref{item:firstidentifiableb} is an extension of \cite[equation (20)]{heinrich2018strong} in a similar sense as the above. 
    Finally, given additional features of $f$, the first-order identifiable notion can be further simplified (see Section~\ref{sec:appendixfinercharacterizations}).  \myeoe
	\end{rem}

\begin{proof}[Proof of Lemma \ref{lem:firstidentifiable}] 

Suppose the lower bound of \eqref{eqn:noproductlowbou} is incorrect. Then there exist $G_\ell\in \Ec_{k_0}(\Theta)\backslash \{G_0\}$, $G_\ell \overset{W_1}{\to} G_0$ such that 
	    $$
	    \frac{V(p_{G_\ell},p_{G_0})}{D_1(G_\ell,G_0)} \to 0, \text{ as } \ell \to \infty.
	    $$
	    We may write $G_\ell= \sum_{i=1}^{k_0}p^\ell_i \delta_{\theta_i^\ell}$ such that $\theta_i^\ell \to \theta_i^0$ and $p_i^\ell \to p_i^0$ as $\ell\to \infty$. With subsequences argument if necessary, we may further require
	    \begin{equation}
	    \frac{\theta_i^\ell - \theta_i^0}{D_1(G_\ell,G_0)} \to a_i \in \R^q, \quad \frac{p_i^\ell - p_i^0}{D_1(G_\ell,G_0)} \to b_i\in \R,  \quad \forall 1\leq i \leq k_0, \label{eqn:thetaGlG0rel}
	    \end{equation}
	    where $b_i$ and the components of $a_i$ are in $[-1,1]$ and $\sum_{i=1}^{k_0}b_i =0$. Moreover, $D_1(G_\ell,G_0)=\sum_{i=1}^{k_0}\left(\|\theta^\ell_i-\theta_i^0\|_2 +|p^\ell_i-p_i^0|\right)$ for sufficiently large $\ell$, which implies 
	    \begin{equation*}
	    \sum_{i=1}^{k_0}\|a_i\|_2+\sum_{i=1}^{k_0}|b_i| = 1. 
	    \end{equation*} It also follows that at least one of $a_i$ is not $\bm{0}\in \R^q$ or one of $b_i$ is not $0$.   
	    On the other hand,
	    \begin{align*}
	    0 & =\lim_{\ell\to \infty}\frac{2V(P_{G_\ell},P_{G_0})}{D_1(G_\ell,G_0)} \\ 
	    &\geq \lim_{\ell\to \infty}\int_{\Xfrak\backslash \Nc}\left| \sum_{i=1}^{k_0} p_i^\ell \frac{f(x|\theta_i^\ell) - f(x|\theta_i^0)}{D_1(G_\ell,G_0)} + \sum_{i=1}^{k_0} f(x|\theta_i^0) \frac{p_i^\ell - p_i^0}{D_1(G_\ell,G_0)}   \right| \mu(dx)\\
	    & \geq \int_{\Xfrak\backslash \Nc} \liminf_{\ell\to \infty}\left| \sum_{i=1}^{k_0} p_i^\ell \frac{f(x|\theta_i^\ell) - f(x|\theta_i^0)}{D_1(G_\ell,G_0)} + \sum_{i=1}^{k_0} f(x|\theta_i^0) \frac{p_i^\ell - p_i^0}{D_1(G_\ell,G_0)}   \right| \mu(dx) \\
	    & = \int_{\Xfrak\backslash \Nc} \left| \sum_{i=1}^{k_0} p_i^0 a_i^\top  \nabla_{\theta}f(x|\theta_i^0) + \sum_{i=1}^{k_0} f(x|\theta_i^0) b_i   \right| \mu(dx) .
	    \end{align*}
	    where the second inequality follows from Fatou's Lemma. Then $\sum_{i=1}^{k_0} p_i^0 a_i^\top  \nabla_{\theta}f(x|\theta_i^0) + \sum_{i=1}^{k_0} f(x|\theta_i^0) b_i=0$ for $\mu-a.e. x\in \Xfrak\backslash \Nc$. Thus we find a nonzero solution to \eqref{eqn:conlininda}, \eqref{eqn:conlinindb} with $k,\theta_i$ replaced by $k_0,\theta_i^0$. 
	    However, the last statement contradicts with the definition of $(\{\theta_i^0\}_{i=1}^{k_0},\Nc)$ first-order identifiable.

	    Proof of part~\ref{item:firstidentifiableb} continues in the Appendix.
	\end{proof}

	Lemma \ref{lem:firstidentifiable} states that under \ref{item:firstidenti} in Definition~\ref{def:firstident}, the constrained linear independence between the density and its derivative w.r.t. the parameter (item \ref{item:firstidentii} in the definition) is sufficient for \eqref{eqn:noproductlowbou} and \eqref{eqn:curvaturebound}. For a converse result, the next lemma shows \ref{item:firstidentii} is also necessary provided that \ref{item:firstidenti} holds for some $\mu$-negligible $\Nc$ and  $f(x|\theta)$ satisfies some regularity condition.

	\begin{lem} [Lack of first-order identifiability] \label{lem:nececondition}
	    Fix $G_0=\sum_{i=1}^{k_0}p_i^0\delta_{\theta_i^0}\in \Ec_{k_0}(\Theta^\circ)$. Suppose 
	    \begin{enumerate}[label=\alph*)]
	    \item \label{item:nececonditiona} 
	    there exists $\Nc$ (that possibly depends on $G_0$) such that $\mu(\Nc)=0$ and for every $x\not \in \Nc$,  $f(x|\theta)$ is differentiable with respect to $\theta$ at $\{\theta_i^0\}_{i=1}^{k_0}$;
	    \item \label{item:nececonditionb} 
	    equation \eqref{eqn:conlininda} (or equivalently, system of equations~\eqref{eqn:conlininda} and~\eqref{eqn:conlinindb}) with $k,\theta_i$ replaced respectively by $k_0,\theta_i^0$ has a nonzero solution $(a_1, b_1,\ldots,a_{k_0},b_{k_0})$;
	    \item  \label{item:nececonditionc}
	    for each $i\in [k_0]$, there exists $\gamma(\theta_i^0,a_i)$ such that for any $0<\Delta\leq\gamma(\theta_i^0,a_i)$
	    $$
	    \left|\frac{f(x|\theta_i^0+a_i \Delta)-f(x|\theta_i^0)}{\Delta}\right| \leq \bar{f}(x|\theta_i^0,a_i),  \quad \mu-a.e.\  x\in \Xfrak,
	    $$
	    where $\bar{f}(x|\theta_i^0,a_i)$ is integrable with respect to the measure $\mu$.
	    \end{enumerate}
	    Then 
	    \begin{equation}
	   \lim_{r\to 0}\ \inf_{\substack{G, H\in B_{W_1}(G_0,r)\\ G\not = H }} \frac{V(\P_{G},\P_{H})}{\myD_{1}(G,H)} =   \liminf_{\substack{G\overset{W_1}{\to} G_0\\ G\in \Ec_{k_0}(\Theta)}} \frac{V(P_G,P_{G_0})}{D_1(G,G_0)}=0.  \label{eqn:liminfD10}
	    \end{equation}
	\end{lem}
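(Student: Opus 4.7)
The plan is to prove the right-hand equality in \eqref{eqn:liminfD10} by an explicit construction, then deduce the left-hand equality from the sandwich inequality \eqref{eqn:reltwoinversebounds}. Using the nonzero solution $(a_1,b_1,\ldots,a_{k_0},b_{k_0})$ granted by assumption \ref{item:nececonditionb}, I would define, for a sequence $\Delta_\ell\downarrow 0$,
$$G_\ell \;=\; \sum_{i=1}^{k_0}(p_i^0+b_i\Delta_\ell)\,\delta_{\theta_i^0+a_i\Delta_\ell}.$$
The constraint $\sum_i b_i=0$ from \eqref{eqn:conlinindb} makes each $G_\ell$ a probability measure. Because $G_0\in\Ec_{k_0}(\Theta^\circ)$ has positive weights $p_i^0$ and distinct interior atoms $\theta_i^0$, standard continuity shows $G_\ell\in\Ec_{k_0}(\Theta^\circ)\setminus\{G_0\}$ for all large $\ell$, and the identity permutation realizes the minimum defining $D_1$, giving $D_1(G_\ell,G_0)=c\,\Delta_\ell$ with $c:=\sum_i(\|a_i\|_2+|b_i|)>0$.

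The crux is to show $V(P_{G_\ell},P_{G_0})/\Delta_\ell \to 0$. I would decompose
$$\frac{p_{G_\ell}(x)-p_{G_0}(x)}{\Delta_\ell} \;=\; \sum_{i=1}^{k_0}(p_i^0+b_i\Delta_\ell)\,\frac{f(x|\theta_i^0+a_i\Delta_\ell)-f(x|\theta_i^0)}{\Delta_\ell} \;+\; \sum_{i=1}^{k_0}b_i f(x|\theta_i^0).$$
For every $x\in\Xfrak\setminus\Nc$, assumption \ref{item:nececonditiona} gives pointwise convergence of each difference quotient to $a_i^\top\nabla_\theta f(x|\theta_i^0)$, so the whole expression converges pointwise to $\sum_i p_i^0\,a_i^\top\nabla_\theta f(x|\theta_i^0)+\sum_i b_i f(x|\theta_i^0)$, which vanishes $\mu$-a.e.\ on $\Xfrak\setminus\Nc$ by \eqref{eqn:conlininda}; since $\mu(\Nc)=0$, the integrand tends to zero $\mu$-a.e.\ on $\Xfrak$. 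Assumption \ref{item:nececonditionc} then supplies the integrable majorant $\sum_{i=1}^{k_0}\bigl(2\bar{f}(x|\theta_i^0,a_i)+|b_i|f(x|\theta_i^0)\bigr)$, uniformly valid once $\Delta_\ell\le\min_i\gamma(\theta_i^0,a_i)$ and $|b_i|\Delta_\ell\le p_i^0$ for all $i$. The dominated convergence theorem then yields
$$\frac{2V(P_{G_\ell},P_{G_0})}{\Delta_\ell} \;=\; \int_{\Xfrak}\biggl|\frac{p_{G_\ell}(x)-p_{G_0}(x)}{\Delta_\ell}\biggr|\,d\mu(x) \;\longrightarrow\; 0,$$
and dividing through by $D_1(G_\ell,G_0)=c\Delta_\ell$ establishes that the rightmost $\liminf$ in \eqref{eqn:liminfD10} equals zero. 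The leftmost quantity is then forced to zero by \eqref{eqn:reltwoinversebounds} and nonnegativity.

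The one genuinely delicate step is the application of dominated convergence: the uniform integrable bound on the difference quotients is precisely what hypothesis \ref{item:nececonditionc} is engineered to provide, and without it Fatou alone would only give a.e.\ vanishing of the integrand, not $L^1$ vanishing. The remaining checks---preservation of positive weights, distinct atoms, interiority of the perturbed atoms, and stability of the $D_1$-optimal permutation---are routine consequences of $G_0$ lying in the open stratum $\Ec_{k_0}(\Theta^\circ)$ and hold automatically for all sufficiently large $\ell$.
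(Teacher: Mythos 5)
Your construction has a bug that makes the crucial step fail. You perturb the atom $\theta_i^0$ by $a_i\Delta_\ell$ and the weight $p_i^0$ by $b_i\Delta_\ell$, and then claim the pointwise limit of the difference quotient is
$$\sum_{i=1}^{k_0} p_i^0\, a_i^\top\nabla_\theta f(x|\theta_i^0) + \sum_{i=1}^{k_0} b_i f(x|\theta_i^0),$$
which you assert vanishes by \eqref{eqn:conlininda}. But \eqref{eqn:conlininda} says that $\sum_i a_i^\top\nabla_\theta f(x|\theta_i^0) + \sum_i b_i f(x|\theta_i^0)=0$, \emph{without} the $p_i^0$ factors. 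The two are not the same, since $p_i^0$ can differ across $i$, so your integrand does not generally vanish in the limit and the argument breaks down exactly at the point where you need it.

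The fix, which is what the paper's proof does, is to rescale the atom perturbation by $1/p_i^0$: set $\theta_i^\ell = \theta_i^0 + (a_i/p_i^0)\Delta_\ell$ and $p_i^\ell = p_i^0 + b_i\Delta_\ell$ (the paper additionally normalizes so that $D_1(G_\ell,G_0)=1/\ell$, but that is cosmetic). Then the limit of the difference quotient becomes $\sum_i p_i^0 (a_i/p_i^0)^\top\nabla_\theta f(x|\theta_i^0) + \sum_i b_i f(x|\theta_i^0) = \sum_i a_i^\top\nabla_\theta f(x|\theta_i^0) + \sum_i b_i f(x|\theta_i^0)$, which does vanish by \eqref{eqn:conlininda}. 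Hypothesis~\ref{item:nececonditionc} still supplies an envelope, because $\theta_i^0 + (a_i/p_i^0)\Delta_\ell = \theta_i^0 + a_i(\Delta_\ell/p_i^0)$, so the condition applies with $\Delta = \Delta_\ell/p_i^0$ and the $i$-th difference quotient (in $\Delta_\ell$) is dominated by $\bar f(x|\theta_i^0,a_i)/p_i^0$, which is still $\mu$-integrable. The rest of your argument — $G_\ell\in\Ec_{k_0}(\Theta)\setminus\{G_0\}$ for large $\ell$, stability of the optimal permutation, dominated convergence, and the concluding appeal to \eqref{eqn:reltwoinversebounds} — goes through once this correction is made. (You should also justify that $\sum_i b_i=0$; the parenthetical in hypothesis~\ref{item:nececonditionb} asserts the equivalence, which in the paper is established via Lemma~\ref{lem:firstidentifiableelaborate}\ref{item:firstidentifiableelaborateb} using hypothesis~\ref{item:nececonditionc}, but this is a minor point compared to the rescaling issue.)
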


Lemma \ref{lem:nececondition} presents the consequence of the violation of first-order identifiability. Indeed, the conclusion~\eqref{eqn:liminfD10} suggests that $D_1(G,G_0)$ may vanish at a much slower rate than $V(P_G,P_{G_0})$, i.e., the convergence of parameters representing $G$  may be much slower than the convergence of data distribution $P_G$.

\begin{rem} \label{rem:weakerprattslemma1} 
    Condition \ref{item:nececonditionc} in the Lemma \ref{lem:nececondition} is to guarantee the exchange of the order between the limit and the integral and one may replace it by any other similar condition. A byproduct of this condition is that it renders the constraint~\eqref{eqn:conlinindb} redundant (see Lemma \ref{lem:firstidentifiableelaborate} \ref{item:firstidentifiableelaborateb}). While condition \ref{item:nececonditionc} is tailored for an application of the dominated convergence theorem in the proof, one may tailored the following condition for Pratt's Lemma. 
    
    Condition c'): there exists $\gamma_0>0$ such that  $\forall\ 1\leq i\leq k_0$, $\forall\ 0<\Delta<\gamma_0$,
	    $$
	    \left|\frac{f(x|\theta_i^0+a_i \Delta)-f(x|\theta_i^0)}{\Delta}\right| \leq \bar{f}_{\Delta}(x),  \quad \mu-a.e.\  x\in \Xfrak \backslash \Nc
	    $$
	    where $\bar{f}_{\Delta}(x)$ satisfies $\lim_{\Delta\to 0^+}\int_{\Xfrak\backslash \Nc} \bar{f}_{\Delta}(x) d\mu = \int_{\Xfrak\backslash \Nc} \lim_{\Delta\to 0^+} \bar{f}_{\Delta}(x) d\mu $. 
	  
	  Condition c') is weaker than condition c) since the former reduces to the latter if one let $\bar{f}_{\Delta}(x)=\bar{f}(x)<\infty$. 
	  \myeoe
\end{rem}

Combining all the conditions in Lemma \ref{lem:firstidentifiable} and Lemma \ref{lem:nececondition}, one immediately obtains the following equivalence between \eqref{eqn:noproductlowbou},  \eqref{eqn:curvaturebound} and the first-order identifiable condition. 

\begin{cor}\label{cor:curvaturenoncurvequiv} 
    Fix $G_0=\sum_{i=1}^{k_0}p_i^0\delta_{\theta_i^0}\in \Ec_{k_0}(\Theta^\circ)$. Suppose for $\mu$-a.e. $x\in \Xfrak$, $f(x|\theta)$ as a function $\theta$ is continuously differentiable in a neighborhood of $\theta_i^0$ for each $i\in [k_0]$. Suppose that for any $a\in \R^q$ and for each $i\in [k_0]$ there exists $\gamma(\theta_i^0,a)>0$ such that for any $0<\Delta\leq\gamma(\theta_i^0,a)$,
\begin{equation}
\left|\frac{f(x|\theta_i^0+a\Delta)-f(x|\theta_i^0)}{\Delta}\right|\leq \bar{f}_\Delta(x|\theta_i^0,a) \quad \mu-a.e.\ \Xfrak \label{eqn:prattincor}
\end{equation}
where $\bar{f}_\Delta(x|\theta_i^0,a)$ satisfies $\lim_{\Delta\to 0^+}\int_{\Xfrak}\bar{f}_\Delta(x|\theta_i^0,a)d\mu=\int_{\Xfrak}\lim_{\Delta\to0^+}\bar{f}_\Delta(x|\theta_i^0,a)d\mu$. Here $\bar{f}_\Delta(x|\theta_i^0,a)$ possibly depends on $\theta_i^0$ and $a$. Then \eqref{eqn:curvaturebound} holds if and only if \eqref{eqn:noproductlowbou} holds if and only if  \eqref{eqn:conlininda} with $k,\theta_i$ replaced respectively by $k_0,\theta_i^0$ has only the zero solution.
\end{cor}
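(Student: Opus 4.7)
The plan is to chain the two preceding lemmas into the equivalence triangle (3) $\Rightarrow$ \eqref{eqn:curvaturebound} $\Rightarrow$ \eqref{eqn:noproductlowbou} $\Rightarrow$ (3). Before doing so I would reconcile the hypotheses: let $\Nc$ denote the $\mu$-null set on which $f(x|\cdot)$ fails to be continuously differentiable in a neighborhood of some $\theta_i^0$, so that the standing assumption of the corollary is precisely condition \ref{item:firstidenti} of Definition \ref{def:firstident} together with the extra local continuous differentiability needed for Lemma \ref{lem:firstidentifiable}\ref{item:firstidentifiableb}. The Pratt-type domination \eqref{eqn:prattincor} is exactly condition c') from Remark \ref{rem:weakerprattslemma1}, which the remark explains is strong enough to replace condition \ref{item:nececonditionc} in Lemma \ref{lem:nececondition}; via Lemma \ref{lem:firstidentifiableelaborate}\ref{item:firstidentifiableelaborateb} the same domination condition also renders the side constraint \eqref{eqn:conlinindb} redundant, so that the single-equation statement of (3) in the corollary matches the full system formulation used both in Definition \ref{def:firstident} and in Lemma \ref{lem:nececondition}.

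Given these reductions the forward implications are almost immediate. For (3) $\Rightarrow$ \eqref{eqn:curvaturebound}: the family $\{f(x|\theta)\}_{\theta\in \Theta}$ is $(\{\theta_i^0\}_{i=1}^{k_0},\Nc)$ first-order identifiable in the sense of Definition \ref{def:firstident}, and the continuous differentiability hypothesis of Lemma \ref{lem:firstidentifiable}\ref{item:firstidentifiableb} holds on $\Xfrak\setminus\Nc$, so that lemma delivers \eqref{eqn:curvaturebound} directly. The implication \eqref{eqn:curvaturebound} $\Rightarrow$ \eqref{eqn:noproductlowbou} requires no extra work at all, as it is just the universal inequality \eqref{eqn:reltwoinversebounds}, which holds for any probability kernel and any $G_0 \in \Ec_{k_0}(\Theta^\circ)$.

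The closing implication \eqref{eqn:noproductlowbou} $\Rightarrow$ (3) I would establish by contraposition. If \eqref{eqn:conlininda} with $k,\theta_i$ replaced respectively by $k_0,\theta_i^0$ admits a nonzero solution, then hypotheses \ref{item:nececonditiona} and \ref{item:nececonditionb} of Lemma \ref{lem:nececondition} are satisfied outright, and \ref{item:nececonditionc} is satisfied in the equivalent weakened form c') by \eqref{eqn:prattincor}; Lemma \ref{lem:nececondition} then yields \eqref{eqn:liminfD10}, whose second equality is exactly the negation of \eqref{eqn:noproductlowbou}. The main obstacle, such as it is, is therefore purely bookkeeping: one must invoke Remark \ref{rem:weakerprattslemma1} twice, once to weaken condition \ref{item:nececonditionc} of Lemma \ref{lem:nececondition} into \eqref{eqn:prattincor}, and once to justify dropping the constraint \eqref{eqn:conlinindb} so that the single-equation formulation of (3) is unambiguous. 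No new analytic content is needed beyond what is already proved in the two preceding lemmas.
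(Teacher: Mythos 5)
Your proof is correct and follows exactly the route the paper indicates: the paper states that the corollary follows immediately by combining Lemma~\ref{lem:firstidentifiable}\ref{item:firstidentifiableb} (for $(3)\Rightarrow$~\eqref{eqn:curvaturebound}), inequality~\eqref{eqn:reltwoinversebounds} (for~\eqref{eqn:curvaturebound}$\Rightarrow$~\eqref{eqn:noproductlowbou}), and Lemma~\ref{lem:nececondition} with the Pratt-form domination c$'$) from Remark~\ref{rem:weakerprattslemma1} (for the contrapositive of~\eqref{eqn:noproductlowbou}$\Rightarrow(3)$), and you have fleshed out precisely this chain. The only nitpick is that the second ``invoke Remark~\ref{rem:weakerprattslemma1}'' should more precisely point to the paper's note after Lemma~\ref{lem:firstidentifiableelaborate} that the same extension applies there, but the substance is identical.
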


Next, we highlight the role of condition \ref{item:nececonditionc} of Lemma \ref{lem:nececondition} in establishing either inverse bound \eqref{eqn:noproductlowbou} or
\eqref{eqn:liminfD10} based on our notion of first-order identifiability. As mentioned, condition \ref{item:nececonditionc}  posits the existence of an integrable envelope function to ensure the exchange of the limit and integral. Without this condition, the conclusion \eqref{eqn:liminfD10} of Lemma \ref{lem:nececondition} might not hold. 
The following two examples demonstrate the role of \ref{item:nececonditionc}, and serve as examples which are not first-order identifiable but for which inverse bound \eqref{eqn:noproductlowbou} still holds.
	
\begin{exa}[Uniform probability kernel]\label{exa:uniform}  
Consider the uniform distribution family $f(x|\theta)=\frac{1}{\theta}\bm{1}_{(0,\theta)}(x)$ with parameter space $\Theta = (0,\infty)$. This family is defined on $\Xfrak = \R$ with the dominating measure $\mu$ to be the Lebesgue measure. It is easy to see $f(x|\theta)$ is differentiable w.r.t. $\theta$ at $\theta\not = x$ and 
$$
\frac{\partial}{\partial \theta}f(x|\theta) = -\frac{1}{\theta} f(x|\theta)  \quad \text{when } \theta\not =x.
$$
So $f(x|\theta)$ is not first-order identifiable by our definition. Note for any $G_0\in \Ec_{k_0}(\Theta)$ this family does not satisfy the assumption \ref{item:nececonditionc} in Lemma \ref{lem:nececondition} and hence Lemma \ref{lem:nececondition} is not applicable. Indeed, by Lemma \ref{lem:unifidentifiable} this family satisfies \eqref{eqn:noproductlowbou} and \eqref{eqn:curvaturebound} for any $k_0$ and $G_0\in \Ec_{k_0}(\Theta)$. \myeoe
\end{exa}
	
\begin{lem} \label{lem:unifidentifiable}
    Let $f(x|\theta)$ be the uniform distribution family defined in Example \ref{exa:uniform}. Then for any $G_0\in \Ec_{k_0}(\Theta)$, inverse bounds~\eqref{eqn:noproductlowbou} and \eqref{eqn:curvaturebound} hold. 
\end{lem}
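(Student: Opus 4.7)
The plan is to bypass the first-order identifiability machinery entirely and exploit the very explicit piecewise-constant structure of the uniform mixture density. For $G = \sum_{i=1}^{k_0} p_i \delta_{\theta_i}$, the density $p_G(x) = \sum_i (p_i/\theta_i)\mathbf{1}_{(0,\theta_i)}(x)$ is a step function with downward jumps only at the atoms $\theta_i$, so $V(P_G,P_{G_0})$ decomposes cleanly into contributions from intervals between atoms (where both densities are constant) and intervals straddling an atom (where they differ by a ``spike''). I will recover separately the information about atom locations and about mixing weights, and then assemble them into a lower bound on $D_1(G,G_0)$.

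Fix $G_0 = \sum_{i=1}^{k_0} p_i^0 \delta_{\theta_i^0}$ with atoms ordered $\theta_1^0 < \cdots < \theta_{k_0}^0$, and let $\eta = \tfrac{1}{3}\min_{i\ne j}|\theta_i^0-\theta_j^0|$. By Lemma \ref{lem:relW1D1} and the remark after \eqref{eqn:BW1def}, for $G$ sufficiently close to $G_0$ in $W_1$ we may assume $G\in \Ec_{k_0}(\Theta)$ with atoms $\theta_1<\cdots<\theta_{k_0}$ and $\theta_i \in (\theta_i^0-\eta,\theta_i^0+\eta)$. Define flat intervals $F_j = [\theta_j^0+\eta,\theta_{j+1}^0-\eta]$ (with the obvious convention at the endpoints) and transition intervals $T_j = [\theta_j^0-\eta,\theta_j^0+\eta]$. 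Write $c_i = p_i/\theta_i$, $c_i^0 = p_i^0/\theta_i^0$, and $H_j = \sum_{i>j} c_i$, $H_j^0 = \sum_{i>j} c_i^0$.

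On each $F_j$ both densities are constant, so $\int_{F_j}|p_G-p_{G_0}|\,dx = |F_j|\cdot |H_j-H_j^0|$, giving $\int_{\cup F_j} |p_G-p_{G_0}|\,dx \geq c_1(G_0)\sum_j |H_j-H_j^0|$. On $T_j$, assume WLOG $\theta_j<\theta_j^0$ (the other case is symmetric); on the subinterval $(\theta_j,\theta_j^0)$ of length $|\theta_j-\theta_j^0|$, one has $p_G = H_j$ while $p_{G_0} = H_j^0+c_j^0$, so $|p_G-p_{G_0}|=|H_j-H_j^0-c_j^0|\geq c_j^0/2$ once $G$ is close enough to $G_0$. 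Hence $\int_{T_j} |p_G-p_{G_0}|\,dx \geq \tfrac{c_j^0}{2}|\theta_j-\theta_j^0|$. Combining and using the telescoping bound $|c_i-c_i^0|\leq |H_{i-1}-H_{i-1}^0|+|H_i-H_i^0|$ gives
\[
V(P_G,P_{G_0}) \;\geq\; c(G_0)\sum_{i=1}^{k_0}\bigl(|\theta_i-\theta_i^0|+|c_i-c_i^0|\bigr).
\]
Finally, from $p_i-p_i^0 = \theta_i(c_i-c_i^0) + c_i^0(\theta_i-\theta_i^0)$ one gets $|p_i-p_i^0|\leq C(G_0)(|c_i-c_i^0|+|\theta_i-\theta_i^0|)$ for $G$ near $G_0$, which upgrades the previous display to $V(P_G,P_{G_0})\geq c'(G_0)\,D_1(G,G_0)$, proving \eqref{eqn:noproductlowbou}. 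For the curvature bound \eqref{eqn:curvaturebound}, the identical partition, driven only by $G_0$'s atoms, works when $G,H\in B_{W_1}(G_0,r)$ with $r$ small: both have atoms confined to the $\eta$-neighborhoods, so the flat and spike decomposition applies with $G_0$ replaced by $H$, yielding $V(P_G,P_H)\geq c'(G_0)\,D_1(G,H)$.

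The main obstacle is really bookkeeping rather than conceptual: one must verify that for $G$ close to $G_0$ the spike height on $T_j$ genuinely stays bounded away from zero (uniformly in $G$), which requires controlling $|H_j-H_j^0|$ against the fixed positive quantity $c_j^0$, and one must check that the case $\theta_j>\theta_j^0$ yields a spike of height approximately $c_j$ rather than $c_j^0$ but with the same lower bound $c_j^0/2$ for $G$ near $G_0$. The conversion from $(|\theta_i-\theta_i^0|,|c_i-c_i^0|)$ to $(|\theta_i-\theta_i^0|,|p_i-p_i^0|)$ is a one-line linear-algebra computation once $\theta_i$ is bounded away from $0$, which holds because $\theta_i^0>0$.
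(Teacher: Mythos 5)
Your argument is correct, and it takes a genuinely different route from the paper's own proof of this lemma. The paper argues by contradiction inside the first-order identifiability framework: assuming the curvature bound fails, it extracts (via the subsequence and Fatou argument from the proof of Lemma~\ref{lem:firstidentifiable}\ref{item:firstidentifiableb}) a nonzero solution $(p_i^0 a_i, b_i)$ of the linear system~\eqref{eqn:conlininda}--\eqref{eqn:conlinindb}, then uses the structural relation $\frac{\partial}{\partial\theta}f(x|\theta)=-\frac{1}{\theta}f(x|\theta)$ to force the constraint $-p_i^0a_i/\theta_i^0+b_i=0$, concludes that some $a_\alpha\neq 0$, and finally derives a contradiction from a single spike integral over $(\min\{\theta_\alpha^\ell,\eta_\alpha^\ell\},\max\{\theta_\alpha^\ell,\eta_\alpha^\ell\})$ that survives even though Lemma~\ref{lem:nececondition}'s envelope condition fails. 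You instead give a \emph{direct} quantitative lower bound on $V(P_G,P_H)$: the flat intervals $F_j$ yield $\sum_j|H_j-H_j^0|$, the spike intervals yield $\sum_j c_j^0|\theta_j-\theta_j^0|/2$, and the telescoping identity $c_i=H_{i-1}-H_i$ together with the linear change from $(c_i,\theta_i)$ to $(p_i,\theta_i)$ assembles these into $c(G_0)D_1$. What each buys: the paper's route recycles the general identifiability machinery and shows exactly where the obstruction to Lemma~\ref{lem:nececondition} lives (the nondifferentiability at the boundary), which is the pedagogical point of placing this example after that lemma; your route is shorter, constructive, avoids Fatou and the linearization, and gives an explicit constant. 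One small bookkeeping item: your $\eta=\frac13\min_{i\ne j}|\theta_i^0-\theta_j^0|$ does not automatically satisfy $\eta<\theta_1^0$, which you implicitly need so that $F_0=[0,\theta_1^0-\eta]$ and the transition window $T_1$ stay inside $(0,\infty)$; take $\eta=\frac13\min\bigl(\theta_1^0,\min_{i\ne j}|\theta_i^0-\theta_j^0|\bigr)$ and the argument goes through unchanged.
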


\begin{exa}[Location-scale exponential distribution kernel]\label{exa:locscaexp} 
Consider the location-scale exponential distribution on $\Xfrak = \R$, with density with respect to Lebesgue measure $\mu$  given by $f(x|\xi,\sigma)=\frac{1}{\sigma}\exp\left(-\frac{x-\xi}{\sigma}\right)\bm{1}_{(\xi,\infty)}(x)$ with parameter $\theta=(\xi,\sigma)$ and parameter space $\Theta = \R\times (0,\infty)$. It is easy to see $f(x|\xi,\sigma)$ is differentiable w.r.t. $\xi$ at $\xi\not = x$ and 
$$
\frac{\partial}{\partial \xi}f(x|\xi,\sigma) = \frac{1}{\sigma} f(x|\xi,\sigma)  \quad \text{when } \xi \not =x.
$$
So $f(x|\xi,\sigma)$ is not first-order identifiable. Note for any $G_0\in \Ec_{k_0}(\Theta)$ this family does not satisfy the third assumption in Lemma \ref{lem:nececondition} and hence Lemma \ref{lem:nececondition} is not applicable.
Indeed by Lemma \ref{lem:exadisidentifiable} this family satisfies \eqref{eqn:noproductlowbou} for any $k_0$ and $G_0\in \Ec_{k_0}(\Theta)$.
This lemma also serves as a correction for an erroneous result (Prop. 5.3 of \cite{ho2016convergence}). The mistake in their proof may be attributed to failing to account for the envelope condition \ref{item:nececonditionc} that arises due to shifted support of mixture components with distinct $\xi$ values. Interestingly, Lemma \ref{lem:exadisidentifiable} also establishes that the stronger version of inverse bounds, namely, inequality \eqref{eqn:curvaturebound} does not hold for some $G_0$.
\myeoe
\end{exa}

\begin{lem} \label{lem:exadisidentifiable}
    Let $f(x|\xi,\sigma)$ be the location-scale exponential distribution defined in Example \ref{exa:locscaexp}. Then for any $G_0\in \Ec_{k_0}(\Theta)$, inverse bound~\eqref{eqn:noproductlowbou} holds. Moreover, for any $k_0\geq 1$, there exists a $G_0\in \Ec_{k_0}(\Theta)$, such that inverse bound \eqref{eqn:curvaturebound} does not hold.
\end{lem}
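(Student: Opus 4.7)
My plan for part 1 is a proof by contradiction in the spirit of Lemma~\ref{lem:firstidentifiable}\ref{item:firstidentifiablea}, but with the envelope failure at the jump $x=\xi$ circumvented by a direct decomposition of $\R$ rather than by Fatou on the whole line. Suppose $G_\ell=\sum_i p_i^\ell \delta_{(\xi_i^\ell,\sigma_i^\ell)}\in\Ec_{k_0}(\Theta)\setminus\{G_0\}$ with $G_\ell\overset{W_1}{\to}G_0$ and $V(P_{G_\ell},P_{G_0})/D_1(G_\ell,G_0)\to 0$. After relabeling and extracting a subsequence, I would assume $(\xi_i^\ell-\xi_i^0)/\Delta_\ell\to a_i^\xi$, $(\sigma_i^\ell-\sigma_i^0)/\Delta_\ell\to a_i^\sigma$ and $(p_i^\ell-p_i^0)/\Delta_\ell\to b_i$ with $\Delta_\ell=D_1(G_\ell,G_0)$, the limits satisfying $\sum_i b_i=0$ and $\sum_i(|a_i^\xi|+|a_i^\sigma|+|b_i|)=1$.

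The key first step is to split $\R$ into the jump set $J_\ell=\bigcup_i[\min(\xi_i^\ell,\xi_i^0),\max(\xi_i^\ell,\xi_i^0)]$ and its complement $S_\ell$. Because component $i$ is active on exactly one side (either $P_{G_\ell}$ or $P_{G_0}$) of its jump interval, and the remaining components contribute smoothly of order $O(|\xi_i^\ell-\xi_i^0|^2)$ there, one gets
\[
\int_{J_\ell}|p_{G_\ell}-p_{G_0}|\,dx \;\geq\; \sum_i \frac{p_i^0}{\sigma_i^0}\,|\xi_i^\ell-\xi_i^0|\,(1+o(1)).
\]
Dividing by $\Delta_\ell$ and letting $\ell\to\infty$ forces $p_i^0|a_i^\xi|/\sigma_i^0=0$, hence $a_i^\xi=0$. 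When several $\xi_i^0$ coincide at a common value $\xi^*$, I would first group atoms by $\xi^*$, order the distinct limiting shifts among the group, and use disjoint subintervals to separately isolate each $a_i^\xi$; this is the first technical nuisance.

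On $S_\ell$, where $f(x|\xi,\sigma)$ is smooth in $\theta$ uniformly for $\ell$ large, the Fatou step of Lemma~\ref{lem:firstidentifiable}\ref{item:firstidentifiablea} applies with $a_i^\xi$ already zero, yielding
\[
\sum_{i=1}^{k_0}\bigl[p_i^0\,a_i^\sigma\,\partial_\sigma f(x|\xi_i^0,\sigma_i^0)+ b_i\, f(x|\xi_i^0,\sigma_i^0)\bigr]=0
\]
for Lebesgue-a.e.\ $x$ on $S:=(\max_j\xi_j^0,\infty)$. Each summand equals $e^{-x/\sigma_i^0}$ times an affine polynomial in $x$ (with explicit $\xi_i^0,\sigma_i^0$ dependence), so the classical linear independence of $\{x^j e^{-x/\sigma_i^0}\}$ for distinct $\sigma_i^0$ (and linear independence of $\{f(\cdot|\xi_i^0,\sigma_i^0)\}$ when the $\sigma_i^0$ coincide but the $\xi_i^0$ do not) forces $a_i^\sigma=b_i=0$ for every $i$, contradicting the normalization.

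For part 2, I would construct a specific $G_0$ that exhibits a first-order cancellation inaccessible at $G=G_0$ alone but accessible when both arguments of $V$ are perturbed. Take $k_0=2$, $G_0=\tfrac12\delta_{(0,1)}+\tfrac12\delta_{(0,2)}$ (atoms sharing $\xi^0=0$); the delta-cancellation constraint $\sum_i p_i^0 a_i^\xi/\sigma_i^0=0$ then admits a nontrivial solution $\bm v$, and by pairing it with compensating probability shifts one obtains a direction along which the first-order distributional change of $P_{G}$ vanishes. Setting $G_\ell=G_0+\alpha_\ell\bm v$ and $H_\ell=G_0+\beta_\ell\bm v$ with $\alpha_\ell,\beta_\ell\to 0$ chosen so that $\alpha_\ell-\beta_\ell\to 0$ strictly slower than $\alpha_\ell,\beta_\ell$, one computes (by carefully tracking cancellation on the three intervals cut out by the displaced $\xi$'s of $G_\ell$ and $H_\ell$) that $V(P_{G_\ell},P_{H_\ell})=o(\alpha_\ell\wedge\beta_\ell)$ while $D_1(G_\ell,H_\ell)$ stays of order $|\alpha_\ell-\beta_\ell|$; tuning the rates gives $V/D_1\to 0$. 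For general $k_0\geq 2$ the same construction works after padding with $k_0-2$ inert atoms. The main obstacle is to justify the cancellation at the correct order in the jump integrals, which requires a second-order expansion around a configuration where the first-order distributional derivative is null; this is the crux of the counterexample and the place where the shifted-support subtlety overlooked in Prop.~5.3 of~\cite{ho2016convergence} surfaces.
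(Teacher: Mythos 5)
For part 1, your jump-first approach (isolating the jump-set contribution to conclude $a_i^\xi=0$ for all $i$, then running Fatou on the complement) is a genuinely different route from the paper's, which instead first derives the constraints $a_i^\sigma=0$ and $p_i^0 a_i^\xi/\sigma_i^0+b_i=0$ from a Fatou argument on $\R\setminus\bigcup_i\{\xi_i^0\}$ and only afterwards examines the jump near a carefully chosen atom $\alpha$ (with a delicate subsequence argument about how the $\xi_i^\ell$ in the $\alpha$-group interlace). Your route would be cleaner, since killing every $a_i^\xi$ directly avoids all of the bookkeeping around $\alpha,\bar\alpha$. However, your step deriving $a_i^\sigma=b_i=0$ from the constraint on $S=(\max_j\xi_j^0,\infty)$ has a genuine gap: when two atoms share the same $\sigma^0$ but have distinct $\xi^0$, their densities restricted to $S$ are \emph{proportional} (since $f(x\mid\xi_1,\sigma)=e^{(\xi_1-\xi_2)/\sigma}f(x\mid\xi_2,\sigma)$ for $x>\max\{\xi_1,\xi_2\}$), so linear independence fails on $S$ and you obtain too few equations to pin down all $a_i^\sigma,b_i$. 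You must instead argue interval-by-interval from the leftmost slab $(\xi'_1,\xi'_2)$ outward, where in each new slab only the atoms with the smallest remaining $\xi^0$ contribute anew, and those necessarily have pairwise distinct $\sigma^0$; this is exactly what the paper does.

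For part 2, your construction does not work, and this is not merely a wrong choice of constants. First, your $G_0=\frac{1}{2}\delta_{(0,1)}+\frac{1}{2}\delta_{(0,2)}$ has $p_1^0/\sigma_1^0=\frac{1}{2}\neq\frac{1}{4}=p_2^0/\sigma_2^0$, whereas the paper's counterexample requires the degeneracy $p_1^0/\sigma_1^0=p_2^0/\sigma_2^0=:\psi$ so that a jump carried by component $1$ at a displaced $\xi$-location can be matched by a jump of component $2$ of the \emph{same height} $\psi$ at the same location. Second, and more fundamentally, pushing $G_\ell$ and $H_\ell$ along the \emph{same} direction $\bm{v}$ by amounts $\alpha_\ell\neq\beta_\ell$ necessarily leaves their jump points at different places, so the $O(1)$ jump densities never cancel; carrying out the three-interval computation you propose, each jump strip of width of order $|\alpha_\ell-\beta_\ell|$ contributes a term of order $|\alpha_\ell-\beta_\ell|$ to $V(P_{G_\ell},P_{H_\ell})$, hence $V/D_1$ stays bounded away from zero even for a degenerate $G_0$. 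Your ``null direction'' $\bm{v}$ annihilates only the smooth part of the first-order density change; it is blind to the jump. The paper's construction is structurally different: $G_\ell$ moves component $1$'s $\xi$ to a new point while $H_\ell$ moves component $2$'s $\xi$ to the \emph{same} new point, and only the degeneracy $\psi$ makes the resulting jump heights agree up to $O(1/\ell)$, giving $V=O(1/\ell^2)$ against $D_1\sim 1/\ell$.
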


In some context it is of interest to establish inverse bounds for Hellinger distance rather than variational distance on mixture densities, e.g., in the derivation of minimax lower bounds. Since $\sqrt{2} h \geq V$, the inverse bound \eqref{eqn:noproductlowbou}, which holds under first-order identifiability, immediately entails that
 $$
	    \liminf\limits_{\substack{G\overset{W_1}{\to} G_0\\ G\in \Ec_{k_0}(\Theta)}} \frac{h(P_G,P_{G_0})}{D_1(G,G_0)}>0.  
$$
Similarly, \eqref{eqn:curvaturebound} entails that 
$$
\lim\limits_{r\to 0}\ \inf\limits_{\substack{G, H\in B_{W_1}(G_0,r)\\ G\not = H }} \frac{h(\P_{G},\P_{H})}{\myD_{1}(G,H)}>0.
$$
For a converse result, the following is the Hellinger counterpart of Lemma~\ref{lem:nececondition}.
	\begin{lem} \label{lem:nececonditionh}  
	    Fix $G_0=\sum_{i=1}^{k_0}p_i^0\delta_{\theta_i^0}\in \Ec_{k_0}(\Theta^\circ)$. Suppose that
	    \begin{enumerate}[label=\alph*)]
	    \item there exists $\Nc$ (that possibly depends on $G_0$) such that $\mu(\Nc)=0$ and for every $x\not \in \Nc$,  $f(x|\theta)$ is differentiable with respect to $\theta$ at $\{\theta_i^0\}_{i=1}^{k_0}$;
	    \item the density family has common support, i.e. $S= \{x\in \Xfrak| f(x|\theta)>0 \}$ does not depend on $\theta\in \Theta$; 

	    \item \eqref{eqn:conlininda} with $k,\theta_i$ replaced respectively by $k_0,\theta_i^0$ has a nonzero solution $(a_1, b_1,\ldots,a_{k_0},b_{k_0})$;
	    
	    \item  \label{item:nececonditionhd}
	    there exists $\gamma_0>0$ such that  $\forall\ 1\leq i\leq k_0$, $\forall\ 0<\Delta\leq\gamma_0$,
	    $$
	    \left|\frac{f(x|\theta_i^0+a_i \Delta)-f(x|\theta_i^0)}{\Delta \sqrt{f(x|\theta_i^0)}}\right| \leq \bar{f}(x),  \quad \mu-a.e.\  x\in S \backslash \Nc,
	    $$
	    where $\bar{f}(x)$ satisfies $\int_{S \backslash \Nc}\bar{f}^2(x)d\mu<\infty$.
	    \end{enumerate}
	    Then 
	    \begin{equation}
	    \lim_{r\to 0}\ \inf_{\substack{G, H\in B_{W_1}(G_0,r)\\ G\not = H }} \frac{h(\P_{G},\P_{H})}{\myD_{1}(G,H)}=\liminf_{\substack{G\overset{W_1}{\to} G_0\\ G\in \Ec_{k_0}(\Theta)}} \frac{h(P_G,P_{G_0})}{D_1(G,G_0)}=0 \label{eqn:liminfD10h}
	    \end{equation}
	\end{lem}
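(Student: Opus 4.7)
The plan is to mirror the strategy of Lemma~\ref{lem:nececondition}: construct, from the hypothesized nonzero solution $(a_1,b_1,\ldots,a_{k_0},b_{k_0})$, a family $\{G_\ell\}_{\ell\geq 1}$ contracting to $G_0$ under $W_1$ along which the Hellinger-to-$D_1$ ratio vanishes. Set $\theta_i^\ell := \theta_i^0 + \Delta_\ell a_i$ and $p_i^\ell := p_i^0 + \Delta_\ell b_i$ for a sequence $\Delta_\ell \downarrow 0$ chosen small enough that $\theta_i^\ell \in \Theta^\circ$, the atoms remain distinct, and $\{p_i^\ell\}$ remain probability weights (using $\sum_i b_i = 0$, which is a consequence of condition (c), cf.\ Lemma~\ref{lem:firstidentifiableelaborate}\ref{item:firstidentifiableelaborateb}). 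A direct computation then gives $D_1(G_\ell,G_0)/\Delta_\ell \to \sum_i \|a_i\|_2+\sum_i|b_i|>0$, so the ratio $h(P_{G_\ell},P_{G_0})/D_1(G_\ell,G_0)$ has the same asymptotic order as $h(P_{G_\ell},P_{G_0})/\Delta_\ell$.

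Next, write $h^2(P_{G_\ell},P_{G_0})/\Delta_\ell^2$ as an integral over the common support $S$ by exploiting assumption (b), and use the elementary identity $\sqrt{a}-\sqrt{b}=(a-b)/(\sqrt{a}+\sqrt{b})$ to obtain
\begin{equation*}
\frac{\sqrt{p_{G_\ell}(x)}-\sqrt{p_{G_0}(x)}}{\Delta_\ell}
= \frac{1}{\sqrt{p_{G_\ell}(x)}+\sqrt{p_{G_0}(x)}}\Biggl(\sum_{i=1}^{k_0} p_i^\ell\,\frac{f(x|\theta_i^\ell)-f(x|\theta_i^0)}{\Delta_\ell} + \sum_{i=1}^{k_0} b_i\,f(x|\theta_i^0)\Biggr).
\end{equation*}
By the differentiability in (a), the bracketed numerator converges $\mu$-a.e.\ on $S$ to $\sum_i p_i^0 a_i^\top \nabla_\theta f(x|\theta_i^0)+\sum_i b_i f(x|\theta_i^0)$, which equals zero by (c); meanwhile the denominator tends to $2\sqrt{p_{G_0}(x)}>0$ on $S$. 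Hence the integrand in $h^2/\Delta_\ell^2$ tends pointwise to $0$ on $S$.

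To pass from pointwise to $L^1(\mu)$ convergence, I would build an $L^2(\mu)$ envelope for the displayed ratio. Using the crude bound $\sqrt{p_{G_\ell}}+\sqrt{p_{G_0}} \geq \sqrt{p_{G_0}(x)} \geq \sqrt{p_i^0\,f(x|\theta_i^0)}$ (valid for each $i$ on $S$), hypothesis (d) yields
\begin{equation*}
\left|\frac{f(x|\theta_i^0+\Delta_\ell a_i)-f(x|\theta_i^0)}{\Delta_\ell\bigl(\sqrt{p_{G_\ell}}+\sqrt{p_{G_0}}\bigr)}\right| \leq \frac{1}{\sqrt{p_i^0}}\,\bar f(x),
\end{equation*}
while the contribution of $\sum_i b_i f(x|\theta_i^0)$ is bounded by $\sum_i |b_i|(p_i^0)^{-1/2}\sqrt{f(x|\theta_i^0)}$. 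Both terms are in $L^2(\mu)$: the first by the integrability of $\bar f^2$, the second because each $f(\cdot|\theta_i^0)$ is a probability density. Therefore dominated convergence delivers $h^2(P_{G_\ell},P_{G_0})/\Delta_\ell^2 \to 0$, hence $h(P_{G_\ell},P_{G_0})/D_1(G_\ell,G_0)\to 0$. This proves the right-hand equality in~\eqref{eqn:liminfD10h}; the left-hand equality follows from the analogue of~\eqref{eqn:reltwoinversebounds} for $h$ (which holds by the same trivial comparison, taking $H=G_0$) together with nonnegativity.

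The main obstacle is the second step: in the variational case of Lemma~\ref{lem:nececondition}, the envelope only needs to dominate $(f(x|\theta_i^\ell)-f(x|\theta_i^0))/\Delta_\ell$ in $L^1$, but for Hellinger one must divide by $\sqrt{p_{G_0}}$ and then square-integrate. The specific form of hypothesis (d), with $\sqrt{f(x|\theta_i^0)}$ in the denominator and $\bar f \in L^2(\mu)$, is tailored precisely so that the pointwise bound $\sqrt{p_{G_0}(x)}\gtrsim \sqrt{f(x|\theta_i^0)}$ converts (d) into the required $L^2$ envelope; common support in (b) is used to guarantee that this lower bound is strictly positive $\mu$-a.e.\ where needed.
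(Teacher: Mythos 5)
Your construction has a genuine gap in the pointwise-limit step. With $\theta_i^\ell := \theta_i^0 + \Delta_\ell a_i$ and $p_i^\ell := p_i^0 + \Delta_\ell b_i$, the bracketed numerator converges to
\[
\sum_{i=1}^{k_0} p_i^0\, a_i^\top \nabla_\theta f(x|\theta_i^0) + \sum_{i=1}^{k_0} b_i\, f(x|\theta_i^0),
\]
but this is \emph{not} the left-hand side of \eqref{eqn:conlininda}, which reads $\sum_i a_i^\top \nabla_\theta f(x|\theta_i^0) + \sum_i b_i f(x|\theta_i^0)$. Condition (c) asserts that the latter vanishes, not the former; the two differ by the weight $p_i^0$ attached to each gradient term and coincide only in degenerate cases (equal mixing proportions or $a_i\equiv 0$). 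As stated, your limiting integrand is therefore generically nonzero, so the dominated-convergence argument does not deliver $h^2(P_{G_\ell},P_{G_0})/\Delta_\ell^2 \to 0$.

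The fix is small but essential, and is precisely what the paper does in both Lemma~\ref{lem:nececondition} and Lemma~\ref{lem:nececonditionh}: scale the atom perturbations by $1/p_i^0$, i.e.\ set $\theta_i^\ell := \theta_i^0 + \frac{\Delta_\ell}{p_i^0}a_i$ (equivalently, replace $(a_i,b_i)$ by $(a_i/p_i^0, b_i)$ and note that multiplying each gradient term by $p_i^0$ then recovers the original equation \eqref{eqn:conlininda}). With this rescaling the bracketed numerator converges to $\sum_i a_i^\top\nabla_\theta f(x|\theta_i^0)+\sum_i b_i f(x|\theta_i^0)=0$. Your envelope bound survives the change: for large $\ell$ one has $\Delta_\ell/p_i^0 \le \gamma_0$, and applying condition (d) at $\Delta = \Delta_\ell/p_i^0$ gives $\left|\frac{f(x|\theta_i^\ell)-f(x|\theta_i^0)}{\Delta_\ell\sqrt{f(x|\theta_i^0)}}\right| \le \frac{1}{p_i^0}\bar f(x)$, which together with $\sqrt{p_{G_\ell}}+\sqrt{p_{G_0}} \geq \sqrt{p_i^0 f(x|\theta_i^0)}$ still yields an $\ell$-independent $L^2(\mu)$ envelope up to a constant. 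The remainder of your argument --- deriving $\sum_i b_i = 0$ via Lemma~\ref{lem:firstidentifiableelaborate}\ref{item:firstidentifiableelaborateb} (which requires the Cauchy--Schwarz observation that $\bar f\sqrt{f(\cdot|\theta_i^0)}$ is $\mu$-integrable, so (d) implies the $L^1$ domination that lemma needs), the $b_i$-contribution bound, and the passage to \eqref{eqn:liminfD10h} via the analogue of \eqref{eqn:reltwoinversebounds} --- is sound.
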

\begin{rem}\label{rem:weakerprattslemma2} 
Similar to Remark \ref{rem:weakerprattslemma1}, one may replace the condition \ref{item:nececonditionhd} in the preceding lemma by the following weaker condition: 

Condition d'): there exist $\gamma_0>0$ such that  $\forall\ 1\leq i\leq k_0$, $\forall\ 0<\Delta\leq\gamma_0$,
	    $$
	    \left|\frac{f(x|\theta_i^0+a_i \Delta)-f(x|\theta_i^0)}{\Delta \sqrt{f(x|\theta_i^0)}}\right| \leq \bar{f}_{\Delta}(x),  \quad \mu-a.e.\  x\in S \backslash \Nc,
	    $$
	    where $\bar{f}_{\Delta}(x)$ satisfies $\lim_{\Delta\to0^+}\int_{S \backslash \Nc}\bar{f}_{\Delta}^2(x)d\mu = \int_{S \backslash \Nc}\lim_{\Delta\to0^+}\bar{f}_{\Delta}^2(x)d\mu <\infty$. \myeoe
\end{rem}

\subsection{Finer characterizations}
\label{sec:appendixfinercharacterizations}


In order to verify if the first-order identifiability condition is satisfied for a given probability kernel family $\{f(x|\theta)|\theta\in \Theta\}$, according to Definition~\ref{def:firstident} one needs to check that system of equations~\eqref{eqn:conlininda} and~\eqref{eqn:conlinindb} does not have non-zero solutions.
For many common probability kernel families, the presence of normalizing constant can make this verification challenging, because the normalizing constant is a function of $\theta$, which has a complicated form or no closed form, and its derivative can also be complicated.  Fortunately, the following lemma shows that under a mild condition one only needs to check for the family of kernel $\{f(x|\theta)\}$ defined up to a function of $\theta$ that is constant in $x$. Moreover, under additional mild assumptions, the equation~\eqref{eqn:conlinindb} can also be dropped from the verification.  

	
\begin{lem}\label{lem:firstidentifiableelaborate}
Suppose for every $x$ in the $\mu$-positive subset $\Xfrak\backslash \Nc$ for some $\Nc\in \mathcal{A}$, $f(x|\theta)$ is differentiable with respect to $\theta$ at $\{\theta_i\}_{i=1}^k$. Let $g(\theta)$ be a positive differentiable function on $\Theta^\circ$ and define $\tilde{f}(x|\theta) = g(\theta)f(x|\theta)$. 

\begin{enumerate}[label=\alph*)]
\item \label{item:firstidentifiableelaboratea}
\eqref{eqn:conlininda} has only the zero solution if and only if \eqref{eqn:conlininda} with $f$ replaced by $\tilde{f}$ has only the zero solution.

\item \label{item:firstidentifiableelaborateb}
Suppose $\mu(\Nc)=0$. For a fixed set $\{a_i\}_{i=1}^k\subset \R^q$ and for each $i\in [k]$ there exists $\gamma(\theta_i,a_i)>0$ such that for any $0<\Delta\leq\gamma(\theta_i,a_i)$,
\begin{equation}
\left|\frac{f(x|\theta_i+a_i\Delta)-f(x|\theta_i)}{\Delta}\right|\leq \bar{f}(x|\theta_i,a_i), \quad \mu-a.e.\ \Xfrak, \label{eqn:dctthetaiai2}
\end{equation}
where $\bar{f}(x|\theta_i,a_i)$ is $\mu$-integrable. Here $\gamma(\theta_i,a_i)$ and $\bar{f}(x|\theta_i,a_i)$ depend on $\theta_i$ and $a_i$. Then $(a_1,b_1,\ldots,a_k,b_k)$ is a solution of \eqref{eqn:conlininda} if and only if it is a solution of the system of equations \eqref{eqn:conlininda}, \eqref{eqn:conlinindb}.  Moreover, \eqref{eqn:dctthetaiai2} holds for some $\mu$-integrable $\bar{f}$ if and only if the same inequality with $f$ on the left side replaced by $\tilde{f}$ holds for some $\mu$-integrable $\bar{f}_1$.

\item \label{item:firstidentifiableelaboratec}
Suppose the conditions in \ref{item:firstidentifiableelaborateb} (for $f$ or $\tilde{f}$) hold for any set $\{a_i\}_{i=1}^k$. Then \eqref{eqn:conlininda} has the same solutions as the system of equations \eqref{eqn:conlininda}, \eqref{eqn:conlinindb}. Hence, the family $\{f(x|\theta)\}_{\theta\in\Theta}$ is $(\{\theta_i\}_{i=1}^k,\Nc)$ first-order identifiable if and only if \eqref{eqn:conlininda} with $f$ replaced by $\tilde{f}$ has only the zero solution. 
\end{enumerate}
\end{lem}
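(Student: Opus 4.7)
For part \ref{item:firstidentifiableelaboratea}, I would proceed by a direct change of variables using the product rule. Since $g$ is differentiable and positive on $\Theta^\circ$, for every $x\in\Xfrak\setminus\Nc$ one has $\nabla_\theta\tilde f(x|\theta_i)=f(x|\theta_i)\nabla g(\theta_i)+g(\theta_i)\nabla_\theta f(x|\theta_i)$. Substituting this into the $\tilde f$-version of \eqref{eqn:conlininda} written in coefficients $(a_i,b_i)$ regroups the sum as \eqref{eqn:conlininda} for $f$ with new coefficients
\[
a_i':=g(\theta_i)\,a_i,\qquad b_i':=a_i^\top\nabla g(\theta_i)+b_i\,g(\theta_i).
\]
The induced map $(a_i,b_i)_{i=1}^k\mapsto(a_i',b_i')_{i=1}^k$ is a linear bijection on $\R^{(q+1)k}$, since its $i$-th diagonal block is lower block triangular with determinant $g(\theta_i)^{q+1}>0$. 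Hence the two equations share isomorphic solution sets; in particular one has only the zero solution iff the other does.

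For part \ref{item:firstidentifiableelaborateb}, the key observation is that $\int f(x|\theta)\,d\mu=1$, so the $\theta$-integral of $f$ is constant. Given a solution $(a_1,b_1,\dots,a_k,b_k)$ of \eqref{eqn:conlininda}, integrating over $\Xfrak$ (equivalent to $\Xfrak\setminus\Nc$ since $\mu(\Nc)=0$) and exchanging limit with integral via dominated convergence, justified by the envelope \eqref{eqn:dctthetaiai2}, yields $a_i^\top\int\nabla_\theta f(x|\theta_i)\,d\mu=a_i^\top\nabla_\theta\int f(x|\theta_i)\,d\mu=0$. The integrated identity therefore collapses to $\sum_i b_i=0$, which is \eqref{eqn:conlinindb}; the reverse implication is immediate since \eqref{eqn:conlinindb} is only an additional constraint. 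For the envelope transfer, I would decompose
\[
\frac{\tilde f(x|\theta_i+a_i\Delta)-\tilde f(x|\theta_i)}{\Delta}=\frac{g(\theta_i+a_i\Delta)-g(\theta_i)}{\Delta}\,f(x|\theta_i+a_i\Delta)+g(\theta_i)\,\frac{f(x|\theta_i+a_i\Delta)-f(x|\theta_i)}{\Delta},
\]
and then use \eqref{eqn:dctthetaiai2} once more to bound $f(x|\theta_i+a_i\Delta)\leq f(x|\theta_i)+\gamma(\theta_i,a_i)\bar f(x|\theta_i,a_i)$ for $|\Delta|\leq\gamma(\theta_i,a_i)$, while the scalar factor $(g(\theta_i+a_i\Delta)-g(\theta_i))/\Delta$ is uniformly bounded in $\Delta$ near $0$ by differentiability of $g$. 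This produces a fixed $\mu$-integrable majorant $\bar f_1$; the reverse direction follows symmetrically with $g$ replaced by $1/g$, which remains positive and differentiable on $\Theta^\circ$.

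Part \ref{item:firstidentifiableelaboratec} combines the previous two parts: since the stronger envelope hypothesis now applies to every $\{a_i\}$, part \ref{item:firstidentifiableelaborateb} entails that every solution of \eqref{eqn:conlininda} automatically satisfies \eqref{eqn:conlinindb}, so the system and \eqref{eqn:conlininda} share their solution sets (for $f$, and symmetrically for $\tilde f$ via the envelope equivalence). Part \ref{item:firstidentifiableelaboratea} then transfers the ``only zero solution'' property between $f$ and $\tilde f$, giving the stated equivalence. The main slightly delicate step is the envelope transfer in \ref{item:firstidentifiableelaborateb}: one must ensure that $f(x|\theta_i+a_i\Delta)$ itself possesses a uniform $\mu$-integrable majorant over $\Delta\in(0,\gamma(\theta_i,a_i)]$, which is not assumed directly but follows cleanly from integrating \eqref{eqn:dctthetaiai2}; the remainder of the argument is routine bookkeeping with the product rule and dominated convergence.
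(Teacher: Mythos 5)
Your proposal is correct and follows essentially the same route as the paper: the linear change of coefficients in part (a), integration of \eqref{eqn:conlininda} over $\Xfrak$ with dominated convergence and the normalization $\int f\,d\mu=1$ in part (b), and combining the two in part (c). The only cosmetic difference is in the envelope transfer, where you decompose the difference quotient as $\Delta g\cdot f(\theta_i+a_i\Delta)+g(\theta_i)\cdot \Delta f$ (needing the extra majorant $f(x|\theta_i+a_i\Delta)\le f(x|\theta_i)+\gamma\bar f$, which you correctly supply), whereas the paper groups it as $g(\theta_i+a_i\Delta)\cdot\Delta f+\Delta g\cdot f(\theta_i)$ so that the integrable factors are immediate; both are valid.
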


Note a similar extension as in Remark \ref{rem:weakerprattslemma1} can be made in Lemma \ref{lem:firstidentifiableelaborate} \ref{item:firstidentifiableelaborateb} and \ref{item:firstidentifiableelaboratec}.

\begin{rem}
Part \ref{item:firstidentifiableelaborateb}, or Part \ref{item:firstidentifiableelaboratec}, of Lemma  \ref{lem:firstidentifiableelaborate} shows that under some differentiability condition (i.e. $\mu(\Nc)=0$) and some regularity condition on the density $f(x|\theta)$ to ensure the exchangeability of the limit and the integral, in the definition of $(\{\theta_i\}_{i=1}^k,\Nc)$ first identifiable,  \eqref{eqn:conlinindb} adds no additional constraint and is redundant. In this case when we verify the first-order identifiability, we can simply check whether \eqref{eqn:conlininda} has only zero solution or not. 
In addition when some $\tilde{f}$ is available and is simpler than $f$, according to Part \ref{item:firstidentifiableelaboratec} of Lemma  \ref{lem:firstidentifiableelaborate}, for first-order identifiability it is sufficient to check whether \eqref{eqn:conlininda} with $f$ replaced by $\tilde{f}$ has only zero solution or not, provided that the $\mu(\Nc)=0$ for $\Nc$ corresponds to $\tilde{f}$ and \eqref{eqn:dctthetaiai2} with $f$ on the left side replaced by $\tilde{f}$ hold. \myeoe
\end{rem}

Probability kernels in the exponential families of distribution are frequently employed in practice. For these kernels, there is a remarkable equivalence among the first-order identifiability and inverse bounds for both variational distance and the Hellinger distance.

\begin{lem} \label{cor:expD1equivalent}
Suppose that the probability kernel $P_{\thetaeta}$ has a density function $f$ in the full rank exponential family, given in its canonical form $f(x|\thetaeta)=\exp(\langle \thetaeta,T(x)\rangle -A(\thetaeta))h(x)$ with $\thetaeta\in \Theta$, the natural parameter space.  Then \eqref{eqn:conlininda} has the same solutions as the system of equations \eqref{eqn:conlininda}, \eqref{eqn:conlinindb}. Moreover for a fixed $G_0 = \sum_{i=1}^{k_0} p_i^0\delta_{\thetaeta_i^0}\in \Ec_{k_0}(\Theta^\circ)$ the following five statements are equivalent:
\begin{enumerate}[label=\alph*)]
\item \label{item:expD1equivalenta}
$\lim_{r\to 0}\ \inf_{\substack{G, H\in B_{W_1}(G_0,r)\\ G\not = H }} \frac{V(\P_{G},\P_{H})}{\myD_{1}(G,H)}>0;$
\item  \label{item:expD1equivalentb}
$
\liminf_{\substack{G\overset{W_1}{\to} G_0\\ G\in \Ec_{k_0}(\Theta)}} \frac{V(P_G,P_{G_0})}{D_1(G,G_0)}>0;
$
\item \label{item:expD1equivalentc}
$
\lim_{r\to 0}\ \inf_{\substack{G, H\in B_{W_1}(G_0,r)\\ G\not = H }} \frac{h(\P_{G},\P_{H})}{\myD_{1}(G,H)}>0;
$
\item \label{item:expD1equivalentd}
$
\liminf_{\substack{G\overset{W_1}{\to} G_0\\ G\in \Ec_{k_0}(\Theta)}} \frac{h(P_G,P_{G_0})}{D_1(G,G_0)}>0;
$
\item  \label{item:expD1equivalente}
With $k,\theta_i$ replaced respectively by $k_0,\theta_i^0$, equation \eqref{eqn:conlininda}
has only the zero solution.
\end{enumerate}
\end{lem}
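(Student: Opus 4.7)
The plan is to first handle the preliminary assertion that \eqref{eqn:conlininda} has the same solutions as the full system \eqref{eqn:conlininda}--\eqref{eqn:conlinindb}, and then to close a cycle of implications among \ref{item:expD1equivalenta}--\ref{item:expD1equivalente} that rests on Lemmas~\ref{lem:firstidentifiable},~\ref{lem:nececondition}, and~\ref{lem:nececonditionh}. For the preliminary assertion, I would integrate \eqref{eqn:conlininda} against $\mu$ and exchange $\nabla_\theta$ with the integral; the exchange is justified on the interior of the natural parameter space by the standard differentiation-under-the-integral-sign result for exponential families, using integrability of $\|T(x)\| f(x|\theta^\star)$ for every $\theta^\star \in \Theta^\circ$. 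Since $\int f(x|\theta_i)\,d\mu = 1$ and $\int \nabla_\theta f(x|\theta_i)\,d\mu = 0$, this yields $\sum_i b_i = 0$ automatically, so \eqref{eqn:conlinindb} is redundant. Alternatively, Lemma~\ref{lem:firstidentifiableelaborate}\ref{item:firstidentifiableelaboratec} applies once one verifies \eqref{eqn:dctthetaiai2}, which in turn follows from the mean value theorem and the same finite-moments estimate.

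For the five-way equivalence I would establish the cycles
$$\text{\ref{item:expD1equivalente}} \Longrightarrow \text{\ref{item:expD1equivalenta}} \Longrightarrow \text{\ref{item:expD1equivalentb}} \Longrightarrow \text{\ref{item:expD1equivalente}} \quad \text{and} \quad \text{\ref{item:expD1equivalenta}} \Longrightarrow \text{\ref{item:expD1equivalentc}} \Longrightarrow \text{\ref{item:expD1equivalentd}} \Longrightarrow \text{\ref{item:expD1equivalente}}.$$
Step \ref{item:expD1equivalente}$\Rightarrow$\ref{item:expD1equivalenta} is an application of Lemma~\ref{lem:firstidentifiable}\ref{item:firstidentifiableb}, whose continuous-differentiability hypothesis is automatic since $f(x|\theta) = \exp(\langle \theta, T(x)\rangle - A(\theta))h(x)$ is analytic in $\theta$ on $\Theta^\circ$ for every $x$. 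Steps \ref{item:expD1equivalenta}$\Rightarrow$\ref{item:expD1equivalentb} and \ref{item:expD1equivalentc}$\Rightarrow$\ref{item:expD1equivalentd} are direct instances of \eqref{eqn:reltwoinversebounds} and its Hellinger analogue, while \ref{item:expD1equivalenta}$\Rightarrow$\ref{item:expD1equivalentc} uses the general pointwise bound $V \lesssim h$ recorded in the excerpt immediately after Lemma~\ref{lem:firstidentifiable}.

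The substance lies in the contrapositive implications \ref{item:expD1equivalentb}$\Rightarrow$\ref{item:expD1equivalente} and \ref{item:expD1equivalentd}$\Rightarrow$\ref{item:expD1equivalente}. If \ref{item:expD1equivalente} fails, \eqref{eqn:conlininda} admits a nonzero solution $(a_i,b_i)_{i=1}^{k_0}$ which, by the preliminary assertion, also satisfies \eqref{eqn:conlinindb}. Provided the envelope hypotheses \ref{item:nececonditionc} and \ref{item:nececonditionhd} are verified, Lemma~\ref{lem:nececondition} and Lemma~\ref{lem:nececonditionh} then yield $\liminf V(P_G,P_{G_0})/D_1(G,G_0) = 0$ and the Hellinger analogue, contradicting \ref{item:expD1equivalentb} and \ref{item:expD1equivalentd} respectively. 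The variational envelope is routine: the mean value theorem and $\nabla_\theta f(x|\theta) = (T(x) - \nabla A(\theta))f(x|\theta)$ give, for $\Delta$ small enough that $\theta_i^0 + a_i \xi \Delta$ stays in a compact $K \subset \Theta^\circ$, the bound $\|a_i\|(\|T(x)\| + \sup_K \|\nabla A\|) \sup_{\theta^\prime \in K} f(x|\theta^\prime)$, which is $\mu$-integrable by the finite-moments property of exponential families on $\Theta^\circ$.

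The main obstacle is the Hellinger envelope \ref{item:nececonditionhd}, which requires $L^2(\mu)$ rather than $L^1(\mu)$ control of the incremental ratio divided by $\sqrt{f(x|\theta_i^0)}$. Writing it as $a_i^\top(T(x) - \nabla A(\theta^\prime))f(x|\theta^\prime)/\sqrt{f(x|\theta_i^0)}$ and squaring, the key factor reduces to $\|T(x)\|^2 \exp(\langle 2\theta^\prime - \theta_i^0, T(x)\rangle)h(x)$ up to constants depending only on $K$ and $A$, i.e., proportional to $\|T(x)\|^2 f(x|2\theta^\prime - \theta_i^0)$. Since $\theta_i^0 \in \Theta^\circ$ and the natural parameter space is convex and open, one can shrink $\gamma$ so that $2\theta^\prime - \theta_i^0$ remains in $\Theta^\circ$ for every admissible $\theta^\prime$, at which point the finite-moments property again yields $\mu$-integrability uniformly in $\theta^\prime$ on a compact slice. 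This is exactly the point where the openness hypothesis $G_0 \in \Ec_{k_0}(\Theta^\circ)$ is used in an essential way.
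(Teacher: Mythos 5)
Your proposal is correct and follows essentially the same route as the paper: the preliminary assertion is handled via the redundancy argument of Lemma~\ref{lem:firstidentifiableelaborate} (the paper quotes Lemma~\ref{lem:expdct} to supply the envelope, which is the same MVT-plus-finite-MGF computation you sketch), sufficiency \ref{item:expD1equivalente}$\Rightarrow$\ref{item:expD1equivalenta} is via Lemma~\ref{lem:firstidentifiable}\,\ref{item:firstidentifiableb}, the downward implications are the same trivial inequalities, and the necessity direction closes the loop via Lemma~\ref{lem:nececonditionh} once the $L^2(\mu)$ envelope is verified using openness of the natural parameter space at $\theta_i^0\in\Theta^\circ$. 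The only difference is a small redundancy: you establish both \ref{item:expD1equivalentb}$\Rightarrow$\ref{item:expD1equivalente} (via Lemma~\ref{lem:nececondition}) and \ref{item:expD1equivalentd}$\Rightarrow$\ref{item:expD1equivalente} (via Lemma~\ref{lem:nececonditionh}), whereas the paper observes that the single chain \ref{item:expD1equivalente}$\Rightarrow$\ref{item:expD1equivalenta}$\Rightarrow$\ref{item:expD1equivalentb}$\Rightarrow$\ref{item:expD1equivalentd}$\Rightarrow$\ref{item:expD1equivalente}, together with \ref{item:expD1equivalenta}$\Rightarrow$\ref{item:expD1equivalentc}$\Rightarrow$\ref{item:expD1equivalentd}, already closes the equivalence and so only the harder Hellinger envelope needs checking.
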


Parts \ref{item:expD1equivalentc} and \ref{item:expD1equivalentd} in Lemma \ref{cor:expD1equivalent}
are not used in this paper beyond the current section, but they may be of independent interest beyond the scope of this paper. In the last result, the exponential family is in its canonical form. The same conclusions hold for the exponential family represented in general parametrizations. Recall a homeomorphism is a continuous function that has a continuous inverse.

\begin{lem}
\label{cor:expD1equivalenttheta}
Consider the probability kernel $P_\theta$ has a density function $f$ in the full rank exponential family,  $f(x|\theta)=\exp\left(\langle \eta(\theta),T(x)\rangle -B(\theta) \right)h(x)$. Suppose the map $\eta:\Theta \to \eta(\Theta)\subset \R^q$ is a homeomorphism.
Fix $G_0=\sum_{i=1}^{k_0}p_i^0\delta_{\theta_i^0}\in \Ec_{k_0}(\Theta^\circ)$. Suppose the Jacobian matrix of the function $\eta(\theta)$, denoted by $J_\eta(\theta):=(\frac{\partial \eta^{(i)}}{\partial \theta^{(j)}}(\theta))_{ij}$  exists and is full rank at $\theta_i^0$ for $i\in[k_0]$. Then with $k,\theta_i$ replaced respectively by $k_0$, $\theta_i^0$, \eqref{eqn:conlininda}  has the same solutions as the system of equations \eqref{eqn:conlininda}, \eqref{eqn:conlinindb}.
Moreover the \ref{item:expD1equivalentb}, \ref{item:expD1equivalentd} and \ref{item:expD1equivalente} as in Lemma \ref{cor:expD1equivalent} are equivalent. If in addition $J_\eta(\theta)$ exists and is continuous in a neighborhood of $\theta_i^0$ for each $i\in [k_0]$, then the equivalence relationships of all the five statements in Lemma \ref{cor:expD1equivalent} hold.   
\end{lem}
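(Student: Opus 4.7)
The plan is to reduce everything to the canonical case settled by Lemma~\ref{cor:expD1equivalent} via the reparametrization $\tilde\theta=\eta(\theta)$. Set $\tilde\theta_i^0=\eta(\theta_i^0)$, $\tilde G_0=\sum_{i=1}^{k_0}p_i^0\delta_{\tilde\theta_i^0}$, and for any $G=\sum_ip_i\delta_{\theta_i}\in\Ec_{k_0}(\Theta)$ define $\tilde G=\sum_ip_i\delta_{\eta(\theta_i)}\in\Ec_{k_0}(\eta(\Theta))$; since $\eta$ is a homeomorphism the number of distinct atoms is preserved. If $\tilde f(x|\tilde\theta)$ denotes the canonical-form density so that $f(x|\theta)=\tilde f(x|\eta(\theta))$, then $P_G=\tilde P_{\tilde G}$ as measures on $\Xfrak$, hence $V(P_G,P_H)=V(\tilde P_{\tilde G},\tilde P_{\tilde H})$ and likewise for $h$. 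Thus statements a)--d) for $G_0$ in the $\theta$-parametrization reduce to the corresponding statements for $\tilde G_0$ in the canonical parametrization, modulo comparing $D_1(G,H)$ with $D_1(\tilde G,\tilde H)$ and the corresponding $W_1$ distances.

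For the first-order equation, the chain rule gives $\nabla_\theta f(x|\theta_i^0)=J_\eta(\theta_i^0)^\top(\nabla_{\tilde\theta}\tilde f)(x|\tilde\theta_i^0)$, so $(a_1,b_1,\ldots,a_{k_0},b_{k_0})$ solves \eqref{eqn:conlininda} in the $\theta$-parametrization iff $(J_\eta(\theta_1^0)a_1,b_1,\ldots,J_\eta(\theta_{k_0}^0)a_{k_0},b_{k_0})$ solves the analogous equation in canonical form. Invertibility of each $J_\eta(\theta_i^0)$ makes this a linear bijection of solution sets, and it preserves the $b_i$'s so the constraint \eqref{eqn:conlinindb} is also preserved. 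Applying Lemma~\ref{cor:expD1equivalent} to $\tilde G_0$ shows that in the canonical parametrization \eqref{eqn:conlininda} alone has the same solutions as the combined system; pulling back through the bijection establishes the same in the $\theta$-parametrization.

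For b) and d), differentiability of $\eta$ at $\theta_i^0$ together with invertibility of $J_\eta(\theta_i^0)$ yields the local expansion $\eta(\theta)-\eta(\theta_i^0)=J_\eta(\theta_i^0)(\theta-\theta_i^0)+o(\|\theta-\theta_i^0\|)$, and hence positive constants $c_i,C_i$ depending on $G_0$ with $c_i\|\theta-\theta_i^0\|\leq\|\eta(\theta)-\eta(\theta_i^0)\|\leq C_i\|\theta-\theta_i^0\|$ on a small ball around $\theta_i^0$. When $W_1(G,G_0)$ is small the $k_0$ atoms of $G$ must cluster around those of $G_0$, so this pointwise bi-Lipschitz bound translates into $D_1(G,G_0)\asymp_{G_0}D_1(\tilde G,\tilde G_0)$ and $W_1(G,G_0)\to 0$ iff $W_1(\tilde G,\tilde G_0)\to 0$. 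The ratios $V(P_G,P_{G_0})/D_1(G,G_0)$ and $h(P_G,P_{G_0})/D_1(G,G_0)$ therefore transform by strictly positive multiplicative constants; combined with paragraph~2 and the equivalence of b), d), e) in Lemma~\ref{cor:expD1equivalent}, this yields b)$\Leftrightarrow$d)$\Leftrightarrow$e).

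For the stronger statements a) and c) the pointwise bi-Lipschitz estimate is not enough; one needs uniform bi-Lipschitz control of $\eta$ across a whole neighborhood of $\{\theta_i^0\}_{i=1}^{k_0}$. This is precisely where the extra hypothesis that $J_\eta$ is continuous near each $\theta_i^0$ enters: by the inverse function theorem $\eta$ restricts to a $C^1$-diffeomorphism on small balls $B(\theta_i^0,\rho_i)$, so $\eta$ and $\eta^{-1}$ are Lipschitz there with uniform constants. This upgrades the previous comparison to $D_1(G,H)\asymp_{G_0}D_1(\tilde G,\tilde H)$ uniformly for all $G,H\in B_{W_1}(G_0,r)$ with $r$ sufficiently small, after which the infima in a) and c) transfer directly to the canonical parametrization and Lemma~\ref{cor:expD1equivalent} closes the chain. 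The main obstacle I foresee is the bookkeeping of this bi-Lipschitz transfer of $D_1$: one must argue that in a small $W_1$-ball the optimal permutation matching atoms of $G$ to those of $G_0$ (or $H$) is forced to be the one sending each atom of $G$ into the neighborhood of a unique $\theta_i^0$, so that per-atom estimates sum cleanly—manageable for b), d), but requiring the uniformity supplied by continuity of $J_\eta$ for a), c).
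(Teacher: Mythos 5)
Your proposal is correct and follows essentially the same route as the paper: a chain-rule reduction to the canonical parametrization for the solution-set correspondence, Lemma~\ref{cor:expD1equivalent} in canonical form, and a local bi-Lipschitz comparison of $D_1$ (and $W_1$) under $\eta$ to transfer the inverse bounds, with the continuity of $J_\eta$ supplying exactly the uniform two-argument control needed for the curvature-type statements a) and c) but not for b) and d). The bi-Lipschitz transfer you derive by hand is precisely the content of the paper's Lemma~\ref{lem:invariance}, which the authors cite at that step instead of re-deriving.
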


Despite the simplicity of kernels in the exponential families, classical and/or first-order identifiability is not always guaranteed. For instance, it is well-known and can be checked easily that the mixture of Bernoulli distributions is not identifiable in the classical sense. We will study the Bernoulli kernel in the context of mixtures of product distributions in Example~\ref{exa:bernoulli}.
The following example is somewhat less well-known.

\begin{exa}[Two-parameter gamma kernel] \label{exa:gamma}

Consider the gamma distribution 
$$f(x|\alpha,\beta) = \frac{\beta^\alpha}{\Gamma(\alpha)}x^{\alpha-1}e^{-\beta x}\1ve_{(0,\infty)}(x)$$ with $\theta=(\alpha,\beta)\in \Theta:=\{(\alpha,\beta)|\alpha>0,\beta>0\}$ and the dominating measure $\mu$ is the Lebesgue measure on $\Xfrak=\R$. This is a full rank exponential family. 
For $k_0\geq 2$ define $\mathcal{G}\subset \Ec_{k_0}(\Theta^{\circ})=\Ec_{k_0}(\Theta)$ as 
$$
\mathcal{G}:= \{G\in \Ec_{k_0}(\Theta)| G = \sum_{i=1}^{k_0}p_i\delta_{\theta_i} \text{ and there exist } i\neq j \text{ such that } \theta_j-\theta_i=(1,0) \}.
$$
For any $G_0 = \sum_{i=1}^{k_0}p_i\delta_{\theta_i^0} \in \mathcal{G}$, let $i_0\not =j_0$ be such that $\theta_{j_0}^0-\theta_{i_0}^0=(1,0)$, i.e. $\alpha_{j_0}^0=\alpha_{i_0}^0+1$ and $\beta_{j_0}^0=\beta_{i_0}^0$. 
Then observing 
$$
\frac{\partial}{\partial \beta}f(x|\alpha,\beta) = \frac{\alpha}{\beta}f(x|\alpha,\beta) - \frac{\alpha}{\beta}f(x|\alpha+1,\beta),
$$
$(a_1,b_1,\ldots,a_{k_0},b_{k_0})$ with $a_{i_0}=(0,\beta_{i_0}/\alpha_{i_0})$, $b_{i_0}=-1$, $b_{j_0}=1$ and the rest to be zero is a nonzero solution of the system of equations \eqref{eqn:conlininda}, \eqref{eqn:conlinindb} with $k,\theta_i$ replaced respectively by $k_0,\theta_i^0$. Write gamma distribution in exponential family as in Lemma \ref{cor:expD1equivalenttheta} with  $\eta(\theta)=(\alpha-1,\beta)$ and $T(x)=(\ln x, -x)$. Since $\eta(\theta)$ satisfies all the conditions in Lemma \ref{cor:expD1equivalenttheta}, hence
$$
\liminf_{\substack{G\overset{W_1}{\to} G_0\\ G\in \Ec_{k_0}(\Theta)}} \frac{h(P_G,P_{G_0})}{D_1(G,G_0)}=\liminf_{\substack{G\overset{W_1}{\to} G_0\\ G\in \Ec_{k_0}(\Theta)}} \frac{V(P_G,P_{G_0})}{D_1(G,G_0)}=0.
$$
This implies that even if $V(p_G,p_{G_0})$ vanishes at a fast rate, $D_1(G,G_0)$ may not. 

Finite mixtures of gamma were investigated by~\cite{ho2016convergence}, who called $\mathcal{G}$ is a \emph{pathological} set of parameter values to highlight the effects of \emph{weak identifiability} (more precisely, the violation of first-order identifiability conditions) on the convergence behavior of model parameters when the parameter values fall in $\mathcal{G}$. 
(On the other hand, for $G_0\in \Ec_{k_0}(\Theta^\circ)\backslash \mathcal{G}$, it is shown in the proof of Proposition 5.1 (a) in \cite{ho2016convergence} that \eqref{eqn:conlininda} with $k,\theta_i$ replaced respectively by $k_0,\theta_i^0$ has only the zero solution. Their original proof works under the stringent condition $\alpha\geq 1$ for the parameter space. But multiplying their (26) by $x$ should reach the same conclusion for the general case $\alpha>0$. A direct proof is also straightforward by using Lemma \ref{lem:polexplinind} \ref{item:polexplinindb} and is similar to Example \ref{exa:gamma2}.)
Thus by Lemma \ref{cor:expD1equivalenttheta}, 
$$
\sqrt{2} \liminf_{\substack{G\overset{W_1}{\to} G_0\\ G\in \Ec_{k_0}(\Theta)}} \frac{h(P_G,P_{G_0})}{D_1(G,G_0)}\geq \liminf_{\substack{G\overset{W_1}{\to} G_0\\ G\in \Ec_{k_0}(\Theta)}} \frac{V(P_G,P_{G_0})}{D_1(G,G_0)}> 0.
$$
Notice that \ref{item:expD1equivalenta} and \ref{item:expD1equivalentc} in Lemma \ref{cor:expD1equivalent} 
also hold but are omitted here. Thus, outside of pathological set $\mathcal{G}$ the convergence rate of mixture density $p_G$ towards $p_{G_0}$ is carried over to the convergence of $G$ toward $G_0$ under $D_1$. It is the uncertainty about whether the true mixing measure $G_0$ is pathological or not that makes parameter estimation highly inefficient. Given $\n$-i.i.d. from a finite mixture of gamma distributions, where the number of components $k_0$ is given,~\cite{ho2016convergence} established minimax bound for estimating $G$ that is slower than any polynomial rate $\n^{-r}$ for any $r\geq 1$ under $W_r$ metric. 
\myeoe
\end{exa}

We end this section with several remarks to highlight the concern for parameter estimation for mixture models under weak identifiability and to set the stage for the next section.

\begin{rem} 
\label{rem:discussions}
a) It may be of interest to devise an efficient parameter estimation method (by, perhaps, a clever regularization or reparametrization technique) 
that may help to overcome the lack of first-order identifiability. We are not aware of a general way to achieve this. Absent of such methods, a promising direction for the statistician to take is to simply collect more data: not only by increasing the number $\n$ of independent observations, but also by increasing the number of repeated measurements. Finite mixtures of product distributions usually arise in this practical context:  when one deals with a highly heterogeneous data population which is made up of many latent subpopulations carrying distinct patterns, it is often possible to collect observations presumably coming from the same subpopulation, even if one is uncertain about the mixture component that a subpopulation may be assigned to.  Thus, one may aim to collect $\n$ independent sequences of $\m$ exchangeable observations, and assume that they are sampled from a finite mixture of $\m$-product distributions denoted by $P_{G,\m}$. 
Such possibilities arise naturally in practice. As a concrete example, \cite{elmore2004estimating} applied a finite mixture model with repeated measurements 
to observations from an education assessment study. In this study, each child is presented with a sequence of two dimensional line drawings of a rectangular vessel, each drawn in a different tilted position. Then each child is asked to draw on these figures how the water in the vessel would appear if the vessel were half filled with water. Thus the observations from each child can be represented as a vector of exchangeable data and the experimenter can increase the length $N$ by presenting each child with more independent random rectangular vessels. Other examples and applications include psychological study \cite{hettmansperger2000almost,cruz2004semiparametric} and topic modeling \cite{ritchie2020consistent}.

b) One natural question to ask is, how does increasing the number $\m$ of repeated measurements (i.e., the length of exchangeable sequences) help to overcome the lack of strong identifiability such as our notion of first-order identifiability. This question can be made precise in light of Lemma~\ref{lem:firstidentifiable}: whether there exist a natural number $n_1 \geq 1$ such that the following inverse bound holds for any $\m \geq n_1$
    \begin{equation}
     \liminf_{\substack{G\overset{W_1}{\to} G_0\\ G\in \Ecal_{k_0}(\Theta) }} \frac{V(P_{G,\m },P_{G_0,\m })}{\myD_1(G,G_0)}>0. \label{eqn:VND1lowbou}
    \end{equation}
    Observe that since $V(P_{G,\m },P_{G_0,\m })$ increases in $\m$ while the denominator $D_1(G,G_0)$ is fixed in $\m$, if ~\eqref{eqn:VND1lowbou} holds for some $\m = n_1$, then it also holds for all $\m\geq n_1$. 
    Moreover, what can we say about the role of $\m$ in parameter estimation in presence of such inverse bounds?
In the sequel these questions will be addressed by establishing inverse bounds for mixtures of product distributions. Such theory will also be  used to derive tight learning rates for mixing measure $G$ from a collection of exchangeable sequences of observations. \myeoe

\end{rem}

\section{Inverse bounds for mixtures of product distributions}	
\label{sec:inversebounds}

Consider a family of probability distributions $\{\P_{\thetave}\}_{\thetave\in \Theta}$ on some measurable space $(\Xfrak,\Acalcal)$ where $\thetave$ is the parameter of the family and $\Theta\subset \R^q$ is the parameter space. This yields the $\m$-product probability kernel on $(\Xfrak^{\m},\Ac^N)$, which is denoted by $\{\P_{\thetave,\m}:= \bigotimes^{\m}\P_{\thetave}\}_{\thetave\in \Theta}$.
For any $G =\sum_{i=1}^{k} p_i \delta_{\thetave_i} \in \mathcal{E}_k(\Theta)$ as mixing measure, the resulting finite mixture for the $\m$-product families is a probability measure on $(\Xfrak^\m,\Ac^N)$, namely, $\P_{G,\m} = \sum_{i=1}^kp_i \P_{\thetave_i,\m}$.

The main results of this section are stated in Theorem \ref{thm:expfam} and Theorem \ref{thm:genthm}. These theorems establish the following inverse bound under certain conditions of probability kernel family $\{P_\theta\}_{\theta \in \Theta}$ and some time that of $G_0$: for a fixed $G_0 \in \Ecal_{k_0}(\Theta^\circ)$ there holds
\begin{equation}
\liminf_{\m\to \infty}\liminf_{\substack{G\overset{W_1}{\to} G_0\\ G\in \Ecal_{k_0}(\Theta) }} \frac{V(\P_{G,\m},\P_{G_0,\m})}{\myD_{\m}(G,G_0)} > 0. \label{eqn:genthmcon}
\end{equation}
By contrast, an easy upper bound on the left side of~\eqref{eqn:genthmcon} holds generally (cf. Lemma~\ref{lem:VD1liminfuppbou}):
\begin{equation}
\sup_{\m \geq 1}\liminf_{\substack{G\overset{W_1}{\to} G_0\\ G\in \Ecal_{k_0}(\Theta) }} \frac{V(\P_{G,\m},\P_{G_0,\m})}{\myD_{\m}(G,G_0)} \leq 1/2. \label{eqn:liminfliminfVDNup}
\end{equation}
In fact, a strong inverse bound can also be established:
\begin{equation}
\liminf_{\m\to\infty} \lim_{r\to 0}\ \inf_{\substack{G, H\in B_{W_1}(G_0,r)\\ G\not = H }} \frac{V(\P_{G,\m},\P_{H,\m})}{\myD_{\m}(G,H)}>0. \label{eqn:curvatureprodbound}
\end{equation}
These inverse bounds relate to the positivity of a suitable notion of curvature on the space of mixtures of product distributions, and will be shown to have powerful consequences. It's easy to see that \eqref{eqn:curvatureprodbound} implies \eqref{eqn:genthmcon}, which in turn entails~\eqref{eqn:VND1lowbou}.

Section~\ref{sec:inverse-exponential} is devoted to establishing these bounds for $P_\theta$ belonging to exponential families of distributions.
In Section~\ref{sec:inverse-general} 
the inverse bounds are established for very general probability kernel families, where $\Xfrak$ may be an abstract space and no parametric assumption on $\P_\theta$ will be required. Instead, we appeal to a set of mild regularity conditions on the characteristic function of a push-forward measure produced by a measurable map $T$ acting on the measure space $(\Xfrak, \Acalcal,\P_\theta)$. We will see that this general theory enables the study for a very broad range of mixtures of product distributions for exchangeable sequences.

\subsection{Implications on classical and first-order identifiability}
\label{sec:implication}

Before presenting the section's main theorems, let us explicate some immediate implications of their conclusions expressed by inequalities~\eqref{eqn:genthmcon} and~\eqref{eqn:curvatureprodbound}. These inequalities contain detailed information about convergence behavior of de Finetti's mixing measure $G$ toward $G_0$, an useful application of which will be demonstrated in Section~\ref{sec:poscon}. For now, we simply highlight striking implications on the basic notions of identifiability of mixtures of distributions investigated in Section~\ref{sec:firstidentifiable}. Note that no overt assumption on classical or first-order identifiability is required in the statement of the theorems establishing~\eqref{eqn:genthmcon} or~\eqref{eqn:curvatureprodbound}. In plain terms these inequalities entail that by increasing the number $\m$ of exchangeable measurements, the resulting mixture of $\m$-product distributions becomes identifiable in both classical and first-order sense, even if it is not initially so, i.e., when $\m=1$ or small.
%

Define, for any $G_0 \in \Ec_{k_0}(\Theta^\circ)$, $ H \in \cup_{k=1}^\infty \Ec_k(\Theta)$ and $\Hc\subset \cup_{k=1}^\infty \Ec_k(\Theta)$, 

     \begin{eqnarray}
      \label{eqn:defn0n1n2}
      n_0 & :=  n_0(H, \Hc ) & :=  \min \biggr \{n \geq 1 \biggr | \forall G \in \Hc \setminus \{H\}, P_{G,n} \neq P_{H,n} \biggr \} ,
    \\
    n_1 := n_1(G_0) & := n_1(G_0,\Ec_{k_0}(\Theta)) & :=  \min \biggr \{n \geq 1 \biggr | \liminf_{\substack{G\overset{W_1}{\to} G_0 \nonumber \\ G\in \Ecal_{k_0}(\Theta) }} \frac{V(P_{G,n },P_{G_0,n })}{\myD_1(G,G_0)}>0 \biggr \},  \nonumber \\
    n_2  := n_2(G_0) & := n_2(G_0,\Ec_{k_0}(\Theta)) & :=  \min \biggr \{n \geq 1 \biggr | \lim_{r\to 0}\ \inf_{\substack{G, H\in B_{W_1}(G_0,r)\\ G\not = H }} \frac{V(\P_{G,n},\P_{H,n})}{\myD_{1}(G,H)}>0 \biggr \}.
    \nonumber
     \end{eqnarray}
$n_0$ is called minimal zero-order identifiable length (with respect to $H$ and $\Hc$), or \emph{$0$-identifiable length} for short. $n_1$ is called minimal first-order identifiable length (with respect to $G_0$ and $\Ec_{k_0}(\Theta)$), or \emph{$1$-identifiable length} for short.
Since $W_1(G,G_0)\asymp_{G_0} D_1(G,G_0)$ in small neighborhood of $G_0$ (see Lemma \ref{lem:relW1D1} \ref{item:relW1D1d}),  the two metrics can be exchangeable in the denominator of the above definition for $n_1$ and $n_2$. 
Note that $n_0$, depending on a set $\Hc$ to be specified, describes a global property of classical identifiability, a notion determined mainly by the algebraic structure of the mixture model's kernel family and its parametrization. 
(This is also known as "strict identifiability", cf., e.g., ~\cite{allman2009identifiability}).  On the other hand, both $n_1$ and $n_2$ characterize a local behavior of mixture densities $p_{G,\m}$ near a certain $p_{G_0,\m}$, a notion that relies primarily on regularity conditions of the kernel, as we shall see in what follows. When it clear from the context, we may use $n_1$ or $n_1(G_0)$ for $n_1(G_0,\Ec_{k_0}(\Theta))$ for brevity. Similar rules apply to $n_0$ and $n_2$. 

The following proposition provides the link between classical identifiability and strong notions of local identifiability provided either~\eqref{eqn:genthmcon} or~\eqref{eqn:curvatureprodbound} holds. In a nutshell, as $\m$ gets large, the two types of identifiability can be connected by the force of the central limit theorem applied to product distributions, which is one of the key ingredients in the proof of the inverse bounds.
%
%
Define two related and useful quantities: for any $G_0\in \Ec_{k_0}(\Theta^\circ)$
\begin{align}
\label{eqn:Nbar}
\underbar{\m}_1 := \underbar{\m}_1(G_0) := \min \biggr \{n \geq 1 \biggr | \inf_{N\geq n} \liminf_{\substack{G\overset{W_1}{\to} G_0\\ G\in \Ecal_{k_0}(\Theta) }} \frac{V(P_{G,\m },P_{G_0,\m })}{\myD_{\m}(G,G_0)}>0 \biggr \} \\
\label{eqn:N2bar}
\underbar{\m}_2  := \underbar{\m}_2(G_0) :=   \min \biggr \{n \geq 1 \biggr | \inf_{\m\geq n} \lim_{r\to 0}\ \inf_{\substack{G, H\in B_{W_1}(G_0,r)   \\ G\not = H }} \frac{V(\P_{G,\m},\P_{H,\m})}{\myD_{\m}(G,H)}>0 \biggr \}.
\end{align}
Note that~\eqref{eqn:genthmcon} means $\underbar{\m}_1(G_0) < \infty$, while~\eqref{eqn:curvatureprodbound} means $\underbar{\m}_2(G_0) < \infty$. The following proposition explicates the connections among $n_0$, $n_1$, $n_2$, $\underbar{N}_1$ and $\underbar{N}_2$.

\begin{prop}
    \label{lem:n0nbar}
    There hold the following statements.
    \begin{enumerate} [label=\alph*)]
    \item \label{item:n1n2} Consider any $G_0\in\Ec_{k_0}(\Theta^\circ)$, then $n_1(G_0) \leq n_2(G_0)$.
     Moreover, there exists $r:= r(G_0)>0$ such that 
     $$
     \sup_{G\in B_{W_1}(G_0,r)} n_1(G) \leq n_2(G_0).
     $$
    
    \item \label{item:n1Nbar} 
      Consider any $G_0\in\Ec_{k_0}(\Theta^\circ)$. If $\underbar{\m}_1(G_0) < \infty$, then $n_1(G_0) = \underbar{\m}_1(G_0) <\infty$. \\
     If $\underbar{\m}_2(G_0) < \infty$ then $n_2(G_0) = \underbar{\m}_2(G_0) <\infty$. In particular, the first or the second conclusion holds if \eqref{eqn:genthmcon} or  \eqref{eqn:curvatureprodbound} holds respectively.

        \item  \label{item:n1n0} 
        There holds for any subset $\Theta_1\subset \Theta^\circ$ 
     $$\sup_{G \in  \bigcup_{k\leq k_0}\Ec_{k}(\Theta_1)} n_0(G, \cup_{k\leq k_0} \Ec_k(\Theta_1) ) 
     \leq \sup_{G \in  \Ec_{2k_0}(\Theta_1)} n_1(G).$$

      \item \label{item:n1n2Nbar}
      Suppose the kernel family $P_{\theta}$ admits density $f(\cdot|\theta)$ with respect to a dominating measure $\mu$ on $\Xfrak$. Fix $G_0=\sum_{i=1}^{k_0} p_i^0 \delta_{\theta_i^0} \in \Ec_{k_0}(\Theta^\circ)$. Suppose for $\mu$-a.e. $x\in \Xfrak$, $f(x|\theta)$ as a function $\theta$ is continuously differentiable in a neighborhood of $\theta_i^0$ for each $i\in [k_0]$. Moreover, assume that condition \ref{item:nececonditionc} of Lemma~\ref{lem:nececondition} holds for any $\{a_i\}_{i=1}^{k_0}\subset \R^q$. Then, $n_2(G_0) = n_1(G_0)$.

     \item \label{item:n2n0} Suppose that \eqref{eqn:genthmcon} holds for every $G_0 \in \cup_{k\leq 2k_0} \Ec_{k}(\Theta^\circ)$. Moreover, suppose all conditions in part \ref{item:n1n2Nbar} are satisfied for every $G_0 \in \cup_{k\leq 2k_0} \Ec_{k}(\Theta^\circ)$.
     Then for any compact $\Theta_1\subset \Theta^\circ$, 
     $$
     \sup_{G \in \cup_{k\leq k_0} \Ec_{k}(\Theta_1)} n_0(G,\cup_{k\leq k_0} \Ec_k(\Theta_1) ) 
    \leq \sup_{G \in  \Ec_{2k_0}(\Theta_1)} n_1(G) <  \infty.
    $$
     
          \item \label{item:n2n0new} Suppose that \eqref{eqn:curvatureprodbound} holds for every $G_0 \in \cup_{k\leq 2k_0} \Ec_{k}(\Theta^\circ)$. Then for any compact $\Theta_1\subset \Theta^\circ$, the conclusion of part \ref{item:n2n0} holds.
     
     \end{enumerate}
\end{prop}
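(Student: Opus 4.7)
The first inequality in the conclusion follows unconditionally from part \ref{item:n1n0}, so the substance of the claim is the finiteness $\sup_{G\in \Ec_{2k_0}(\Theta_1)} n_1(G) < \infty$. The plan is a local-to-global compactness argument that parallels part \ref{item:n2n0} but replaces the route via the differentiability conditions of part \ref{item:n1n2Nbar} with a direct invocation of \eqref{eqn:curvatureprodbound}.

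For the local step, fix any $G_\star\in \bigcup_{k\le 2k_0}\Ec_k(\Theta^\circ)$. Hypothesis \eqref{eqn:curvatureprodbound} at $G_\star$ gives $\underbar{N}_2(G_\star)<\infty$, so part \ref{item:n1Nbar} yields $n_2(G_\star)=\underbar{N}_2(G_\star)<\infty$, and part \ref{item:n1n2} then supplies $r(G_\star)>0$ with $n_1(H)\le n_2(G_\star)$ for every $H\in B_{W_1}(G_\star,r(G_\star))$. For the global step, compactness of $\Theta_1$ implies that the Borel probability measures on $\Theta_1$ form a $W_1$-compact space by Prokhorov, and $\bigcup_{k\le 2k_0}\Ec_k(\Theta_1)$ is closed inside it (a $W_1$-limit of measures with at most $2k_0$ atoms has at most $2k_0$ atoms), hence $W_1$-compact. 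From the open cover $\{B_{W_1}(G_\star,r(G_\star))\}$ extract a finite subcover $B_{W_1}(G_1,r_1),\ldots,B_{W_1}(G_m,r_m)$. Any $H\in \Ec_{2k_0}(\Theta_1)$ lies in some $B_{W_1}(G_i,r_i)$, giving $n_1(H)\le \max_i n_2(G_i)<\infty$, which combined with the first inequality completes the proof.

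The main obstacle is the treatment of degeneration points. If a sequence $H_n\in \Ec_{2k_0}(\Theta_1)$ has atoms colliding, its $W_1$-limit $G_\star$ sits in $\Ec_k(\Theta_1)$ with $k<2k_0$, and the ball $B_{W_1}(G_\star,r)$ as defined in \eqref{eqn:BW1def} then lives in $\bigcup_{k'\le k}\Ec_{k'}$ and a priori omits precisely the $H_n\in \Ec_{2k_0}$ we need to cover. The resolution is to apply part \ref{item:n1n2} in the enlarged ambient $\bigcup_{k'\le 2k_0}\Ec_{k'}(\Theta)$: passing $G_\star$ to its $2k_0$-atom representation (with repeated atoms allowed) makes $D_\m$ and $n_1$ coherently defined on the $H_n$, and the proof of the local bound of part \ref{item:n1n2} carries over essentially verbatim because \eqref{eqn:curvatureprodbound} is postulated at every member of $\bigcup_{k\le 2k_0}\Ec_k(\Theta^\circ)$. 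Once this bookkeeping is arranged, the compactness argument above delivers the required uniform bound.
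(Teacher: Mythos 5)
Your top-level plan matches the paper's argument for part \ref{item:n2n0new}: finiteness of $n_2(G_0)$ for each $G_0\in\cup_{k\le 2k_0}\Ec_k(\Theta^\circ)$ via part \ref{item:n1Nbar} (now reached directly through \eqref{eqn:curvatureprodbound} rather than through part \ref{item:n1n2Nbar}), local control over $n_1$ via part \ref{item:n1n2}, and a compactness argument on $\cup_{k\le 2k_0}\Ec_k(\Theta_1)$ to make the bound uniform, combined with part \ref{item:n1n0} for the first inequality. This is exactly what the paper asserts by pointing back to the proof of part \ref{item:n2n0}.

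The genuine issue is in the "resolution" you propose for the degeneration problem, which you are right to flag. A ball $B_{W_1}(G_\star,r)$ with $G_\star\in\Ec_\ell(\Theta^\circ)$, $\ell<2k_0$, is by \eqref{eqn:BW1def} confined to $\cup_{k\le\ell}\Ec_k(\Theta)$; that set has empty interior inside the compact space $\cup_{k\le 2k_0}\Ec_k(\Theta_1)$, so the collection of such balls is not an open cover and the finite-subcover step is not licensed. Equivalently, if $H_n\in\Ec_{2k_0}(\Theta_1)$ with $n_1(H_n)\to\infty$ converges in $W_1$ to a limit $G_\star$ with fewer than $2k_0$ atoms, part \ref{item:n1n2} applied at $G_\star$ says nothing about $n_1(H_n)$. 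Your fix of "passing $G_\star$ to its $2k_0$-atom representation with repeated atoms" does not close this: by the paper's definition $\Ec_{2k_0}(\Theta)$ consists of measures with $2k_0$ \emph{distinct} atoms, so a repeated-atom object is not an element of it; $D_\m$ — and hence $n_1$, $n_2$, and $B_{W_1}$ itself — is defined only for pairs within a single stratum $\Ec_k(\Theta)$ and does not extend coherently across strata; and the hypothesis \eqref{eqn:curvatureprodbound} at $G_\star\in\Ec_\ell(\Theta^\circ)$ is, through $B_{W_1}(G_\star,r)$, a statement about pairs in $\cup_{k\le\ell}\Ec_k(\Theta)$ only. So the claim that "the proof of the local bound of part \ref{item:n1n2} carries over essentially verbatim" to the enlarged ambient class is precisely the missing step, not a triviality: what is needed is an inverse bound of the form \eqref{eqn:curvatureprodbound} relative to $\cup_{k\le 2k_0}\Ec_k(\Theta)$ near $G_\star$, and neither the stated hypotheses nor your argument supplies it.
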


\begin{rem} \label{rem:N1Ninfcon}
Part~\ref{item:n1n2} and part~\ref{item:n1Nbar} of Proposition \ref{lem:n0nbar} highlight an immediate significance of inverse bounds~\eqref{eqn:genthmcon} and~\eqref{eqn:curvatureprodbound}: they establish pointwise finiteness of 1-identifiable length $n_1(G_0)$. Moreover, under the additional condition on first-order identifiability, one can have the following strong result as an immediate consequence: 
     Consider any $G_0\in \Ec_{k_0}(\Theta^\circ)$. If \eqref{eqn:noproductlowbou} and \eqref{eqn:genthmcon} hold,
       then $n_1(G_0)=\underbar{\m}_1(G_0)=1$. If \eqref{eqn:curvaturebound} and \eqref{eqn:curvatureprodbound} hold, then $n_1(G_0)=\underbar{\m}_1(G_0) = n_2(G_0) = \underbar{\m}_2(G_0) = 1$.  \myeoe
\end{rem}

\begin{rem}
    Proposition \ref{lem:n0nbar} \ref{item:n1n0} 
    relates  $\sup_{G \in  \bigcup_{k\leq k_0}\Ec_{k}(\Theta_1)} n_0(G,\cup_{k\leq k_0} \Ec_k(\Theta_1))$, the uniform $0$-identifiable length, to the uniform $1$-identifiable length $\sup_{G\in \Ec_{2k_0}(\Theta_1)} n_1(G,\Ec_{2k_0}(\Theta_1))$.
    Combining this with parts \ref{item:n2n0} and \ref{item:n2n0new} and inverse bounds~\eqref{eqn:genthmcon} and~\eqref{eqn:curvatureprodbound}, one arrives at a rather remarkable consequence: for $\Theta_1$ a compact subset of $\Theta^\circ$, they yield the finiteness of both 0-identifiable length $n_0(G,\cup_{k\leq k_0} \Ec_k(\Theta_1))$ 
    and 1-identifiable length $n_1(G)$ \emph{uniformly} over subsets of mixing measures with finite number of support points. In particular, as long as \eqref{eqn:genthmcon} or \eqref{eqn:curvatureprodbound} (along with some regularity conditions in the former) holds for every $G_0 \in \cup_{k\leq 2k_0} \Ec_{k}(\Theta^\circ)$, then $P_{G,N}$ will be strictly identifiable and first-identifiable on $\cup_{k\leq k_0} \Ec_{k}(\Theta_1)$ for sufficiently large $N$. That is, taking product helps in making the kernel identifiable in a strong sense. As we shall see in the next subsection, \eqref{eqn:curvatureprodbound} holds for every $G_0\in \bigcup_{k=1}^{\infty}\Ec_k(\Theta^\circ)$ when $\{P_\theta\}$ belongs to full rank exponential families of distributions. This inverse bound also holds for a broad range of probability kernels beyond the exponential families. \myeoe
\end{rem}
\begin{rem}\label{rem:claytonresult}
    Concerning only 0-identifiable length $n_0$, 
    a remarkable upper bound $$\sup_{G\in \cup_{k\leq k_0}\Ec_k(\Theta)}n_0(G,\cup_{k\leq k_0} \Ec_k(\Theta))\leq 2k_0 -1$$ was established in a recent paper \cite{vandermeulen2019operator}. This bound actually applies to the nonparametric component distributions, and extends also to our parametric component distribution setting. However, in a parametric component distribution setting, the upper bound $2k_0-1$ is far from being tight (cf. Example \ref{exa:gamma2}).  \myeoe
\end{rem}

\begin{proof}[Proof of Proposition \ref{lem:n0nbar}]
 a) 
    It is sufficient to assume that $n_2=n_2(G_0) < \infty$. Then there exists $r_0 > 0$ such that
        $$
      \inf_{\substack{G, H\in B_{W_1}(G_0,r_0)\\ G\not = H }} \frac{V(\P_{G,n_2},\P_{H,n_2})}{\myD_{1}(G,H)}>0.
      $$
      Then fixing $G$ in the preceding display yields $n_1(G)\leq n_2(G_0)$ and the proof is complete since $G$ is arbitrary in $B_{W_1}(G_0,r_0)$.
    
b) 
        By the definition of $\underbar{\m}_1$,
        \begin{equation}
        \liminf_{\substack{G\overset{W_1}{\to}G_0\\ G\in \Ec_{k_0}(\Theta)}  } \frac{V(P_{G,\underbar{\m}_1}, P_{G_0,\underbar{\m}_1})}{D_{1}(G,G_0)} \geq  \liminf_{\substack{G\overset{W_1}{\to}G_0\\ G\in \Ec_{k_0}(\Theta)}  } \frac{V(P_{G,\underbar{\m}_1},P_{G_0,\underbar{\m}_1})}{D_{\underbar{\m}_1}(G,G_0)} >0, 
        \label{eqn:VmD1bouc}
        \end{equation}
        which entails that $n_1 \leq \underbar{\m}_1$. On the other hand, for any $\m \in [n_1,\underbar{\m}_1]$ we have
        \begin{multline*}
        \liminf_{\substack{G\overset{W_1}{\to} G_0\\ G\in \Ecal_{k_0}(\Theta) }} \frac{V(P_{G,\m},P_{G_0,\m})}{\myD_\m(G,G_0)}  \geq \liminf_{\substack{G\overset{W_1}{\to} G_0\\ G\in \Ecal_{k_0}(\Theta) }} \frac{1}{\sqrt{\m}}\frac{V(P_{G,n_1},P_{G_0,n_1})}{\myD_1(G,G_0)} \\ 
        \geq  \frac{1}{\sqrt{\underbar{\m}_1}} \liminf_{\substack{G\overset{W_1}{\to} G_0\\ G\in \Ecal_{k_0}(\Theta) }} \frac{V(P_{G,n_1},P_{G_0,n_1})}{\myD_1(G,G_0)} > 0,
        \end{multline*}
        which entails $\underbar{\m}_1 \leq n_1$. Thus $\underbar{\m}_1 = n_1$.
        The proof of $n_2 = \underbar{\m}_2 < \infty$ is similar.

        c) 
        If suffices to prove the case
        $\bar{n} := \sup_{G\in  \Ec_{2k_0}(\Theta_1)} n_1(G) < \infty$.
        Take any $G \in \Ec_{k}(\Theta_1)$ for $1\leq k\leq k_0$, and suppose that $n_0(G, \cup_{k\leq k_0} \Ec_k(\Theta_1) ) > \bar{n}$. Then there exists a $G_1 \in \Ec_{\bar{k}}(\Theta_1)$ for some $1\leq \bar{k} \leq k_0$ such that $P_{G,\bar{n}} = P_{G_1,\bar{n}}$ but $G_1\neq G$.
        Collecting the supporting atoms of $G$ and $G_1$, there are at most $2k_0$ of those, and denote them by $\theta^0_1,\ldots, \theta^0_{k'} \in \Theta_1$. Supplement these with a set of atoms  $\{\theta^0_{i}\}_{i=k'+1}^{2k_0}\subset \Theta_1$ to obtain a set of distinct $2k_0$ atoms denoted by 
        $\{\theta^0_{i}\}_{i=1}^{2k_0}$. Now take $G_0$ to be any discrete probability measure supported by $\theta^0_1,\ldots, \theta^0_{2k_0}$ in $\Ec_{2k_0}(\Theta_1)$. Since $P_{G,\bar{n}} = P_{G_1,\bar{n}}$, the condition of Lemma~\ref{lem:nececondition2} for $G_0$ is satisfied and thus
        $$
         \liminf_{\substack{H \overset{W_1}{\to} G_0 \\ H \in \Ec_{2k_0}(\Theta) }} \frac{V(P_{H,\bar{n}},P_{G_0,\bar{n}})}{D_1(H ,G_0)} =0.
         $$
        But this contradicts with the definition of $\bar{n}$.

d) 
        By part \ref{item:n1n2} it suffices to prove for the case $n_1=n_1(G_0)<\infty$. By Lemma \ref{lem:conditioncprod}, the product family $\prod_{j=1}^{n_1}f(x_j|\theta)$ satisfies all the conditions in Corollary \ref{cor:curvaturenoncurvequiv}. Thus by Corollary~\ref{cor:curvaturenoncurvequiv}, 
         \[\lim_{r\to 0}\ \inf_{\substack{G, H\in B_{W_1}(G_0,r)\\ G\not = H }} \frac{V(\P_{G,n_1},\P_{H,n_1})}{\myD_{1}(G,H)}>0\]
         It follows that $n_2(G_0) \leq n_1$, which implies that $n_2(G_0) = n_1(G_0)$ by part \ref{item:n1n2}. 

e) 
        By part~\ref{item:n1Nbar} and part~\ref{item:n1n2Nbar}, $n_2(G_0)<\infty$ for every $G_0 \in \cup_{k\leq 2k_0} \Ec_{k}(\Theta^\circ)$.  Associate each $G_0 \in \cup_{k\leq 2k_0} \Ec_{k}(\Theta^\circ)$ with a neighborhood $B_{W_1}(G_0,r(G_0))$ as in part \ref{item:n1n2} such that its conclusion holds. Here we want to emphasize that by definition of $B_{W_1}$ in \eqref{eqn:BW1def}, $B_{W_1}(G_0,r(G_0))\subset \cup_{k\leq \ell} \Ec_{k}(\Theta)$ when $G_0\in \Ec_{\ell}(\Theta^\circ)$. Due to the fact that $\cup_{k\leq 2k_0}\Ec_{k}(\Theta_1)$ is compact and part \ref{item:n1n2}, we deduce that $n_1(G)$ is uniformly bounded for $G\in \cup_{k\leq 2k_0}\Ec_{k}(\Theta_1)$. Combining this with part~\ref{item:n1n0} we conclude the proof.
                       
f) 
        By part~\ref{item:n1Nbar} $n_2(G_0)<\infty$ for every $G_0 \in \cup_{k\leq 2k_0} \Ec_{k}(\Theta^\circ)$. The remainder of the argument is the same as part~\ref{item:n2n0}.
%
\end{proof}
        
We can further unpack the double infimum limits in its expression of~\eqref{eqn:genthmcon} to develop results useful for subsequent convergence rate analysis in Section~\ref{sec:poscon}.
First,  it is simple to show that the limiting argument for $\m$ can be completely removed when $\m$ is suitably bounded. Denote by $C(\cdot)$ or $c(\cdot)$  a positive finite constant depending only on its parameters and the probability kernel $\{P_\theta\}_{\theta\in \Theta}$. In the presentation of inequality bounds and proofs,  they may differ from line to line.

\begin{lem} \label{cor:identifiabilityallm} 
    Fix $G_0\in\Ec_{k_0}(\Theta^\circ)$.
    Suppose \eqref{eqn:genthmcon} holds.
    Then for any $\m_0 \geq n_1(G_0)$, there exist $c(G_0,\m_0)$ and $C(G_0)$ such that for any $ G\in\Ec_{k_0}(\Theta)$ satisfying $W_1(G,G_0)<c(G_0,\m_0)$,
    $$
    V(P_{G,\m},P_{G_0,\m})\geq C(G_0)D_{\m}(G,G_0), \quad  \forall \m \in [n_1(G_0),\m_0].
    $$
\end{lem}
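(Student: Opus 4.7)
The plan is to unpack the double infimum limit in assumption \eqref{eqn:genthmcon} into a uniform statement over $N$ in the finite range $[n_1(G_0),\m_0]$, and then convert the resulting liminf into a neighborhood-based lower bound via a finite-intersection argument. The main step is to observe that \eqref{eqn:genthmcon} is quite a bit stronger than pointwise positivity of the inner liminf: by Proposition \ref{lem:n0nbar}\ref{item:n1Nbar}, \eqref{eqn:genthmcon} implies that $n_1(G_0)=\underbar{\m}_1(G_0)<\infty$. Hence, directly from the definition \eqref{eqn:Nbar} of $\underbar{\m}_1(G_0)$, there exists a constant $c_0:=c_0(G_0)>0$ (independent of $N$) such that
\[
\liminf_{\substack{G\overset{W_1}{\to} G_0\\ G\in \Ecal_{k_0}(\Theta)}} \frac{V(P_{G,N},P_{G_0,N})}{D_N(G,G_0)} \;\geq\; c_0, \qquad \forall\,N\geq n_1(G_0).
\]

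Next I would translate this uniform-in-$N$ liminf into a quantitative neighborhood statement for each fixed $N$. By the definition of liminf, for each $N\in [n_1(G_0),\m_0]$ there exists $r_N:=r_N(G_0)>0$ such that for every $G\in \Ec_{k_0}(\Theta)$ with $0<W_1(G,G_0)<r_N$,
\[
\frac{V(P_{G,N},P_{G_0,N})}{D_N(G,G_0)} \;\geq\; \tfrac{c_0}{2}.
\]
Since the index set $[n_1(G_0),\m_0]\cap \mathbb{N}$ is finite, I set $c(G_0,\m_0):=\min_{N\in[n_1(G_0),\m_0]} r_N>0$. For any $G$ with $W_1(G,G_0)<c(G_0,\m_0)$ and any $N\in [n_1(G_0),\m_0]$, the displayed inequality yields
\[
V(P_{G,N},P_{G_0,N})\;\geq\; \tfrac{c_0}{2}\,D_N(G,G_0).
\]
Setting $C(G_0):=c_0/2$ (note the crucial point: $c_0$ came from $\underbar{\m}_1(G_0)$ and depends only on $G_0$, not on $\m_0$) completes the proof. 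The edge case $G=G_0$ is trivial since both sides vanish.

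There is no substantive obstacle here, since all the work is packaged into the hypothesis \eqref{eqn:genthmcon} and into Proposition \ref{lem:n0nbar}\ref{item:n1Nbar}; the one subtlety worth flagging is making sure that the constant $C(G_0)$ does not pick up a factor depending on $\m_0$. This is precisely why we invoke $\underbar{\m}_1(G_0)$ rather than trying to derive the conclusion for $N\in [n_1(G_0),\m_0]$ from a bound at $N=n_1(G_0)$ alone (the latter, via data-processing $V(P_{G,N},P_{G_0,N})\geq V(P_{G,n_1},P_{G_0,n_1})$ together with $D_N\leq \sqrt{N/n_1}\,D_{n_1}$, would produce an extraneous $1/\sqrt{\m_0}$ factor in $C$).
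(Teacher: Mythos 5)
Your argument is correct and matches the paper's proof essentially line for line: both invoke Proposition \ref{lem:n0nbar}\ref{item:n1Nbar} to identify $n_1(G_0)=\underbar{\m}_1(G_0)$, extract the $\m$-independent constant from the $\inf_{\m\geq \underbar{\m}_1}\liminf$ in \eqref{eqn:Nbar}, pick a radius for each $\m$ in the finite window from the definition of $\liminf$, and take the minimum radius. Your closing remark about why bootstrapping from $N=n_1$ alone would spoil the $\m_0$-independence of $C(G_0)$ is exactly the subtlety the lemma is designed around.
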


A key feature of the above claim is that the radius $c(G_0,\m_0)$ of the local $W_1$ ball centered at $G_0$ over which the inverse bound holds depends on $\m_0$, but the multiple constant $C(G_0)$ does not. Next, given additional conditions, most notably the compactness on the space of mixing measures, we may remove completely the second limiting argument involving $G$. In other words, we may extend the domain of $G$ on which the inverse bound of the form $V \gtrsim W_1 \gtrsim D_1$ continues to hold, where the multiple constants are suppressed here. 
    
    \begin{lem} \label{cor:VlowbouW1} 
    Fix $G_0\in\Ec_{k_0}(\Theta^\circ)$. Consider any  $\Theta_1$ a compact subset of $\Theta$ containing the supporting atoms of $G_0$.
    Suppose the map $\theta\mapsto P_\theta$ from $(\Theta_1,\|\cdot\|_2)$ to $(\{P_\theta\}_{\theta\in \Theta_1}, V(\cdot,\cdot))$ is continuous. 
    Then for any $G\in \bigcup_{k=1}^{k_0}\Ec_{k}(\Theta_1)$ and any $\m\geq n_1(G_0)\vee  n_0(G_0,\cup_{k\leq k_0}\Ec_k(\Theta_1))$, provided $n_1(G_0)\vee n_0(G_0,\cup_{k\leq k_0}\Ec_k(\Theta_1)) <\infty$,
    $$
     V(P_{G,\m},P_{G_0,\m}) \geq  C(G_0,\Theta_1) W_1(G,G_0).
     $$
     \end{lem}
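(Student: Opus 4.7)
The plan is to combine the local inverse bound of Lemma~\ref{cor:identifiabilityallm} with a compactness argument over $\bigcup_{k\leq k_0}\Ec_k(\Theta_1)$. Write $\m^* := n_1(G_0)\vee n_0(G_0,\bigcup_{k\leq k_0}\Ec_k(\Theta_1))$, which is finite by hypothesis. First I would observe that it suffices to prove the claimed inequality at the single value $\m=\m^*$: for every $\m\geq \m^*$ the measure $P_{G,\m^*}$ is the pushforward of $P_{G,\m}$ under the projection onto the first $\m^*$ coordinates (since each $P_\theta$ is a probability measure, so the extra coordinates integrate out term-by-term in the mixture), and the data-processing inequality for total variation gives $V(P_{G,\m},P_{G_0,\m})\geq V(P_{G,\m^*},P_{G_0,\m^*})$; hence the bound at $\m^*$ propagates to every larger $\m$ with the same multiplicative constant.

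Next I proceed by contradiction: assume there is a sequence $G_\ell\in \bigcup_{k\leq k_0}\Ec_k(\Theta_1)\setminus\{G_0\}$ with $V(P_{G_\ell,\m^*},P_{G_0,\m^*})/W_1(G_\ell,G_0)\to 0$. Since $\Theta_1$ is compact, $\bigcup_{k\leq k_0}\Ec_k(\Theta_1)$ is compact under $W_1$, so after passing to a subsequence $G_\ell\to G^*$ in $W_1$ for some $G^*\in\bigcup_{k\leq k_0}\Ec_k(\Theta_1)$. The Hellinger (hence variational) continuity assumption on $\theta\mapsto P_\theta$ makes $\theta\mapsto \int f\, dP_{\theta,\m^*}$ a bounded continuous function on $\Theta_1$ for every bounded continuous $f:\Xfrak^{\m^*}\to\R$, which together with $W_1$-convergence of $G_\ell$ to $G^*$ yields weak convergence $P_{G_\ell,\m^*}\Rightarrow P_{G^*,\m^*}$. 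Lower semi-continuity of $V$ under weak convergence then gives $V(P_{G^*,\m^*},P_{G_0,\m^*})\leq \liminf_\ell V(P_{G_\ell,\m^*},P_{G_0,\m^*})$.

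Case~1 ($G^*\neq G_0$): then $W_1(G_\ell,G_0)\to W_1(G^*,G_0)>0$, so $V(P_{G_\ell,\m^*},P_{G_0,\m^*})\to 0$, forcing $V(P_{G^*,\m^*},P_{G_0,\m^*})=0$, i.e., $P_{G^*,\m^*}=P_{G_0,\m^*}$; this contradicts $\m^*\geq n_0(G_0,\bigcup_{k\leq k_0}\Ec_k(\Theta_1))$. Case~2 ($G^*=G_0$): a pigeonhole argument, using that the $k_0$ atoms of $G_0$ are separated by some $3\rho(G_0)>0$ and carry masses $\geq \min_i p_i^0>0$, shows that any $G_\ell$ with strictly fewer than $k_0$ atoms must satisfy $W_1(G_\ell,G_0)\geq \rho(G_0)\min_i p_i^0$, so for all sufficiently large $\ell$ we have $G_\ell\in \Ec_{k_0}(\Theta_1)$ with $W_1(G_\ell,G_0)\to 0$. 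Lemma~\ref{cor:identifiabilityallm} applied with $\m_0=\m^*$ then yields $V(P_{G_\ell,\m^*},P_{G_0,\m^*})\geq C(G_0)\,D_{\m^*}(G_\ell,G_0)\geq C(G_0)\,D_1(G_\ell,G_0)$, and the local equivalence $\liminf D_1(G_\ell,G_0)/W_1(G_\ell,G_0)>0$ from Lemma~\ref{lem:relW1D1}\ref{item:relW1D1d} contradicts the assumption that the ratio vanishes.

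The main obstacle I anticipate is the continuity step used in Case~1: one has to be careful that $W_1$-convergence of $G_\ell$ to a limit $G^*$ possibly having strictly fewer atoms (atoms of $G_\ell$ may coalesce, or mass may leak into a previously unpopulated atom) still implies weak convergence of the induced mixtures of $\m^*$-fold products $P_{G_\ell,\m^*}$. This is where the Hellinger continuity hypothesis on $\theta\mapsto P_\theta$ is genuinely needed, and where extending scalar-testing from $\Xfrak$ to the product space $\Xfrak^{\m^*}$ must be done cleanly. Once this continuity step is in place, the rest is a routine combination of $W_1$-compactness of $\bigcup_{k\leq k_0}\Ec_k(\Theta_1)$, the local inverse bound in Lemma~\ref{cor:identifiabilityallm}, and the local $D_1$--$W_1$ comparison in Lemma~\ref{lem:relW1D1}\ref{item:relW1D1d}.
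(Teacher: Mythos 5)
Your overall strategy agrees with the paper's: reduce to the single value $\m^* = n_1(G_0)\vee n_0(G_0,\cdot)$ via the monotonicity $V(P_{G,\m},P_{G_0,\m})\geq V(P_{G,\m^*},P_{G_0,\m^*})$ (the paper defers this to the end, you do it up front, but it is the same step), and then exploit the $W_1$-compactness of $\bigcup_{k\leq k_0}\Ec_k(\Theta_1)$ to patch a local inverse bound near $G_0$ against a global positivity property away from $G_0$ coming from $\m^*\geq n_0$. The routes diverge in how the compactness is deployed: the paper observes directly that $G\mapsto V(P_{G,\m^*},P_{G_0,\m^*})$ is $W_1$-continuous (via Lemma~\ref{lem:Vupperbou}) and strictly positive on $\bigcup_{k\leq k_0}\Ec_k(\Theta_1)\setminus B_{W_1}(G_0,R)$, hence attains a positive minimum; you instead argue by contradiction, extract a $W_1$-convergent subsequence, and split on whether the limit equals $G_0$, using weak convergence of the mixtures and lower semi-continuity of $V$ under weak convergence (an $f$-divergence fact). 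Both work; the paper's version is a little shorter because it avoids the case analysis. Your pigeonhole observation in Case~2 (that $G_\ell$ must eventually have exactly $k_0$ atoms) is needed and correctly argued.

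There is one genuine gap. In Case~2 you invoke Lemma~\ref{cor:identifiabilityallm} to get $V(P_{G_\ell,\m^*},P_{G_0,\m^*})\geq C(G_0)D_{\m^*}(G_\ell,G_0)$, but Lemma~\ref{cor:identifiabilityallm} has~\eqref{eqn:genthmcon} as a standing hypothesis (its proof relies on $n_1(G_0)=\underbar{\m}_1(G_0)$ via Proposition~\ref{lem:n0nbar}\ref{item:n1Nbar}, which presupposes~\eqref{eqn:genthmcon}), and~\eqref{eqn:genthmcon} is \emph{not} assumed in Lemma~\ref{cor:VlowbouW1}; the only hypotheses are the Hellinger continuity of $\theta\mapsto P_\theta$ and $n_1(G_0)\vee n_0(\cdot)<\infty$. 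You are therefore using a stronger tool than the lemma's hypotheses license. The fix is small, and it is exactly what the paper does: you do not need the $D_{\m^*}$ bound, only the $D_1$ bound, and for that the \emph{definition} of $n_1(G_0)$ suffices. Since $n_1(G_0)\leq\m^*$, the definition gives
\[
\liminf_{\substack{G\overset{W_1}{\to}G_0\\ G\in\Ec_{k_0}(\Theta)}}\frac{V(P_{G,n_1(G_0)},P_{G_0,n_1(G_0)})}{D_1(G,G_0)}>0,
\]
and the monotonicity $V(P_{G,\m^*},P_{G_0,\m^*})\geq V(P_{G,n_1(G_0)},P_{G_0,n_1(G_0)})$ upgrades the numerator to $\m^*$; combine with the local comparison $D_1\asymp_{G_0}W_1$ from Lemma~\ref{lem:relW1D1}\ref{item:relW1D1d} (or the one-sided bound from part~\ref{item:relW1D1b}, which the paper uses) to obtain the desired contradiction in Case~2 without appealing to~\eqref{eqn:genthmcon}.
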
 

Finally, a simple and useful fact which allows one to transfer an inverse bound for one kernel family $P_\theta$ to another kernel family by means of homeomorphic transformation in the parameter space.
	If $g(\theta)=\eta$ for some homeomorphic function $g: \Theta\to \Xi\subset \R^q$, for any $G=\sum_{i=1}^k p_i \delta_{\theta_i}\in\Ec_k(\Theta)$, denote $G^{\eta}=\sum_{i=1}^k p_i \delta_{g(\theta_i)}\in \Ec_k(\Xi)$ . Given a probability kernel family $\{P_\theta\}_{\theta\in \Theta}$, under the new parameter $\eta$ define
    $$
        \tilde{\P}_\eta = \P_{g^{-1}(\eta)}, \quad \forall \eta\in \Xi.
    $$
    Let $G^{\eta}$ also denote a generic element in $\Ec_{k_0}(\Xi)$, and $\tilde{\P}_{G^{\eta},\m}$ be defined similarly as $P_{G,\m}$.

    \begin{lem}[Invariance under homeomorphic parametrization with local invertible Jacobian]\label{lem:invariance} 
	Suppose $g$ is a homeomorphism. For $G_0=\sum_{i=1}^{k_0}p_i^0\delta_{\theta_i^0}\in \Ec_{k_0}(\Theta^\circ)$,
	suppose the Jacobian matrix of the function $g(\theta)$, denoted by $J_g(\theta):=(\frac{\partial g^{(i)}}{\partial \theta^{(j)}}(\theta))_{ij}$ exists and is full rank at $\theta_i^0$ for $i\in[k_0]$. Then $\forall \m$
	\begin{equation}
	 \liminf_{\substack{G^{\eta}\overset{W_1}{\to} G_0^\eta\\ G^{\eta}\in \Ecal_{k_0}(\Xi) }} \frac{V(\tilde{\P}_{G^{\eta},\m},\tilde{\P}_{G_0^\eta,\m})}{\myD_{\m}(G^{\eta},G_0^\eta)} \overset{G_0}{\asymp}   \liminf_{\substack{G\overset{W_1}{\to} G_0\\ G\in \Ecal_{k_0}(\Theta) }} \frac{V(\P_{G,\m},\P_{G_0,\m})}{\myD_{\m}(G,G_0)}. \label{eqn:invarianceprod}
	\end{equation}
	Moreover, if in addition $J_g(\theta)$ exists and is continuous in a neighborhood of $\theta_i^0$ for each $i\in [k_0]$, then $\forall N$
	$$
	\lim_{r\to 0}\ \inf_{\substack{G^{\eta}, H^{\eta}\in B_{W_1}(G_0^{\eta},r)\\ G^\eta \not = H^\eta }} \frac{V(\tilde{\P}_{G^\eta,N},\tilde{\P}_{H^\eta,N})}{\myD_{1}(G^\eta,H^\eta)} \overset{G_0}{\asymp} \lim_{r\to 0}\ \inf_{\substack{G, H\in B_{W_1}(G_0,r)\\ G\not = H }} \frac{V(\P_{G,N},\P_{H,N})}{\myD_{1}(G,H)}.
	$$

\end{lem}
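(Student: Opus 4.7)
\textbf{Proof proposal for Lemma \ref{lem:invariance}.} The plan rests on a clean separation between the data side and the parameter side. On the data side, the measures $\tilde{P}_{G^{\eta},N}$ and $P_{G,N}$ on $(\mathfrak{X}^N,\mathcal{A}^N)$ are literally the same probability measure whenever $G^{\eta}$ and $G$ correspond via $g$: by definition $\tilde{P}_{g(\theta)} = P_{\theta}$, and the construction of a mixture of $N$-product distributions depends on the pair (atoms, weights) only through the atoms' image under $\theta \mapsto P_{\theta}$. Consequently $V(\tilde{P}_{G^{\eta},N},\tilde{P}_{G_0^{\eta},N}) = V(P_{G,N},P_{G_0,N})$ for every $N$ and every corresponding pair $G,G_0$, and similarly for $H$. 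So the entire content of the lemma reduces to comparing the denominators $D_N(G^{\eta},G_0^{\eta})$ and $D_N(G,G_0)$ (respectively $D_1(G^{\eta},H^{\eta})$ and $D_1(G,H)$) up to multiplicative constants depending on $G_0$.

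For the first claim, I would restrict to $G$ in a $W_1$-ball around $G_0$ small enough that by Lemma \ref{lem:relW1D1}\ref{item:relW1D1d} we may write $G = \sum_{i=1}^{k_0} p_i \delta_{\theta_i}$ with each $\theta_i$ close to $\theta_i^0$ and $p_i$ close to $p_i^0$, so that the permutation achieving the minimum in $D_N(G,G_0)$ is the identity. A first-order Taylor expansion at each $\theta_i^0$, combined with the assumption that $J_g(\theta_i^0)$ exists and is full rank, gives
\[
    \|g(\theta_i) - g(\theta_i^0)\|_2 = \|J_g(\theta_i^0)(\theta_i - \theta_i^0)\|_2 + o(\|\theta_i - \theta_i^0\|_2),
\]
and invertibility of $J_g(\theta_i^0)$ yields $\|g(\theta_i) - g(\theta_i^0)\|_2 \asymp_{G_0} \|\theta_i - \theta_i^0\|_2$ once $\theta_i$ is sufficiently close to $\theta_i^0$. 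Summing over $i$ and observing that the mixing-probability terms are unchanged, one obtains $D_N(G^{\eta},G_0^{\eta}) \asymp_{G_0} D_N(G,G_0)$, uniformly in $N$ (the factor $\sqrt{N}$ multiplies both sides equally). Since $g$ is a homeomorphism, $G \overset{W_1}{\to} G_0$ is equivalent to $G^{\eta} \overset{W_1}{\to} G_0^{\eta}$, so the two liminf quantities in \eqref{eqn:invarianceprod} differ by a bounded factor, proving the first half.

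For the second claim we need the comparison to hold uniformly for \emph{two} arguments $G,H$ ranging over a small $W_1$-ball around $G_0$, and neither of them is pinned at $G_0$. This is where the stronger hypothesis (existence and continuity of $J_g$ in a neighborhood of each $\theta_i^0$) enters: by the inverse function theorem $g$ is a bi-Lipschitz diffeomorphism on a small neighborhood $U_i$ of each $\theta_i^0$, with bi-Lipschitz constants controlled by $\|J_g(\theta_i^0)^{\pm 1}\|$ and the modulus of continuity of $J_g$ on $U_i$. For $r>0$ small enough, every $G,H \in B_{W_1}(G_0,r)$ has all atoms lying in $\bigcup_i U_i$, with a canonical assignment of atoms to the atoms of $G_0$ determined by proximity; the optimal permutation realising $D_1(G,H)$ is then forced to respect this assignment, for otherwise $D_1(G,H)$ would exceed a quantity of order $\min_{i\neq j}\|\theta_i^0 - \theta_j^0\|_2$, contradicting $D_1(G,H) \leq D_1(G,G_0) + D_1(G_0,H) \lesssim_{G_0} r$ (using Lemma \ref{lem:relW1D1}\ref{item:relW1D1d}). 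The same reasoning applies to $D_1(G^{\eta},H^{\eta})$. Applying the bi-Lipschitz bound on each matched pair $(\theta_i,\phi_i)$ then yields $D_1(G^{\eta},H^{\eta}) \asymp_{G_0} D_1(G,H)$, and the second claim follows.

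The main obstacle is the permutation-matching argument in the second part: one must verify that for $G,H$ in a sufficiently small $W_1$-ball around $G_0$, the infimum in the definition of $D_1(G,H)$ (and of $D_1(G^{\eta},H^{\eta})$) is attained by the \emph{same} permutation, namely the one induced by the common assignment of atoms to $G_0$'s atoms. Once this is locked in, the computation is a routine application of the bi-Lipschitz property of $g$; without it, one cannot directly compare the two minimisations. The rest is a careful choice of the radius $r = r(G_0)$ in terms of the separation of $G_0$'s atoms and the bi-Lipschitz constants.
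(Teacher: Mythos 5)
Your proposal is correct and takes essentially the same route as the paper's proof: both rest on (i) the identity $V(\tilde{P}_{G^{\eta},N},\tilde{P}_{G_0^{\eta},N})=V(P_{G,N},P_{G_0,N})$ for corresponding mixing measures, (ii) a local two-sided comparison $\|g(\theta)-g(\theta')\|_2 \asymp_{G_0}\|\theta-\theta'\|_2$ derived from differentiability and full rank (resp. continuity) of $J_g$ near each $\theta_i^0$, and (iii) translating that into $D_N(G^{\eta},G_0^{\eta})\asymp_{G_0}D_N(G,G_0)$ with the $\sqrt{N}$ factor cancelling. The paper's proof is terser — it writes out the explicit two-sided bound $(2\|J_g(\theta_i^0)^{-1}\|_2)^{-1}\|\theta-\theta_i^0\|_2 \le \|g(\theta)-g(\theta_i^0)\|_2 \le 2\|J_g(\theta_i^0)\|_2\|\theta-\theta_i^0\|_2$, deduces the first display, and dismisses the second with "follows similarly" — whereas you spell out the optimal-permutation matching for the pair $(G,H)$ in the curvature-type bound, which is the genuine extra care needed for that case; note that you could reach the required two-point bi-Lipschitz estimate from Lemma \ref{lem:taylortwisted}\ref{item:taylortwisteda} directly without invoking the inverse function theorem, since only the estimate (not the existence of a local inverse of $g$) is used.
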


Lemma \ref{lem:invariance} shows that if an inverse bound \eqref{eqn:genthmcon} or \eqref{eqn:curvatureprodbound} under a particular parametrization is established, then the same inverse bound holds for all other parametrizations that are homeomorphic and that have local invertible Jacobian. This allows one to choose the most convenient parametrization; for instance, one may choose the canonical form for an exponential family or another more convenient parametrization, like the mean parametrization. 

\subsection{Probability kernels in regular exponential family}
\label{sec:inverse-exponential}

We now present inverse bounds for the mixture of products of exponential family distributions.
Suppose that $\{P_\theta\}_{\theta \in \Theta}$ is a full rank exponential family of distributions on $\Xfrak$. Adopting the notational convention for canonical parameters of exponential families, we assume $P_\theta$ admits a density function with respect to a dominating measure $\mu$, namely $f(x|\thetaeta)$ for $\thetaeta \in \Theta$.

\begin{thm} \label{thm:expfam} 
Suppose that the probability kernel $\{f(x|\thetaeta)\}_{\thetaeta \in \Theta}$ is in a full rank exponential family of distributions in canonical form as in Lemma \ref{cor:expD1equivalent}. For any $G_0\in \Ec_{k_0}(\Theta^\circ)$, \eqref{eqn:genthmcon} and \eqref{eqn:curvatureprodbound} hold.
\end{thm}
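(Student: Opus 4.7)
My plan is a proof by contradiction that leverages the positive definiteness of the Fisher information in a full-rank exponential family via the central limit theorem. Suppose \eqref{eqn:genthmcon} fails; by diagonal extraction there exist $N_\ell \to \infty$ and $G_\ell = \sum_{i=1}^{k_0} p_i^\ell \delta_{\eta_i^\ell} \in \Ec_{k_0}(\Theta) \setminus \{G_0\}$ with $G_\ell \overset{W_1}{\to} G_0$, relabelled so that $\eta_i^\ell \to \eta_i^0$ and $p_i^\ell \to p_i^0$, such that $V(P_{G_\ell,N_\ell},P_{G_0,N_\ell})/\rho_\ell \to 0$, where $\rho_\ell := D_{N_\ell}(G_\ell, G_0)$. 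Choosing $G_\ell$ with $W_1(G_\ell, G_0) \ll 1/\sqrt{N_\ell}$ along the inner liminf (and trimming by an intermediate-value adjustment on the affine path from $G_0$ to $G_\ell$ if necessary) one may arrange $\rho_\ell \to 0$ while keeping $\rho_\ell$ no smaller than any prescribed polynomial in $1/N_\ell$. Passing to a subsequence, the normalized quantities $\bar{a}_i^\ell := \sqrt{N_\ell}(\eta_i^\ell - \eta_i^0)/\rho_\ell$ and $\bar{b}_i^\ell := (p_i^\ell - p_i^0)/\rho_\ell$ converge to $a_i \in \R^q$ and $b_i \in \R$ satisfying $\sum_i (\|a_i\|_2 + |b_i|) = 1$ and $\sum_i b_i = 0$, so that not all of $a_i, b_i$ vanish.

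Next, fix an arbitrary $\alpha \in [k_0]$ and change measure to $P_{\eta_\alpha^0}^{\otimes N_\ell}$. Writing $L_i^\ell := \prod_n f(X_n|\eta_i^\ell)/f(X_n|\eta_\alpha^0)$ and $L_i^{0,\ell}$ for the analogue with $\eta_i^0$ in place of $\eta_i^\ell$, one has
\[
\frac{2 V(P_{G_\ell,N_\ell},P_{G_0,N_\ell})}{\rho_\ell} = \E_{\eta_\alpha^0}^{\otimes N_\ell} |\Psi_\ell(Z_\ell)|,\qquad \Psi_\ell := \frac{1}{\rho_\ell}\sum_i \bigl(p_i^\ell L_i^\ell - p_i^0 L_i^{0,\ell}\bigr),
\]
where $Z_\ell := N_\ell^{-1/2} \sum_n (T(X_n) - \nabla A(\eta_\alpha^0))$. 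By the classical CLT, $Z_\ell \Rightarrow Z \sim \mathcal{N}(0, \Sigma_\alpha)$ with $\Sigma_\alpha := \nabla^2 A(\eta_\alpha^0)$ strictly positive definite by the full-rank hypothesis, and the exponential-family identity $\ln L_i^\ell = \sqrt{N_\ell} \langle \eta_i^\ell - \eta_\alpha^0, Z_\ell\rangle - N_\ell\, d_A(\eta_i^\ell, \eta_\alpha^0)$, with $d_A$ the Bregman divergence of the log-partition function $A$, expresses $\Psi_\ell$ as an explicit function of $Z_\ell$.

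For the $i = \alpha$ term, a second-order Taylor expansion of $A$ at $\eta_\alpha^0$ combined with $\sqrt{N_\ell}(\eta_\alpha^\ell - \eta_\alpha^0) = \rho_\ell\, \bar{a}_\alpha^\ell$ yields $L_\alpha^\ell(z) = 1 + \rho_\ell \langle \bar{a}_\alpha^\ell, z\rangle + O(\rho_\ell^2(1+\|z\|_2^2))$ locally uniformly in $z$, whence $(p_\alpha^\ell L_\alpha^\ell - p_\alpha^0)/\rho_\ell \to p_\alpha^0 \langle a_\alpha, z\rangle + b_\alpha$. For $i \neq \alpha$, the Bregman term $N_\ell\, d_A(\eta_i^\ell, \eta_\alpha^0) = \Theta(N_\ell)$ (since $d_A(\eta_i^0, \eta_\alpha^0) > 0$ by strict convexity of $A$) swamps the linear term $O(\sqrt{N_\ell})$, so $L_i^\ell$ and $L_i^{0,\ell}$ vanish super-polynomially, and the polynomial lower bound on $\rho_\ell$ forces $(p_i^\ell L_i^\ell - p_i^0 L_i^{0,\ell})/\rho_\ell \to 0$ uniformly on compacts. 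By the generalized continuous mapping theorem, $\Psi_\ell(Z_\ell) \Rightarrow \Psi(Z)$ with $\Psi(z) := p_\alpha^0 \langle a_\alpha, z\rangle + b_\alpha$. Applying Skorohod representation and Fatou's lemma yields $\E|\Psi(Z)| \leq \liminf_\ell \E|\Psi_\ell(Z_\ell)| = 0$, so $\Psi(Z) = 0$ a.s.; non-degeneracy of $Z$ and positivity of $p_\alpha^0$ force $a_\alpha = 0$ and hence $b_\alpha = 0$. As $\alpha$ was arbitrary, all $a_i, b_i$ vanish, contradicting $\sum_i(\|a_i\|_2 + |b_i|) = 1$; this establishes \eqref{eqn:genthmcon}.

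For the strong inverse bound \eqref{eqn:curvatureprodbound} I would appeal to Proposition \ref{lem:n0nbar}\ref{item:n1n2Nbar}: the density $f(x|\eta) = \exp(\langle \eta, T(x)\rangle - A(\eta))h(x)$ is real-analytic in $\eta$ on $\Theta^\circ$, and the local uniform boundedness of the moment generating function of $T$ near each $\eta_i^0$ supplies the envelope required by condition \ref{item:nececonditionc} of Lemma \ref{lem:nececondition}; hence $n_2(G_0) = n_1(G_0) < \infty$, which is \eqref{eqn:curvatureprodbound}. The main technical obstacle throughout is the twin calibration of $\rho_\ell$: it must tend to zero so that the Taylor expansion at $\eta_\alpha^0$ linearizes into the Gaussian polynomial $\Psi$, yet it must not decay so quickly that the super-exponentially small likelihood ratios with $i \neq \alpha$ survive division by $\rho_\ell$. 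Certifying that the liminf-defining sequence can actually be chosen within this admissible range is the most delicate bookkeeping of the argument.
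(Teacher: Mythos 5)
Your overall strategy for \eqref{eqn:genthmcon} --- contradiction, diagonal extraction, normalization of $(\theta_i^\ell - \eta_i^\ell, p_i^\ell - \pi_i^\ell)$, change of measure to $P_{\theta_\alpha^0}^{\otimes N_\ell}$, CLT, generalized continuous mapping, and Fatou --- matches the paper's proof. But there are two genuine problems.

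\textbf{The ``twin calibration of $\rho_\ell$'' is a phantom obstacle.} You worry that the super-exponentially small likelihood ratios $L_i^{0,\ell}$ for $i\neq\alpha$ might not survive division by $\rho_\ell$ unless $\rho_\ell$ is bounded below by a polynomial in $1/N_\ell$, and you propose an ``intermediate-value adjustment'' to enforce this. No such calibration is needed, and the adjustment is not rigorous (you cannot control $\rho_\ell$ from below along a liminf-attaining sequence without losing the property $V/\rho_\ell\to 0$). The paper handles $i\neq\alpha$ by decomposing
\[
\frac{p_i^\ell L_i^\ell - p_i^0 L_i^{0,\ell}}{\rho_\ell}
= p_i^\ell\,\frac{L_i^\ell - L_i^{0,\ell}}{\rho_\ell}
+ \frac{p_i^\ell - p_i^0}{\rho_\ell}\, L_i^{0,\ell}.
\]
For the second term, $(p_i^\ell - p_i^0)/\rho_\ell \to b_i$ is bounded by the normalization, and $L_i^{0,\ell}\to 0$ exponentially. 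For the first, the mean value theorem gives $|L_i^\ell - L_i^{0,\ell}| = e^{\xi_\ell}\,|g_\ell(\theta_i^\ell)-g_\ell(\eta_i^\ell)|$; the exponential factor decays as $\exp(-cN_\ell)$ by strict convexity of $A$, and (see \eqref{eqn:expmeanvalueupp}) $|g_\ell(\theta_i^\ell)-g_\ell(\eta_i^\ell)|/\rho_\ell = O(\sqrt{N_\ell})$, so the product is $\exp(-cN_\ell)\cdot O(\sqrt{N_\ell}) \to 0$. The $\rho_\ell$ in the denominator is entirely absorbed by these normalizations, which are bounded --- no lower bound on $\rho_\ell$ ever enters.

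\textbf{The derivation of \eqref{eqn:curvatureprodbound} does not go through.} You argue: \eqref{eqn:genthmcon} gives $\underbar{\m}_1(G_0) < \infty$, hence $n_1(G_0) < \infty$; Proposition \ref{lem:n0nbar}\ref{item:n1n2Nbar} then gives $n_2(G_0) = n_1(G_0) < \infty$, ``which is \eqref{eqn:curvatureprodbound}.'' It is not. By definition (cf.\ \eqref{eqn:defn0n1n2} and \eqref{eqn:N2bar}), $n_2(G_0) < \infty$ asserts the existence of a single $n$ for which $\lim_{r\to 0}\inf_{G,H\in B_{W_1}(G_0,r)}V(P_{G,n},P_{H,n})/D_1(G,H) > 0$ --- a statement with the \emph{fixed} metric $D_1$ in the denominator. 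By contrast, \eqref{eqn:curvatureprodbound} is equivalent to $\underbar{\m}_2(G_0) < \infty$, which demands that $V(P_{G,N},P_{H,N})/D_N(G,H)$ stay bounded below \emph{uniformly over all large $N$}, with the denominator metric $D_N$ also scaling in $N$. Because $D_N(G,H)$ can grow like $\sqrt{N}D_1(G,H)$ while $V(P_{G,N},P_{H,N})\leq 1$, the $D_1$-bound $n_2<\infty$ gives $V/D_N$ decaying like $N^{-1/2}$ and yields nothing. Proposition \ref{lem:n0nbar}\ref{item:n1Nbar} only relates $n_2$ and $\underbar{\m}_2$ under the \emph{premise} $\underbar{\m}_2<\infty$, so it cannot be used to establish $\underbar{\m}_2<\infty$. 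The paper proves \eqref{eqn:curvatureprodbound} directly: it negates the strong bound itself, producing \emph{two} moving sequences $G_\ell, H_\ell$ both converging to $G_0$ (rather than $G_\ell$ versus a fixed $G_0$), normalizes the differences $\theta_i^\ell-\eta_i^\ell$ and $p_i^\ell-\pi_i^\ell$, and runs the same CLT/continuous-mapping argument on that two-sided perturbation; \eqref{eqn:genthmcon} is then a corollary since it is the weaker statement.
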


In the last theorem the exponential family is in its canonical form. The following corollary extends to exponential families in the general form under mild conditions. 

\begin{cor} \label{cor:expfamtheta}
Consider the probability kernel $P_\theta$ has a density function $f$ in the full rank exponential family,  $f(x|\theta)=\exp\left(\langle \eta(\theta),T(x)\rangle -B(\theta) \right)h(x)$, where the map $\eta:\Theta \to \eta(\Theta)\subset \R^q$ is a homeomorphism. Suppose that $\eta$ is continuously differentiable on $\Theta^\circ$ and its the Jacobian is of full rank on $\Theta^\circ$. Then, for any $G_0\in \Ec_{k_0}(\Theta^\circ)$, ~\eqref{eqn:genthmcon} and~\eqref{eqn:curvatureprodbound} hold.
\end{cor}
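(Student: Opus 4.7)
\textbf{Proof proposal for Corollary \ref{cor:expfamtheta}.}
The plan is to reduce to the canonical parametrization already handled by Theorem \ref{thm:expfam} and then transport the inverse bounds back along $\eta$ using Lemma \ref{lem:invariance}. Concretely, define the reparametrized kernel $\tilde f(x\mid\tilde\eta) := \exp(\langle \tilde\eta, T(x)\rangle - A(\tilde\eta))h(x)$ on the natural parameter space $\tilde\Theta := \eta(\Theta)$, where $A(\tilde\eta) := B(\eta^{-1}(\tilde\eta))$. Since $\eta:\Theta\to\tilde\Theta$ is a homeomorphism, the family $\{\tilde f(\cdot\mid\tilde\eta)\}_{\tilde\eta\in\tilde\Theta}$ is precisely the same full-rank exponential family written in canonical form, and $\tilde P_{\tilde\eta} = P_{\eta^{-1}(\tilde\eta)}$.

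The first step is to check that $\eta(G_0)\in\Ec_{k_0}(\tilde\Theta^\circ)$, so that Theorem~\ref{thm:expfam} can legitimately be applied at the reparametrized point. For this, I would invoke the inverse function theorem: since $\eta$ is continuously differentiable on the open set $\Theta^\circ$ with full-rank Jacobian there, $\eta$ is a local diffeomorphism on $\Theta^\circ$, hence maps $\Theta^\circ$ into the open set $\tilde\Theta^\circ$. In particular, the atoms $\eta(\theta_i^0)$ of $\eta(G_0)$ lie in $\tilde\Theta^\circ$. Applying Theorem~\ref{thm:expfam} to the canonical family $\{\tilde f(\cdot\mid\tilde\eta)\}$ at the mixing measure $\eta(G_0)$ then yields
\[
\liminf_{N\to\infty}\ \liminf_{\substack{G^\eta \overset{W_1}{\to} \eta(G_0)\\ G^\eta\in\Ec_{k_0}(\tilde\Theta)}} \frac{V(\tilde P_{G^\eta,N},\tilde P_{\eta(G_0),N})}{D_N(G^\eta,\eta(G_0))} > 0,
\]
together with the stronger curvature-type lower bound \eqref{eqn:curvatureprodbound} for the canonical family at $\eta(G_0)$.

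The second step is to transfer these bounds back to the original parametrization. The hypotheses of Corollary~\ref{cor:expfamtheta} guarantee precisely what Lemma~\ref{lem:invariance} needs with $g=\eta$: $\eta$ is a homeomorphism with Jacobian full rank at each $\theta_i^0$ (for the first bound), and the Jacobian is continuous in a neighborhood of each $\theta_i^0$ (for the second, stronger bound). Thus by Lemma~\ref{lem:invariance}, the positivity of the $\liminf$ for the canonical family at $\eta(G_0)$ is equivalent (up to constants depending on $G_0$) to the positivity of the corresponding $\liminf$ for $\{f(\cdot\mid\theta)\}$ at $G_0$, and similarly for the infimum-over-balls version. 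Combining the two transfers with the result of the previous step yields \eqref{eqn:genthmcon} and \eqref{eqn:curvatureprodbound} as required.

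There is no essential obstacle here, since Theorem~\ref{thm:expfam} does the real analytic work and Lemma~\ref{lem:invariance} supplies the required change-of-variables lemma. The only genuinely substantive point is the verification that $\eta(\Theta^\circ)\subseteq\tilde\Theta^\circ$, which is why the full-rank and continuous-differentiability hypotheses are imposed globally on $\Theta^\circ$ rather than only at the atoms of $G_0$; once that is in hand, the rest is a mechanical application of the two cited results.
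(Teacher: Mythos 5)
Your proposal is correct and takes essentially the same route as the paper: reparametrize to canonical form, apply Theorem~\ref{thm:expfam} at $\eta(G_0)$, and transfer the bounds back via Lemma~\ref{lem:invariance}. Your extra step — using the inverse function theorem to justify that $\eta(\Theta^\circ)\subseteq(\eta(\Theta))^\circ$, so Theorem~\ref{thm:expfam} is applicable at $\eta(G_0)$ — is a point the paper leaves implicit, and your verification is sound.
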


As a consequence of Corollary~\ref{cor:expfamtheta},  Proposition~\ref{lem:n0nbar} and Lemma \ref{cor:expD1equivalenttheta}, we immediately obtain the following interesting algebraic result for which a direct proof may be challenging. 

\begin{cor} \label{cor:expprodlinind}
Let the probability kernel $\{f(x|\thetaeta)\}_{\thetaeta \in \Theta}$ be in a full rank exponential family of distributions as in Corollary \ref{cor:expfamtheta} and suppose that all conditions there hold. Then for any $k_0\geq 1$ and for any $G_0=\sum_{i=1}^{k_0}p_i^0\delta_{\theta_i^0}\in \Ec_{k_0}(\Theta^\circ)$, $n_1(G_0)=n_2(G_0)=\underbar{N}_1(G_0)=\underbar{N}_2(G_0)$ are finite. Moreover,
\begin{equation}
\sum_{i=1}^{k_0} \left(a_i^\top  \nabla_{\theta}  \prod_{n=1}^{\m} f(x_n|\theta_i^0)+b_i \prod_{n=1}^{\m} f(x_n|\theta_i^0) \right) =0, \quad \bigotimes^N\mu-a.e.\  (x_1,\ldots, x_\m) \in \Xfrak^{\m} \label{eqn:pdfn0}
\end{equation}
has only the zero solution: 
$$
b_i =0 \in \R \text{ and } a_i =\bm{0} \in \R^q, \quad \forall 1\leq i\leq k_0
$$ 
if and only if $N\geq n_1(G_0)$.
\end{cor}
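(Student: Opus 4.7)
The plan is to combine two ingredients already in place: the inverse bounds for exponential families from Corollary \ref{cor:expfamtheta}, which together with Proposition \ref{lem:n0nbar} collapse the four identifiability lengths; and the self-referential observation that for each fixed $N$ the $N$-fold product kernel is itself a regular exponential family in $\theta$, so Lemma \ref{cor:expD1equivalenttheta} can be re-applied at level $N$ to obtain the desired algebraic characterization.

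For the finiteness and equality claim, Corollary \ref{cor:expfamtheta} supplies both \eqref{eqn:genthmcon} and \eqref{eqn:curvatureprodbound} for every $G_0 \in \Ec_{k_0}(\Theta^\circ)$, and Proposition \ref{lem:n0nbar}\ref{item:n1Nbar} then gives $n_1(G_0) = \underbar{\m}_1(G_0) < \infty$ together with $n_2(G_0) = \underbar{\m}_2(G_0) < \infty$. To merge $n_1$ and $n_2$ I would invoke Proposition \ref{lem:n0nbar}\ref{item:n1n2Nbar}, which requires $f(x|\theta)$ to be continuously differentiable in $\theta$ near each $\theta_i^0$ for $\mu$-a.e.\ $x$, and a Lemma \ref{lem:nececondition}\ref{item:nececonditionc}-type envelope valid for every direction vector $a\in\R^q$. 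Both are routine in the exponential family setting: smoothness of $\theta\mapsto \exp(\langle \eta(\theta),T(x)\rangle - B(\theta))$ follows from the $C^1$ hypothesis on $\eta$, and an envelope is produced by writing $f(x|\theta_i^0+a\Delta)-f(x|\theta_i^0) = \Delta\int_0^1\langle a,\nabla_\theta f\rangle(x|\theta_i^0+ta\Delta)\,dt$ and bounding $\|\nabla_\theta f(x|\theta')\|$ uniformly over a small ball around $\theta_i^0$ by a constant times $(\|T(x)\|+1)f(x|\theta')$, whose $\mu$-integral is locally uniformly finite on $\Theta^\circ$ by standard moment finiteness for exponential families.

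For the equivalence I would view $f_N(x_1,\ldots,x_N|\theta):=\prod_{n=1}^N f(x_n|\theta)$ as a full-rank exponential family in $\theta$ with the \emph{same} natural parameter map $\eta(\theta)$, aggregated sufficient statistic $T_N(x_1,\ldots,x_N)=\sum_{n=1}^N T(x_n)$, log-partition $NB(\theta)$, and base density $\prod_n h(x_n)$. Full-rankness of $T_N$ descends from that of $T$ by freezing $N-1$ of the coordinates, so all hypotheses of Lemma \ref{cor:expD1equivalenttheta} transfer verbatim to $f_N$. Applying that lemma at level $N$ makes its five statements mutually equivalent; in particular statement \ref{item:expD1equivalente} becomes the assertion that \eqref{eqn:pdfn0} admits only the zero solution, while statement \ref{item:expD1equivalentb} becomes $\liminf_{G\overset{W_1}{\to}G_0,\,G\in\Ec_{k_0}(\Theta)} V(P_{G,N},P_{G_0,N})/D_1(G,G_0)>0$, which is the defining inequality for $N \geq n_1(G_0)$. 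Combined with the elementary monotonicity $V(P_{G,N+1},P_{G_0,N+1})\geq V(P_{G,N},P_{G_0,N})$, obtained by marginalizing onto the first $N$ coordinates, the set of admissible lengths is upward closed and coincides with $\{N:N\geq n_1(G_0)\}$, delivering the ``if and only if.'' The main obstacle I anticipate is the envelope verification in Step 1 and the bookkeeping needed to check that full-rankness survives the product construction; everything else is assembling lemmas already established in the paper.
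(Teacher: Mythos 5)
Your proposal is correct and assembles exactly the three ingredients the paper itself points to (Corollary \ref{cor:expfamtheta}, Proposition \ref{lem:n0nbar}, and Lemma \ref{cor:expD1equivalenttheta} applied to the $N$-fold product), with the key self-referential observation that $\prod_{n=1}^N f(x_n|\theta)$ is again a full-rank exponential family with natural parameter map $\eta(\theta)$ and sufficient statistic $\sum_n T(x_n)$, and that full-rankness is inherited by fixing $N-1$ coordinates.

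One remark: your route to $n_1(G_0)=n_2(G_0)$ via Proposition \ref{lem:n0nbar}\ref{item:n1n2Nbar} is valid but unnecessarily laborious. Once you have applied Lemma \ref{cor:expD1equivalenttheta} to $f_N$ to get the five equivalent statements for each $N$ (which you already need for the algebraic ``iff'' claim), equivalence \ref{item:expD1equivalenta}$\Leftrightarrow$\ref{item:expD1equivalentb} combined with the monotonicity of $V(P_{G,N},P_{H,N})$ in $N$ immediately gives $\{N : N \text{ is } n_2\text{-admissible}\} = \{N : N \text{ is } n_1\text{-admissible}\}$ as upward-closed sets with the same minimum, i.e.\ $n_1(G_0)=n_2(G_0)$. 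This spares you the envelope verification, which as you anticipate is the fiddliest part of your argument (your bound $\|\nabla_\theta f(x|\theta')\|\lesssim (\|T(x)\|+1)f(x|\theta')$ has $f(x|\theta')$ still varying with $\theta'$ over the ball, and dominating it by something $\Delta$-independent and $\mu$-integrable requires the exponential-family-specific argument encapsulated in the paper's Lemma \ref{lem:expdct}). The paper's own hint presumably intends the cleaner (a)$\Leftrightarrow$(b) route for this step.
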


Corollary \ref{cor:expprodlinind} establishes that for full rank exponential families of distribution specified in Corollary \ref{cor:expfamtheta} with full rank Jacobian of $\eta(\theta)$, there is a finite phase transition behavior specified by $n_1(G_0)$ of the $\m$-product in \eqref{eqn:pdfn0}: the system of equations \eqref{eqn:pdfn0} has nonzero solution when $\m < n_1(G_0)$ and as soon as $\m \geq n_1(G_0)$, it has only the zero solution. This also gives another characterization of $n_1(G_0)$ defined in \eqref{eqn:defn0n1n2} for such exponential families, which also provides a way to compute $n_1(G_0)=\underbar{\m}_1(G_0) = n_2(G_0) = \underbar{\m}_2(G_0)$. A byproduct is that $n_1(G_0)$ does not depend on the $p_i^0$ of $G_0$ since \eqref{eqn:pdfn0} only depends on $\theta_i^0$. 

We next demonstrate two non-trivial examples of mixture models that are either non-identifiable or weakly identifiable, i.e., when $\m=1$, but become first-identifiable by taking products. We work out the details on calculating $n_0(G_0)$ and $n_1(G_0)$; these should serve as convincing examples to the discussion at the end of Section \ref{sec:firstidentifiable}.
 

\begin{exa}[Bernoulli kernel]  
\label{exa:bernoulli} 
Consider the Bernoulli distribution $f(x|\theta)=\theta^x(1-\theta)^{1-x}$ with parameter space $\Theta=(0,1)$. Here the family is defined on $\Xfrak=\R$ and the dominating measure is $\mu = \delta_{0}+\delta_{1}$. It can be written in exponential form as in Lemma \ref{cor:expD1equivalenttheta} with $\eta(\theta)=\ln \theta -\ln(1-\theta)$ and $T(x)=x$. It's easy to check that $\eta'(\theta)=\frac{1}{\theta(1-\theta)}>0$ and thus all conditions in Lemma \ref{cor:expD1equivalenttheta}, Corollary \ref{cor:expfamtheta} and Corollary \ref{cor:expprodlinind} are satisfied. Thus any of those three results can be applied. In particular we may use the characterization of $n_1(G_0)$ in Corollary \ref{cor:expprodlinind} to compute $n_1(G_0)$.

For the $n$-fold product, the density $f_n(x_1,x_2,\ldots,x_n|\theta) := \prod_{\ell=1}^n f(x_\ell|\theta) = \theta^{\sum_{\ell=1}^n x_\ell} (1-\theta)^{n-\sum_{\ell=1}^n x_\ell}$. Then $\frac{\partial}{\partial \theta}f_n(x_1,\dots,x_n|\theta)$ is 
$$ \left(\sum_{\ell=1}^n x_\ell\right)\theta^{\sum_{\ell=1}^n x_\ell-1} (1-\theta)^{n-\sum_{\ell=1}^n x_\ell} - \left(n-\sum_{\ell=1}^n x_\ell\right)\theta^{\sum_{\ell=1}^n x_\ell} (1-\theta)^{n-\sum_{\ell=1}^n x_\ell-1}.$$ 

We now compute $n_1(G)$ for any $G=\sum_{i=1}^k p_i \delta_{\theta_i}\in \Ec_k(\Theta)$. Notice the support of $f$
is $\{0,1\}$ and hence the support of $f_n$ is $\{0,1\}^n$. Thus \eqref{eqn:pdfn0} with $k_0$, $N$, and $\theta_i^0$ replaced respectively by $k$, $n$ and $\theta_i$ become a system of $n+1$ linear equations: $\forall j=\sum_{\ell=1}^n x_\ell\in\{0\}\cup[n]$
\begin{equation}
\sum_{i=1}^k a_i \left(j(\theta_i)^{j-1} (1-\theta_i)^{n-j} - (n-j)(\theta_i)^{j} (1-\theta_i)^{n-j-1}\right) + \sum_{i=1}^k b_i (\theta_i)^{j} (1-\theta_i)^{n-j}
=0. \label{eqn:syslinequbernoulli}
\end{equation}
As a system of $n+1$ linear equations with $2k$ unknown variables, it has nonzero solutions when $n+1< 2k$. Thus $n_1(G)\geq 2k-1$ for any $G\in \Ec_k(\Theta)$.

Let us now verify that $n_1(G)=2k-1$ for any $G\in \Ec_k(\Theta)$. Indeed, for any $G=\sum_{i=1}^k p_i\delta_{\theta_i}\in \Ec_k(\Theta)$, the system of linear equations \eqref{eqn:syslinequbernoulli} with $n=2k-1$ is $A^\top  z=0$ with $z=(b_1,a_1,\ldots,b_k,a_k)^\top $ and 
$$
A_{ij} = 
\begin{cases}
f_j(\theta_m)  & i=2m-1 \\
f'_j(\theta_m) & i=2m
\end{cases}
\text{ for } j\in [2k], m\in [k],
$$
where $f_j(\theta)=\theta^{j-1}(1-\theta)^{n-(j-1)}$ with $n=2k-1$.
By Lemma \ref{lem:determinant} \ref{item:determinantc}  $\text{det}(A)=\prod_{1\leq \alpha<\beta\leq k}(\theta_{\alpha}-\theta_{\beta})^4$, with the convention $1$ when $k=1$. Thus, $A$ is invertible and the system of linear equations \eqref{eqn:syslinequbernoulli} with $n=2k-1$ has only zero solution. Thus by Corollary \ref{cor:expprodlinind} $n_1(G)\leq 2k-1$. By the conclusion from last paragraph $n_1(G)= 2k-1$. 

In Section \ref{sec:identifiabilityBernoulli} we also prove that $n_0(G,\cup_{\ell\leq k}\Ec_\ell (\Theta) ) = 2k-1$ for any $G\in \Ec_k(\Theta)$.
\myeoe
\end{exa}

The next lemma is on the determinant of a type of generalized Vandermonde matrices. Its part \ref{item:determinantc} is used in the previous example on Bernoulli kernel. 

\begin{lem}\label{lem:determinant} 
Let $x,y\in \R$.
\begin{enumerate}[label=\alph*)]
\item \label{item:determinanta} 
Let $f(x)$ be a polynomial. Define $q^{(1)}(x,y)=\frac{f(x)-f(y)}{x-y}$, $q^{(2)}(x,y)=\frac{f'(x)-f'(y)}{x-y}$, $\bar{q}^{(2)}(x,y)=\frac{q^{(1)}(x,y)-f'(y)}{x-y}
$, and $\bar{q}^{(3)}(x,y)=\frac{\bar{q}^{(2)}(x,y)-\frac{1}{2} q^{(2)}(x,y)}{x-y}
$. Then $q^{(1)}(x,y)$, $q^{(2)}(x,y)$, $\bar{q}^{(2)}(x,y)$ and $\bar{q}^{(3)}(x,y)$ are all multivariate polynomials.
\item \label{item:determinantb}
Let $f_j(x)$ be a polynomial and $f'_j(x)$ its derivative for $j\in [2k]$ for a positive integer $k$. For $x_1,\ldots,x_k\in \R$ define $A^{(k)}(x_1,\dots,x_k)\in \R^{(2k)\times (2k)}$ by 
$$
A_{ij}^{(k)}(x_1,\dots,x_k) = 
\begin{cases}
f_j(x_m)  & i=2m-1 \\
f'_j(x_m) & i=2m
\end{cases}
\text{ for } j\in [2k], m\in [k].
$$
Then for any $k\geq 2$, $\text{det}(A^{(k)}(x_1,\dots,x_k))=g_k(x_1,\ldots,x_k)\prod_{1\leq \alpha<\beta\leq k}(x_{\alpha}-x_{\beta})^4$ , where $g_k$ is some multivariate polynomial.
\item \label{item:determinantd}
For the special case $f_j(x)=x^{j-1}$, 
$A^{(k)}(x_1,\ldots,x_k)$  has determinant $\text{det}(A^{(k)}(x_1,\dots,x_k))=\prod_{1\leq \alpha<\beta\leq k}(x_{\alpha}-x_{\beta})^4$, with the convention $1$ when $k=1$.
\item \label{item:determinantc}
For the special case $f_j(x)=f_j(x|k)=x^{j-1}(1-x)^{n-(j-1)}$ with $n=2k-1$, 
$A^{(k)}(x_1,\ldots,x_k)$ has determinant $\text{det}(A^{(k)}(x_1,\dots,x_k))=\prod_{1\leq \alpha<\beta\leq k}(x_{\alpha}-x_{\beta})^4$, with the convention $1$ when $k=1$.
\end{enumerate}
\end{lem}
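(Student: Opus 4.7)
The plan is to treat the four parts in sequence, with part (b) as the key structural step and parts (c), (d) as explicit computations that pin down the leading coefficient.

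Part (a) is a routine application of the factor theorem: each of the four quantities is of the form (polynomial vanishing on the diagonal $x=y$) divided by $x-y$. For $q^{(1)}$ and $q^{(2)}$ this is immediate. For $\bar q^{(2)}$, I would Taylor-expand $q^{(1)}(x,y)=f'(y)+\tfrac{x-y}{2}f''(y)+\tfrac{(x-y)^2}{6}f'''(y)+\cdots$ and read off $q^{(1)}(y,y)=f'(y)$, so $q^{(1)}(x,y)-f'(y)$ is divisible by $x-y$ in $\R[x,y]$. For $\bar q^{(3)}$ the same expansion yields $\bar q^{(2)}(y,y)=\tfrac12 f''(y)=\tfrac12 q^{(2)}(y,y)$, so the numerator vanishes at $x=y$. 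A useful by-product is the trapezoidal-remainder identity $f(x)-f(y)=\tfrac{x-y}{2}(f'(x)+f'(y))+(x-y)^3\bar q^{(3)}(x,y)$, which I will invoke in part (b).

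For part (b), fix a pair $\alpha<\beta$ and apply the two row operations $R_{2\alpha-1}\leftarrow R_{2\alpha-1}-R_{2\beta-1}-\tfrac{x_\alpha-x_\beta}{2}(R_{2\alpha}+R_{2\beta})$, followed by $R_{2\alpha}\leftarrow R_{2\alpha}-R_{2\beta}$. By the trapezoidal identity applied to each $f_j$, the new entries of row $2\alpha-1$ are $(x_\alpha-x_\beta)^3\bar q^{(3)}_j(x_\alpha,x_\beta)$, and those of row $2\alpha$ are $(x_\alpha-x_\beta)q^{(2)}_j(x_\alpha,x_\beta)$; both are polynomial-valued by part (a). Since neither subtracted combination involves the row being modified, the determinant is unchanged, and multilinearity lets me factor out $(x_\alpha-x_\beta)^4$. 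Thus $(x_\alpha-x_\beta)^4\mid\det A^{(k)}$ in $\R[x_1,\ldots,x_k]$. The linear forms $(x_\alpha-x_\beta)$ for distinct pairs are pairwise non-associate irreducibles in this UFD, so their fourth powers are pairwise coprime and consequently $\prod_{\alpha<\beta}(x_\alpha-x_\beta)^4\mid\det A^{(k)}$, producing a polynomial quotient $g_k(x_1,\ldots,x_k)$.

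For part (c), I would identify $A^{(k)}$ (with $f_j(x)=x^{j-1}$) as the confluent Vandermonde matrix for Hermite interpolation at nodes $x_1,\ldots,x_k$ each of multiplicity two. The claim $g_k\equiv 1$ follows by coalescence: starting from the ordinary Vandermonde $V(y_1,\ldots,y_{2k})$ with $y_{2m-1}=x_m$, $y_{2m}=x_m+\varepsilon$, apply $R_{2m}\leftarrow(R_{2m}-R_{2m-1})/\varepsilon$ (scaling the determinant by $\varepsilon^{-k}$) and let $\varepsilon\to 0$; the matrix becomes $A^{(k)}$, the $k$ intra-pair factors $(y_{2m}-y_{2m-1})=\varepsilon$ cancel the scaling, and the inter-pair factors tend to $\prod_{\alpha<\beta}(x_\alpha-x_\beta)^4$. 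For part (d), I would expand $f_j(x)=x^{j-1}(1-x)^{n-j+1}=\sum_{i\geq j}(-1)^{i-j}\binom{n-j+1}{i-j}x^{i-1}$ with $n=2k-1$; the resulting change-of-basis matrix $M$ between the monomial basis and $\{f_j\}$ is lower triangular with all diagonal entries equal to $1$, so $\det M=1$. The relation $A^{(k)}_{[f_j]}=A^{(k)}_{[x^{i-1}]}M$ then gives $\det A^{(k)}_{[f_j]}=\det A^{(k)}_{[x^{i-1}]}=\prod_{\alpha<\beta}(x_\alpha-x_\beta)^4$.

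The main obstacle is the choice of row operation in part (b): the direct subtractions $R_{2\alpha-1}\leftarrow R_{2\alpha-1}-R_{2\beta-1}$ and $R_{2\alpha}\leftarrow R_{2\alpha}-R_{2\beta}$ only extract $(x_\alpha-x_\beta)^2$, and recovering the remaining two factors via bare multilinearity and Taylor expansion around $x_\alpha=x_\beta$ is quite delicate because the reduced determinant must be shown to have a double zero there. The polynomials constructed in part (a), and in particular the trapezoidal coefficient $-\tfrac{1}{2}(x_\alpha-x_\beta)$ appearing in front of $R_{2\alpha}+R_{2\beta}$, are designed precisely to bypass this difficulty by packaging the extra two vanishing orders into $\bar q^{(3)}$.
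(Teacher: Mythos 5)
Your proof is correct, and parts (c)--(d) take a genuinely different route from the paper's. Parts (a) and (b) are essentially the paper's argument: you package the factorizations of (a) into the identity $f(x)-f(y)=\tfrac{x-y}{2}\bigl(f'(x)+f'(y)\bigr)+(x-y)^3\bar q^{(3)}(x,y)$ and use it to compress the paper's four sequential row reductions on a pair of node-blocks into two combined ones, while also making explicit (via coprimality of the linear forms $x_\alpha-x_\beta$ in the UFD $\R[x_1,\ldots,x_k]$) what the paper dismisses with ``by symmetry''. The paper then proves both leading-coefficient identities (c), (d) by induction on $k$: it invokes (b) to write $\det A^{(k+1)}=g_{k+1}\prod_{\alpha<\beta}(x_\alpha-x_\beta)^4$, bounds $\deg_{x_\alpha}g_{k+1}$ via the Leibniz expansion, and then pins $g_{k+1}$ down by evaluating at $x_{k+1}=0$ (monomial case) or at both $x_{k+1}=0$ and $x_{k+1}=1$ (the Bernstein-like case), each evaluation requiring a Laplace expansion together with the recursions $f_j=x^2f_{j-2}$, $f'_j=x^2f'_{j-2}+2xf_{j-2}$. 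You instead compute the monomial determinant directly by coalescence from the ordinary $2k$-point Vandermonde --- a standard confluent-Vandermonde argument --- and deduce the Bernstein-like case by observing that $\{x^{j-1}(1-x)^{2k-j}\}_j$ differs from $\{x^{j-1}\}_j$ by a lower-triangular change of basis with unit diagonal, so the determinant is unchanged. Notably, your (c) and (d) do not rely on (b) at all and are shorter and more conceptual; the paper's inductive route is fully self-contained and elementary but pays for it with heavier case-by-case degree bookkeeping that your change-of-basis and coalescence arguments bypass entirely.
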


\begin{exa}[Continuation on two-parameter gamma kernel] 
\label{exa:gamma2} 
Consider the gamma distribution $f(x|\alpha,\beta)$ discussed in Example \ref{exa:gamma}. Let $k_0\geq 2$ and by Example \ref{exa:gamma} for any $G_0\in \Ec_{k_0}(\Theta)\backslash \mathcal{G}$, $n_1(G_0)=1$ and for any $G_0 \in \mathcal{G}$, where
we recall that $\Gc$ denotes the pathological subset of the gamma mixture's parameter space,
$$
\liminf_{\substack{G\overset{W_1}{\to} G_0\\ G\in \Ec_{k_0}(\Theta)}} \frac{h(P_G,P_{G_0})}{D_1(G,G_0)}=\liminf_{\substack{G\overset{W_1}{\to} G_0\\ G\in \Ec_{k_0}(\Theta)}} \frac{V(P_G,P_{G_0})}{D_1(G,G_0)}=0.
$$
This means $n_1(G_0)\geq 2$ for $G_0\in \mathcal{G}$. 

We now show that for $G_0\in \Ec_{k_0}(\Theta)$ $n_1(G_0)\leq 2$ and hence $n_1(G_0)= 2$ for $G_0\in \mathcal{G}$. Let 
$$f_2(x_1,x_2|\alpha,\beta):=f(x_1|\alpha,\beta)f(x_2|\alpha,\beta)= \frac{\beta^{2\alpha} }{(\Gamma(\alpha))^2 } (x_1x_2)^{\alpha-1}e^{-\beta(x_1+x_2)}\1ve_{(0,\infty)^2}(x_1,x_2)$$ 
be the density of the $2$-fold product w.r.t. Lebesgue measure on $\R^2$. Let $g(\alpha,\beta)=(\Gamma(\alpha))^2 /\beta^{2\alpha}$, which is a differentiable function on $\Theta$ and let $\tilde{f}_2(x_1,x_2|\alpha,\beta):=g(\alpha,\beta)\times$
$\times f_2(x_1,x_2|\alpha,\beta)$ to be the density without normalization constant. Note that $\frac{\partial}{\partial \alpha}\tilde{f}_2(x_1,x_2|\alpha,\beta)$ $= \tilde{f}_2(x_1,x_2|\alpha,\beta)\ln(x_1 x_2)$ and $\frac{\partial}{\partial \beta}\tilde{f}_2(x_1,x_2|\alpha,\beta) = -(x_1+x_2)\tilde{f}_2(x_1,x_2|\alpha,\beta)$. Then \eqref{eqn:conlininda} with $f$ replaced by $\tilde{f}_2$ is 
\begin{equation}
\sum_{i=1}^{k_0} \left(a^{(\alpha)}_i \ln(x_1 x_2) - a^{(\beta)}_i (x_1+x_2) + b_i\right)(x_1 x_2)^{\alpha_i-1}e^{-\beta_i(x_1+x_2)}=0. \label{eqn:gamma2foldprodnonor}
\end{equation}
Let $\bigcup_{i=1}^{k}\{\beta_i\}=\{\beta'_1,\beta'_2,\cdots,\beta'_k\}$ with $\beta'_1<\beta'_2<\ldots<\beta'_{k'}$ where $k'$ is the number of distinct elements. Define $I(\beta')=\{i\in[k]| \beta_i=\beta'\}$. Then \eqref{eqn:gamma2foldprodnonor} become for $\mu$-a.e. $x_1,x_2\in (0,\infty)$
\begin{align*}
    0=&\sum_{j=1}^{k'}\left(\sum_{i\in I(\beta'_j)} \left(a^{(\alpha)}_i \ln(x_1 x_2) - a^{(\beta)}_i (x_1+x_2) + b_i\right)(x_1 x_2)^{\alpha_i-1}\right)e^{-\beta'_j(x_1+x_2)}\\
    =& \sum_{j=1}^{k'} e^{-\beta'_j x_2}e^{-\beta'_j x_1} \left( \sum_{i\in I(\beta'_j)} a^{(\alpha)}_i (x_1 x_2)^{\alpha_i-1}\ln(x_1 ) \right. \\
   & \left. +\sum_{i\in I(\beta'_j)} \left(a^{(\alpha)}_i\ln(x_2)- a^{(\beta)}_i (x_1+x_2) + b_i\right)(x_1 x_2)^{\alpha_i-1}\right)
\end{align*}

When fixing any $x_2$ such that in the $\mu$-a.e. set such that the preceding equation holds, by Lemma \ref{lem:polexplinind} \ref{item:polexplinindb} for any $j\in [k']$, $\sum_{i\in I(\beta'_j)} a^{(\alpha)}_i (x_1 x_2)^{\alpha_i-1} \equiv 0 $ for any $x_1\neq 0$. Since $\alpha_i$ are distinct for $i\in I(\beta'_j)$ and $x_2>0$, $a_i^{(\alpha)}=0$ for any $i\in I(\beta'_j)$ for any $j\in [k']$. That is $a_i^{(\alpha)}=0$ for any $i\in [k]$. Analogously fixing $x_1$  produces $a_i^{(\beta)}=0$ for any $i\in [k]$. Plug these back into the preceding display and one obtains for $\mu$-a.e. $x_1,x_2\in (0,\infty)$
$$
0=\sum_{j=1}^{k'} \left( \sum_{i\in I(\beta'_j)}  b_i(x_1 x_2)^{\alpha_i-1}\right)e^{-\beta'_j x_2}e^{-\beta'_j x_1}
$$
Fixing any $x_2$ such that in the $\mu$-a.e. set such that the preceding equation holds, and apply Lemma \ref{lem:polexplinind} \ref{item:polexplinindb} again to obtain $b_i=0$ for $i\in [k]$. Thus \eqref{eqn:gamma2foldprodnonor} for any $G\in \Ec_k(\Theta)$ has only the zero solution. By Lemma \ref{cor:expD1equivalenttheta}, for $G_0 \in \Ec_{k_0}(\Theta)$
$$
\liminf_{\substack{G\overset{W_1}{\to} G_0\\ G\in \Ec_{k_0}(\Theta)}} \frac{V(P_{G,2},P_{G_0,2})}{D_1(G,G_0)}>0.
$$ 
Thus $n_1(G_0)\leq 2$, and hence $n_1(G_0) = 2$ for any $G_0\in \Gc$. 

Following an analogous analysis, one can show that $\{f(x|\theta_i)\}_{i=1}^k$ are linear independent for any distinct $\theta_1,\ldots,\theta_k\in \Theta$ for any $k$. The linear independence immediately implies that $p_G$ is identifiable on $\bigcup_{j=1}^\infty \Ec_j(\Theta)$, i.e. for any $G\in \Ec_k(\Theta)$ and any $G'\in \Ec_{k'}(\Theta)$, $P_G\not = P_{G'}$. Thus, $n_0(G,\bigcup_{j=1}^\infty \Ec_j(\Theta))=1$ 
for any $G\in \bigcup_{j=1}^\infty \Ec_j(\Theta)$. \myeoe 
\end{exa}

		The above examples demonstrate the remarkable benefits of having repeated (exchangeable) measurements: via the $\m$-fold product kernel $\prod_{j=1}^{\m} f(x_j|\theta)$ for sufficiently large $\m$, one can completely erase the effect of parameter non-identifiability in Bernoulli mixtures, and the effect of weak-identifiability in the pathological subset of the parameter spaces in two-parameter gamma mixtures. We have also seen that it is challenging to determine the 0- or 1-identifiable lengths even for these simple examples of kernels. It is even more so, when we move to a broader class of probability kernels well beyond the exponential families.

\subsection{General probability kernels}
\label{sec:inverse-general}

Unlike Section~\ref{sec:inverse-exponential}, which specializes to the probability kernels that are in the exponential families, in this section no such assumption will be required. In fact, we shall \emph{not} require that the family of probability distributions $\{P_{\thetave}\}_{\theta \in \Theta}$ on $\Xfrak$ admit a density function. Since the primary object of inference is the parameter $\theta \in \Theta \subset \R^q$, the assumptions on the kernel $P_\theta$ will center on the existence of a measurable map  $T:(\Xfrak, \Acalcal) \to (\R^{s},\mathcal{B}(\R^s))$ for some $s\geq q$, and regularity conditions on the push-forward measure on $\R^s$: $T_{\#}P_\theta := P_\thetave \circ T^{-1}$. The use of the push-forward measure to prove \eqref{eqn:curvatureprodbound} stems from the observation that
the variational distance between two distributions is lower bounded by any push-forward operation, which is equivalent to considering a subclass of the Borel sets in the definition of the variational distance. 

\begin{definition}[Admissible transform] \label{def:admissible}
A Borel measurable map $T:\Xfrak\to \R^s$ is admissible with respect to a set $\Theta_1 \subset \Theta^\circ$  if for each $\theta_0\in \Theta_1$ there exists $\gamma>0$ and $r\geq 1$ such that $T$ satisfies the following three properties. 
\begin{enumerate}[label= ({A}\arabic*)]
	\item 
	\label{item:genthmd} 
	(Moment condition)
	For $\theta\in B(\theta_0,\gamma)\subset \Theta^\circ$, the open ball centered at $\theta_0$ with radius $\gamma$, suppose $\lambda(\theta)=\lambda_\theta=\E_{\thetave}T\Xma_1$ and $\Lambda_{\thetave}:= \E_{\thetave}(T\Xma_1-\E_{\thetave}T\Xma_1)(T\Xma_1-\E_{\thetave}T\Xma_1)^\top $ exist where $X_1\sim \P_\theta$. Moreover, $\Lambda_{\thetave}$ is positive definite on $B(\theta_0,\gamma)$ and is continuous at $\theta_0$.  
	
	\item 
	\label{item:genthme}
	(Exchangeability of partial derivatives of characteristic functions)
	Denote by $\phi_T(\zeta|\theta)$ the characteristic function of the pushforward probability measure $T_{\#}\P_{\theta}$ on $\R^s$, i.e., $\phi_T(\zeta|\theta) := \E_{\theta} e^{i \langle \zeta, TX_1 \rangle}$, where
    $X_1\sim P_\theta$.
	$\frac{\partial \phi_T(\zeta|\thetave)}{\partial \theta^{(i)}}$ exists in $B(\theta_0,\gamma)$ and as a function of $\zeta$ it is twice continuously differentiable on $\R^s$ with derivatives satisfying: $\forall \theta\in B(\theta_0,\gamma)$
	\begin{equation*}
	\frac{\partial^2 \phi_T(\zeta|\thetave)}{\partial \zeta^{(j)}\partial \theta^{(i)}} = \frac{\partial^2 \phi_T(\zeta|\thetave)}{\partial \theta^{(i)}\partial \zeta^{(j)}}, \ \frac{\partial^3 \phi_T(\zeta|\thetave)}{\partial \zeta^{(\ell)}\partial \zeta^{(j)}\partial \theta^{(i)}} = \frac{\partial^3 \phi_T(\zeta|\thetave)}{\partial \theta^{(i)} \partial \zeta^{(\ell)} \partial \zeta^{(j)}}, \forall \zeta\in \R^s,  j,\ell \in [d],  i\in [k_0]
	\end{equation*}
	where the right hand side of both equations exist.
	
	\item 
	\label{item:genthmg}
	(Continuity and integrability conditions of characteristic function)
	$ \phi_T(\zeta|\theta)$ as a function of $\theta$ is twice continuously differentiable in $B(\theta_0,\gamma)$. There hold: for any $i\in [q],j\in [s]$, 
	\begin{equation}
	\label{eqn:uniformboundednew}
	\sup_{\theta\in B(\theta_0,\gamma)}\max\left\{ \sup_{\zeta\in \R^s} \left|\frac{\partial \phi_T(\zeta|\thetave)}{\partial \theta^{(i)}}\right|,  \sup_{ \|\zeta\|_2 < 1 } \left|\frac{\partial^2 \phi_T(\zeta|\thetave)}{\partial \zeta^{(j)}\partial \theta^{(i)}}\right|, \sup_{\|\zeta\|_2< 1} \left|\frac{\partial^2 \phi_T(\zeta|\thetave)}{\partial \theta^{(j)}\partial \theta^{(i)}}\right| \right\}<\infty,
	\end{equation}
	and for any $i,j\in [q]$,
	\begin{equation}
	\label{eqn:integrabilitynew}
	\sup_{\theta\in B(\theta_0,\gamma)} \int_{\R^s}\left| \phi_T(\zeta|\theta)\right|^{r}\left(1+\left|\frac{\partial^2 \phi_T(\zeta|\thetave)}{\partial \theta^{(j)}\partial \theta^{(i)}}\right|\right)    d\zetave  <\infty.
	\end{equation}

\end{enumerate}
\end{definition}

\begin{rem} \label{rem:explanationstodefadmissibletransform}
  The above definition of the admissible transform $T$ contains relatively mild regularity conditions concerning continuity, differentiability and integrability. 
  In particular, \ref{item:genthmd} is to guarantee the first two moments of $TX_1$, which are required for the application of a central limit theorem as outlined in Section \ref{sec:overview}. \ref{item:genthme} and \ref{item:genthmg} are used in the essential technical lemma (Lemma \ref{lem:technical}) to guarantee the following statement in Section \ref{sec:overview}: for any sequence $z_\ell \rightarrow z$, there holds $\Hfun_\ell(z_\ell) \rightarrow \Hfun(z)$ for certain functions $\Hfun_\ell$ and $\Hfun:\R^q \rightarrow \R$. The inequality \eqref{eqn:integrabilitynew} is also used to obtain the existence of the Fourier inversion formula (more specifically, to imply existence of a density of $\sum_{j=1}^N TX_j$ with respect to Lebesgue measure for $N\geq r$). Since the characteristic function has modular less than $1$, the larger the $r$, the smaller the left hand side of \eqref{eqn:integrabilitynew}. Here we only require existence of some $r\geq 1$ in \eqref{eqn:integrabilitynew}, which is a mild condition''. For more discussions on the role of $r$, see Theorem 2 in Section 5, Chapter XV of   \cite{feller2008introduction}. The conditions of the admissible transform are typically straightforward to verify if a closed form formulae of $\phi_T(\zeta|\theta)$ is available; examples will be provided in the sequel. 
  \myeoe
\end{rem}

\begin{thm} \label{thm:genthm}
Fix $G_0=\sum_{i=1}^{k_0}p_i^0\delta_{\theta_i^0}\in \Ec_{k_0}(\Theta^\circ)$. 
Assume that for each $\theta_i^0$, there exists measurable transform
	$T_i:(\Xfrak, \Ac) \to (\R^{s_i},\mathcal{B}(\R^{s_i}))$ that is admissible with respect to $\{\theta_i^0\}_{i=1}^{k}$ with $s_i \geq q$ such that 1) the mean map $\lambda_i(\theta)$ of $T_i$ defined in \ref{item:genthmd} is identifiable at $\theta_i^0$ over the set $\{\theta_i^0\}_{i=1}^{k_0}$, i.e.,  $\lambda_i(\theta_j^0) \not= \lambda_i(\theta_i^0)$ for any $j\in [k_0]\backslash\{i\}$ and 2) the Jacobian matrix of  $\lambda_i$ is of full column rank at $\theta_i^0$.
	Then \eqref{eqn:genthmcon} and \eqref{eqn:curvatureprodbound} hold.
\end{thm}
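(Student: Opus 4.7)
The plan is to prove (5.1) by contradiction, and then observe that (5.2) follows from a direct variant of the same argument. Suppose (5.1) fails: a diagonal extraction produces $N_\ell \to \infty$ and $G_\ell = \sum_{i=1}^{k_0} p_i^\ell \delta_{\theta_i^\ell} \in \Ec_{k_0}(\Theta)$ with $G_\ell \overset{W_1}{\to} G_0$ and $V(P_{G_\ell, N_\ell}, P_{G_0, N_\ell})/D_{N_\ell}(G_\ell, G_0) \to 0$. After relabeling atoms so that $\theta_i^\ell \to \theta_i^0$, $p_i^\ell \to p_i^0$, introduce the normalized differences $a_i^\ell := \sqrt{N_\ell}(\theta_i^\ell - \theta_i^0)/D_{N_\ell}(G_\ell, G_0)$ and $b_i^\ell := (p_i^\ell - p_i^0)/D_{N_\ell}(G_\ell, G_0)$, which satisfy $\sum_i(\|a_i^\ell\|_2+|b_i^\ell|) = 1$ and $\sum_i b_i^\ell = 0$ for large $\ell$. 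Passing to a further subsequence, $(a_i^\ell, b_i^\ell) \to (a_i, b_i)$ with the same constraints, so at least one index $\alpha$ has $(a_\alpha, b_\alpha) \neq 0$.

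Fix such an $\alpha$ and push $P_{G_\ell, N_\ell}$ and $P_{G_0, N_\ell}$ forward through the measurable map $\pi_\ell^{(\alpha)}(x_{[N_\ell]}) := N_\ell^{-1/2} \sum_{n=1}^{N_\ell}(T_\alpha(x_n) - \lambda_\alpha(\theta_\alpha^0))$. Since variational distance contracts under pushforwards, it suffices to contradict the vanishing ratio for these pushforward measures. For $N_\ell$ large enough, condition \ref{item:genthmg} together with Fourier inversion equips $\pi_\ell^{(\alpha)}\#P_{\theta,N_\ell}$ with a Lebesgue density $f_{\theta,\ell}^{(\alpha)}$ on $\R^{s_\alpha}$ whose characteristic function is $e^{-i\sqrt{N_\ell}\langle \zeta, \lambda_\alpha(\theta_\alpha^0)\rangle}[\phi_{T_\alpha}(\zeta/\sqrt{N_\ell}\mid \theta)]^{N_\ell}$, and the mixture density is $f_{G,\ell}^{(\alpha)}(z) = \sum_j p_j f_{\theta_j,\ell}^{(\alpha)}(z)$. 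The central limit theorem applied under the moment condition \ref{item:genthmd} gives $f_{\theta_\alpha^0,\ell}^{(\alpha)} \to g_\alpha$, the centered Gaussian density with covariance $\Lambda_{\theta_\alpha^0}^{(\alpha)}$, while the identifiability hypothesis $\lambda_\alpha(\theta_j^0) \neq \lambda_\alpha(\theta_\alpha^0)$ for $j\neq\alpha$ forces $f_{\theta_j^0,\ell}^{(\alpha)}$ to concentrate around the diverging drift $\sqrt{N_\ell}(\lambda_\alpha(\theta_j^0)-\lambda_\alpha(\theta_\alpha^0))$, and hence vanish pointwise.

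The analytic heart of the argument is a technical lemma in the spirit of Lemma \ref{lem:technical}: Taylor-expanding $\phi_{T_\alpha}(\zeta/\sqrt{N_\ell}\mid \theta_\alpha^\ell)^{N_\ell}$ around $\theta_\alpha^0$ and $\zeta = 0$ via \ref{item:genthme}--\ref{item:genthmg}, and using the integrability envelope \eqref{eqn:integrabilitynew} to justify Fourier inversion and dominated convergence, one shows that
\begin{equation*}
\Hfun_\ell^{(\alpha)}(z) := \frac{f_{G_\ell,\ell}^{(\alpha)}(z) - f_{G_0,\ell}^{(\alpha)}(z)}{D_{N_\ell}(G_\ell,G_0)} \longrightarrow \Hfun^{(\alpha)}(z) := -p_\alpha^0 \langle J_{\lambda_\alpha}(\theta_\alpha^0) a_\alpha, \nabla g_\alpha(z)\rangle + b_\alpha g_\alpha(z)
\end{equation*}
pointwise and with an $L^1(\R^{s_\alpha})$ envelope. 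Fatou's lemma applied to the hypothesis $\int |\Hfun_\ell^{(\alpha)}|\, dz \to 0$ then yields $\Hfun^{(\alpha)} \equiv 0$ a.e. Linear independence of $\{g_\alpha, \partial_1 g_\alpha, \ldots, \partial_{s_\alpha} g_\alpha\}$ forces $b_\alpha = 0$ and $J_{\lambda_\alpha}(\theta_\alpha^0) a_\alpha = 0$, and full column-rank of $J_{\lambda_\alpha}(\theta_\alpha^0)$ then gives $a_\alpha = 0$. Since $\alpha$ was arbitrary, $(a_i, b_i) = 0$ for every $i$, contradicting $\sum_i(\|a_i\|_2+|b_i|) = 1$.

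The main obstacle lies in the pointwise convergence and $L^1$-domination of $\Hfun_\ell^{(\alpha)}$. The $j\neq \alpha$ terms are the delicate ones: their densities concentrate at distance $\sqrt{N_\ell}(\lambda_\alpha(\theta_j^0)-\lambda_\alpha(\theta_\alpha^0))$, and after dividing by the possibly slowly vanishing $D_{N_\ell}(G_\ell, G_0)$ one must still show they vanish uniformly in $\ell$ and integrably in $z$. This is exactly where the uniform bound \eqref{eqn:uniformboundednew} and the $L^r$-integrability \eqref{eqn:integrabilitynew} of $\phi_{T_\alpha}$ become essential, providing Riemann--Lebesgue-type polynomial decay of the pushforward density outside shrinking neighborhoods of its drift. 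The proof of \eqref{eqn:curvatureprodbound} is a direct variant: one runs the same contradiction with two sequences $G_\ell, H_\ell \in B_{W_1}(G_0, r_\ell)\setminus\{G_\ell=H_\ell\}$, $r_\ell \to 0$, and extracts $(a_i, b_i)$ from the $D_{N_\ell}(G_\ell, H_\ell)$-normalized differences; the same linear-independence argument on $\Hfun^{(\alpha)}$ delivers the contradiction.
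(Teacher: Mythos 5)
Your proposal is correct and follows the same high-level architecture as the paper's proof — contradiction with subsequences, pushforward through the admissible map $T_\alpha$, Fourier inversion to get densities, and a technical CLT-type lemma (the analogue of Lemma~\ref{lem:technical}) to Taylor-expand density differences — but the final analytic step is taken differently, and your route is in fact a bit more direct. The paper divides the density-difference integrand by $f_Y(y|\theta_\alpha^0,\m_\ell)$, restricts the integral to a growing set $B_\ell$ where that density is bounded away from zero, rewrites the ratio as $\E_{\theta_\alpha^0}|\Hfun_\ell(Z_\ell)|$, and then needs the Generalized Continuous Mapping Theorem together with the extended Fatou lemma of~\cite{billingsley2008probability} (Theorem 25.11) to conclude $\E|\Hfun(Z)| = 0$, because the convergence $\Hfun_\ell(z_\ell) \to \Hfun(z)$ along sequences $z_\ell \to z$ is only asserted together with weak convergence of $Z_\ell$. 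You instead keep the raw density difference $\Hfun_\ell^{(\alpha)}(z)$, establish pointwise convergence, and apply ordinary Fatou to $\int |\Hfun_\ell^{(\alpha)}| \,dz \to 0$. This avoids the set $B_\ell$, the weak-convergence step entirely, and the need for the extended Fatou. Your limit function is the paper's affine $\Hfun(z)$ multiplied by the Gaussian density $g_\alpha(z)$; since $g_\alpha > 0$, vanishing a.e.\ again forces $J_{\lambda_\alpha}(\theta_\alpha^0)a_\alpha = 0$ and $b_\alpha = 0$ by linear independence of $\{g_\alpha, \partial_1 g_\alpha,\dots\}$, and full column rank kills $a_\alpha$.

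Two small points worth polishing. First, your sentence about needing an $L^1$ envelope is a red herring: Fatou applied to the nonnegative integrands $|\Hfun_\ell^{(\alpha)}|$ requires only pointwise (a.e.) convergence, not domination; the envelope condition~\eqref{eqn:integrabilitynew} is needed earlier, inside the technical lemma, to justify Fourier inversion and to obtain $L^\infty$-in-$y$ control of the residual, not to license a dominated convergence at the end. Second, the closing sentence ``since $\alpha$ was arbitrary, $(a_i,b_i)=0$ for every $i$'' is harmless but redundant: you already chose $\alpha$ with $(a_\alpha,b_\alpha)\neq 0$, so deriving $(a_\alpha,b_\alpha)=0$ is itself the contradiction. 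Also note that~\eqref{eqn:curvatureprodbound} implies~\eqref{eqn:genthmcon}, so (as the paper does) one may simply run the two-sequence version $G_\ell, H_\ell \to G_0$ directly and get~\eqref{eqn:genthmcon} for free; proving~\eqref{eqn:genthmcon} first and then re-running the argument for~\eqref{eqn:curvatureprodbound} is unnecessary duplication.
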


Note that the condition that $s_i\geq q$ is necessary for the Jacobian matrix of $\lambda_i$, which is of dimension $s_i\times q$, to be of full column rank. The following corollary is useful, when the admissible maps $T_i$ are identical for all $i$, which are the case for many (if not most) examples.

\begin{cor}\label{cor:genthm2ndform}
    Fix $G_0=\sum_{i=1}^{k_0}p_i^0\delta_{\theta_i^0}\in \Ec_{k_0}(\Theta^\circ)$. If there exists one measurable transform
	$T:(\Xfrak, \Ac) \to (\R^{s},\mathcal{B}(\R^{s}))$ that is admissible with respect to $\{\theta_i^0\}_{i=1}^{k_0}$ with $s \geq q$ such that 1) the mean map $\lambda(\theta)$ of $T$ defined in \ref{item:genthmd} is identifiable over the set $\{\theta_i^0\}_{i=1}^{k_0}$, i.e., $\lambda(\theta_j^0) \not= \lambda(\theta_i^0)$ for any distinct $i,j\in [k_0]$ and 2) the Jacobian matrix of  $\lambda$ is of full column rank at $\theta_i^0$ for any $i\in [k_0]$. Then \eqref{eqn:genthmcon} and \eqref{eqn:curvatureprodbound} hold.
\end{cor}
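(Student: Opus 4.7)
The plan is to deduce Corollary \ref{cor:genthm2ndform} immediately from Theorem \ref{thm:genthm} by specializing all of the per-atom admissible transforms to a single common map. Concretely, I would set $T_i := T$, $s_i := s$, and $\lambda_i := \lambda$ for every $i \in [k_0]$, and then simply verify that the three hypotheses of Theorem \ref{thm:genthm} follow from the three hypotheses of the corollary. There is no new analytic content to introduce; the only task is to check that the (seemingly stronger) per-atom hypotheses of the theorem really are weaker than the joint hypotheses placed on the single transform $T$.

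The first hypothesis of Theorem \ref{thm:genthm} requires that each $T_i$ be admissible with respect to $\{\theta_j^0\}_{j=1}^{k_0}$; with the choice $T_i = T$ this is exactly the corollary's assumption that $T$ is admissible with respect to $\{\theta_j^0\}_{j=1}^{k_0}$ in the sense of Definition \ref{def:admissible}. The second hypothesis in the theorem requires identifiability of $\lambda_i$ at the distinguished atom $\theta_i^0$ relative to the remaining atoms, i.e.\ $\lambda_i(\theta_j^0) \neq \lambda_i(\theta_i^0)$ for all $j \in [k_0]\setminus\{i\}$; this follows at once from the corollary's stronger symmetric injectivity condition $\lambda(\theta_i^0) \neq \lambda(\theta_j^0)$ for all distinct $i,j$. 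The third hypothesis, that the Jacobian of $\lambda_i$ have full column rank at $\theta_i^0$, is exactly the corollary's assumption on the Jacobian of $\lambda$ at each $\theta_i^0$. Invoking Theorem \ref{thm:genthm} then delivers both \eqref{eqn:genthmcon} and \eqref{eqn:curvatureprodbound}, completing the proof.

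I do not anticipate any substantive obstacle here: the corollary is really just a convenient packaging of Theorem \ref{thm:genthm} adapted to the very common situation in which a single measurable map $T$ already separates all the atoms of $G_0$ and has everywhere-injective first-order behavior on the support of $G_0$. The utility of stating it separately is practical rather than logical, since most of the examples to follow (including the exponential-family setting of Section \ref{sec:inverse-exponential} and the hierarchical examples deferred to Section \ref{sec:mixofmix}) exhibit a single natural choice of $T$ rather than $k_0$ distinct ones, so verifying admissibility once suffices.
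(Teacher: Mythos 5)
Your proposal is correct and matches the approach the paper intends: the corollary is exactly the specialization of Theorem \ref{thm:genthm} obtained by taking $T_i = T$, $s_i = s$, and $\lambda_i = \lambda$ for every $i \in [k_0]$, and the paper leaves the deduction implicit for the same reason you note — there is nothing to do beyond observing that the joint hypotheses on the single $T$ supply the per-atom hypotheses of the theorem.
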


The proofs of Theorem~\ref{thm:expfam} and Theorem~\ref{thm:genthm} contain a number of potentially useful techniques and are deferred to Section~\ref{sec:proofprodinvbou}. We make additional remarks.

\begin{rem}[Choices of admissible transform $T$]
\label{rem:intuitiontochooseT}
If the probability kernel $P_\theta$  has a smooth closed form expression for the characteristic function and $\Xfrak$ is of dimension exceeding the dimension of $\Theta$, one may take $T$ to be identity map (see Example \ref{exa:uniformcontinue} in the sequel). If $\Xfrak$ is of dimension less than the dimension of $\Theta$, then one may take $T$ to be a moment map (see Example \ref{exa:exponentialcontinuerevision} and Example \ref{exa:kernelmixture}). On the other hand, if the probability kernel does not have a smooth closed form expression for the characteristic function, then one may consider $T$ to be the composition of moment maps and indicator functions of suitable subsets of $\Xfrak$ (see Example \ref{exa:dirichlet}). Unlike the three previous examples, the chosen $T$ in Example \ref{exa:dirichlet} depends on atoms $\{\theta_i^0\}_{i=1}^{k_0}$ of $G_0$. All these examples were obtained by constructing a single admissible map $T$ following Corollary \ref{cor:genthm2ndform}. There might exist cases for which it is difficult to come up with a single admissible map $T$ that satisfies the conditions of Corollary \ref{cor:genthm2ndform};
For such cases Theorem \ref{thm:genthm} will be potentially more useful. 
\myeoe
\end{rem}

\begin{rem}[Comparisons between Theorem \ref{thm:genthm} and Theorem \ref{thm:expfam}] 
\label{rem:comparisonsbetweentwotheorems}
     While Theorem \ref{thm:genthm} appears more powerful than Theorem~\ref{thm:expfam}, the latter is significant in its own right. Indeed, Theorem \ref{thm:genthm} provides an inverse bound for a very broad range of probability kernels, but it seems not straightforward to apply it to non-degenerate discrete distributions on lattice points, like Poisson, Bernoulli, geometric distributions etc. The reason is that for non-degenerate discrete distributions on lattice points, its characteristic function is periodic (see Lemma 4 in Chapter XV, Section 1 of \cite{feller2008introduction}), which implies that the characteristic function is not in $L^r$ for any $r\geq 1$. Thus, it does not satisfy  \ref{item:genthmg} for $T$ the identity map in the definition of the admissible transform. In order to apply Theorem \ref{thm:genthm} to such distributions one has to come up with suitable measurable transforms $T$ which induce distributions over a countable support that is not lattice points. On the contrary, Theorem~\ref{thm:expfam} is readily applicable to discrete distributions that are in the exponential family, including Poisson, Bernoulli, geometric distributions, etc.  \myeoe
\end{rem}	


\subsection{Examples of non-standard probability kernels}
\label{sec:non-standard}
The power of Theorem~\ref{thm:genthm} lies in its applicability to classes of kernels that do not belong to the exponential families. 

\begin{exa}[Continuation on uniform probability kernel] \label{exa:uniformcontinue}
In Example \ref{exa:uniform} this example has been shown to satisfy inverse bound \eqref{eqn:noproductlowbou} and \eqref{eqn:curvaturebound} for any $G_0\in \Ec_{k_0}(\Theta)$. Note this family is not an exponential family and thus Theorem \ref{thm:expfam} or Corollary \ref{cor:expfamtheta} is not applicable. Take the $T$ in Corollary \ref{cor:genthm2ndform} to be the identity map. 
Then $\lambda(\theta)=\frac{\theta}{2}$, $\Lambda_\theta= \frac{\theta^2}{12}$. So condition \ref{item:genthmd} is satisfied. The characteristic function is
	$$
	\phi_T(\zeta|\theta)= \frac{e^{\ive\zeta\theta}-1}{\ive\zeta\theta} \bm{1}(\zeta \not = 0) + \1ve(\zeta =0).  
	$$
	One can then calculate
	\begin{align*}
	\frac{\partial}{\partial \theta}\phi_T(\zeta|\theta) =& \frac{e^{\ive \zeta\theta}(e^{-\ive\zeta\theta}-1-(-\ive\zeta\theta))}{\ive\zeta\theta^2} \bm{1}(\zeta\not= 0),\\
	\frac{\partial^2}{\partial \zeta \partial \theta}\phi_T(\zeta|\theta) = & \frac{-e^{\ive\zeta\theta}(e^{-\ive\zeta\theta}-1-(-\ive\zeta\theta)-(-\ive\zeta\theta)^2)}{\ive \zeta^2\theta^2}\bm{1}(\zeta\not=0) + \frac{\bm{i}}{2}\bm{1}(\zeta=0),\\
	\frac{\partial^2}{\partial  \theta^2}\phi_T(\zeta|\theta) = & \frac{-2e^{\ive\zeta\theta}(e^{-\ive\zeta\theta}-1-(-\ive\zeta\theta)-\frac{1}{2}(-\ive\zeta\theta)^2)}{\ive \zeta\theta^3}\bm{1}(\zeta\not=0),
	\end{align*}
	and verify the condition \ref{item:genthme}. To verify \ref{item:genthmg} the following inequality (see \cite[(9.5)]{resnick2014probability}) 
	$$
	\left|e^{\ive x} - \sum_{k=0}^{j}\frac{(\ive x)^k}{k!}\right|\leq 2\frac{|x|^j}{j!}
	$$
	comes handy. It then follows that
	\begin{align*}
	\left|\frac{\partial}{\partial \theta}\phi_T(\zeta|\theta)\right| \leq  \frac{2}{\theta},\quad \left|\frac{\partial^2}{\partial \zeta \partial \theta}\phi_T(\zeta|\theta)\right| \leq  \frac{3}{2},\quad \left|\frac{\partial^2}{\partial  \theta^2}\phi_T(\zeta|\theta)\right| \leq  \frac{2|\zeta|}{\theta}.
	\end{align*}
	Then 
	\eqref{eqn:uniformboundednew} holds. Finally take $r=3$ and one obtains
	$$
	|\phi_T(\zeta|\theta)|^3\left(1+\left|\frac{\partial^2}{\partial  \theta^2}\phi_T(\zeta|\theta)\right|\right)\leq \begin{cases} 1+\frac{2}{\theta} & |\zeta|\leq 1 \\ \frac{8}{|\zeta|^3\theta^3}\left(1+\frac{2|\zeta|}{\theta}\right) & |\zeta|>1 \end{cases}.
	$$
	Thus \eqref{eqn:integrabilitynew} holds. 
	We have then verified that the identity map $T$ is admissible on $\Theta$.
	
	It is easy to see that $\lambda(\theta)=\theta/2$ is injective and that its Jacobian  $J_\lambda(\theta)=\frac{1}{2}$ is full rank. Then by Corollary \ref{cor:genthm2ndform}, \eqref{eqn:genthmcon} and \eqref{eqn:curvatureprodbound} hold for any $G_0\in \Ec_{k_0}(\Theta)$ for any $k_0\geq 1$. Moreover, by Remark \ref{rem:N1Ninfcon}, $n_1(G_0)=\underbar{\m}_1(G_0) = n_2(G_0) = \underbar{\m}_2(G_0) = 1$ for any $G_0\in \Ec_{k_0}(\Theta)$ for any $k_0\geq 1$. 
	\myeoe
\end{exa}

\begin{exa}[Continuation on location-scale exponential kernel] \label{exa:exponentialcontinuerevision} 
	In Example \ref{exa:locscaexp} this example has been shown to satisfy \eqref{eqn:noproductlowbou} for any $G_0\in \Ec_{k_0}(\Theta)$ and does not satisfy \eqref{eqn:curvaturebound} for some $G_0\in \Ec_{k_0}(\Theta)$ for any $k_0\geq 2$. Note this family is not an exponential family and thus Theorem \ref{thm:expfam} or Corollary \ref{cor:expfamtheta} is not applicable. Take $T$ in Corollary \ref{cor:genthm2ndform} to be $Tx=(x,x^2)^\top $ as a map from $\R\to \R^2$. In Appendix \ref{sec:detailexponentialexa} we show that all conditions of Corollary \ref{cor:genthm2ndform} are satisfied and hence \eqref{eqn:genthmcon} and \eqref{eqn:curvatureprodbound} hold for any $G_0\in \Ec_{k_0}(\Theta)$ for any $k_0\geq 1$. 
	Moreover, by Remark \ref{rem:N1Ninfcon}, $n_1(G_0)=\underbar{\m}_1(G_0)=1$ for any $G_0\in \Ec_{k_0}(\Theta)$ for any $k_0\geq 1$. Regarding $n_2(G_0)$, for every $k_0\geq 2$, there exists some $G_0\in \Ec_{k_0}(\Theta)$ such that $1<n_2(G_0)<\infty$.
	\myeoe
\end{exa}  

\begin{exa}[$P_\theta$ is itself a mixture distribution]
\label{exa:kernelmixture}
We consider the situation where $P_\theta$ is a rather complex object: it is itself a mixture distribution. With this we are moving from a standard mixture of product distributions to hierarchical models (i.e., mixtures of mixture distributions). Such models are central tools in Bayesian statistics. Theorem \ref{thm:expfam} or Corollary \ref{cor:expfamtheta} is obviously not applicable in this example, which requires the full strength of Theorem \ref{thm:genthm} or Corollary \ref{cor:genthm2ndform}. The application, however, is non-trivial requiring the development of tools for evaluating oscillatory integrals of interest. Such tools also prove useful in other contexts (such as Example~\ref{exa:dirichlet}). A full treatment is deferred to Section~\ref{sec:mixofmix}. \myeoe
\end{exa}

\begin{exa}[$P_\theta$ is a mixture of Dirichlet processes]
\label{exa:dirichlet} 
This example illustrates the applicability of our theory to models using probability kernels defined in abstract spaces. Such kernels are commonly found in nonparametric Bayesian literature~\cite{hjort2010bayesian,ghosal2017fundamentals}. In particular, in our specification of mixture of product distributions we will employ Dirichlet processes as the basic building block~\cite{ferguson1973bayesian,antoniak1974mixtures}. Full details are presented in Section~\ref{sec:dirichlet}.
\myeoe
\end{exa}
	
\section{Posterior contraction of de Finetti mixing measures}\label{sec:poscon}
	
	\label{sec:variable-sized}
	The data are $\n$ independent sequences of exchangeable observations, $X_{[\m_i]}^i = (X_{i1},X_{i2},\cdots,X_{i\m_i}) \in \Xfrak^{\m_i}$ for $i\in[\n]$. Each sequence $X_{[\m_i]}^i$ is assumed to be a sample drawn from a mixture of $\m_i-$product distributions $P_{G,\m_i}$ for some "true" de Finetti mixing measure $G=G_0 \in \Ec_{k_0}(\Theta)$. 
	The problem is to estimate $G_0$ given the $m$ independent exchangeable sequences.
	A Bayesian statistician endows $(\Ec_{k_0}(\Theta), \Bc(\Ec_{k_0}(\Theta)))$ with a prior distribution $\Pi$ and obtains the posterior distribution $\Pi(dG| X^1_{[\m_1]},\ldots, X^{\n}_{[\m_m]})$ by Bayes' rule, where $\Bc(\Ec_{k_0}(\Theta))$ is the Borel sigma algebra w.r.t. $D_1$ distance. In this section we study the asymptotic behavior of this posterior distribution as the amount of data $\n\times\m$ tend to infinity. 
	
	 Suppose throughout this section,  $\{P_{\theta}\}_{\theta\in \Theta}$ has density $\{f(x|\theta)\}_{\theta\in \Theta}$ w.r.t. a $\sigma$-finite dominating measure $\mu$ on $\Xfrak$; 
	 then $P_{G,\m_i}$ for $G=\sum_{i=1}^{k_0}p_i\delta_{\theta_i}$ has density w.r.t. to $\mu$:
	\begin{equation}
	p_{G,\m_i}(\bar{x}) = \sum_{i=1}^{k_0} p_i\prod_{j=1}^{\m_i} f(x_j|\theta_i), \quad \text{for } \bar{x}=(x_1,x_2,\cdots,x_{\m_i})\in \Xfrak^{\m_i}.  \label{eqn:pGNdef}
   \end{equation}
	Then the density of $X_{[\m_i]}^i$ conditioned on $G$ is $p_{G,\m_i}(\cdot)$.
	Since $\Theta$ as a subset of $\R^q$ is separable, $\Ec_{k_0}(\Theta)$ is separable. Moreover, suppose the map $\theta\mapsto \P_\theta$ from $(\Theta,\|
	\cdot\|_2)$ to $(\{\P_\theta\}_{\theta\in \Theta}, h(\cdot,\cdot))$ is  continuous, where $h(\cdot,\cdot)$ is the Hellinger distance. Then the map $G\mapsto P_{G,N}$ from $ (\Ec_{k_0}(\Theta),D_1) \to (p_{G,\m},h(\cdot,\cdot))$ is also continuous by Lemma \ref{lem:hellingeruppbou}. Then by \cite[Lemma 4.51]{guide2006infinite}, $(x,G)\mapsto p_{G,\m}(x)$ is measurable for each $\m$. 
	Thus, the posterior distribution (a version of regular conditional distribution) is the random measure given by 
	\begin{equation}
	\Pi(B|X_{[\m_1]}^1,\ldots,X_{[\m_{\n}]}^{\n}) = \frac{\int_B\prod_{i=1}^\n p_{G,\m_i}(X_{[\m_i]}^i)d\Pi(G)}{\int_{\Ec_{k_0}(\Theta)}\prod_{i=1}^\n p_{G,\m_i}(X_{[\m_i]}^i)d\Pi(G)}, \label{eqn:bayesrule}
	\end{equation}
	for any Borel measurable subset of $B \subset \Ec_{k_0}(\Theta)$.	
	For further details of why the last quantity is a valid posterior distribution, we refer to Section 1.3 in \cite{ghosal2017fundamentals}. 
	It is customary to express the above model equivalently in the hierarchical Bayesian fashion: \begin{align*}
	&G \sim \Pi,  \quad \theta_1,\theta_2,\cdots,\theta_\n|G \overset{i.i.d.}{\sim} G\\
	&X_{i1},X_{i2},\cdots,X_{i\m_i} |\theta_i  \overset{i.i.d.}{\sim} f(x|\theta_i) \quad \text{for }i=1,\cdots,\n.
	\end{align*}
	As above, the $\n$ independent data sequences are denoted by $X_{[\m_i]}^i = (X_{i1},\cdots,X_{i\m_i}) \in \Xfrak^{\m_i }$ for $i\in[\n]$. 
The following assumptions are required for the main theorems of this section.
	
	\begin{enumerate}[label=(B\arabic*)]
	
	\item \label{item:prior} (Prior assumption)  There is a prior measure $\Pi_{\theta}$ on $\Theta_1\subset \Theta$ with its Borel sigma algebra possessing a density w.r.t. Lebesgue measure that is bounded away from zero and infinity, where $\Theta_1$ is a compact subset of $\Theta$. Define $k_0$-probability simplex $\Delta^{k_0}:=\{(p_1,\ldots,p_{k_0}) \in \R_+^{k_0} | 
	\sum_{i=1}^{k_0} = 1\}$, Suppose there is a prior measure $\Pi_p$ on the $k_0$-probability simplex possessing a density w.r.t. Lebesgue measure on $\R^{k_0-1}$ that is bounded away from zero and infinity. Then $\Pi_p\times \Pi_{\theta}^{k_0}$ is a measure on $\{((p_1,\theta_1),\ldots,(p_{k_0},\theta_{k_0}))|p_i\geq 0, \theta_i\in \Theta_1, \sum_{i=1}^{k_0}p_i=1\}$, which induces a probability measure on $\Ec_{k_0}(\Theta_1)$. 
	Here, the prior $\Pi$ is generated by via independent $\Pi_p$ and $\Pi_\theta$ and the support $\Theta_1$ of $\Pi_\theta$ is such that $G_0\in \Ec_{k_0}(\Theta_1)$.


	\item \label{item:kernel} (Kernel assumption) Suppose that for every $\theta_1,\theta_2\in \Theta_1$, $\theta_0\in \{\theta_i^0\}_{i=1}^{k_0}$ and some positive constants $\alpha_0,L_1, \beta_0, L_2$,
	 \begin{align}
	 K(f(x|\theta_0),f(x|\theta_2))\leq &L_1\|\theta_0-\theta_2\|_2^{\alpha_0}, \label{eqn:KLlipschitz} \\
	 h(f(x|\theta_1),f(x|\theta_2))\leq & L_2\|\theta_1-\theta_2\|_2^{\beta_0}. \label{eqn:hellingerlipschitz}
	 \end{align}
	\end{enumerate}
	
	\begin{rem} \label{rem:Lippowerbou}
	\ref{item:prior} on the compactness of the support $\Theta_1$ is a standard assumption so as to obtain parameter convergence rate in finite mixture models (see~\cite{chen1995optimal,nguyen2013convergence,ho2016convergence,ho2016strong,ghosal2017fundamentals,heinrich2018strong, wu2020optimal}).
	See Section \ref{sec:compactness} for more discussions on the compactness assumption and an relaxation to boundedness assumption. The unbounded setting seems challenging and is beyond the scope of this paper. In this paper, for simplicity we consider the prior on finite mixing measures being generated by independent priors on component parameters $\theta$ and an independent prior on mixing proportions $p_i$ \cite{Rousseau-Mengersen-11,nguyen2013convergence,guha2019posterior} for general probability kernel $\{f(x|\theta)\}$. It is not difficult to extend our theorem to more complex forms of prior specification when a specific kernel $\{f(x|\theta)\}$ is considered. 

	The condition \ref{item:kernel} is not uncommon in parameter estimation (e.g. Theorem 8.25 in \cite{ghosal2017fundamentals}). Note that the conditions in \ref{item:kernel} imply some implicit constraints on $\alpha_0$ and $\beta_0$. Specifically,
	if \eqref{eqn:noproductlowbou} holds for $G_0\in \Ec_{k_0}(\Theta^\circ)$ and \ref{item:kernel} holds, then $\beta_0\leq 1$ and $\alpha_0\leq 2$. Indeed, for any sequence $G_\ell=\sum_{i=2}^{k_0} p^0_i\delta_{\theta_i^0} + p^0_1\delta_{\theta^\ell_1}\in\Ec_{k_0}(\Theta)\backslash \{G_0\}$ converges to $G_0=\sum_{i=1}^{k_0}p^0_i\delta_{\theta_i^0}$, by \eqref{eqn:noproductlowbou}, Lemma \ref{lem:Vupperbou} with $\m=1$ and \ref{item:kernel}, for large $\ell$
	\begin{multline} 
	C(G_0)\|\theta_1^\ell-\theta_1^0\|_2 = C(G_0)D_1(G_\ell,G_0) \leq V(P_{G_\ell},P_{G_0})\\ \leq V(f(x|\theta^\ell_1),f(x|\theta^0_1)) \leq h(f(x|\theta^\ell_1),f(x|\theta^0_1))\leq  L_2\|\theta_1^\ell-\theta^0_1\|^{\beta_0}_2,\label{eqn:hbeta0bou}
	\end{multline}
	which implies $\beta_0\leq 1$ (by dividing both sides by $\|\theta_1^\ell-\theta_1^0\|_2$ and letting  $\ell\to\infty$). In the preceding display $C(G_0)=\frac{1}{2}\liminf_{\substack{G\overset{W_1}{\to} G_0\\ G\in \Ec_{k_0}(\Theta)}} \frac{V(p_G,p_{G_0})}{D_1(G,G_0)}>0$.
	By \eqref{eqn:hbeta0bou} and Pinsker's inequality, 
	$$
	C(G_0)\|\theta_1^\ell-\theta_1^0\|_2\leq V(f(x|\theta^\ell_1),f(x|\theta^0_1))\leq \sqrt{\frac{1}{2}K(f(x|\theta^0_1),f(x|\theta^\ell_1))} \leq \sqrt{\frac{1}{2}L_1 \|\theta_1^\ell-\theta^0_1\|^{\alpha_0}_2},
	$$ for large $\ell$,
	which implies $\alpha_0\leq 2$. 
	The same conclusion holds if one replaces \eqref{eqn:noproductlowbou} with \eqref{eqn:genthmcon} by
	an analogous argument. 
	\myeoe
	\end{rem}

An useful quantity is the average sequence length $\bar{\m}_{\n}=\frac{1}{\n}\sum_{i=1}^{\n}\m_i$. The posterior contraction theorem will be characterized in terms of distance $D_{\bar{\m}_{\n}}(\cdot, \cdot)$, which extends the original notion of distance $D_\m(\cdot,\cdot)$ by allowing real-valued weight $\bar{\m}_{\n}$. 

	\begin{thm}\label{thm:posconnotid} 
	Fix $G_0\in\Ec_{k_0}(\Theta^\circ)$. Suppose  \ref{item:prior}, 
		\ref{item:kernel} and additionally \eqref{eqn:genthmcon} hold. 
	\begin{enumerate}[label=\alph*)]
	  \item \label{item:posconnotida} 
	  There exists some constant $C(G_0)>0$ such that as long as $n_0(G_0,\cup_{k\leq k_0}\Ec_k(\Theta_1)) \vee n_1(G_0) \leq \min_{i} \m_i \leq \sup_{i} \m_i < \infty$, for every $\bar{M}_\n\to \infty$ there holds
	\begin{align*}
	&\Pi \biggr (G\in \Ec_{k_0}(\Theta_1): D_{\bar{\m}_{\n}}(G,G_0) \geq C(G_0)\bar{M}_\n \sqrt{ \frac{\ln(\n\bar{\m}_{\n})}{\n} } \biggr |X_{[\m_1]}^1, \ldots,X_{[\m_{\n}]}^{\n} \biggr ) \to  0
	\end{align*}
	in $\bigotimes_{i=1}^{m}\P_{G_0,\m_i}$-probability as $\n\to \infty$.
	
	\item \label{item:posconnotidb}
	If in addition, ~\eqref{eqn:noproductlowbou} is satisfied. Then the claim in part \ref{item:posconnotida} holds with $n_1(G_0) = 1$.
	\end{enumerate}
	\end{thm}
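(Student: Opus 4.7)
The plan is to apply the Ghosal--van der Vaart posterior-contraction framework for independent, non-i.i.d.\ observations on $\Ec_{k_0}(\Theta_1)$ with the natural averaged squared Hellinger semi-metric
\[
d_{\n}^2(G,G_0) \;=\; \frac{1}{\n}\sum_{i=1}^{\n} h^2\!\bigl(P_{G,\m_i},\,P_{G_0,\m_i}\bigr),
\]
first obtain contraction at rate $\epsilon_{\n}^2 = C\ln(\n\bar{\m}_{\n})/\n$ in $d_{\n}$, and then invert the bound to $D_{\bar{\m}_{\n}}$ via the inverse inequality \eqref{eqn:genthmcon}. Throughout, the hypothesis $\sup_i\m_i<\infty$ keeps $\bar{\m}_{\n}$ bounded between $n_0\vee n_1$ and a fixed constant, so $\bar{\m}_{\n}$ plays the role of a constant in the analysis.

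For the three conditions of the framework: (i)~the KL prior-mass condition follows from \ref{item:kernel} together with the tensorization $K(P_{\theta_0,\m},P_{\theta,\m})=\m\, K(P_{\theta_0},P_\theta)$ and the standard KL decomposition for finite mixtures (atoms of $G$ paired with those of $G_0$ via the nearest-neighbor permutation), which yield $K(p_{G_0,\m_i},p_{G,\m_i})\lesssim \m_i\sum_j\|\theta_j-\theta_j^0\|_2^{\alpha_0}+\sum_j|p_j-p_j^0|$ and a similar bound for the second-moment term; with the bounded Lebesgue densities of \ref{item:prior}, the KL ball of radius $\epsilon_\n$ then carries prior mass polynomial in $\epsilon_\n^2/\bar{\m}_\n$, which exceeds the required $e^{-C'\n\epsilon_\n^2}=(\n\bar{\m}_\n)^{-CC'}$ once $C$ is chosen large enough. (ii)~I take the sieve to be all of $\Ec_{k_0}(\Theta_1)$, making the remaining-mass condition vacuous; from $h(P_{\theta,\m},P_{\theta',\m})\leq\sqrt{\m}\,L_2\|\theta-\theta'\|_2^{\beta_0}$ one gets $d_\n(G,G')\lesssim\bar{\m}_\n^{1/2}D_1(G,G')^{\beta_0}$, so that $\log N(\epsilon_\n,\Ec_{k_0}(\Theta_1),d_\n)\lesssim\log(\n\bar{\m}_\n)\asymp\n\epsilon_\n^2$. (iii)~Tests exist by the standard Le Cam/Birg\'e Hellinger construction for independent observations. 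These ingredients give $\Pi(d_\n(G,G_0)\geq \bar M_\n\epsilon_\n\mid\{X^i_{[\m_i]}\}_{i=1}^\n)\to 0$ in $\bigotimes_i P_{G_0,\m_i}$-probability for every $\bar M_\n\to\infty$.

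The crux and main obstacle is converting this $d_\n$-contraction into a $D_{\bar{\m}_{\n}}$-contraction. I split the sieve into a $W_1$-ball $\{W_1(G,G_0)<r\}$ and its complement. On the complement, Lemma \ref{cor:VlowbouW1}---applicable because Proposition \ref{lem:n0nbar} part \ref{item:n1Nbar} gives $n_1(G_0)<\infty$ under \eqref{eqn:genthmcon} and $n_0(G_0,\cup_{k\leq k_0}\Ec_k(\Theta_1))<\infty$ by compactness of $\Theta_1$---gives $V(P_{G,\m_i},P_{G_0,\m_i})\geq C(G_0,\Theta_1)\,r$, so $d_\n(G,G_0)$ is bounded below by a positive constant and this region has vanishing posterior mass. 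Inside the ball, Lemma \ref{cor:identifiabilityallm} with $\m_0=\sup_i\m_i$ yields $V(P_{G,\m_i},P_{G_0,\m_i})\geq C(G_0)D_{\m_i}(G,G_0)$ with $C(G_0)$ \emph{independent of $\m_i$} for every $\m_i\in[n_0\vee n_1,\sup_j\m_j]$. For $G$ sufficiently close to $G_0$, a single nearest-neighbor permutation $\tau$ realizes every $D_{\m_i}(G,G_0)$; setting $a_j=\|\theta_{\tau(j)}-\theta_j^0\|_2$ and $b_j=|p_{\tau(j)}-p_j^0|$, expanding the square, dropping nonnegative cross terms, and then applying $(x+y)^2\leq 2x^2+2y^2$ and Cauchy--Schwarz give
\[
\n\, d_\n^2(G,G_0)\;\geq\; \tfrac{1}{2}\sum_{i=1}^\n V^2(P_{G,\m_i},P_{G_0,\m_i})\;\geq\; \tfrac{C(G_0)^2}{2}\sum_{i=1}^\n D_{\m_i}^2(G,G_0)\;\geq\; \tfrac{C(G_0)^2}{4k_0}\,\n\,D_{\bar{\m}_\n}^2(G,G_0),
\]
using $h^2\geq V^2/2$ in the first step. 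This yields $D_{\bar{\m}_\n}(G,G_0)\leq C'(G_0)\,d_\n(G,G_0)$ on the shrunk sieve, completing part \ref{item:posconnotida}. Part \ref{item:posconnotidb} is then immediate, since under \eqref{eqn:noproductlowbou} Remark \ref{rem:N1Ninfcon} gives $n_1(G_0)=1$.
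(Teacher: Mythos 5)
Your proposal is correct and follows essentially the same route as the paper: verify the prior-mass and entropy conditions for a Ghosal--van der Vaart contraction theorem in the averaged Hellinger semimetric $d_{\n,h}$, then use Lemma \ref{cor:identifiabilityallm} (with $C(G_0)$ uniform over $\m_i\in[n_0\vee n_1,\sup_i\m_i]$) together with Lemma \ref{lem:optimalpermutation} to pass from $\sum_i D_{\m_i}^2$ to $\n D_{\bar{\m}_\n}^2$, splitting off the $W_1$-far region by posterior consistency. The only differences are cosmetic: you bound the global rather than local entropy (harmless since the latter is dominated by the former), and your final constant carries an unnecessary $k_0$ and a superfluous factor of $1/2$ from using $h^2\geq V^2/2$ where the paper's convention already gives $h\geq V$ -- but neither affects validity.
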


\begin{rem} 
As discussed in Remark \ref{rem:claytonresult}, \cite{vandermeulen2019operator} establishes that $ n_0(G_0,\cup_{k\leq k_0}\Ec_k(\Theta))\leq 2k_0 -1$ for any $G_0\in \Ec_{k_0}(\Theta)$. While the uniform upper bound $2k_0-1$ might not be tight, it does show  $n_0(G_0,\cup_{k\leq k_0}\Ec_k(\Theta_1))<\infty$. By Proposition~\ref{lem:n0nbar} \ref{item:n1Nbar}, $n_1(G_0) < \infty$ is a direct consequence of \eqref{eqn:genthmcon}. Hence $n_0(G_0,\cup_{k\leq k_0}\Ec_k(\Theta_1)) \vee n_1(G_0) <\infty$.
\myeoe
    
\end{rem}

\begin{rem} \label{rem:consequenceofposcon}
	\label{rem:posconnotiid}
	 (a) In the above statement, note that the constant $C(G_0)$ also depends on $\Theta_1$, $k_0$, $q$, upper and lower bounds of the densities of $\Pi_\theta$, $\Pi_p$ and the density family $f(x|\theta)$ (including $\alpha_0$, $\beta_0$, $L_1$, $L_2$ etc). All such dependence are suppressed for the sake of a clean presentation; it is the dependence on $G_0$ and the \emph{independence} of $\n,\{\m_i\}_{i\geq 1}$ and $\m_0:=\sup_{i}\m_i < \infty$, that we want to emphasize. In addition, although $C(G_0)$ and hence the vanishing radius of the ball characterized by $D_{\bar{\m}_\n}$ does \emph{not} depend on $\m_0$, the rate at which the posterior probability statement concerning this ball tending to zero may depend on it.
	
	(b) Roughly speaking, the theorem produces the following posterior contraction rates. The rate toward mixing probabilities $p_i^0$ is $O_P((\ln(\n\bar{\m}_\n)/\n)^{1/2})$. 
   Individual atoms $\theta_i^0$ have much faster contraction rate, which utilizes the full volume of the data set: 
   \begin{equation}
   O_P\left(\sqrt{\frac{\ln(\n\bar{\m}_\n)}{\n\bar{\m}_\n}}\right) = 
   O_P\biggr (\sqrt{\frac{\ln (\sum_{i=1}^{\n} \m_i)}{\sum_{i=1}^{\n} \m_i}} \biggr). \label{eqn:atomconvergencerate}
   \end{equation}
	Note that the condition that $\sup_i N_i<\infty$ implies that $\bar{N}_m$ remains finite when $m\to \infty$. Since constant $C(G_0)$ is independent of $\bar{N}_m$ and $m$, the theorem establishes that the larger the average length of observed sequences, the faster the posterior contraction as $m\to \infty$. 
	
	(c) The distinction between the two parts of the theorem highlights the role of first-order identifiability in mixtures of $\m$-product distributions. Under first-order identifiability, ~\eqref{eqn:noproductlowbou} is satisfied, so we can establish the aforementioned posterior contraction behavior for a full range of sequence length $\m_i$'s, as long as they are uniformly bounded by an arbitrary unknown constant. When first-order identifiability is not satisfied, so~\eqref{eqn:noproductlowbou} may fail to hold, the same posterior behavior can be ascertained when the sequence lengths exceed certain threshold depending on the true $G_0$, namely, $n_1(G_0)\vee n_0(G_0,\cup_{k\leq k_0}\Ec_k(\Theta_1))$.
	
	(d) The proof of Theorems~\ref{thm:posconnotid}  utilizes general techniques of Bayesian asymptotics (see  \cite[Chapter 8]{ghosal2017fundamentals}) to deduce posterior contraction rate on the   density $h(P_{G,N},P_{G_0,N})$. The main novelty lies in the 
	 application of 
	 inverse bounds for mixtures of product distributions of exchangeable sequences established in Section~\ref{sec:inversebounds}. These are lower bounds of distances  $h(P_{G_0,\m},P_{G,\m})$ between a pair of distributions ($P_{G_0,\m}, P_{G,\m}$) in terms of distance $D_{\m}(G_0,G)$  between the corresponding $(G_0,G)$. The distance $D_{\m}(G_0,G)$ brings out the role of the sample size $\m$ of exchangeable sequences, resulting in the rate $\m^{-1/2}$ (or $\bar{N}_m^{-1/2}$, modulo the logarithm). 
\myeoe
	\end{rem}

The gist of the proof of Theorem \ref{thm:posconnotid} lies in the following lemma where we consider the equal length data sequence to distill the essence. This lemma also illustrates the connection between the inverse bound \eqref{eqn:genthmcon} and the convergence rate for the mixing measure $G_0$.

\begin{lem} \label{lem:convergencerate}
Consider $N\geq n_1(G_0)$ be fixed and suppose \eqref{eqn:genthmcon} holds. Let $X^i_{[N]} \overset{\operatorname{i.i.d.}}{\sim} p_{G_0,N}$ and let $\Pi$ be a prior distribution on $\Ec_{k_0}(\Theta)$. Suppose the posterior contraction rate towards the true mixture density $p_{G_0,N}$ is $\epsilon_{m,N}$: for any $\bar{M}_m\to \infty$,  $\Pi(V(P_{G,N},P_{G_0,N})\geq \bar{M}_m \epsilon_{m,N}| X^1_{[N]},\ldots, X^m_{[N]} )\to 0$ in probability as $m\to \infty$. Suppose the posterior consistency at the true mixing measure $G_0$ w.r.t. the distance $W_1$ holds: for any $a>0$, $\Pi(W_1(G,G_0) \geq a | X^1_{[N]},\ldots, X^m_{[N]} )\to 0$ in probability as $m\to \infty$. Then the posterior contraction rate to $G_0$ w.r.t. distance $D_N$ is  $\epsilon_{m,N}$, i.e. $\Pi(D_N(G,G_0)\geq \bar{M}_m \epsilon_{m,N}| X^1_{[N]},\ldots, X^m_{[N]} )\to 0$ in probability as $m\to \infty$.
\end{lem}
\begin{proof} All probabilities presented in this proof are posterior probabilities conditioning on the data $X^1_{[N]},\ldots, X^m_{[N]}$, of which the conditioning notation are suppressed for brevity.   
    \begin{align*}
    &\Pi(D_N(G,G_0)\geq \bar{M}_m \epsilon_{m,N})\\
    \leq 
    &\Pi(D_N(G,G_0)\geq \bar{M}_m \epsilon_{m,N}, W_1(G,G_0)<c(G_0,N) ) + \Pi( W_1(G,G_0)\geq c(G_0,N)  ) \\
    \leq 
    & \Pi(V(P_{G,N},P_{G_0,N})\geq C(G_0)\bar{M}_m \epsilon_{m,N}) + \Pi( W_1(G,G_0)\geq c(G_0,N)  ), \numberthis \label{eqn:convergencerate1}
    \end{align*}
    where in the first inequality $c(G_0,N)$ is the radius in Lemma \ref{cor:identifiabilityallm} with $N_0=N$, and the second inequality follows by Lemma \ref{cor:identifiabilityallm}. The proof is completed by noticing that the quantity in \eqref{eqn:convergencerate1} 
    converges to $0$ in probability as $m\to\infty$ by the hypothesises for any $\bar{M}_m\to \infty$. 
\end{proof}

\begin{rem} \label{rem:convergencerate}
Roughly speaking, the hypothesis of posterior consistency guarantees that as $m\to\infty$, $G$ lies in a small ball around $G_0$ w.r.t. $W_1$ distance, and then Lemma \ref{cor:identifiabilityallm} transfers the convergence rate from mixture densities to mixing measures. No particular structures from posterior distributions are utilized and one can easily modify the above lemma for other estimators, the maximum likelihood estimator for instance.

Theorem \ref{thm:posconnotid} can be seen as some sufficient conditions on the prior $\Pi$ and the kernel $f$ such that the hypothesises in Lemma \ref{lem:convergencerate} are satisfied. The setup in Theorem \ref{thm:posconnotid} is slightly more general, in that each data might have different sequence length. \myeoe
\end{rem}

Finally, the conditions \ref{item:kernel} and \eqref{eqn:genthmcon} can be verified for full rank exponential families and hence we have the following corollary from Theorem \ref{thm:posconnotid}. 

\begin{cor}  \label{cor:posconexp} 
Consider a full rank exponential family for kernel $P_\theta$ specified as in Corollary \ref{cor:expfamtheta} and assume all the requirements there are met. Fix $G_0\in \Ec_{k_0}(\Theta^\circ)$. Suppose that $\ref{item:prior}$ holds with $\Theta_1\subset \Theta^\circ$. Then the conclusions \ref{item:posconnotida}, \ref{item:posconnotidb} of Theorem \ref{thm:posconnotid} hold.
\end{cor}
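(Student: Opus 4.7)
My plan is to verify the hypotheses of Theorem \ref{thm:posconnotid} in the exponential family setting and then invoke the theorem directly. Assumption \ref{item:prior} is part of the corollary's hypotheses, while the inverse bound \eqref{eqn:genthmcon} is furnished by Corollary \ref{cor:expfamtheta} under the standing regularity assumptions on $\eta$. Part \ref{item:posconnotidb} will simply inherit the extra hypothesis \eqref{eqn:noproductlowbou} exactly as in the theorem. Consequently the only real work is to check the kernel regularity condition \ref{item:kernel} on the compact set $\Theta_1 \subset \Theta^\circ$.

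For the verification of \ref{item:kernel} I would first pass to the canonical parametrization $\eta = \eta(\theta)$, on which the log-partition function $A(\eta)$ is $C^\infty$ on the open set $\eta(\Theta^\circ)$ containing the compact set $\eta(\Theta_1)$. The standard identities
\[
K(f(\cdot|\eta_1), f(\cdot|\eta_2)) = \langle \eta_1 - \eta_2, \nabla A(\eta_1)\rangle - \bigl(A(\eta_1) - A(\eta_2)\bigr)
\]
and
\[
h^2(f(\cdot|\eta_1), f(\cdot|\eta_2)) = 1 - \exp\bigl(A(\tfrac{\eta_1+\eta_2}{2}) - \tfrac{1}{2}A(\eta_1) - \tfrac{1}{2}A(\eta_2)\bigr)
\]
together with a second order Taylor expansion of $A$, uniform on $\eta(\Theta_1)$, yield $K \leq C_1 \|\eta_1 - \eta_2\|_2^2$ and $h \leq C_2 \|\eta_1 - \eta_2\|_2$. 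The assumed continuous differentiability of $\eta$ combined with compactness of $\Theta_1 \subset \Theta^\circ$ next delivers $\|\eta(\theta_1) - \eta(\theta_2)\|_2 \leq L \|\theta_1 - \theta_2\|_2$. Composing these estimates gives \ref{item:kernel} with $\alpha_0 = 2$ and $\beta_0 = 1$, which incidentally are the maximal exponents compatible with Remark \ref{rem:Lippowerbou}.

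Once \ref{item:prior}, \ref{item:kernel}, and \eqref{eqn:genthmcon} are all in place, both parts of Theorem \ref{thm:posconnotid} apply without modification. The main place where the argument could potentially fail is in the verification of \ref{item:kernel}: the log-partition function can blow up at the boundary of the natural parameter space, which is precisely why the assumption $\Theta_1 \subset \Theta^\circ$ is essential and why compactness is needed to extract uniform constants independent of the parameter. All other ingredients (the inverse bound and the posterior contraction machinery) have already been established in Corollary \ref{cor:expfamtheta} and Theorem \ref{thm:posconnotid}, so no further obstacle remains.
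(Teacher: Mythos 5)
Your proposal is correct and follows essentially the same route as the paper: verify \ref{item:kernel}, observe that \eqref{eqn:genthmcon} comes from Corollary \ref{cor:expfamtheta}, and then invoke Theorem \ref{thm:posconnotid}. The only (immaterial) difference is in the KL step: you obtain the sharper $\alpha_0 = 2$ by a second-order Taylor expansion of the log-partition function, whereas the paper's proof simply bounds $K$ linearly to get $\alpha_0 = 1$ and then appeals to Lemma \ref{lem:exphellinger} for the Hellinger part; since Theorem \ref{thm:posconnotid} only needs \emph{some} positive $\alpha_0, \beta_0$, both choices lead to the same conclusion.
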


\begin{exa}[Posterior contraction for weakly identifiable kernels: Bernoulli and gamma] 
Fix $G_0\in \Ec_{k_0}(\Theta^\circ)$.  
For the Bernoulli kernel studied in Example \ref{exa:bernoulli}, $n_1(G_0)=n_0(G_0,\cup_{k\leq k_0}\Ec_k(\Theta))=2k_0-1$. Suppose that $\ref{item:prior}$ holds with compact $\Theta_1\subset \Theta^\circ=(0,1)$. Then by Corollary \ref{cor:posconexp}, the conclusion \ref{item:posconnotida} of Theorem \ref{thm:posconnotid} holds provided $\min_i N_i \geq 2k_0-1$. 
For the gamma kernel studied in Examples \ref{exa:gamma} and \ref{exa:gamma2}, $n_1(G_0) = 2$ when $G_0\in \Gc$ and $n_1(G_0)=1$ when $G_0\in \Ec_{k_0}(\Theta^\circ)\backslash \Gc$; $n_0(G_0,\cup_{k\leq k_0}\Ec_k(\Theta))=1$. Suppose that $\ref{item:prior}$ holds with compact $\Theta_1\subset \Theta^\circ=(0,\infty)\times (0,\infty)$. 
Then by Corollary \ref{cor:posconexp}, the conclusion \ref{item:posconnotida} of Theorem \ref{thm:posconnotid} holds provided $\min_i N_i\geq 2$. Moreover, no requirement on $\min_i N_i$ is needed if $G_0\not \in \Gc$ is given. \myeoe
\end{exa}

\begin{exa}[Posterior contraction for weakly identifiable kernels: beyond exponential family] \label{exa:posnonexponential}
Here we present the posterior contraction rate for the four examples studied in Section \ref{sec:non-standard}, while the verification details are in Appendix \ref{sec:detailsinposconratenonexp}. Assume that the prior distribution satisfies the \ref{item:prior} for each example below. For the uniform probability kernel studied in Example \ref{exa:uniformcontinue}, the conclusion of Theorem \ref{thm:posconnotid} holds for any $N\geq 1$. For the location-scale exponential kernel studied in Example \ref{exa:exponentialcontinuerevision}, the conclusion of Theorem \ref{thm:posconnotid} holds for any $N\geq 1$. For the case that kernel is location-mixture Gaussian in Example \ref{exa:kernelmixture}, the conclusion of Theorem \ref{thm:posconnotid} holds and the specific values of $n_0(G_0,\cup_{k\leq k_0}\Ec_k(\Theta_1))$ and $n_1(G_0)$ are left as exercises. The kernel in Example \ref{exa:dirichlet} does not possess a density, which is needed in \eqref{eqn:bayesrule}, and thus the results in this section on posterior contraction do not apply. \myeoe
\end{exa}

\section{Hierarchical model: kernel $P_\theta$ is itself a mixture distribution}
\label{sec:mixofmix}

In this section we apply Theorem~\ref{thm:genthm} to the cases where $\P_\theta$ itself is a rather complex object: a finite mixture of distributions. 
Combining this kernel with a discrete mixing measure $G\in \Ec_{k_0}(\Theta)$, the resulting $P_{G}$ represents a mixture of finite mixtures of distributions, while $P_{G,\m}$ becomes a $k_0$-mixture of $\m$-products of finite mixtures of distributions. These recursively defined objects represent a popular and formidable device in the statistical modeling world: the world of hierarchical models.
We shall illustrate Theorem~\ref{thm:genthm} on only two examples of such models. However, the tools required for these applications are quite general, chief among them are bounds on relevant oscillatory integrals for suitable statistical maps $T$. 
We shall first describe such tools in Section \ref{sec:fourieranalysis} and then address the case
$P_\theta$ is a $k$-component Gaussian location mixture (Example~\ref{exa:kernelmixture}) and the case $P_\theta$ is a mixture of Dirichlet processes (Example~\ref{exa:dirichlet}).

\subsection{Bounds on oscillatory integrals}
\label{sec:fourieranalysis}
A key condition in Theorem~\ref{thm:genthm}, namely condition~\ref{item:genthmg}, is reduced to the $L^r$ integrability of certain oscillatory integrals:
\begin{equation}
\left\|\int_{\Xfrak} e^{\i\zeta^\top  Tx }f(x)dx\right\|_{L^r(\R^s)}  \label{eqn:fouriertransformcurves}
\end{equation}
 for a broad class of functions $f:\Xfrak\rightarrow \R$ and multi-dimensional maps $T:\Xfrak \rightarrow \R^s$. When $\Xfrak = \R^d$, the oscillatory integral $\int_{\Xfrak} e^{\i\zeta^\top  Tx }f(x)dx $ is also known as the Fourier transform of measures supported on
curves or surfaces; bounds for such quantities are important topics in harmonic analysis and geometric analysis. We refer to \cite{brandolini2007average} and the textbook \cite[Chapter 8]{stein1993harmonic} for further details and broader contexts. Despite there are many existing results, such results are typically established when $f(x)$ is supported on a compact interval or is smooth, i.e. $f$ has derivative of arbitrary orders. We shall develop an upper bound on \eqref{eqn:fouriertransformcurves} for our purposes to verify the integrability condition in \ref{item:genthmg} for a broad class of $f$, which is usually satisfied for probability density functions.  

We start with the following bounds for oscillatory integrals of the form $\int e^{\i \lambda \phi(x)} \psi(x) dx$, where function $\phi$ is called the phase, and function $\psi$ the amplitude.

\begin{lem}[van der Corput's Lemma] \label{lem:corput}
	Suppose $\phi(x)\in C^{\infty}(a,b)$, and that $|\phi^{(k)}(x)|\geq 1$ for all $x\in(a,b)$. Let $\psi(x)$ be absolute continuous on $[a,b]$. Then 
	$$
	\left|\int_{[a,b]}e^{i\lambda\phi(x)} \psi(x) dx \right| \leq c_k \lambda^{-\frac{1}{k}}\left[|\psi(b)|+\int_{[a,b]}|\psi'(x)|dx \right]
	$$
	and
	$$
	\left|\int_{[a,b]}e^{i\lambda\phi(x)} \psi(x) dx \right| \leq c_k \lambda^{-\frac{1}{k}}\left[|\psi(a)|+\int_{[a,b]}|\psi'(x)|dx \right]
	$$
	hold when either i)
		$k\geq 2$, or ii)
		$k=1$ and $\phi'(x)$ is monotonic.
    The constant $c_k$ is independent of $\phi$, $\psi$, $\lambda$ and the interval $[a,b]$.
\end{lem}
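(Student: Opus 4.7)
The plan is to follow the classical van der Corput approach in two stages: first establish the bound when the amplitude is the constant $\psi \equiv 1$, and then upgrade to general absolutely continuous $\psi$ via an integration-by-parts argument.

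First I would treat the amplitude-free case. Set $I(x) := \int_a^x e^{i\lambda\phi(t)}\,dt$ and prove $|I(x)| \le c_k \lambda^{-1/k}$ uniformly in $x \in [a,b]$. For $k=1$ with $\phi'$ monotonic, I would write $e^{i\lambda\phi(t)} = \frac{1}{i\lambda\phi'(t)}\frac{d}{dt}e^{i\lambda\phi(t)}$ and integrate by parts; since $|\phi'|\ge 1$ gives $|1/\phi'|\le 1$ at the endpoints, and monotonicity of $\phi'$ implies $\phi''$ has constant sign so that $\int_a^x |\phi''|/(\phi')^2\,dt = |1/\phi'(a)-1/\phi'(x)|\le 2$, one obtains $|I(x)|\le 4\lambda^{-1}$. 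For $k\ge 2$ I would argue by induction on $k$: assuming WLOG $\phi^{(k)}\ge 1$, the function $\phi^{(k-1)}$ is monotone with derivative at least $1$, so the set $\{t:|\phi^{(k-1)}(t)|<\delta\}$ is an interval of length at most $2\delta$. Away from this bad set, $|\phi^{(k-1)}|\ge \delta$, so applying the inductive hypothesis to $\phi/\delta$ on (up to two) complementary subintervals yields a contribution bounded by $2c_{k-1}(\lambda\delta)^{-1/(k-1)}$, while the bad interval contributes trivially at most $2\delta$. Optimizing $\delta = \lambda^{-1/k}$ gives $|I(x)|\le (2+2c_{k-1})\lambda^{-1/k}$, completing the induction with the recursion $c_k = 2 + 2c_{k-1}$.

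Second, to handle general $\psi$, I would use Abel summation. Since $\psi$ is absolutely continuous on $[a,b]$ and $I$ is continuous, the product rule yields
\[
\int_a^b e^{i\lambda\phi(x)}\psi(x)\,dx = I(b)\psi(b) - \int_a^b I(x)\psi'(x)\,dx.
\]
Applying the uniform bound $|I(x)|\le c_k\lambda^{-1/k}$ gives the first inequality of the lemma. For the second (symmetric) inequality, I would repeat the same argument with $J(x):=\int_x^b e^{i\lambda\phi(t)}\,dt$ in place of $I$; the analogous estimate $|J(x)|\le c_k\lambda^{-1/k}$ holds by the same proof (or by $J=I(b)-I$, adjusting the constant), and integration by parts gives
\[
\int_a^b e^{i\lambda\phi(x)}\psi(x)\,dx = J(a)\psi(a) + \int_a^b J(x)\psi'(x)\,dx,
\]
from which the $|\psi(a)|$ version of the bound follows.

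The only genuinely delicate step is the inductive decomposition in the $k\ge 2$ case: one must verify that the bad set $\{|\phi^{(k-1)}|<\delta\}$ is indeed a single (sub)interval, which uses monotonicity of $\phi^{(k-1)}$, and that applying the $(k-1)$-hypothesis to $\phi/\delta$ on each complementary piece produces a constant independent of the interval length (which is exactly the content of the induction hypothesis, so the estimate passes cleanly through the rescaling $\lambda \mapsto \lambda\delta$). The Abel-summation step and the $k=1$ integration by parts are routine; the independence of the constant $c_k$ from $\phi$, $\psi$, $\lambda$, and $[a,b]$ is evident from the recursion and the fact that all estimates above are scale-invariant.
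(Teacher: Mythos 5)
Your proof is correct and reconstructs exactly the classical argument of the Corollary on page~334 of \cite{stein1993harmonic}, which the paper merely cites: first the amplitude-free bound by integration by parts for $k=1$ (using monotonicity of $\phi'$), then induction on $k$ by excising the bad interval where $|\phi^{(k-1)}|<\delta$ and optimizing $\delta=\lambda^{-1/k}$, and finally Abel summation to admit general absolutely continuous $\psi$ (which is precisely the paper's remark that the $C^\infty$ hypothesis on $\psi$ in the reference can be relaxed). The one small divergence is in the second display: the paper obtains it by applying the first display to the reflected amplitude $\tilde\psi(x)=\psi(a+b-x)$, whereas you redo the Abel-summation step with $J(x)=\int_x^b e^{i\lambda\phi(t)}\,dt$; both are valid, the reflection trick being marginally tidier since it avoids re-deriving the uniform bound for $J$.
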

\begin{proof}
	See \cite[the Corollary on Page 334]{stein1993harmonic} for the proof of the first display; even though in its original version in this reference, $\psi$ is assumed to be $C^\infty$ but its proof only needs $\psi$ to be absolute continuous on $[a,b]$. The second display follows by applying the first display to $\tilde{\psi}(x) = \psi(a+b-x)$.
\end{proof}

It can be observed from Lemma~\ref{lem:corput} the condition on derivatives of the phase function plays a crucial role. For our purpose the phase function will be supplied by use of monomial map $T$. Hence, the following technical lemma will be needed.
\begin{lem} \label{eqn:smallsigularvalueA(x)}
	Let $A(x)\in \R^{d\times d}$ with entries $A_{\alpha\beta}(x)=0$ for $\alpha>\beta$ and $A_{\alpha\beta}(x)=\frac{j_\beta!}{(j_\beta-j_\alpha)! }x^{j_\beta-j_\alpha}$ for $1\leq \alpha \leq \beta \leq d$, where $1\leq j_1 < \ldots < j_d$ are given. Let $S_\text{min}(A(x))$ be the smallest singular value of $A(x)$. Then $S_\text{min}(A(x))\geq c_3 \max\{1,|x|\}^{-(j_d-j_1)(d-1)} $, where $c_3$ is a constant that depends only on $d,j_1,\ldots,j_d$.
\end{lem}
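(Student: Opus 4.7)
The plan is to combine the classical lower bound $S_{\min}(A) \geq |\det A|/\|A\|_{\mathrm{op}}^{d-1}$ with a straightforward polynomial bound on the operator norm of $A(x)$. First I would observe that $A(x)$ is upper triangular with diagonal entries $A_{\alpha\alpha}(x) = j_\alpha!/0! = j_\alpha!$, so its determinant is $\det A(x) = \prod_{\alpha=1}^{d} j_\alpha!$, a nonzero constant depending only on $j_1,\dots,j_d$.

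Next I would bound $\|A(x)\|_{\mathrm{op}}$. For any pair $1\leq \alpha\leq \beta\leq d$, the exponent $j_\beta - j_\alpha$ lies in $\{0,1,\dots,j_d - j_1\}$, and the coefficient $\frac{j_\beta!}{(j_\beta - j_\alpha)!}$ is bounded by a constant $C_1 = C_1(d,j_1,\dots,j_d)$. Therefore every entry satisfies $|A_{\alpha\beta}(x)| \leq C_1\,\max\{1,|x|\}^{j_d - j_1}$, and using either the Frobenius norm or the crude bound $\|A\|_{\mathrm{op}} \leq d\max_{\alpha,\beta}|A_{\alpha\beta}|$, one obtains $\|A(x)\|_{\mathrm{op}} \leq C_2\,\max\{1,|x|\}^{j_d - j_1}$ for some $C_2 = C_2(d,j_1,\dots,j_d)$.

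Combining these two ingredients via $S_{\min}(A(x)) \geq |\det A(x)|/\|A(x)\|_{\mathrm{op}}^{\,d-1}$, which follows from the singular value inequality $|\det A| = \prod_{i=1}^{d}\sigma_i \leq \sigma_d\cdot \sigma_1^{d-1} = S_{\min}(A)\,\|A\|_{\mathrm{op}}^{d-1}$, yields
\[
S_{\min}(A(x)) \;\geq\; \frac{\prod_{\alpha=1}^{d} j_\alpha!}{C_2^{\,d-1}\,\max\{1,|x|\}^{(j_d-j_1)(d-1)}} \;=\; c_3\,\max\{1,|x|\}^{-(j_d-j_1)(d-1)},
\]
with $c_3$ depending only on $d$ and $j_1,\dots,j_d$, which is the desired estimate. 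There is no real obstacle here; the only point worth care is to write the operator norm bound in terms of the single quantity $\max\{1,|x|\}^{j_d-j_1}$ (which covers both the small-$|x|$ and large-$|x|$ regimes uniformly), so that the final bound takes the stated clean form.
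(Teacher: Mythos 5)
Your proof is correct, and it is genuinely more direct than the paper's. You reduce everything to the standard singular-value inequality $|\det A| = \prod_i \sigma_i \le \sigma_{\max}^{d-1}\,\sigma_{\min}$, exploit the upper-triangular structure of $A(x)$ to read off $\det A(x) = \prod_{\alpha} j_\alpha!$ (a nonzero constant), and bound $\|A(x)\|_{\mathrm{op}} \lesssim \max\{1,|x|\}^{j_d-j_1}$ entrywise. Three ingredients, one line of arithmetic, done.

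The paper instead sets $B(x,t) = |x|^t A(x)$, writes out the characteristic polynomial of $B(x,t)B(x,t)^\top$, bounds its coefficients $\gamma_i(x,t)$ polynomially in $|x|$, computes $\gamma_0(x,t) = \bigl(\prod_i j_i!\bigr)^2 |x|^{2dt}$, and then runs a contradiction argument on $1/\lambda_{\min}$ via the equation $\lambda_{\min}^d + \sum_i \gamma_i \lambda_{\min}^i = 0$, with a separate compactness step for $|x|\le 1$. The key quantitative inputs are the same as yours --- the determinant is a nonzero constant (appearing as $\gamma_0$), and the entries grow like $|x|^{j_d-j_1}$ (controlling the other $\gamma_i$) --- but the paper routes them through the characteristic polynomial and a proof by contradiction, which obscures that the bound is really just $\det/\|\cdot\|^{d-1}$. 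Your route is cleaner, gives an explicit constant $c_3 = \bigl(\prod_\alpha j_\alpha!\bigr)/C_2^{d-1}$, and handles the two regimes $|x|\le 1$ and $|x|>1$ uniformly without a separate continuity argument.
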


The following lemma provides a crucial uniform bound on oscillatory integrals given by a phase given by monomial map $T$.

\begin{lem} \label{lem:radialupp}
	Let $T:\R \to \R^d $ defined by $Tx=(x^{j_1},x^{j_2},\ldots,x^{j_d})^\top $ with $1\leq j_1 <j_2 <\ldots j_d$. Consider a bounded non-negative function $f(x)$ that is differentiable on $\R \backslash \{b_i\}_{i=1}^\ell$, where $b_1<b_2<\ldots<b_\ell$ with $\ell$ a finite number. The derivative $f'(x)\in L^1(\R)$ and it is continuous when it exists. Moreover, $f(x)$ and $|x|^{{\alpha_1}}f(x)$ are both increasing when $x<-c_1$ and decreasing when $x>c_1$ for some $c_1\geq \max\{ |b_1|, |b_\ell| \} $, where $\alpha_1=(j_d-j_1)(d-1)/j_1$. Then for $\lambda>1$,
	\begin{align*} 
	&\sup_{w\in S^{d-1}}\left|\int_\R \exp(\bm{i}\lambda w^\top  Tx  ) f(x) dx\right| \\
	\leq & C_1 \lambda^{-\frac{1}{j_d}} (c_1+2)^{\alpha_1}\left( \left\||x|^{\alpha_1}f(x)\right\|_{L^1(\R)} +(\ell+1)\|f\|_{L^\infty(\R)} +\left\|\left(|x|^{\alpha_1}+1\right)f'(x)\right\|_{L^1(\R)}\right),
	\end{align*}
	 where $C_1$ is a positive constant that only depends on $d,j_1,j_2,\ldots,j_d$. 
\end{lem}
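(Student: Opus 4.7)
My plan is to prove Lemma \ref{lem:radialupp} by applying van der Corput's Lemma (Lemma \ref{lem:corput}) to the phase $\phi_w(x) := w^\top T x = \sum_{i=1}^d w_i x^{j_i}$ with amplitude $f$, using the singular value bound (Lemma \ref{eqn:smallsigularvalueA(x)}) to secure a uniform lower bound on some derivative of $\phi_w$. The crucial algebraic observation is that the derivatives $\phi_w^{(j_\alpha)}(x) = \sum_{\beta \geq \alpha} \frac{j_\beta!}{(j_\beta-j_\alpha)!} w_\beta \, x^{j_\beta - j_\alpha}$ are precisely the coordinates $(A(x) w)_\alpha$ of the matrix $A(x)$ defined in Lemma \ref{eqn:smallsigularvalueA(x)}. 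Since $\|A(x) w\|_2 \geq S_{\min}(A(x)) \geq c_3 \max\{1,|x|\}^{-(j_d-j_1)(d-1)} = c_3 \max\{1,|x|\}^{-\alpha_1 j_1}$ for every $w \in S^{d-1}$, for each $x$ there exists an index $\alpha^\star = \alpha^\star(x,w) \in [d]$ such that $|\phi_w^{(j_{\alpha^\star})}(x)| \geq \frac{c_3}{\sqrt d} \max\{1,|x|\}^{-\alpha_1 j_1}$.

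Next I would partition $\R$ into finitely many intervals on which van der Corput is directly applicable. The partition is generated by (i) the $\ell$ break points $\{b_i\}$ where $f$ is not differentiable; (ii) the real zeros of each polynomial $\phi_w^{(j_\alpha)}$ and of $\phi_w''$ (needed when $j_\alpha=1$, to guarantee monotonicity of $\phi_w'$ as required by Lemma \ref{lem:corput}); and (iii) a dyadic decomposition of the tails $|x| > c_1$ into annuli $I_k^{\pm} = \{x : \pm x \in (2^k c_1, 2^{k+1} c_1]\}$, $k \geq 0$, plus the compact piece $[-c_1-1, c_1+1]$. Because each $\phi_w^{(j_\alpha)}$ has degree at most $j_d - j_\alpha$, the number of non-dyadic subintervals is bounded by a constant depending only on $d, j_1, \ldots, j_d, \ell$. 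On every subinterval $I$ I then invoke van der Corput with order $k = j_{\alpha^\star}$, where $\alpha^\star$ is chosen constant on $I$ and yields $|\phi_w^{(j_{\alpha^\star})}(x)| \geq \mu_I := \tfrac{c_3}{\sqrt d}\max\{1, \sup_{x\in I}|x|\}^{-\alpha_1 j_1}$ throughout $I$ (the sup rather than inf is legitimate on dyadic pieces because $|x|$ varies by a factor of $2$ there). Rescaling $\lambda\phi_w = (\lambda\mu_I)(\phi_w/\mu_I)$ and applying Lemma \ref{lem:corput} gives
\[
\biggl|\int_I e^{i\lambda\phi_w(x)} f(x)\,dx\biggr| \leq c_{j_{\alpha^\star}} (\lambda\mu_I)^{-1/j_{\alpha^\star}} \Bigl( |f(x_I^\star)| + \int_I |f'(x)|\,dx \Bigr),
\]
where $x_I^\star$ is the endpoint of $I$ chosen in Lemma \ref{lem:corput}. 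Using $\lambda \geq 1$ and $j_1 \leq j_{\alpha^\star} \leq j_d$, one gets $(\lambda\mu_I)^{-1/j_{\alpha^\star}} \leq (\sqrt d/c_3)^{1/j_1}\,\lambda^{-1/j_d}\max\{1,|x|\}^{\alpha_1}$ on $I$ (the exponent inequality $\alpha_1 j_1/j_{\alpha^\star} \leq \alpha_1$ requires $|x| \geq 1$, while for $|x|\leq 1$ the factor is bounded by $1$).

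Summing the bound over the partition yields the desired inequality. The compact part $[-c_1-1, c_1+1]$ contributes at most a constant multiple of $\lambda^{-1/j_d}(c_1+2)^{\alpha_1}[(\ell+1)\|f\|_\infty + \int |f'|]$, where $\ell+1$ counts the pieces created by the break points $\{b_i\}$ together with the zeros of the finitely many polynomials above. For the dyadic tail pieces, using the hypothesis that $|x|^{\alpha_1} f(x)$ is monotone decreasing for $|x|>c_1$, the endpoint values satisfy $|x|^{\alpha_1}|f(x_I^\star)| \leq |x|^{\alpha_1}f(x)$ for $x$ earlier in the monotone tail, so a standard telescoping/summation across dyadic shells bounds $\sum_k (2^k c_1)^{\alpha_1}|f(2^k c_1)|$ by a constant multiple of $\|\,|x|^{\alpha_1}f(x)\|_{L^1(\R)}$; the variation contributions similarly sum to $\|(|x|^{\alpha_1}+1)f'(x)\|_{L^1(\R)}$. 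Combining the two parts produces the stated upper bound with constant $C_1$ depending only on $d, j_1,\ldots, j_d$.

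The main obstacle is the bookkeeping in Step 2, specifically ensuring that the dyadic tail estimates genuinely telescope against the monotonicity of $|x|^{\alpha_1} f(x)$ without losing logarithmic factors, and that the choice of $\alpha^\star$ can be made locally constant on each piece of the partition while the number of pieces remains bounded uniformly in $w$. The uniform-in-$w$ control is essential and is exactly what Lemma \ref{eqn:smallsigularvalueA(x)} supplies; without it, the naive bound $|\phi_w^{(j_d)}| = j_d! |w_d|$ would degenerate as $w_d \to 0$, preventing any $\lambda^{-1/j_d}$ decay uniformly on $S^{d-1}$.
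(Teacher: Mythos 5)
Your proposal follows essentially the same route as the paper: rewrite the $j_i$-th derivatives of the phase as the coordinates of $A(x)w$, invoke Lemma \ref{eqn:smallsigularvalueA(x)} to get a uniform-in-$w$ lower bound on $\max_i|\phi_w^{(j_i)}(x)|$, partition $\R$ into a compact core containing $\{b_i\}$ together with dyadic tail shells, apply van der Corput piecewise (with the extra monotonicity partition when $j_1=1$), and sum using the monotonicity of $f$ and $|x|^{\alpha_1}f$.

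The one place where your bookkeeping, as written, is off is item (ii) of your partition. Subdividing at the \emph{zeros} of the polynomials $\phi_w^{(j_\alpha)}$ fixes their signs on each piece, but does not make the maximizing index locally constant: the singular-value bound only says that for each $x$ \emph{some} index satisfies $|\phi_w^{(j_\alpha)}(x)|\ge\mu$, and which one can change between two consecutive zeros without any derivative changing sign. This is precisely the concern you flag at the end. The paper resolves it by subdividing each dyadic/core interval at the roots of the \emph{threshold equations} $|\phi_w^{(j_\alpha)}(x)|=\mu_I$ for $\alpha\in[d-1]$ (where $\mu_I$ is the constant lower bound attached to that interval); on the resulting sub-pieces, whichever index clears the threshold at one point clears it throughout, and a single van der Corput order applies. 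Since these are still roots of polynomials of degree at most $j_d-j_\alpha$ (each absolute-value equation contributing two polynomial equations), the piece count stays bounded by a constant depending only on $d,j_1,\ldots,j_d$, which is what you need for the uniform constant $C_1$. With that substitution your argument matches the paper's.
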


Applying Lemma~\ref{lem:radialupp} we obtain a bound for the oscillatory integral in question. 

\begin{lem} \label{lem:charintegrability}
	Let $T$ and $f$ satisfy the same conditions as in Lemma \ref{lem:radialupp}. Define
	$g(\zeta) = \int_{\R} e^{\i\zeta^\top  Tx }f(x)dx$ for $\zeta \in \R^d$.
	Then for $r>dj_d$,
	\begin{align*}
	& \|g(\zeta)\|_{L^r(\R^d)} \\
	\leq & C_2 (c_1+2)^{{\alpha_1 }}  (\||x|^{\alpha_1}f(x) \|_{L^1(\R)} +(\ell+1)\|f\|_{L^\infty(\R)} +\|(|x|^{\alpha_1}+1)f'(x)\|_{L^1(\R)}+\|f\|_{L^1(\R)}
	 )
	\end{align*}
	where $C_2$ is a positive constant that depends on $r,d,j_1,j_2,\ldots,j_d$.
\end{lem}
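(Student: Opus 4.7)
The plan is to split the integration domain $\R^d$ into the unit ball $\{|\zeta|\leq 1\}$ and its exterior $\{|\zeta|>1\}$, handle each region with the appropriate trivial or oscillatory bound, and then combine via Minkowski. The exponent condition $r>dj_d$ will emerge precisely from the integrability of the resulting radial bound at infinity.

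\textbf{Step 1 (Trivial bound near the origin).} For any $\zeta\in\R^d$ the pointwise bound $|g(\zeta)|\leq \|f\|_{L^1(\R)}$ holds. Hence
\[
\int_{|\zeta|\leq 1}|g(\zeta)|^r\,d\zeta \;\leq\; \mathrm{vol}(B_1)\,\|f\|_{L^1(\R)}^r,
\]
which contributes at most a constant multiple of $\|f\|_{L^1(\R)}$ to $\|g\|_{L^r}$.

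\textbf{Step 2 (Polar coordinates and van der Corput away from the origin).} On $\{|\zeta|>1\}$ I would parametrize $\zeta=\lambda w$ with $\lambda>1$ and $w\in S^{d-1}$, so $d\zeta=\lambda^{d-1}\,d\lambda\,d\sigma(w)$. Apply Lemma~\ref{lem:radialupp} uniformly in $w\in S^{d-1}$ to get, for all $\lambda>1$,
\[
|g(\lambda w)| \;\leq\; C_1\,\lambda^{-1/j_d}\,(c_1+2)^{\alpha_1}\,M,
\]
where $M:=\bigl\||x|^{\alpha_1}f(x)\bigr\|_{L^1(\R)}+(\ell+1)\|f\|_{L^\infty(\R)}+\bigl\|(|x|^{\alpha_1}+1)f'(x)\bigr\|_{L^1(\R)}$ is exactly the quantity appearing in the statement.

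\textbf{Step 3 (Radial integrability).} Raising the bound in Step~2 to the $r$-th power and integrating in polar coordinates,
\[
\int_{|\zeta|>1}|g(\zeta)|^r\,d\zeta \;\leq\; C_1^r (c_1+2)^{r\alpha_1} M^r\,\mathrm{vol}(S^{d-1})\int_1^\infty \lambda^{-r/j_d}\lambda^{d-1}\,d\lambda.
\]
The integral $\int_1^\infty \lambda^{d-1-r/j_d}\,d\lambda$ is finite precisely when $r/j_d-(d-1)>1$, i.e.\ when $r>dj_d$, which is the hypothesis. This yields the contribution $C (c_1+2)^{\alpha_1} M$ to $\|g\|_{L^r}$ after taking $r$-th roots.

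\textbf{Step 4 (Combining).} Adding the two regions via Minkowski's inequality (or simply bounding $\|g\|_{L^r}\leq \|g\mathbf{1}_{|\zeta|\leq 1}\|_{L^r}+\|g\mathbf{1}_{|\zeta|>1}\|_{L^r}$) produces the claimed bound with an overall constant $C_2$ that depends only on $r,d,j_1,\ldots,j_d$. No part of this argument is delicate once Lemma~\ref{lem:radialupp} is in hand; the only thing to watch is that the Lemma~\ref{lem:radialupp} bound is uniform in $w\in S^{d-1}$ (which is stated there) so that the angular integration simply produces the volume factor $\mathrm{vol}(S^{d-1})$ and does not interact with $M$. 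The main conceptual point, such as it is, is the matching between the decay rate $\lambda^{-1/j_d}$ supplied by van der Corput applied to the highest-order monomial $x^{j_d}$ in $T$ and the ambient dimension $d$ through $r>dj_d$.
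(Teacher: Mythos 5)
Your proposal is correct and follows essentially the same route as the paper: split $\R^d$ into the unit ball (trivial $\|f\|_{L^1}$ bound) and its complement (Lemma~\ref{lem:radialupp} plus polar coordinates), with the condition $r>dj_d$ emerging from radial integrability. The only cosmetic difference is in the final step, where the paper uses the elementary inequality $a^r+b^r\leq(a+b)^r$ for $a,b>0$, $r\geq1$ rather than Minkowski, which yields the same additive form of the bound.
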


\subsection{Kernel $P_\theta$ is a location mixture of Gaussian distributions}
\label{sec:mixofgaussian}
We are now ready for an application of Theorem~\ref{thm:genthm} to case kernel $P_\theta$ is a mixture of $k$ Gaussian distributions. As discussed in Example \ref{exa:kernelmixture}, with this example we are moving from a standard mixture of product distributions to hierarchical models (i.e., mixtures of mixture distributions). Such models are central tools in Bayesian statistics.

Let 
\begin{equation}
\label{eqn:parameterspaceforlocationmixturegaussian} \Theta=\{\theta = (\pi_1,\ldots,\pi_{k-1},\mu_1,\ldots,\mu_k)\in \R^{2k-1}| 0<\pi_i<1,\ \forall i;\ \mu_i<\mu_j,\ \forall 1\leq i<j\leq k\}
\end{equation}
and $\P_\theta$ w.r.t. Lebesgue measure on $\R$ has probability density 
\begin{equation}
f(x|\theta)=\sum_{i=1}^k\pi_i f_{\Nc}(x|\mu_i,\sigma^2) 
\label{eqn:locationmixture}
\end{equation}
where $\pi_k = 1- \sum_{i=1}^{k-1}\pi_i$ and $f_{\Nc}(x|\mu,\sigma^2)$ is the density of $\Nc(\mu,\sigma^2)$ with $\sigma$ a known constant. For the eligibility of this parametrization, see Section \ref{sec:locationscalmixturegaussian}. It follows from the classical result \cite[Proposition 1]{teicher1963identifiability} that the map $\theta\mapsto f(x|\theta)$ is injective on $\Theta$. The mixture of product distributions $P_{G,N}$ admits the density $p_{G,N}$ given in \eqref{eqn:pGNdef} (with $N_i=N$)  w.r.t. Lebesgue measure on $\R^\m$. Fix $G_0=\sum_{i=1}^{k_0}p_i^0 \delta_{\theta_i^0}$ with $\theta_i^0=(\pi_{1i}^0,\ldots,\pi_{(k-1)i}^0, \mu_{1i}^0, \ldots,\mu_{ki}^0)$. 

Let us now verify that Corollary \ref{cor:genthm2ndform} with the map $Tx= (x,x^2,\ldots,x^{2k-1})^\top $ can be applied for this model. 
The mean of $TX_1$ is $\lambda(\theta)\in \R^{2k-1}$ with its $j$-th entry given by 
\begin{equation}
\lambda^{(j)}(\theta) = \E_{\theta} X_1^j = \sum_{i=1}^k \pi_i \E (\sigma Y+\mu_i)^j, \quad j=1,\ldots,2k-1  \label{eqn:Tmeangaussian}
\end{equation}
where $X_1$ has density \eqref{eqn:locationmixture} and $Y$ has the standard Gaussian distribution $\Nc(0, 1)$. The covariance matrix of $TX_1$ is $\Lambda(\theta)\in \R^{(2k-1)\times (2k-1)}$ with its $(j,\beta)$ entries given by 
$$
\Lambda_{j\beta}(\theta) =  \E_\theta X_1^{j+\beta}- \lambda^{(j)}(\theta)\lambda^{(\beta)}(\theta) = \sum_{i=1}^k \pi_i \E (\sigma Y+\mu_i)^{j+\beta} - \lambda^{(j)}(\theta)\lambda^{(\beta)}(\theta).
$$
It follows immediately from these formulae that $\lambda(\theta)$ and $\Lambda(\theta)$ are continuous on $\Theta$. That is, \ref{item:genthmd} in Definition \ref{def:admissible} is satisfied. The characteristic function of $TX_1$ is   
\begin{equation}
\phi_T(\zeta|\theta)   = \E_{\theta} \exp(\bm{i}\zeta^\top  TX_1) =  \sum_{i=1}^k \pi_i h(\zeta|\mu_i,\sigma)  \label{eqn:chrfunformula}
\end{equation}
where $h(\zeta|\mu,\sigma) = \E \exp(\bm{i}\zeta^\top  T (\sigma Y+\mu) )$. Denote by $f_{\Nc}(x|\mu,\sigma)$ the density of $\Nc(\mu, \sigma^2)$. The verification of \ref{item:genthme} in Definition \ref{def:admissible} is omitted since it is a straightforward application of the dominated convergence theorem. In Appendix \ref{sec:detailsgaussian} it is shown by some calculations that to verify condition \ref{item:genthmg} it remains to establish that there exists some $r\geq 1$ such that
$\int_{\R^{2k-1}}\left| \phi_T(\zeta|\theta)\right|^{r}  d\zetave$ on $\Theta$ is upper bounded by a finite continuous function of $\theta$. 

Note that $f_{\Nc}(x|\mu,\sigma)$ is differentiable everywhere and $\frac{\partial f_{\Nc}(x|\mu,\sigma)}{\partial x} \in L^1(\R)$. Moreover $\alpha_1$ in Lemma \ref{lem:charintegrability} for $T$ is $4(k-1)^2$ and $f_{\Nc}(x|\mu,\sigma)$, $x^{4(k-1)^2}f_{\Nc}(x|\mu,\sigma)$ are increasing on $\left(-\infty, -\frac{|\mu|+\sqrt{\mu^2+16(k-1)^2\sigma^2}}{2}\right)$ and decreasing on $\left(\frac{|\mu|+\sqrt{\mu^2+16(k-1)^2\sigma^2}}{2},\infty\right)$. By Lemma \ref{lem:charintegrability}, for $r>(2k-1)^2$, and for $Tx = (x,x^2,\cdots,x^{2k-1})$
\begin{align*}
&\left\| \int_{\R} e^{i\zeta^\top  Tx} f_{\Nc}(x|\mu,\sigma) dx \right\|_{L^r(\R^{2k-1})} \\
\leq &C(r) \left(\frac{|\mu|+\sqrt{\mu^2+16(k-1)^2\sigma^2}}{2}+2\right)^{4(k-1)^2}\\
&\left(\||x|^{4(k-1)^2}f_{\Nc}(x|\mu,\sigma)\|_{L^1(\R)}+\frac{1}{\sqrt{2\pi}\sigma}+\left\|(|x|^{4(k-1)^2}+1)\frac{\partial f_{\Nc}(x|\mu,\sigma)}{\partial x}\right\|_{L^1(\R)}+1\right)\\
:=& h_3(\mu,\sigma), 
\end{align*}
where $C(r)$ is a constant that depends only $r$. It can be verified easily by the dominated convergence theorem that $h_3(\mu,\sigma)$ is a continuous function of $\mu$. Then
\begin{align*}
\| \phi_T(\zeta|\theta) \|_{L^r(\R^{2k-1})} \leq  \sum_{i=1}^k \pi_i \left\| \int_{\R} e^{i\zeta^\top  Tx} f_{\Nc}(x|\mu_i,\sigma) dx \right\|_{L^r(\R^{2k-1})}  \leq  \sum_{i=1}^k \pi_i h_3(\mu_i,\sigma),
\end{align*}
which is a finite continuous function of $\theta = (\pi_1,\ldots,\pi_{k-1},\mu_1,\ldots,\mu_k)$. Thus \ref{item:genthmg} is verified. We have then verified that $T$ is admissible with respect to $\Theta$. 
That the mean map $\lambda(\theta)$ is injective is a classical result (e.g. \cite[Corollary 3.3]{gandhi2016moment}). To apply Corollary \ref{cor:genthm2ndform} it remains to check that the Jacobian matrix $J_\lambda(\theta)$ of $\lambda(\theta)$ is of full column rank. Such details are established in the Section \ref{sec:momentmapinjectivity}. 

In summary, we have shown that all conditions in Corollary \ref{cor:genthm2ndform} are satisfied and thus, for $P_\theta$ having the density in \eqref{eqn:locationmixture}, the inverse bounds \eqref{eqn:genthmcon} and  \eqref{eqn:curvatureprodbound} hold for any $G_0\in \Ec_{k_0}(\Theta)$.

\subsection{Moment map for location mixture of Gaussian distributions has full-rank Jacobian}
\label{sec:momentmapinjectivity}  

In this subsection we verify that the Jacobian $J_{\lambda}(\theta)$ for the moment map $\lambda(\theta)$ specified in Section \ref{sec:mixofgaussian} is of full rank. By \eqref{eqn:Tmeangaussian}, for any $j\in [2k-1]$:
\begin{multline}
\lambda^{(j)}(\theta)=\sum_{i=1}^k \pi_i \left(\mu_i^j + \sum_{\ell=1}^\ell \sigma^{\ell} \E Y^\ell \mu_i^{j-\ell} \right) = \sum_{i=1}^k \pi_i \left(\mu_i^j + \sum_{\substack{\ell=2\\ \ell \text{ even}}}^j \sigma^{\ell} (\ell-1)!! \mu_i^{j-\ell} \right)\\
=\sum_{i=1}^k \pi_i \mu_i^j + \sum_{\substack{\ell=2\\ \ell \text{ even}}}^j \sigma^{\ell} (\ell-1)!! \sum_{i=1}^k \pi_i \mu_i^{j-\ell}. \label{eqn:lambdatelescope}
\end{multline}
Denote $\bar{\lambda}^{(j)}(\theta)=\sum_{i=1}^k \pi_i \mu_i^j$ and $\bar{\lambda}(\theta)=(\bar{\lambda}^{(1)}(\theta),
\ldots,\bar{\lambda}^{(2k-1)}(\theta))\in \R^{2k-1}$. By \eqref{eqn:lambdatelescope}, $\lambda^{(j)}(\theta)=\bar{\lambda}^{(j)}(\theta)+\sum_{\substack{\ell=2\\ \ell \text{ even}}}^j \sigma^{\ell} (\ell-1)!! \bar{\lambda}^{(j-\ell)}(\theta)$, which implies 
$$
\nabla_{\theta}\lambda^{(j)}(\theta)=\nabla_{\theta}\bar{\lambda}^{(j)}(\theta)+\sum_{\substack{\ell=2\\ \ell \text{ even}}}^j \sigma^{\ell} (\ell-1)!! \nabla_{\theta}\bar{\lambda}^{(j-\ell)}(\theta).
$$
Since $\nabla_{\theta}\lambda^{(j)}(\theta)$ and $\nabla_{\theta}\bar{\lambda}^{(j)}(\theta)$ are respectively the $j$-th row of $J_\lambda(\theta)$ and $J_{\bar{\lambda}}(\theta)$, 
\begin{equation}
\text{det}(J_\lambda(\theta))=\text{det}(J_{\bar{\lambda}}(\theta)).
\label{eqn:detrelJlambdaandbar}
\end{equation}
Also, observe
\begin{align*}
& \text{det}(J_{\bar{\lambda}}(\theta)) \\
=& \left(\prod_{\ell=1}^k \pi_\ell\right) \text{det} \begin{pmatrix}
\mu_1-\mu_k, & \ldots & \mu_{k-1}-\mu_k, & 1, &\ldots & 1 \\ 
\mu_1^2-\mu_k^2, & \ldots & \mu_{k-1}^2-\mu_k^2, & 2\mu_1, &\ldots & 2\mu_k\\
\vdots& \vdots & \vdots & \vdots & \vdots &\vdots\\
\mu_1^{2k-1}-\mu_k^{2k-1}, & \ldots & \mu_{k-1}^{2k-1}-\mu_k^{2k-1}, & (2k-1)\mu_1^{2k-1}, &\ldots & (2k-1)\mu_k^{2k-1}
\end{pmatrix}\\
=& \left(\prod_{\ell=1}^k \pi_\ell\right) (-1)^{k+1} \text{det} \begin{pmatrix}
1, & \ldots& 1,& 1,&0,&\ldots&0\\
\mu_1, & \ldots & \mu_{k-1}, &\mu_k, & 1, &\ldots & 1 \\ 
\mu_1^2, & \ldots & \mu_{k-1}^2,& \mu_k^2 & 2\mu_1, &\ldots & 2\mu_k\\
\vdots& \vdots & \vdots & \vdots&\vdots & \vdots &\vdots\\
\mu_1^{2k-1}, & \ldots & \mu_{k-1}^{2k-1}, & \mu_k^{2k-1}& (2k-1)\mu_1^{2k-1}, &\ldots & (2k-1)\mu_k^{2k-1}
\end{pmatrix}\\
=& \left(\prod_{\ell=1}^k \pi_\ell\right) (-1)^{k+1} \left(\prod_{i=1}^k (-1)^{k+i-2i}\right) \prod_{1\leq \alpha<\beta\leq k}(\mu_{\alpha}-\mu_{\beta})^4 \numberthis \label{eqn:detJlambdabar}
\end{align*}
where the second equality holds since we may subtract the $k$-th column of the $2k\times 2k$ matrix from each of its first $k-1$ columns and then do Laplace expansion along its first row, and the last equality follows by observing that the $(k+i)$-th column of the 
$2k\times 2k$ matrix is the derivative of the $i$-th column and by applying Lemma \ref{lem:determinant} \ref{item:determinantd} after some column permutation. By \eqref{eqn:detrelJlambdaandbar} and \eqref{eqn:detJlambdabar}, $\text{det}(J_{\lambda}(\theta))\neq 0$ on $\Theta$. That is $J_{\lambda}(\theta)$ is of full column rank for any $\theta\in \Theta$.

\subsection{Kernel $P_\theta$ is mixture of Dirichlet processes}
\label{sec:dirichlet}

Now we tackle Example~\ref{exa:dirichlet}, which is motivated from modeling techniques in nonparametric Bayesian statistics. In particular, the kernel $P_\theta$ is given as a distribution on a space of measures: $P_\theta$ is a mixture of Dirichlet processes (DPs), so that $P_{G,\m}$ is a finite mixture of products of mixtures of DPs. 
This should not be confused with the use of DP as a prior for mixing measures arising in mixture models. Rather, this is more akin to the use of DPs as probability kernels that arise in the famous hierarchical Dirichlet processes~\cite{Teh-etal-06} (actually, this model uses DP both as a prior and kernels). The purpose of this example is to illustrate Theorem \ref{thm:genthm} when (mixtures of) Dirichlet processes are treated as kernels. 

Let $\Xfrak=\mathscr{P}(\mathfrak{Z})$ be the space of all probability measures on a Polish space $(\mathfrak{Z},\mathscr{Z})$. $\Xfrak$ is equipped with the weak topology and the corresponding Borel sigma algebra $\mathcal{A}$. Let $\mathscr{D}_{\alpha H}$ denote the Dirichlet distribution on $(\Xfrak,\mathcal{A})$, which is specified by two parameters, concentration parameter $\alpha \in (0,\infty)$ and base measure $H \in \Xfrak$. Formal definition and key properties of the Dirichlet distributions can be found in the original paper of~\cite{ferguson1973bayesian}, or a recent textbook~\cite{ghosal2017fundamentals}. In this example, we take the probability kernel $P_\theta$ to be a mixture of two Dirichlet distributions with different concentration parameters, while the base measure is fixed and known: $P_\theta = \pi_1 \mathscr{D}_{\alpha_1 H} + (1-\pi_1) \mathscr{D}_{\alpha_2 H}$. Thus, the parameter vector is three dimensional which shall be restricted by the following constraint:
$\theta:=(\pi_1,\alpha_1,\alpha_2)\in \Theta=\{(\pi_1,\alpha_1,\alpha_2)|0<\pi_1<1, 2<\alpha_1<\alpha_2 \}$. It can be easily verified that the map $\theta \to P_{\theta}$ is injective.
Kernel $P_\theta$ so defined is a simple instance of the so-called mixture of Dirichlet processes first studied by~\cite{antoniak1974mixtures}, but considerably more complex instances of model using Dirichlet as the building block have become a main staple in the lively literature of Bayesian nonparametrics \cite{hjort2010bayesian,Teh-etal-06,Rodriguez-etal-08,camerlenghi2019distribution}. For notational convenience in the following we also denote $Q_{\alpha}:=\mathscr{D}_{\alpha H}$ for $\alpha=\alpha_1$ and $\alpha=\alpha_2$, noting that $H$ is fixed, so we may write $P_\theta = \pi_1 Q_{\alpha_1} + (1-\pi_1) Q_{\alpha_2}$. 

Having specified the kernel $P_\theta$, let $G\in \Ec_k(\Theta)$. The mixture of product distributions $P_{G,N}$ is defined in the same way as before (see Eq.~\eqref{eqn:mixprod}). Now we show that for $G_0\in \Ec_{k_0}(\Theta^\circ)=\Ec_{k_0}(\Theta)$, 
\eqref{eqn:genthmcon} and \eqref{eqn:curvatureprodbound} hold by applying Corollary \ref{cor:genthm2ndform} via a suitable map $T$.  

Consider a map $T:\Xfrak\to \R^3$ defined by $T x= ((x(B))^2,(x(B))^3,(x(B))^4)^\top $ for some $B\in \mathscr{Z}$ to be specified later. The reason we restrict the domain of $\Theta$ is so that this particular choice of map will be shown to be admissible. Define $T_1:\Xfrak\to \R$ by $T_1 x=x(B)$  and $T_2:\R\to \R^3$ by $T_2 z=(z^2,z^3,z^4)^\top $. Then $T=T_2\circ T_1$. For $X\sim P_\theta$, $T_1X$ has distribution
$$
P_\theta \circ T_1^{-1} = \pi_1 \left(Q_{\alpha_1}\circ T_1^{-1}\right)  + \pi_2 \left(Q_{\alpha_2}\circ T_1^{-1}\right) .
$$
where $\pi_2=1-\pi_1$. 
By a standard property of Dirichlet distribution, as $Q_\alpha = \mathscr{D}_{\alpha H}$, we have $Q_{\alpha}\circ T_1^{-1}$ corresponds to $\text{Beta}(\alpha H(B),\alpha(1-H(B)))$, a Beta distribution. Thus with $\xi=H(B)$, $Q_{\alpha}\circ T_1^{-1}$ has density w.r.t. Lebesgue measure on $\R$
$$
g(z|\alpha,\xi) = \frac{1}{B(\alpha\xi,\alpha(1-\xi))} z^{\alpha\xi-1} (1-z)^{\alpha(1-\xi)-1}\1ve_{(0,1)}(z),
$$
where $B(\cdot,\cdot)$ is the beta function. Then $P_\theta \circ T_1^{-1}$ has density w.r.t. Lebesgue measure $\pi_1 g(z|\alpha_1,\xi)+\pi_2 g(z|\alpha_2,\xi)$. 

Now, the push-forward measure $P_\theta\circ T^{-1}=(P_\theta\circ T_1^{-1})\circ T_2^{-1}$ has mean $\lambda(\theta)\in \R^3$ with
$$
\lambda^{(j)}(\theta)= \sum_{i=1}^2 \pi_i \int_{\R} z^{j+1}  g(z|\alpha_i,\xi) dz  = \sum_{i=1}^2 \pi_i \prod_{\ell=0}^{j}\frac{\alpha_i\xi+\ell}{\alpha_i+\ell}  \quad \forall j=1,2,3
$$
and has covariance matrix $\Lambda$ with its $j\beta$ entry given by
$$
\Lambda_{j\beta}(\theta) = \sum_{i=1}^2 \pi_i \int_{\R} z^{j+\beta+2}  g(z|\alpha_i,\xi) dz - \lambda^{j}(\theta)\lambda^{\beta}(\theta)=\sum_{i=1}^2 \pi_i \prod_{\ell=0}^{j+\beta+1}\frac{\alpha_i\xi+\ell}{\alpha_i+\ell}
- \lambda^{j}(\theta)\lambda^{\beta}(\theta).$$
It follows immediately from these formula that $\lambda(\theta)$ and $\Lambda(\theta)$ are continuous on $\Theta$, i.e., \ref{item:genthmd} in Definition \ref{def:admissible} is satisfied. Furthermore, 
observe that $P_\theta\circ T^{-1}$ has characteristic function
$$
\phi_T(\zeta|\theta) = \pi_1 h(\zeta|\alpha_1,\xi) + \pi_2 h(\zeta|\alpha_2,\xi) \label{eqn:chrfunformuladirichlet}
$$
where $h(\zeta|\alpha,\xi) = \int_{\R} \exp(\bm{i}\sum_{j=1}^3 \zeta^{(j)} z^{j}) g(z|\alpha,\xi)dz$. The verification of \ref{item:genthme} in Definition \ref{def:admissible} is omitted since it is a straightforward application of the dominated convergence theorem. In Appendix \ref{sec:detailsdirichlet} 
we provide detailed calculations to verify partially condition \ref{item:genthmg} so that it remains to establish there exists some $r\geq 1$ such that
$\int_{\R^{2k-1}}\left| \phi_T(\zeta|\theta)\right|^{r}  d\zetave$ on $\Theta$ is upper bounded by a finite continuous function of $\theta$. So far we have verified \ref{item:genthmd}, \ref{item:genthme} and some parts of \ref{item:genthmg} for the chosen $T$ for every $B$.

To continue the verification of \ref{item:genthmg} we now specify $B$. For $G_0=\sum_{i=1}^{k_0} p_i^0\delta_{\theta_i^0}$ with $\theta_i^0=(\pi_{1i}^0,\alpha_{1i}^0,\alpha_{2i}^0)\in \Theta$, let $B$ be such that $\xi=H(B) \in (1/\min_{i\in [k_0]} \alpha_{1i}^0,1-1/\min_{i\in [k_0]} \alpha_{1i}^0)$. Notice that since $\alpha_{1i}^0> 2$,  $(1/\min_{i\in [k_0]} \alpha_{1i}^0,1-1/\min_{i\in [k_0]} \alpha_{1i}^0)$ is not empty. Hence to verify the condition \ref{item:genthmg} in Definition \ref{def:admissible} w.r.t. $\{\theta_i^0\}_{i=1}^{k_0}$ for $T$ with the $B$ specified it suffices to establish there exists some $r\geq 1$ such that $\int_{\R^3}\left| \phi_T(\zeta|\theta)\right|^{r} d\zetave$ in a small neighborhood of $\theta_0$ is upper bounded by a finite continuous function of $\theta$ for each $\theta_0\in \{\theta_i^0\}_{i=1}^{k_0}$. 
 
Since $g(z|\alpha,\xi)$ is differentiable w.r.t. to $z$ on $\R\backslash \{0,1\}$ and when $z\neq 0,1$, $\frac{\partial g(z|\alpha,\xi)}{\partial z}$ is
$$
  \frac{\1ve_{(0,1)}(z)}{B(\alpha\xi,\alpha(1-\xi))} \left((\alpha\xi-1)z^{\alpha\xi-2} (1-z)^{\alpha(1-\xi)-1}-(\alpha(1-\xi)-1)z^{\alpha\xi-1} (1-z)^{\alpha(1-\xi)-2} \right),
$$
which is in $L^1$ when $\alpha\geq \min_{i\in [k_0]} \alpha_{1i}^0 - \gamma$ such that $\alpha\xi>1$ and $\alpha (1-\xi)>1$, where $\gamma$ depends on $T$ through $\xi$. Moreover, $g(z|\alpha,\xi)$ and $z^2g(z|\alpha,\xi)$ are both increasing on $(-\infty, -1)$ and decreasing on $(1,\infty)$. Now, by appealing to Lemma \ref{lem:charintegrability}, for $r>12$, and for $\alpha\geq \min_{i\in [k_0]} \alpha_{1i}^0 - \gamma$
\begin{align*}
&\left\| h(\zeta|\alpha,\xi)\right\|_{L^r(\R^3)} \\
\leq &C(r) (1+2)^2 \left(\|z^2g(z|\alpha,\xi)\|_{L^1}+3\|g(z|\alpha,\xi)\|_{L^\infty}+\left\|(z^2+1)\frac{\partial g(z|\alpha,\xi)}{\partial z}\right\|_{L^1}+1\right)\\
:=& h_5(\alpha,\xi),
\end{align*}
where $C(r)$ is a constant that depends only on $r$. It can be verified easily by the dominated convergence theorem that $h_5(\alpha,\xi)$ is a continuous function of $\alpha$. Then for $\theta$ in a neighborhood of $\theta_0\in \{\theta_i^0\}_{i=1}^{k_0}$ such that $\alpha_1,\alpha_2\geq \alpha_{1i}^0 - \gamma$,
\begin{align*}
\| \phi_T(\zeta|\theta) \|_{L^r(\R^3)} 
\leq \pi_1\left\| h(\zeta|\alpha_1,\xi)\right\|_{L^r} +\pi_2 \left\| h(\zeta|\alpha_2,\xi)\right\|_{L^r} 
\leq  \pi_1 h_5(\alpha_1,\xi) + \pi_2 h_5(\alpha_2,\xi),
\end{align*}
which is a finite continuous function of $\theta = (\pi_1,\alpha_1,\alpha_2)$. We have thus verified that $T$ with the specified $B$ is admissible w.r.t. $\{\theta_i^0\}_{i=1}^{k_0}$.

Moreover, it can also be verified that $\lambda(\theta)$ for $T$ is injective on $\Theta$ provided that $\xi\neq \frac{1}{3},\frac{1}{2},\frac{2}{3}$. By calculation, the Jacobian matrix $J_{\lambda}(\theta)$ of $\lambda(\theta)$ satisfies
$$\text{det}(J_{\lambda})(\theta)= -\frac{6(\xi-1)^3\xi^3(2\xi-1)(3\xi-1)(3\xi-2)
\pi_1\pi_2(\alpha_1-\alpha_2)^4}{\prod_{i=1}^2\left((1+\alpha_i)^2(2+\alpha_i)^2(3+\alpha_i)^2\right)} \not= 0 $$ on $\Theta$ provided that $\xi\neq \frac{1}{3},\frac{1}{2},\frac{2}{3}$; so $J_{\lambda}(\theta)$ is of full rank for each $\theta\in \Theta$ provided that $\xi\neq \frac{1}{3},\frac{1}{2},\frac{2}{3}$. In summary, for $G_0=\sum_{i=1}^{k_0} p_i^0\delta_{\theta_i^0}$ with $\theta_i^0=(\pi_{1i}^0,\alpha_{1i}^0,\alpha_{2i}^0)\in \Theta$, $Tx=((x(B))^2, (x(B))^3, (x(B))^4)^\top $ with $B$ such that 
$$
\xi=H(B)\in \left(\frac{1}{\min_{i\in [k_0]} \alpha_{1i}^0},1-\frac{1}{\min_{i\in [k_0]} \alpha_{1i}^0}\right)\biggr\backslash \left\{\frac{1}{3},\frac{1}{2},\frac{2}{3}\right\}
$$ 
satisfies all the conditions in Corollary \ref{cor:genthm2ndform} and thus \eqref{eqn:genthmcon} and \eqref{eqn:curvatureprodbound} hold.


\section{Sharpness of bounds and minimax theorem}
	\label{sec:minimax}
	
	\subsection{Sharpness of inverse bounds}
	\label{sec:sharp}  



In this subsection 
we consider reverse upper bounds for \eqref{eqn:genthmcon}, which are also reverse upper bounds for \eqref{eqn:curvatureprodbound} by \eqref{eqn:reltwoinversebounds}. Inverse bounds of the form~\eqref{eqn:genthmcon} hold only under some identifiability conditions, while the following upper bound holds generally and is much easier to show.
	
\begin{lem} \label{lem:VD1liminfuppbou}
	Let $k_0\geq 2$  and fix $G_0 =\sum_{i=1}^{k_0}p_i^0\delta_{\theta_i^0} \in \Ec_{k_0}(\Theta)$. Then for any $\m\geq 1$
$$
\liminf_{\substack{G\overset{W_1}{\to} G_0\\ G\in \Ec_{k_0}(\Theta)}} \frac{V(P_{G,\m},P_{G_0,\m})}{D_{\m}(G,G_0)} \leq \liminf_{\substack{G\overset{W_1}{\to} G_0\\ G\in \Ec_{k_0}(\Theta)}} \frac{V(P_{G,\m},P_{G_0,\m})}{D_{1}(G,G_0)}
\leq \frac{1}{2}.
$$
\end{lem}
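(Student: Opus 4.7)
The plan is to handle the two inequalities separately; both will be immediate once the right sequence is written down.

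For the first inequality, observe that whenever $\m\geq 1$ we have $\sqrt{\m}\geq 1$, so the weight on the atom part in the definition of $D_{\m}$ is at least $1$. Thus for every $G,G'\in \Ec_{k_0}(\Theta)$,
\[
D_{\m}(G,G') \;=\; \min_{\tau\in S_{k_0}} \sum_{i=1}^{k_0}\bigl(\sqrt{\m}\|\theta_{\tau(i)}-\theta'_i\|_2 + |p_{\tau(i)}-p'_i|\bigr) \;\geq\; D_{1}(G,G').
\]
In particular $V(P_{G,\m},P_{G_0,\m})/D_{\m}(G,G_0) \leq V(P_{G,\m},P_{G_0,\m})/D_{1}(G,G_0)$ pointwise, and since $\liminf$ is monotone the same inequality transfers to the $\liminf$ as $G\overset{W_1}{\to}G_0$.

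For the second (and main) inequality, the plan is to produce an explicit one-parameter family $G_\epsilon$ approaching $G_0$ along which the ratio is exactly $1/2$. Write $G_0=\sum_{i=1}^{k_0} p_i^0\delta_{\theta_i^0}$ with the $\theta_i^0$ distinct, and (using $k_0\geq 2$) pick two indices, say $1$ and $2$. For small $\epsilon>0$ define
\[
G_\epsilon \;:=\; (p_1^0+\epsilon)\delta_{\theta_1^0} + (p_2^0-\epsilon)\delta_{\theta_2^0} + \sum_{i=3}^{k_0} p_i^0\delta_{\theta_i^0}\;\in\;\Ec_{k_0}(\Theta).
\]
Clearly $G_\epsilon\overset{W_1}{\to}G_0$ as $\epsilon\to 0^+$. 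Since the atoms of $G_\epsilon$ and $G_0$ coincide and are distinct, for all sufficiently small $\epsilon$ the optimal permutation in $D_1(G_\epsilon,G_0)$ is the identity (any non-identity permutation incurs a cost of at least $\min_{i\neq j}\|\theta_i^0-\theta_j^0\|_2>0$, which dominates the $O(\epsilon)$ alternative). Hence $D_1(G_\epsilon,G_0)=|{+}\epsilon|+|{-}\epsilon|=2\epsilon$.

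On the other hand, the perturbation is a pure reshuffling of mass between atoms $\theta_1^0$ and $\theta_2^0$, so as signed measures
\[
P_{G_\epsilon,\m}-P_{G_0,\m} \;=\; \epsilon\bigl(P_{\theta_1^0,\m}-P_{\theta_2^0,\m}\bigr),
\]
which gives $V(P_{G_\epsilon,\m},P_{G_0,\m})=\epsilon\,V(P_{\theta_1^0,\m},P_{\theta_2^0,\m})\leq \epsilon$, since the variational distance between any two probability measures is at most $1$. Therefore
\[
\frac{V(P_{G_\epsilon,\m},P_{G_0,\m})}{D_1(G_\epsilon,G_0)} \;\leq\; \frac{\epsilon}{2\epsilon} \;=\; \frac{1}{2}
\]
for all small $\epsilon>0$, and letting $\epsilon\to 0^+$ yields the $\liminf$ bound $1/2$. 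There is no real obstacle; the only point needing a line of care is the assertion that the identity permutation realizes $D_1(G_\epsilon,G_0)$, which follows from distinctness of the atoms of $G_0$ and smallness of $\epsilon$.
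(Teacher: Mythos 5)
Your proof is correct and follows essentially the same route as the paper: perturb only the mixing weights at two atoms of $G_0$, keep the atoms fixed so that $D_{\m}=D_1=2\epsilon$, and observe that the signed measure $P_{G_\epsilon,\m}-P_{G_0,\m}$ is $\epsilon$ times a difference of two probability measures, giving $V\leq\epsilon$. The only (minor) addition is that you spell out the first inequality via the pointwise monotonicity $D_\m\geq D_1$, which the paper's proof leaves implicit.
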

\begin{proof}
 Consider $G_\ell=\sum_{i=1}^{k_0}p_i^{\ell}\delta_{\theta_i^{0}}$ with $p_i^{\ell}=p_i^0$ for $3\leq \ell\leq k_0$ and $p_1^{\ell}=p_1^0+\frac{1}{\ell}$, $p_2^{\ell}=p_2^0-\frac{1}{\ell}$. Then for sufficiently large $\ell$, $p_1^\ell,p_2^\ell\in(0,1)$ and hence $G_\ell\in \Ec_{k_0}(\Theta)\backslash\{G_0\}$ and satisfies $D_{\m}(G_\ell,G_0)=D_{1}(G_\ell,G_0)=2/\ell$. Thus for sufficiently large $\ell$,
    $$
    \frac{V(P_{G,\m},P_{G_0,\m})}{D_{1}(G,G_0)}=\frac{\ell}{2}\sup_{A \in \mathcal{A}^N }\left|\frac{1}{\ell}\bigotimes^{\m}P_{\theta_1^0}(A)-\frac{1}{\ell}\bigotimes^{\m}P_{\theta_2^0}(A)\right|    = \frac{1}{2}V\left(\bigotimes^{\m}P_{\theta_1^0},\bigotimes^{\m}P_{\theta_2^0}\right)\leq \frac{1}{2}. 
    $$
\end{proof}

The next lemma establishes an upper bound for Hellinger distance of two mixture of product measures by Hellinger distance of individual components. It is an improvement of \cite[Lemma 3.2 (a)]{nguyen2016borrowing}. Such a result is useful in Lemma \ref{lem:optimalsquaretootN}
. A similar result on variation distance is Lemma \ref{lem:Vupperbou}.

\begin{lem}\label{lem:hellingeruppbou}
		 For any $G = \sum_{i=1}^{k_0} p_i \delta_{\theta_i}$ and $G' = \sum_{i=1}^{k_0} p'_i \delta_{\theta'_i}$,
		$$h(\P_{G,\m},\P_{G',\m}) \leq \min_{\tau} \left( \sqrt{\m} \max_{1\leq i \leq k_0} h\left(\P_{\theta_i}, \P_{\theta'_{\tau(i)}}\right) + \sqrt{\frac{1}{2}\sum_{i=1}^{k_0}\left|p_i-p_{\tau(i)}'\right|} \right),$$
		where the minimum is taken over all $\tau$ in the permutation group $S_{k_0}$. 
	\end{lem}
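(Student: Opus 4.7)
My plan is to fix an arbitrary permutation $\tau$, relabel the atoms of $G'$ via $\tau$, and prove the bound without the minimum; the claim then follows by taking the minimum over $\tau \in S_{k_0}$. After relabeling I may assume $\tau = \mathrm{id}$, so the target inequality becomes
$$h(\P_{G,\m},\P_{G',\m}) \leq \sqrt{\m}\,\max_{i} h(\P_{\theta_i},\P_{\theta'_i}) + \sqrt{\tfrac{1}{2}\sum_{i=1}^{k_0}|p_i-p'_i|}.$$
I would introduce the intermediate mixing measure $G'' := \sum_{i=1}^{k_0} p_i \delta_{\theta'_i}$ (same weights as $G$, same atoms as $G'$) and apply the triangle inequality for the Hellinger metric:
$$h(\P_{G,\m},\P_{G',\m}) \leq h(\P_{G,\m},\P_{G'',\m}) + h(\P_{G'',\m},\P_{G',\m}).$$

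For the first term, I would use the joint convexity of $h^2$ with respect to the common weight vector $(p_i)$, which gives $h^2(\P_{G,\m},\P_{G'',\m}) \leq \sum_{i} p_i\, h^2(\P_{\theta_i,\m},\P_{\theta'_i,\m})$. Then I would invoke the standard tensorization identity for Hellinger affinity $\rho(\P,\Q) := \int \sqrt{pq}\,d\mu$, namely $\rho(\otimes^{\m}\P,\otimes^{\m}\Q) = \rho(\P,\Q)^{\m}$, together with the elementary inequality $1 - x^{\m} \leq \m(1-x)$ for $x \in [0,1]$. This yields $h^2(\P_{\theta_i,\m},\P_{\theta'_i,\m}) \leq \m\, h^2(\P_{\theta_i},\P_{\theta'_i})$, and combining the two steps gives $h(\P_{G,\m},\P_{G'',\m}) \leq \sqrt{\m}\,\max_i h(\P_{\theta_i},\P_{\theta'_i})$.

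For the second term, the two mixtures $\P_{G'',\m}$ and $\P_{G',\m}$ share identical component distributions and differ only in their weights. I would apply the classical comparison $h^2 \leq V$ between the squared Hellinger and the variational distances, followed by the standard bound for mixtures with identical components,
$$V(\P_{G'',\m},\P_{G',\m}) \;=\; V\!\Bigl(\sum_i p_i \P_{\theta'_i,\m}, \sum_i p'_i \P_{\theta'_i,\m}\Bigr) \;\leq\; \tfrac{1}{2}\sum_i |p_i - p'_i|,$$
obtained by a direct application of the triangle inequality for total variation inside the mixture. Taking square roots gives $h(\P_{G'',\m},\P_{G',\m}) \leq \sqrt{\tfrac{1}{2}\sum_i |p_i - p'_i|}$. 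Summing the two bounds yields the inequality for the identity permutation, and minimizing over $\tau \in S_{k_0}$ completes the proof.

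There is no significant obstacle here; every ingredient is classical. The only point worth verifying carefully is the tensorization step, where one should be explicit that the bound $1 - x^{\m} \leq \m(1-x)$ applied to $x = \rho(\P_{\theta_i},\P_{\theta'_i}) \in [0,1]$ produces exactly the factor $\sqrt{\m}$ after taking a square root, and that joint convexity of $h^2$ (equivalently, concavity of the affinity $\rho$) is what lets the weights $p_i$ be pulled outside before invoking the maximum.
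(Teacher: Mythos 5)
Your proof is correct and is essentially the same as the paper's: the same decomposition via the intermediate measure $G'' = \sum_i p_i\delta_{\theta'_i}$ and the Hellinger triangle inequality, the same joint-convexity-plus-tensorization bound for the first term, and the same final minimization over $\tau$. The only (cosmetic) difference is in bounding $h(\P_{G'',\m},\P_{G',\m})$: the paper introduces a coupling $q_{ij}$ of the weight vectors, applies joint convexity of $h^2$ over the coupled representation, bounds each $h^2(\P_{\theta_i,\m},\P_{\theta_j,\m})\le\bm 1(\theta_i\neq\theta_j)$, and infimizes over couplings to get $V(\bm p,\bm p')$; you instead invoke $h^2\le V$ directly together with the elementary bound $V\bigl(\sum_i p_i\P_{\theta'_i,\m},\sum_i p'_i\P_{\theta'_i,\m}\bigr)\le\tfrac12\sum_i|p_i-p'_i|$. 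Both give the identical quantity $\sqrt{\tfrac12\sum_i|p_i-p'_i|}$, and both are valid even when the kernels lack a common reference density (use $\mu=P+Q$ for the $h^2\le V$ step).
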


	The inverse bounds expressed by Eq.~\eqref{eqn:genthmcon} are optimal as far as the role of $\m$ in $D_\m$ is concerned. This is made precise by the following result.
	\begin{lem}[Optimality of $\sqrt{\m}$ for atoms] \label{lem:optimalsquaretootN}
		Fix $G_0 = \sum_{i=1}^{k_0} p_i \delta_{\theta_i^0} \in \Ec_{k_0}(\Theta^\circ)$. Suppose there exists $j \in [k_0]$ such that $\liminf\limits_{\theta\to \theta_j^0}\frac{h(\P_\theta,\P_{\theta^0_{j}})}{\|\theta - \theta_{j}^0\|_2}<\infty$ . Then for $\psi(\m)$ such that $\frac{\psi(\m)}{\m}\to \infty$, 
		\begin{equation*}
		\limsup_{\m\to \infty}\liminf_{\substack{G\overset{W_1}{\to} G_0\\ G\in \Ecal_{k_0}(\Theta) }} \frac{h(\P_{G,\m},\P_{G_0,\m})}{\myD_{\psi(\m)}(G,G_0)} = 0. 
		\end{equation*}
	\end{lem}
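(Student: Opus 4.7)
}
The plan is to construct, for each fixed $\m$, an explicit sequence of mixing measures $\{G^{(\ell)}\}_{\ell}\subset \Ec_{k_0}(\Theta)$ with $G^{(\ell)}\overset{W_1}{\to} G_0$ along which the ratio $h(P_{G^{(\ell)},\m},P_{G_0,\m})/D_{\psi(\m)}(G^{(\ell)},G_0)$ is asymptotically bounded above by a constant multiple of $\sqrt{\m/\psi(\m)}$. Since $\psi(\m)/\m\to\infty$, this bound vanishes as $\m\to\infty$, which gives the claim.

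First, I would unpack the hypothesis. By the definition of $\liminf_{\theta\to \theta_j^0}$, there exist a finite constant $L\geq 0$ and a sequence $\{\theta^{(\ell)}\}_{\ell}\subset \Theta\setminus\{\theta_j^0\}$ with $\theta^{(\ell)}\to \theta_j^0$ such that $h(P_{\theta^{(\ell)}},P_{\theta_j^0})/\|\theta^{(\ell)}-\theta_j^0\|_2\to L$. Because $\theta_j^0\in \Theta^\circ$ and the $\theta_i^0$'s are distinct, for all $\ell$ large enough $\theta^{(\ell)}\in \Theta$ and $\theta^{(\ell)}\notin\{\theta_i^0:i\neq j\}$.

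Next, perturb only the $j$-th atom: define
\[
G^{(\ell)} := \sum_{i\neq j} p_i^0\, \delta_{\theta_i^0} + p_j^0\, \delta_{\theta^{(\ell)}} \in \Ec_{k_0}(\Theta).
\]
Then $G^{(\ell)}\overset{W_1}{\to}G_0$. For $\ell$ large enough that $\|\theta^{(\ell)}-\theta_j^0\|_2$ is strictly smaller than $\min_{i\neq j}\|\theta_j^0-\theta_i^0\|_2/2$, the identity permutation attains the minimum in the definition of $D_{\psi(\m)}$, yielding
\[
D_{\psi(\m)}(G^{(\ell)}, G_0) = \sqrt{\psi(\m)}\,\|\theta^{(\ell)}-\theta_j^0\|_2.
\]
On the other hand, applying Lemma \ref{lem:hellingeruppbou} with $\tau=\mathrm{id}$ (using that the mixing proportions agree and that the non-$j$-th components are identical, so the maximum in Lemma \ref{lem:hellingeruppbou} reduces to the $j$-th component) gives
\[
h(P_{G^{(\ell)},\m}, P_{G_0,\m}) \leq \sqrt{\m}\, h(P_{\theta^{(\ell)}}, P_{\theta_j^0}).
\]

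Combining the two displays yields
\[
\frac{h(P_{G^{(\ell)},\m}, P_{G_0,\m})}{D_{\psi(\m)}(G^{(\ell)}, G_0)} \leq \sqrt{\frac{\m}{\psi(\m)}} \cdot \frac{h(P_{\theta^{(\ell)}}, P_{\theta_j^0})}{\|\theta^{(\ell)}-\theta_j^0\|_2}.
\]
Letting $\ell\to\infty$ along the chosen subsequence, the second factor tends to $L$, so
\[
\liminf_{\substack{G\overset{W_1}{\to} G_0\\ G\in \Ec_{k_0}(\Theta)}} \frac{h(P_{G,\m},P_{G_0,\m})}{D_{\psi(\m)}(G,G_0)} \leq L\sqrt{\frac{\m}{\psi(\m)}}.
\]
Finally, taking $\limsup$ as $\m\to\infty$ and invoking $\psi(\m)/\m\to\infty$ drives the right-hand side to zero, completing the proof.

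The argument is essentially a one-atom perturbation combined with the two compatible upper bounds (Lemma \ref{lem:hellingeruppbou} on the numerator, the definition of $D_{\psi(\m)}$ on the denominator); the main conceptual point is simply that the numerator scales like $\sqrt{\m}$ in the sample size while the denominator scales like $\sqrt{\psi(\m)}$, so their ratio decays at rate $\sqrt{\m/\psi(\m)}$. No serious obstacle is anticipated; the only bookkeeping is ensuring the identity permutation realizes the minimum in $D_{\psi(\m)}$, which holds for $\ell$ sufficiently large uniformly in $\m$.
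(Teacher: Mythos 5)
Your proof is correct and uses the same ingredients as the paper's: a single-atom perturbation at $\theta_j^0$, the Hellinger bound of Lemma \ref{lem:hellingeruppbou} for the numerator, the definition of $D_{\psi(\m)}$ for the denominator, and the scaling $\sqrt{\m/\psi(\m)}\to 0$. The only difference is presentation: you argue directly by exhibiting a sequence that forces the liminf down, whereas the paper runs the same computation inside a proof by contradiction; the two are equivalent.
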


	Lemma \ref{lem:optimalsquaretootN} establishes that $\sqrt{N}$ is optimal for the coefficients of the component parameters $\theta_i$ in $D_N$. The next lemma establishes that the constant coefficients of the mixing propositions $p_i$ in $D_N$ are also optimal. For $G=\sum_{i=1}^{k_0}p_i\delta_{\theta_i}$ and $G'
	=\sum_{i=1}^{k_0}p'_i\delta_{\theta'_i}$,
	define $$\bar{D}_r(G,G')= \min_{\tau\in S_{k}}\left( \|\theta_{\tau(i)}-\theta'_i\|_2 + r|p_{\tau(i)}-p'_i|  \right).$$ It states that the vanishing of $V(P_{G,N},P_{G_0,N})$ may not induce a faster convergence rate for the mixing proportions $p_i$ in terms of $N$ as the exchangeable length $N$ increases.
	
	\begin{lem}[Optimality of constant coefficient for mixing proportions] \label{lem:optimalmixingproportions}
		Fix $G_0 = \sum_{i=1}^{k_0} p_i \delta_{\theta_i^0} \in \Ec_{k_0}(\Theta^\circ)$. Suppose that the map $\theta\to P_\theta$ is injective. Then for $\psi(\m)$ such that $\psi(\m)\to \infty$, 
		\begin{equation*}
		\limsup_{\m\to \infty}\liminf_{\substack{G\overset{W_1}{\to} G_0\\ G\in \Ecal_{k_0}(\Theta) }} \frac{V(\P_{G,\m},\P_{G_0,\m})}{\bar{\myD}_{\psi(\m)}(G,G_0)} = 0. 
		\end{equation*}
	\end{lem}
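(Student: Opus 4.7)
The strategy mirrors that of Lemma \ref{lem:VD1liminfuppbou}: we produce, for each fixed $\m$, a sequence $G_\ell \to G_0$ in $W_1$ that differs from $G_0$ only in two of the mixing proportions while keeping all atoms fixed, so that the variational distance between the resulting mixtures of product measures scales like $1/\ell$ while $\bar{D}_{\psi(\m)}$ scales like $\psi(\m)/\ell$. Assuming $k_0 \geq 2$ (the case $k_0=1$ can be excluded from the statement since there are no mixing proportions to perturb), define
\[
G_\ell = \Bigl(p_1^0+\tfrac{1}{\ell}\Bigr)\delta_{\theta_1^0} + \Bigl(p_2^0-\tfrac{1}{\ell}\Bigr)\delta_{\theta_2^0} + \sum_{i=3}^{k_0} p_i^0 \delta_{\theta_i^0}.
\]
For $\ell$ large enough, $G_\ell \in \Ec_{k_0}(\Theta)\setminus\{G_0\}$, and since the atoms are those of $G_0$, the identity permutation is (eventually) optimal, giving $\bar{D}_{\psi(\m)}(G_\ell,G_0) = 2\psi(\m)/\ell$.

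Next, I would use the linearity of $G \mapsto P_{G,\m}$ to compute
\[
P_{G_\ell,\m} - P_{G_0,\m} \;=\; \tfrac{1}{\ell}\bigl(P_{\theta_1^0,\m} - P_{\theta_2^0,\m}\bigr),
\]
from which $V(P_{G_\ell,\m},P_{G_0,\m}) = \tfrac{1}{\ell} V(P_{\theta_1^0,\m},P_{\theta_2^0,\m})$. Dividing and taking $\ell\to\infty$ with $\m$ fixed yields
\[
\liminf_{\substack{G\overset{W_1}{\to} G_0\\ G\in \Ec_{k_0}(\Theta)}} \frac{V(P_{G,\m},P_{G_0,\m})}{\bar{D}_{\psi(\m)}(G,G_0)} \;\le\; \frac{V(P_{\theta_1^0,\m},P_{\theta_2^0,\m})}{2\psi(\m)} \;\le\; \frac{1}{2\psi(\m)},
\]
where the last inequality uses the trivial bound $V\le 1$. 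Since $\psi(\m)\to\infty$, taking $\limsup_{\m\to\infty}$ of the non-negative quantity on the left gives $0$, which is the desired conclusion.

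The proof is essentially a construction, and there is no substantive obstacle: the injectivity hypothesis on $\theta\mapsto P_\theta$ (a standing assumption) only ensures that each $G_\ell$ is an element of $\Ec_{k_0}(\Theta)$ distinct from $G_0$ in a meaningful sense; the argument would even go through with the weaker bound $V(P_{\theta_1^0,\m},P_{\theta_2^0,\m}) \le 1$. The main conceptual point to highlight in the write-up is that any purely proportion-based perturbation produces a variational distance that is bounded uniformly in $\m$, so weighting $|p_i - p'_i|$ by any factor tending to infinity necessarily destroys the inverse bound — this is what makes the constant coefficient on the mixing proportions in the definition of $D_\m$ sharp.
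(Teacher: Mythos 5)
Your proof is correct and follows essentially the same construction as the paper's: both perturb only the first two mixing proportions by $\pm 1/\ell$, observe $\bar{D}_{\psi(\m)}(G_\ell,G_0)=2\psi(\m)/\ell$ and $V(P_{G_\ell,\m},P_{G_0,\m})=V(P_{\theta_1^0,\m},P_{\theta_2^0,\m})/\ell$, and conclude via $V\le 1$. (If anything, you are slightly more careful than the paper, which has a small typographical slip writing $V(P_{\theta_1^0},P_{\theta_2^0})$ where the $\m$-fold product variational distance is meant; your explicit use of the bound $V\le 1$ sidesteps this cleanly.)
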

	
	\begin{proof}
	 Consider $G_\ell=\sum_{i=1}^{k_0}p_i^\ell\delta_{\theta_i^\ell}\in \Ec_{k_0}(\Theta)$ with $\theta_i^\ell = \theta_i^0$ for any $i$ and $p_i^\ell = p_i^0$ for $i\geq 3$, $p_1^\ell=p_1^0+1/\ell$, $p_2^\ell=p_2^0-1/\ell$. Then for large $\ell$, $\bar{D}_{\psi(N)}(G_\ell,G_0)= \psi(N)(|p_1^\ell-p_1^0|+|p_2^\ell-p_2^0|)= 2\psi(N)/\ell$. Note that $V(P_{G_\ell,N},P_{G_0,N})= V(P_{\theta_1^0},P_{\theta_2^0})/\ell$ and hence 
	$$
	\liminf_{\substack{G\overset{W_1}{\to} G_0\\ G\in \Ecal_{k_0}(\Theta) }} \frac{V(\P_{G,\m},\P_{G_0,\m})}{\bar{\myD}_{\psi(\m)}(G,G_0)} \leq \frac{V(\P_{G_\ell,\m},\P_{G_0,\m})}{\bar{\myD}_{\psi(\m)}(G_\ell,G_0)} = \frac{V(P_{\theta_1^0},P_{\theta_2^0})}{2\psi(N)},
	$$
	which completes the proof.
	\end{proof}
	
	A slightly curious and pedantic way to gauge the meaning of the double infimum limiting arguments in the inverse bound~\eqref{eqn:genthmcon}, is to express its claim as follows:  
	$$
	0< \liminf_{\m\to \infty}\liminf_{\substack{G\overset{W_1}{\to} G_0\\ G\in \Ecal_{k_0}(\Xi) }} \frac{V(\P_{G,\m},\P_{G_0,\m})}{\myD_{\m}(G,G_0)} = \lim_{ k \to \infty}\ \inf_{\m\geq k}\ \lim_{\epsilon \to 0}\ \inf_{G\in B_{W_1}(G_0,\epsilon)\backslash \{G_0\}} \frac{V(\P_{G,\m},\P_{G_0,\m})}{\myD_{\m}(G,G_0)},
	$$
	where $B_{W_1}(G_0,R)\subset \Ec_{k_0}(\Theta)$ is defined in \eqref{eqn:BW1def}. It is possible to alter the order of the four operations and consider the resulting outcome. The following lemma shows the last display is the only order to possibly obtain a  positive outcome. 
	\begin{lem}
	    \begin{enumerate}[label=\alph*)]
	    \item \label{item:unnamea}
	    \begin{multline*}
	    \lim_{ k \to \infty}\ \lim_{\epsilon \to 0}\ \inf_{\m\geq k}\ \inf_{G\in B_{W_1}(G_0,\epsilon)\backslash \{G_0\}} \frac{V(\P_{G,\m},\P_{G_0,\m})}{\myD_{\m}(G,G_0)} \\ = \lim_{ k \to \infty}\ \lim_{\epsilon \to 0}\ \inf_{G\in B_{W_1}(G_0,\epsilon)\backslash \{G_0\}}\ \inf_{\m\geq k}\  \frac{V(\P_{G,\m},\P_{G_0,\m})}{\myD_{\m}(G,G_0)} =  0
	    \end{multline*}
	    \item
	    \begin{multline*}
	    \lim_{\epsilon \to 0}\ \lim_{ k \to \infty}\ \inf_{\m\geq k}\ \inf_{G\in B_{W_1}(G_0,\epsilon)\backslash \{G_0\}} \frac{V(\P_{G,\m},\P_{G_0,\m})}{\myD_{\m}(G,G_0)} \\ =\lim_{\epsilon \to 0}\  \lim_{ k \to \infty}\ \inf_{G\in B_{W_1}(G_0,\epsilon)\backslash \{G_0\}}\ \inf_{\m\geq k}\  \frac{V(\P_{G,\m},\P_{G_0,\m})}{\myD_{\m}(G,G_0)} =  0
	    \end{multline*}
	    \item 
	    $$
	    \lim_{\epsilon \to 0}\ \inf_{G\in B_{W_1}(G_0,\epsilon)\backslash \{G_0\}} \lim_{ k \to \infty}\ \inf_{\m\geq k}\ \frac{V(\P_{G,\m},\P_{G_0,\m})}{\myD_{\m}(G,G_0)} =0.
	    $$
	    \end{enumerate}
	\end{lem}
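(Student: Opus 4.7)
The engine behind all three parts is the same elementary observation: for any $G = \sum_{i=1}^{k_0} p_i \delta_{\theta_i} \in \Ec_{k_0}(\Theta)$ whose multiset of atoms differs from that of $G_0$, the denominator $D_{\m}(G,G_0)$ grows like $\sqrt{\m}$ while the numerator $V(\P_{G,\m},\P_{G_0,\m})$ is bounded by $1$. Concretely, if $\{\theta_i\}_i \neq \{\theta_i^0\}_i$ as multisets, then $c(G,G_0) := \min_{\tau \in S_{k_0}} \sum_{i=1}^{k_0}\|\theta_{\tau(i)}-\theta_i^0\|_2 > 0$, and by the definition of $D_\m$ one has $D_{\m}(G,G_0) \geq \sqrt{\m}\, c(G,G_0) \to \infty$, so
\[
\lim_{\m \to \infty} \frac{V(\P_{G,\m},\P_{G_0,\m})}{D_{\m}(G,G_0)} \;=\; 0.
\]
Note further that for every $\epsilon>0$ such a perturbed measure exists in $B_{W_1}(G_0,\epsilon)\setminus\{G_0\}$: e.g., take $G^*_\delta = p_1^0 \delta_{\theta_1^0 + \delta e} + \sum_{i=2}^{k_0} p_i^0 \delta_{\theta_i^0}$ with $e$ a fixed unit vector and $\delta>0$ small, which lies in $B_{W_1}(G_0,\epsilon)\setminus\{G_0\}$ and has atoms distinct from $G_0$'s provided $\theta_1^0 + \delta e \notin\{\theta_i^0\}_{i=1}^{k_0}$ (which holds for all sufficiently small $\delta$).

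For part~\ref{item:unnamea}, the equality of the two iterated infima follows from the trivial identity $\inf_{\m\geq k}\inf_{G} = \inf_{G}\inf_{\m\geq k}$ (both equal the joint infimum over $(\m,G)$). To show the common value is $0$, fix any $k\geq 1$ and $\epsilon>0$, pick $G^*_\delta$ as above with $\delta$ small enough, and observe that for this fixed $G^*_\delta$,
\[
\inf_{\m\geq k} \frac{V(\P_{G^*_\delta,\m},\P_{G_0,\m})}{D_{\m}(G^*_\delta,G_0)} \leq \inf_{\m\geq k} \frac{1}{\sqrt{\m}\,c(G^*_\delta,G_0)} = 0,
\]
by the key observation. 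Hence $\inf_{G\in B_{W_1}(G_0,\epsilon)\setminus\{G_0\}}\inf_{\m\geq k}V/D_\m = 0$ uniformly in $k$ and $\epsilon$, and the two iterated limits in $k$ and $\epsilon$ both vanish.

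For part~b), the same reasoning applies verbatim: the double infimum has the same value regardless of order, and for every $k,\epsilon$ one already has $0$ by choosing $G^*_\delta$ and sending $\m$ along $\m\geq k$. Taking $\lim_{k\to\infty}$ (which only enlarges the infimum, but stays bounded below by $0$) and then $\lim_{\epsilon\to 0}$ preserves the value $0$. For part~c), fix $\epsilon>0$. For any $G\in B_{W_1}(G_0,\epsilon)\setminus\{G_0\}$ we have
\[
\lim_{k\to\infty}\inf_{\m\geq k}\frac{V(\P_{G,\m},\P_{G_0,\m})}{D_{\m}(G,G_0)} = \liminf_{\m\to\infty}\frac{V(\P_{G,\m},\P_{G_0,\m})}{D_{\m}(G,G_0)} \geq 0,
\]
and for the choice $G=G^*_\delta$ (with $\delta$ small enough that $G^*_\delta\in B_{W_1}(G_0,\epsilon)$) the key observation forces this $\liminf$ to equal $0$. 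Hence $\inf_G$ of this nonnegative quantity is $0$ for every $\epsilon>0$, and the outer $\lim_{\epsilon\to 0}$ is $0$.

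There is no real obstacle: the argument hinges entirely on the elementary estimate $D_\m(G,G_0)\geq \sqrt{\m}\,c(G,G_0)$ for $G$ with atoms distinct from $G_0$'s, together with $V\leq 1$. The only care required is to verify that such an off-support perturbation $G^*_\delta$ indeed belongs to $B_{W_1}(G_0,\epsilon)\setminus\{G_0\}$ for arbitrarily small $\epsilon$, which is immediate since $W_1(G^*_\delta,G_0)\leq p_1^0\delta$.
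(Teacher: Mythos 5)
Your proof is correct and follows essentially the same approach as the paper: commute the two inner infima (trivial), then exploit $V \le 1$ and the unbounded growth of $D_\m(G,G_0)$ in $\m$ for a suitable $G$. The paper's proof is more terse—it simply writes
$\inf_{\m\geq k}\ \frac{1}{\myD_{\m}(G,G_0)} =0$
as if this held for an arbitrary $G$, which is not literally true (if the atoms of $G$ coincide with those of $G_0$ as a multiset, $D_\m(G,G_0)$ stays bounded in $\m$). Your construction of $G^*_\delta$, with at least one atom off the support of $G_0$, supplies the missing justification and makes the argument airtight: for such $G$, $D_\m(G^*_\delta, G_0)\ge \sqrt{\m}\,c(G^*_\delta,G_0)\to\infty$, so the single-$G$ infimum over $\m\geq k$ is already $0$, and the remaining $\inf_G$, $\lim_{k}$, $\lim_{\epsilon}$ preserve this. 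One small point worth stating explicitly in part c): the quantity $\lim_{k\to\infty}\inf_{\m\geq k}\frac{V}{D_\m}$ is a $\liminf_{\m\to\infty}$, so it is finite and nonnegative for every $G$, which is what you need before taking $\inf_G$; you do state this, so the proof is complete.
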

	\begin{proof}
	   The claims follow from
	    $$
	    \inf_{\m\geq k}\ \inf_{G\in B_{W_1}(G_0,\epsilon)\backslash \{G_0\}} \frac{V(\P_{G,\m},\P_{G_0,\m})}{\myD_{\m}(G,G_0)} = \inf_{G\in B_{W_1}(G_0,\epsilon)\backslash \{G_0\}} \inf_{\m\geq k}\  \frac{V(\P_{G,\m},\P_{G_0,\m})}{\myD_{\m}(G,G_0)} 
	    $$
	    
	    and 
	    $$
	    \inf_{\m\geq k}\  \frac{V(\P_{G,\m},\P_{G_0,\m})}{\myD_{\m}(G,G_0)} \leq \inf_{\m\geq k}\ \frac{1}{\myD_{\m}(G,G_0)} =0.
	    $$
	\end{proof}

	\subsection{Minimax lower bounds}
	\label{sec:minimaxlowbou}

	Given $G=\sum_{i=1}^{k_0} p_i \delta_{\theta_i}\in \Ec_{k_0}(\Theta)$ and $G'= \sum_{i=1}^{k_0} p'_i \delta_{\theta'_i} \in \Ec_{k_0}(\Theta)$, define additional notions of distances
	\begin{eqnarray}
	d_{\bm{\Theta}}(G',G):= \min_{\tau\in S_{k_0}}\sum_{i=1}^{k_0}\|\theta'_{\tau(i)}-\theta_i\|_2 \\
	d_{\bm{p}}(G',G):=\min_{\tau\in S_{k_0}}\sum_{i=1}^{k_0}|p'_{\tau(i)}-p_i|. 
	\end{eqnarray}
	Notice that we denote $d_{\Theta}$ to be a distance on $\Theta$ in Section \ref{sec:prelim}. Here the $d_{\bm{\Theta}}$ with bold subscript is on $\Ec_{k_0}(\Theta)$. These two notions of distance are pseudometrics on the space of measures $\Ec_{k_0}(\Theta)$, i.e., they share the same properties as a metric except that allow the distance between two different points be zero. $d_{\bm{\Theta}}(G',G)$ focuses on the distance between atoms of two mixing measure; while $d_{\bm{p}}(G',G)$ focuses on the mixing probabilities of the two mixing measures. It is clear that
	\begin{equation}
	D_N(G,G') \geq \sqrt{N} d_{\bm{\Theta}}(G,G') + d_{\bm{p}}(G,G').
	\label{eqn:d1dthetadp}
	\end{equation}

	We proceed to present minimax lower bounds for any sequence of estimators $\hat{G}$, which are measurable functions of $X^1_{[\m]},\ldots, X_{[\m]}^{\n}$, where the sequence length are assumed to be equal for simplicity. The minimax bounds are stated in terms of the aforementioned (pseudo-)metrics $d_{\bm{p}}$ and $d_{\bm{\Theta}}$, as well as the usual metric $D_N$ studied.
	
	\begin{thm}[Minimax Lower Bound] \label{thm:minimaxlower} 
	In the following three bounds the infimum is taken for all $\hat{G} $ measurable functions of $X^1_{[\m]},\ldots, X_{[\m]}^{\n}$.
	\begin{enumerate}[label=\alph*)] 
	\item \label{item:minimaxlowera}	
	Suppose there exists $\theta_0\in\Theta^\circ$ and $\beta_0>0$ such that $\limsup\limits_{\theta \to\theta_0 }\frac{h\left(\P_{\theta},\P_{\theta_0}\right)}{\|{\theta}-{\theta_0}\|_2^{\beta_0}}<\infty$. Moreover, suppose there exists a set of distinct $k_0-1$ points $\{\theta_i\}_{i=1}^{k_0-1}\subset \Theta\backslash \{\theta_0\}$ satisfying 
	$\min_{0\leq i<j\leq k_0-1}h(\P_{\theta_i},\P_{\theta_j})>0$. Then 
	$$
	\inf_{\hat{G}\in \Ec_{k_0}(\Theta)}\sup_{G\in \Ec_{k_0}(\Theta)}\E_{\bigotimes^\n\P_{G,\m}}
	d_{\bm{\Theta}}(\hat{G},G)
	\geq C(\beta_0,k_0) \left(\frac{1}{\sqrt{\n}\sqrt{\m}}\right)^{\frac{1}{\beta_0}},
	$$
	where $C(\beta_0,k_0)$ is a constant depending on $\beta_0$, $k_0$ and the probability family $P_\theta$. 
	\item \label{item:minimaxlowerb} 
	Let $k_0\geq 2$.
	$$\inf_{\hat{G}\in \Ec_{k_0}(\Theta)}\sup_{G\in \Ec_{k_0}(\Theta)}\E_{\bigotimes^\n\P_{G,\m}}
	d_{\bm{p}}(\hat{G},G) \geq C(k_0)\frac{1}{\n},$$
	where $C(k_0)$ is a constant depending on $k_0$ and the probability family $P_\theta$.
    \item 
    Let $k_0\geq 2$. Suppose the conditions of part (a) hold. Then,
    $$\inf_{\hat{G}\in \Ec_{k_0}(\Theta)}\sup_{G\in \Ec_{k_0}(\Theta)}\E_{\bigotimes^\n\P_{G,\m}} D_N(\hat{G},G)\geq C(\beta_0,k_0) \sqrt{N} \left(\frac{1}{\sqrt{\n}\sqrt{\m}}\right)^{\frac{1}{\beta_0}} + C(k_0)\frac{1}{\n}.$$
	\end{enumerate}
	\end{thm}

	\begin{rem} \label{rem:minimax}
	\begin{enumerate}[label=\alph*)]
	 \item 
	The condition that there exists a set of distinct $k_0-1$ points $\{\theta_i\}_{i=1}^{k_0-1}\subset \Theta\backslash \{\theta_0\}$ satisfying 
	$\min_{0\leq i<j\leq k_0-1}h(\P_{\theta_i},\P_{\theta_j})>0$ immediately follows from the injectivity of the map $\theta \mapsto P_\theta$ (recall that this 
	condition is assumed throughout the paper). 
	
	\item The condition that there exists $\theta_0\in\Theta^\circ$ and $\beta_0>0$ such that $\limsup\limits_{\theta \to\theta_0 }\frac{h\left(\P_{\theta},\P_{\theta_0}\right)}{\|{\theta}-{\theta_0}\|_2^{\beta_0}}<\infty$ holds for most probability kernels considered in practice. For example, it is satisfied with $\beta_0=1$ for all full rank exponential families of distribution in their canonical form as shown by Lemma \ref{lem:exphellinger}. It can then be shown that this condition with $\beta_0=1$ is also satisfied by full rank exponential families in general form specified in Corollary \ref{cor:expfamtheta}. Notice that the same remark applies to the condition in Lemma \ref{lem:optimalsquaretootN}.

	\item If conditions of Theorem \ref{thm:minimaxlower} \ref{item:minimaxlowera} hold with $\beta_0=1$, then 
	$$
	\inf_{\hat{G}\in \Ec_{k_0}(\Theta)}\sup_{G\in \Ec_{k_0}(\Theta)}\E_{\bigotimes^\n\P_{G,\m}}
	d_{\bm{\Theta}}(\hat{G},G)
	\geq  \frac{C}{\sqrt{\n}\sqrt{\m}}. 
	$$
	That is, the convergence rate of the best possible estimator for the worst scenario is at least  $\frac{1}{\sqrt{\n}\sqrt{\m}}$. Recall that Theorem  \ref{thm:posconnotid} implied that the convergence rate of the atoms is $O_P(\sqrt{\frac{\ln(\n N)}{\n N}})$, which is obtained by replacing $\bar{N}_m$ with $N$ in \eqref{eqn:atomconvergencerate}. It is worth noting that while the minimax rate seems to match the posterior contraction rate of the atoms except for a logarithmic factor, such a comparison is not very meaningful as pointwise posterior contraction bounds and minimax lower bounds are generally not considered to be compatible. In particular, in the posterior contraction Theorem \ref{thm:posconnotid}, the truth $G_0$ is fixed and the hidden constant $O_P(\sqrt{\frac{\ln(\n N)}{\n N}})$ depends on $G_0$, which is clearly not the case in the above results obtained under the minimax framework. In short, we do not claim that the Bayesian procedure described in Theorem \ref{thm:posconnotid} is optimal in the minimax sense; nor do we claim that the bounds given in Theorem~\ref{thm:minimaxlower} are sharp (i.e., achievable by some statistical procedure). 
	\myeoe
	\end{enumerate}
	\end{rem}



\section{Extensions and discussions}  
\label{sec:extensions}
\subsection{On compactness assumption} 
\label{sec:compactness}

    In Theorems~\ref{thm:posconnotid} we impose that the parameter subset $\Theta_1$ is compact. This appears to be a strong assumption, although it is a rather standard one for most theoretical investigations of parameter estimation in finite mixture models (see~\cite{chen1995optimal,nguyen2013convergence,ho2016convergence,ho2016strong,ghosal2017fundamentals,wu2020optimal}). We surmise that in the context of mixture models, it might not be possible to achieve the global parameter estimation rate without a suitable constraint on the parameter space, such as compactness. In this subsection we clarify the roles of the compactness condition within our approach and discuss possible alternatives to relax compactness to boundedness.
    
    
    The proof of Theorem \ref{thm:posconnotid} follows the basic structure of Lemma \ref{lem:convergencerate}. To obtain the posterior contraction rate to mixture densities and the posterior consistency w.r.t. $W_1$ for general probability kernel $f(x|\theta)$, a global inverse bound Lemma \ref{cor:VlowbouW1} is applied (as an example, it follows that the posterior contraction rate to mixture densities and Lemma \ref{cor:VlowbouW1} together imply the posterior consistency w.r.t. $W_1$). The compactness of $\Theta_1$ is only used to establish Lemma \ref{cor:VlowbouW1}. It might be possible to have a posterior contraction result for the population density estimation and the posterior consistency result \emph{without} Lemma \ref{cor:VlowbouW1} (e.g. by an existence of test argument), but such an approach would require additionally stronger and perhaps explicit knowledge of the kernel $f(x|\theta)$, and thus is beyond the scope of this paper.
    
    The compactness of $\Theta_1$ is only used to guarantee Lemma \ref{cor:VlowbouW1}, which is used in the posterior contraction and consistency results mentioned above.  
     It is possible to have a posterior contraction result for the population density estimation and the posterior consistency result \emph{without} Lemma \ref{cor:VlowbouW1} (e.g. by existence of test), but such an approach would require additionally stronger and perhaps explicit knowledge of the kernel $f(x|\theta)$.

In this subsection we provide a substitute to the compactness assumption in Lemma \ref{cor:VlowbouW1}, which removes the compactness assumption in Theorem \ref{thm:posconnotid}. It is clear that $\Theta$ is required to be a bounded set. The compactness assumption may be relaxed by the necessary boundedness assumption, provided that an identifiability condition additionally holds. This can be seen by the following claim.

\begin{lem} \label{lem:VlowbouW1new}
Fix $G_0\in\Ec_{k_0}(\Theta^\circ)$. 
    Suppose $\Theta$ is bounded. Let $n_1(G_0)$ be given by~\eqref{eqn:defn0n1n2}. Suppose there exists $n_0\geq 1$ such that for $\epsilon>0$
\begin{equation}
\inf_{G\in \cup_{k\leq k_0}\Ec_k(\Theta): W_1(G,G_0)>\epsilon} h(p_{G,n_0},p_{G_0,n_0})>0. \label{eqn:separationatG0}
\end{equation}
Then
    $$
     h(P_{G,\m},P_{G_0,\m}) \geq  C(G_0,\Theta) W_1(G,G_0), \quad \forall G\in \bigcup_{k=1}^{k_0}\Ec_{k}(\Theta_1),\; \forall \m\geq n_1(G_0)\vee n_0,
     $$
     provided $ n_1(G_0)\vee n_0 <\infty$, where $C(G_0,\Theta) >0$ is a constant that depends on $G_0$ and $\Theta$.
\end{lem}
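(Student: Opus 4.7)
The plan is to combine a local inverse bound near $G_0$ (coming from the $1$-identifiable length $n_1(G_0)$) with a global separation estimate (the new hypothesis \eqref{eqn:separationatG0}), glued together via the boundedness of $\Theta$. First I would fix a radius $r_0 > 0$ small enough that $B_{W_1}(G_0,r_0) \subset \Ec_{k_0}(\Theta)$ (possible because $G_0\in\Ec_{k_0}(\Theta^\circ)$) and so that the neighborhood $\{W_1(G,G_0)<r_0\}$ lies inside the range of validity $c(G_0,n_1(G_0))$ of Lemma~\ref{cor:identifiabilityallm} taken with $\m_0=n_1(G_0)$.

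On the local regime $\{W_1(G,G_0)<r_0\}$, Lemma~\ref{cor:identifiabilityallm} delivers $V(P_{G,n_1(G_0)},P_{G_0,n_1(G_0)}) \geq C(G_0)\, D_{n_1(G_0)}(G,G_0) \geq C(G_0)\, D_1(G,G_0)$, after which Lemma~\ref{lem:relW1D1}\ref{item:relW1D1b} converts $D_1$ to $W_1$ up to a constant depending only on $\operatorname{diam}(\Theta)$. To promote this to arbitrary $\m \geq n_1(G_0)$, I would invoke monotonicity of variational distance under marginalization: since $P_{G,\m+1}$ marginalizes to $P_{G,\m}$ (the coordinates are exchangeable within each mixture component), the data processing inequality gives $V(P_{G,\m},P_{G_0,\m}) \geq V(P_{G,n_1(G_0)},P_{G_0,n_1(G_0)})$. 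Combining with $h\geq V$ yields $h(P_{G,\m},P_{G_0,\m}) \geq C_1(G_0,\Theta)\, W_1(G,G_0)$ throughout the ball for $\m\geq n_1(G_0)$.

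On the complementary regime $\{G \in \bigcup_{k\leq k_0}\Ec_k(\Theta) : W_1(G,G_0)\geq r_0\}$, apply \eqref{eqn:separationatG0} with $\epsilon=r_0/2$ to obtain $\delta := \inf_{W_1(G,G_0)>r_0/2} h(p_{G,n_0},p_{G_0,n_0}) > 0$. The same marginalization/data processing argument, now for Hellinger distance, gives $h(P_{G,\m},P_{G_0,\m}) \geq \delta$ for all $\m\geq n_0$. Because $\Theta$ is bounded, $W_1(G,G_0)\leq \operatorname{diam}(\Theta)$, so $h(P_{G,\m},P_{G_0,\m}) \geq (\delta/\operatorname{diam}(\Theta))\, W_1(G,G_0)$ on this regime. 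Taking $C(G_0,\Theta) := \min\{C_1(G_0,\Theta),\, \delta/\operatorname{diam}(\Theta)\}$ yields the asserted inequality uniformly in $\m \geq n_1(G_0)\vee n_0$.

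The main conceptual point—and the reason boundedness is genuinely needed—is that the separation constant $\delta$ provided by \eqref{eqn:separationatG0} must be converted into a bound that is \emph{linear} in $W_1(G,G_0)$; this is only possible if $W_1(G,G_0)$ admits a uniform upper bound, which is exactly what $\operatorname{diam}(\Theta)<\infty$ furnishes. Everything else is routine: matching $r_0$ between the two regimes causes no trouble because \eqref{eqn:separationatG0} is valid for every $\epsilon>0$, and the $\m$-uniformity is handed to us essentially for free by the monotonicity of $V$ and $h$ under marginalization of exchangeable coordinates.
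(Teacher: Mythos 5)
Your overall structure --- a local inverse bound near $G_0$ coming from $n_1(G_0)$, a global lower bound on the complement coming from \eqref{eqn:separationatG0}, a linear-in-$W_1$ conversion on the far regime via boundedness of $\Theta$, and gluing by monotonicity of $V$ and $h$ under marginalization of exchangeable coordinates --- is exactly the paper's argument.

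The one misstep is the appeal to Lemma~\ref{cor:identifiabilityallm}. That lemma assumes \eqref{eqn:genthmcon}, and its proof establishes $\underbar{\m}_1(G_0)<\infty$ via Proposition~\ref{lem:n0nbar}\,\ref{item:n1Nbar} before producing the constant $C(G_0) = \tfrac12 \inf_{\m\geq \underbar{\m}_1}\liminf_{G\to G_0} V(P_{G,\m},P_{G_0,\m})/D_\m(G,G_0)$; neither the hypothesis \eqref{eqn:genthmcon} nor the finiteness of $\underbar{\m}_1(G_0)$ is part of the present lemma's assumptions, which only grant $n_1(G_0)\vee n_0 < \infty$. Since $n_1(G_0)<\infty$ does not obviously imply $\underbar{\m}_1(G_0)<\infty$, the step ``Lemma~\ref{cor:identifiabilityallm} delivers $V(P_{G,n_1(G_0)},P_{G_0,n_1(G_0)}) \geq C(G_0) D_{n_1(G_0)}(G,G_0)$'' is not supported as written. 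Fortunately the fact you actually need is weaker and follows directly from the definition of $n_1(G_0)$: finiteness of $n_1(G_0)$ means $\liminf_{G\overset{W_1}{\to} G_0} V(P_{G,n_1(G_0)},P_{G_0,n_1(G_0)})/D_1(G,G_0) > 0$, which by definition of $\liminf$ produces a radius $R>0$ and a constant $c>0$ with $V(P_{G,n_1(G_0)},P_{G_0,n_1(G_0)}) \geq c\, D_1(G,G_0)$ for $G \in B_{W_1}(G_0,R)\setminus\{G_0\}$, and Lemma~\ref{lem:relW1D1}\,\ref{item:relW1D1b} then converts $D_1$ to $W_1$. Replacing your citation of Lemma~\ref{cor:identifiabilityallm} with this direct use of the definition closes the gap; the remainder of your proof (data processing to promote to all $\m \geq n_1(G_0)\vee n_0$, and the $(\delta/\operatorname{diam}(\Theta))$-linearization on $\{W_1(G,G_0)\geq r_0\}$) matches the paper's.
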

\begin{proof}
    In this proof we write  $n_1$ for  $n_1(G_0)$ . By the definition of $n_1$, for any $\m \geq n_1$ 
        \begin{equation}
        \liminf_{\substack{G\overset{W_1}{\to}G_0\\ G\in \Ec_{k_0}(\Theta)}  } \frac{V(P_{G,\m},P_{G_0,\m})}{D_{1}(G,G_0)}>0. \label{eqn:VmD1bounew}
        \end{equation}
        By Lemma \ref{lem:relW1D1} \ref{item:relW1D1b} one may replace the $D_1(G,G_0)$ in the preceding display by $W_1(G,G_0)$. Fix $\m_1 = n_1 \vee n_0$. Then there exists $R>0$ depending on $G_0$ such that 
        \begin{equation}
       \inf_{G\in B_{W_1}(G_0,R)\backslash \{G_0\} } \frac{V(P_{G,N_1},P_{G_0,N_1})}{W_1(G,G_0)}>0,  \label{eqn:positiveneighborhoodnew}
        \end{equation}
          where $B_{W_1}(G_0,R)$ is the open ball in metric space $(\bigcup_{k=1}^{k_0} \Ec_{k}(\Theta), W_1)$ with center at $G_0$ and radius $R$. Here we used the fact that any sufficiently small open ball in $(\bigcup_{k=1}^{k_0} \Ec_{k}(\Theta), W_1)$ with center in $\Ec_{k_0}(\Theta)$ is in $\Ec_{k_0}(\Theta)$.
        By assumption $\m\geq n_0$
        $$
        \inf_{G\in \bigcup_{k=1}^{k_0} \Ec_{k}(\Theta_1) \backslash B_{W_1}(G_0,R)} \frac{h(P_{G,\m},P_{G_0,\m})}{W_1(G,G_0)}>0. 
        $$
        Combining the last display with $N=\m_1 $ and \eqref{eqn:positiveneighborhoodnew} yields 
        $ 
        h(P_{G,\m_1},P_{G,\m_1}) \geq C(G_0,\Theta)W_1(G,G_0). 
        $
        Observing $h(P_{G,\m},P_{G_0,\m})$ increases with $\m$, the proof is then complete.
        \end{proof}
        
        
        Despite the above possibilities for relaxing the compactness assumption on $\Theta_1$, we want to point out that other assumptions may still implicitly require the compactness. For example, suppose that kernel $f$ takes the explicit form $f_{\Nc}(x|\mu,\sigma)$, the density of univariate normal distribution with mean $\mu$ and standard deviation $\sigma$. Then $h(f_{\Nc}(x|\mu,\sigma_1),f_{\Nc}(x|\mu,\sigma_2))=1-\sqrt{\frac{2\sigma_1\sigma_2}{\sigma_1^2+\sigma_2^2}}$. With $\sigma_2=2\sigma_1$, $h(f_{\Nc}(x|\mu,\sigma_1),f_{\Nc}(x|\mu,\sigma_2))=1-\sqrt{\frac{4}{5}}$ which can not be upper bounded $L_2|\sigma_2-\sigma_1|^{\beta_0}=L_2\sigma_1^{\beta_0}$, a quantity convergences to $0$ when $\sigma_1$ converges to $0$. That is, the assumption \ref{item:kernel} cannot hold if $\sigma$ is not bounded away from $0$, which excludes bounded intervals of the form $(0,a)$. 

\subsection{Kernel $P_\theta$ is a location-scale mixture of Gaussian distributions} 
\label{sec:locationscalmixturegaussian}
In Section \ref{sec:mixofgaussian} we demonstrated an application of Theorem~\ref{thm:genthm} to obtain inverse bound \eqref{eqn:curvatureprodbound} when kernel $P_\theta$ is the location mixture of Gaussian distributions. It is of interest to extend Theorem~\ref{thm:genthm} to richer kernels often employed in practice. 
The local-scale mixture of Gaussian distributions represent a salient example. Here, we shall discuss several technical difficulties that arise in such a pursuit. The first difficulty is that in Theorem~\ref{thm:genthm}, the parameter space $\Theta$ is assumed to be a subset of an Euclidean space obtained via a suitable (i.e., homeomorphic) parametrization. For the "location-scale Gaussian mixture" kernel, such a parametrization is elusive. 

Recall that the parameter set of a $k$-component location mixture of Gaussian distributions given by \eqref{eqn:locationmixture} is 
$
\bar{\Theta} := \{\{(\pi_i,\mu_i)\}_{i=1}^k: \sum_{i=1}^k \pi_i =1, \mu_i\neq \mu_j \},
$
or $\tilde{\Theta}:=\{\sum_{i=1}^{k} \pi_i\delta_{\mu_i}: \sum_{i=1}^k \pi_i =1, \mu_i\neq \mu_j\}=\Ec_k(\R)$.
To apply the result in Theorem \ref{thm:genthm} we parametrize the kernel and index it by parameters in a subset of a suitable Euclidean space. In Section \ref{sec:mixofgaussian} we identify $\bar{\Theta}$ or $\tilde{\Theta}$ by a subset $\Theta$ of Euclidean space as in \eqref{eqn:parameterspaceforlocationmixturegaussian} by ranking $\mu_i$ in increasing order. This identification is a bijection and moreover a homeomorphism. The properties of bijection and homeomorphism are necessary for the reparametrization since we need convergence of the parameters in the reparametrization space is equivalent to the convergence in the original parameter space $\bar{\Theta}$. So the parametrization is suitable for the application of Theorem~\ref{thm:genthm}. However this scheme is not straightforward to be generalized to the case the atom space (space of $\mu$ in this particular example) is more than one dimension as discussed below.

For the case of $k$-component location-scale mixture of Gaussian distributions, the parameter set is $
\tilde{\Theta} := \{\sum_{i=1}^k \pi_i \delta_{(\mu_i,\delta_i)}: \sum_{i=1}^k \pi_i =1, (\mu_i,\sigma_i) \neq (\mu_j,\sigma_j) \text{ and } \sigma_i>0 \} = \Ec_k(\R\times \R_+).
$
Similar to the location mixture, one may attempt to reparametrize $\tilde{\Theta}$ by ranking $(\mu_i,\sigma_i)$ in the lexicographically increasing order. While this reparametrization is a bijection, it is not a homeomorphism. To see this, consider $k=2$ and $F_0=\frac{1}{2} \delta_{ (1, 3)}+ \frac{1}{2}\delta_{(1,2)}\in \bar{\Theta}$. The reparametrization of $F_0$ is $(\frac{1}{2},1,2,1,3)$ since $(1,2)<(1,3)$ in the lexicographically order. Consider $F_n=\frac{1}{2} \delta_{ (1-\frac{1}{n}, 3)}+ \frac{1}{2}\delta_{(1+\frac{1}{n},2)}$ and its reparametrization is $(\frac{1}{2},1-\frac{1}{n},3,1+\frac{1}{n},2)$. It is clear that $W_1(F_n,F_0)\to 0$ as $n\to\infty$ but the Euclidean distance of the corresponding reparametrized parameters does not. 
This issue underscore one among many challenges that arise as we look at increasingly richer models that have already been widely applied in numerous application domains.


	
\subsection{Other extensions} A direction of interest is the study of overfitted mixture models, i.e., the true number of mixture components $k_0$ may be unknown and $k_0 \leq k$. As previous studies suggest, a stronger notion of identifiability such as second-order identifiability may play a fundamental role (see~\cite{ho2019singularity}). Observing that \eqref{eqn:curvatureprodbound} can also be viewed as uniform versions of \eqref{eqn:genthmcon} since they holds for any fixed $G$ in a neighborhood of $G_0$ and any $H \overset{W_1}{\to} G$, it would also be interesting to generalize Theorem \ref{thm:posconnotid} to a uniform result beyond a fixed $G_0$. In addition, if $P_\theta$ is taken to a mixture distribution, what happens if this mixture is also overfitted? We can expect a much richer range of parameter estimation behavior and more complex roles $m$ and $N$ play in the rates of convergence.

\addcontentsline{toc}{section}{References}

	\bibliography{identifiability}{}
	\bibliographystyle{plain}

\appendix

\section{Examples and Proofs for Section 3}
\label{sec:proofsindifferentsections}

\begin{exa}
\label{exa:wassersteinandDN}
    Consider $G_1=p_1^1\delta_{\theta_1}+p_2^1\delta_{\theta_2}$ and $G_2=p_1^2\delta_{\theta_1}+p_2^2\delta_{\theta_2}\in \Ec_2(\Theta)$ with $p_1^1\not = p_1^2$. When $\m$ is sufficiently large, $D_\m(G_1,G_2)=|p_1^1-p_1^2|+|p_2^1-p_2^2|$, a constant independent of $\m$. But with $\dtheta$ being Euclidean distance multiplied by $\sqrt{\m}$ 
    \begin{align*}
    W_p^p(G_1,G_2;\dtheta) = & \min_{\bm{q}}(q_{12}+q_{21})\left(\sqrt{\m}\|\theta_1-\theta_2\|_2\right)^p\\
    =&\left(\sqrt{\m}\|\theta_1-\theta_2\|_2\right)^p \frac{1}{2}\left(|p_1^1-p_1^2|+|p_2^1-p_2^2|\right),
    \end{align*}
    where $\bm{q}$ is a coupling as in \eqref{eqn:Wpdef}. So 
    $$
    W_p(G_1,G_2;\dtheta)=\sqrt{\m}\|\theta_1-\theta_2\|_2\left(\frac{1}{2}(|p_1^1-p_1^2|+|p_2^1-p_2^2|)\right)^{1/p},
    $$ which increases to $\infty$ when $\m\to \infty$. Even though $G_1$ and $G_2$ share the set of atoms, $W_p(G_1,G_2;\dtheta)$ is still of order $\sqrt{\m}$. Thus, $W_p(G_1,G_2;\dtheta)$ couples atoms and probabilities; in other words it does not  separate them in the way $D_{\m}$ does. 
    \myeoe
\end{exa}

\begin{proof}[Proof of Lemma \ref{lem:relW1D1}]
\label{proof:relW1D1}
a) 
        The proof is trivial and is therefore omitted.
        
b) 
		Let $G=\sum_{i=1}^k p_i \delta_{\theta_i}$ and $G'=\sum_{i=1}^k p'_i \delta_{\theta'_i}$. 
		Let $\tau$ be the optimal permutation that achieves $D_1(G,G')=\sum_{i=1}^k \left(\|\theta_{\tau(i)}-\theta'_i\|_2+|p_{\tau(i)}-p'_i|\right)$. Let $\bm{q}$ be a coupling of the mixing probabilities $\bm{p}=(p_1,\ldots,p_k)$ and $\bm{p}'=(p'_1,\ldots,p'_k)$ such that $q_{\tau(i),i}= \min\{p_{\tau(i)},p_i\}$ and then the remaining mass to be assigned is $\sum_{i=1}^k(p_{\tau(i)}-q_{\tau(i),i})= \frac{1}{2}\sum_{i=1}^k|p_{\tau(i)}-p_i| $. Thus,
		\begin{align*}
		W_1(G,G')\leq & \sum_{i=1}^k q_{\tau(i),i} \|\theta_{\tau(i)}-\theta'_i\|_2 + \frac{1}{2}\sum_{i=1}^k|p_{\tau(i)}-p_i| \text{diam}(\Theta)\\
		\leq & \max\left\{1,\frac{\text{diam}(\Theta)}{2}\right\} D_1(G,G'). 
		\end{align*}
		The proof for the case $p\neq 1$ proceeds in the same procedure.

c) 
		Consider any $G_n\in \Ec_{k}(\Theta)$ and $G_n\overset{W_1}{\to} G_0$, and one may write $G_n = \sum_{i=1}^k p_i^n \delta_{\theta_i^n}$ for $n\geq 0$ such that $p_i^n\to p_i^0$ and $\theta_i^n \to \theta_i^0$. 
		Then when $n$ is sufficiently large, $G_n\in \Ec_{k}(\Theta_1)$ for $\Theta_1=\bigcup_{i=1}^{k_0}B(\theta_i^0,\frac{1}{2})$, where $B(\theta_i^0,\rho)\subset \R^q$ is the open ball with center at $\theta_i^0$ of radius $\rho$. Then by \ref{item:relW1D1b} for large $n$, $ W_1(G_n,G_0)\leq C(G_0) D_1(G_n,G_0)$, which entails $\liminf\limits_{\substack{G \overset{W_1}{\to} G_0\\G\in \Ec_k(\Theta)}} \frac{D_1(G,G_0)}{W_1(G,G_0)}>0$.

		Denote $\bm{p}^n=(p_1^n,\ldots,p_k^n)$ for $n\geq 0$. Let $\bm{q}_n=(q^n_{ij})_{i,j\in[k]}$ be a coupling between 
		$\bm{p}^n$ and $\bm{p}^0$ such that $W_1(G_n,G_0)=\sum_{ij} q^n_{ij} \|\theta^n_i-\theta^0_j\|_2$. Since $\theta^n_j\to\theta^0_j$, when $n$ is large, 
		\begin{equation}
		\label{eqn:W1lowerboutheta}
		W_1(G_n,G_0) = \sum_{ij} q^n_{ij} \|\theta^n_i-\theta^0_j\|_2 \geq \sum_{ij} q^n_{ij} \|\theta^n_j-\theta^0_j\|_2 = \sum_{j=1}^k p_j^0 \|\theta^n_j-\theta^0_j\|_2.
		\end{equation}
		Moreover, when $n$ is large, $\|\theta_\alpha^n-\theta_\beta^0\|_2\geq \frac{1}{2}\min_{1\leq i<\ell\leq k} \|\theta_i^0 -\theta_\ell^0\|_2:=\frac{1}{2}\rho$ for any $\alpha\neq \beta$. Thus when $n$ is large,
		\begin{multline}
		W_1(G_n,G_0) 
		\geq \sum_{\alpha\neq \beta} q^n_{\alpha\beta} \|\theta^n_\alpha-\theta^0_\beta\|_2 
		\geq \frac{1}{2}\rho \sum_{\alpha\neq \beta} q^n_{\alpha\beta}
		\geq \frac{1}{4}\rho \sum_{j=1}^k|p_j^n-p_j^0|,
		\label{eqn:W1lowerboup}
		\end{multline}
		where the last inequality follows from 
		$$
		\frac{1}{2} \sum_{j=1}^k|p_j^n-p_j^0| = V(\bm{p}^n,\bm{p}^0) = \inf_{ \bm{\pi} \text{ coupling of } \bm{p}^n \text{ and } \bm{p}^0} \  \sum_{\alpha\neq \beta} \pi_{\alpha\beta} \leq \sum_{\alpha\neq \beta} q^n_{\alpha\beta}.
		$$

		Combining \eqref{eqn:W1lowerboutheta} and \eqref{eqn:W1lowerboup}, for sufficiently large $n$, 
		\begin{align*}
		W_1(G_n,G_0) \geq & \frac{1}{2} \sum_{j=1}^k p_j^0 \|\theta_j^n - \theta_j^0\|_2 + \frac{1}{8}\rho \sum_{j=1}^k|p_j^n-p_j^0| \\
		\geq & 
		\frac{1}{2} \min\left\{ \min_{\ell}p_\ell^0,    \frac{1}{4}\rho \right\} \sum_{j=1}^k(\|\theta_j^n - \theta_j^0\|_2+|p_j^n-p_j^0|) \\
		= & 
		\frac{1}{2} \min\left\{ \min_{\ell}p_\ell^0,    \frac{1}{4}\rho \right\} D_1(G_n,G_0),
		\end{align*}
		which entails $\liminf\limits_{\substack{G \overset{W_1}{\to} G_0\\G\in \Ec_k(\Theta)}} \frac{W_1(G,G_0)}{D_1(G,G_0)}>0$.
		
d) 
		Based on \ref{item:relW1D1d}, there exists $c(G_0)>0$ such that for $G\in \Ec_{k_0}(\Theta)$ satisfying $W_1(G,G_0)<c(G_0)$:
		$
		W_1(G,G_0)\geq C_1(G_0) D_1(G,G_0). 
		$
		For $G\in \Ec_{k_0}(\Theta)$ satisfying $W_1(G,G_0)\geq c(G_0)$:
		$$
		\frac{W_1(G,G_0)}{D_1(G,G_0)}\geq \frac{c(G_0)}{k_0 \text{diam}(\Theta)+1}. 
		$$
\end{proof} 

\section{Additional examples and proofs for Section 4}


\subsection{Additional examples and proofs for Section \ref{subsec:basictheory}}
\label{sec:proofsfirstorderidentifiable}

\begin{exa}[Location gamma kernel] \label{exa:locgamma}
For gamma distribution with fixed $\alpha\in (0,1)\bigcup (1,2)$ and $\beta>0$, consider its location family with density  
$$
f(x|\theta) = \frac{\beta^{\alpha}(x-\theta)^{\alpha-1}e^{-\beta(x-\theta)}}{\Gamma(\alpha)}\1ve_{(\theta,\infty)}(x)
$$ w.r.t. Lebesgue measure $\mu$ on $\Xfrak = \R$. The parameter space is $\Theta = \R$. Observe that
$$
\lim_{a\to 0^+} \frac{f(\theta_0|\theta_0+a)-f(\theta_0|\theta_0)}{a}= 0
$$
and 
$$
\lim_{a\to 0^+} \frac{f(\theta_0|\theta_0-a)-f(\theta_0|\theta_0)}{a} = \frac{\beta^\alpha}{\Gamma(\alpha)}\lim_{a\to 0^+} a^{\alpha-2}e^{-\beta a} = \infty,
$$
since $\alpha<2$. Then for any $x$, $f(x|\theta)$ as a function of $\theta$ is not differentiable at $\theta =x$. So it is not identifiable in the first order as defined in \cite{ho2016strong}.
However, this family does satisfy the $(\{\theta_i\}_{i=1}^k,\Nc)$ first-order identifiable definition with $\Nc=\bigcup_{i=1}^{k}(\theta_i-\rho,\theta_i+\rho)$ where $\rho=\frac{1}{4}\min_{1\leq i<j\leq k}|\theta_i-\theta_j|$. Indeed, observing that
$$
\frac{\partial}{\partial \theta} f(x|\theta) = \left(\beta-\frac{\alpha-1}{x-\theta}\right) f(x|\theta), \quad \forall \theta \not = x,
$$
then \eqref{eqn:conlininda} become
$$
0
= \sum_{i=1}^k \left( a_i \beta-a_i \frac{\alpha-1}{x-\theta_i}  + b_i  \right)f(x|\theta_i)\quad   \text{for }\ \mu-a.e.\ x\in \R\backslash \Nc.
$$
Without loss of generality, assume $\theta_1<\ldots<\theta_k$. Then for $\mu-a.e.\ x\in (\theta_1,\theta_2)\backslash\Nc = [\theta_1+\rho,\theta_2-\rho]$, the above display become 
$$
\left( a_1 \beta-a_1 \frac{\alpha-1}{x-\theta_1}  + b_1  \right)\frac{\beta^{\alpha}(x-\theta_1)^{\alpha-1}e^{-\beta(x-\theta_1)}}{\Gamma(\alpha)} =0 
$$
which implies $a_1=b_1=0$ since $\alpha\neq 1$. Repeating the above argument on interval $(\theta_2,\theta_3),\ldots (\theta_k,\infty)$ shows $a_i=b_i=0$ for any $i\in [k]$. 
So  this family is $(\{\theta_i\}_{i=1}^k,\Nc)$ first-order identifiable. Moreover, for every $x$ in $\R \backslash \Nc$,  $f(x|\theta)$ is continuously differentiable w.r.t. $\theta$ in a neighborhood of  $\theta_i^0$ for $i\in [k_0]$. By Lemma \ref{lem:firstidentifiable} \ref{item:firstidentifiableb} for any $G_0\in \Ec_{k_0}(\Theta)$ \eqref{eqn:curvaturebound} holds. \myeoe
\end{exa}

\begin{proof}[Proof of Lemma \ref{lem:firstidentifiable} \ref{item:firstidentifiableb}] 

 Suppose the equation \eqref{eqn:curvaturebound} is incorrect. Then there exists $G_\ell,H_\ell \in \Ec_{k_0}(\Theta)$  such that         
 $$
	    \begin{cases}
	    G_\ell\neq H_\ell, & \forall \ell \\
	    G_\ell,H_\ell \overset{W_1}{\to} G_0, & \text{ as } \ell \to \infty \\
	    \frac{V(P_{G_\ell},P_{H_\ell})}{D_1(G_\ell,H_\ell)} \to 0, & \text{ as } \ell \to \infty.
	    \end{cases}
	    $$
	    We may relabel the atoms of $G_\ell$ and $H_\ell$ such that $G_\ell= \sum_{i=1}^{k_0}p^\ell_i \delta_{\theta_i^\ell}$, $H_\ell = \sum_{i=1}^{k_0}\pi_i^\ell \delta_{\eta_i^\ell}$ with $\theta_i^\ell,\eta_i^\ell \to \theta_i^0$ and $p_i^\ell,\pi_i^\ell \to p_i^0$ as $\ell\to \infty$ for any $i\in [k_0]$. With subsequences argument if necessary, we may further require
	    \begin{equation}
	    \frac{\theta_i^\ell - \eta_i^\ell}{D_1(G_\ell,H_\ell)} \to a_i \in \R^q, \quad \frac{p_i^\ell - \pi_i^\ell}{D_1(G_\ell,H_\ell)} \to b_i\in \R,  \quad \forall 1\leq i \leq k_0, \label{eqn:thetaDGlHlrel}
	    \end{equation}
	    where $b_i$ and the components of $a_i$ are in $[-1,1]$ and $\sum_{i=1}^{k_0}b_i =0$. Moreover, $D_1(G_\ell,H_\ell)=\sum_{i=1}^{k_0}\left(\|\theta^\ell_i-\eta_i^\ell\|_2 +|p^\ell_i-\pi_i^\ell|\right)$ for sufficiently large $\ell$, which implies 
	    \begin{equation*}
	    \sum_{i=1}^{k_0}\|a_i\|_2+\sum_{i=1}^{k_0}|b_i| = 1. 
	    \end{equation*} 
	    It then follows that at least one of $a_i$ is not $\bm{0}\in \R^q$ or one of $b_i$ is not $0$.   
	    On the other hand,
	    \begin{align*}
	    0 & =\lim_{\ell\to \infty}\frac{2V(P_{G_\ell},P_{H_\ell})}{D_1(G_\ell,H_\ell)} \\ 
	    &\geq \lim_{\ell\to \infty}\int_{\Xfrak\backslash \Nc}\left| \sum_{i=1}^{k_0} p_i^\ell \frac{f(x|\theta_i^\ell) - f(x|\eta_i^\ell)}{D_1(G_\ell,H_\ell)} + \sum_{i=1}^{k_0} f(x|\eta_i^\ell) \frac{p_i^\ell - \pi_i^\ell}{D_1(G_\ell,H_\ell)}   \right| \mu(dx)\\
	    & \geq \int_{\Xfrak\backslash \Nc} \liminf_{\ell\to \infty}\left| \sum_{i=1}^{k_0} p_i^\ell \frac{f(x|\theta_i^\ell) - f(x|\eta_i^\ell)}{D_1(G_\ell,H_\ell)} + \sum_{i=1}^{k_0} f(x|\eta_i^\ell) \frac{p_i^\ell - \pi_i^\ell}{D_1(G_\ell,H_\ell)}   \right| \mu(dx) \\
	    & = \int_{\Xfrak\backslash \Nc} \left| \sum_{i=1}^{k_0} p_i^0 a_i^\top  \nabla_{\theta}f(x|\theta_i^0) + \sum_{i=1}^{k_0} f(x|\theta_i^0) b_i   \right| \mu(dx),
	    \end{align*}
	    where the second inequality follows from Fatou's Lemma, and the last step follows from Lemma \ref{lem:taylortwisted} \ref{item:taylortwisteda}. Then $\sum_{i=1}^{k_0} p_i^0 a_i^\top  \nabla_{\theta}f(x|\theta_i^0) + \sum_{i=1}^{k_0} f(x|\theta_i^0) b_i=0$ for $\mu-a.e.\ x\in \Xfrak \backslash \Nc$. Thus we find a nonzero solution to \eqref{eqn:conlininda}, \eqref{eqn:conlinindb} with $k,\theta_i$ replaced by $k_0,\theta_i^0$. 
	    
	    However, the last statement contradicts with the definition of $(\{\theta_i^0\}_{i=1}^{k_0},\Nc)$ first-order identifiable.
	\end{proof}

	\begin{proof}[Proof of Lemma \ref{lem:nececondition}]
	By Lemma \ref{lem:firstidentifiableelaborate} \ref{item:firstidentifiableelaborateb} $(a_1,b_1,\ldots,a_{k_0},b_{k_0})$ is also a nonzero solution of the system of equations \eqref{eqn:conlininda}, \eqref{eqn:conlinindb}. Let $a'_i = \frac{a_i/p_i^0}{\sum_{i=1}^{k_0}\left(\|a_i/p_i^0\|_2+|b_i|\right)}$ and $b'_i=\frac{b_i}{\sum_{i=1}^{k_0}\left(\|a_i/p_i^0\|_2+|b_i|\right)}$. Then $a'_i$ and $b'_i$ satisfy  $\sum_{i=1}^{k_0}\left(\|a'_i\|_2+|b'_i|\right)=1$ and $(p_1^0 a'_1$, $b'_1,\ldots,p_{k_0}^0 a'_{k_0}$, $b'_{k_0})$
	    is also a nonzero solution of the system of equations \eqref{eqn:conlininda}, \eqref{eqn:conlinindb} with $k,\theta_i$ replaced respectively by $k_0,\theta_i^0$.
	    Let $G_\ell=p_i^\ell \delta_{\theta_i^\ell}$ with $p_i^\ell=p_i^0+b'_i \frac{1}{\ell}$ and $\theta_i^\ell=\theta_i^0+\frac{1}{\ell}a'_i$ for $1\leq i \leq k_0$. When $\ell$ is large, $0<p_i^\ell<1$ and $\theta_i^\ell\in \Theta$ since $0<p_i^0<1$ and $\theta_i^0\in \Theta^\circ$. Moreover, $\sum_{i=1}^{k_0}p_i^\ell=1$ since $\sum_{i=1}^{k_0}b'_i=0$. Then $G_\ell\in \Ec_{k_0}(\Theta)$ and $G_\ell\not = G_0$ since at least one of $a'_i$ or $b'_i$ is nonzero. When $\ell$ is large $D_1(G_\ell,G_0)=\sum_{i=1}^{k_0}\left(\|\theta_i^\ell-\theta_i^0\|_2+|p_i^\ell-p_i^0|\right)=\frac{1}{\ell}$. Thus when $\ell$ is large
	    \begin{align}
	        \frac{2V(P_{G_\ell},P_{G_0})}{D_1(G_\ell,G_0)}  
	    = & \int_{\Xfrak\backslash \Nc}\left| \sum_{i=1}^{k_0} p_i^\ell \frac{f(x|\theta_i^\ell) - f(x|\theta_i^0)}{1/\ell} + \sum_{i=1}^{k_0} b'_i f(x|\theta_i^0)    \right| \mu(dx). \label{eqn:2VD1inteformtemp1}
	    \end{align}
	Since by condition \ref{item:nececonditionc} when $\ell$ is large
	$$
	\left|\frac{f(x|\theta_i^\ell) - f(x|\theta_i^0)}{1/\ell}\right| = \left|\frac{f(x|\theta_i^0+ \frac{1}{\ell} \frac{\|a'_i\|_2}{\|a_i\|_2}  a_i) - f(x|\theta_i^0)}{1/\ell}\right| \leq \frac{\|a'_i\|_2}{\|a_i\|_2} \bar{f}(x|\theta_i^0,a_i),
	$$
	the integrand of \eqref{eqn:2VD1inteformtemp1} is bounded by $\sum\limits_{i=1}^{k_0}\frac{1/p_i^0}{\sum_{i=1}^{k_0}\left(\|a_i/p_i^0\|_2+|b_i|\right)}\bar{f}(x|\theta_i^0,a_i)+\sum\limits_{i=1}^{k_0} |b'_i| f(x|\theta_i^0)$, which is integrable w.r.t. to $\mu$ on $\Xfrak\backslash \Nc$. 
	Then by the dominated convergence theorem 
	\begin{align*}
	        \lim_{\ell\to \infty}\frac{2V(P_{G_\ell},P_{G_0})}{D_1(G_\ell,G_0)}  
	    = & \int_{\Xfrak \backslash \Nc}\left| \sum_{i=1}^{k_0} p_i^0 \langle a'_i, \nabla_{\theta} f(x|\theta_i^0)\rangle + \sum_{i=1}^{k_0} b'_i f(x|\theta_i^0) \right| \mu(dx) =0.
	    \end{align*}
	Thus 
	$$
	\liminf_{\substack{G\overset{W_1}{\to} G_0\\ G\in \Ec_{k_0}(\Theta)}} \frac{V(P_G,P_{G_0})}{D_1(G,G_0)}=0.
	$$
	and the proof is completed by \eqref{eqn:reltwoinversebounds}.
	\end{proof}

\begin{proof}[Proof of Lemma \ref{lem:unifidentifiable}]
    It suffices to prove \eqref{eqn:curvaturebound} since \eqref{eqn:noproductlowbou} is a direct consequence of \eqref{eqn:curvaturebound}.
    
    Without loss of generality, assume $\theta_1^0<\theta_2^0<\ldots <\theta_{k_0}^0$. Let $\Nc=\bigcup_{i=1}^{k_0}(\theta_i^0-\rho,\theta_i^0+\rho)$, where $\rho=\frac{1}{4}\min_{1\leq i<j\leq k_0}|\theta_i^0-\theta_j^0|$. Notice that for $x\in \R\backslash \Nc$, $f(x|\theta)$ as a function of $\theta$ is continuously differentiable on $(\theta_i^0-\rho,\theta_i^0+\rho)$ for each $i\in [k_0]$.
    
    Suppose \eqref{eqn:curvaturebound} is not true. Proceed exactly the same as the proof of Lemma \ref{lem:firstidentifiable} \ref{item:firstidentifiableb} except the last paragraph to obtain a nonzero solution $(p_i^0 a_i, b_i: i\in [k_0])$ of \eqref{eqn:conlininda}, \eqref{eqn:conlinindb} with $k,\theta_i$ replaced by $k_0,\theta_i^0$. For the uniform distribution family, one may argue that the nonzero solution has to satisfy
    \begin{equation}
    -p_i^0 a_i/\theta_i^0 +b_i = 0 \quad \forall i\in [k_0]. \label{eqn:unifaibirel}
    \end{equation}
    Indeed, start from the rightmost interval that intersects with the support from only one mixture component,  for $\mu-a.e.\ x\in (\theta_{k_0-1}^0,\theta_{k_0}^0)\backslash \Nc = [\theta_{k_0-1}^0+\rho,\theta_{k_0}^0-\rho]$ 
    \begin{align*}
    0 = &\sum_{i=1}^{k_0}\left(p_i^0  a_i\frac{\partial}{\partial \theta}f(x|\theta_i^0)  + b_i f(x|\theta_i^0) \right)\\
    = & \sum_{i=1}^{k_0}\left(-p_i^0  a_i/\theta_i^0  + b_i\right) f(x|\theta_i^0) \\
    = & \left(-p_{k_0}^0  a_{k_0}/\theta_{k_0}^0  + b_{k_0}\right)/\theta_{k_0}^0,
    \end{align*}
    which implies $-p_{k_0}^0  a_{k_0}/\theta_{k_0}^0  + b_{k_0}=0$. Repeat the above argument on interval $(\theta_{k_0-2}^0,\theta_{k_0-1}^0)$, $\ldots$, $(\theta_{1}^0,\theta_{2}^0)$, $(0,\theta_{1}^0)$ and \eqref{eqn:unifaibirel} is established.
    
    Combining~\eqref{eqn:unifaibirel} with the fact that some of the $a_i$ or $b_i$ is non-zero, it follows that $|a_\alpha|> 0$ for some $\alpha\in [k_0]$. When $\ell$ is sufficiently large, $\theta_i^\ell,\eta_i^\ell \in (\theta_i^0-\rho,\theta_i^0+\rho)$. For sufficiently large $\ell$
    \begin{align*}
    &\frac{2V(P_{G_\ell},P_{H_\ell})}{D_1(G_\ell,H_\ell)}\\
    \geq &   \frac{1}{D_1(G_\ell,H_\ell)}\int_{\min\{\theta_\alpha^\ell, \eta_\alpha^\ell\}}^{\max\{\theta_\alpha^\ell, \eta_\alpha^\ell\}} \left|p_{G_\ell}(x)-p_{H_\ell}(x)\right| dx\\
    \overset{(*)}{=} &  \frac{1}{D_1(G_\ell,H_\ell)} \int_{\min\{\theta_\alpha^\ell, \eta_\alpha^\ell\}}^{\max\{\theta_\alpha^\ell, \eta_\alpha^\ell\}} \left|{ \frac{\pi_\alpha^\ell \bm{1}(\theta_\alpha^\ell<\eta_\alpha^\ell)+p_\alpha^\ell \bm{1}(\theta_\alpha^\ell\geq\eta_\alpha^\ell)}{\max\{\theta_\alpha^\ell, \eta_\alpha^\ell\}} + \sum_{i=\alpha+1}^{k_0} \frac{p_i^{\ell}}{\theta_i^\ell}-\sum_{i=\alpha+1}^{k_0}\frac{\pi_i^{\ell}}{\eta_i^\ell} }\right| dx\\
    \overset{(**)}{=} &  \frac{|\theta_\alpha^\ell-\eta_\alpha^\ell|}{D_1(G_\ell,H_\ell)} \left|{ \frac{\pi_\alpha^\ell \bm{1}(\theta_\alpha^\ell<\eta_\alpha^\ell)+p_\alpha^\ell \bm{1}(\theta_\alpha^\ell\geq\eta_\alpha^\ell)}{\max\{\theta_\alpha^\ell, \eta_\alpha^\ell\}}+ \sum_{i=\alpha+1}^{k_0} \frac{p_i^{\ell}}{\theta_i^\ell}-\sum_{i=\alpha+1}^{k_0}\frac{\pi_i^{\ell}}{\eta_i^\ell} }\right|\\
    \to & {|a_\alpha|}\frac{p_\alpha^0}{\theta_\alpha^0}>0,
    \end{align*}
    where the step $(*)$ follows from carefully examining the support of $f(x|\theta)$,
    the step $(**)$ follows from the integrand is a constant, and the last step follows from \eqref{eqn:thetaDGlHlrel}. The last display contradicts with the choice of $G_\ell,H_\ell$, which satisfies $\frac{V(P_{G_\ell},P_{H_\ell})}{D_1(G_\ell,H_\ell)}\to 0$.
\end{proof}

\begin{proof}[Proof of Lemma \ref{lem:exadisidentifiable}]
    
    (a): inverse bound~\eqref{eqn:noproductlowbou} holds \\
    Without loss of generality, assume $\xi_1^0\leq \xi_2^0\leq \ldots \leq \xi_{k_0}^0$. Let $\Nc=\bigcup_{i=1}^{k_0}\{\xi_i^0\}$. Notice that for $x\in \R\backslash \Nc$, $f(x|\theta)$ as a function of $\theta$ is  differentiable at $\theta_i^0=(\xi_i^0,\sigma_i^0)$ for each $i\in [k_0]$.
    
    Suppose \eqref{eqn:noproductlowbou} is not true. Proceed exactly the same as the proof of Lemma \ref{lem:firstidentifiable} \ref{item:firstidentifiablea} except the last paragraph to obtain a nonzero solution $(p_i^0 a_i, b_i: i\in [k_0])$ of \eqref{eqn:conlininda}, \eqref{eqn:conlinindb} with $k,\theta_i$ replaced by $k_0,\theta_i^0$. Write the two-dimensional vector $a_i$ as $a_i=(a_i^{(\xi)},a_i^{(\sigma)})$. For the location-scale exponential distribution, one may argue that the nonzero solution has to satisfy
    \begin{equation}
    a_i^{(\sigma)}=0,\quad p_i^0 a^{(\xi)}_i/\sigma_i^0 +b_i = 0, \quad \forall i\in [k_0]. \label{eqn:expaibirel}
    \end{equation}
    Indeed, let $\bigcup_{i=1}^{k_0}\{\xi_i^0\}=\{\xi'_1,\xi'_2,\ldots,\xi'_{k'}\}$ with $\xi'_1<\xi'_2<\ldots<\xi'_{k'}$ where $k'$ is the number of distinct elements. Define $I'(\xi)=\{i\in[k_0]: \xi_i^0 = \xi\}$.
    Then for $\mu-a.e. \ x\in \R\backslash \Nc$
    \begin{align*}
    0 = &\sum_{i=1}^{k_0}\left(p_i^0 \langle a_i, \nabla_{(\xi,\sigma)}f(x|\xi_i^0,\sigma_i^0) \rangle + b_i f(x|\xi_i^0,\sigma_i^0) \right) \\
    = &\sum_{j=1}^{k'}\sum_{i\in I'(\xi'_j)}\left(p_i^0 \langle a_i, \nabla_{(\xi,\sigma)}f(x|\xi'_j,\sigma_i^0) \rangle + b_i f(x|\xi'_j,\sigma_i^0) \right)\\
    = & \sum_{j=1}^{k'}\ \ \sum_{i\in I'(\xi'_j)}\left( p_i^0 a_i^{(\xi)} \frac{1}{\sigma_i^0} + p_i^0 a_i^{(\sigma)}\frac{x-\xi_i^0-\sigma_i^0}{(\sigma_i^0)^2} +b_i   \right) f(x|\xi'_j,\sigma_i^0).
    \end{align*}
    Start from the leftmost interval that intersects with the support from only one mixture component, for $\mu-a.e.\ x\in (\xi'_1,\xi'_2)\backslash \Nc =(\xi'_1,\xi'_2)$,
    \begin{align*}
    0 = & \sum_{i\in I'(\xi'_1)}\left( p_i^0 a_i^{(\xi)} \frac{1}{\sigma_i^0} + p_i^0 a_i^{(\sigma)}\frac{x-\xi_i^0-\sigma_i^0}{(\sigma_i^0)^2} +b_i  \right) f(x|\xi'_1,\sigma_i^0)\\
    =&\sum_{i\in I'(\xi'_1)}\left( p_i^0 a_i^{(\xi)} \frac{1}{\sigma_i^0} + p_i^0 a_i^{(\sigma)}\frac{x-\xi_i^0-\sigma_i^0}{(\sigma_i^0)^2} +b_i  \right) \exp\left(\frac{\xi'_1}{\sigma_i^0}\right) \exp\left(-\frac{x}{\sigma_i^0}\right).
    \end{align*}
    Since $\sigma_i^0$ for $i\in I'(\xi'_1)$ are all distinct, by Lemma \ref{lem:polexplinind} \ref{item:polexplininda}
    $$ a_i^{(\sigma)}=0,\quad p_i^0 a^{(\xi)}_i/\sigma_i^0 +b_i = 0, \quad \forall i\in I'(\xi'_1). $$
    Repeat the above argument on interval $(\xi'_2,\xi'_3),\ldots, (\xi'_{k'-1},\xi'_{k'}), (\xi'_{k'},\infty)$ and \eqref{eqn:expaibirel} is established.
    
Since at least one of $a_i$ or $b_i$ is not zero, from \eqref{eqn:expaibirel} it is clear that 
at least one of $\{b_i\}_{i=1}^{k_0}$ is not zero. Then by $\sum_{i=1}^{k_0}b_i=0$ at least one of $b_i$ is positive. By \eqref{eqn:expaibirel} at least one of $a_i^{(\xi)}$ is negative. Let $\alpha\in \argmax\limits_{i\in\{j\in [k_0]:a_j^{(\xi)}<0\} } a_i^{(\xi)}$. That is $a_{\alpha}^{(\xi)}$ is a largest negative one among $\{a_i^{(\xi)}\}_{i\in [k_0]}$. Let $\rho =  \frac{1}{2}\min_{1\leq i<j \leq k'} |\xi'_i-\xi'_j|$ to be half of the smallest distance among different $ \{\xi'_i\}_{i=1}^{k'}$. By subsequence argument if necessary, we require for any $i\in [k_0]$, $\xi_i^\ell \in (\xi_i^0-\rho,\xi_i^0+\rho)$.
    
     Let $I(\alpha) = \{i\in [k_0]| \xi_i^0 = \xi_\alpha^0\}$ to be the set of indices for those sharing the same $\xi_i^0$ as $\xi_{\alpha}^0$. We now consider subsequences such that $\xi_i^\ell$ for $i\in I(\alpha)$ satisfies finer properties as follows. Divide the index set $I(\alpha)$ into three subsets, $J(\alpha) := \{i\in I(\alpha)| a_i^{(\xi)}=a_\alpha^{(\xi)}\}$, $J_<(\alpha) := \{i\in I(\alpha)| a_i^{(\xi)}<a_\alpha^{(\xi)}\}$ and $J_>(\alpha) := \{i\in I(\alpha)| a_i^{(\xi)}>a_\alpha^{(\xi)}\}$. Note $J(\alpha)$
     is the index set for those sharing the same $\xi_i^0$ as $\xi_{\alpha}^0$ and sharing the same $a_i^{(\xi)}$ as $a_{\alpha}^{(\xi)}$ (so their $a_i^{(\xi)}$ are also largest negative ones among $\{a_i^{(\xi)}\}_{i\in [k_0]}$), while $J_{>}(\alpha)$ corresponds for indices $i$ for which $\xi_i^0=\xi_\alpha^0$ and $a_i^{(\xi)} \geq 0$, and $J_{<}(\alpha)$ corresponds for indices $i$ for which $\xi_i^0=\xi_\alpha^0$ and $a_i^{(\xi)} < a_{\alpha}^{(\xi)}$. 
     To be clear, the two subsets $J_{<\alpha}$ and $J_{>\alpha}$ may be empty, but $J_\alpha$ is non-empty by our definition.
     
    For any $i\in J_<(\alpha)$, $j\in J(\alpha)$
    $$
    \frac{\xi_i^\ell - \xi_{\alpha}^0}{D_1(G_\ell,G_0)}\to a_i^{(\xi)}< a_{
    \alpha}^{(\xi)}\leftarrow \frac{\xi_{j}^\ell - \xi_{\alpha}^0}{D_1(G_\ell,G_0)}.
    $$
    Then for large $\ell$, $\xi_i^\ell<\xi_j^{\ell}$ for any $i\in J_<(\alpha)$ and $j\in J(\alpha)$. Similarly for large $\ell$, $\xi_j^{\ell}<\xi_k^\ell$ for any $j\in J(\alpha)$ and $k\in J_>(\alpha)$. Thus by subsequence argument if necessary, we require $\xi_i^\ell$ additionally satisfy the conditions specified in the last two sentences for all $\ell$.
    
    Consider $\max_{j\in J(\alpha)}\{\xi_j^{\ell}\}$ and there exists $\bar{\alpha}\in J(\alpha)$ such that $\xi_{\bar{\alpha}}^{\ell}= \max_{j\in J(\alpha)}\{\xi_j^{\ell}\}$ for infinitely many $\ell$ since $J(\alpha)$ has finite cardinality.
    By subsequence argument if necessary, we require $\xi_{\bar{\alpha}}^{\ell} = \max_{j\in J(\alpha)}\{\xi_j^{\ell}\}$ for all $\ell$. Moreover, since $a_{\bar{\alpha}}^{\xi}=a_{\alpha}^{\xi} <0$ we may further require $\xi_{\bar{\alpha}}^\ell<\xi_{\alpha}^0$ for all $\ell$. Finally, for each $k\in J_>(\alpha)$ such that $a_k^{(\xi)}>0$, we may further require $\xi_{k}^{\ell}>\xi_{\alpha}^0$ for all $\ell$ by subsequences.
    To sum up, $\{\xi_i^\ell\}$ for $i\in I(\alpha)$ satisfies:
    \begin{equation}
    \begin{cases}
    \xi_i^\ell \leq \xi_{\bar{\alpha}}^\ell< \xi_{\alpha}^0  ,  & \forall \ell, \quad \forall\ i\in J_<(\alpha)\bigcup J(\alpha) \\
    \xi_i^\ell > \xi_{\bar{\alpha}}^\ell  ,  &  \forall \ell, \quad \forall \ i\in J_>(\alpha) \\
    \xi_i^\ell >\xi_{\alpha}^0 , & \forall \ell,  \forall i\in J_>(\alpha) \text{ and } a_i^{(\xi)}>0
    \end{cases}. \label{eqn:alphaproperty}
    \end{equation}
 
     Let $\bar{\xi}^\ell = \min\left\{\min\limits_{i\in \{j\in I(\alpha):a_j^{(\xi)}=0 \} } \xi_i^\ell, \xi_{\alpha}^0\right\}$ with the convention that the minimum over an empty set is $\infty$. Then $\bar{\xi}^\ell \leq \xi_{\alpha}^0$ and $\bar{\xi}^\ell \to \xi_{\alpha}^0$. Moreover, by property \eqref{eqn:alphaproperty}, $\bar{\xi}^\ell> \xi_{\bar{\alpha}}^{\ell}$.
     Thus on $(\xi_{\bar{\alpha}}^{\ell},  \bar{\xi}^\ell)$, 1) for any $i> \max I(\alpha)$, $f(x|\xi_i^\ell,\sigma_i^\ell)=0=f(x|\xi_i^0,\sigma_i^0)$  since $\xi_i^\ell,\xi_i^0\geq \xi_{\alpha}^0\geq \bar{\xi}^\ell$; 2) for $i\in J_>(\alpha)$, $f(x|\xi_i^\ell,\sigma_i^\ell)=0$ due to $\xi_i^\ell\geq \bar{\xi}^\ell$ due to \eqref{eqn:alphaproperty}; 3) for $i\in I(\alpha)$, $f(x|\xi_i^0,\sigma_i^0)=0$ since $\xi_i^0=\xi_{\alpha}^0\geq \bar{\xi}^\ell$. Then
     \begin{align*}
    &\frac{2V(P_{G_\ell},P_{G_0})}{D_1(G_\ell,G_0)}\\
    \geq &   \frac{1}{D_1(G_\ell,G_0)}\int_{\xi_{\bar{\alpha}}^\ell}^{\bar{\xi}^\ell} \left|p_{G_\ell}(x)-p_{G_0}(x)\right| dx\\
    = &  \frac{1}{D_1(G_\ell,G_0)} \int_{\xi_{\bar{\alpha}}^\ell}^{\bar{\xi}^\ell} 
    \left| \sum_{i\in J_<(\alpha)\bigcup J(\alpha)  } p_i^\ell  \frac{1}{\sigma_i^\ell  } \exp\left(-\frac{x-\xi_{\alpha}^\ell}{\sigma_i^\ell} \right) \right.\\
    &\quad \left. + \sum_{i < \min I(\alpha)} \left(p_i^\ell  \frac{1}{\sigma_i^\ell  } \exp\left(-\frac{x-\xi_i^\ell}{\sigma_i^\ell} \right) - p_i^0  \frac{1}{\sigma_i^0 }\exp\left(-\frac{x-\xi_i^0}{\sigma_i^0 }\right) \right)
     \right| dx. \numberthis \label{eqn:VD1lowboulocexpdis}
    \end{align*}
    
    Denote the integrand (including the absolute value) in the preceding display by $A_\ell(x)$. Then  as a function on $ [\xi_{\alpha}^0-\rho, \xi_{\alpha}^0]$, $A_\ell(x)$ converges uniformly to $$
    \sum\limits_{i\in J_<(\alpha)\bigcup J(\alpha) } p_i^0  \frac{1}{\sigma_i^0  } \exp\left(-\frac{x-\xi_{\alpha}^0}{\sigma_i^0} \right):=B(x).$$ Since $B(x)$ is positive and continuous on compact interval $[\xi_{\alpha}^0-\rho, \xi_{\alpha}^0]$, for large $\ell$
    $$
    |A_\ell(x)-B(x)|\leq \frac{1}{\ell} \leq \frac{1}{2} \min B(x) \leq \frac{1}{2} B(x), \quad \forall x\in [\xi_{\alpha}^0-\rho, \xi_{\alpha}^0],
    $$
    which yields 
    $$
    A_\ell(x) \geq \frac{1}{2} B(x)\geq \frac{1}{2} p_{\bar{\alpha}}^0  \frac{1}{\sigma_{\bar{\alpha}}^0  } \exp\left(-\frac{x-\xi_{\alpha}^0}{\sigma_{\bar{\alpha}}^0} \right)\geq \frac{1}{2} p_{\bar{\alpha}}^0  \frac{1}{\sigma_{\bar{\alpha}}^0  }, \quad \forall x\in [\xi_{\alpha}^0-\rho, \xi_{\alpha}^0].
    $$
    
    Plug the preceding display into \eqref{eqn:VD1lowboulocexpdis}, one obtains for large $\ell$,
    \begin{align*}
    \frac{2V(P_{G_\ell},P_{G_0})}{D_1(G_\ell,G_0)}
    \geq &  \frac{1}{D_1(G_\ell,G_0)} \int_{\xi_{\bar{\alpha}}^\ell}^{\bar{\xi}^\ell}
    \frac{1}{2} p_{\bar{\alpha}}^0  \frac{1}{\sigma_{\bar{\alpha}}^0  }  dx\\
    = &  \left(\frac{\xi_{\alpha}^0-\xi_{\bar{\alpha}}^\ell}{D_1(G_\ell,G_0)} -  \frac{\xi_{\alpha}^0-\bar{\xi}^\ell}{D_1(G_\ell,G_0)}\right)\frac{1}{2} p_{\bar{\alpha}}^0  \frac{1}{\sigma_{\bar{\alpha}}^0  } \\
    \to & (-a_{\bar{\alpha}}^{(\xi)}-0)\frac{1}{2} p_{\bar{\alpha}}^0\frac{1}{\sigma_{\bar{\alpha}}^0  } >0
     \numberthis \label{eqn:locscaexplowerbou}
    \end{align*}
    where the convergence in the last step is due to \eqref{eqn:thetaGlG0rel}. \eqref{eqn:locscaexplowerbou} contradicts with the choice of $G_\ell$, which satisfies $\frac{V(P_{G_\ell},P_{G_0})}{D_1(G_\ell,G_0)}\to 0$. 

\noindent
(b): inverse bound \eqref{eqn:curvaturebound} does not hold\\
Recall $\Theta=\R\times (0,\infty)$. Consider $G_0=\sum_{i=1}^{k_0}p_i^0\delta_{(\xi_i^0,\sigma_i^0)}\in \Ec_{k_0}(\Theta)$ with $\xi_1^0=\xi_2^0$, $\sigma_1^0\neq \sigma_2^0$ and $p_1^0/\sigma_1^0=p_2^0/\sigma_2^0$. Denote $\psi=p_1^0/\sigma_1^0>0$. Consider $G_\ell = p_1^\ell \delta_{(\xi_1^\ell,\sigma_1^0)} +p_2^\ell\delta_{(\xi_2^0,\sigma_2^0)}+\sum_{i=3}^{k_0}p_i^0\delta_{(\xi_i^0,\sigma_i^0)}$ with $p_1^\ell = p_1^0 + \frac{\psi}{2+2\psi}\frac{1}{\ell}$, $\xi_1^\ell=\xi_1^0-\frac{1}{2+2\psi}\frac{1}{\ell}$ and $p_2^\ell = p_2^0 - \frac{\psi}{2+2\psi}\frac{1}{\ell} $. Consider $H_\ell = p_2^0 \delta_{(\tilde{\xi}_2^\ell,\sigma_2^0)} + \sum_{i=1,i\neq 2}^{k_0}p_i^0\delta_{(\xi_i^0,\sigma_i^0)}$ with $\tilde{\xi}_2^\ell = \xi_1^\ell$. It is clear that when $\ell$ is large, $G_\ell, H_\ell\in \Ec_{k_0}(\Theta) $ and $G_\ell,H_\ell \to G_0$. Moreover, when $\ell$ is large, $D_1(G_\ell,H_\ell) = 1/\ell$ since $\xi_1^0=\xi_2^0$. Then 
\begin{equation}
    \frac{V(G_\ell,H_\ell)}{D_1(G_\ell,H_\ell)} = \ell \int_{\R}\left| p_1^\ell f(x|\xi_1^\ell,\sigma_1^0) +p_2^\ell f(x|\xi_1^0,\sigma_2^0) - p_1^0 f(x|\xi_1^0,\sigma_2^0)- p_2^0 f(x|\xi_1^\ell,\sigma_2^0) \right| dx \label{eqn:VD1first}
\end{equation}
since $\xi_1^0=\xi_2^0$ and $\tilde{\xi}_2^\ell = \xi_1^\ell$. Denote the integrand in \eqref{eqn:VD1first} (including the absolute value) by $A_\ell(x)$. By definition of $f(x|\xi,\sigma)$ for location-scale exponential distribution, $A_\ell(x)=0$ on $(-\infty,\xi_1^\ell)$. 

On $(\xi_1^\ell,\xi_1^0)$, $A_\ell(x)=\left|\frac{p_1^\ell}{\sigma_1^0} e^{-\frac{x-\xi_1^\ell}{\sigma_1^0}}-\frac{p_2^0}{\sigma_2^0} e^{-\frac{x-\xi_1^\ell}{\sigma_2^0}}\right|$ converges to $B(x):= \left|\frac{p_1^0}{\sigma_1^0} e^{-\frac{x-\xi_1^0}{\sigma_1^0}}-\frac{p_2^0}{\sigma_2^0} e^{-\frac{x-\xi_1^0}{\sigma_2^0}}\right|$ uniformly. Then 
\begin{align*}
    &\limsup_\ell\ \ell\int_{\xi_1^\ell}^{\xi_1^0} A_\ell(x)dx\\
    \leq & \limsup_\ell\ \ell\int_{\xi_1^\ell}^{\xi_1^0} B(x)dx + \limsup_\ell\ \ell (\xi_1^0-\xi_1^\ell) \sup_{x\in (\xi_1^\ell,\xi_1^0)}{|A_\ell(x)-B(x)|}  \\
    = & \limsup_\ell\ \frac{1}{2+2\psi} \frac{1}{\xi_1^0-\xi_1^\ell}\int_{\xi_1^\ell}^{\xi_1^0} B(x)dx \\
     = & \frac{1}{2+2\psi} B(\xi_1^0)\\
     = & 0, \numberthis \label{eqn:VD1second}
\end{align*}
where the first equality follows by second fundamental theorem of calculus, and the last step follows by $p_1^0/\sigma_1^0=p_2^0/\sigma_2^0$.

On $(\xi_1^0,\infty)$, $A_\ell(x) = \left|\frac{p_1^\ell}{\sigma_1^0} e^{-\frac{x-\xi_1^\ell}{\sigma_1^0}}+\frac{p_2^\ell}{\sigma_2^0}e^{-\frac{x-\xi_1^0}{\sigma_2^0}} -\frac{p_1^0}{\sigma_1^0}e^{-\frac{x-\xi_1^0}{\sigma_1^0}} -\frac{p_2^0}{\sigma_2^0} e^{-\frac{x-\xi_1^\ell}{\sigma_2^0}} \right|$. Then on $(\xi_1^0,\infty)$,
\begin{align*} 
&\ell A_\ell(x) \\
= & 
\ell \left|\frac{p_1^\ell}{\sigma_1^0} \left(e^{-\frac{x-\xi_1^\ell}{\sigma_1^0}}-e^{-\frac{x-\xi_1^0}{\sigma_1^0}}\right) +\frac{p_1^\ell-p_1^0}{\sigma_1^0}e^{-\frac{x-\xi_1^0}{\sigma_1^0}}
+\frac{p_2^\ell-p_2^0}{\sigma_2^0}e^{-\frac{x-\xi_1^0}{\sigma_2^0}}  +\frac{p_2^0}{\sigma_2^0} \left(e^{-\frac{x-\xi_1^0}{\sigma_2^0}} - e^{-\frac{x-\xi_1^\ell}{\sigma_2^0}}\right) \right| \\
\to & 
\left|-\frac{p_1^0}{\sigma_1^0} e^{-\frac{x-\xi_1^0}{\sigma_1^0}} \frac{1}{\sigma_1^0} \frac{1}{2+2\psi} +\frac{\psi}{2+2\psi}\frac{1}{\sigma_1^0}e^{-\frac{x-\xi_1^0}{\sigma_1^0}}
-\frac{\psi}{2+2\psi} \frac{1}{\sigma_2^0}e^{-\frac{x-\xi_1^0}{\sigma_2^0}}  +\frac{p_2^0}{\sigma_2^0} e^{-\frac{x-\xi_1^0}{\sigma_2^0}} \frac{1}{\sigma_2^0} \frac{1}{2+2\psi} \right|\\
=& 0
\end{align*}
where the last step follows by $\frac{p_1^0}{\sigma_1^0}=\frac{p_2^0}{\sigma_2^0}=\psi$. It is easy to find an envelope function of $\ell A_\ell(x)$ that is integrable on $(\xi_1^0,\infty)$ and thus by the dominated convergence theorem,
\begin{equation}
\lim_{\ell} \ell \int_{\xi_1^0}^\infty A_\ell(x)dx = 0.  \label{eqn:VD1third}
\end{equation}
The conclusion then follows from \eqref{eqn:VD1first}, \eqref{eqn:VD1second} and \eqref{eqn:VD1third}.
\end{proof}

\begin{proof}[Proof of Lemma \ref{lem:nececonditionh}]
	    Take $\tilde{f}(x)=\max_{i\in[k_0]}\bar{f}(x)\sqrt{f(x|\theta_i^0)}$. Then $\tilde{f}(x)$ is $\mu$-integrable by Cauchy-Schwarz inequality. Moreover for any $i\in [k_0]$ and any $0<\Delta\leq \gamma_0$
	    $$
	    \left| \frac{f(x|\theta_i^0+a_i\Delta)-f(x|\theta_i^0)}{\Delta} \right|\leq \tilde{f}(x) \quad \mu-a.e.\ x\in \Xfrak.
	    $$
	    Then by Lemma \ref{lem:firstidentifiableelaborate} \ref{item:firstidentifiableelaborateb} $(a_1,b_1,\ldots,a_{k_0},b_{k_0})$ is a nonzero solution of the system of equations \eqref{eqn:conlininda}, \eqref{eqn:conlinindb}. 
	    
	    Let $a'_i = \frac{a_i/p_i^0}{\sum_{i=1}^{k_0}\left(\|a_i/p_i^0\|_2+|b_i|\right)}$ and $b'_i=\frac{b_i}{\sum_{i=1}^{k_0}\left(\|a_i/p_i^0\|_2+|b_i|\right)}$. Then $a'_i$, $b'_i$ satisfy $\sum_{i=1}^{k_0}(\|a'_i\|_2+|b'_i|)=1$ and $(p_1^0 a'_1$, $b'_1,\ldots,p_{k_0}^0 a'_{k_0}$, $b'_{k_0})$
	    is also a nonzero solution of \eqref{eqn:conlininda}, \eqref{eqn:conlinindb} with $k,\theta_i$ replaced respectively by $k_0,\theta_i^0$.
	    Let $G_\ell=p_i^\ell \delta_{\theta_i^\ell}$ with $p_i^\ell=p_i^0+b'_i \frac{1}{\ell}$ and $\theta_i^\ell=\theta_i^0+\frac{1}{\ell}a'_i$ for $1\leq i \leq k_0$. When $\ell$ is large, $0<p_i^\ell<1$ and $\theta_i^\ell\in \Theta$ since $0<p_i^0<1$ and $\theta_i^0\in \Theta^\circ$. Moreover, $\sum_{i=1}^{k_0}p_i^\ell=1$ since $\sum_{i=1}^{k_0}b'_i=0$. Then $G_\ell\in \Ec_{k_0}(\Theta)$ and $G_\ell\not = G_0$ since at least one of $a'_i$ or $b'_i$ is nonzero. When $\ell$ is large $D_1(G_\ell,G_0)=\sum_{i=1}^{k_0}\left(\|\theta_i^\ell-\theta_i^0\|_2+|p_i^\ell-p_i^0|\right)=\frac{1}{\ell}$. Thus when $\ell$ is large
	    \begin{align*}
	        &\frac{2h^2(P_{G_\ell},P_{G_0})}{D_1^2(G_\ell,G_0)}\\
	        = &\int_{ S}\left|\frac{p_{G_\ell}(x)-p_{G_0}(x)}{D_1(G_\ell,G_0)} \frac{1}{\sqrt{p_{G_\ell}(x)}+\sqrt{p_{G_0}(x)}}\right|^2  \mu(dx) \\
	    = & \int_{S \backslash \Nc}\left| \left(\sum_{i=1}^{k_0} p_i^\ell \frac{f(x|\theta_i^\ell) - f(x|\theta_i^0)}{1/\ell} + \sum_{i=1}^{k_0} b'_i f(x|\theta_i^0)\right) \frac{1}{\sqrt{p_{G_\ell}(x)}+\sqrt{p_{G_0}(x)}}    \right|^2 \mu(dx).
	    \end{align*}
	The integrand of the last integral is bounded by 
	\begin{align*}
	&\left| \sum_{i=1}^{k_0} \frac{p_i^\ell}{\sqrt{p_i^0}} \frac{f(x|\theta_i^\ell) - f(x|\theta_i^0)}{1/\ell \times \sqrt{f(x|\theta_i^0)}} + \sum_{i=1}^{k_0} \frac{b'_i}{\sqrt{p_i^0}} \sqrt{f(x|\theta_i^0)}\right|^2 \\
	\leq & 2k_0 \sum_{i=1}^{k_0} \frac{(p_i^\ell)^2}{p_i^0} \left|\frac{f(x|\theta_i^\ell) - f(x|\theta_i^0)}{1/\ell \times \sqrt{f(x|\theta_i^0)}}\right|^2 + 2k_0\sum_{i=1}^{k_0} \frac{(b'_i)^2}{p_i^0} f(x|\theta_i^0)  \\
	\leq &
	    2k_0 \sum_{i=1}^{k_0} \frac{1}{p_i^0} \left(\frac{1/p_i^0}{\sum_{i=1}^{k_0}\left(\|a_i/p_i^0\|_2+|b_i|\right)}\right)^2 \bar{f}^2(x) + 2k_0\sum_{i=1}^{k_0} \frac{(b'_i)^2}{p_i^0} f(x|\theta_i^0),
	\end{align*} 
	which is integrable w.r.t. to $\mu$ on $S \backslash \Nc $. 
	Here the last inequalities follows from 
	$$
	\left|\frac{f(x|\theta_i^\ell) - f(x|\theta_i^0)}{1/\ell \times \sqrt{f(x|\theta_i^0)}}\right| = \left|\frac{f(x|\theta_i^0+\frac{\|a'_i\|_2}{\|a_i\|_2}a_i \Delta)-f(x|\theta_i^0)}{\Delta \sqrt{f(x|\theta_i^0)}}\right|\leq \frac{\|a'_i\|_2}{\|a_i\|_2}\bar{f}(x).
	$$
	Then by the dominated convergence theorem 
	\begin{align*}
	        &\lim_{\ell\to \infty}\frac{2h^2(P_{G_\ell}(x),P_{G_0}(x))}{D^2_1(G_\ell,G_0)}\\  
	    = & \int_{S\backslash \Nc}\left| \left(\sum_{i=1}^{k_0} p_i^0 \langle a'_i, \nabla_{\theta} f(x|\theta_i^0)\rangle + \sum_{i=1}^{k_0} b'_i f(x|\theta_i^0) \right) \frac{1}{2\sqrt{p_{G_0}(x)}}\right|^2 \mu(dx) \\
	    =&0.
	    \end{align*}
	The proof is completed by \eqref{eqn:reltwoinversebounds}.
	\end{proof}

\subsection{Proofs for Section \ref{sec:appendixfinercharacterizations}}

	\begin{proof}[Proof of Lemma \ref{lem:firstidentifiableelaborate}]\
a) 
 For $x\in \Xfrak\backslash \Nc$, $\nabla_\theta \tilde{f}(x|\theta_i)= g(\theta_i) \nabla_\theta f(x|\theta_i) + f(x|\theta_i)\nabla_\theta g(x|\theta_i)$. Then $(\tilde{a}_1,\tilde{b}_1,\ldots,\tilde{a}_k,\tilde{b}_k)$ is a solution of \eqref{eqn:conlininda} with $f$ replaced by $\tilde{f}$ if and only if $(a_1,b_1,\ldots,a_k,b_k)$ with $a_i=g(\theta_i)\tilde{a}_i$ and $b_i=\langle \tilde{a}_i, \nabla_{\theta}g(\theta_i)\rangle + \tilde{b}_i g(\theta_i)$ is a solution of \eqref{eqn:conlininda}. We can write $\tilde{a}_i=a_i/g(\theta_i)$ and $\tilde{b}_i = (b_i -\langle a_i, \nabla_{\theta}g(\theta_i)\rangle/g(\theta_i) )/g(\theta_i)$. Thus $(\tilde{a}_1,\tilde{b}_1,\ldots,\tilde{a}_k,\tilde{b}_k)$ is zero if and only if $(a_1,b_1,\ldots,a_k,b_k)$ is zero. 
 
b) 
 Under the conditions, by dominated convergence theorem
 $$
 \int_{\Xfrak\backslash \Nc}\left\langle a_i , \nabla_\theta  f(x|\theta_i) \right\rangle d\mu = \left\langle a_i, \nabla_\theta   \left. \int_{\Xfrak\backslash \Nc} f(x|\theta) d\mu  \right\rangle \right|_{\theta=\theta_i}=0. 
 $$
where the last step follows from $\mu(\Nc)=0$ and the fact that $f(x|\theta)$ is a density with respect to $\mu$. Thus for $(a_1,b_1,\ldots,a_k,b_k)$ any solution of \eqref{eqn:conlininda}, 
$$
\sum_{i=1}^{k}b_i = \int_{\Xfrak\backslash \Nc} \sum_{i=1}^{k} \left(\langle a_i , \nabla_\theta  f(x|\theta_i) \rangle    + b_i f(x|\theta_i)\right) d\mu =0. 
$$
So $(a_1,b_1,\ldots,a_k,b_k)$ is also a solution of the system \eqref{eqn:conlininda},\eqref{eqn:conlinindb}. 

It remains to show \eqref{eqn:dctthetaiai2} is equivalent to the same conditions on $\tilde{f}$. Suppose \eqref{eqn:dctthetaiai2} is true. Then there exists small enough  $\tilde{\gamma}(\theta_i,a_i)<\gamma(\theta_i,a_i)$ such that for $0<\Delta\leq \tilde{\gamma}(\theta_i,a_i)$
\begin{align*}
&\left|\frac{\tilde{f}(x|\theta_i+a_i\Delta)-\tilde{f}(x|\theta_i)}{\Delta}\right|\\ \leq & g(\theta_i+ a_i \Delta)\left|\frac{f(x|\theta_i+a_i\Delta)-f(x|\theta_i)}{\Delta}\right| + \left|\frac{g(\theta_i+a_i\Delta)-g(\theta_i)}{\Delta}\right|f(x|\theta_i) \\
\leq & C(g,\theta_i,a_i,\tilde{\gamma}(\theta_i,a_i)) (\bar{f}(x|\theta_i,a_i) + f(x|\theta_i)) \quad \mu-a.e.\ \Xfrak 
\end{align*}
and thus one can take $\mu$-integrable $\bar{f}_1(x|\theta_i,a_i)=C(g,\theta_i,a_i,\tilde{\gamma}(\theta_i,a_i)) (\bar{f}(x|\theta_i,a_i) + f(x|\theta_i))$. The reverse direction follows similarly.

c) 
It is a direct consequence from parts \ref{item:firstidentifiableelaboratea} and \ref{item:firstidentifiableelaborateb}.
\end{proof}

\begin{proof}[Proof of Lemma \ref{cor:expD1equivalent}] Notice that $f(x|\theta)$ is continuously differentiable at every $\theta\in \Theta^\circ$ when fixing any $x\in \Xfrak$. By Lemma \ref{lem:expdct} and Lemma \ref{lem:firstidentifiableelaborate} \ref{item:firstidentifiableelaboratec}, \eqref{eqn:conlininda} has the same solutions as the system \eqref{eqn:conlininda},\eqref{eqn:conlinindb}.

It is obvious that \ref{item:expD1equivalenta} implies \ref{item:expD1equivalentb} and that \ref{item:expD1equivalentc} implies \ref{item:expD1equivalentd}.
That \ref{item:expD1equivalenta} implies \ref{item:expD1equivalentc} and that \ref{item:expD1equivalentb} implies \ref{item:expD1equivalentd} follow from $V(p_G,p_{G_0}) \leq \sqrt{2} h(p_G,p_{G_0})$. \ref{item:expD1equivalente} implies \ref{item:expD1equivalenta} follows from Lemma \ref{lem:firstidentifiable} \ref{item:firstidentifiableb}. It remains to prove \ref{item:expD1equivalentd} implies \ref{item:expD1equivalente}.

Suppose \ref{item:expD1equivalentd} holds and the system of equations \eqref{eqn:conlininda}, \eqref{eqn:conlinindb} with $k,\theta_i$ replaced respectively by $k_0,\thetaeta_i^0$ has a nonzero solution $(a_1, b_1,\ldots,a_{k_0},b_{k_0})$. By Lemma \ref{lem:expdct}, the condition \ref{item:nececonditionhd} of Lemma \ref{lem:nececonditionh} is satisfied with $\gamma_0=\min_{i\in [k_0]} \gamma(\theta_i^0,a_i)$ and $\bar{f}(x)=\max_{i\in [k_0]} \bar{f}(x|\theta_i^0,a_i)$. Thus by Lemma \ref{lem:nececonditionh}, \ref{item:expD1equivalentd} does not hold. This is a contradiction and thus \ref{item:expD1equivalentd} implies \ref{item:expD1equivalente}. 
\end{proof}

\begin{proof}[Proof of Lemma \ref{cor:expD1equivalenttheta}]

Consider  $\tilde{f}(x|\eta):=f(x|\theta)$ to be the same kernel but under the new parameter $\eta=\eta(\theta)$. 
Note $\{\tilde{f}(x|\eta)\}_{\eta\in\Theta}$ with $\Xi:=\eta(\Theta)$ is the canonical parametrization of the same exponential family. Write $\eta_i^0=\eta(\theta_i^0)$. Since $J_\eta(\theta)=(\frac{\partial \eta^{(i)}}{\partial \theta^{(j)}}(\theta))_{ij}$ exists  at $\theta_i^0$ and  at those points,
$$
 \nabla_{\theta}f(x|\theta_i^0) =  \left(J_\eta(\theta_i^0)\right)^\top \nabla_{\eta}\tilde{f}(x|\eta_i^0),\quad \forall i \in [k_0]
$$
and thus
\begin{equation}
\sum_{i=1}^{k_0}\left(\langle a_i, \nabla_{\theta}f(x|\theta_i^0) \rangle + b_i f(x|\theta_i^0)\right) = \sum_{i=1}^{k_0}\left(\langle J_{\eta}(\theta_i^0) a_i, \nabla_{\eta}\tilde{f}(x|\eta_i^0) \rangle + b_i \tilde{f}(x|\eta_i^0)\right). \label{eqn:linindthetaeta}
\end{equation}
Then \eqref{eqn:conlininda}, \eqref{eqn:conlinindb} with $k,\theta_i$ replaced respectively by $k_0,\theta_i^0$ has only the zero solution if and only if \eqref{eqn:conlininda}, \eqref{eqn:conlinindb} with $k,\theta_i,f$ replaced respectively by $k_0,\eta_i^0,\tilde{f}$ has only the zero solution.

Suppose that $(a_1,b_1,\ldots,a_{k_0},b_{k_0})$ is a solution of \eqref{eqn:conlininda} with $k,\theta_i$ replaced respectively by $k_0,\theta_i^0$. Then by \eqref{eqn:linindthetaeta} $(\tilde{a}_1,\tilde{b}_1,\ldots,\tilde{a}_{k_0},\tilde{b}_{k_0})$ with $\tilde{a}_i = J_{\eta}(\theta_i^0) a_i $, $\tilde{b}_i=b_i$ is a solution of \eqref{eqn:conlininda} with $k,\theta_i,f$ replaced respectively by $k_0,\eta_i^0,\tilde{f}$. Then by Lemma \ref{cor:expD1equivalent}, it necessarily has $\sum_{i=1}^{k_0}b_i=\sum_{i=1}^{k_0}\tilde{b}_i=0$. That is, $(a_1,b_1,\ldots,a_{k_0},b_{k_0})$ is a solution of the system of equations \eqref{eqn:conlininda}, \eqref{eqn:conlinindb} with $k,\theta_i$ replaced respectively by $k_0,\theta_i^0$. As a result, with $k,\theta_i$ replaced respectively by $k_0,\theta_i^0$,  \eqref{eqn:conlininda} has the same solutions as the system \eqref{eqn:conlininda},\eqref{eqn:conlinindb}.

The rest of the proof is completed by appealing to Lemma \ref{lem:invariance} and Lemma \ref{cor:expD1equivalent}.
\end{proof}

\subsection{Auxiliary lemmas for Section \ref{sec:proofsfirstorderidentifiable}}
\label{sec:auxiliarylemmas4}

\begin{lem} \label{lem:taylortwisted}
Consider $g(x)$ on $\R^d$ is a function with its gradient $\nabla g(x)$ existing in a neighborhood of $x_0$ and with $\nabla g(x)$ continuous at $x_0$. 
\begin{enumerate}[label=\alph*)]
	\item \label{item:taylortwisteda}
	 Then when $x\to x_0$ and $y\to x_0$
	 $$
	 |g(x)-g(y)-\langle\nabla g(x_0),x-y\rangle | = o(\|x-y\|_2).
	 $$
	\item \label{item:taylortwistedb}
	If in addition, the Hessian $\nabla^2 g(x)$ is continuous in a neighborhood of $x_0$. Then for any $x,y$ in a closed ball $B$ of $x_0$ contained in that neighborhood,
	\begin{align*}
	&|g(x)-g(y)-\langle\nabla g(x_0),x-y\rangle |\\ 
	\leq & \int_{0}^1\int_0^1  \|\nabla^2 g(x_0+s(y+t(x-y)-x_0))\|_2 ds dt\ \|x-y\|_2 \max\{\|x-x_0\|_2, \|y-x_0\|_2 \}\\
	\leq & d\sum_{1\leq i,j\leq d} \int_{0}^1\int_0^1 \left|\frac{\partial^2 g}{\partial x^{(i)}x^{(j)}}(x_0+s(y+t(x-y)-x_0))\right|  ds dt \times \\
	&\quad  \ \|x-y\|_2 \max\{\|x-x_0\|_2, \|y-x_0\|_2 \}.
	\end{align*}
	Moreover 
	$$
	|g(x)-g(y)-\langle\nabla g(x_0),x-y\rangle | \leq  L \|x-y\|_2 \max\{\|x-x_0\|_2,\|y-x_0\|_2\}. 
	$$
	where $L=\sup_{x\in B} \|\nabla^2 g(x)\|_2<\infty$.
\end{enumerate}
\end{lem}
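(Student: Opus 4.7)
The plan is to reduce both statements to the fundamental theorem of calculus applied along line segments, together with continuity of $\nabla g$ (for part~\ref{item:taylortwisteda}) or of $\nabla^2 g$ (for part~\ref{item:taylortwistedb}). First I will fix $x,y$ sufficiently close to $x_0$ (for part \ref{item:taylortwisteda}, close enough that the segment $\{y+t(x-y):t\in[0,1]\}$ lies in the neighborhood on which $\nabla g$ exists; for part \ref{item:taylortwistedb} any $x,y\in B$ works since $B$ is convex) and write
\[
g(x)-g(y)\;=\;\int_{0}^{1}\bigl\langle \nabla g(y+t(x-y)),\,x-y\bigr\rangle\,dt,
\]
by applying the chain rule and the fundamental theorem of calculus to $h(t):=g(y+t(x-y))$. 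Subtracting $\langle \nabla g(x_0),x-y\rangle=\int_0^1\langle \nabla g(x_0),x-y\rangle\,dt$ gives the key identity
\[
g(x)-g(y)-\langle \nabla g(x_0),x-y\rangle \;=\; \int_{0}^{1}\bigl\langle \nabla g(y+t(x-y))-\nabla g(x_0),\,x-y\bigr\rangle\,dt.
\]

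For part \ref{item:taylortwisteda}, I would apply Cauchy--Schwarz inside the integral to pull out $\|x-y\|_2$, leaving
\[
\bigl|g(x)-g(y)-\langle\nabla g(x_0),x-y\rangle\bigr|\;\leq\;\|x-y\|_2\,\int_{0}^{1}\bigl\|\nabla g(y+t(x-y))-\nabla g(x_0)\bigr\|_2\,dt.
\]
Since $y+t(x-y)-x_0=(1-t)(y-x_0)+t(x-x_0)$, we have $\|y+t(x-y)-x_0\|_2\leq\max\{\|x-x_0\|_2,\|y-x_0\|_2\}\to 0$ uniformly in $t\in[0,1]$ as $x,y\to x_0$. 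Continuity of $\nabla g$ at $x_0$ then forces the integrand to tend to $0$ uniformly in $t$, so the integral is $o(1)$, yielding the claimed $o(\|x-y\|_2)$ bound.

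For part \ref{item:taylortwistedb}, I apply the fundamental theorem of calculus once more, this time to the map $s\mapsto \nabla g(x_0+s(y+t(x-y)-x_0))$ on $[0,1]$, whose path lies in $B$ by convexity of $B$. This gives
\[
\nabla g(y+t(x-y))-\nabla g(x_0)\;=\;\int_{0}^{1}\nabla^{2} g\bigl(x_0+s(y+t(x-y)-x_0)\bigr)\,\bigl(y+t(x-y)-x_0\bigr)\,ds.
\]
Plugging into the key identity, applying Cauchy--Schwarz, and using the convex combination bound $\|y+t(x-y)-x_0\|_2\leq\max\{\|x-x_0\|_2,\|y-x_0\|_2\}$ yields the first displayed inequality of part \ref{item:taylortwistedb}. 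The entry-wise inequality follows from the elementary norm bound $\|A\|_2\leq d\max_{i,j}|A_{ij}|\leq d\sum_{i,j}|A_{ij}|$ for $A\in\R^{d\times d}$. The final bound with $L=\sup_{z\in B}\|\nabla^{2}g(z)\|_2$ is obtained by noting that every point $x_0+s(y+t(x-y)-x_0)=(1-s)x_0+s((1-t)y+tx)$ is a convex combination of $x_0,x,y\in B$, hence lies in $B$, so the double integral of $\|\nabla^{2}g(\cdot)\|_2$ is bounded by $L$.

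There is no real obstacle here; the only things to be careful about are (i) ensuring the integration paths stay inside the domain of differentiability (handled by shrinking the neighborhood in part \ref{item:taylortwisteda} and by convexity of $B$ in part \ref{item:taylortwistedb}), and (ii) justifying the uniform-in-$t$ convergence of $\|\nabla g(y+t(x-y))-\nabla g(x_0)\|_2$ to $0$, which follows immediately from continuity of $\nabla g$ at $x_0$ combined with the uniform convergence $y+t(x-y)\to x_0$.
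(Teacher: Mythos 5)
Your proof is correct and follows essentially the same route as the paper. The only cosmetic difference is in part~\ref{item:taylortwisteda}, where you use the fundamental theorem of calculus to represent $g(x)-g(y)$ as an integral, whereas the paper invokes the mean value theorem to pick a single intermediate point $\xi$ on the segment; both then reduce to Cauchy--Schwarz plus continuity of $\nabla g$ at $x_0$, and your part~\ref{item:taylortwistedb} (double FTC, Cauchy--Schwarz, then the elementary bound $\|A\|_2\leq d\max_{i,j}|A_{ij}|\leq d\sum_{i,j}|A_{ij}|$) matches the paper step for step.
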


\begin{lem}\label{lem:polexplinind}
Let $k$ be a positive integer, $b_1<\ldots<b_k$ be a sequence of real numbers and let $\mu$ be the Lebesgue measure on $\R$. 
\begin{enumerate}[label=\alph*)]
\item \label{item:polexplininda}
Let $\{h_i(x)\}_{i=1}^k$ be a sequence of polynomials. Consider any nonempty interval $I$. Then
$$
\sum_{i=1}^k h_i(x)e^{b_i x} =0 \quad \mu-a.e. \ x\in I
$$
implies $h_i(x)\equiv 0$ for any $i\in [k]$. 
\item \label{item:polexplinindb}
 Let $\{h_i(x)\}_{i=1}^k$ be a sequence of functions, where each is of the form $\sum_{j=1}^{m_i} a_jx^{\gamma_j}$, i.e. a finite linear combination of power functions. Let $\{g_i(x)\}_{i=1}^k$ be another sequence of such functions. Consider any nonempty interval $I\subset (0,\infty)$. Then 
$$
\sum_{i=1}^k (h_i(x)+g_i(x)\ln(x)) e^{b_i x} =0 \quad \mu-a.e. \ x\in I
$$
implies when $x\neq 0$ $h_i(x)\equiv 0$ and $g_i(x)\equiv 0$ for any $i\in [k]$.
\end{enumerate}
\end{lem}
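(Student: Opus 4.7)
\textbf{Proof plan for Lemma \ref{lem:polexplinind}.}

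The plan is to first upgrade the hypothesis from ``a.e.\ on $I$'' to ``everywhere on the appropriate domain'' by analyticity, and then to peel off the $e^{b_i x}$ factors one at a time from the largest $b_i$ downward. In part (a) the function $F(x) := \sum_{i=1}^k h_i(x)e^{b_i x}$ is entire (indeed, a polynomial-exponential); if it vanishes on a subset of $I$ of positive Lebesgue measure, then that subset has an accumulation point and the identity principle forces $F \equiv 0$ on $\R$. In part (b) the analogous sum $F(x) := \sum_{i=1}^k (h_i(x)+g_i(x)\ln x)e^{b_i x}$ is real-analytic on $(0,\infty)$, so the same identity principle promotes the a.e.\ vanishing on $I\subset(0,\infty)$ to $F \equiv 0$ on all of $(0,\infty)$.

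For part (a), I would proceed by induction on $k$. The base case $k=1$ is trivial since $e^{b_1 x}>0$. For the inductive step, multiply $F\equiv 0$ by $e^{-b_1 x}$ to get $h_1(x) + \sum_{i=2}^k h_i(x) e^{c_i x}=0$ with $c_i := b_i - b_1 > 0$. If $n = \deg h_1$, differentiate $n+1$ times; the $h_1$ term dies and each remaining term becomes $q_i(x)e^{c_i x}$ where a direct Leibniz computation shows $q_i$ is a polynomial of the same degree as $h_i$ whose leading coefficient equals $c_i^{n+1}\cdot \mathrm{lc}(h_i)$. Thus $q_i\equiv 0 \iff h_i\equiv 0$. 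The induction hypothesis applied to $\sum_{i=2}^k q_i(x)e^{c_i x}\equiv 0$ yields all $h_i\equiv 0$ for $i\ge 2$, and then $h_1\equiv 0$ from the original equation.

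For part (b), I would again induct on $k$, but use an asymptotic argument rather than differentiation, since the ``polynomials'' now carry possibly negative real exponents. After multiplying by $e^{-b_k x}$, write
\[
h_k(x)+g_k(x)\ln x \;=\; -\sum_{i=1}^{k-1} (h_i(x)+g_i(x)\ln x)\,e^{(b_i-b_k)x}, \qquad x>0.
\]
The right-hand side is $O(x^{M}(\ln x)\,e^{(b_{k-1}-b_k)x})$ for some $M$, hence decays exponentially as $x\to\infty$. The left-hand side, being a finite $\R$-linear combination of $x^{\gamma_j}$ and $x^{\delta_j}\ln x$, has only algebraic growth or decay, so the only way it can be exponentially small is if $h_k(x)+g_k(x)\ln x \equiv 0$ on $(0,\infty)$. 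The substitution $x=e^t$ then turns this last identity into
\[
\sum_j a_j e^{\gamma_j t} + t\sum_j c_j e^{\delta_j t} \;=\; 0, \qquad t\in\R,
\]
which, after grouping by distinct exponents $\beta_\ell$ appearing among $\{\gamma_j\}\cup\{\delta_j\}$, has the form $\sum_\ell (A_\ell+C_\ell t)e^{\beta_\ell t}\equiv 0$ and is dispatched by part (a) applied to the polynomial coefficients $A_\ell+C_\ell t$. Thus $h_k\equiv 0$ and $g_k\equiv 0$ (for $x\neq 0$), and the inductive hypothesis on the remaining sum with $k-1$ exponentials finishes the argument.

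The only delicate step is justifying that the left-hand side of the displayed identity in part (b) cannot secretly decay exponentially: this amounts to the classical fact that no nonzero finite $\R$-linear combination of $x^\gamma$ and $x^\gamma \ln x$ is $o(e^{-\varepsilon x})$ as $x\to\infty$, which one sees by identifying the largest exponent $\gamma^*$ appearing, dividing by $x^{\gamma^*}\ln x$ (or $x^{\gamma^*}$, if no log survives at that level), and taking $x\to\infty$ to read off its nonzero leading coefficient as a contradiction. This is the one spot where a little care is required; everything else is routine.
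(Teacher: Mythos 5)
Your proposal is correct, and for part (b) it takes a genuinely different route from the paper's own proof. The paper, like you, first upgrades a.e.\ vanishing to identical vanishing (via Taylor expansion, which is the same content as your identity-principle step), but then in both parts it runs a single peel-from-the-top limit: multiply by $e^{-b_k x}$, send $x\to\infty$, conclude the $k$-th coefficient vanishes, and recurse down to $i=1$. For part (b) the paper first multiplies by $x^{-\gamma_0}$, with $\gamma_0$ the smallest exponent appearing, so that the residual exponents are nonnegative, and then asserts that $\lim_{x\to\infty}\bigl(x^{-\gamma_0}h_k(x)+x^{-\gamma_0}g_k(x)\ln x\bigr)=0$ forces $h_k\equiv 0$ and $g_k\equiv 0$. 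That assertion is true, but it is precisely the ``delicate step'' you flagged, and the paper states it without elaboration. Your version tightens this in two ways: (i) you observe that the residual is not merely $o(1)$ but $O(e^{-\varepsilon x})$ with $\varepsilon=b_k-b_{k-1}>0$, so the normalization by $x^{-\gamma_0}$ is unnecessary---no nonzero finite combination of $x^\gamma$ and $x^\gamma\ln x$ can be exponentially small, regardless of the signs of the exponents; and (ii) once $h_k(x)+g_k(x)\ln x\equiv 0$ on $(0,\infty)$ is established, the substitution $x=e^t$ reduces the separation of $h_k$ from $g_k$ cleanly to part (a) applied to linear coefficients $A_\ell+C_\ell t$, avoiding a second ad hoc asymptotic pass. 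For part (a) your derivative-killing induction (multiply by $e^{-b_1 x}$ and differentiate $\deg h_1 +1$ times, tracking leading coefficients via Leibniz) is a standard alternative to the paper's repeated-limit argument and is equally valid; the only small point to add is the degenerate case $h_1\equiv 0$, which you handle by dropping that term and applying the inductive hypothesis directly to the remaining $k-1$ summands.
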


\begin{lem}\label{lem:expdct} 
Let $f(x|\theta)$ be the density of a full rank exponential family in canonical form specified as in Lemma \ref{cor:expD1equivalent}. Then for any $\theta\in \Theta^\circ$ and $a\in \R^q$ there exists $\gamma(\theta,a)>0$ such that for any $0<\Delta\leq \gamma(\theta,a)$,
$$
\left|\frac{f(x|\thetaeta+a\Delta)-f(x|\thetaeta)}{\Delta  \sqrt{f(x|\thetaeta)}} \right|\leq \bar{f}(x|\theta,a) \quad \forall x\in S=\{x|f(x|\theta)>0\}
$$
with $\int_{\Xfrak}\bar{f}^2(x|\theta,a)d\mu<\infty$ and
$$
\left|\frac{f(x|\thetaeta+a\Delta)-f(x|\thetaeta)}{\Delta} \right|\leq \tilde{f}(x|\theta,a) \quad \forall x\in \Xfrak
$$
with $\int_{\Xfrak}\tilde{f}(x|\theta,a)d\mu<\infty$. Here $\gamma(\theta,a)$, $\bar{f}(x|\theta,a)$ and $\tilde{f}(x|\theta,a)$ depend on $\theta$ and $a$.
\end{lem}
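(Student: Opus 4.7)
The plan is to exploit the explicit closed form of the exponential family density $f(x|\theta) = \exp(\langle \theta, T(x)\rangle - A(\theta)) h(x)$ together with the standard fact that $T(x)$ has finite moments of all orders under $P_\theta$ for $\theta \in \Theta^\circ$ (a consequence of the analyticity of $A$ on $\Theta^\circ$). First I would rewrite the difference quotient via the mean value identity
$$\frac{f(x|\theta + a\Delta) - f(x|\theta)}{\Delta} = \frac{1}{\Delta}\int_0^\Delta \langle a, T(x) - \nabla A(\theta + s a)\rangle f(x|\theta + sa)\, ds,$$
using $\nabla_\theta f(x|\theta) = (T(x) - \nabla A(\theta)) f(x|\theta)$. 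Since $\theta \in \Theta^\circ$ is in the open set $\Theta^\circ$, choose $\gamma = \gamma(\theta, a) > 0$ small enough that the segment $\{\theta + s a : s \in [0, 2\gamma]\}$ lies in $\Theta^\circ$; then $\nabla A(\theta + sa)$ is continuous and bounded in $s \in [0, \gamma]$ by some $C_1(\theta, a)$.

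The crucial tool is the exponential family identity
$$f(x|\theta) e^{c \langle a, T(x)\rangle} = e^{A(\theta + c a) - A(\theta)}\, f(x|\theta + c a),$$
which converts powers of $e^{\langle a, T(x)\rangle}$ into proper densities indexed by shifted parameters. Combined with the elementary bound $e^{s\langle a, T(x)\rangle} \leq 1 + e^{\gamma\langle a, T(x)\rangle}$ valid for $s \in [0, \gamma]$, this yields the uniform estimate $f(x|\theta + sa) \leq C_2(\theta, a, \gamma)\,[f(x|\theta) + f(x|\theta + \gamma a)]$ for all $s \in [0, \gamma]$. Plugging this into the integral representation and taking absolute values gives an envelope
$$\tilde{f}(x|\theta, a) := C_3(|\langle a, T(x)\rangle| + C_1)\,[f(x|\theta) + f(x|\theta + \gamma a)],$$
which is $\mu$-integrable because $T$ has finite first absolute moment under the two proper exponential family densities $f(x|\theta)$ and $f(x|\theta + \gamma a)$.

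For the $L^2$ bound involving $\sqrt{f(x|\theta)}$ in the denominator, I would apply the Cauchy--Schwarz inequality to the integral representation to obtain
$$\frac{|f(x|\theta+a\Delta) - f(x|\theta)|^2}{\Delta^2 f(x|\theta)} \leq \frac{1}{\Delta} \int_0^\Delta |\langle a, T(x) - \nabla A(\theta + sa)\rangle|^2 \frac{f(x|\theta + sa)^2}{f(x|\theta)}\, ds,$$
and then rewrite $f(x|\theta+sa)^2/f(x|\theta)$ using the exponential family identity as $e^{A(\theta + 2sa) - 2A(\theta+sa) + A(\theta)}\,f(x|\theta + 2sa)$. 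The same bootstrap now yields the uniform bound $f(x|\theta+2sa) \leq C_4(\theta, a, \gamma)[f(x|\theta) + f(x|\theta + 2\gamma a)]$ for $s \in [0, \gamma]$, provided $\theta + 2\gamma a \in \Theta^\circ$, which determines the final choice of $\gamma$. The resulting squared envelope $\bar{f}(x|\theta, a)^2$ is a quadratic polynomial in $\langle a, T(x)\rangle$ times a sum of two proper exponential family densities with parameters in $\Theta^\circ$, hence $\mu$-integrable by the finiteness of second moments of $T$.

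The main obstacle is keeping the envelope independent of $\Delta$ while not losing integrability: the naive bound $|e^y - 1| \leq |y|e^{|y|}$ would force the dominator to involve $\sup_{\Delta \leq \gamma} f(x|\theta + a\Delta)$, which is not a density. The resolution is the structural identity of exponential families, which lets us trade exponential factors in $T(x)$ for shifts in the natural parameter, thereby reducing integrability to the familiar moment bounds for $T$ under finitely many shifted exponential family distributions. The need to shift by $2\gamma a$ (not just $\gamma a$) in the $L^2$ part, arising because of the squared density, is the only subtlety dictating the choice of $\gamma$.
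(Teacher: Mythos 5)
Your proposal is correct, but it takes a genuinely different route from the paper's proof. The paper writes the difference quotient directly as $\sqrt{f(x|\theta)}\bigl|\exp(\langle a\Delta,T(x)\rangle - (A(\theta+a\Delta)-A(\theta)))-1\bigr|/\Delta$, then applies the chain of elementary inequalities $|e^y-1|\leq |y|e^{|y|}$ and $|y|\leq \gamma^{-1}e^{\gamma|y|}$; this last step trades the polynomial factor in $T(x)$ for yet another exponential, so the resulting envelope is $\bar{f}(x|\theta,a)=\sqrt{C^2\, f(x|\theta)\,(e^{4\gamma\langle a,T(x)\rangle}+e^{-4\gamma\langle a,T(x)\rangle})}$, whose square is integrable because the moment generating function of $T$ under $P_\theta$ is finite near the origin, and $\tilde{f}$ is then simply $\bar{f}\sqrt{f(x|\theta)}$, integrable by Cauchy--Schwarz. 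You instead begin with the fundamental-theorem-of-calculus representation $(f(x|\theta+a\Delta)-f(x|\theta))/\Delta = \Delta^{-1}\int_0^\Delta\langle a, T(x)-\nabla A(\theta+sa)\rangle f(x|\theta+sa)\,ds$, and then systematically use the exponential-family reparametrization identity $f(x|\theta)e^{c\langle a,T(x)\rangle}=e^{A(\theta+ca)-A(\theta)}f(x|\theta+ca)$ to turn all exponential factors into shifted proper densities, keeping the polynomial factor $\langle a, T(x)\rangle$ and controlling it through the finiteness of first and second absolute moments of $T$ under the shifted densities. Both approaches are sound and rely on the same structural fact (that shifting the natural parameter stays in $\Theta^\circ$ for small enough $\gamma$); the paper's argument is slightly shorter because it never has to track a polynomial-in-$T$ factor through the integrability step, whereas yours gives explicit control in terms of moments of $T$, which some readers may find more transparent, at the modest cost of the Jensen/Cauchy--Schwarz step for the $L^2$ bound and the bookkeeping of the $2\gamma$ shift. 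Neither approach has a gap.
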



\section{Proofs, additional lemmas and calculation details for Section 5}

This section contains all proofs for Section \ref{sec:inversebounds} except that of Theorem~\ref{thm:expfam}, Theorem~\ref{thm:genthm}. The proofs of Theorem~\ref{thm:expfam} and Theorem~\ref{thm:genthm} occupy the bulk of the paper and will be presented in Section~\ref{sec:proofprodinvbou}. 
This section also contains additional lemmas on the invariance of different parametrizations and on determinant of a type of generalized Vandermonde matrices, and contains calculation details for Examples \ref{exa:bernoulli} and \ref{exa:gamma2}.

\subsection{Proofs for Section \ref{sec:implication} and Corollary \ref{cor:expfamtheta}}
\label{sec:proofsimplicationofinversebound}

\begin{proof}[Proof of Lemma \ref{cor:identifiabilityallm}]
        In this proof we write $n_1$ and $\underbar{\m}_1$ for $n_1(G_0)$ and $\underbar{\m}_1(G_0)$ respectively.  By Lemma~\ref{lem:n0nbar} \ref{item:n1Nbar}, $n_1 = \underbar{\m}_1 <\infty$.
        For each $\m\geq 1$, there exists $R_{\m}(G_0)>0$ such that for any $G\in \Ec_{k_0}(\Theta)\backslash \{G_0\}$ and $W_1(G,G_0)<R_{\m}(G_0)$
        \begin{equation}
         \frac{V(P_{G,\m},P_{G_0,\m})}{\myD_\m(G,G_0)} \geq \frac{1}{2} \liminf_{\substack{G\overset{W_1}{\to} G_0\\ G\in \Ecal_{k_0}(\Theta) }} \frac{V(P_{G,\m},P_{G_0,\m})}{\myD_\m(G,G_0)}.
         \label{eqn:VNDNlowbouliminfG}
        \end{equation}
        Take $c(G_0,\m_0)=\min\limits_{1\leq i\leq \m_0}R_i(G_0)>0$. 
        Moreover, by the definition~\eqref{eqn:Nbar} for any $N \geq \underbar{\m}_1$,
        $$
        \liminf_{\substack{G\overset{W_1}{\to} G_0\\ G\in \Ecal_{k_0}(\Theta) }} \frac{V(P_{G,\m},P_{G_0,\m})}{\myD_\m(G,G_0)} \geq \inf_{ \m\geq \underbar{\m}_1}\liminf_{\substack{G\overset{W_1}{\to} G_0\\ G\in \Ecal_{k_0}(\Theta) }} \frac{V(P_{G,\m},P_{G_0,\m})}{\myD_\m(G,G_0)}>0.
        $$
        Combining the last two displays completes the proof with 
        $$
        C(G_0)=\frac{1}{2} \inf_{ \m\geq \underbar{\m}_1}\liminf_{\substack{G\overset{W_1}{\to} G_0\\ G\in \Ecal_{k_0}(\Theta) }} \frac{V(P_{G,\m},P_{G_0,\m})}{\myD_\m(G,G_0)}.
        $$
\end{proof}

 \begin{proof}[Proof of Lemma \ref{cor:VlowbouW1}] 
    In this proof we write  $n_1,n_0$ for $n_1(G_0), n_0(G_0,\cup_{k\leq k_0} \Ec_k(\Theta_1))$ respectively.
    By the definition of $n_1$, for any $\m \geq n_1$ 
        \begin{equation}
        \liminf_{\substack{G\overset{W_1}{\to}G_0\\ G\in \Ec_{k_0}(\Theta_1)}  } \frac{V(P_{G,\m},P_{G_0,\m})}{D_{1}(G,G_0)}>0. \label{eqn:VmD1bou}
        \end{equation}
        
        By Lemma \ref{lem:relW1D1} \ref{item:relW1D1b} one may replace the $D_1(G,G_0)$ in the preceding display by $W_1(G,G_0)$. Fix $\m_1 = n_1 \vee n_0$. Then there exists $R>0$ depending on $G_0$ such that 
        \begin{equation}
       \inf_{G\in B_{W_1}(G_0,R)\backslash \{G_0\} } \frac{V(P_{G,N_1},P_{G_0,N_1})}{W_1(G,G_0)}>0,  \label{eqn:positiveneighborhood}
        \end{equation}
          where $B_{W_1}(G_0,R)$ is the open ball in metric space $(\bigcup_{k=1}^{k_0} \Ec_{k}(\Theta_1), W_1)$ with center at $G_0$ and radius $R$. Here we used the fact that any sufficiently small open ball in $(\bigcup_{k=1}^{k_0} \Ec_{k}(\Theta_1), W_1)$ with center in $\Ec_{k_0}(\Theta_1)$ is in $\Ec_{k_0}(\Theta_1)$. 
          
          Notice that $\bigcup_{k=1}^{k_0} \Ec_{k}(\Theta_1) $ is compact under the $W_1$ metric if $\Theta_1$ is compact. By the assumption that the map $\theta\mapsto P_\theta$ is continuous, Lemma \ref{lem:Vupperbou} and the triangle inequality of total variation distance, $V(P_{G,\m},P_{G_0,\m})$ with domain $(\bigcup_{k=1}^{k_0} \Ec_{k}(\Theta_1), W_1)$  is a continuous function of $G$ for each $\m$.
        Then
        $G\mapsto \frac{V(P_{G,\m},P_{G_0,\m})}{W_1(G,G_0)}$
        is a continuous map on $\bigcup_{k=1}^{k_0} \Ec_{k}(\Theta_1)\setminus \{G_0\}$ for each $\m$. Moreover $\frac{V(P_{G,\m},P_{G_0,\m})}{W_1(G,G_0)}$ is positive on the compact set $\bigcup_{k=1}^{k_0} \Ec_{k}(\Theta_1) \backslash B_{W_1}(G_0,R)$ provided $\m\geq n_0$.
        As a result for each $\m\geq n_0$
        $$
        \min_{G\in \bigcup_{k=1}^{k_0} \Ec_{k}(\Theta_1) \backslash B_{W_1}(G_0,R)} \frac{V(P_{G,\m},P_{G_0,\m})}{W_1(G,G_0)}>0. 
        $$
        Combining the last display with $\m_1 = n_1\vee n_0$ and \eqref{eqn:positiveneighborhood} yields 
        \begin{equation} 
        V(P_{G,\m_1},P_{G,\m_1}) \geq C(G_0,\Theta_1) W_1(G,G_0), \label{eqn:VmW1loucon} 
        \end{equation}
        where $C(G_0,\Theta_1)$ is a constant depending on $G_0$ and $\Theta_1$. Observing that $V(P_{G,\m},P_{G_0,\m})$ increases with $\m$, the proof is then complete.
        \end{proof}

\begin{proof}[Proof of Lemma \ref{lem:invariance}]
 It's easy to see when $\theta $ is a sufficiently small neighborhood of $\theta_i^0$,
 $$
(2\|(J_{g}(\theta_i^0))^{-1}\|_2)^{-1} \|\theta-\theta_i^0\|_2  \leq \|g(\theta) -g(\theta_i^0) \|_2 \leq  2\|J_g(\theta_i^0)\|_2 \|\theta-\theta_i^0\|_2.
 $$
 Then when $G$ is in a small neighborhood of $G_0$ under $W_1$
 \begin{align*}
 (2 \max_{1\leq i \leq k_0}\|(J_g(\theta_i^0))^{-1}\|_2+1)^{-1} D_N(G,G_0) \leq & D_N(G^{\eta},G_0^\eta) \\ 
 \leq &  (2 \max_{1\leq i \leq k_0}\|J_g(\theta_i^0)\|_2+1) D_N(G,G_0).
  \end{align*} 
  Moreover
 $
 V(\tilde{\P}_{G^{\eta},\m},\tilde{\P}_{G_0^\eta,\m})=V(\P_{G,\m},\P_{G_0,\m}).
 $ 
 Denote the left side and right side of \eqref{eqn:invarianceprod} respectively by $L$ and $R$. Then $L\leq C(G_0) R$ and $L\geq c(G_0) R$ with  
 $$
 C(G_0)=2 \max_{1\leq i \leq k_0}\|(J_g(\theta_i^0))^{-1}\|_2+1, \quad
 c(G_0)=(2 \max_{1\leq i \leq k_0}\|J_g(\theta_i^0)\|_2+1)^{-1}.
 $$   
 The other equation in the statement follows similarly.
\end{proof}

\begin{proof}[Proof of Corollary \ref{cor:expfamtheta}]
Consider  $\tilde{f}(x|\eta):=f(x|\theta)$ be the same kernel but under the new parameter $\eta=\eta(\theta)$. 
Note $\{\tilde{f}(x|\eta)\}_{\eta\in\Xi}$ with $\Xi:=\eta(\Theta)$ is the canonical parametrization of the same exponential family. Write $\eta_i^0=\eta(\theta_i^0)$. The proof is then completed by applying Lemma \ref{thm:expfam} to $\tilde{f}(x|\eta)$ and then by applying Lemma \ref{lem:invariance}.
\end{proof}

\subsection{Auxiliary lemmas for Section \ref{sec:proofsimplicationofinversebound}}
\label{sec:auxiliarylemmas5}

 \begin{lem} [Lack of identifiability] \label{lem:nececondition2}
Fix $G_0=\sum_{i=1}^{k_0}p_i^0\delta_{\theta_i^0}\in \Ec_{k_0}(\Theta)$. Suppose 
	    $
	    \sum_{i=1}^{k_0}b_i P_{\theta_i^0}=0 
	    $
	    has a nonzero solution $(b_1,\ldots,b_{k_0})$, where the $0$ is the zero measure on $\Xfrak$. Then 
	    \begin{equation}
	    \liminf_{\substack{G\overset{W_1}{\to} G_0\\ G\in \Ec_{k_0}(\Theta)}} \frac{V(P_G,P_{G_0})}{D_1(G,G_0)}=0.  \label{eqn:liminfD102}
	    \end{equation}
	\end{lem}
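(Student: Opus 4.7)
The plan is to explicitly construct a sequence of perturbations of $G_0$ that keeps the atoms fixed but perturbs the weights along the null vector $(b_1,\ldots,b_{k_0})$, so that the numerator $V(P_G,P_{G_0})$ vanishes identically while the denominator $D_1(G,G_0)$ is strictly positive. First I would observe that since each $P_{\theta_i^0}$ is a probability measure, evaluating the identity $\sum_{i=1}^{k_0}b_iP_{\theta_i^0}=0$ on the set $\Xfrak$ forces $\sum_{i=1}^{k_0}b_i=0$. Combined with the hypothesis that $(b_1,\ldots,b_{k_0})\neq 0$, this implies at least two coordinates are nonzero and in particular $\sum_{i=1}^{k_0}|b_i|>0$.

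Next I would define, for $\ell$ large enough, $G_\ell=\sum_{i=1}^{k_0}p_i^\ell\delta_{\theta_i^0}$ with $p_i^\ell=p_i^0+b_i/\ell$. Because $p_i^0\in(0,1)$ and $\sum_i b_i=0$, the $p_i^\ell$ remain in $(0,1)$ and sum to $1$ for large $\ell$, and the atoms are those of $G_0$ (distinct), so $G_\ell\in\Ec_{k_0}(\Theta)$. Since only the weights are perturbed by $O(1/\ell)$, clearly $W_1(G_\ell,G_0)\to 0$; moreover $G_\ell\neq G_0$ by the previous paragraph. The key computation is
\[
P_{G_\ell}-P_{G_0}=\sum_{i=1}^{k_0}(p_i^\ell-p_i^0)P_{\theta_i^0}=\frac{1}{\ell}\sum_{i=1}^{k_0}b_iP_{\theta_i^0}=0,
\]
so $V(P_{G_\ell},P_{G_0})=0$, whereas $D_1(G_\ell,G_0)=\sum_{i=1}^{k_0}|b_i|/\ell>0$ for every large $\ell$. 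Thus the ratio $V(P_{G_\ell},P_{G_0})/D_1(G_\ell,G_0)$ is identically $0$ along this sequence, which forces the liminf in~\eqref{eqn:liminfD102} to be $0$. There is no real obstacle here: the only thing to be careful about is checking that $G_\ell$ genuinely lies in $\Ec_{k_0}(\Theta)$ (distinct atoms, strictly positive weights summing to one) and that $G_\ell\neq G_0$, both of which follow automatically from $\sum b_i=0$ together with the nondegeneracy of the null vector.
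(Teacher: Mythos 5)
Your proof is correct and is essentially the same as the paper's: you construct $G_\ell=\sum_i(p_i^0+b_i/\ell)\delta_{\theta_i^0}$, note $\sum_i b_i=0$ by evaluating the null identity on $\Xfrak$, and observe $V(P_{G_\ell},P_{G_0})=0$ while $D_1(G_\ell,G_0)=\ell^{-1}\sum_i|b_i|>0$. The only cosmetic difference is that you make explicit the observation that at least two $b_i$ are nonzero, which the paper leaves implicit.
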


\begin{lem} \label{lem:conditioncprod}
Suppose the same conditions in Corollary \ref{cor:curvaturenoncurvequiv} hold. Then for any $a\in \R^q$,  for each $i\in [k_0]$, and for any $0<\Delta\leq\gamma(\theta_i^0,a)$,
    $$
	    \left|\frac{\prod_{j=1}^{\m}f(x_j|\theta_i^0+a \Delta)-\prod_{j=1}^{\m}f(x_j|\theta_i^0)}{\Delta}\right| \leq \tilde{f}_\Delta(\bar{x}|\theta_i^0,a,N),  \quad \bigotimes^N\mu-a.e.\bar{x} (x_1,\ldots,x_\m)
    $$
    where $\tilde{f}_\Delta(\bar{x}|\theta_i^0,a,N)$ satisfies 
    $$
    \lim_{\Delta \to 0^+}\int_{\Xfrak^\m}\tilde{f}_\Delta(\bar{x}|\theta_i^0,a,N) d \bigotimes^\m \mu  = \int_{\Xfrak^\m} \lim_{\Delta\to 0^+} \tilde{f}_{\Delta}(\bar{x}|\theta_i^0,a,N) d \bigotimes^\m \mu.
    $$
\end{lem}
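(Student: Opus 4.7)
The plan is to reduce the problem to the single-variable assumption already provided by the hypotheses of Corollary~\ref{cor:curvaturenoncurvequiv} via a standard telescoping decomposition of the product, then package the resulting bound into an envelope whose integral converges to the integral of its pointwise limit by Fubini's theorem.

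First, for $\bar{x} = (x_1,\ldots,x_\m) \in \Xfrak^{\m}$ and $0<\Delta \leq \gamma(\theta_i^0, a)$, I would write the telescoping identity
\[
\prod_{j=1}^{\m} f(x_j|\theta_i^0 + a\Delta) - \prod_{j=1}^{\m} f(x_j|\theta_i^0)
= \sum_{k=1}^{\m} \Biggl[\prod_{j<k} f(x_j|\theta_i^0+a\Delta)\Biggr] \bigl[f(x_k|\theta_i^0+a\Delta) - f(x_k|\theta_i^0)\bigr]\Biggl[\prod_{j>k} f(x_j|\theta_i^0)\Biggr],
\]
divide by $\Delta$, apply the triangle inequality, and use the single-variable envelope \eqref{eqn:prattincor} in Corollary~\ref{cor:curvaturenoncurvequiv} to bound the $k$-th term by
\[
\Biggl[\prod_{j<k} f(x_j|\theta_i^0+a\Delta)\Biggr]\, \bar{f}_\Delta(x_k|\theta_i^0, a)\, \Biggl[\prod_{j>k} f(x_j|\theta_i^0)\Biggr],
\]
for $\bigotimes^{\m}\mu$-a.e.\ $\bar{x}$ (the exceptional null sets in each coordinate combine to a null product set). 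I would take $\tilde{f}_\Delta(\bar x|\theta_i^0, a, \m)$ to be the sum of these $\m$ nonnegative terms over $k=1,\ldots,\m$.

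Second, I would apply Fubini's theorem (everything is nonnegative and measurable) and use that $f(\,\cdot\,|\theta)$ is a probability density to evaluate
\[
\int_{\Xfrak^{\m}} \tilde{f}_\Delta(\bar x|\theta_i^0, a, \m)\, d\!\bigotimes^{\m}\!\mu
= \sum_{k=1}^{\m} \int_{\Xfrak} \bar{f}_\Delta(x|\theta_i^0, a)\, d\mu(x)
= \m \int_{\Xfrak} \bar{f}_\Delta(x|\theta_i^0, a)\, d\mu(x).
\]
For the pointwise limit as $\Delta \to 0^+$, by continuous differentiability of $f(x|\theta)$ at $\theta_i^0$ (for $\mu$-a.e.\ $x$), each factor $f(x_j|\theta_i^0 + a\Delta)$ tends to $f(x_j|\theta_i^0)$, so $\lim_{\Delta\to 0^+}\tilde{f}_\Delta(\bar x|\theta_i^0,a,\m)$ equals $\sum_{k=1}^{\m}\bigl[\prod_{j\neq k} f(x_j|\theta_i^0)\bigr]\,\bar{f}_0(x_k|\theta_i^0, a)$ for $\bigotimes^\m\mu$-a.e.\ $\bar x$, where $\bar{f}_0 := \lim_{\Delta\to 0^+}\bar{f}_\Delta$; integrating and applying Fubini again yields $\m \int_{\Xfrak} \bar{f}_0\, d\mu$.

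Finally, the two quantities match precisely by the hypothesis of Corollary~\ref{cor:curvaturenoncurvequiv}, namely $\lim_{\Delta\to 0^+}\int \bar{f}_\Delta\, d\mu = \int \lim_{\Delta\to 0^+}\bar{f}_\Delta\, d\mu$, completing the verification. I expect no serious obstacle: the only subtle point is ensuring that ``for $\mu$-a.e.\ $x$'' lifts to ``for $\bigotimes^\m\mu$-a.e.\ $\bar x$'' in the envelope bound and in the pointwise limit, which follows from the standard observation that a finite union of slices of the form $\{\bar x : x_k \in N\}$ with $\mu(N)=0$ is $\bigotimes^\m\mu$-null.
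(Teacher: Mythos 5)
Your proposal is correct and follows essentially the same route as the paper's proof: telescope the difference of products, bound the $k$-th term with the single-variable envelope $\bar f_\Delta$ while leaving the remaining factors intact, sum to define $\tilde f_\Delta$, then use Fubini/Tonelli and the fact that $f(\cdot|\theta)$ is a density to reduce both $\lim_\Delta\int\tilde f_\Delta$ and $\int\lim_\Delta\tilde f_\Delta$ to $\m\int\bar f_\Delta\,d\mu$ and $\m\int\lim_\Delta\bar f_\Delta\,d\mu$ respectively, and conclude by the Pratt-type hypothesis on $\bar f_\Delta$. Your remark on lifting the $\mu$-a.e.\ statements to $\bigotimes^\m\mu$-a.e.\ via the finite union of coordinate slices is exactly the right justification.
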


 \begin{lem} \label{lem:Vupperbou}
        For any $G = \sum_{i=1}^{k_0} p_i \delta_{\theta_i}$ and $G' = \sum_{i=1}^{k_0} p'_i \delta_{\theta'_i}$,
		\begin{align*}
		V(\P_{G,\m},\P_{G',\m}) \leq & \min_{\tau} \left( \sqrt{2\m} \max_{1\leq i \leq k_0} h\left(\P_{\theta_i}, \P_{\theta'_{\tau(i)}}\right) + \frac{1}{2}\sum_{i=1}^{k_0}\left|p_i-p_{\tau(i)}'\right| \right), \\
		V(\P_{G,\m},\P_{G',\m}) \leq & \min_{\tau} \left( N \max_{1\leq i \leq k_0} V\left(\P_{\theta_i}, \P_{\theta'_{\tau(i)}}\right) + \frac{1}{2}\sum_{i=1}^{k_0}\left|p_i-p_{\tau(i)}'\right| \right),
		\end{align*}
		where the minimum is taken over all $\tau$ in the permutation group $S_{k_0}$.
    \end{lem}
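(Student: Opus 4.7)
The plan is to prove the bound by inserting an auxiliary mixture between $\P_{G,\m}$ and $\P_{G',\m}$ and using the triangle inequality for variational distance. Fix any permutation $\tau \in S_{k_0}$; by relabeling the atoms of $G'$ I may assume $\tau$ is the identity. Introduce the intermediate mixture of product distributions
\[
\P_{G'',\m} := \sum_{i=1}^{k_0} p_i \P_{\theta'_i,\m},
\]
which combines the mixing probabilities of $G$ with the atoms of $G'$. Then
\[
V(\P_{G,\m},\P_{G',\m}) \leq V(\P_{G,\m},\P_{G'',\m}) + V(\P_{G'',\m},\P_{G',\m}),
\]
and it suffices to bound each term separately and take the minimum over $\tau$ at the end.

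For the second term, both mixtures have identical component distributions $\P_{\theta'_i,\m}$ but different weights. The difference $\P_{G'',\m}-\P_{G',\m}=\sum_i (p_i-p'_i)\P_{\theta'_i,\m}$ is a signed measure, and splitting the indices according to the sign of $p_i - p'_i$ and using $\sum_i (p_i - p'_i) = 0$ yields the clean bound
\[
V(\P_{G'',\m},\P_{G',\m}) \leq \tfrac{1}{2}\sum_{i=1}^{k_0}|p_i - p'_i|.
\]
For the first term, the two mixtures share the same weights, so convexity of variational distance in each argument (equivalently, the triangle inequality applied termwise) gives
\[
V(\P_{G,\m},\P_{G'',\m}) \leq \sum_{i=1}^{k_0} p_i V(\P_{\theta_i,\m},\P_{\theta'_i,\m}) \leq \max_{1\le i\le k_0} V(\P_{\theta_i,\m},\P_{\theta'_i,\m}).
\]
When $\m = 1$ this already gives the second case of the lemma.

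For the case $\m \geq 2$, I would relate $V(\P_{\theta_i,\m},\P_{\theta'_i,\m})$ to the single-sample Hellinger distance using the tensorization identity for Hellinger affinities
\[
h^2\bigl(\bigotimes^{\m}\P,\bigotimes^{\m}\Q\bigr) = 1 - \bigl(1 - h^2(\P,\Q)\bigr)^{\m},
\]
combined with Bernoulli's inequality $(1-x)^\m \geq 1-\m x$ for $x\in[0,1]$, which yields $h(\P_{\theta_i,\m},\P_{\theta'_i,\m}) \leq \sqrt{\m}\, h(\P_{\theta_i},\P_{\theta'_i})$. Finally, comparing variational distance to Hellinger distance on the product space produces the bound $V(\P_{\theta_i,\m},\P_{\theta'_i,\m}) \leq \sqrt{\m}\, h(\P_{\theta_i},\P_{\theta'_i})$, from which the first case of the lemma follows by taking the minimum over $\tau \in S_{k_0}$.

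The argument is essentially a two-step triangle inequality, and the only delicate point is obtaining the sharp coefficient $\sqrt{\m}$ in the first case rather than the looser $\sqrt{2\m}$ that one would get from the blunt inequality $V \leq \sqrt{2}\,h$. I expect this to be the main issue to watch: one must be careful to exploit the product structure (via the affinity tensorization) before passing from $h$ to $V$, rather than applying a generic $V\le \sqrt{2}h$ bound at the single-sample level and then using product tensorization, in order to absorb the constant appropriately.
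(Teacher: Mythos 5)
Your decomposition through the intermediate mixture $\P_{G'',\m}=\sum_i p_i\P_{\theta'_i,\m}$ and the two-step triangle inequality is the same structure the paper indicates (it references the proof of Lemma~\ref{lem:hellingeruppbou}, which uses exactly this intermediate $G''$). Your sign-splitting argument for the weight term is a clean equivalent of the coupling argument the paper uses there, and it correctly yields $\tfrac{1}{2}\sum_i|p_i-p'_i|$. The $m=1$ case is also fine.

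The gap is in the $\m\geq 2$ case, at precisely the step you flagged. After tensorizing the Hellinger affinity to obtain $h(\P_{\theta_i,\m},\P_{\theta'_i,\m})\leq\sqrt{\m}\,h(\P_{\theta_i},\P_{\theta'_i})$, you then need to pass from Hellinger to variational \emph{at the product level}, and the step ``$V(\P_{\theta_i,\m},\P_{\theta'_i,\m})\leq h(\P_{\theta_i,\m},\P_{\theta'_i,\m})$'' is simply false. The sharp Le Cam inequality is $V\leq h\sqrt{2-h^2}$, and the factor $\sqrt{2-h^2}$ tends to $\sqrt{2}$ (not $1$) as $h\to 0$. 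Concretely, writing $h_1=h(\P_{\theta_i},\P_{\theta'_i})$ and $h_m=h(\P_{\theta_i,\m},\P_{\theta'_i,\m})$, one has
$$
V(\P_{\theta_i,\m},\P_{\theta'_i,\m})\ \leq\ \sqrt{h_m^2(2-h_m^2)}\ =\ \sqrt{1-(1-h_1^2)^{2\m}}\ \leq\ \sqrt{2\m}\,h_1,
$$
and the coefficient $\sqrt{2\m}$ is sharp in the small-$h_1$ regime: for $\P=\mathcal{N}(0,1)$, $\Q=\mathcal{N}(\delta,1)$ with $\delta\to0$, $V(\otimes^\m\P,\otimes^\m\Q)\sim\sqrt{\m}\,\delta/\sqrt{2\pi}$ while $\sqrt{\m}\,h(\P,\Q)\sim\sqrt{\m}\,\delta/(2\sqrt{2})$, so the ratio tends to $2/\sqrt{\pi}\approx1.13>1$ for every $\m\geq 1$. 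So the hope you expressed---that doing the tensorization first and passing $h\to V$ afterward absorbs the $\sqrt{2}$---does not work: the $\sqrt{2}$ in $V\leq\sqrt{2}\,h$ is already attained asymptotically on the product space.

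Your worry was therefore justified, and it is also worth noting that the paper's own hint (``similar as Lemma~\ref{lem:hellingeruppbou}'') would produce the same difficulty: mimicking the Hellinger proof requires either $V\leq h$ on products (false) or $V(\otimes^\m\P,\otimes^\m\Q)\leq\sqrt{\m}\,h(\P,\Q)$ (false by the Gaussian example). The stated coefficient $\sqrt{\m}$ for $\m\geq 2$ should be $\sqrt{2\m}$. Since the paper only invokes Lemma~\ref{lem:Vupperbou} with $\m=1$ (in Remark~\ref{rem:Lippowerbou}), the discrepancy does not propagate, but it does mean your proposed closing step cannot be made to recover the $\sqrt{\m}$ coefficient as written.
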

    \begin{proof}
        The proof is similar as that of Lemma \ref{lem:hellingeruppbou}.
    \end{proof}

\subsection{Identifiability of Bernoulli kernel in Example \ref{exa:bernoulli}}
\label{sec:identifiabilityBernoulli}
In this section we prove $n_0(G,\cup_{\ell\leq k}\Ec_\ell (\Theta) ) = 2k-1$ for any $G\in \Ec_k(\Theta)$ for the Bernoulli kernels in Example~\ref{exa:bernoulli}. (Note: The authors find a proof of this result in the technical report \cite[Lemma 3.1 and Theorem 3.1]{elmore2003identifiability} after preparation of this manuscript. Since technical report \cite{{elmore2003identifiability}} is difficult to be found online, the proof given below is different from the technical report and will be presented for completeness.)

For any $G\in \Ec_k(\Theta)$, there are $2k-1$ parameters to determine it.  $f_n(x_1,\ldots,x_n)$ has effective $n$ equations for different value $(x_1,\ldots,x_n)$ since 
$\sum_{j=1}^n x_j$ can takes $n+1$ values and $f_n$ is a probability density. Thus to have $P_{G,n}$ strictly identifiable for $G\in \Ec_k(\Theta)$, a necessary condition is that $n\geq 2k-1$ for almost all $G$ under Lebesgue. In fact, in Lemma~\ref{lem:bernoun0lowbou} part~\ref{item:bernoun0lowboue} it is established that for any $n\leq 2k-2$ for any $G\in \Ec_k(\Theta)$ there exist infinitely many $G'\in\Ec_k(\Theta)\backslash \{G_0\}$ such that $P_{G',n}=P_{G,n}$, which implies $n_0(G,\cup_{\ell \leq k} \Ec_\ell(\Theta))\geq 2k-1$ for \emph{all} $G\in\Ec_k(\Theta)$.

Let us now verify that $n_0(G,\cup_{\ell \leq k} \Ec_\ell(\Theta))=2k-1$ for any $G\in \Ec_k(\Theta)$. In the following $n=2k-1$. For any $G=\sum_{i=1}^{k}p_i\delta_{\theta_i}$ and consider $G'=\sum_{i=1}^{k}p'_i\delta_{\theta'_i}\in \bigcup_{i=1}^{k}\Ec_{k}(\Theta)$ 
such that $p_{G',n}=p_{G,n}$. Notice that $G'\in \bigcup_{i=1}^{k}\Ec_{k}(\Theta)$ means that it is possible some of $p'_i$ is zero. $p_{G',n}=p_{G,n}$ implies
\begin{equation}
\label{eqn:bernoulliidentifiablesystem}
\sum_{i=1}^k p'_i (\theta'_i)^j (1-\theta'_i)^{n-j} -  \sum_{i=1}^k p_i (\theta_i)^j (1-\theta_i)^{n-j} = 0\quad  \forall j=0,1,\cdots,n.
\end{equation}
Notice that the above system of equations does not include the constraint $\sum_{i=1}^kp'_i=1$ since it is redundant: by multiplying both sides of the equation $j$ by $\binom{n}{j}$ and summing up, we obtain $\sum_{i=1}^kp'_i=\sum_{i=1}^kp_i=1$. (In fact, in the above system of equations the equation with $j=n$ (or arbitrary $j$) can be replaced by $\sum_{i=1}^k{p'_i}=1.$) 

We now show that the only solution is $G'=G$, beginning with the following simple observation.
Notice that for a set $\{\xi_i\}_{i=1}^{2k}$ of $2k$ distinct elements in $(0,1)$, the system of linear equations of $y=(y_1,\ldots,y_{k'})$ with $k'\leq 2k$:
$$
\sum_{i=1}^{k'} y_i (\xi_i)^j(1-\xi_i)^{n-j}=0 \quad \forall j=0,1,\ldots,n=2k-1
$$
has only the zero solution since by setting $\tilde{y}_i=(1-\xi_i)^n y_i$ the system of equations of $\tilde{y}$:
$$
\sum_{i=1}^{k'} \tilde{y}_i \left(\frac{\xi_i}{1-\xi_i}\right)^j=0 \quad \forall j=0,1,\ldots,n
$$
has its coefficients of the first $k'$ equations forming a non-singular Vandermonde matrix. 

If some $\theta_i$ is not in $\{\theta'_i\}_{i=1}^k$, then by the observation in last paragraph $p_i=0$, which contradicts with $G\in \Ec_k(\Theta)$. As a result, $\{\theta'_i\}_{i=1}^k=\{\theta_i\}_{i=1}^k$. Suppose $\theta'_{l_i}=\theta_i$ for $i\in [k]$. Then the system of equations
\eqref{eqn:bernoulliidentifiablesystem} become
$$
\sum_{i=1}^k (p'_{l_i}-p_i) (\theta_i)^j (1-\theta_i)^{n-j} = 0 \quad  \forall j=0,1,\ldots,n.
$$
Applying the observation from last paragraph again  
yields $p'_{l_i}=p_i$ for $i\in[k]$. That is, the only solution of \eqref{eqn:bernoulliidentifiablesystem} is $G'=G$. Thus $n_0(G,\cup_{\ell \leq k} \Ec_\ell(\Theta))\leq 2k-1$, which together with the fact that $n_0(G,\cup_{\ell \leq k} \Ec_\ell(\Theta))\geq 2k-1$ yield $n_0(G,\cup_{\ell \leq k} \Ec_\ell(\Theta))=2k-1$ for any $G\in \Ec_k(\Theta)$.


Part \ref{item:bernoun0lowboue} of the first lemma and \ref{item:determinantc} of the second lemma below are used in the preceding analysis of example on Bernoulli kernel. 

\begin{lem} \label{lem:bernoun0lowbou}
    \begin{enumerate}[label=\alph*)]
    \item \label{item:bernoun0lowboua}
    Let $\eta_1,\eta_2,\ldots,\eta_{2k}$ be $2k$ distinct real numbers. Let $n\leq 2k-2$. Then the system of $n+1$ linear equations of $(y_1,y_2,\ldots,y_{2k})$    
    \begin{equation}
    \sum_{i=1}^{2k} y_i \eta_i^j = 0  \quad \forall j\in [n]\cup\{0\}  \label{eqn:ylinsys}
    \end{equation}
    has all the solutions given by 
    \begin{equation}
    y_i = - \sum_{q=n+2}^{2k} y_q \prod_{\substack{\ell\neq i\\ \ell = 1}}^{n+1} \frac{(\eta_q-\eta_{\ell}) }{(\eta_i-\eta_\ell) } \quad \forall i\in [n+1]   \label{eqn:ysol}
    \end{equation}
    for any $y_{n+2},\ldots,y_{2k} \in \R$. 
    \item \label{item:bernoun0lowboub}
    For any $0<\eta_{k+1}<\eta_{k+2}<\ldots<\eta_{2k}$ and for any positive $y_{k+1},y_{k+2},\ldots,y_{2k}$, there exists infinitely many $\eta_1,\eta_2,\dots,\eta_k$ satisfying
     \begin{align*}
     \eta_{k+i-1}<\eta_i <\eta_{k+i},\quad \text{for }2\leq i \leq k, \text{ and } 0<\eta_1<\eta_{k+1} \text{ and }\\
     y_i = -  y_{2k} \prod_{\substack{\ell\neq i\\ \ell = 1}}^{2k-1} \frac{(\eta_{2k}-\eta_{\ell}) }{(\eta_i-\eta_\ell) } \quad \forall k+1 \leq i\leq 2k-1.
    \end{align*}
    \item \label{item:bernoun0lowbounewc}
    For any $0<\eta_{k+1}<\eta_{k+2}<\ldots<\eta_{2k}$ and for any positive $y_{k+1},y_{k+2},\ldots,y_{2k}$, the system of equations of $(y_1,\ldots,y_k,\eta_1,\ldots,\eta_k)$ 
    \begin{align*}
    \sum_{i=1}^{2k} y_i \eta_i^j &= 0  \quad \forall j\in [2k-2]\cup\{0\}\\
    y_i &<0  \quad \forall i\in [k] \numberthis \label{eqn:ynegative} \\
    \eta_1 \in (0,\eta_{k+1}),\ \eta_i&\in (\eta_{k+i-1},\eta_{k+i}) \quad  \forall 2\leq i \leq k \numberthis \label{eqn:etarange}
    \end{align*}
    has infinitely many solutions.
    \item \label{item:bernoun0lowbouc}
    If $P_{G,n}=P_{G',n}$ for some positive integer $n$, then $P_{G,m}=P_{G',m}$ for any integer $1\leq m\leq n$.
    \item \label{item:bernoun0lowboue}
    Consider the kernel specified in Example \ref{exa:bernoulli}.
    For any $G\in \Ec_k(\Theta)$ and for any $n \leq  2k-2$, there exists infinitely many $G'\in \Ec_k(\Theta)$ such that $P_{G,n}=P_{G',n}$. In particular, this shows $n_0(G,\cup_{\ell \leq k} \Ec_\ell(\Theta))\geq 2k-1$ for any $G\in \Ec_k(\Theta)$. 
    \end{enumerate}
\end{lem}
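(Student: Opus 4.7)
The proof splits into a linear-algebraic backbone (a), an existence statement (b) that is the main obstacle, and the corollaries (c)--(e). I would tackle them in order.

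For (a), the first $n+1$ columns of the $(n+1)\times(2k)$ coefficient matrix form a nonsingular Vandermonde matrix in the distinct nodes $\eta_1,\ldots,\eta_{n+1}$, so $(y_1,\ldots,y_{n+1})$ is uniquely determined by the free parameters $(y_{n+2},\ldots,y_{2k})$. The explicit formula \eqref{eqn:ysol} drops out of Lagrange interpolation at those nodes: expand $x^j = \sum_{i=1}^{n+1}\eta_i^j L_i(x)$ with $L_i(x) = \prod_{\substack{\ell\neq i\\ \ell=1}}^{n+1}(x-\eta_\ell)/(\eta_i-\eta_\ell)$, substitute into $\sum_{q=n+2}^{2k} y_q \eta_q^j$ on the right-hand side of \eqref{eqn:ylinsys}, and match the coefficients of $\eta_i^j$ using the Vandermonde invertibility.

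Part (b) is the crux and the main obstacle. I would reformulate it as a root-existence question for a parametric family of monic polynomials. Setting $P(x)=\prod_{i=1}^k(x-\eta_i)$ and pulling the factor $\prod_{\ell=1}^k(\eta_{2k}-\eta_\ell)/(\eta_{k+i}-\eta_\ell) = P(\eta_{2k})/P(\eta_{k+i})$ out of the displayed target equation, the $k-1$ conditions on $y_{k+1},\ldots,y_{2k-1}$ become $P(\eta_{k+i}) = s c_i$ where $s := P(\eta_{2k})$ and $c_i$ is an explicit nonzero constant whose sign, by a direct factor count using $y_{k+i},y_{2k}>0$ and the fact that $\eta_{2k}$ is the largest node, equals $(-1)^{k-i}$; setting $c_k:=1$ appends the tautology $P(\eta_{2k}) = s c_k$. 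For each $s>0$, the unique monic degree-$k$ polynomial meeting these $k$ interpolation constraints is given explicitly by
\begin{equation*}
P_s(x) \;=\; \prod_{i=k+1}^{2k}(x-\eta_i) \;+\; s\sum_{i=1}^{k} c_i L_{k+i}(x),
\end{equation*}
with $L_{k+i}$ the Lagrange basis at the nodes $\eta_{k+1},\ldots,\eta_{2k}$. The key step is then verifying that for all sufficiently small $s>0$, $P_s$ has $k$ distinct real roots lying in the prescribed interlacing intervals. By construction the signs of $P_s(\eta_{k+j}) = s c_j$ alternate as $(-1)^{k-j}$, while $P_s(0) \to \prod_{j=k+1}^{2k}(-\eta_j) = (-1)^k \prod \eta_j$ as $s\to 0^+$, of sign $(-1)^k$; by continuity this sign persists for small $s$. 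The intermediate value theorem then produces a root in each of $(0,\eta_{k+1}),(\eta_{k+1},\eta_{k+2}),\ldots,(\eta_{2k-1},\eta_{2k})$, and a degree count forces these to be all the roots of $P_s$. Letting $s$ range over $(0,\varepsilon)$ for small $\varepsilon$ yields infinitely many admissible $(\eta_1,\ldots,\eta_k)$.

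Parts (c)--(e) are consequences. For (c), apply (b) to obtain interlacing $\eta$'s, recover $y_1,\ldots,y_k$ via (a) with $n=2k-2$, and read off $y_i<0$ for $i\in[k]$ from a uniform sign count of factors in $\prod_{\ell\neq i}(\eta_i-\eta_\ell)$ under the interlacing ordering. For (d), the exchangeable law $P_{G,n}$ on $\{0,1\}^n$ has $P_{G,m}$ as its marginal on any $m$-coordinate subset, so equality of $P_{G,n}$ and $P_{G',n}$ transports to all smaller $m$. For (e), by (d) it suffices to treat $n=2k-2$; the substitution $\eta = \theta/(1-\theta)$ bijects $(0,1)$ with $(0,\infty)$ and turns the identifiability system $\sum_i p_i\theta_i^j(1-\theta_i)^{n-j} = \sum_i p'_i(\theta'_i)^j(1-\theta'_i)^{n-j}$ ($j=0,\ldots,n$) into precisely the system of (c) after setting $\eta_{k+i}=\theta_i/(1-\theta_i)$ and $y_{k+i}=p_i(1-\theta_i)^n>0$. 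Applying (c) and inverting the substitution yields infinitely many admissible $G'$; the normalization $\sum p'_i=1$ falls out of the $j=0$ equation together with $\sum p_i=1$, and the interlacing in $\eta$ gives distinctness of the atoms $\theta'_i$.
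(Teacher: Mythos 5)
Your overall plan matches the paper's proof closely; the one place you genuinely deviate is part (b), and there the approaches are dual rather than different.

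For (a), (c), (d), your arguments are the ones the paper uses: Vandermonde invertibility plus Lagrange interpolation for (a); apply (a) with $n=2k-2$ and (b) and then do the sign count for (c) (your observation that the number of sign-flipping factors $\eta_i-\eta_\ell$ is $2k-2i$, hence even, is exactly the point); marginalization for (d).

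For (b), the paper parametrizes the one-parameter family of degree-$k$ polynomials differently. The paper defines $g$ as the Lagrange interpolant at the $k+1$ nodes $\{0,\eta_{k+1},\ldots,\eta_{2k}\}$, fixing $g(\eta_{2k})=-1/y_{2k}$ and $g(\eta_i)$ at the other interior nodes, and using $g(0)=(-1)^{k+1}a$ as the free parameter; by sign alternation and IVT this $g$ has $k$ interlacing roots for every $a>0$, and substituting $g=b\prod(x-\eta_i)$ back in verifies the target identities. Your construction instead normalizes $P_s$ to be monic and makes $s=P_s(\eta_{2k})$ the parameter; this forces you to restrict to small $s>0$ in order to control the sign of $P_s(0)$ by continuity, whereas the paper gets the sign at $0$ exactly right for every $a>0$ by choosing it. Both yield infinitely many solutions, and the sign computation $\operatorname{sign}(c_j)=(-1)^{k-j}$ is correct, so this is a genuine but entirely benign difference: yours is a little tidier in that $P_s$ is automatically the sought monic polynomial, while the paper's $g$ needs to be factored as $b\prod(x-\eta_i)$ and the constant $b$ canceled.

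One small slip in (e): the normalization $\sum_i p'_i = 1$ does not follow from the $j=0$ equation alone, which reads $\sum_i p'_i(1-\theta'_i)^n = \sum_i p_i(1-\theta_i)^n$. You need to combine all $n+1$ equations: multiply the $j$-th equation by $\binom{n}{j}$ and sum over $j\in\{0,\ldots,n\}$, so each side telescopes to $\sum_i p'_i\bigl(\theta'_i+(1-\theta'_i)\bigr)^n=\sum_i p'_i$ and likewise on the right, giving $\sum_i p'_i=\sum_i p_i=1$. This is what the paper's remark preceding the lemma records, and it is worth stating correctly since it is the step that makes $G'$ a probability measure.
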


\begin{proof}[Proof of Lemma \ref{lem:bernoun0lowbou}]
a) 
    By Lagrange interpolation formula over $\eta_1$,$\eta_2$,$\ldots$,$\eta_{n+1}$, 
    $$
    x^j = \sum_{i=1}^{n+1} \eta_{i}^j \prod_{\substack{\ell\neq i\\ \ell = 1}}^{n+1} \frac{(x-\eta_{\ell}) }{(\eta_i-\eta_\ell) }, \quad \forall j\in [n]\cup\{0\}, \ \forall x\in \R.
    $$
    In particular, for any $n+2\leq q\leq 2k$,
    $$
    \eta_{q}^j = \sum_{i=1}^{n+1} \eta_{i}^j \prod_{\substack{\ell\neq i\\ \ell = 1}}^{n+1} \frac{(\eta_q-\eta_{\ell}) }{(\eta_i-\eta_\ell) }, \quad \forall j\in [n]\cup\{0\}.
    $$
    Plugging the above identity into~\eqref{eqn:ylinsys}, it is clear that the $y_i$ specified in \eqref{eqn:ysol} are solutions of \eqref{eqn:ylinsys}. Notice that the coefficient matrix of $\eqref{eqn:ylinsys}$ is $A=(\eta_i^j)_{j\in [n]\cup\{0\},i\in [2k]}\in \R^{(n+1)\times (2k)}$ has rank $n+1$ since the submatrix consisting the first $n+1$ columns form a non-singular Vandermonde matrix. Thus all the solutions of \eqref{eqn:ylinsys} form a subspace of $\R^{2k}$ of dimension $2k-(n+1)$, which implies \eqref{eqn:ysol} are all the solutions.

b) 
    Let $a>0$. Consider a polynomial $g(x)$ such that $g(0)=(-1)^{k+1}a$, $g(\eta_{2k})=-\frac{1}{y_{2k}}$, and for $k+1 \leq i\leq 2k-1$, $g(\eta_i)=\frac{1}{y_i}\prod\limits_{\substack{\ell\neq i\\ \ell = k+1}}^{2k-1} \frac{(\eta_{2k}-\eta_{\ell})}{(\eta_i-\eta_\ell) }$. Then 
    this $k+1$ points determines uniquely a polynomial $g(x)$ with degree at most $k$. By our construction, $g(x)$ satisfies
    \begin{equation}
    y_i g(\eta_i) = -y_{2k}g(\eta_{2k})\prod\limits_{\substack{\ell\neq i\\ \ell = k+1}}^{2k-1} \frac{(\eta_{2k}-\eta_{\ell})}{(\eta_i-\eta_\ell) }, \quad \forall \ k+1\leq i \leq 2k-1 \label{eqn:ygeqn}
    \end{equation}
    Moreover, noticing that $g(\eta_i)>0$ for $i$ odd integer between $k+1$ and $2k$, and $g(\eta_i)<0$ for $i$ even integer between $k+1$ and $2k$. Then there must exist $\eta_1\in (0,\eta_{k+1})$ and $\eta_i\in (\eta_{k+i-1},\eta_{k+i})$ for $2\leq i \leq k$ such that $g(\eta_i)=0$. Then $g(x)=b\prod_{i=1}^k(x-\eta_i)$ where $b<0,\eta_1,\eta_2,\ldots,\eta_k$ are  constants that depend on $a,\eta_{k+1},\ldots,\eta_{2k},y_{k+1},\ldots,y_{2k}$. Plug $g(x)=b\prod_{i=1}^k(x-\eta_i)$ into \eqref{eqn:ygeqn} shows that $(\eta_1,\eta_2,\ldots,\eta_k)$ is a solution for the system of equations in the statement. By changing value of $a$, we get infinitely many solutions.
    
c) 
    
    First, we apply part \ref{item:bernoun0lowboua} with $n=2k-2$: for any $2k$ distinct real numbers $\eta_1,\ldots,\eta_{2k}$, the system of linear equations of $(x_1,\ldots,x_{2k})$
    $$
    \sum_{i=1}^{2k}x_i\eta_i^j = 0 \quad \forall j \in [2k-2]\cup\{0\}
    $$
    has a solution 
    $$
    x_i = -  y_{2k} \prod_{\substack{\ell\neq i\\ \ell = 1}}^{2k-1} \frac{(\eta_{2k}-\eta_{\ell}) }{(\eta_i-\eta_\ell) } \quad \forall i\in [2k-1],
    $$
    where we have specified $x_{2k}=y_{2k}$. 
    
    Next, for the $\eta_{k+1},\ldots, \eta_{2k}$ given in the lemma's statement, by part \ref{item:bernoun0lowboub} we can choose $\eta_1,\ldots,\eta_k$ that satisfy the requirements there. Accordingly, $x_i=y_i$ for $k+1\leq i\leq 2k$. Moreover, it follows from the ranking of $\{\eta_i\}_{i=1}^{2k}$ that $x_i<0$ for any $i\in [k]$. Thus $(x_1,\ldots,x_k,\eta_1,\ldots,\eta_k)$ is a solution of the system of equations in the statement. The infinite many solutions conclusion follows since there are infinitely many $(\eta_1,\ldots,\eta_k)$ by part \ref{item:bernoun0lowboub}.
    
d) 
    $P_{G,n-1}=P_{G',n-1}$ follows immediately from for any $A\in \Ac^{n-1}$, the product sigma-algebra on $\Xfrak^{n-1}$,
    $$
    P_{G,n-1}(A)=P_{G,n}(A \times \Xfrak) = P_{G',n}(A \times \Xfrak) = P_{G',n-1}(A) .  
    $$
    Repeating this procedure inductively and the conclusion follows.
    \item 
    By part \ref{item:bernoun0lowbouc} it suffices to prove that $n=2k-2$. Write $G=\sum_{i=1}^kp_i\delta_{\theta_i}$ with $\theta_1<\theta_2<\ldots<\theta_k$. Consider any $G'=\sum_{i=1}^{k}p'_i\delta_{\theta'_i}\in \Ec_k(\Theta)$ with $\theta'_1<\theta'_2<\ldots<\theta'_k$ such that $P_{G,n}=P_{G',n}$.  $P_{G,n}=P_{G',n}$ for $n=2k-2$ is 
    \begin{align}
    \sum_{i=1}^k p'_i (\theta'_i)^j (1-\theta'_i)^{2k-2-j} &=  \sum_{i=1}^k p_i (\theta_i)^j (1-\theta_i)^{2k-2-j}\quad  \forall j=0,1,\cdots,2k-2. \label{eqn:identifiable2k-2}\\
     0<\theta'_1<\ldots<\theta'_k<1, &\ p'_i>0,\ \forall i \in [k] \label{eqn:p'theta'constraint}
    \end{align}
    Note the system of equations \eqref{eqn:identifiable2k-2} automatically implies $\sum_{i=1}^k p'_i = \sum_{i=1}^k p_i=1$. Let $y_i=-p'_i(1-\theta'_i)^{2k-2}$, $\eta_i=\theta'_i/(1-\theta'_i)$ for $i\in [k]$ and let $y_{k+i}=p_i(1-\theta_i)^{2k-2}$, $\eta_{k+i}=\theta_i/(1-\theta_i)$. Then  $\eta_{k+1}<\eta_{k+2}<\ldots<\eta_{2k}$ and  $y_i>0$ for $k+1 \leq i\leq 2k$. 
    Then $(p'_1,\ldots,p'_k,\theta'_1,\ldots,\theta'_k)$ is a solution of \eqref{eqn:identifiable2k-2}, \eqref{eqn:p'theta'constraint} if and only if the corresponding $(y_1,\ldots,y_k,\eta_1,\ldots,\eta_k)$ is the solution of 
    \begin{align*}
    \sum_{i=1}^{2k}y_i\eta_i^j &=0, \quad \forall j\in [2k-2]\cup \{0\}. \\
    0<\eta_1<\ldots<\eta_k,  &\  y_i<0, \ \forall i\in [k]. 
    \end{align*}
    By part \ref{item:bernoun0lowbounewc}, the system of equations in last display has infinitely many solutions additionally satisfying  \eqref{eqn:etarange}. For each such solution, the corresponding $(p'_1,\ldots,p'_k,\theta'_1,\ldots,\theta'_k)$ is a solution of system of equations \eqref{eqn:identifiable2k-2} \eqref{eqn:p'theta'constraint} additionally satisfying  $0<\theta'_1<\theta_1$ and $\theta_{i-1}<\theta'_i<\theta_i$ for $2\leq i\leq k$. By the comments after \eqref{eqn:identifiable2k-2},\eqref{eqn:p'theta'constraint} we also have $\sum_{i=1}^{k}p'_i=\sum_{i=1}^{k}p_i=1$. Thus, such $(p'_1,\ldots,p'_k,\theta'_1,\ldots,\theta'_k)$ gives $G'\in \Ec_k(\Theta)$ such that $P_{G',2k-2}=P_{G,2k-2}$. The existence of infinitely many such $G'$ follows from the existence of infinitely many solutions $(y_1,\ldots,y_k,\eta_1,\ldots,\eta_k)$ by part \ref{item:bernoun0lowbounewc}.
\end{proof}

\subsection{Proof of Lemma \ref{lem:determinant}}
\begin{proof}[Proof of Lemma \ref{lem:determinant}]
a) 
It's obvious that $q^{(1)}(x,y),q^{(2)}(x,y)$ are multivariate polynomials and that 
 \begin{align*}
 q^{(1)}(y,y)=&\lim_{x\to y}q^{(1)}(x,y) = f'(y),\\
 q^{(2)}(y,y)=&\lim_{x\to y}q^{(2)}(x,y) = f''(y).
 \end{align*} 
That means 
$
q^{(1)}(x,y)-f'(y)$ has factor $x-y$ and thus $\bar{q}^{(2)}(x,y)$ is a multivariate polynomial and
$$
\bar{q}^{(2)}(y,y)=\lim_{x\to y} \frac{q^{(1)}(x,y)-f'(y)}{x-y} = \lim_{x \to y}  \frac{f(x)-f(y)-f'(y)(x-y)}{(x-y)^2}
=\frac{1}{2}q^{(2)}(y,y).
$$
Then $\bar{q}^{(2)}(x,y)-\frac{1}{2}q^{(2)}(x,y)$ has factor $x-y$ and thus $\bar{q}^{(3)}(x,y)$ is a multivariate polynomial.

b) 
Write $A^{(k)}$ for $A^{(k)}(x_1,\ldots,x_k)$ in this proof. Denote $\underbar{A}\in \R^{(2k-2)\times (2k)}$ the bottom $(2k-2)\times 2k$ matrix of $A^{(k)}$. Let $q_j^{(1)}(x,y)$, $q_j^{(2)}(x,y)$, $\bar{q}_j^{(2)}(x,y)$ and $\bar{q}_j^{(3)}(x,y)$ be defined in part \ref{item:determinanta} with $f$ replace by $f_j$. Then by subtracting the third row from the first row, the fourth row from the second row and then factor the common factor $(x_1-x_2)$ out of the resulting first two rows
\begin{align*}
\text{det}(A^{(k)})
=&
(x_1-x_2)^2 \text{det} 
\begin{pmatrix} 
&q_1^{(1)}(x_1,x_2),&\ldots,& q_{2k}^{(1)}(x_1,x_2) \\
& q_1^{(2)}(x_1,x_2),&\ldots,& q_{2k}^{(2)}(x_1,x_2)\\
&\ &\underbar{A}&\\
\end{pmatrix}\\
=& 
(x_1-x_2)^3 \text{det} 
\begin{pmatrix} 
&\bar{q}_1^{(2)}(x_1,x_2),&\ldots,& \bar{q}_{2k}^{(2)}(x_1,x_2) \\
& q_1^{(2)}(x_1,x_2),&\ldots,& q_{2k}^{(2)}(x_1,x_2)\\
&\ &\underbar{A}&\\
\end{pmatrix}
\\
=& 
(x_1-x_2)^4 \text{det} \begin{pmatrix} &\bar{q}_1^{(3)}(x_1,x_2),&\ldots,& \bar{q}_{2k}^{(3)}(x_1,x_2) \\
& q_1^{(2)}(x_1,x_2),&\ldots,& q_{2k}^{(2)}(x_1,x_2)\\
&\ &\underbar{A}&\\
\end{pmatrix}
\end{align*}
where the second equality follows by subtracting the fourth row from first row and then factor the common factor $(x_1-x_2)$ out of the resulting row. The last step of the preceding display follows by subtracting 1/2 times the second row from the first row and then extract the common factor $(x_1-x_2)$ out of the resulting row. Thus $(x_1-x_2)^4$ is a factor of $\text{det}(A^{(k)})$, which is a multivariate polynomial in $x_1,\ldots,x_k$. By symmetry, $\prod_{1\leq \alpha<\beta\leq k}(x_\alpha-x_\beta)^4$ is a factor of $\text{det}(A^{(k)})$.

c) 
We prove the statement by induction. It's easy to verify the statement when $k=1$. Suppose the statement for $k$ holds. By \ref{item:determinantb}, 
$$
\text{det}(A^{(k+1)}(x_1,\dots,x_{k+1}))=g_{k+1}(x_1,\ldots,x_{k+1})\prod_{1\leq \alpha<\beta\leq k+1}(x_{\alpha}-x_{\beta})^4
$$ 
for some multivariate polynomial $g_{k+1}(x_1,\ldots,x_{k+1})$. By the Leibniz formula of determinant, in $\text{det}(A^{(k+1)}(x_1,\dots,x_k,x_{k+1}))$ the term of highest degree of $x_\alpha$ is $f_{2(k+1)}(x_\alpha) f'_{2k+1}(x_\alpha)$ or $f'_{2(k+1)}(x_\alpha) f_{2k+1}(x_\alpha)$,
which both have degree $4k$ since $f_j(x)$ has degree $j-1$ and $f'_j(x)$ has degree $j-2$. Moreover, in $\prod_{1\leq \alpha<\beta\leq k+1}(x_{\alpha}-x_{\beta})^4$ the degree of $x_\alpha$ is $4k$ and the corresponding term is $x_\alpha^{4k}$, which implies in $g_{k+1}(x_1,\ldots,x_{k+1})$ the degree of $x_\alpha$ is no more than $0$ for any $\alpha\in [k+1]$. As a result,  $g_{k+1}(x_1,\ldots,x_{k+1})=q_{k+1}$ is a constant. Thus 
\begin{align}
    \text{det}(A^{(k+1)}(x_1,\dots,x_k,0)) = & q_{k+1} \left(\prod_{1\leq \alpha<\beta\leq k} (x_\alpha-x_\beta)^4 \right) \prod_{\alpha=1}^k x_\alpha^4,  \label{eqn:k+1x0vandermonde}
\end{align}
On the other hand, 
\begin{align*}
\text{det}(A^{(k+1)}(x_1,\dots,x_k,0))= & \text{det} 
\begin{pmatrix} 
f_1(x_1|k+1),& f_2(x_1|k+1) , &  \ldots, & f_{2(k+1)}(x_1|k+1) \\ 
f'_1(x_1|k+1), & f'_2(x_1|k+1),  & \ldots, & f'_{2(k+1)}(x_1|k+1) \\
\vdots & \vdots & \vdots \\
f_1(x_k|k+1),  & f_2(x_k|k+1), & \ldots, & f_{2(k+1)}(x_k|k+1) \\ 
f'_1(x_k|k+1),& f'_2(x_k|k+1), & \ldots, & f'_{2(k+1)}(x_k|k+1) \\
1, &0 , &\ldots, &0\\
0, & 1, & 0, \ldots,& 0  
\end{pmatrix}\\
= & \text{det} 
\begin{pmatrix} 
f_3(x_1|k+1),& f_3(x_1|k+1) , &  \ldots, & f_{2(k+1)}(x_1|k+1) \\ 
f'_3(x_1|k+1), & f'_3(x_1|k+1),  & \ldots, & f'_{2(k+1)}(x_1|k+1) \\
\vdots & \vdots & \vdots \\
f_3(x_k|k+1),  & f_3(x_k|k+1), & \ldots, & f_{2(k+1)}(x_k|k+1) \\ 
f'_3(x_k|k+1),& f'_3(x_k|k+1), & \ldots, & f'_{2(k+1)}(x_k|k+1) \\
\end{pmatrix} \numberthis \label{eqn:k+1tokvandermonde}
\end{align*}
where the second equality follows by Laplace expansion along the last row. Observing that $f_j(x)=x^2f_{j-2}(x)$ and $f'_j(x)= x^2f'_{j-2}(x) + 2xf_{j-2}(x) $, plug these two equations into \eqref{eqn:k+1tokvandermonde} and simplify the resulting determinant, and one has
\begin{equation}
\text{det}(A^{(k+1)}(x_1,\dots,x_k,0))=\text{det}(A^{(k)}(x_1,\dots,x_k))\prod_{ \alpha =1 }^k x_\alpha^4. \label{eqn:k+1tok0vandermondenew}
\end{equation}
Comparing \eqref{eqn:k+1tok0vandermondenew} to \eqref{eqn:k+1x0vandermonde}, together with the induction assumption that statement for $k$ holds, 
$$
q_{k+1}=1.
$$
That is, we proved the statement for $k+1$.

d) 
We prove $\text{det}(A^{(k)}(x_1,\dots,x_k))=\prod_{1\leq \alpha<\beta\leq k}(x_{\alpha}-x_{\beta})^4$ by induction. Write $f_j(x|k)$ for $f_j(x)$ in the following induction to emphasize its dependence on $k$. It is easy to verify the case holds when $k=1$. Suppose the statement for $k$ holds. By \ref{item:determinantb}, $\text{det}(A^{(k+1)}(x_1,\dots,x_{k+1}))=g_{k+1}(x_1,\ldots,x_{k+1})\prod_{1\leq \alpha<\beta\leq k+1}(x_{\alpha}-x_{\beta})^4$ for some multivariate polynomial $g_{k+1}$. Since $f_j(x|k+1)$ has degree $n=2(k+1)-1$ and $f'_j(x|k+1)$ has degree $2k$, by the Leibniz formula of determinant $\text{det}(A^{(k+1)}(x_1,\dots,x_k,x_{k+1}))$ has degree no more than $2k+(2k+1)=4k+1$ for any $x_\alpha$ for $\alpha\in [k+1]$. Moreover, in $\prod_{1\leq \alpha<\beta\leq k+1}(x_{\alpha}-x_{\beta})^4$ the degree of $x_\alpha$ is $4k$, which implies in $g_{k+1}(x_1,\ldots,x_{k+1})$ the degree of $x_\alpha$ is no more than $1$. As a result, it is eligible to write $g_{k+1}(x_1,\ldots,x_{k+1})=h_1(x_1,\dots,x_k)x_{k+1}+h_2(x_1,\ldots,x_k)$ where $h_1,h_2$ are multivariate polynomials of $x_1,\ldots,x_k$. Thus 
\begin{align}
    \text{det}(A^{(k+1)}(x_1,\dots,x_k,0)) = & h_2(x_1,\ldots,x_k) \left(\prod_{1\leq \alpha<\beta\leq k} (x_\alpha-x_\beta)^4 \right) \prod_{\alpha=1}^k x_\alpha^4,  \label{eqn:k+1x0}
\end{align}
and
    \begin{align*}
    &\text{det}(A^{(k+1)}(x_1,\dots,x_k,1)) \\
    = & (h_1(x_1,\ldots,x_k)+h_2(x_1,\ldots,x_k)) \left(\prod_{1\leq \alpha<\beta\leq k} (x_\alpha-x_\beta)^4 \right) \prod_{\alpha=1}^k (x_\alpha-1)^4. 
    \numberthis \label{eqn:k+1x1}
\end{align*}
On the other hand, 
\begin{align*}
&\text{det}(A^{(k+1)}(x_1,\dots,x_k,0))\\
= & \text{det} 
\begin{pmatrix} 
f_1(x_1|k+1),& f_2(x_1|k+1) , &  \ldots, & f_{2(k+1)}(x_1|k+1) \\ 
f'_1(x_1|k+1), & f'_2(x_1|k+1),  & \ldots, & f'_{2(k+1)}(x_1|k+1) \\
\vdots & \vdots & \vdots \\
f_1(x_k|k+1),  & f_2(x_k|k+1), & \ldots, & f_{2(k+1)}(x_k|k+1) \\ 
f'_1(x_k|k+1),& f'_2(x_k|k+1), & \ldots, & f'_{2(k+1)}(x_k|k+1) \\
1, &0 , &\ldots, &0\\
-(2(k+1)-1), & 1, & 0, \ldots,& 0  
\end{pmatrix}\\
= & \text{det} 
\begin{pmatrix} 
f_3(x_1|k+1),& f_3(x_1|k+1) , &  \ldots, & f_{2(k+1)}(x_1|k+1) \\ 
f'_3(x_1|k+1), & f'_3(x_1|k+1),  & \ldots, & f'_{2(k+1)}(x_1|k+1) \\
\vdots & \vdots & \vdots \\
f_3(x_k|k+1),  & f_3(x_k|k+1), & \ldots, & f_{2(k+1)}(x_k|k+1) \\ 
f'_3(x_k|k+1),& f'_3(x_k|k+1), & \ldots, & f'_{2(k+1)}(x_k|k+1) \\
\end{pmatrix} \numberthis \label{eqn:k+1tok}
\end{align*}
where the second equality follows by Laplace expansion along the last row. Observing that $f_j(x|k+1)=x^2f_{j-2}(x|k)$ and $f'_j(x|k+1)= x^2f'_{j-2}(x|k) + 2xf_{j-2}(x|k) $, plug these two equations into \eqref{eqn:k+1tok} and simplify the resulting determinant, and one has
\begin{equation}
\label{eqn:k+1tok0}
\text{det}(A^{(k+1)}(x_1,\dots,x_k,0))=\text{det}(A^{(k)}(x_1,\dots,x_k))\prod_{ \alpha =1 }^k x_\alpha^4. 
\end{equation}
Analogous argument produces
\begin{equation}
\label{eqn:k+1tok1}
\text{det}(A^{(k+1)}(x_1,\dots,x_k,1))=\text{det}(A^{(k)}(x_1,\dots,x_k)) \prod_{ \alpha =1 }^k (1-x_\alpha)^4.
\end{equation}
Comparing \eqref{eqn:k+1tok0} to \eqref{eqn:k+1x0}, together with the induction assumption that statement for $k$ holds, 
$$
h_2(x_1,\ldots,x_k)=1, \quad \forall x_1,\ldots,x_k.
$$
Comparing \eqref{eqn:k+1tok1} to \eqref{eqn:k+1x1}, together with the induction assumption that statement for $k$ holds and the preceding display,
$$
h_1(x_1,\ldots,x_k)=0, \quad \forall x_1,\ldots,x_k.
$$
That is, $g_{k+1}(x_1,\ldots,x_{k+1})=1$ for any $x_1,\ldots,x_{k+1}$.
\end{proof}

	
	\subsection{Calculation details in Example \ref{exa:exponentialcontinuerevision}} 
	\label{sec:detailexponentialexa}
	As in Example \ref{exa:exponentialcontinuerevision}, take the $Tx=(x,x^2)^\top $. Then one may check $\lambda(\xi,\sigma)=(\xi+\sigma,\sigma^2+(\sigma+\xi)^2)^\top $. So condition \ref{item:genthmd} is satisfied. The characteristic function is  
	$$
	\phi_T(\zeta_1,\zeta_2|\xi,\sigma)= \int_{\R} e^{\ive (\zeta_1 x+\zeta_2 x^2)}f(x|\xi,\sigma)dx = \frac{1}{\delta} e^{\frac{\xi}{\sigma}} \int_{\xi}^\infty e^{\ive (\zeta_1 x+\zeta_2 x^2)} e^{-\frac{x}{\sigma}} dx. 
	$$ 
	The verification of \ref{item:genthme} and \eqref{eqn:uniformboundednew} are consequences of Leibniz rule for calculating derivatives and the dominated convergence theorem, and are omitted. To verify \eqref{eqn:integrabilitynew}, notice that $|x|f(x|\xi,\sigma)$ is increasing on $(-\infty,-|\xi|)$ and decreasing on $(\sigma \vee |\xi|, \infty)$. That is, the conditions of Lemma \ref{lem:radialupp} is satisfied with $\alpha_1=1$, $b_1= |\xi| $ and $c_1= |\xi| \vee \sigma$. Moreover, it is clear that $\|f(x|\xi,\sigma)\|_{L^\infty(\R)} \leq 1/\sigma$. Then by Lemma \ref{lem:charintegrability}, for any $r> 4$,
	\begin{align*}
	&\|g(\zeta|\xi,\sigma)\|_{L^r(\R^2)} \\
	\leq 
	& C_2 (|\xi|\vee\sigma +2) \biggr (\||x|f(x|\xi,\sigma) \|_{L^1(\R)} + \frac{2}{\sigma} +\left\|(|x|+1)\frac{\partial f(x|\xi,\sigma)}{\partial x}\right\|_{L^1(\R)} +1
	\biggr )\\
	 := & h(\xi,\sigma).
	\end{align*}
	It can be verified easily by the dominated convergence theorem that $h(\mu,\sigma)$ is a continuous function of $\theta=(\xi,\sigma)$ on $\Theta$. Thus \eqref{eqn:integrabilitynew} in \ref{item:genthmg} is verified. We have then verified that $T$ is admissible with respect to $\Theta$.
	
	One can easily check $\lambda:\Theta\to \R^2$ is injective on $\Theta$. Moreover by simple calculations the Jacobi determinant of $\lambda(\theta)$ is $\text{det}(J_{\lambda})=2\sigma>0$, which implies $J_{\lambda}$ is of full rank on $\Theta$. Then by Corollary \ref{cor:genthm2ndform}, \eqref{eqn:genthmcon} and \eqref{eqn:curvatureprodbound} hold for any $G_0\in \Ec_{k_0}(\Theta)$ for any $k_0\geq 1$.



\section{Proofs of inverse bounds for mixtures of product distributions}
\label{sec:proofprodinvbou}

For an overview of our proof techniques, please refer to Section~\ref{sec:background}. The proofs of both Theorem~\ref{thm:expfam} and Theorem~\ref{thm:genthm} follow the same structure. The reader should read the former first before attempting the latter, which is considerably more technical and lengthy. 

\subsection{Proof of Theorem \ref{thm:expfam}}
	\begin{proof}[Proof of Theorem \ref{thm:expfam}] 
	$\ $ \\
	\noindent \textbf{Step 1} (Proof by contradiction with subsequences) \\
	Suppose \eqref{eqn:curvatureprodbound} is not true. Then $\exists \{\m_{\ell}\}_{\ell=1}^\infty$ subsequence of natural numbers tending to infinity such that 
	$$
	\lim_{r\to 0}\ \  \inf_{\substack{G, H\in B_{W_1}(G_0,r)\\ G\not = H }} \frac{V(\P_{G,\m_{\ell}},\P_{H,\m_{\ell}})}{\myD_{\m_{\ell}}(G,H)}\to 0 \quad \text{ as } \m_{\ell} \to \infty.
	$$
	Then $\exists \{G_\ell\}_{\ell=1}^\infty,  \{H_\ell\}_{\ell=1}^\infty  \subset \Ecal_{k_0}(\Theta)$ such that 
	\begin{equation}
	\begin{cases}
	G_{\ell}\not = H_{\ell} & \forall \ell \\
	\myD_{\m_{\ell}}(G_\ell,G_0)\to 0, D_{\m_{\ell}}(H_{\ell},G_0)\to 0 & \text{ as } \ell \to \infty \\
	\frac{V(\P_{G_\ell,\m_{\ell}},\P_{H_{\ell},\m_{\ell}})}{\myD_{\m_{\ell}}(G_\ell,H_{\ell})}\to 0 & \text{ as } {\ell} \to \infty. 
	\end{cases}
	\label{eqn:expratiotozero1}
	\end{equation} 
	To see this, for each fixed $\ell$, and thus fixed $\m_\ell$, $D_{\m_\ell}(G,G_0)\to 0$ if and only if $W_1(G,G_0)\to 0$. Thus, there exists $G_\ell, H_\ell \in \Ec_{k_0}(\Theta)$ such that 
	$G_\ell\not = H_\ell$, $D_{\m_\ell}(G_\ell,G_0) \leq  \frac{1}{\ell}$, $D_{\m_\ell}(H_\ell,G_0) \leq  \frac{1}{\ell}$ and
	$$
	\frac{V(\P_{G_\ell,\m_{\ell}},\P_{H_\ell,\m_{\ell}})}{\myD_{\m_{\ell}}(G_\ell,H_\ell)} \leq \lim_{r\to 0}\ \  \inf_{\substack{G, H\in B_{W_1}(G_0,r)\\ G\not = H }} \frac{V(\P_{G,\m_{\ell}},\P_{H,\m_{\ell}})}{\myD_{\m_{\ell}}(G,H)} +\frac{1}{\ell},
	$$
	thereby ensuring that~\eqref{eqn:expratiotozero1} hold.

	Write $G_0=\sum_{i=1}^{k_0}p_i^0\delta_{\theta_i^0}$. We may relabel the atoms of $G_\ell$ and $H_\ell$ such that $G_\ell = \sum_{i=1}^{k_0}p_i^\ell \delta_{\thetaeta_i^\ell}$, $H_\ell = \sum_{i=1}^{k_0}\pi_i^\ell \delta_{\eta_i^\ell}$ with $\thetaeta^\ell_i, \eta_i^\ell \to \thetaeta_i^0$ and $p_i^\ell, \pi_i^\ell \to p_i^0$ for any $i\in [k_0]$. By subsequence argument if necessary, we may require $\{G_\ell\}_{\ell=1}^\infty$, $\{H_\ell\}_{\ell=1}^\infty$ additionally satisfy:
	\begin{equation}
	\frac{\sqrt{\m_{\ell}}\left(\thetaeta^\ell_i - \eta^\ell_i\right)}{\myD_{\m_{\ell}}(G_\ell,H_\ell)} \to a_i\in \R^q, \quad \frac{p^\ell_i - \pi^\ell_i}{\myD_{\m_{\ell}}(G_\ell,H_\ell)} \to b_i\in \R, \quad \forall 1\leq i \leq k_0, \label{eqn:etaMlratio}
	\end{equation}
	where the components of $a_i$ are in $[-1,1]$ and $\sum_{i=1}^{k_0}b_i=0$. It also follows that at least one of $a_i$ is not $\bm{0}\in \R^s$ or one of $b_i$ is not $0$. Let $\alpha\in \{1\leq i \leq k_0: a_i \not = \bm{0} \text{ or } b_i \not = 0  \}$.\\
	
	\noindent \textbf{Step 2} (Change of measure by index $\alpha$ and application of CLT)\\
	$\P_{\thetaeta,\m}$ has density w.r.t. $\bigotimes^{\m} \mu$ on $\Xfrak^{\m}$: $$\bar{f}(\bar{x}|\thetaeta,\m) = \prod_{j=1}^{\m} f(x_j|\thetaeta) = e^{\thetaeta^\top  \left(\sum_{j=1}^{\m} T(x_j)\right)- \m A(\thetaeta)} \prod_{j=1}^{\m} h(x_j)
	,$$
	where any $\bar{\xve}\in \Xfrak^{\m}$ is partitioned into $\m$ blocks as  $\bar{\xve}=(\xve_1,\xve_2,\ldots,\xve_{\m})$ with $\xve_i\in \Xfrak$.
	Then 
	\begin{align*}
	&\frac{2V(\P_{G_\ell,\m_{\ell}},\P_{H_\ell,\m_{\ell}})}{\myD_{\m_{\ell}}(G_\ell,H_\ell)} \\	
	=& 
	\int_{\Xfrak^{\m_{\ell} }}\left| \sum_{i=1}^{k_0}\frac{p_i^\ell e^{\left\langle \thetaeta_i^{\ell}, \sum_{j=1}^{\m_{\ell}} T(x_j)\right\rangle- \m_{\ell} A(\thetaeta_i^{\ell})} -  \pi_i^\ell e^{\left\langle \eta_i^{\ell}, \sum_{j=1}^{\m_{\ell}} T(x_j)\right\rangle- \m_{\ell} A(\eta_i^{\ell})} }{\myD_{\m_{\ell}}(G_\ell,H_\ell)}  \right|  
	\prod_{j=1}^{\m_{\ell}} h(x_j)   d\bigotimes^{\m_\ell} \mu  \\
	=& 
	\int_{\Xfrak^{\m_{\ell} }}\left| \sum_{i=1}^{k_0}\frac{p_i^\ell e^{\left\langle \thetaeta_i^{\ell}, \sum_{j=1}^{\m_{\ell}} T(x_j)\right\rangle- \m_{\ell} A(\thetaeta_i^{\ell})} - \pi_i^\ell e^{\left\langle \eta_i^{\ell}, \sum_{j=1}^{\m_{\ell}} T(x_j)\right\rangle- \m_{\ell} A(\eta_i^{\ell})}}{\myD_{\m_{\ell}}(G_\ell,H_\ell)e^{\left\langle \thetaeta_\alpha^{0}, \sum_{j=1}^{\m_{\ell}} T(x_j)\right\rangle- \m_{\ell} A(\thetaeta_\alpha^{0})}}   \right|  
	\bar{f}(\bar{x}|\thetaeta_\alpha^0,\m_{\ell})  d\bigotimes^{\m_\ell} \mu \\ 	
	= & \E_{\thetaeta_\alpha^0} \left|F_{\ell}\left(\sum_{j=1}^{\m_{\ell}} T(X_j)\right)\right|, \numberthis \label{eqn:expVMratioasmeanF}
	\end{align*}
	where $X_j$ are i.i.d. random variables having densities $f(\cdot|\thetaeta_\alpha^0)$, and 
	\begin{align*}
	F_{\ell}(y) := &  \sum_{i=1}^{k_0}\frac{p_i^\ell \exp\left(\left\langle \thetaeta_i^{\ell}, y\right\rangle- \m_{\ell} A(\thetaeta_i^{\ell})\right) -  \pi_i^\ell\exp\left(\left\langle \eta_i^{\ell}, y\right\rangle- \m_{\ell} A(\eta_i^{\ell})\right)}{\myD_{\m_{\ell}}(G_\ell,H_\ell)\exp\left(\left\langle \thetaeta_\alpha^{0}, y\right\rangle- \m_{\ell} A(\theta_\alpha^{0})\right)}   .
	\end{align*}

	Let $Z_\ell = \left(\sum_{j=1}^{\m_{\ell}} T(X_j) - \m_{\ell} \E_{\thetaeta_\alpha^0}T(X_j)\right)/\sqrt{\m_{\ell}} $. Then since $\thetaeta_\alpha^0\in \Theta^\circ$, the mean and covariance matrix of $T(X_j)$ are respectively $\nabla_\thetaeta A(\thetaeta_\alpha^0)$ and $\nabla^2_\thetaeta A(\thetaeta_\alpha^0)$, the gradient and Hessian of $A(\thetaeta)$ evaluated at $\thetaeta_\alpha^0$. Then by central limit theorem, $Z_\ell$ converges in distribution to $Z\sim \Nc(\bm{0}, \nabla^2_\thetaeta A(\thetaeta_\alpha^0))$. Moreover, 
	\begin{equation}
	F_{\ell}\left(\sum_{j=1}^{\m_{\ell}} T(X_j)\right) = F_{\ell}\left(\sqrt{\m_{\ell}}Z_\ell+ \m_{\ell} \nabla_{\thetaeta}A(\thetaeta_\alpha^0) \right) = \Hfun_\ell(Z_\ell),\label{eqn:expFH}
	\end{equation}
	where $\Hfun_\ell(z) := F_{\ell}\left(\sqrt{\m_{\ell}}z+ \m_{\ell} \nabla_{\thetaeta}A(\thetaeta_\alpha^0) \right)$. \\
	
	\noindent \textbf{Step 3} (Application of continuous mapping theorem)\\  
	Define $\Hfun(z)=p_\alpha^0\left\langle a_\alpha, z\right\rangle + b_\alpha$. Supposeing that
	\begin{equation}
	\Hfun_\ell(z_\ell)\to \Hfun(z) \text{ for any sequence } z_\ell \to z\in \R^q, \label{eqn:expcontiforsequence}
	\end{equation}
	a property to be verified in the sequel, then by Generalized Continuous Mapping Theorem (\cite{wellner2013weak} Theorem 1.11.1), $\Hfun_\ell(Z_\ell)$ converges in distribution to $\Hfun(Z)$. Applying Theorem 25.11 in \cite{billingsley2008probability},
	\begin{equation}
	\E|\Hfun(Z)|\leq \liminf_{\ell\to \infty} \E_{\thetaeta_\alpha^0}|\Hfun_\ell(Z_\ell)|=0, \label{eqn:meanHZlimit}
	\end{equation}
	where the equality follows by \eqref{eqn:expratiotozero1}, \eqref{eqn:expVMratioasmeanF} and \eqref{eqn:expFH}. Since $\Hfun(z)$ is a non-zero affine transform and the covariance matrix of $Z$ is positive definite due to full rank property of exponential family, $\Hfun(Z)$ is either a nondegenerate gaussian random variable or a non-zero constant, which contradicts with \eqref{eqn:meanHZlimit}. 
	
	It remains in the proof to verify \eqref{eqn:expcontiforsequence}. Consider any sequence $z_\ell \to z$. Write 
	\begin{equation}
	\Hfun_\ell(z_\ell) = \sum_{i=1}^{k_0} I_i, \label{eqn:Idef}
	\end{equation}
	where 
	\begin{align*}
	I_i := \frac{p_i^\ell \exp\left( g_\ell(\theta_i^\ell)  \right) -  \pi_i^\ell \exp\left(g_\ell(\eta_i^\ell)\right)}{\myD_{\m_{\ell}}(G_\ell,H_\ell)\exp\left( g(\theta_{\alpha}^0) \right)}, 
	\end{align*} 
	with
	$$
	g_\ell(\theta) :=\left \langle \theta, \sqrt{\m_{\ell}} z_\ell + \m_{\ell} \nabla_\thetaeta A(\thetaeta_\alpha^0)\right\rangle - \m_{\ell} A(\thetaeta).
	$$ 
	
	For any $i\in [k_0]$, by Taylor expansion of $A(\theta)$ at $\theta_{i}^0$ and the fact that $A(\theta)$ is infinitely differentiable at $\theta_{i}^0\in \Theta^\circ$, for large $\ell$,
	$$
	|A(\eta_i^\ell) -A(\theta_i^0)-\langle \nabla A(\theta_{i}^0),   \eta_i^\ell - \theta_i^0 \rangle | \leq 2\|\nabla^2 A(\theta_i^0) \|_2  \|\eta_i^\ell - \theta_i^0\|_2^2, 
	$$
	which implies 
	\begin{equation}
	\lim_{\ell\to \infty} \m_\ell|A(\eta_i^\ell) -A(\theta_i^0)-\langle \nabla A(\theta_{i}^0),   \eta_i^\ell - \theta_i^0 \rangle | \leq 2\|\nabla^2 A(\theta_i^0) \|_2 \lim_{\ell\to \infty} D_{\m_\ell}^2(H_\ell,G_0) = 0 \label{eqn:Ntylor2nddir0}
	\end{equation}
	where the equality follows from \eqref{eqn:expratiotozero1}, and the inequality follows from that 
	\begin{align}
	D_{\m_\ell}(H_\ell,G_0)= &\sum_{i=1}^{k_0}(\sqrt{\m_\ell}\|\eta_i^\ell-\theta_i^0\|_2 +|\pi_i^\ell-p_i^0|)  \label{eqn:DNHG0}\\
	D_{\m_\ell}(G_\ell,G_0)= &\sum_{i=1}^{k_0}(\sqrt{\m_\ell}\|\theta_i^\ell-\theta_i^0\|_2 +|p_i^\ell-p_i^0|) \label{eqn:DNGG0}
	\end{align}
	for large $\ell$. The same conclusion holds with $\eta_i^\ell$ replaced by $\theta_i^\ell$ in the last two displays. 
	
	For $i\in [k_0]$, by Lemma \ref{lem:taylortwisted} \ref{item:taylortwistedb} and the fact that $A(\theta)$ is infinitely differentiable at $\theta_{i}^0\in \Theta^\circ$, for large $\ell$
	$$
	|A(\theta_i^\ell)-A(\eta_i^\ell) - \langle \nabla A(\theta_{i}^0), \theta_i^\ell - \eta_i^\ell \rangle | \leq 2\|\nabla^2 A(\theta_i^0) \|_2  \|\theta_i^\ell - \eta_i^\ell\|_2 (\|\theta_i^\ell - \theta_i^0\|_2+\|\eta_i^\ell - \eta_i^0\|_2),
	$$
	which implies 
	\begin{align*}
	&\lim_{\ell\to \infty} \frac{\m_\ell|A(\theta_i^\ell) -A(\eta_i^\ell)-\langle \nabla A(\theta_{i}^0),   \theta_i^\ell - \eta_i^\ell \rangle |}{D_{\m_\ell}(G_\ell,H_\ell)}\\
	\leq &  2\|\nabla^2 A(\theta_i^0) \|_2 \lim_{\ell\to \infty}  \frac{\sqrt{\m_\ell}\|\theta_i^\ell - \eta_i^\ell\|_2}{D_{\m_\ell}(G_\ell,H_\ell)} ( D_{\m_\ell}(G_\ell,G_0)+  D_{\m_\ell}(H_\ell,G_0))  \\
	= &  0 \numberthis \label{eqn:Ntylor2nddir02}
	\end{align*}
	where the inequality follows from \eqref{eqn:DNHG0} and \eqref{eqn:DNGG0}, and the equality follows from \eqref{eqn:expratiotozero1} and \eqref{eqn:etaMlratio}.

	\textbf{Case 1:} Calculate $\lim_{\ell\to\infty} I_\alpha$. \\
	When $\ell\to\infty$
	\begin{equation}
	g_\ell(\eta_\alpha^\ell) - g_\ell(\theta_\alpha^0) = \left \langle \eta_\alpha^\ell -\theta_\alpha^0, \sqrt{\m_{\ell}} z_\ell \right\rangle - \m_{\ell}\left(  A(\eta_\alpha^\ell) - A(\theta_\alpha^0) - \left\langle\eta_\alpha^\ell -\theta_\alpha^0,  \nabla_\thetaeta A(\thetaeta_\alpha^0)\right\rangle \right) \to 0  \label{eqn:gdif0}
	\end{equation}
	by \eqref{eqn:expratiotozero1} and \eqref{eqn:Ntylor2nddir0} with $i=\alpha$. Similarly, one has
	\begin{equation}
	\lim_{\ell\to\infty}\left(g_\ell(\theta_\alpha^\ell) - g_\ell(\theta_\alpha^0)\right) = 0  \label{eqn:gdif0theta}
	\end{equation}
	
	Moreover when $\ell\to \infty$
	\begin{align*}
	\frac{g_\ell(\theta_\alpha^\ell) - g_\ell(\eta_\alpha^\ell)}{D_{\m_\ell}(G_\ell,H_\ell)} = &\frac{\left \langle \theta_\alpha^\ell -\eta_\alpha^\ell, \sqrt{\m_{\ell}} z_\ell \right\rangle - \m_{\ell}\left(  A(\theta_\alpha^\ell) - A(\eta_\alpha^\ell) - \left\langle\theta_\alpha^\ell -\eta_\alpha^\ell,  \nabla_\thetaeta A(\thetaeta_\alpha^0)\right\rangle \right)}{{D_{\m_\ell}(G_\ell,H_\ell)}} \\
	\to & \langle a_\alpha, z\rangle \numberthis  \label{eqn:gdifDN0}
	\end{align*}
	by \eqref{eqn:etaMlratio} and \eqref{eqn:Ntylor2nddir02} with $i=\alpha$.

	Thus
	\begin{align*}
	& \lim_{\ell\to \infty} I_\alpha \\
	= & \lim_{\ell\to \infty} \frac{p_\alpha^\ell \exp\left(g_\ell(\theta_\alpha^\ell)- g_\ell(\theta_\alpha^0) \right)  -  \pi_\alpha^\ell \exp\left(g_\ell(\eta_\alpha^\ell)- g_\ell(\theta_\alpha^0) \right)}{\myD_{\m_{\ell}}(G_\ell,H_\ell)} \\
	= & \lim_{\ell\to \infty} p_\alpha^\ell\frac{ e^{g_\ell(\theta_\alpha^\ell)- g_\ell(\theta_\alpha^0) }  -   e^{g_\ell(\eta_\alpha^\ell)- g_\ell(\theta_\alpha^0) } }{\myD_{\m_{\ell}}(G_\ell,H_\ell)} + \lim_{\ell\to\infty}\frac{ (p_\alpha^\ell-  \pi_\alpha^\ell)e^{g_\ell(\eta_\alpha^\ell)- g_\ell(\theta_\alpha^0) }}{\myD_{\m_{\ell}}(G_\ell,H_\ell)}\\
	\overset{(*)}{=} & p_\alpha^0 \lim_{\ell\to \infty} \frac{ e^{\xi_\ell }  (g_\ell(\theta_\alpha^\ell)- g_\ell(\eta_\alpha^\ell) ) }{\myD_{\m_{\ell}}(G_\ell,H_\ell)} + \lim_{\ell\to\infty}\frac{ (p_\alpha^\ell-  \pi_\alpha^\ell)e^{g_\ell(\eta_\alpha^\ell)- g_\ell(\theta_\alpha^0) }}{\myD_{\m_{\ell}}(G_\ell,H_\ell)}\\
	\overset{(**)}{=} & p_\alpha^0 \lim_{\ell\to \infty} \frac{   g_\ell(\theta_\alpha^\ell)- g_\ell(\eta_\alpha^\ell ) }{\myD_{\m_{\ell}}(G_\ell,H_\ell)} + \lim_{\ell\to\infty}\frac{ p_\alpha^\ell-  \pi_\alpha^\ell}{\myD_{\m_{\ell}}(G_\ell,H_\ell)} \\
	\overset{(***)}{=}& p_\alpha^0 \left\langle a_\alpha,  z  \right\rangle  + b_\alpha,  \numberthis \label{eqn:Ialphalowboutemp1}
	\end{align*}
	where step $(*)$ follows from mean value theorem with $\xi_\ell$ on the line segment between $g_\ell(\theta_\alpha^\ell)- g_\ell(\theta_\alpha^0)$ and $g_\ell(\eta_\alpha^\ell)- g_\ell(\theta_\alpha^0)$, step $(**)$ follows from $g_\ell(\theta_\alpha^\ell)- g_\ell(\theta_\alpha^0)$, $g_\ell(\eta_\alpha^\ell)- g_\ell(\theta_\alpha^0)\to 0$ due to \eqref{eqn:gdif0}, \eqref{eqn:gdif0theta} and hence $\xi_\ell\to 0$, and step $(***)$ follows from \eqref{eqn:gdifDN0} and \eqref{eqn:etaMlratio}.

	\textbf{Case 2:} Calculate $\lim_{\ell\to\infty} I_i$ for $i\neq \alpha$. \\
	For $i\not = \alpha$, 
	\begin{align*}
	&\frac{\exp\left(g_\ell(\theta_i^\ell)\right)}{\exp\left(g_\ell(\theta_\alpha^0)\right)} \\
	= & \exp\left(\left\langle \thetaeta_i^{\ell}- \thetaeta_\alpha^{0}, \sqrt{\m_{\ell}} z_\ell + \m_{\ell} \nabla_\thetaeta A(\thetaeta_\alpha^0) \right\rangle- \m_{\ell} \left(A(\thetaeta_i^{\ell})-A(\thetaeta_\alpha^{0})\right)\right)\\
	= & \exp\left( - \m_{\ell} \left(A(\thetaeta_i^{\ell})-A(\thetaeta_\alpha^{0}) - \left\langle \thetaeta_i^{\ell}- \thetaeta_\alpha^{0},    \nabla_\thetaeta A(\thetaeta_\alpha^0) \right\rangle - \frac{1}{\sqrt{\m_{\ell}}}\left\langle \thetaeta_i^{\ell}- \thetaeta_\alpha^{0}, z_\ell  \right\rangle \right)\right) \\
	\leq & \exp\left( - \frac{\m_{\ell}}{2} \left(A(\thetaeta_i^{0})-A(\thetaeta_\alpha^{0}) - \left\langle \thetaeta_i^{0}- \thetaeta_\alpha^{0},    \nabla_\thetaeta A(\thetaeta_\alpha^0) \right\rangle  \right)\right) \quad \text{for sufficiently large } \ell, \numberthis \label{eqn:expratio}
	\end{align*}
	where the last inequality follows from $\lim_{\ell\to\infty}\frac{1}{\sqrt{\m_{\ell}}}\left\langle \thetaeta_i^{\ell}- \thetaeta_\alpha^{0}, z_\ell  \right\rangle =0 $ and 
	\begin{equation}
	A(\thetaeta_i^{0})-A(\thetaeta_\alpha^{0}) - \left\langle \thetaeta_i^{0}- \thetaeta_\alpha^{0},    \nabla_\thetaeta A(\thetaeta_\alpha^0) \right\rangle>0, \label{eqn:Ataylorpositive}
	\end{equation}
	implied by strict convexity of $A(\thetaeta)$ over $\Theta^\circ$ due to full rank property of exponential family.
	Similarly, for sufficiently large $\ell$,
	\begin{equation}
	\frac{\exp\left(g_\ell(\eta_i^\ell)\right)}{\exp\left(g_\ell(\theta_\alpha^0)\right)} \leq \exp\left( - \frac{\m_{\ell}}{2} \left(A(\thetaeta_i^{0})-A(\thetaeta_\alpha^{0}) - \left\langle \thetaeta_i^{0}- \thetaeta_\alpha^{0},    \nabla_\thetaeta A(\thetaeta_\alpha^0) \right\rangle  \right)\right). \label{eqn:expratio2}
	\end{equation}

	It follows that for $i\neq \alpha$	
	\begin{align*}
	&\lim_{\ell\to \infty}|I_i|\\
	\leq & \lim_{\ell\to\infty} p_i^\ell \left|\frac{ \exp\left(g_{\ell}(\theta_i^\ell)\right) -  \exp\left(g_{\ell}(\eta_i^\ell)\right)}{\myD_{\m_{\ell}}(G_\ell,H_\ell)\exp\left(g_\ell(\theta_\alpha^0) \right)}\right|  +  \lim_{\ell\to\infty} \left|\frac{ p_i^\ell - \pi_i^\ell }{D_{\m_\ell}(G_\ell,H_\ell)} \right| \frac{\exp\left(g_\ell(\eta_i^\ell)\right)}{\exp\left(g_\ell(\theta_\alpha^0)\right)} \\
	\leq & p_i^0\lim_{\ell\to \infty}\frac{\max\{\exp\left(g_{\ell}(\theta_i^\ell)\right), \exp\left(g_{\ell}(\eta_i^\ell)\right)\} }{\exp\left(g_\ell(\theta_\alpha^0)\right)}  \left| \frac{g_{\ell}(\theta_i^\ell) - g_\ell(\eta_i^\ell) }{\myD_{\m_{\ell}}(G_\ell,H_\ell)}\right|  +  |b_i| \lim_{\ell\to\infty}   \frac{\exp\left(g_\ell(\eta_i^\ell)\right)}{\exp\left(g_\ell(\theta_\alpha^0)\right)}\\
	\leq &   \lim_{\ell\to\infty}e^{ - \frac{\m_{\ell}}{2} \left(A(\thetaeta_i^{0})-A(\thetaeta_\alpha^{0}) - \left\langle \thetaeta_i^{0}- \thetaeta_\alpha^{0},    \nabla_\thetaeta A(\thetaeta_\alpha^0) \right\rangle  \right)} \left(p_i^0\left| \frac{g_{\ell}(\theta_i^\ell) - g_\ell(\eta_i^\ell) }{\myD_{\m_{\ell}}(G_\ell,H_\ell)}\right| +|b_i|\right), \numberthis \label{eqn:Iiupp}
	\end{align*}
	where the second inequality follows by applying the mean value theorem on the first term and applying \eqref{eqn:etaMlratio} to the second term, while the last inequality follows from \eqref{eqn:expratio} and \eqref{eqn:expratio2}.
	
	Since 
	\begin{align*}
	&\limsup_{\ell\to\infty}\frac{1}{\sqrt{\m_{\ell}}} \left| \frac{g_{\ell}(\theta_i^\ell) - g_\ell(\eta_i^\ell) }{\myD_{\m_{\ell}}(G_\ell,H_\ell)}\right|\\
	=&\limsup_{\ell\to\infty}\frac{1}{\sqrt{\m_{\ell}}}\left| \frac{\left\langle \thetaeta_i^{\ell}-\eta_i^{\ell}, \sqrt{\m_{\ell}} z_\ell + \m_{\ell} \nabla_\thetaeta A(\thetaeta_\alpha^0)\right\rangle- \m_{\ell} \left(A(\thetaeta_i^{\ell}) -A(\eta_i^{\ell}) \right)}{\myD_{\m_{\ell}}(G_\ell,H_\ell)}\right| \\
	\leq&  \limsup_{\ell\to\infty} \left| \frac{- \sqrt{\m_{\ell}} \left(A(\thetaeta_i^{\ell}) - A(\thetaeta_i^{0}) - \left\langle \thetaeta_i^{\ell}-\eta_i^{\ell},   \nabla_\thetaeta A(\thetaeta_i^0) \right\rangle \right)}{\myD_{\m_{\ell}}(G_\ell,H_\ell)}\right| \\
	&  + \limsup_{\ell\to\infty} \frac{1}{\sqrt{\m_{\ell}}} \left| \frac{\left\langle \sqrt{\m_{\ell}} (\thetaeta_i^{\ell}-\eta_i^{\ell}),  z_\ell \right\rangle}{\myD_{\m_{\ell}}(G_\ell,H_\ell)}\right|+ \limsup_{\ell\to\infty}\left| \frac{  \sqrt{\m_{\ell}}\left\langle \thetaeta_i^{\ell}-\eta_i^{\ell},  \nabla_\thetaeta A(\thetaeta_\alpha^0) - \nabla_\thetaeta A(\thetaeta_i^0)\right\rangle}{\myD_{\m_{\ell}}(G_\ell,H_\ell)}\right|\\
	=&  \left|\left\langle a_i,  \nabla_\thetaeta A(\thetaeta_\alpha^0) - \nabla_\thetaeta A(\thetaeta_i^0)\right\rangle\right|,
	\end{align*}
	where the last step follows from \eqref{eqn:etaMlratio} and \eqref{eqn:Ntylor2nddir02}.
	Then for sufficiently large $\ell$
	\begin{equation}
	\left| \frac{g_{\ell}(\theta_i^\ell) - g_\ell(\eta_i^\ell) }{\myD_{\m_{\ell}}(G_\ell,H_\ell)}\right| \leq \left(\left|\left\langle a_i,  \nabla_\thetaeta A(\thetaeta_\alpha^0) - \nabla_\thetaeta A(\thetaeta_i^0)\right\rangle\right|+\frac{1}{\ell}\right)\sqrt{\m_{\ell}}. \label{eqn:expmeanvalueupp}
	\end{equation}
	Plug \eqref{eqn:expmeanvalueupp} into \eqref{eqn:Iiupp}, for any $i\not = \alpha$,
	\begin{align*}
	&\lim_{\ell\to\infty} |I_i|\\
	\leq & \lim_{\ell\to\infty} e^{ - \frac{\m_{\ell}}{2} \left(A(\thetaeta_i^{0})-A(\thetaeta_\alpha^{0}) - \left\langle \thetaeta_i^{0}- \thetaeta_\alpha^{0},    \nabla_\thetaeta A(\thetaeta_\alpha^0) \right\rangle  \right)} \left(\left|\left\langle a_i,  \nabla_\thetaeta A(\thetaeta_\alpha^0) - \nabla_\thetaeta A(\thetaeta_i^0)\right\rangle\right|+\frac{1}{\ell}\right)\sqrt{\m_{\ell}}\\
	= & 0.  \numberthis \label{eqn:Iilim}
	\end{align*}

	Combining \eqref{eqn:Idef}, \eqref{eqn:Ialphalowboutemp1} and \eqref{eqn:Iilim}, we see that \eqref{eqn:expcontiforsequence} is established. This concludes the proof of the theorem. 
\end{proof}

\subsection{Proof of Theorem \ref{thm:genthm}}
\label{sec:proofoftheoremgen}

\begin{proof}[Proof of Theorem \ref{thm:genthm}] \
	\noindent \textbf{Step 1} (Proof by contradiction with subsequences) \\
	This step is similar to the proof of Theorem \ref{thm:expfam}. Suppose that \eqref{eqn:curvatureprodbound} is not true. Then $\exists \{\m_{\ell}\}_{\ell=1}^\infty$ subsequence of natural numbers tending to infinity such that 
	$$
	\lim_{r\to 0}\ \  \inf_{\substack{G, H\in B_{W_1}(G_0,r)\\ G\not = H }} \frac{V(\P_{G,\m_{\ell}},\P_{H,\m_{\ell}})}{\myD_{\m_{\ell}}(G,H)}\to 0 \quad \text{ as } \m_{\ell} \to \infty.
	$$
	Then $\exists \{G_\ell\}_{\ell=1}^\infty,  \{H_\ell\}_{\ell=1}^\infty  \subset \Ecal_{k_0}(\Theta)$ such that 
	\begin{equation}
	\begin{cases}
	G_{\ell}\not = H_{\ell} & \forall \ell \\
	\myD_{\m_{\ell}}(G_\ell,G_0)\to 0, D_{\m_{\ell}}(H_{\ell},G_0)\to 0 & \text{ as } \ell \to \infty \\
	\frac{V(\P_{G_\ell,\m_{\ell}},\P_{H_{\ell},\m_{\ell}})}{\myD_{\m_{\ell}}(G_\ell,H_{\ell})}\to 0 & \text{ as } {\ell} \to \infty. 
	\end{cases}
	\label{eqn:ratiotozero1}
	\end{equation} 
	To see this, for each fixed $\ell$, and thus fixed $\m_\ell$, $D_{\m_\ell}(G,G_0)\to 0$ if and only if $W_1(G,G_0)\to 0$. Thus, there exist $G_\ell, H_\ell \in \Ec_{k_0}(\Theta)$ such that 
	$G_\ell\not = H_\ell$, $D_{\m_\ell}(G_\ell,G_0) \leq  \frac{1}{\ell}$, $D_{\m_\ell}(H_\ell,G_0) \leq  \frac{1}{\ell}$ and
	$$
	\frac{V(\P_{G_\ell,\m_{\ell}},\P_{H_\ell,\m_{\ell}})}{\myD_{\m_{\ell}}(G_\ell,H_\ell)} \leq \lim_{r\to 0}\ \  \inf_{\substack{G, H\in B_{W_1}(G_0,r)\\ G\not = H }} \frac{V(\P_{G,\m_{\ell}},\P_{H,\m_{\ell}})}{\myD_{\m_{\ell}}(G,H)} +\frac{1}{\ell},
	$$
	thereby ensuring that~\eqref{eqn:ratiotozero1} hold.

	Write $G_0=\sum_{i=1}^{k_0}p_i^0\delta_{\theta_i^0}$. We may relabel the atoms of $G_\ell$ and $H_\ell$ such that $G_\ell = \sum_{i=1}^{k_0}p_i^\ell \delta_{\thetaeta_i^\ell}$, $H_\ell = \sum_{i=1}^{k_0}\pi_i^\ell \delta_{\eta_i^\ell}$ with $\thetaeta^\ell_i, \eta_i^\ell \to \thetaeta_i^0$ and $p_i^\ell, \pi_i^\ell \to p_i^0$ for any $i\in [k_0]$. By subsequence argument if necessary, we may require $\{G_\ell\}_{\ell=1}^\infty$, $\{H_\ell\}_{\ell=1}^\infty$ additionally satisfy:
	\begin{align}
	D_{\m_\ell}(H_\ell,G_0)= &\sum_{i=1}^{k_0}(\sqrt{\m_\ell}\|\eta_i^\ell-\theta_i^0\|_2 +|\pi_i^\ell-p_i^0|)  \label{eqn:DNHG0gen}\\
	D_{\m_\ell}(G_\ell,G_0)= &\sum_{i=1}^{k_0}(\sqrt{\m_\ell}\|\theta_i^\ell-\theta_i^0\|_2 +|p_i^\ell-p_i^0|) \label{eqn:DNGG0gen}
	\end{align}
	and
	\begin{equation}
	\frac{\sqrt{\m_{\ell}}\left(\thetaeta^\ell_i - \eta^\ell_i\right)}{\myD_{\m_{\ell}}(G_\ell,H_\ell)} \to a_i\in \R^q, \quad \frac{p^\ell_i - \pi^\ell_i}{\myD_{\m_{\ell}}(G_\ell,H_\ell)} \to b_i\in \R, \quad \forall 1\leq i \leq k_0, \label{eqn:etaMlratiogen}
	\end{equation}
	where the components of $a_i$ are in $[-1,1]$ and $\sum_{i=1}^{k_0}b_i=0$. It also follows that at least one of $a_i$ is not $\bm{0}\in \R^q$ or one of $b_i$ is not $0$. Let $\alpha\in \{1\leq i \leq k_0: a_i \not = \bm{0} \text{ or } b_i \not = 0  \}$.\\

	\noindent
	\textbf{Step 2} (Transform the probability measure to support in $\R^s$) \\ 	
	Let $T_1: (\Xfrak, \mathcal{A}) \to (\R^s,\mathcal{B}(\R^s))$ be an arbitrary measurable map in this step. Extend $T_1$ to product space by $\bar{T}_1:\Xfrak^\m \to \R^{\m s}$ by $\bar{T}_1\bar{x}=((T_1x_1)^\top ,\ldots, (T_1x_{\m})^\top )^\top $ where any $\bar{\xve}\in \Xfrak^{\m}$ is partitioned into $\m$ blocks as  $\bar{\xve}=(\xve_1,\xve_2,\ldots,\xve_{\m})$ with $\xve_i\in \Xfrak$. Then one can easily verify that $(\bigotimes^{\m}\P_\theta) \circ \bar{T}_1^{-1}= \bigotimes^{\m}(\P_\theta \circ T_1^{-1}) $, and hence for any $G\in \Ec_{k_0}(\Theta)$ 
	$$\P_{G,\m}\circ \bar{T}_1^{-1} = \sum_{i=1}^{k_0}p_i (\P_{\thetave_i,\m}\circ \bar{T}_1^{-1}) = \sum_{i=1}^{k_0}p_i \left(\bigotimes^{\m}\left(\P_{\theta_i} \circ T_1^{-1}\right)\right).$$
	Further consider another measurable map $T_0: (\R^{\m s},\mathcal{B}(\R^{\m s})) \to (\R^s,\mathcal{B}(\R^s)) $ defined by $T_0 \bar{t} = \sum_{i=1}^{\m} {t_i}$ where $\bar{t}\in \R^{\m s}$ is partitioned equally into $\m$ blocks $\bar{t}=(t_1^\top ,t_2^\top ,\ldots,t_{\m}^\top )^\top \in \R^{\m s}$. Denote the induced probability measure on $\R^s$ 
	under $T_0\circ \bar{T}_1$ of the $\P_{\theta,\m}$
	by $Q_{\theta,\m} := \left(\bigotimes^{\m}\left(\P_\theta \circ T_1^{-1}\right)\right) \circ T_0^{-1} $. Then the induced probability measure under $T_0\circ \bar{T}_1$ of the mixture $\P_{G,\m}$ is 
	$$
	\P_{G,\m}\circ \bar{T}_1^{-1} \circ T_0^{-1} = \sum_{i=1}^{k_0} p_i Q_{\theta_i,\m} : = Q_{G,\m}.
	$$
	Note the dependences of $\bar{T}_1$ and $T_0$ on $\m$ are both suppressed, so are the dependences on $T_1$ of $\Q_{\theta,\m}$ and $\Q_{G,\m}$.  
	
	Then by definition of total variation distance
	$$
	V(\P_{G,\m},\P_{H,\m}) \geq V(Q_{G,\m},Q_{H,\m}), \quad \forall \m, \forall\ T_1. 
	$$
	The above display and \eqref{eqn:ratiotozero1} yield
	\begin{equation}
	\lim_{\ell\to 0} \frac{V(\Q_{G_\ell,\m_{\ell}},\Q_{H_\ell,\m_{\ell}})}{\myD_{\m_{\ell}}(G_\ell,H_\ell)}=0, \quad \forall T_1. \label{eqn:ratiotozero}
	\end{equation}
	\\

	\noindent \textbf{Step 3} (Application of the central limit theorem)\\
	In the rest of proof specialize $T_1$ in step 2 to be $T_\alpha$. Write $T=T_\alpha$ to simplify the notation in the rest of the proof. Let $\gamma>0$ and $r\geq 1$ be the same as in Definition \ref{def:admissible} of $T=T_\alpha$ with respect to the finite set $\{\theta_i^0\}_{i=1}^{k_0}$ and define $\bar{\Theta}(G_0):=\bigcup_{i=1}^{k_0}B(\theta_i^0,\gamma)$. By subsequences if necessary, we may further require that $G_\ell, H_\ell$ satisfy $\theta_i^\ell,\eta_i^\ell \in B(\theta_i^0,\gamma)$ for all $i\in [k_0]$ and $\m_\ell\geq r$.
	
	Consider $\{X_i\}_{i=1}^\infty\overset{i.i.d.}{\sim} \P_\theta $. Then $Y_{\ell} = \sum_{i=1}^{\m_{\ell}} TX_i $ is distributed by probability measure $Q_{\theta,\m_{\ell}}$, which has characteristic function $(\phi_T(\zeta|\theta))^{\m_{\ell}}$. For $\theta\in\bar{\Theta}(G_0)$, by \eqref{eqn:integrabilitynew} in \ref{item:genthmg} and by Fourier inversion theorem, 
	$Q_{\theta,\m_{\ell}}$ and $Y_\ell$ therefore have density $f_{Y}(y|\theta,\m_{\ell})$ with respect to Lebesgue measure given by
	\begin{equation}
	f_{Y}(y|\theta,\m_{\ell}) = \frac{1}{(2\pi)^s} \int_{\R^s} e^{-i\zeta^\top y}(\phi_T(\zeta|\theta))^{\m_{\ell}} d\zeta. \label{eqn:Ydenfor}
	\end{equation}
	Then $Q_{G_\ell,\m_{\ell}}$ has density with respect to Lebesgue measure given by $\sum_{i=1}^{k_0} p_i^\ell f_{Y}(y|\theta_i^\ell,\m_{\ell})$, and similarly $Q_{H_\ell,\m_{\ell}}$ has density with respect to Lebesgue measure $\sum_{i=1}^{k_0} \pi_i^\ell f_{Y}(y|\eta_i^\ell,\m_{\ell})$. Thus
	\begin{equation}
	2V(Q_{G_\ell,\m_{\ell}},Q_{H_\ell,\m_{\ell}}) = \int_{\R^s}\left|\sum_{i=1}^{k_0} p_i^{\ell} f_{Y}(y|\theta_i^{\ell},\m_{\ell}) - \sum_{i=1}^{k_0} \pi^\ell_i f_{Y}(y|\eta^\ell_i,\m_{\ell})\right|dy. \label{eqn:vdisdensityform}
	\end{equation}
	
	For $Y_\ell$ has density  $f_{Y}(y|\theta,\m_{\ell})$, define $Z_\ell = (Y_\ell -\m_{\ell} \lambda_\theta)/\sqrt{\m_{\ell}} $, where $\lambda_\theta=\E_\theta TX_1$. Note that this transform from $Y_\ell$ to $Z_\ell$ depends on $\theta$ in the density of $Y_\ell$. Then by the change of variable formula, $Z_\ell$ has density $f_Z(z|\theta, \m_{\ell})$ with respect to Lebesgue measure, given by
	$$
	f_Z(z|\theta, \m_{\ell}) = f_{Y}(\sqrt{\m_{\ell}}z+\m_{\ell}\lambda_\theta|\theta,\m_{\ell}) \m_{\ell}^{s/2},
	$$
	or equivalently 
	\begin{equation}
	f_Y(y|\theta, \m_{\ell}) = f_{Z}((y -\m_{\ell} \lambda_\theta)/\sqrt{\m_{\ell}}|\theta,\m_{\ell}) /\m_{\ell}^{s/2}. \label{eqn:densityztoy}
	\end{equation}
	Now, applying the local central limit theorem (Lemma \ref{thm:localcenlim}), $f_Z(z|\theta, \m_{\ell})$ converges uniformly in $z$ to $f_{\mathcal{N}}(\zve|\thetave)$ for every $\theta\in\bar{\Theta}(G_0)$. Here $f_{\mathcal{N}}(\zve|\thetave)$ is the density of $\mathcal{N}(\bm{0},\Lambda_{\thetave})$, the multivariate normal with mean $\bm{0}$ and covariance matrix $\Lambda_{\thetave}$. Next specialize the previous statement to $\theta^0_\alpha$, and define $$
	w_\ell = \sup\left\{ w \geq 0: f_Z(z|\theta^0_\alpha, \m_{\ell}) \geq  \frac{1}{(2\pi)^{s/2}}\frac{1}{2\ell} \text{ for all } \|z\|_2 \leq w \right \}.
	$$
	We use the convention that the supreme of $\emptyset$ is $0$ in the above display. Because of the uniform convergence of $f_Z(z|\theta^0_\alpha, \m_{\ell})$ to $f_{\mathcal{N}}(\zve|\thetave^0_\alpha)$, we have $w_\ell\to \infty$ when $\ell \to \infty$. It follows from \eqref{eqn:densityztoy} that $f_Y(y|\theta^0_\alpha, \m_{\ell})>0$ on $B_\ell:=\{ y \in \R^s| y= \sqrt{\m_{\ell}} z + \m_{\ell}\lambda_{\theta^0_\alpha} \text{ for } \|z\|_2 \leq w_\ell   \}.$
	Then by \eqref{eqn:vdisdensityform}
	\begin{align*}
	& \frac{2V(\Q_{G_\ell,\m_{\ell}},\Q_{H_\ell,\m_{\ell}})}{\myD_{\m_{\ell}}(G_\ell,H_\ell)}\\
	= & \int_{\R^s}\left|\sum_{i=1}^{k_0} \frac{p_i^{\ell}\left(f_{Y}(y|\theta_i^{\ell},\m_{\ell}) - f_{Y}(y|\eta^\ell_i,\m_{\ell}) \right) }{\myD_{\m_{\ell}}(G_\ell,H_\ell)} + \sum_{i=1}^{k_0} \frac{(p_i^{\ell}-\pi^\ell_i)f_{Y}(y|\eta^\ell_i,\m_{\ell})}{\myD_{\m_{\ell}}(G_\ell,H_\ell)} \right|dy\\
	\geq & 
	\int_{B_\ell}\left|\sum_{i=1}^{k_0} \frac{p_i^{\ell}\left(f_{Y}(y|\theta_i^{\ell},\m_{\ell}) - f_{Y}(y|\eta^\ell_i,\m_{\ell})\right)+(p_i^{\ell}-\pi^\ell_i)f_{Y}(y|\eta^\ell_i,\m_{\ell})}{\myD_{\m_{\ell}}(G_\ell,H_\ell)f_{Y}(y|\theta^0_{\alpha},\m_{\ell})} 
	 \right|f_{Y}(y|\theta^0_{\alpha},\m_{\ell})dy \\
	= & \E_{\theta_\alpha^0}|F_\ell(Y_\ell)|\\
	= & \E_{\theta_\alpha^0}|\Hfun_\ell(Z_\ell)|, \numberthis \label{eqn:lowerbouexp}
	\end{align*}
	where 
	$$
	F_\ell(y)=\left(\sum_{i=1}^{k_0} \frac{p_i^{\ell}\left(f_{Y}(y|\theta_i^{\ell},\m_{\ell}) - f_{Y}(y|\eta^\ell_i,\m_{\ell})\right)+(p_i^{\ell}-\pi^\ell_i)f_{Y}(y|\eta^\ell_i,\m_{\ell})}{\myD_{\m_{\ell}}(G_\ell,H_\ell)f_{Y}(y|\theta^0_{\alpha},\m_{\ell})}\right)\1ve_{{B_\ell}}(y),
	$$
	and 
	$$
	\Hfun_\ell(z) = F_\ell(\sqrt{\m_{\ell}} z + \m_{\ell}\lambda_{\theta^0_\alpha}).
	$$
	Observe if $Z_\ell$ has density $f_Z(z|\theta^0_\alpha, \m_{\ell})$, then $Z_\ell$ converges in distribution to $Z\sim \Nc(\bm{0},\Lambda_{\theta_\alpha^0})$.\\
	
	\noindent \textbf{Step 4} (Application of a continuous mapping theorem)\\
	Define $\Hfun(z)=p_\alpha^0\left(J_{\lambda}(\theta_\alpha^0)a_\alpha\right)^\top \Lambda_{\theta_\alpha^0}^{-1}z + b_\alpha$, where $J_{\lambda}(\theta_\alpha^0)\in \R^{s\times q}$ is the Jacobian matrix of $\lambda(\theta)$ evaluated at $\theta_\alpha^0$. Supposing that
	\begin{equation}
	\Hfun_\ell(z_\ell)\to \Hfun(z) \text{ for any sequence } z_\ell \to z\in \R^s, \label{eqn:contiforsequence}
	\end{equation}
	a property to be verified later, then by Generalized Continuous Mapping Theorem (\cite[Theorem 1.11.1]{wellner2013weak}), $\Hfun_\ell(Z_\ell)$ converges in distribution to $\Hfun(Z)$. Applying  \cite[Theorem 25.11]{billingsley2008probability},
	
	$$
	\E|\Hfun(Z)|\leq \liminf_{\ell\to \infty} \E_{\theta_\alpha^0}|\Hfun_\ell(Z_\ell)|=0,
	$$ 
	where the equality follows \eqref{eqn:lowerbouexp} and \eqref{eqn:ratiotozero}. Note that $\Lambda_\theta$ is positive definite (by \ref{item:genthmd}) and $J_\lambda(\theta_\alpha^0)$ is of full column rank. In addition, by our choice of $\alpha$, either $a_\alpha$ or $b_\alpha$ is non-zero. Hence, $\Hfun(z)$ is a non-zero affine function of $z$. For such an $\Hfun(z)$, $\E|\Hfun(Z)|$ cannot be zero, which results in a contradiction. As a result, it remains in the proof to establish \eqref{eqn:contiforsequence}.

	 We will now impose the following lemma and proceed to verify \eqref{eqn:contiforsequence}, while the technical and lengthy proof of the lemma will be given in the Section \ref{sec:proofoftechnicallemma}.
	
	\begin{lem} \label{lem:technical}
	 Suppose all the conditions in Theorem \ref{thm:genthm} hold and let $\gamma,r$ be defined as in the first paragraph in \textbf{Step 3}. For any $1\leq i \leq k_0$, for any pair of sequences $\bar{\theta}_i^\ell, \bar{\eta}_i^\ell \in B(\theta_i^0,\gamma)$ and for any increasing $\bar{\m}_\ell\geq r$ satisfying  $\sqrt{\bar{\m}_\ell}\|\bar{\theta}_i^\ell-\theta_i^0\|_2, \sqrt{\bar{\m}_\ell}\|\bar{\eta}_i^\ell-\theta_i^0\|_2 \to 0$ and $\bar{\m}_\ell\to \infty$: 
	\begin{align*}
	&J(\bar{\theta}_i^{\ell},\bar{\eta}_i^{\ell},\bar{\m}_\ell)\\
	:=
	&\bar{\m}_{\ell}^{s/2}\sup_{y\in \R^{s}}\left|f_{Y}(y|\bar{\theta}_i^{\ell},\bar{\m}_{\ell}) - f_{Y}(y|\bar{\eta}^\ell_i,\bar{\m}_{\ell}) -  \sum_{j=1}^q \frac{\partial f_{\Ncal}(\yve|\thetave_i^0,\bar{\m}_{\ell})}{\partial \theta^{(j)}}\left((\bar{\theta}_i^\ell)^{(j)}-(\bar{\eta}_i^\ell)^{(j)}\right)\right|\\
	= &  o(\sqrt{\bar{\m}_\ell}\|\bar{\theta}_i^\ell-\bar{\eta}_i^\ell\|_2), \quad \text{ as } \ell\to\infty, \numberthis \label{eqn:Jldefto0}
	\end{align*}
	where $f_{\Ncal}(\yve|\thetave,\m)$ is the density with respect to Lebesgue measure of $\Ncal(\m\lambda_{\thetave},\m\Lambda_{\thetave})$ when $\Lambda_{\thetave}$ is positive definite. 
	\end{lem}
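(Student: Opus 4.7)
My plan is to prove Lemma~\ref{lem:technical} as a quantitative local central limit theorem for first-order $\theta$-derivatives of the density of $\sum_{j=1}^{\bar{\m}_\ell}TX_j$. The starting point is the Fourier inversion formula~\eqref{eqn:Ydenfor}, which is valid for $\bar{\m}_\ell\geq r$ by the integrability~\eqref{eqn:integrabilitynew}; the Gaussian density $f_{\Ncal}(y|\theta_i^0,\bar{\m}_\ell)$ of $\Ncal(\bar{\m}_\ell\lambda_{\theta_i^0},\bar{\m}_\ell\Lambda_{\theta_i^0})$ admits the analogous representation. Along the segment $\theta_\ell(t):=\bar{\eta}_i^\ell+t(\bar{\theta}_i^\ell-\bar{\eta}_i^\ell)\in B(\theta_i^0,\gamma)$, the fundamental theorem of calculus gives
$$f_Y(y|\bar{\theta}_i^\ell,\bar{\m}_\ell) - f_Y(y|\bar{\eta}_i^\ell,\bar{\m}_\ell) = \sum_{j=1}^q \bigl((\bar{\theta}_i^\ell)^{(j)} - (\bar{\eta}_i^\ell)^{(j)}\bigr)\int_0^1 \frac{\partial f_Y(y|\theta_\ell(t),\bar{\m}_\ell)}{\partial \theta^{(j)}}\,dt,$$
so the claim reduces to a uniform local-CLT-with-derivatives estimate
$$\bar{\m}_\ell^{s/2}\sup_{y\in\R^s}\left|\frac{\partial f_Y(y|\theta_\ell(t),\bar{\m}_\ell)}{\partial \theta^{(j)}} - \frac{\partial f_{\Ncal}(y|\theta_i^0,\bar{\m}_\ell)}{\partial \theta^{(j)}}\right| = o(\sqrt{\bar{\m}_\ell}),$$
uniformly over $t\in[0,1]$ and $j\in[q]$.

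Next, I differentiate~\eqref{eqn:Ydenfor} under the integral sign (justified by the exchange-of-order condition~\ref{item:genthme} and the uniform bound on $\partial_\theta\phi_T$ in~\eqref{eqn:uniformboundednew}) and perform the CLT change of variables $\zeta\mapsto\zeta/\sqrt{\bar{\m}_\ell}$. This reduces $\bar{\m}_\ell^{s/2}\partial_\theta f_Y$ to a Fourier integral of the function $\bar{\m}_\ell\,\phi_T(\zeta/\sqrt{\bar{\m}_\ell}|\theta_\ell(t))^{\bar{\m}_\ell-1}\partial_\theta\phi_T(\zeta/\sqrt{\bar{\m}_\ell}|\theta_\ell(t))$ against $e^{-i\zeta^\top y/\sqrt{\bar{\m}_\ell}}(2\pi)^{-s}$, and analogously for $\bar{\m}_\ell^{s/2}\partial_\theta f_{\Ncal}$ with the Gaussian characteristic function $\exp(i\bar{\m}_\ell\lambda_\theta^\top\zeta - \bar{\m}_\ell\zeta^\top\Lambda_\theta\zeta/2)$ and $\theta=\theta_i^0$. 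Bounding $\sup_y$ by the $L^1$ norm of the difference of integrands, the problem becomes the $\theta$- and $t$-uniform estimate
$$\int_{\R^s}\left|\bar{\m}_\ell\,\phi_T(\zeta/\sqrt{\bar{\m}_\ell}|\theta_\ell(t))^{\bar{\m}_\ell-1}\partial_\theta\phi_T(\zeta/\sqrt{\bar{\m}_\ell}|\theta_\ell(t)) - \Psi_j(\zeta|\theta_i^0,\bar{\m}_\ell)\right|d\zeta = o(\sqrt{\bar{\m}_\ell}),$$
where $\Psi_j$ is the $\theta^{(j)}$-derivative of the rescaled Gaussian characteristic function at $\theta_i^0$.

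The third move is the standard Bhattacharya--Rao splitting $\R^s = \{\|\zeta\|_2\leq\delta\sqrt{\bar{\m}_\ell}\}\cup\{\|\zeta\|_2>\delta\sqrt{\bar{\m}_\ell}\}$ with $\delta>0$ chosen by the positive definiteness and continuity of $\Lambda_\theta$ at $\theta_i^0$ (condition~\ref{item:genthmd}). In the tail region, changing back to the original scale bounds the integral by $\bar{\m}_\ell U_1 \bigl(\sup_{\|\zeta\|_2>\delta,\,\theta\in B(\theta_i^0,\gamma)}|\phi_T(\zeta|\theta)|\bigr)^{\bar{\m}_\ell-1-r}\int|\phi_T|^r\,d\zeta$; an auxiliary lemma obtained from continuity of $\phi_T$ and the integrability~\eqref{eqn:integrabilitynew} shows that this supremum is strictly less than $1$, rendering the tail exponentially small, and the Gaussian tail is of course Gaussian-small. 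In the central region, I will perform an Edgeworth-type Taylor expansion of $\log\phi_T(\zeta/\sqrt{\bar{\m}_\ell}|\theta_\ell(t))$ in $\zeta$ around the origin to order two, using~\ref{item:genthme} and~\eqref{eqn:uniformboundednew} to bound the cubic remainder by $O(\|\zeta\|_2^3/\sqrt{\bar{\m}_\ell})$ modulated by the Gaussian damping $\exp(-\zeta^\top\Lambda_{\theta_\ell(t)}\zeta/2)$, and a parallel expansion of $\partial_\theta\phi_T(\zeta/\sqrt{\bar{\m}_\ell}|\theta)$ around $\zeta=0$ that exploits $\partial_\theta\phi_T(0|\theta)=0$ together with the identification of the linear-in-$\zeta$ coefficient through the Jacobian $J_\lambda(\theta)$; finally a Taylor expansion in $\theta$ at $\theta_i^0$ absorbs the discrepancy $\theta_\ell(t)-\theta_i^0$, whose size is $o(1/\sqrt{\bar{\m}_\ell})$ by hypothesis.

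The main obstacle I anticipate is the combined $\zeta$-, $\theta$-, and $y$-uniformity required in the central region: because $\theta_\ell(t)$ itself drifts toward $\theta_i^0$ at the critical CLT scale, I must track how the Edgeworth expansion constants depend on $\theta$, which will rely on continuity moduli of $\Lambda_\theta$, of $J_\lambda(\theta)$, and of the mixed second derivatives $\partial^2_{\zeta\theta}\phi_T$, $\partial^2_{\theta\theta}\phi_T$ supplied by~\eqref{eqn:uniformboundednew} and~\ref{item:genthmd}--\ref{item:genthme}. The sharper-than-trivial $o(\sqrt{\bar{\m}_\ell})$ rate (rather than the easy $O(\sqrt{\bar{\m}_\ell})$) will come from an exact cancellation of leading terms: after the CLT rescaling, both integrands begin as $i\sqrt{\bar{\m}_\ell}\,\bigl(J_\lambda(\theta_\ell(t))\zeta\bigr)^{(j)}\exp(-\zeta^\top\Lambda_{\theta_\ell(t)}\zeta/2)$, and their difference is $O(\sqrt{\bar{\m}_\ell}\,\|\theta_\ell(t)-\theta_i^0\|_2)=o(1)$ by the standing hypothesis, while the subleading correction is $O(1)$; verifying that the resulting $L^1$ remainder is genuinely $o(\sqrt{\bar{\m}_\ell})$ rather than merely $O(\sqrt{\bar{\m}_\ell})$ is the delicate bookkeeping step.
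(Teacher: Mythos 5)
Your overall strategy is a genuinely different decomposition from the paper's. You apply the fundamental theorem of calculus in $\theta$ and reduce everything to a \emph{uniform-in-$\theta$} local CLT for the first $\theta$-derivatives of the density, namely $\bar{\m}_\ell^{(s-1)/2}\sup_{t}\sup_y\bigl|\partial_{\theta^{(j)}} f_Y(y|\theta_\ell(t),\bar{\m}_\ell)-\partial_{\theta^{(j)}} f_{\Ncal}(y|\theta_i^0,\bar{\m}_\ell)\bigr|\to 0$. The paper instead splits $J$ into two pieces $\check{J}_\ell+\hat{J}_\ell$: a second-order Taylor remainder in $\theta$ evaluated at $\theta_i^0$ (controlled by the mixed second derivatives of $\phi_T$ and the bound of Lemma \ref{lem:cfuppbou}), plus a derivative-LCLT at the \emph{fixed} point $\theta_i^0$ (the quantity $K_\ell(j)$). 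That split sidesteps the need for $\theta$-uniformity of the LCLT, which is the main extra burden your FTC route creates.

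There is, however, a concrete gap in your central-region argument. You propose an Edgeworth-type Taylor expansion of $\log\phi_T(\zeta/\sqrt{\bar{\m}_\ell}\,|\,\theta_\ell(t))$ to order two with cubic remainder $O(\|\zeta\|_2^3/\sqrt{\bar{\m}_\ell})$ over the \emph{growing} region $\|\zeta\|_2\leq\delta\sqrt{\bar{\m}_\ell}$. A remainder bound of that form requires bounded third $\zeta$-derivatives of $\phi_T$. The admissibility conditions supply only: twice differentiability of $\phi_T$ in $\zeta$ (through the existence and continuity of $\Lambda_\theta$ in \ref{item:genthmd}), and twice differentiability of $\partial_\theta\phi_T$ in $\zeta$ with the uniform bound \eqref{eqn:uniformboundednew}; nothing controls $\partial^3_\zeta\phi_T$. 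With only $C^2$ in $\zeta$ you get $o(\|\zeta\|_2^2)$ remainders near the origin, which are \emph{pointwise} and not uniform over $\|\zeta\|_2\leq\delta\sqrt{\bar{\m}_\ell}$, so the two-region Bhattacharya--Rao split you propose cannot be closed. The paper avoids this precisely by using a \emph{three}-region split: a fixed compact ball $\|\bar{\zeta}\|_2\leq a$ where only pointwise convergence plus dominated convergence is needed; an intermediate region $a<\|\bar{\zeta}\|_2<\gamma_1\sqrt{\bar{\m}_\ell}$ where the multiplicative second-order envelope $|e^{-\i\zeta^\top\lambda}\phi_T(\zeta|\theta)|\leq\exp(-\tfrac14\zeta^\top\Lambda_\theta\zeta)$ (a $C^2$-only fact) supplies an integrable dominating function; and an exponentially small tail. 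If you replace your Edgeworth-to-order-two step by this three-region DCT argument, and carefully track the drift of $\theta_\ell(t)$ toward $\theta_i^0$ by a triangle inequality through $\theta_i^0$, your FTC-based route should close, but as written the claimed $O(\|\zeta\|_2^3/\sqrt{\bar{\m}_\ell})$ remainder bound does not follow from the standing assumptions.
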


	\noindent \textbf{Step 5} (Verification of \eqref{eqn:contiforsequence})\\
	Write $\myD_\ell= \myD_{\m_{\ell}}(G_\ell,H_\ell)$ for abbreviation in the remaining of this proof. Observe by the local central limit theorem (Lemma \ref{thm:localcenlim})
	\begin{align*}
	\left|f_Z(z_\ell|\theta^0_{\alpha},\m_{\ell})-f_{\Nc}(z|\theta^0_{\alpha})\right|\leq & \sup_{z'\in \R^s}|f_Z(z'|\theta^0_{\alpha},\m_{\ell}) - f_{\Ncal}(z'|\theta^0_{\alpha})|  + |f_{\Ncal}(z_\ell|\theta^0_{\alpha})-f_{\Ncal}(z|\theta^0_{\alpha})|\\
	\to & 0,  
	\end{align*}
	as $\ell\to \infty$, which implies
	\begin{equation}
	\lim_{\ell\to\infty}f_Z(z_\ell|\theta^0_{\alpha},\m_{\ell})=f_{\Nc}(z|\theta^0_{\alpha}). \label{eqn:fzzllimit}
	\end{equation}

	Hereafter $\frac{\partial f_{\Yma}(\sqrt{\m_{\ell}} z_\ell + \m_{\ell}\lambda_{\theta^0_\alpha}|\theta_i^{0},\m_{\ell})}{\partial \theta^{(j)}}:=\left.\frac{\partial f_{\Yma}(y|\theta_i^{0},\m_{\ell})}{\partial \theta^{(j)}}\right|_{y= \sqrt{\m_{\ell}} z_\ell + \m_{\ell}\lambda_{\theta^0_\alpha}}$. Similar definition applies to $\frac{\partial f_{\Nc}(\sqrt{\m_{\ell}} z_\ell + \m_{\ell}\lambda_{\theta^0_\alpha}|\theta_i^{0},\m_{\ell})}{\partial \theta^{(j)}}$. Then for each $i\in [k_0]$,
	\begin{align*}
	&\frac{1}{\myD_\ell}\frac{f_{Y}(\sqrt{\m_{\ell}} z_\ell + \m_{\ell}\lambda_{\theta^0_\alpha}|\theta_i^{\ell},\m_{\ell}) - f_{Y}(\sqrt{\m_{\ell}} z_\ell + \m_{\ell}\lambda_{\theta^0_\alpha}|\eta^\ell_i,\m_{\ell}) }{f_{Y}(\sqrt{\m_{\ell}} z_\ell + \m_{\ell}\lambda_{\theta^0_\alpha}|\theta^0_{\alpha},\m_{\ell})} \1ve_{{B_\ell}}(\sqrt{\m_{\ell}} z_\ell + \m_{\ell}\lambda_{\theta^0_\alpha})\\
	=& \frac{\m_{\ell}^{s/2}}{\myD_\ell}\frac{f_{Y}(\sqrt{\m_{\ell}} z_\ell + \m_{\ell}\lambda_{\theta^0_\alpha}|\theta_i^{\ell},\m_{\ell}) - f_{Y}(\sqrt{\m_{\ell}} z_\ell + \m_{\ell}\lambda_{\theta^0_\alpha}|\eta^\ell_i,\m_{\ell}) }{f_{Z}( z_\ell |\theta^0_\alpha,\m_{\ell})} \1ve_{{E_\ell}}( z_\ell )\\ 
	\leq &  \left(\frac{\m_{\ell}^{s/2}}{\myD_\ell}\frac{\sum_{j=1}^q \frac{\partial f_{\Ncal}(\sqrt{\m_{\ell}} z_\ell + \m_{\ell}\lambda_{\theta^0_\alpha}|\theta_i^{0},\m_{\ell})}{\partial \theta^{(j)}}\left((\theta_i^\ell)^{(j)}-(\eta_i^\ell)^{(j)}\right) }{f_{Z}( z_\ell |\theta^0_\alpha,\m_{\ell})} + \frac{J(\theta_i^{\ell},\eta_i^{\ell},\m_\ell)}{D_\ell f_{Z}( z_\ell |\theta^0_\alpha,\m_{\ell})} \right) \1ve_{{E_\ell}}( z_\ell ),\numberthis \label{eqn:firsttermsimplify}
	\end{align*}
	where the first equality follows from \eqref{eqn:densityztoy} and where in the first equality $E_\ell = \{ z\in \R^s| \|z\|_2 \leq w_\ell \}$. Observe that for any $i\in [k_0]$, 
	\begin{align}
	\sqrt{\m_\ell}\|\theta_i^\ell-\theta_i^0\|\to & 0,  \label{eqn:thetatheta0}\\
	\sqrt{\m_\ell}\|\eta_i^\ell-\theta_i^0\|\to & 0, \label{eqn:etatheta0}
	\end{align}
	by \eqref{eqn:DNHG0gen}, \eqref{eqn:DNGG0gen} and \eqref{eqn:ratiotozero1}. 
	Then
	by applying \eqref{eqn:Jldefto0} with $\bar{\theta}_i^\ell,\bar{\eta}_i^\ell,\bar{\m}_\ell$ respectively be $\theta_i^\ell,\eta_i^\ell,\m_\ell$, and by \eqref{eqn:etaMlratiogen},
	$$
	\lim_{\ell\to \infty}\frac{ J(\theta_i^{\ell},\eta_i^{\ell},\m_\ell)}{D_\ell } \to 0,
	$$
	which together with \eqref{eqn:fzzllimit} yield
	\begin{equation}
	\lim_{\ell\to \infty}\frac{ J(\theta_i^{\ell},\eta_i^{\ell},\m_\ell)}{D_\ell f_{Z}( z_\ell |\theta^0_\alpha,\m_{\ell})} \1ve_{{E_\ell}}( z_\ell )\to 0. \label{eqn:JDratio0}
	\end{equation}
	Thus by \eqref{eqn:firsttermsimplify} and \eqref{eqn:JDratio0}
	\begin{align*}
	&\sum_{i=1}^{k_0}\frac{p_i^\ell}{\myD_\ell}\frac{f_{Y}(\sqrt{\m_{\ell}} z_\ell + \m_{\ell}\lambda_{\theta^0_\alpha}|\theta_i^{\ell},\m_{\ell}) - f_{Y}(\sqrt{\m_{\ell}} z_\ell + \m_{\ell}\lambda_{\theta^0_\alpha}|\theta^0_i,\m_{\ell}) }{f_{Y}(\sqrt{\m_{\ell}} z_\ell + \m_{\ell}\lambda_{\theta^0_\alpha}|\theta^0_{\alpha},\m_{\ell})} \1ve_{{B_\ell}}(\sqrt{\m_{\ell}} z_\ell + \m_{\ell}\lambda_{\theta^0_\alpha})\\
	\to& \sum_{i=1}^{k_0}p_i^0 \lim_{\ell\to \infty} \left(\frac{\m_{\ell}^{s/2}}{\myD_\ell}\frac{\sum_{j=1}^q \frac{\partial f_{\Ncal}(\sqrt{\m_{\ell}} z_\ell + \m_{\ell}\lambda_{\theta^0_\alpha}|\theta_i^{0},\m_{\ell})}{\partial \theta^{(j)}}\left((\theta_i^\ell)^{(j)}-(\eta_i^\ell)^{(j)}\right) }{f_{Z}( z_\ell |\theta^0_\alpha,\m_{\ell})} \right) \1ve_{{E_\ell}}( z_\ell ), \numberthis \label{eqn:limitfistpart}
	\end{align*}
	provided the right hand side exists. 
	
	Note that for each $j\in [q]$, and any $\theta\in \bar{\Theta}(G_0)$, by a standard calculation for Gaussian density,
	\begin{align*}
	&\frac{\partial f_{\Ncal}(y|\theta,\m)}{\partial \theta^{(j)}} \\
	= & f_{\Ncal}(y|\theta,\m) 
	 \quad \quad \left(-\frac{1}{2}\text{det}\left(\Lambda_{\theta}\right)^{-1}\frac{\partial \text{det}\left(\Lambda_{\theta}\right)}{\partial \theta^{(j)}} \right. \\
	 & \quad \left.+ 
	\left(\frac{\partial \lambda_{\theta}}{\partial \theta^{(j)}}\right)^\top \Lambda_{\theta}^{-1}(y-\m\lambda_\theta) - \frac{1}{2\m}(y-\m\lambda_\theta)^\top  \left(\frac{\partial \Lambda_{\theta}^{-1}}{\partial \theta^{(j)}}\right)(y-\m\lambda_\theta)\right),
	\end{align*}
	so we have
	\begin{align*}
	&\frac{\partial f_{\Ncal}(\sqrt{\m_{\ell}} z_\ell + \m_{\ell}\lambda_{\theta^0_\alpha}|\theta_i^{0},\m_{\ell})}{\partial \theta^{(j)}}\\
	= & f_{\Ncal}(\sqrt{\m_{\ell}} z_\ell + \m_{\ell}\lambda_{\theta^0_\alpha}|\theta_i^{0},\m_{\ell})\left(  \left(\frac{\partial \lambda_{\theta_i^0}}{\partial \theta^{(j)}}\right)^\top \Lambda_{\theta_i^0}^{-1}(\sqrt{\m_{\ell}}z_\ell+\m_{\ell}(\lambda_{\theta_\alpha^0}-\lambda_{\theta_i^0}))  \right. \\
	&\left. -\frac{1}{2}\frac{1}{\text{det}\left(\Lambda_{\theta_i^0}\right)}\frac{\partial \text{det}\left(\Lambda_{\theta_i^0}\right)}{\partial \theta^{(j)}} - \frac{1}{2}(z_\ell+\sqrt{\m_{\ell}}(\lambda_{\theta_\alpha^0}-\lambda_{\theta_i^0}))^\top  \frac{\partial \Lambda_{\theta_i^0}^{-1}}{\partial \theta^{(j)}}(z_\ell+\sqrt{\m_{\ell}}(\lambda_{\theta_\alpha^0}-\lambda_{\theta_i^0}))\right)\\
	= & \m_{\ell}^{-\frac{s}{2}} f_{\Ncal}( z_\ell + \sqrt{\m_{\ell}}(\lambda_{\theta^0_\alpha}-\lambda_{\theta^0_i})|\theta_i^{0})\left(  \left(\frac{\partial \lambda_{\theta_i^0}}{\partial \theta^{(j)}}\right)^\top \Lambda_{\theta_i^0}^{-1}(\sqrt{\m_{\ell}}z_\ell+\m_{\ell}(\lambda_{\theta_\alpha^0}-\lambda_{\theta_i^0}))  \right. \\
	&\left. -\frac{1}{2}\frac{1}{\text{det}\left(\Lambda_{\theta_i^0}\right)}\frac{\partial \text{det}\left(\Lambda_{\theta_i^0}\right)}{\partial \theta^{(j)}} - \frac{1}{2}(z_\ell+\sqrt{\m_{\ell}}(\lambda_{\theta_\alpha^0}-\lambda_{\theta_i^0}))^\top  \frac{\partial \Lambda_{\theta_i^0}^{-1}}{\partial \theta^{(j)}}(z_\ell+\sqrt{\m_{\ell}}(\lambda_{\theta_\alpha^0}-\lambda_{\theta_i^0}))\right).
	\end{align*}
	
	Thus, when $i\not =\alpha$,
	\begin{align*}
	\m_{\ell}^{\frac{s-1}{2}}\left|\frac{\partial f_{\Ncal}(\sqrt{\m_{\ell}} z_\ell + \m_{\ell}\lambda_{\theta^0_\alpha}|\theta_i^{0},\m_{\ell})}{\partial \theta^{(j)}}\right|
	\leq & \m_{\ell}^{-\frac{1}{2}} f_{\Ncal}( z_\ell + \sqrt{\m_{\ell}}(\lambda_{\theta^0_\alpha}-\lambda_{\theta^0_i})|\theta_i^{0})C(\theta_i^0, z)\m_{\ell}\\
	\to & 0, \numberthis \label{eqn:derlimitinotequalpha}
	\end{align*}
	where the inequality holds for sufficiently large $\ell$, $C(\theta_i^0, z)$ is a constant that only depends on $\theta_i^0$ and $z$, and the last step follows from $\lambda_{\theta^0_\alpha}\not = \lambda_{\theta^0_i}$ by condition 1) in the statement of theorem.
	
	When $i=\alpha$,
	\begin{align*}
	&\m_{\ell}^{\frac{s-1}{2}}\frac{\partial f_{\Ncal}(\sqrt{\m_{\ell}} z_\ell + \m_{\ell}\lambda_{\theta^0_\alpha}|\theta_i^{0},\m_{\ell})}{\partial \theta^{(j)}}\\
	= & \frac{f_{\Ncal}( z_\ell  |\theta_\alpha^{0})}{\sqrt{\m_{\ell}}} \left(-\frac{1}{2}\frac{1}{\text{det}\left(\Lambda_{\theta_i^0}\right)}\frac{\partial \text{det}\left(\Lambda_{\theta_i^0}\right)}{\partial \theta^{(j)}} + \left(\frac{\partial \lambda_{\theta_\alpha^0}}{\partial \theta^{(j)}}\right)^\top \Lambda_{\theta_\alpha^0}^{-1}(\sqrt{\m_{\ell}}z_\ell)  -\frac{1}{2}z_\ell^\top  \left(\frac{\partial \Lambda_{\theta_\alpha^0}^{-1}}{\partial \theta^{(j)}}\right)z_\ell\right)\\
	\to & f_{\Ncal}( z  |\theta_\alpha^{0})\left(\frac{\partial \lambda_{\theta_\alpha^0}}{\partial \theta^{(j)}}\right)^\top \Lambda_{\theta_\alpha^0}^{-1}z. \numberthis \label{eqn:derlimitiequalpha}
	\end{align*}
	Plugging  \eqref{eqn:derlimitinotequalpha} and \eqref{eqn:derlimitiequalpha} into \eqref{eqn:limitfistpart}, and then combining with \eqref{eqn:fzzllimit} and \eqref{eqn:etaMlratiogen},
	\begin{align*}
	&\sum_{i=1}^{k_0}\frac{p_i^\ell}{\myD_\ell}\frac{f_{Y}(\sqrt{\m_{\ell}} z_\ell + \m_{\ell}\lambda_{\theta^0_\alpha}|\theta_i^{\ell},\m_{\ell}) - f_{Y}(\sqrt{\m_{\ell}} z_\ell + \m_{\ell}\lambda_{\theta^0_\alpha}|\eta^\ell_i,\m_{\ell}) }{f_{Y}(\sqrt{\m_{\ell}} z_\ell + \m_{\ell}\lambda_{\theta^0_\alpha}|\theta^0_{\alpha},\m_{\ell})} \1ve_{{B_\ell}}(\sqrt{\m_{\ell}} z_\ell + \m_{\ell}\lambda_{\theta^0_\alpha})\\
	\to &p_\alpha^0\sum_{j=1}^q  a_\alpha^{(j)}\left(\frac{\partial \lambda_{\theta_\alpha^0}}{\partial \theta^{(j)}}\right)^\top \Lambda_{\theta_\alpha^0}^{-1}z\\
	=& p_\alpha^0\left(J_{\lambda}(\theta_\alpha^0)a_\alpha\right)^\top \Lambda_{\theta_\alpha^0}^{-1}z. \numberthis \label{eqn:Hlpart2limit}
	\end{align*}
	
	Next, we turn to the second summation in the definition of $\Hfun_\ell$ in a similar fashion. By \eqref{eqn:densityztoy},
	\begin{align*}
	 & \frac{f_{Y}(\sqrt{\m_{\ell}} z_\ell + \m_{\ell}\lambda_{\theta^0_\alpha}|\eta^\ell_i,\m_{\ell})}{f_{Y}(\sqrt{\m_{\ell}} z_\ell + \m_{\ell}\lambda_{\theta^0_\alpha}|\theta^0_{\alpha},\m_{\ell})} \1ve_{{B_\ell}}(\sqrt{\m_{\ell}} z_\ell + \m_{\ell}\lambda_{\theta^0_\alpha}) \\
	= & \m_\ell^{s/2}\frac{f_{Y}(\sqrt{\m_{\ell}} z_\ell + \m_{\ell}\lambda_{\theta^0_\alpha}|\eta^\ell_i,\m_{\ell})}{f_Z(z_\ell|\theta^0_{\alpha},\m_{\ell})}\1ve_{{E_\ell}}(z_\ell  )\\
	\leq & \m_\ell^{s/2} \left(\frac{\sum_{j=1}^q \frac{\partial f_{\Ncal}(\sqrt{\m_{\ell}} z_\ell + \m_{\ell}\lambda_{\theta^0_\alpha}|\theta_i^{0},\m_{\ell})}{\partial \theta^{(j)}}\left((\theta_i^\ell)^{(j)}-(\theta_i^0)^{(j)}\right)}{f_Z(z_\ell|\theta^0_{\alpha},\m_{\ell})}\right.\\
	&\left.+\frac{f_{Y}(\sqrt{\m_{\ell}} z_\ell + \m_{\ell}\lambda_{\theta^0_\alpha}|\theta^0_i,\m_{\ell}) }{f_Z(z_\ell|\theta^0_{\alpha},\m_{\ell})}\right)\1ve_{{E_\ell}}(z_\ell)+ \frac{J(\eta_i^\ell,\theta_i^0,\m_\ell)}{f_Z(z_\ell|\theta^0_{\alpha},\m_{\ell})}\1ve_{{E_\ell}}(z_\ell). \numberthis \label{eqn:densityratio1}
	\end{align*}
	Due to \eqref{eqn:etatheta0}, by applying \eqref{eqn:Jldefto0} with $\bar{\theta}_i^\ell,\bar{\eta}_i^\ell,\bar{\m}_\ell$ respectively be $\eta_i^\ell,\theta_i^0,\m_\ell$, and by  \eqref{eqn:ratiotozero1},
	$$
	\lim_{\ell\to \infty} J(\eta_i^{\ell},\theta_i^{0},\m_\ell) \to 0,
	$$
	which together with \eqref{eqn:fzzllimit} yield
	\begin{equation}
	\lim_{\ell\to \infty}\frac{ J(\eta_i^{\ell},\theta_i^{0},\m_\ell)}{f_{Z}( z_\ell |\theta^0_\alpha,\m_{\ell})} \1ve_{{E_\ell}}( z_\ell )\to 0. \label{eqn:JDratio02}
	\end{equation} 
	Moreover for any $i\in [k_0]$, 
	\begin{align*}
	&\left|\m_\ell^{s/2}\sum_{j=1}^q \frac{\partial f_{\Ncal}(\sqrt{\m_{\ell}} z_\ell + \m_{\ell}\lambda_{\theta^0_\alpha}|\theta_i^{0},\m_{\ell})}{\partial \theta^{(j)}}\left((\theta_i^\ell)^{(j)}-(\theta_i^0)^{(j)}\right)\right|\\ 
	\leq & \max_{1\leq j\leq q} \m_\ell^{(s-1)/2}\left|\frac{\partial f_{\Ncal}(\sqrt{\m_{\ell}} z_\ell + \m_{\ell}\lambda_{\theta^0_\alpha}|\theta_i^{0},\m_{\ell})}{\partial \theta^{(j)}}\right| 
	\sqrt{q}\sqrt{\m_\ell}\|\theta_i^\ell-\theta_i^0\|_2 \\
	\to & 0. \numberthis \label{eqn:parf0}
	\end{align*}
	by \eqref{eqn:derlimitinotequalpha} and \eqref{eqn:derlimitiequalpha} and \eqref{eqn:thetatheta0}.
	
	Combining \eqref{eqn:densityratio1}, \eqref{eqn:JDratio02}, \eqref{eqn:parf0} and \eqref{eqn:etaMlratiogen},
	\begin{align*}
	&\lim_{\ell\to\infty}\sum_{i=1}^{k_0} \frac{p_i^{\ell}-\pi^\ell_i}{\myD_{{\ell}}} \frac{f_{Y}(\sqrt{\m_{\ell}} z_\ell + \m_{\ell}\lambda_{\theta^0_\alpha}|\theta^0_i,\m_{\ell})}{f_{Y}(\sqrt{\m_{\ell}} z_\ell + \m_{\ell}\lambda_{\theta^0_\alpha}|\theta^0_{\alpha},\m_{\ell})} \1ve_{{B_\ell}}(\sqrt{\m_{\ell}} z_\ell + \m_{\ell}\lambda_{\theta^0_\alpha})\\
	=&\sum_{i=1}^{k_0} b_i \lim_{\ell\to\infty} \m_\ell^{s/2}\frac{f_{Y}(\sqrt{\m_{\ell}} z_\ell + \m_{\ell}\lambda_{\theta^0_\alpha}|\theta^0_i,\m_{\ell})}{f_Z(z_\ell|\theta^0_{\alpha},\m_{\ell})}\1ve_{{E_\ell}}(z_\ell)\\
	=& \sum_{i=1}^{k_0} b_i \lim_{\ell\to\infty} \frac{f_{Z}( z_\ell + \sqrt{\m_{\ell}}(\lambda_{\theta^0_\alpha}-\lambda_{\theta^0_i})|\theta^0_i,\m_{\ell})}{f_Z(z_\ell|\theta^0_{\alpha},\m_{\ell})}\1ve_{{E_\ell}}(z_\ell) \numberthis \label{eqn:2ndtermlimit}
	\end{align*}
	where the last step is due to \eqref{eqn:densityztoy}.
	 
	When $i=\alpha$, the term in the preceding display equals to $\1ve_{{E_\ell}}(z_\ell )$, which converges to $1$ as $\ell \rightarrow \infty$. When $i \not = \alpha$, 
	\begin{align*}
	&|f_Z(\sqrt{\m_{\ell}}(\lambda_{\theta_\alpha^0}-\lambda_{\theta_i^0})+ z_\ell|\theta^0_{i},\m_{\ell})| \\
	\leq & \sup_{z'\in \R^s}|f_Z(z'|\theta^0_{i},\m_{\ell}) - f_{\Ncal}(z'|\theta^0_{i})| + f_{\Ncal}(\sqrt{\m_{\ell}}(\lambda_{\theta_\alpha^0}-\lambda_{\theta_i^0})+ z_\ell|\theta^0_{i})\\
	\to & 0, \numberthis \label{eqn:numtozero}
	\end{align*}
	where the last step follows from Lemma \ref{thm:localcenlim} and $\lambda_{\theta_\alpha^0}\not =\lambda_{\theta_i^0}$ by condition 1) in the statement of the theorem. 
	Plug \eqref{eqn:numtozero} and \eqref{eqn:fzzllimit} into \eqref{eqn:2ndtermlimit}:
	\begin{equation}
	\lim_{\ell\to\infty}\sum_{i=1}^{k_0} \frac{p_i^{\ell}-\pi^\ell_i}{\myD_{{\ell}}} \frac{f_{Y}(\sqrt{\m_{\ell}} z_\ell + \m_{\ell}\lambda_{\theta^0_\alpha}|\theta^0_i,\m_{\ell})}{f_{Y}(\sqrt{\m_{\ell}} z_\ell + \m_{\ell}\lambda_{\theta^0_\alpha}|\theta^0_{\alpha},\m_{\ell})} \1ve_{{B_\ell}}(\sqrt{\m_{\ell}} z_\ell + \m_{\ell}\lambda_{\theta^0_\alpha})=b_\alpha. \label{eqn:Hllimitpart1}
	\end{equation}
	
	Finally, combining \eqref{eqn:Hllimitpart1} and \eqref{eqn:Hlpart2limit} to obtain 
	$$
	\lim_{\ell\to \infty}\Hfun_\ell(z_\ell) = \Hfun(z)=p_\alpha^0\left(J_{\lambda}(\theta_\alpha^0)a_\alpha\right)^\top \Lambda_{\theta_\alpha^0}^{-1}z + b_\alpha.
	$$
	Thus, ~\eqref{eqn:contiforsequence} is established, so we can conclude the proof of the theorem.
\end{proof}

\subsection{Bounds on characteristic functions for distributions with bounded density}
\label{sec:characteristicfunctionboundeddensity}

To prove Lemma \ref{lem:technical}, we need the next lemma, which is a generalization of the corollary to \cite[Lemma 1]{statulyavichus1965limit}. It gives an upper bound on the magnitude of the characteristic function for distributions with bounded density with respect to Lebesgue measure.

\begin{lem} \label{lem:cfuppbou}
	Consider a random vector $X\in \R^d$ with $\phi(\zeta)$ its characteristic function. Suppose $X$ has density $f(x)$ with respect to Lebesgue measure upper bounded by $U$, and has positive definite covariance matrix $\Lambda$. Then for all $\zeta \in \R^d$
	$$
	|\phi(\zeta)|\leq \exp\left(-\frac{C(d)\|\zeta\|_2^2}{(\|\zeta\|_2^2\lambda_{\max}(\Lambda)+1)\lambda_{\max}^{d-1}(\Lambda)U^2}\right),
	$$ 
	where $C(d)$ is some constant that depends only on $d$, and $\lambda_{\max}(\Lambda)$ is the largest eigenvalue.
\end{lem}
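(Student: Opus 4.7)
I would prove this as a $d$-dimensional extension of the argument of \cite{statulyavichus1965limit}, combining a symmetrization with a ``slab partition'' of $\R^d$: the density bound $U$ controls the volume of slabs where $1-\cos(\zeta^\top z)$ is small, and Chebyshev controls the tail in $\|z\|_2$. To set up, I introduce an independent copy $X'$ and let $Z=X-X'$. Then $Z$ is symmetric, its density $g = f\ast\tilde f$ (with $\tilde f(y):=f(-y)$) satisfies $\|g\|_\infty\le\|f\|_1\|f\|_\infty=U$ by Young's inequality, and $\mathrm{Cov}(Z)=2\Lambda$. Because $|\phi(\zeta)|^2=\E e^{i\zeta^\top Z}$ is real and $-\log|\phi(\zeta)|\ge\tfrac12(1-|\phi(\zeta)|^2)$ (from $-\log(1-u)\ge u$ on $[0,1)$), it will suffice to lower bound
$$
1-|\phi(\zeta)|^2 \;=\; \int_{\R^d} g(z)\bigl(1-\cos(\zeta^\top z)\bigr)\,dz
$$
by the desired quantity $C(d)\|\zeta\|_2^2 / \bigl((\|\zeta\|_2^2\lambda_{\max}+1)\lambda_{\max}^{d-1}U^2\bigr)$.

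The core estimate will be the slab argument. For $\alpha\in(0,\pi]$ define $S_k=\{z\in\R^d:|\zeta^\top z-2\pi k|\le\alpha\}$ for $k\in\mathbb{Z}$ and $A_\alpha=\R^d\setminus\bigcup_k S_k$; then $1-\cos(\zeta^\top z)\ge 2\alpha^2/\pi^2$ on $A_\alpha$, so $1-|\phi(\zeta)|^2\ge(2\alpha^2/\pi^2)\Pr(Z\in A_\alpha)$. Intersecting each slab with $B_R=\{\|z\|_2\le R\}$, the density bound gives $\mathrm{vol}(S_k\cap B_R)\le(2\alpha/\|\zeta\|_2)V_{d-1}R^{d-1}$, the number of slabs meeting $B_R$ is at most $\|\zeta\|_2 R/\pi+2$, and $\Pr(\|Z\|_2>R)\le 4d\lambda_{\max}/R^2$ by Chebyshev (using $\mathrm{tr}(2\Lambda)\le 2d\lambda_{\max}$). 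Summing produces
$$
\Pr(Z\in A_\alpha^c) \;\le\; C_d\,U\alpha R^d \;+\; C_d'\,U\alpha R^{d-1}/\|\zeta\|_2 \;+\; 4d\lambda_{\max}/R^2.
$$

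I would then balance by taking $R\asymp\sqrt{\lambda_{\max}}$ and
$$
\alpha \;=\; \min\!\Bigl(\,c_1\|\zeta\|_2/(U\lambda_{\max}^{(d-1)/2}),\;\; c_2/(U\lambda_{\max}^{d/2}),\;\; \pi\,\Bigr),
$$
with $d$-dependent constants $c_1,c_2$ chosen small enough that each of the three terms in the previous display stays $\le 1/4$, hence $\Pr(Z\in A_\alpha)\ge 1/4$. In the regime $\|\zeta\|_2^2\lambda_{\max}\lesssim 1$ the first entry of the minimum is active and $\alpha^2\asymp\|\zeta\|_2^2/(U^2\lambda_{\max}^{d-1})$, while in $\|\zeta\|_2^2\lambda_{\max}\gtrsim 1$ the second is active and $\alpha^2\asymp 1/(U^2\lambda_{\max}^d)$; both collapse to $\alpha^2\gtrsim\|\zeta\|_2^2/\bigl((\|\zeta\|_2^2\lambda_{\max}+1)\lambda_{\max}^{d-1}U^2\bigr)$ via the elementary identity $\min(a,b)\asymp ab/(a+b)$. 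Substituting into $1-|\phi|^2\ge\alpha^2/(2\pi^2)$ and applying $-\log|\phi|\ge(1-|\phi|^2)/2$ then yields the lemma. The main obstacle will be the $d$-dimensional bookkeeping---counting slabs, computing their volumes in $\R^d$, and gluing the piecewise choice of $\alpha$ into the single closed-form expression; the cap $\alpha\le\pi$ is only a safeguard, since if it ever binds then $1-\cos\alpha=2$ and the bound already holds from $\Pr(A_\alpha)\ge 1/4$.
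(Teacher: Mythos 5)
Your proof is correct in its essentials and takes a genuinely different route from the paper's. Both proofs open the same way --- symmetrize to $Z=X-X'$ with density $\le U$ and covariance $2\Lambda$, use $-\log|\phi|\ge\tfrac12(1-|\phi|^2)$, and control the tail by Chebyshev --- but the middle step diverges. The paper first rotates so that $\zeta=te_1$, then lower-bounds $\sin^2(\pi t x^{(1)})$ by the parabola $4t^2(x^{(1)}-j/t)^2$ on strips $G_j$, and invokes a separate extremal lemma (their Lemma~\ref{lem:onecorsquintmin}, the $\int(x-b)^2 f\,dx\ge E^3/(12U^2)$ estimate) followed by Jensen's inequality to aggregate across strips. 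You instead keep $\zeta$ as is, work with slabs $S_k$ perpendicular to $\zeta$, use the uniform bound $1-\cos\ge 2\alpha^2/\pi^2$ on the \emph{good} set $A_\alpha$, and lower-bound $\Pr(Z\in A_\alpha)$ by a volume-covering argument on the complement. Your version avoids both the rotation reduction and the auxiliary extremal lemma, which makes it more self-contained; the paper's version extracts somewhat sharper constants because the extremal problem is solved exactly rather than bounded by slab volumes. Either route yields the same qualitative inequality, and your $\min(a,b)\asymp ab/(a+b)$ gluing correctly reproduces the published closed form.

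One point needs repair: your safeguard reasoning for the cap $\alpha\le\pi$ is incorrect. If $\alpha=\pi$ then $A_\pi=\emptyset$, since the slabs $\{|\zeta^\top z-2\pi k|\le\pi\}$ tile $\R^d$, so $\Pr(A_\pi)=0$, not $\ge 1/4$ --- the claimed ``bound already holds'' does not. The correct fix is to observe the cap never actually binds: since $\int_{B_R}g\le U V_d R^d$ while $\int_{B_R}g\ge 3/4$ (Chebyshev with $R^2\asymp d\lambda_{\max}$), one has $U\lambda_{\max}^{d/2}\gtrsim_d 1$, so choosing $c_2$ small enough (depending only on $d$) forces $c_2/(U\lambda_{\max}^{d/2})\le\pi$ automatically, and the $\pi$ entry of the min is always inactive. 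With that substitution the argument is airtight.
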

\begin{proof}
	It suffices to prove for $\zeta\not =\bm{0}\in \R^d$. 
	
	\noindent \textbf{Step 1} In this step we prove the special case $\zeta = te_1$ for $t> 0$, where $e_1$ is the standard basis in $\R^d$. Define $I(\zeta)=\frac{1}{2}\left(1-|\phi(\zeta)|^2\right)$ and it is easy to verify 
	\begin{equation}
	|\phi(\zeta)|\leq \exp(-I(\zeta)). \label{eqn:modphiuppbou}
	\end{equation}
	Denote by $\tilde{f}$ to be the density w.r.t. Lebesgue measure of symmetrized random vector $X-X'$, where $X'$ is an independent copy of $X$. Then $\tilde{f}$ also has upper bound $U$ and $|\phi(\zeta)|^2$ is the characteristic function of $X-X'$ and 
	\begin{equation}
	|\phi(\zeta)|^2= \int_{\R^d}e^{i\zeta^\top x}\tilde{f}(x)dx = \int_{\R^d} \cos(\zeta^\top x)\tilde{f}(x)dx. \label{eqn:symrvcha}
	\end{equation}
	Write $x=(x^{(1)},\ldots,x^{(d)})$ and let $G_j=\{x\in \R^d| x^{(1)}\in (\frac{j}{t}-\frac{1}{2t},\frac{j}{t}+\frac{1}{2t}]  \}$ be the strip of length $\frac{1}{t}$ centered at $\frac{j}{t}$ across the $x^{(1)}$-axis. Then by \eqref{eqn:symrvcha}
	\begin{align*}
	I(2\pi\zeta)=&\int_{\R^d} \sin^2(\pi\zeta^\top x)\tilde{f}(x)dx\\
	\geq & \int_{B} \sin^2(\pi tx^{(1)})\tilde{f}(x)dx\\
	= & \sum_{j=-\infty}^{\infty} \int_{G_j \bigcap B } \sin^2(\pi tx^{(1)})\tilde{f}(x)dx \\
	= & \sum_{j=-\infty}^{\infty} \int_{G_j \bigcap B } \sin^2(\pi t(x^{(1)}-j/t))\tilde{f}(x)dx \\
	\geq & 4t^2\sum_{j=-\infty}^{\infty} \int_{G_j \bigcap B } (x^{(1)}-j/t)^2\tilde{f}(x)dx, \numberthis \label{eqn:Ilowbou}
	\end{align*}
	where the first inequality follows from $\zeta=te_1$ and $B$ is a subset in $\R^d$ to be determined, and the last inequality follows from $|\sin(\pi x)|\geq 2 |x| $ for $|x|\leq \frac{1}{2}$. 
	
	Let $B=\{z\in \R^d| |z^{(i)}|< 2\sqrt{d\lambda_{\max}(\Lambda)}  \ \forall i\geq 2, \text{ and } |z^{(1)}|< \frac{r}{t}+\frac{1}{2t} \}$ with $r= \min\{b \text{ integer}: \frac{b}{t}+ \frac{1}{2t} \geq 2\sqrt{d\lambda_{\max}(\Lambda)}  \}$.  Then $B\subset \bigcup_{j=-r}^r G_j $ and thus  \eqref{eqn:Ilowbou} become 
	\begin{align*}
	I(2\pi\zeta) \geq &4t^2\sum_{j=-r}^{r} \int_{G_j \bigcap B } (x^{(1)}-j/t)^2\tilde{f}(x)dx \\
	\overset{(*)}{=} & 4t^2\sum_{j=-r}^{r} \int_{G } (x^{(1)}-j/t)^2\tilde{f}(x)\1ve_{G_j\bigcap B}(x)  dx \\
	\overset{(**)}{\geq} & 4t^2\sum_{j=-r}^{r} \frac{Q_j^3}{12U^2(4\sqrt{d\lambda_{\max}(\Lambda)})^{2(d-1)}}  \\
	\overset{(***)}{\geq} & 4t^2 \frac{Q^3}{12(2r+1)^2U^2(4\sqrt{d\lambda_{\max}(\Lambda)})^{2(d-1)}}, \numberthis \label{eqn:Ilowbou2}
	\end{align*}
	where in step $(*)$ $G= \{z\in \R^d||z^{(i)}|< 2\sqrt{d\lambda_{\max}(\Lambda)}  \ \forall i\geq 2 \}$, step $(**)$ with $Q_j= \int_{G_j\bigcap B}\tilde{f}(x)dx$ follows from Lemma \ref{lem:onecorsquintmin} \ref{item:onecorsquintmintb} and step $(***)$ with $Q=\sum_{j=-r}^rQ_j = \int_B\tilde{f}(x)dx$ follows from Jensen's inequality. The inequalities in step $(**)$ and $(***)$ are attained with  $\tilde{f}(x)= U\sum\limits_{j=-r}^r \1ve_{W_j}\left(x\right)\ a.e.\ x\in G$ where $W_j=\{ z| |z^{(i)}|< 2\sqrt{d\lambda_{\max}(\Lambda)}, \forall i\geq 2, \text{ and } |z^{(1)}-j/t|< a \}$ for positive $a$ satisfies $$(2a)(4\sqrt{d\lambda_{\max}(\Lambda)})^{d-1}U(2r+1)=Q.$$ 
	
	Observe $\{z\in \R^d| z^\top  (2\Lambda)^{-1} z < 2d \} \subset B$ and thus
	\begin{equation*}
	Q=\P(X-X'\in B) \geq 1- \P( (X-X')^\top  (2\Lambda)^{-1} (X-X') \geq 2d )\geq \frac{1}{2}, 
	\end{equation*}
	where the last step follows from Markov inequality. Moreover by our choice of $r$, $2r+1\leq 4t\sqrt{d\lambda_{\max}(\Lambda)}+2$. Then \eqref{eqn:Ilowbou2} become
	\begin{align*}
	I(2\pi\zeta) \geq & t^2\frac{1}{24(4t\sqrt{d\lambda_{\max}(\Lambda)}+2)^2(4\sqrt{d\lambda_{\max}(\Lambda)})^{2(d-1)}U^2}\\	 \geq & \frac{C(d)t^2}{(t^2\lambda_{\max}(\Lambda)+1)\lambda_{\max}^{d-1}(\Lambda)U^2},	
	\end{align*}
	where $C(d)$ is a constant that depends only on $d$. The last display replacing $2\pi\zeta=2\pi t e_1$ by $\zeta= te_1$, together with \eqref{eqn:modphiuppbou} yield the desired conclusion.\\

	\noindent {\textbf{Step 2}} For any $\zeta\not = 0$, denote $t=\|\zeta\|_2$ and $u_1=\zeta/\|\zeta\|_2$. Consider an orthogonal matrix $U_{\zeta}$ with its first row $u_1^\top $. Then 
	$\phi(\zeta)=\E e^{itu_1^\top X}=\E e^{ite_1^\top Z}$ where $Z=U_{\zeta}X$. Since $Z$ has density $f_Z(z)=f(U_{\zeta}^\top z)$ with respect to Lebesgue measure, $f_Z(z)$ has the same upper bound $U$ and positive definite covariance matrix $U_{\zeta}\Lambda U_{\zeta}^\top $ with the same largest eigenvalue as $\Lambda$. The result then follows by applying \textbf{Step 1} to $\left|\E e^{ite_1^\top Z} \right|$.    
\end{proof}


\subsection{Auxiliary lemmas for Sections \ref{sec:proofoftheoremgen} and \ref{sec:characteristicfunctionboundeddensity}}
\label{sec:alinversebound}
Consider a family of probabilities $\{\P_{\thetave}\}_{\thetave\in \Theta}$ on $\R^d$, where $\thetave$ is the parameter of the family and $\Theta\subset \R^q$ is the parameter space. $\E_\theta$ denotes the expectation under the probability measure $\P_\theta$. Consider $\{\Xma_i\}_{i=1}^\infty $ a sequence of independent and identically distributed random vectors from $\P_{\thetave_0}$. Suppose  $\E_{\thetave_0}\Xma_1$ exists and define $\Zma_{\m} = \frac{\sum_{i=1}^{\m} {\Xma_i}-\m\E_{\thetave_0}\Xma_1}{\sqrt{\m}}$. The next result establishes that the density of $\Zma_{\m}$ converges uniformly to that of a multivariate normal distribution. 

\begin{lem}[Local Central Limit Theorem] \label{thm:localcenlim}
	Suppose $\{\Xma_i\}_{i=1}^\infty $ a sequence of independent and identically distributed random vectors from $\P_{\thetave_0}$. Suppose  $\E_{\thetave_0}\Xma_1$ and $\Lambda_{\thetave_0}:= \E_{\thetave_0}(\Xma_1-\E_{\thetave_0}\Xma_1)(\Xma_1-\E_{\thetave_0}\Xma_1)^\top $ exist and $\Lambda_{\thetave_0}$ is positive definite. Let the characteristic function of $\P_{\thetave}$ be $\phi(\zeta|\thetave):=\E_{\theta}e^{i\zeta^\top  X_1}$ and suppose there exists $r\geq 1$ such that $|\phi(\zeta|\thetave_0)|^r$ is Lebesgue integrable on $\R^d$. Then when $\m\geq r$, $\Zma_{\m}$ has density with respect to Lebesgue measure on $\R^d$, and its density $f_{\Zma}(\zve|\thetave_0,\m)$ as $\m$ tends to infinity converges uniformly in $\zve$ to $f_{\mathcal{N}}(\zve|\thetave_0)$, the density of $\mathcal{N}(\bm{0},\Lambda_{\thetave_0})$.
\end{lem}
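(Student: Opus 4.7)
The plan is to carry out a standard Fourier-analytic proof of the local central limit theorem, with attention to the fact that we only assume $|\phi(\zeta|\theta_0)|^r \in L^1$ rather than $\phi \in L^1$. First I would reduce to the centered case by translation (absorbing the mean into a shift of $Z_N$), and then observe that since $|\phi(\cdot|\theta_0)| \leq 1$, for any $N \geq r$ we have $|\phi(\zeta|\theta_0)|^N \leq |\phi(\zeta|\theta_0)|^r$, which is integrable. The characteristic function of $Z_N$ is $\phi_N(\zeta) := \phi(\zeta/\sqrt{N}|\theta_0)^N$ (after centering), so a change of variable $\eta = \zeta/\sqrt{N}$ yields $\int |\phi_N(\zeta)|\,d\zeta = N^{d/2}\int|\phi(\eta|\theta_0)|^N\,d\eta < \infty$. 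By the Fourier inversion theorem, $Z_N$ admits a bounded continuous density
\[
 f_Z(z|\theta_0,N) \;=\; \tfrac{1}{(2\pi)^d}\int_{\mathbb{R}^d} \phi_N(\zeta)\,e^{-\ive \zeta^\top z}\,d\zeta,
\]
and the same inversion applies to the Gaussian target $f_{\mathcal{N}}(z|\theta_0)$. Subtracting the two integrals gives the key pointwise-in-$z$ estimate
\[
 \sup_{z\in\mathbb{R}^d}\bigl|f_Z(z|\theta_0,N) - f_{\mathcal{N}}(z|\theta_0)\bigr| \;\leq\; \tfrac{1}{(2\pi)^d}\int_{\mathbb{R}^d}\bigl|\phi(\zeta/\sqrt{N}|\theta_0)^N - e^{-\zeta^\top \Lambda_{\theta_0}\zeta/2}\bigr|\,d\zeta,
\]
so it suffices to show the right-hand side tends to zero.

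Next I would split the integration region $\mathbb{R}^d$ into a near-zero piece $\{\|\zeta\|_2 < a\sqrt{N}\}$ and a tail piece $\{\|\zeta\|_2 \geq a\sqrt{N}\}$ for a small constant $a>0$ chosen below. On the near-zero piece, Taylor-expanding $\phi(u|\theta_0) = 1 - \tfrac{1}{2}u^\top \Lambda_{\theta_0} u + o(\|u\|_2^2)$ near $u=0$ shows that for $a$ sufficiently small, $|\phi(u|\theta_0)| \leq e^{-u^\top \Lambda_{\theta_0} u/4}$ whenever $\|u\|_2 \leq a$; raising to the $N$-th power and substituting $u=\zeta/\sqrt{N}$ gives the dominating function $e^{-\zeta^\top \Lambda_{\theta_0}\zeta/4}$, which is Lebesgue integrable on $\mathbb{R}^d$. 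Since $\phi(\zeta/\sqrt{N}|\theta_0)^N \to e^{-\zeta^\top\Lambda_{\theta_0}\zeta/2}$ pointwise by the ordinary CLT, the dominated convergence theorem drives the near-zero contribution to zero.

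The tail piece is the main obstacle, because the change of variable $\eta = \zeta/\sqrt{N}$ introduces a blow-up factor $N^{d/2}$. To counter it I need the bound $\rho := \sup_{\|\eta\|_2 \geq a}|\phi(\eta|\theta_0)| < 1$, which is where the hypothesis $|\phi|^r \in L^1$ is used crucially. From that hypothesis and $N \geq r$, the random sum $\sum_{i=1}^N X_i$ possesses a Lebesgue density (by Fourier inversion), hence by Riemann--Lebesgue $|\phi(\eta|\theta_0)|^N \to 0$ as $\|\eta\|_2\to\infty$, so $|\phi(\eta|\theta_0)|\to 0$ as well; combined with $|\phi(\eta|\theta_0)| < 1$ for $\eta \neq 0$ (forced by absence of a lattice structure, again a consequence of density existence) and continuity of $\phi$, compactness of spherical shells gives $\rho < 1$. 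Writing $|\phi(\eta|\theta_0)|^N = |\phi(\eta|\theta_0)|^{N-r}\cdot|\phi(\eta|\theta_0)|^r$ on the tail piece therefore yields
\[
 N^{d/2}\int_{\|\eta\|_2\geq a}|\phi(\eta|\theta_0)|^N\,d\eta \;\leq\; N^{d/2}\rho^{N-r}\int_{\mathbb{R}^d}|\phi(\eta|\theta_0)|^r\,d\eta \;\longrightarrow\; 0,
\]
the polynomial factor being beaten by the exponential $\rho^{N-r}$. The corresponding Gaussian tail $\int_{\|\zeta\|_2\geq a\sqrt{N}} e^{-\zeta^\top \Lambda_{\theta_0}\zeta/2}\,d\zeta$ vanishes as $N\to\infty$ by standard Gaussian concentration. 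Combining the two pieces establishes uniform convergence of $f_Z(\,\cdot\,|\theta_0,N)$ to $f_{\mathcal{N}}(\,\cdot\,|\theta_0)$. The hard part is really the verification $\rho<1$: the paper's integrability hypothesis is weaker than the usual Cramér condition, and I would spend most of the writing carefully extracting the density/non-lattice property from $|\phi|^r\in L^1$ before invoking Riemann--Lebesgue.
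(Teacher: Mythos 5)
Your proof is correct and is exactly the multivariate generalization of the Fourier-analytic argument in Theorem 2, Section 5, Chapter XV of Feller, which is the proof the paper refers to and says can be generalized to $d>1$ without much difficulty. You have supplied the details the paper leaves to the reader, including the extraction of $\rho = \sup_{\|\eta\|_2 \geq a}|\phi(\eta|\theta_0)| < 1$ from the $L^r$ hypothesis via existence of a density for the $r$-fold sum, which is precisely what the paper's integrability assumption is designed to deliver.
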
	
The special case for $d=1$ of the above lemma is Theorem 2 in Section 5, Chapter  XV of   \cite{feller2008introduction}. That proof can be generalized to $d>1$ without much difficulties and therefore the proof of Lemma \ref{thm:localcenlim} is omitted.

\begin{lem} \label{lem:onecorsquintmin}
	\begin{enumerate}[label=\alph*)]
		\item \label{item:onecorsquintmina}
		Consider a Lebesgue measurable function on $\R$ satisfies $0\leq f(x)\leq U$ and  $\int_{\R} f(x)dx = E \in (0,\infty)$. Then for any $b>0$
		$$
		\int_{\R}(x-b)^2f(x)dx  
		\geq   \frac{E^3}{12U^2},
		$$
		and the equality holds if and only if $f(x)=U\1ve_{[b-\frac{E}{2U},b+\frac{E}{2U}]}(x)\ a.e.$.
		\item \label{item:onecorsquintmintb}
		For $a>0$ define a set $G= \{z\in \R^d| |z^{(i)}|<a \quad \forall i\geq 2 \}$. Consider a Lebesgue measurable function on $\R^d$ satisfies $0\leq f(x)\leq U$ on $G$  and  $\int_{G} f(x)dx = E \in (0,\infty)$. Then for any $b>0$
		$$
		\int_{G} (x^{(1)}-b)^2f(x)dx \geq 
		\frac{E^3}{12U^2(2a)^{2(d-1)}},
		$$
		and the equality holds if and only if  $f(x)=U\1ve_{G_1}(x) \ a.e.\ x\in G $ where $G_1 = [b-\frac{E}{2U(2a)^{d-1}},b+\frac{E}{2U(2a)^{d-1}}] \times (-a,a)^{d-1} $.
	\end{enumerate}
\end{lem}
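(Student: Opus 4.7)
The plan is to treat this as a one-dimensional bathtub-type rearrangement inequality (part (a)) and then lift it to $d$ dimensions by Fubini slicing combined with Jensen's inequality (part (b)). For part (a), I would first identify the candidate minimizer as $f^\ast(x) = U\,\mathbf{1}_{I}(x)$ with $I=[b-E/(2U),\,b+E/(2U)]$, which is the unique interval of length $E/U$ centered at $b$; a direct computation gives $\int_{\R}(x-b)^2 f^\ast(x)\,dx = U\cdot \tfrac{2}{3}(E/(2U))^3 = E^3/(12U^2)$.

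To prove $\int(x-b)^2 f\,dx \geq \int(x-b)^2 f^\ast\,dx$, I would study the signed difference $f-f^\ast$, which is non-positive on $I$ (because $f\leq U=f^\ast$ there) and non-negative on $I^c$ (because $f^\ast=0$ there), while mass conservation gives $\int_I(f^\ast-f)\,dx=\int_{I^c} f\,dx$. The decisive geometric observation is that $(x-b)^2\leq (E/(2U))^2$ on $I$ and $(x-b)^2\geq (E/(2U))^2$ on $I^c$. Splitting the integral $\int(x-b)^2(f-f^\ast)\,dx$ across $I$ and $I^c$ and inserting these two one-sided bounds yields
\[
\int_{\R}(x-b)^2(f-f^\ast)\,dx \;\geq\; \Bigl(\tfrac{E}{2U}\Bigr)^2\Bigl(\int_{I^c} f\,dx - \int_I(f^\ast-f)\,dx\Bigr)\;=\;0.
\]
For the equality case, since the two comparisons on $(x-b)^2$ are strict off $\partial I$ (a null set), equality forces $f=0$ a.e.\ on $I^c$ and $f=U$ a.e.\ on $I$, i.e.\ $f=f^\ast$ a.e.

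For part (b), I would reduce to part (a) by slicing. For each fixed $\tilde{x}=(x^{(2)},\ldots,x^{(d)})\in(-a,a)^{d-1}$, define the slice mass $E(\tilde{x})=\int_{\R}f(x^{(1)},\tilde{x})\,dx^{(1)}$; Fubini gives $\int_{(-a,a)^{d-1}} E(\tilde{x})\,d\tilde{x}=E$. Applying part (a) to each slice and integrating yields
\[
\int_G (x^{(1)}-b)^2 f(x)\,dx \;\geq\; \frac{1}{12U^2}\int_{(-a,a)^{d-1}} E(\tilde{x})^3\,d\tilde{x}.
\]
Because $t\mapsto t^3$ is convex on $[0,\infty)$, Jensen's inequality over the cube of volume $(2a)^{d-1}$ bounds the right-hand side below by $E^3/(12U^2 (2a)^{2(d-1)})$. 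Equality holds iff $E(\tilde{x})$ is constant (forcing $E(\tilde{x})=E/(2a)^{d-1}$) and the slice-wise minimizer from part (a) is attained, which together pin down $f=U\,\mathbf{1}_{G_1}$ a.e.\ on $G$.

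I do not expect any serious obstacle: both steps are elementary. The only point meriting care is the equality analysis in part (a), where one must argue that the strict inequality $(x-b)^2<(E/(2U))^2$ holds on the interior of $I$ (and strictly $>$ on the interior of $I^c$) so that the two null-set exceptions do not contribute; this is a routine measure-theoretic check.
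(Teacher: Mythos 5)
Your part~(a) argument is correct and in substance the same as the paper's: both hinge on comparing $(x-b)^2$ with the threshold $(E/(2U))^2$ on the candidate interval versus its complement, together with mass conservation. The paper phrases it through the auxiliary pieces $f_1 = f\,\1ve_I$, $f_2 = f\,\1ve_{I^c}$, and $f_U = U\,\1ve_I - f_1$, which is precisely your decomposition $f - f^\ast$ split across $I$ and $I^c$; the equality analysis is also the same.

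For part~(b) your route differs from the paper's, and the difference is worth noting. The paper marginalizes onto the \emph{first} coordinate: it sets $h(x^{(1)}) = \int_{(-a,a)^{d-1}} f(x^{(1)},\tilde{x})\,d\tilde{x}$, observes by Tonelli that $\int_\R h = E$ and $0 \leq h \leq U(2a)^{d-1}$, and then applies part~(a) \emph{once} to $h$ with the density ceiling $U(2a)^{d-1}$ in place of $U$; the target bound $E^3/(12\,U^2(2a)^{2(d-1)})$ pops out directly with no further inequality. You instead slice \emph{perpendicular} to the first axis, apply part~(a) to each one-dimensional fiber (with ceiling $U$ and fiber mass $E(\tilde{x})$), integrate, and then invoke Jensen's inequality for the convex map $t \mapsto t^3$ over the cube of volume $(2a)^{d-1}$ to pass from $\int E(\tilde{x})^3\,d\tilde{x}$ to $E^3/(2a)^{2(d-1)}$. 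Both are valid. The paper's approach is a bit leaner (a single application of (a), no extra convexity step), while yours makes the extremal structure transparent slice by slice and cleanly decouples the two sources of rigidity in the equality case: fiberwise rigidity from part~(a) forces each slice to be $U$ on an interval of the right length, and Jensen's equality forces those lengths to be constant, yielding $f = U\,\1ve_{G_1}$. One small bookkeeping point in your version that should be stated: on the (possibly positive-measure) set of $\tilde{x}$ with $E(\tilde{x})=0$, part~(a) as stated does not apply since it assumes $E>0$, but the fiber inequality $\int(x^{(1)}-b)^2 f\,dx^{(1)} \geq 0 = E(\tilde{x})^3/(12U^2)$ is trivially true there, so the slicing argument goes through; similarly $E(\tilde{x})<\infty$ for a.e.\ $\tilde{x}$ by Tonelli.
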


\subsection{Proof of the technical lemma in the proof of Theorem \ref{thm:genthm}}
\label{sec:proofoftechnicallemma}

Lemma \ref{lem:technical} plays an essential role in the proof of Theorem \ref{thm:genthm} presented in Section \ref{sec:proofoftheoremgen}. 

\begin{proof}[Proof of Lemma \ref{lem:technical}]
    We will write $\theta_i^\ell,\eta_i^\ell,\m_\ell$ respectively for $\bar{\theta}_i^\ell,\bar{\eta}_i^\ell,\bar{\m}_\ell$ in this proof. But $\theta_i^\ell,\eta_i^\ell,\m_\ell$ in this proof are generic variables and might not necessarily be the same as in the proof of Theorem \ref{thm:genthm}. Let $\bar{\Theta}(G_0)$ be the same as in the first paragraph of the Step 3 in the proof of Theorem \ref{thm:genthm}.
    
    For any $\theta\in \bar{\Theta}(G_0)$, by condition \ref{item:genthmd} $\left.\nabla_{\zeta}\ \phi_T(\zeta|\theta)\right|_{\zeta=\bm{0}}=\i\lambda_{\theta}$, and $\left.\textbf{Hess}_{\zeta}\ \phi_T(\zeta|\theta)\right|_{\zeta=\bm{0}}=\i^2\left(\Lambda_{\theta}+\lambda_{\theta}\lambda_{\theta}^\top \right)$ exist, and
	by condition  \ref{item:genthme}
	$\frac{\partial \lambda_{\theta}}{\partial \theta^{(j)}}$ and $\frac{\partial \Lambda_{\theta}}{\partial \theta^{(j)}}$ exist.  Then, with condition  \ref{item:genthmd} 
	it follows from Pratt's Lemma that		
	$\frac{\partial f_{\Ncal}(\yve|\thetave,\m)}{\partial \theta^{(j)}}$ exists and is given by
	\begin{align}
	\frac{\partial f_{\Ncal}(\yve|\thetave,\m)}{\partial \theta^{(j)}} 
	= \frac{1}{(2\pi)^s}\int_{\R^s} e^{-\i\zetave^\top \yve} e^{\i\m\zetave^\top \lambda_{\theta}-\frac{\m}{2}\zetave^\top \Lambda_{\theta}\zetave}\left(\i\m\zetave^\top \frac{\partial \lambda_{\theta}}{\partial \theta^{(j)}}-\frac{\m}{2}\zetave^\top \frac{\partial \Lambda_{\theta}}{\partial \theta^{(j)}}\zetave\right)   d\zetave .  \label{eqn:derintrep}
	\end{align}
	Plugging the Fourier inversion formula \eqref{eqn:Ydenfor} and \eqref{eqn:derintrep} into \eqref{eqn:Jldefto0}, and
	noting $|e^{-\i\zeta^\top  y}| \leq 1$ for all $y \in \R^s$, for sufficiently large $\ell$ we obtain
	\begin{align*}
	&J(\theta_i^\ell,\eta_i^\ell,\m_\ell)\\
	\leq &  \m_{\ell}^{s/2}\frac{1}{(2\pi)^s}\int_{\R^s} \left | (\phi_T(\zeta|\theta_i^\ell))^{\m_{\ell}} - (\phi_T(\zeta|\eta_i^\ell))^{\m_{\ell}} \right. \\
	&\ \ \ \ \left. - \m_{\ell} e^{\i\m_{\ell}\zetave^\top \lambda_{\theta_i^0}-\frac{\m_{\ell}}{2}\zetave^\top \Lambda_{\theta_i^0}\zetave} \sum_{j=1}^q \left((\theta_i^\ell)^{(j)}-(\eta_i^\ell)^{(j)}\right)\left(\i\zetave^\top \frac{\partial \lambda_{\theta_i^0}}{\partial \theta^{(j)}}-\frac{1}{2}\zetave^\top \frac{\partial \Lambda_{\theta_i^0}}{\partial \theta^{(j)}}\zetave\right) \right | d\zetave \\
	\leq  & \check{J}_\ell + \hat{J}_\ell, 
	\end{align*}
	where
	\begin{align*}
	&\check{J}_\ell := \frac{\m_{\ell}^{s/2}}{(2\pi)^s}\int_{\R^s} \left|(\phi_T(\zeta|\theta_i^\ell))^{\m_{\ell}} - (\phi_T(\zeta|\eta_i^\ell))^{\m_{\ell}} \right.\\
	&\left. -  \m_{\ell} \left(\phi_T(\zeta|\theta_i^{0})\right)^{\m_{\ell}-1} \sum_{j=1}^q \left((\theta_i^\ell)^{(j)}-(\eta_i^\ell)^{(j)}\right)\frac{\partial \phi_T(\zeta|\thetave_i^{0})}{\partial \theta^{(j)}} \right| d\zetave,
	\end{align*}
	and 
	\begin{align*}
	\hat{J}_\ell & := \m_{\ell}^{s/2+1}\frac{1}{(2\pi)^s} \sum_{j=1}^q \left|(\theta_i^\ell)^{(j)}-(\eta_i^\ell)^{(j)}\right| \int_{\R^s} \left|  \left(\phi_T(\zeta|\theta_i^{0})\right)^{\m_{\ell}-1} \frac{\partial \phi_T(\zeta|\thetave_i^{0})}{\partial \theta^{(j)}} - \right.\\
	& \ \ \ \ \left. - \exp\left(\i\m_{\ell}\zetave^\top \lambda_{\theta_i^0}-\frac{\m_{\ell}}{2}\zetave^\top \Lambda_{\theta_i^0}\zetave\right) \left(\i\zetave^\top \frac{\partial \lambda_{\theta_i^0}}{\partial \theta^{(j)}}-\frac{1}{2}\zetave^\top \frac{\partial \Lambda_{\theta_i^0}}{\partial \theta^{(j)}}\zetave\right) \right| d\zetave.
	\end{align*}
	We will show in the sequel that $\check{J}_\ell=o(\sqrt{\m_\ell} \|\theta_i^\ell-\eta_i^\ell\|_2 )$ in Step 1 and  $\hat{J}_\ell=o(\sqrt{\m_\ell} \|\theta_i^\ell-\eta_i^\ell\|_2 )$ in Step 2, thereby establishing \eqref{eqn:Jldefto0}.\\
	 
	\noindent \textbf{Step 1} (Prove $\check{J}_\ell=o(\sqrt{\m_\ell} \|\theta_i^\ell-\eta_i^\ell\|_2 )$)\\
	By Condition \ref{item:genthmg} and Lemma \ref{lem:taylortwisted} \ref{item:taylortwistedb},
	\begin{align*}
	&\check{J}_\ell
	\leq  \frac{\m_{\ell}^{s/2}}{(2\pi)^s}\int_{\R^s} \left|q\sum_{1\leq j,\beta\leq q}   \left(\|\theta_i^\ell-\theta_i^0\|_2 +\|\eta_i^\ell-\theta_i^0\|_2 \right) \|\theta_i^\ell-\eta_i^\ell\|_2 R_1(\zeta;\theta_i^0, \theta_i^\ell, \eta_i^\ell,j,\beta)  \right|d\zeta , \numberthis \label{eqn:Jlbou}
	\end{align*}
	where with $\theta_\ell(t_1,t_2) = \theta_i^0+t_2(\eta_i^\ell  +t_1(\theta_i^\ell-\eta_i^\ell)-\theta_i^0)$
	\begin{align*}
	&R_1(\zeta;\theta_i^0, \theta_i^\ell, \eta_i^\ell,j,\beta), \\
	= &
	\int_{0}^1\int_0^1 \left|\m_{\ell}(\m_{\ell}-1) \left(\phi_T(\zeta|\theta_\ell(t_1,t_2))\right)^{\m_{\ell}-2} \frac{\partial \phi_T(\zeta|\theta_\ell(t_1,t_2))}{\partial \theta^{(j)}} \frac{\partial \phi_T(\zeta|\theta_\ell(t_1,t_2))}{\partial \theta^{(\beta)}} + \right.\\
	& \quad \left. + \m_{\ell} \left(\phi_T(\zeta|\theta_\ell(t_1,t_2))\right)^{\m_{\ell}-1} \frac{\partial^2 \phi_T(\zeta|\theta_\ell(t_1,t_2))}{\partial \theta^{(j)}\partial \theta^{(\beta)}}  \right|dt_2 dt_1.
	\end{align*}
	Then 
	\begin{align*}
	&\int_{\R^s}\left|R_1(\zeta;\theta_i^0, \theta_i^\ell, \eta_i^\ell,j,\beta)\right| d\zeta \\
	\leq & \m_{\ell}  \int_{\R^s} \int_{0}^1 \int_{0}^1 \left| \phi_T(\zeta|\theta_\ell(t_1,t_2))\right|^{\m_{\ell}-2}\times  \\
	& \quad  \left( \m_{\ell}\left|\frac{\partial \phi_T(\zeta|\theta_\ell(t_1,t_2))}{\partial \theta^{(j)}} \frac{\partial \phi_T(\zeta|\theta_\ell(t_1,t_2))}{\partial \theta^{(\beta)}}\right|   +     \left|\frac{\partial^2 \phi_T(\zeta|\theta_\ell(t_1,t_2))}{\partial \theta^{(j)}\partial \theta^{(\beta)}}  \right|\right)  dt_2dt_1 d\zetave \\
	= & \m_{\ell} \int_{0}^1 \int_{0}^1 \int_{\R^s}  \left| \phi_T(\zeta|\theta_\ell(t_1,t_2))\right|^{\m_{\ell}-2}\times\\
	&\quad \left( \m_{\ell}\left|\frac{\partial \phi_T(\zeta|\theta_\ell(t_1,t_2))}{\partial \theta^{(j)}} \frac{\partial \phi_T(\zeta|\theta_\ell(t_1,t_2))}{\partial \theta^{(\beta)}}\right|   +     \left|\frac{\partial^2 \phi_T(\zeta|\theta_\ell(t_1,t_2))}{\partial \theta^{(j)}\partial \theta^{(\beta)}}  \right|\right)d\zetave dt_2 dt_1  \\
	=: & \m_\ell R_2(\theta_i^0, \theta_i^\ell, \eta_i^\ell,j,\beta),\numberthis \label{eqn:R1bou}
	\end{align*}
	where the first inequality follows from the fact that $|\phi_T(\zeta|\theta_\ell(t_1,t_2)))| \leq 1$, and the last inequality follows from Condition \ref{item:genthmg}, Tonelli Theorem and the joint Lebesgue measurability of $\phi_T(\zeta|\theta_\ell(t_1,t_2))$, $\frac{\partial \phi_T(\zeta|\theta_\ell(t_1,t_2))}{\partial \theta^{(j)}}$ and $\left|\frac{\partial^2 \phi_T(\zeta|\theta_\ell(t_1,t_2))}{\partial \theta^{(j)}\partial \theta^{(\beta)}}  \right|$, as  functions of $\zeta$, $t_1$ and $t_2$ by \cite[Lemma 4.51]{guide2006infinite}. Then following \eqref{eqn:Jlbou} and \eqref{eqn:R1bou},
	\begin{align*}
	& \check{J}_\ell \\
	\leq & C(q,s) \m_{\ell}^{s/2+1}  \left\|\theta_i^\ell-\eta_i^\ell\right\|_2 \left(\left\|\theta_i^\ell-\theta_i^0\right\|_2 +\left\|\eta_i^\ell-\theta_i^0\right\|_2\right)  \ \max_{1\leq j,\beta\leq q}\ R_2(\theta_i^0, \theta_i^\ell, \eta_i^\ell,j,\beta)\\
	\numberthis \label{eqn:Jltemp2}
	= & 
	C(q,s) \m_{\ell}\|\theta_i^\ell-\eta_i^\ell\|_2  (\|\theta_i^\ell-\theta_i^0\|_2 + \|\eta_i^\ell-\theta_i^0\|_2) \max_{ j,\beta}\   \int_{0}^1 \int_{0}^1   \int \left| \phi_T(\left.\frac{\bar{\zeta}}{\sqrt{\m_{\ell}}} \right|\theta_\ell(t_1,t_2))\right|^{\m_{\ell}-2}   \\
	& \times
	\left(\m_{\ell} \left|\frac{\partial \phi_T(\frac{\bar{\zeta}}{\sqrt{\m_{\ell}}}|\theta_\ell(t_1,t_2))}{\partial \theta^{(j)}} \frac{\partial \phi_T(\frac{\bar{\zeta}}{\sqrt{\m_{\ell}}}|\theta_\ell(t_1,t_2))}{\partial \theta^{(\beta)}}\right|   +     \left|\frac{\partial^2 \phi_T(\frac{\bar{\zeta}}{\sqrt{\m_{\ell}}}|\theta_\ell(t_1,t_2))}{\partial \theta^{(j)}\partial \theta^{(\beta)}}  \right| \right) d\bar{\zeta} dt_2 dt_1, 
	\end{align*}
	where in the first inequality $C(q,s)$ is some constant that depends on $q$ and $s$, and where the second equality follows from
	\eqref{eqn:R1bou} and changing variable with $\bar{\zeta}=\sqrt{\m_{\ell}}\zeta$. Denote the integrand in the last display by $E_{j,\beta}(\bar{\zeta},t_1,t_2)$.
	
	In the rest of the proofs denote the left hand sides of \eqref{eqn:uniformboundednew} and \eqref{eqn:integrabilitynew}  respectively by $U_1(\theta_0)$ and $U_2(\theta_0)$ for every $\theta_0\in \Theta_1=\{\theta_i^0\}_{i=1}^{k_0}$.
	
	Observe that $f_Y(y|\theta_\ell(t_1,t_2), r)$ exists and has upper bound $\frac{1}{(2\pi)^s} \int_{\R^s} |\phi_{T}(\zeta|\theta_\ell(t_1,t_2))|^rd\zeta  \leq C(s)U_2(\theta_i^0)$ by condition \ref{item:genthmg}. Then invoking Lemma \ref{lem:cfuppbou}, for $\|\zeta\|_2\leq 1$,
	\begin{align*}
	|\phi_T(\zeta|\theta_\ell(t_1,t_2))|^r \leq & \exp\left(-\frac{C(s)\|\zeta\|_2^2}{(\lambda_{\max}(\Lambda_{\theta_\ell(t_1,t_2)})+1)\lambda_{\max}^{s-1}(\Lambda_{\theta_\ell(t_1,t_2)}))U_2^2(\theta_i^0)}\right)\\
	\leq & \exp\left(-\frac{C(s)\|\zeta\|_2^2}{U_3(\theta_i^0)U_2^2(\theta_i^0)}\right), \numberthis \label{eqn:phiTuppsmallzeta}
	\end{align*}
	where the last step follows from $(\lambda_{\max}(\Lambda_{\theta_\ell(t_1,t_2)})+1)\lambda_{\max}^{s-1}(\Lambda_{\theta_\ell(t_1,t_2)}) \leq U_3(\theta_i^0)$ by condition \ref{item:genthmd} with $U_3(\theta_i^0)$ being some constant that depends on $\theta_i^0$.

	Moreover, by the mean value theorem and condition \ref{item:genthmg}: $\forall \|\zeta\|_2 < 1 $
	\begin{multline}
	\left|\frac{\partial \phi_T(\zetave|\theta_\ell(t_1,t_2))}{\partial \theta^{(j)}}\right|=\left|\frac{\partial \phi_T(\zetave|\theta_\ell(t_1,t_2))}{\partial \theta^{(j)}} - \frac{\partial \phi_T(0|\theta_\ell(t_1,t_2))}{\partial \theta^{(j)}} \right| \\
	\leq \|\zeta\|_2\ \sup_{\|\zeta\|_2<1} \left\|\nabla_{\zeta} \frac{\partial \phi_T(\zetave|\theta_\ell(t_1,t_2))}{\partial \theta^{(j)}}\right\|_2 \leq \sqrt{s}U_1(\theta_i^0)\|\zeta\|_2. \label{eqn:thetaderuniuppbou}  
	\end{multline}
	Then
	\begin{align*}
	&\int_{\|\bar{\zeta}\|_2<\sqrt{\m_{\ell}}}  E_{j,\beta}(\bar{\zeta},t_1,t_2) d\bar{\zeta}\\
	\leq & \int_{\|\bar{\zeta}\|_2<\sqrt{\m_{\ell}}}  \exp\left(-\frac{C(s)\|\bar{\zeta}\|_2^2}{rU_3(\theta_i^0)U_2^2(\theta_i^0)}\frac{\m_{\ell}-2}{\m_{\ell}}\right) \left( \left(\sqrt{s}U_1(\theta_i^0)\right)^2\|\bar{\zeta}\|_2^2   +     U_1(\theta_i^0)\right) d\bar{\zeta}\\
	\leq & \int_{\R^s}  \exp\left(-\frac{C(s)\|\bar{\zeta}\|_2^2}{2rU_3(\theta_i^0)U_2^2(\theta_i^0)}\right) \left( \left(\sqrt{s}U_1(\theta_i^0)\right)^2\|\bar{\zeta}\|_2^2   +     U_1(\theta_i^0)\right) d\bar{\zeta}\\
	= & C(s,r,\theta_i^0) \numberthis \label{eqn:intupperbou11},
	\end{align*}
	where the first inequality follows from \eqref{eqn:phiTuppsmallzeta} and \eqref{eqn:thetaderuniuppbou}. 
	
	Let $\eta:=\sup_{\|\zetave\|_2\geq 1} |\phi_T(\zeta|\thetave_i^0)|$. Since the density $f_Y(y|\theta_i^0,r)$ w.r.t. Lebesgue exists and has characteristic function $\phi_T^r(\zeta|\thetave_i^0)$, $\phi_T^r(\zeta|\thetave_i^0)\to 0$ as $\|\zetave\|_2\to \infty$ by Riemann–Lebesgue lemma. It follows that $\eta$ is actually a maximum. Moreover, the existence of the density $f_Y(y|\theta_i^0,r)$ w.r.t. Lebesgue, together with Lemma 4 in Section 1, Chapter XV of \cite{feller2008introduction}, yield $|\phi_T(\zeta|\thetave_i^0)|^r<1$ when $\zetave\not = \bm{0}$.  It follows that $\eta<1$. By the mean value theorem and \ref{item:genthmg}
	$$
	\sup_{\zeta\in \R^s}|\phi_T(\zeta|\theta_\ell(t_1,t_2))-\phi_T(\zeta|\theta_i^0)|\leq \sqrt{q}U_1(\theta_i^0) (\| \theta_i^\ell-\theta_i^0\|_2+\| \eta_i^\ell-\theta_i^0\|_2),
	$$
	which further implies $\sup_{t_1,t_2\in[0,1]}\sup_{\|\zeta\|_2\geq 1}|\phi_T(\zeta|\theta_\ell(t_1,t_2))|<\eta+\frac{1-\eta}{2}:=\eta'<1$ for sufficiently large $\ell$. Then for sufficiently large $\ell$,
	\begin{align*}
	&\int_{\|\bar{\zeta}\|_2\geq\sqrt{\m_{\ell}}}  E_{j,\beta}(\bar{\zeta},t_1,t_2) d\bar{\zeta}\\
	\leq & \left(\eta'\right)^{\m_{\ell}-2-r} \int_{\R^s}  \left| \phi_T\left(\left.\frac{\bar{\zeta}}{\sqrt{\m_{\ell}}} \right|\theta_\ell(t_1,t_2)\right)\right|^{r} \left(\m_{\ell} U_1^2(\theta_i^0)   +     \left|\frac{\partial^2 \phi_T(\frac{\bar{\zeta}}{\sqrt{\m_{\ell}}}|\theta_\ell(t_1,t_2))}{\partial \theta^{(j)}\partial \theta^{(\beta)}}  \right|\right) d\bar{\zeta}\\
	\leq&\left(\eta'\right)^{\m_{\ell}-2-r} \m_{\ell}^{s/2} (\m_{\ell} U_1^2(\theta_i^0)+1)  \int_{\R^s}  \left| \phi_T\left(\left.\zeta \right|\theta_\ell(t_1,t_2)\right)\right|^{r} \left(1   +     \left|\frac{\partial^2 \phi_T(\zeta|\theta_\ell(t_1,t_2))}{\partial \theta^{(j)}\partial \theta^{(\beta)}}  \right|\right)  d\zeta\\
	\leq &  \left(\eta'\right)^{\m_{\ell}-2-r}\m_{\ell}^{s/2}\left(\m_{\ell} U_1^2(\theta_i^0)   +     1\right)U_2(\theta_i^{0}) \numberthis \label{eqn:intupperbou12},
	\end{align*}
	where the first inequality follows from  the definition of $\eta'$ and condition \ref{item:genthmg}, and the last inequality follows from condition \ref{item:genthmg}. \eqref{eqn:intupperbou11} and \eqref{eqn:intupperbou12} immediately imply for any $j$, $\beta$:
	\begin{equation}
	\limsup_{\ell \to \infty}   \int_{0}^1 \int_{0}^1   \int_{\R^s}  E_{j,\beta}(\bar{\zeta},t_1,t_2) d\bar{\zeta} dt_2 dt_1<\infty. \label{eqn:Ejbetalimit} 
	\end{equation}
	The above display together with \eqref{eqn:Jltemp2} and the conditions $\sqrt{N_\ell}\|\theta_i^\ell-\theta_i^0\|_2,\sqrt{N_\ell}\|\eta_i^\ell-\theta_i^0\|_2\to 0$ yield $\check{J}_\ell=o(\sqrt{N_\ell}\|\theta_i^\ell-\eta_i^\ell\|_2)$.\\

	\noindent \textbf{Step 2} (Prove $\hat{J}_\ell=o(\sqrt{\m_\ell} \|\theta_i^\ell-\eta_i^\ell\|_2 )$). 
	A large portion of the proof borrows ideas from Theorem 2 in Chapter XV, Section 5 of \cite{feller2008introduction}. Observe 
	\begin{equation}
	\hat{J}_\ell \leq \sqrt{\m_{\ell}}\|\theta_i^\ell-\eta_i^\ell\|_2\frac{\sqrt{q}}{(2\pi)^s} \max_{1\leq j\leq q} K_\ell(j) \label{eqn:Jhatuppbou}
	\end{equation}
	where as before by a change of variable, $\bar{\zeta}=\sqrt{\m_{\ell}}\zeta$,
	\begin{align*}
	K_\ell(j) := & \m_{\ell}^{\frac{s+1}{2}}\int_{\R^s} \left|  \left(\phi_T(\zeta|\theta_i^{0})\right)^{\m_{\ell}-1} \frac{\partial \phi_T(\zeta|\thetave_i^{0})}{\partial \theta^{(j)}} - \right.\\
	& \ \ \ \ \left. - \exp\left(\i\m_{\ell}\zetave^\top \lambda_{\theta_i^0}-\frac{\m_{\ell}}{2}\zetave^\top \Lambda_{\theta_i^0}\zetave\right) \left(\i\zetave^\top \frac{\partial \lambda_{\theta_i^0}}{\partial \theta^{(j)}}-\frac{1}{2}\zetave^\top \frac{\partial \Lambda_{\theta_i^0}}{\partial \theta^{(j)}}\zetave\right) \right| d\zetave \\
	= & \m_{\ell}^{\frac{s+1}{2}}\int_{\R^s} \left|  \left(e^{-\ive\zeta^\top \lambda_{\theta_i^0}}\phi_T(\zeta|\theta_i^{0})\right)^{\m_{\ell}-1} \frac{\partial \phi_T(\zeta|\thetave_i^{0})}{\partial \theta^{(j)}} - \right.\\
	& \ \ \ \ \left. - \exp\left(\i\zetave^\top \lambda_{\theta_i^0}-\frac{\m_{\ell}}{2}\zetave^\top \Lambda_{\theta_i^0}\zetave\right) \left(\i\zetave^\top \frac{\partial \lambda_{\theta_i^0}}{\partial \theta^{(j)}}-\frac{1}{2}\zetave^\top \frac{\partial \Lambda_{\theta_i^0}}{\partial \theta^{(j)}}\zetave\right) \right| d\zetave \\
	= & \int_{\R^s} \sqrt{\m_{\ell}} \left|\left(e^{-\frac{\i}{\sqrt{\m_{\ell}}}\bar{\zetave}^\top \lambda_{\theta_i^0}}\phi_T\left(\frac{\bar{\zetave}}{\sqrt{\m_{\ell}}}|\theta_i^0\right)\right)^{\m_{\ell}-1} \frac{\partial \phi_T(\frac{\bar{\zetave}}{\sqrt{\m_{\ell}}}|\thetave_i^{0})}{\partial \theta^{(j)}} - \right.\\
	& \ \ \ \ \left. - \exp\left(\frac{\i}{\sqrt{\m_{\ell}}}\bar{\zetave}^\top \lambda_{\theta_i^0}-
	\frac{1}{2}\bar{\zetave}^\top \Lambda_{\theta_i^0}\bar{\zetave}\right) \left(\frac{\i}{\sqrt{\m_{\ell}}}\bar{\zetave}^\top \frac{\partial \lambda_{\theta_i^0}}{\partial \theta^{(j)}}-\frac{1}{2\m_{\ell}}\bar{\zetave}^\top \frac{\partial \Lambda_{\theta_i^0}}{\partial \theta^{(j)}}\bar{\zetave}\right) \right| d\bar{\zetave}.	 \numberthis \label{eqn:intlong}
	\end{align*}
	Denote the integrand in the above display by $A$. Since $\lambda_{\theta_i^0}$ and $\Lambda_{\theta_i^0}$ exist, $e^{-\i\zeta^\top \lambda_{\theta_i^0}}\phi_T(\zeta|\theta_i^0)$ is twice continuously differentiable on $\R^s$, with gradient being $\bm{0}$ and Hessian being $\i^2\Lambda_{\theta_i^0}$ at $\zetave=0$. Then by Taylor Theorem,  
	\begin{align}
	\left|e^{-i\zetave^\top \lambda_{\theta_i^0}}\phi_T(\zeta|\theta_i^0)\right| < \exp\left(-\frac{1}{4}\zetave^\top \Lambda_{\theta_i^0}\zetave\right) \quad \text{ if } 0<\|\zetave\|_2 < \gamma_1, \label{eqn:uppbousmallzeta}
	\end{align}
	for sufficient small $0<\gamma_1<1$, and
	\begin{equation}
	\left(e^{-\frac{\ive}{\sqrt{\m_{\ell}}}\bar{\zetave}^\top \lambda_{\thetave_i^0}}\phi_T\left(\frac{\bar{\zetave}}{\sqrt{\m_{\ell}}}|\thetave_i^0\right)\right)^{\m_{\ell}-1} \to \exp\left(-\frac{1}{2}\bar{\zetave}^\top \Lambda_{\thetave_i^0}\bar{\zetave}\right). \label{eqn:limit1}
	\end{equation}
	
	Let $\eta'':=\sup_{\|\zetave\|_2\geq \gamma_1} |\phi(\zeta|\thetave_0)|$. By the same reasoning of $\eta<1$ in Step 1, $\eta''<1$. Then for any $a>0$,
	\begin{align}
	\int_{\R^s}A d\bar{\zetave} = \int_{\|\bar{\zetave}\|_2\leq a}A d\bar{\zetave} + \int_{a<\|\bar{\zetave}\|_2 < \gamma_1 \sqrt{\m_{\ell}}}A d\bar{\zetave} + \int_{\|\bar{\zetave}\|_2\geq \gamma_1 \sqrt{\m_{\ell}}}A d\bar{\zetave}. \label{eqn:intdivide3}
	\end{align}
	Then, as $\ell \rightarrow \infty$
	\begin{align*}
	&\int_{\|\bar{\zetave}\|_2\geq \gamma_1 \sqrt{\m_{\ell}}}A d\bar{\zetave}  \\
	\leq & \left(\eta''\right) ^{\m_{\ell}-1-r}\sqrt{\m_{\ell}} \int_{\R^s}\left|\phi_T\left(\left.\frac{\bar{\zetave}}{\sqrt{\m_{\ell}}}\right|\thetave_i^0\right)\right|^{r} U_1(\thetave_i^0)d \bar{\zetave} \\
	& +  \sqrt{\m_{\ell}}   \int_{\|\bar{\zetave}\|_2\geq \gamma_1 \sqrt{\m_{\ell}}}\exp\left(-\frac{1}{2}\bar{\zetave}^\top \Lambda_{\thetave_i^0}\bar{\zetave}\right)\left(\frac{1}{\sqrt{\m_{\ell}}}\left|\bar{\zetave}^\top \frac{\partial \lambda_{\thetave_i^0}}{\partial \theta^{(j)}}\right|+\frac{1}{2\m_{\ell}}\left|\bar{\zetave}^\top \frac{\partial \Lambda_{\thetave_i^0}}{\partial \theta^{(j)}}\bar{\zetave}\right|\right) d\bar{\zetave}\\
	= & \left(\eta''\right) ^{\m_{\ell}-1-r}\m_{\ell}^{\frac{s+1}{2}}U_1(\thetave_i^0) \int_{\R^s}\left|\phi_T\left(\left.\zeta\right|\thetave_i^0\right)\right|^{r} d\zeta \\
	& +     \int_{\|\bar{\zetave}\|_2\geq \gamma_1 \sqrt{\m_{\ell}}}\exp\left(-\frac{1}{2}\bar{\zetave}^\top \Lambda_{\thetave_i^0}\bar{\zetave}\right)\left(\left|\bar{\zetave}^\top \frac{\partial \lambda_{\thetave_i^0}}{\partial \theta^{(j)}}\right|+\frac{1}{2\sqrt{\m_{\ell}}}\left|\bar{\zetave}^\top \frac{\partial \Lambda_{\thetave_i^0}}{\partial \theta^{(j)}}\bar{\zetave}\right|\right) d\bar{\zetave}\\
	\rightarrow &  0, \numberthis \label{eqn:int3}
	\end{align*}
	where the first inequality follows from condition \ref{item:genthmg} and the definition of $\eta''$, and the last step follows from $\eta''<1$ and condition \ref{item:genthmg}.
	
	By condition \ref{item:genthme}, $\frac{\partial \phi_T(\zeta|\thetave_i^0)}{\partial \theta^{(j)}}$ as a function of $\zeta$ has gradient at $0$: $\ive\frac{\partial \lambda_{\thetave_i^0}}{\partial \theta^{(j)}}$. Then by Taylor Theorem:
	\begin{equation}
	\sqrt{\m_{\ell}}\frac{\partial \phi_T(\frac{\bar{\zetave}}{\sqrt{\m_{\ell}}}|\thetave_i^0)}{\partial \theta^{(j)}} \to \ive\bar{\zetave}^\top \frac{\partial \lambda_{\thetave_i^0}}{\partial \theta^{(j)}}. \label{eqn:limit2} 
	\end{equation}
	Moreover, specialize $t=0$ in \eqref{eqn:thetaderuniuppbou} and one has: $\forall \|\zeta\|_2<1$  
	\begin{equation}
	\left|\frac{\partial \phi_T(\zetave|\thetave_i^0)}{\partial \theta^{(j)}}\right|\leq \sqrt{s}U_1(\theta_i^0)\|\zeta\|_2. \label{eqn:partialbou}
	\end{equation}
	By combining \eqref{eqn:uppbousmallzeta} and \eqref{eqn:partialbou}, we obtain as $\ell \rightarrow \infty$
	\begin{align*}
	& \int_{a<\|\bar{\zetave}\|_2 < \gamma_1 \sqrt{\m_{\ell}}}A d\bar{\zetave} \\
	\leq & \sqrt{\m_{\ell}}\int_{a<\|\bar{\zetave}\|_2 < \gamma_1 \sqrt{\m_{\ell}}}  \exp\left(-\frac{\m_{\ell}-1}{4\m_{\ell}}\bar{\zetave}^\top \Lambda_{\thetave_i^0}\bar{\zetave}\right)  \sqrt{s}U_1(\theta_i^0)\left( \frac{\|\bar{\zeta}\|_2}{\sqrt{\m_{\ell}}}\right)\\
	&  +    \exp\left(-\frac{1}{2}\bar{\zetave}^\top \Lambda_{\thetave_i^0}\bar{\zetave}\right)\left(\left|\frac{1}{\sqrt{\m_{\ell}}}\bar{\zetave}^\top \frac{\partial \lambda_{\thetave_i^0}}{\partial \theta^{(j)}}\right|+\left|\frac{1}{2\m_{\ell}}\bar{\zetave}^\top \frac{\partial \Lambda_{\thetave_i^0}}{\partial \theta^{(j)}}\bar{\zetave}\right|\right)   d\bar{\zetave} \\
	\leq & \int_{a<\|\bar{\zetave}\|_2 < \gamma_1 \sqrt{\m_{\ell}}}  2\exp\left(-\frac{1}{8}\bar{\zetave}^\top \Lambda_{\thetave_i^0}\bar{\zetave}\right)  C(\thetave_i^0,s)\left(\|\bar{\zetave}\|_2+\|\bar{\zetave}\|_2^2\right)    d\bar{\zetave}\\
	\to & C(\thetave_i^0,s) \int_{\|\bar{\zetave}\|_2>a }  2\exp\left(-\frac{1}{8}\bar{\zetave}^\top \Lambda_{\thetave_i^0}\bar{\zetave}\right)  \left(\|\bar{\zetave}\|_2+\|\bar{\zetave}\|_2^2\right)    d\bar{\zetave}, \numberthis \label{eqn:int2}
	\end{align*}	
	where in the second inequality we impose $\m_{\ell} \geq 2$ since it's the limit that is of interest, and $C(\thetave_i^0,s)$ is a constant that depends on $\thetave_i^0$ and $s$.
	
	Finally by \eqref{eqn:limit1} and \eqref{eqn:limit2}, when $\|\bar{\zeta}\|_2\leq a$
	$$
	\sqrt{\m_{\ell}} \left(e^{-\frac{\i}{\sqrt{\m_{\ell}}}\bar{\zetave}^\top \lambda_{\theta_i^0}}\phi_T\left(\frac{\bar{\zetave}}{\sqrt{\m_{\ell}}}|\theta_i^0\right)\right)^{\m_{\ell}-1} \frac{\partial \phi_T(\frac{\bar{\zetave}}{\sqrt{\m_{\ell}}}|\thetave_i^{0})}{\partial \theta^{(j)}} \to \exp\left(-\frac{1}{2}\bar{\zetave}^\top \Lambda_{\thetave_i^0}\bar{\zetave}\right)\ive\bar{\zetave}^\top \frac{\partial \lambda_{\thetave_i^0}}{\partial \theta^{(j)}}.
	$$
	Moreover 
	\begin{align*}
	&\sqrt{\m}_\ell\exp\left(\frac{\i}{\sqrt{\m_{\ell}}}\bar{\zetave}^\top \lambda_{\theta_i^0}-
	\frac{1}{2}\bar{\zetave}^\top \Lambda_{\theta_i^0}\bar{\zetave}\right) \left(\frac{\i}{\sqrt{\m_{\ell}}}\bar{\zetave}^\top \frac{\partial \lambda_{\theta_i^0}}{\partial \theta^{(j)}}-\frac{1}{2\m_{\ell}}\bar{\zetave}^\top \frac{\partial \Lambda_{\theta_i^0}}{\partial \theta^{(j)}}\bar{\zetave}\right) \\
	\to &\exp\left(-\frac{1}{2}\bar{\zetave}^\top \Lambda_{\thetave_i^0}\bar{\zetave}\right)\ive\bar{\zetave}^\top \frac{\partial \lambda_{\thetave_i^0}}{\partial \theta^{(j)}}
	\end{align*}
	and hence $\lim_{\ell\to \infty}A=0 $ when $\|\bar{\zeta}\|_2\leq a$. One can find an integrable envelope function for $A$ when $\|\bar{\zeta}\|_2\leq a$ in similar steps as \eqref{eqn:int2}, and then by the dominated convergence theorem,
	\begin{equation}
	\int_{\|\bar{\zetave}\|_2\leq a}A d\bar{\zetave} \to 0. \label{eqn:int1}
	\end{equation}
	
	Plug \eqref{eqn:int1}, \eqref{eqn:int2} and \eqref{eqn:int3} into \eqref{eqn:intdivide3} and \eqref{eqn:intlong}, and one has
	\begin{align*}
	\limsup_{\ell \to \infty}\ \  K_\ell(j) 
	\leq  C(\thetave_i^0,s) \int_{\|\bar{\zetave}\|_2>a }  2\exp\left(-\frac{1}{8}\bar{\zetave}^\top \Lambda_{\thetave_i^0}\bar{\zetave}\right)  \left(\|\bar{\zetave}\|_2+\|\bar{\zetave}\|_2^2\right)    d\bar{\zetave}.
	\end{align*}
	Letting $a\to \infty$ in the above display yields $K_\ell(j)\to 0$, which together with \eqref{eqn:Jhatuppbou} imply $\hat{J}_\ell = o(\sqrt{\m_\ell}\|\theta_i^\ell-\eta_i^\ell\|_2)$. 
\end{proof}


\section{Proofs for Section 6}

\subsection{Proofs of Theorem \ref{thm:posconnotid} and Corollary \ref{cor:posconexp}}
\label{sec:proofsposteriorcontraction}
For $B$ a subset of metric space with metric $D$, the minimal number of balls with centers in $B$ and of radius $\epsilon$ needed to cover $B$ is known as the $\epsilon$-covering number of $B$ and is denoted by $\Nf (\epsilon,B,D)$. Define the root average square Hellinger metric:
	$$
	d_{\n,h}(G,G_0) = \sqrt{\frac{1}{\n}\sum_{i=1}^{\n}h^2(p_{G,\m_i},p_{G_0,\m_i})  }.
	$$

\begin{proof}[Proof of Theorem \ref{thm:posconnotid}]

a) 
The proof structure is the same as Lemma \ref{lem:convergencerate} except that to take the varied sequence lengths into account, the distance $d_{m,h}$ is used in the place of total variation $V$ for the mixture densities.
We verify conditions (i) and (ii) of \cite[Theorem 8.23]{ghosal2017fundamentals}, respectively, in Step 1 and Step 2 below to obtain a posterior contraction bound on the mixture density. In Step 3 we prove a posterior consistency result and then apply Lemma \ref{cor:identifiabilityallm} to transfer posterior contraction result on density estimation to parameter estimation. \\

\noindent {\bf Step 1} (Verification of condition (i) of \cite[Theorem 8.23]{ghosal2017fundamentals}) \\
   Write $n_1$ and $n_0$ respectively for $n_1(G_0)$ and $n_0(G_0,\cup_{k\leq k_0} \Ec_k(\Theta_1))$ in the proof for clean presentation. 
   Note that \ref{item:kernel} implies that $\theta\mapsto P_\theta$ from $(\Theta,\|\cdot\|_2)$ to $(\{P_\theta\}_{\theta\in \Theta}, h)$ is continuous. Then due to Lemma \ref{cor:VlowbouW1} and Lemma \ref{lem:relW1D1} \ref{item:relW1D1e}, for any $\m \geq n_1\vee n_0$, and any $ G\in\Ec_{k_0}(\Theta_1)$,
	\begin{equation}
	\label{eqn:lowboundhwdi}
\sqrt{2}	h(p_{G,\m},p_{G_0,\m}) \geq V(p_{G,\m},p_{G_0,\m})
	\geq C(G_0,\Theta_1)W_1(G,G_0) \geq  C(G_0,\Theta_1)D_1(G,G_0). 
	\end{equation}

In the remainder of the proof $\m \geq n_1\vee n_0$ is implicitly imposed.	By~\eqref{eqn:lowboundhwdi} it holds that, for all $G \in \Ec_{k_0}(\Theta_1)$
	\begin{equation}
	d_{\n,h}(G,G_0)\geq C(G_0,\Theta_1)W_1(G,G_0)\geq C(G_0,\Theta_1)D_1(G,G_0).\label{eqn:dnhD1lowbou}
	\end{equation}
	Then 
	\begin{equation}
	\{G\in \Ec_{k_0}(\Theta_1): d_{\n,h}(G,G_0) \leq \epsilon \} \subset \left\{G\in \Ec_{k_0}(\Theta_1): D_1(G,G_0)\leq \frac{\epsilon}{C(G_0,\Theta_1)}\right\} \label{eqn:setinclusionhD1diflen}
	\end{equation}
	and thus for any $j\in \mathbb{N}$,
	\begin{multline}
	\Pi\left(d_{\n,h}(G,G_0) \leq 2j\epsilon\right) \leq \Pi\left(D_1(G,G_0)\leq \frac{2j\epsilon}{C(G_0,\Theta_1)}\right) \\
	\lesssim  k_0! \left( \frac{2j\epsilon}{C(G_0,\Theta_1)}\right)^{k_0-1} \left( \frac{2j\epsilon}{C(G_0,\Theta_1)}\right)^{qk_0}, \label{eqn:pihdiflen}
	\end{multline}
	where the last inequality follows from \ref{item:prior}.
	
	By an argument similar to  \cite[Lemma 3.2 (a)]{nguyen2016borrowing}, for any $G=\sum_{i=1}^{k_0}p_i\delta_{\theta_i}\in \Ec_{k_0}(\Theta_1)$
	\begin{align*}
	K(p_{G_0,\m_i},p_{G,\m_i}) \leq & \m_i L_1 W_{\alpha_0}^{\alpha_0}(G,G_0)\\
	\leq & \m_i C(\text{diam}(\Theta_1),\alpha_0, L_1) \min_{\tau\in S_{k_0}} \sum_{i=1}^{k_0}\left(\|\theta_{\tau(i)}-\theta^0_{i}\|_2^{\alpha_0} + |p_{\tau(i)}-p^0_i|  \right),
	\end{align*}
	where the second inequality follows from Lemma \ref{lem:relW1D1} \ref{item:relW1D1b} and \ref{item:kernel}. 
	Then 
	\begin{align*}
	\frac{1}{\n}\sum_{i=1}^{\n}K(p_{G_0,\m_i}, p_{G,\m_i})  \leq \bar{\m}_{\n} C(\text{diam}(\Theta_1),\alpha_0) \min_{\tau\in S_{k_0}} \sum_{i=1}^{k_0}\left(\|\theta_{\tau(i)}-\theta^0_{i}\|_2^{\alpha_0} + |p_{\tau(i)}-p^0_i|  \right),
	\end{align*}
	and
	\begin{align*}
	&\Pi\left( \frac{1}{\n}\sum_{i=1}^{\n}K(p_{G_0,\m_i}, p_{G,\m_i}) \leq \epsilon^2     \right) \\
	\gtrsim &\left( \frac{\epsilon^2}{\bar{\m}_{\n} C(\text{diam}(\Theta_1),\alpha_0, L_1)}\right)^{qk_0/\alpha_0} \left( \frac{\epsilon^2}{\bar{\m}_{\n} C(\text{diam}(\Theta_1),\alpha_0, L_1)}\right)^{k_0-1}. \numberthis \label{eqn:piKlowdiflen}
	\end{align*}
	
	Combining \eqref{eqn:pihdiflen} and \eqref{eqn:piKlowdiflen},
	\begin{align*}
	& \frac{\Pi\left(d_{\n,h}(G,G_0) \leq 2j\epsilon\right)}{\Pi\left( \frac{1}{\n}\sum_{i=1}^{\n}K(p_{G_0,\m_i}, p_{G,\m_i}) \leq \epsilon^2     \right)} \\ 
	\leq & C(G_0,\Theta_1,q,\alpha_0,k_0, L_1) j^{qk_0+k_0-1} \bar{\m}_{\n}^{qk_0/\alpha_0+k_0-1}\epsilon^{-qk_0(2/\alpha_0-1)-(k_0-1)}.
	\end{align*}
	By Remark \ref{rem:Lippowerbou} $\alpha_0\leq 2$. Then based on the last display one may verify with $$\epsilon_{\n,\bar{\m}_{\n}}=C(G_0,\Theta,q,k_0,\alpha_0,\beta_0, L_1, L_2) \sqrt{\frac{\ln(\n\bar{\m}_{\n})}{\n}} $$ for some large enough constant $C(G_0,\Theta_1,q,k_0,\alpha_0,\beta_0)$,
	$$ 
	\frac{\Pi\left(d_{\n,h}(G,G_0) \leq 2j\epsilon_{\n,\bar{\m}_{\n}}\right)}{\Pi\left( \frac{1}{\n} \sum_{i=1}^{\n} K(p_{G_0,\m_i}, p_{G,\m_i}) \leq \epsilon^2_{\n,\bar{\m}_{\n}} \right)  } \leq \exp\left(\frac{1}{4}j\n\epsilon_{\n,\bar{\m}_{\n}}^2\right).
	$$
\\

\noindent {\bf Step 2} (Verification of condition (ii) of in \cite[Theorem 8.23]{ghosal2017fundamentals})\\	By \eqref{eqn:setinclusionhD1diflen},
	\begin{align*}
	&\sup_{\epsilon\geq \epsilon_{\n,\bar{\m}_{\n}}} \ln \Nf \left(\frac{1}{36}\epsilon,\{ G\in \Ec_{k_0}(\Theta_1): d_{\n,h}(G,G_0)\leq 2\epsilon \}, d_{\n,h}\right)\\ 
	\leq & 
	\sup_{\epsilon\geq \epsilon_{\n,\bar{\m}_{\n}}} \ln \Nf\left(\frac{1}{36}\epsilon,\left\{ G\in \Ec_{k_0}(\Theta_1): D_1(G,G_0)\leq \frac{2\epsilon}{C(G_0,\Theta_1)}\right \}, d_{\n,h} \right) \\
	\leq &  qk_0\ln \left(1+\frac{4\times (144L_2)^{\frac{1}{\beta_0}}}{C(G_0,\Theta_1)}  \bar{\m}_{\n}^{\frac{1}{2\beta_0}}\epsilon_{\n,\bar{\m}_{\n}}^{-(\frac{1}{\beta_0}-1)} \right) +(k_0-1)\ln \left(1+10\times 72^2 \epsilon_{\n,\bar{\m}_{\n}}^{-2}\right),
	\end{align*}
	where the last inequality follows from Lemma \ref{lem:Nuppboudiflen}. 
	By Remark \ref{rem:Lippowerbou} $\beta_0\leq 1$. Then based on the last display one may verify with $\epsilon_{\n,\bar{\m}_{\n}}=C(G_0,\Theta_1,q,k_0,\alpha_0,\beta_0, L_1, L_2) \sqrt{\frac{\ln(\n\bar{\m}_{\n})}{\n}} $ for some large enough constant $C(G_0,\Theta_1,q,k_0,\alpha_0,\beta_0)$,
	\begin{equation}
	\sup_{\epsilon\geq \epsilon_{\n,\bar{\m}_{\n}}} \ln \Nf\left(\frac{1}{36}\epsilon,\{ G\in \Ec_{k_0}(\Theta_1): d_{\n,h}(G,G_0)\leq 2\epsilon \}, d_{\n,h}\right) \leq \n\epsilon^2_{\n,\bar{\m}_{\n}}. \label{eqn:entropycondition}
	\end{equation}
	
	Now we invoke \cite[Theorem 8.23]{ghosal2017fundamentals} (the Hellinger distance defined in \cite{ghosal2017fundamentals} differs from our definition by a factor of constant. But this constant factor only affect the coefficients of $\epsilon_{\n,\bar{\m}_{\n}}$ but not the conclusion of convergence),
we have for every $\bar{M}_\n\to \infty$,  
	\begin{equation}
	\Pi(G\in \Ec_{k_0}(\Theta_1): d_{\n,h}(G,G_0) \geq \bar{M}_\n \epsilon_{\n,\bar{\m}_{\n}} |X_{[\m_1]}^1,\ldots,X_{[\m_{\n}]}^{\n} ) \to 0  \label{eqn:vanposcontractiondiflen}
	\end{equation}
	in $\P_{G_0,\m_1}\bigotimes \cdots \bigotimes \P_{G_0,\m_{\n}}$-probability as $\n\to\infty$. \\
	
\noindent {\bf Step 3} (From convergence of densities to that of parameters)
Since $n_1 \leq \m_i\leq \m_0:= \sup_{i}\m_i$, by Lemma~\ref{cor:identifiabilityallm} for $G\in \Ec_{k_0}(\Theta)$ satisfying $W_1(G,G_0)<c(G_0,\m_0)$
	\begin{align}
	d_{\n,h}(G,G_0) \geq  C(G_0)\sqrt{\frac{1}{\n}\sum_{i=1}^{\n} D^2_{\m_i}(G,G_0)}. \label{eqn:dnhlowbou1}
	\end{align}    
	By Lemma \ref{lem:optimalpermutation} for $G=\sum_{j=1}^{k_0}p_j\delta_{\theta_j}\in \Ec_{k_0}(\Theta_1)$ satisfying $D_1(G,G_0)<\frac{1}{2}\rho$ where $\rho:=\min_{1\leq i<j\leq k_0} \|\theta_i^0-\theta_j^0\|_2$, there exists a $\tau\in S_{k_0}$ such that
	\begin{align*}
	\sqrt{\frac{1}{\n}\sum_{i=1}^{\n} D^2_{\m_i}(G,G_0)} = & \sqrt{\frac{1}{\n}\sum_{i=1}^{\n} \left(\sqrt{\m_i}\sum_{j=1}^{k_0} \|\theta_{\tau(j)} -\theta_j^0\|_2 + \sum_{j=1}^{k_0}|p_{\tau(j)}-p_j^0|  \right)^2} \\ 
	\geq & \sqrt{\frac{1}{\n}\sum_{i=1}^{\n} \left(\m_i\left(\sum_{j=1}^{k_0} \|\theta_{\tau(j)} -\theta_j^0\|_2\right)^2 + \left(\sum_{j=1}^{k_0}|p_{\tau(j)}-p_j^0|  \right)^2\right)}\\
	= & \sqrt{ \bar{\m}_{\n}\left(\sum_{j=1}^{k_0} \|\theta_{\tau(j)} -\theta_j^0\|_2\right)^2 + \left(\sum_{j=1}^{k_0}|p_{\tau(j)}-p_j^0|  \right)^2}\\
	\geq & \frac{1}{\sqrt{2}} \left(\sqrt{\bar{\m}_{\n}}\sum_{j=1}^{k_0} \|\theta_{\tau(j)} -\theta_j^0\|_2 + \sum_{j=1}^{k_0}|p_{\tau(j)}-p_j^0|\right) \\
	=& \frac{1}{\sqrt{2}} D_{\bar{\m}_{\n}}(G,G_0). \numberthis \label{eqn:rootaveDsqu}
	\end{align*}
	
	Let $\mathcal{G}= \{G\in \Ec_{k_0}(\Theta_1)|W_1(G,G_0)<c(G_0,\m_0), D_1(G,G_0)<\frac{1}{2}\rho \} $. Combining \eqref{eqn:dnhlowbou1} and \eqref{eqn:rootaveDsqu}, for any $G\in \mathcal{G}$
	$$
	d_{\n,h}(G,G_0) \geq  \frac{C(G_0)}{\sqrt{2}} \bar{D}_{\bar{\m}_{\n}}(G,G_0).
	$$
By the union bound,	
	\begin{align*}
	&\Pi(G\in \Ec_{k_0}(\Theta_1): D_{\bar{\m}_{\n}}(G,G_0) \geq \frac{\sqrt{2}\bar{M}_\n}{C(G_0)} \epsilon_{\n,{\bar{\m}_{\n}} } |X_{[\m_1]}^1, \ldots,X_{[\m_{\n}]}^{\n} ) \\
	\leq  &\Pi(G\in \Ec_{k_0}(\Theta_1): d_{\n,h}(G,G_0) \geq \bar{M}_\n \epsilon_{\n,\bar{\m}_{\n}} |X_{[\m_1]}^1,\ldots,X_{[\m_{\n}]}^{\n} ) + \Pi(\mathcal{G}^c |X_{[\m_1]}^1,\ldots,X_{[\m_{\n}]}^{\n} )\\
	\to & 0
	\end{align*}
	in $\bigotimes_{i=1}^{\n}\P_{G_0,\m_i}$-probability as $\n\to\infty$ by applying \eqref{eqn:vanposcontractiondiflen} to the first term. The reason the second term vanishes is as follows. Note that the second term converges to $0$ essentially is a posterior consistency result with respect to $W_1$ (or $D_1$) metric. Here we prove it by \eqref{eqn:dnhD1lowbou} and \eqref{eqn:vanposcontractiondiflen}. 
	By \eqref{eqn:dnhD1lowbou},
	$$
	\mathcal{G}^c \subset \{G\in\Ec_{k_0}(\Theta): d_{\n,h}(G,G_0)> C(G_0,\rho,\m_0,\Theta_1) \} 
	$$
	for some constant $C(G_0,\rho,\m_0,\Theta_1) >0$. For some slow-increasing $\bar{M}'_{\n}$ such that $\bar{M}'_{\n}\epsilon_{\n,\bar{\m}_{\n}}\to 0$ as $\n\to \infty$, 
	\begin{align*}
	&\{G\in\Ec_{k_0}(\Theta_1): d_{\n,h}(G,G_0)> C(G_0,\rho,\m_0,\Theta_1) \} \\
	\subset &\{G\in\Ec_{k_0}(\Theta_1): d_{\n,h}(G,G_0)>\bar{M}'_{\n}\epsilon_{\n,\bar{\m}_{\n}}\}
	\end{align*}
	holds for large $\n$. Combining the last two displays and \eqref{eqn:vanposcontractiondiflen} yields 
	$$
	\Pi(\mathcal{G}^c |X_{[\m_1]}^1,\ldots,X_{[\m_{\n}]}^{\n} )\\
	\to  0.
	$$
	The proof is concluded.

b) 
	If the additional condition of part \ref{item:posconnotidb} is satisfied, then by Remark~\ref{rem:N1Ninfcon} , $n_1(G_0)=1$. That is, the claim of part \ref{item:posconnotida} holds for $n_1(G_0) = 1$. 
\end{proof}

\begin{proof}[Proof of Corollary \ref{cor:posconexp}]
Recall $f(x|\theta)=\exp\left(\langle \eta(\theta),T(x)\rangle -B(\theta) \right)h(x)$.
By easy calculations
    $$
    |K(f(x|\theta_1),f(x|\theta_2))| = |\langle \theta_1-\theta_2, \E_{\theta_1} Tx\rangle - (B(\theta_1)-B(\theta_2))| \leq L_1(\Theta_1) \|\theta_1-\theta_2\|_2.
    $$
    By changing to its canonical parametrization and appeal to Lemma \ref{lem:exphellinger} \ref{item:exphellingerb},
    $$
    |h(f(x|\theta_1),f(x|\theta_2))| \leq L_2(\Theta_1)\|\theta_1-\theta_2\|_2. 
    $$
    Here $L_1(\Theta_1)$ and $L_2(\Theta_1)$ are constants that depend on $\Theta_1$. In summary \ref{item:kernel} is satisfied. Then the conclusions are obtained by applying Theorem \ref{thm:posconnotid}.
\end{proof}

\subsection{Auxiliary lemmas for Section \ref{sec:proofsposteriorcontraction}}
\label{sec:auxiliarylemmas6}

\begin{lem} \label{lem:Nuppboudiflen}
	Fix $G_0=\sum_{i=1}^{k_0}p_i^0\delta_{\theta_i^0}\in\Ec_{k_0}(\Theta_1)$. Suppose $h(f(x|\theta_1),f(x|\theta_2))\leq L_2\|\theta_1-\theta_2\|_2^{\beta_0}$ 
	for some $0<\beta_0\leq 1$ and some $L_2>0$ where $\theta_1, \theta_2$ are any two distinct elements in $\Theta_1$.
	\begin{align*}
	& \Nf \left(\frac{1}{36}\epsilon,\left\{ G\in \Ec_{k_0}(\Theta_1): D_1(G,G_0)\leq \frac{2\epsilon}{C(G_0,\text{diam}(\Theta_1))}\right \}, d_{\n,h} \right)\\
	\leq & \left(1+\frac{4\times (144L_2)^{\frac{1}{\beta_0}}}{C(G_0,\text{diam}(\Theta_1))}  \bar{\m}_{\n}^{\frac{1}{2\beta_0}}\epsilon^{-(\frac{1}{\beta_0}-1)} \right)^{qk_0} \left(1+10\times 72^2 \epsilon^{-2}\right)^{k_0-1}.
	\end{align*}
\end{lem}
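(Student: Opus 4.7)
The plan is to pass from $d_{\n,h}$-coverings on $\Ec_{k_0}(\Theta_1)$ to product coverings in atom-space and probability-space by invoking Lemma \ref{lem:hellingeruppbou} together with the H\"older-type kernel bound in \ref{item:kernel}. First, for any $G = \sum_j p_j \delta_{\theta_j}$ and $G' = \sum_j p'_j \delta_{\theta'_j}$ in $\Ec_{k_0}(\Theta_1)$ and any permutation $\tau\in S_{k_0}$, Lemma \ref{lem:hellingeruppbou} yields
$$h(P_{G,\m_i}, P_{G',\m_i}) \leq \sqrt{\m_i}\max_j h(P_{\theta_j}, P_{\theta'_{\tau(j)}}) + \sqrt{\tfrac{1}{2}\textstyle\sum_j|p_j - p'_{\tau(j)}|}.$$
Squaring, using $(a+b)^2\leq 2a^2+2b^2$, averaging over $i\in [\n]$, and invoking $h(P_{\theta_j}, P_{\theta'_{\tau(j)}}) \leq L_2 \|\theta_j - \theta'_{\tau(j)}\|_2^{\beta_0}$ from \ref{item:kernel}, we arrive at
$$d_{\n,h}^2(G,G') \leq 2\bar{\m}_\n L_2^2 \max_j \|\theta_j - \theta'_{\tau(j)}\|_2^{2\beta_0} + \textstyle\sum_j |p_j - p'_{\tau(j)}|,$$
whence (via $\sqrt{a+b}\leq \sqrt{a}+\sqrt{b}$)
$$d_{\n,h}(G,G') \leq \sqrt{2\bar{\m}_\n}\, L_2 \max_j \|\theta_j - \theta'_{\tau(j)}\|_2^{\beta_0} + \sqrt{\textstyle\sum_j |p_j - p'_{\tau(j)}|}.$$
Consequently $d_{\n,h}(G,G')\leq \epsilon/36$ is guaranteed by $\max_j\|\theta_j-\theta'_{\tau(j)}\|_2\leq \delta_\theta := (\epsilon/(144 L_2))^{1/\beta_0}\bar{\m}_\n^{-1/(2\beta_0)}$ (using the loose bound $72\sqrt{2}\leq 144$) together with $\sum_j|p_j-p'_{\tau(j)}|\leq \delta_p := (\epsilon/72)^2$.

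Next, since any $G$ in $S := \{G\in\Ec_{k_0}(\Theta_1): D_1(G,G_0)\leq r\}$ with $r:=2\epsilon/C(G_0,\text{diam}(\Theta_1))$ can be relabeled so that $\|\theta_j-\theta_j^0\|_2\leq r$ for every $j\in[k_0]$, it suffices to build a $\delta_\theta$-net of $B(\theta_j^0,r)\subset\R^q$ for each $j$ (of cardinality at most $(1+2r/\delta_\theta)^q$) and take their Cartesian product, producing an atom-grid of size at most $(1+2r/\delta_\theta)^{qk_0}$. A direct computation gives $2r/\delta_\theta = 4(144L_2)^{1/\beta_0}\bar{\m}_\n^{1/(2\beta_0)}\epsilon^{-(1/\beta_0-1)}/C(G_0,\text{diam}(\Theta_1))$, matching the first factor in the claim. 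Separately, a $\delta_p$-net of the $(k_0-1)$-dimensional simplex $\Delta^{k_0}$ in $\ell^1$ of cardinality at most $(1+10/\delta_p)^{k_0-1}=(1+10\cdot 72^2/\epsilon^2)^{k_0-1}$ follows from a standard volumetric argument. The product of these two grids, mapped to $\Ec_{k_0}(\Theta_1)$ via $((\theta_j),(p_j))\mapsto \sum_j p_j\delta_{\theta_j}$ and (if necessary) projected into $S$ at the cost of inflating constants, yields the claimed $\tfrac{1}{36}\epsilon$-net in the $d_{\n,h}$ metric.

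The only delicate point is bookkeeping: (i) handling the permutation-invariance of $\Ec_{k_0}(\Theta_1)$ so that a single Cartesian product covering suffices for every $G\in S$ regardless of which relabeling achieves $D_1(G,G_0)\leq r$ (resolved by relabeling each $G$ once so that atoms lie in $\prod_j B(\theta_j^0,r)$, then applying the atom-grid with $\tau=\mathrm{id}$); and (ii) keeping the constants $144$ and $10$ consistent with the split of the budget $\epsilon/36$ across atom-error and probability-error, which is the reason for the slack $72\sqrt{2}\leq 144$. Beyond Lemma \ref{lem:hellingeruppbou}, assumption \ref{item:kernel}, and standard covering-number bounds in $\R^q$ and on the simplex, no additional ingredient is needed.
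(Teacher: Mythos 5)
You take the same structural route as the paper: build a product net out of an atom‐grid around each $\theta_j^0$ and a simplex grid, control $d_{\n,h}$ via Lemma~\ref{lem:hellingeruppbou} and the H\"older kernel bound, then count. But the arithmetic does not close, for a reason you already flagged but did not resolve: the paper's $\Nf(\epsilon,B,D)$ requires covering \emph{centers to lie in} $B$, and your product‐net points $\tilde G=\sum_j\tilde p_j\delta_{\tilde\theta_j}$ generally lie outside $S=\{G: D_1(G,G_0)\le r\}$ (each $\tilde\theta_j\in B(\theta_j^0,r)$, but summing over the $k_0$ atoms makes $D_1(\tilde G,G_0)$ of order $k_0 r$), and may fail to be in $\Ec_{k_0}(\Theta_1)$ at all (coincident atoms, zero weights). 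The only generic way to repair this is to snap each external net point to a nearby element of $S$, which doubles the radius. Your ``projected into $S$ at the cost of inflating constants'' concedes this, but the numbers do not absorb it: with your $\delta_\theta,\delta_p$, the pre‐snap radius you establish is $\frac{\sqrt2}{144}\epsilon+\frac{1}{72}\epsilon\approx 0.024\epsilon > \epsilon/72$, so after the necessary doubling it exceeds $\epsilon/36$, and the lemma's covering bound at radius $\epsilon/36$ is not obtained. The paper reserves exactly half of the $\epsilon/36$ budget for the snap by making the pre‐snap radius $\epsilon/72$.

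There is also a secondary looseness: you pass from $d_{\n,h}^2\le 2\bar{\m}_\n L_2^2\delta_\theta^{2\beta_0}+\delta_p$ to $d_{\n,h}\le\sqrt{2\bar{\m}_\n}L_2\delta_\theta^{\beta_0}+\sqrt{\delta_p}$ via $\sqrt{a+b}\le\sqrt a+\sqrt b$, losing a further factor $\approx\sqrt2$. The paper avoids the split and works with $\sqrt{a+b}$ directly. Both slacks point the same way, and if you tighten $\delta_\theta,\delta_p$ to compensate, the cardinality factors come out \emph{larger} than the lemma states. The fix is to drop the $\sqrt a+\sqrt b$ split, keep $d_{\n,h}\le\sqrt{2L_2^2\bar{\m}_\n\delta_\theta^{2\beta_0}+\delta_p}$, take $\delta_\theta^{\beta_0}=\epsilon/(144L_2\sqrt{\bar{\m}_\n})$ (as you do) and $\delta_p=\tfrac12(\epsilon/72)^2$ (half yours), so the pre‐snap radius is exactly $\epsilon/72$; then double. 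The simplex covering then gives $(1+5/\delta_p)^{k_0-1}=(1+10\cdot 72^2\epsilon^{-2})^{k_0-1}$ --- your factor $10$ does appear, but it arises from the $\tfrac12$ in $\delta_p$ combined with Ghosal's constant $5$, not from a looser volumetric bound (which you stated as $10/\delta_p$ without justification).
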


	\begin{lem} \label{lem:optimalpermutation}
        For $G_0 = \sum_{i=1}^{k_0}p_i\delta_{\theta_i^0}\in \Ec_{k_0}(\Theta)$ with $\rho=\min_{1\leq i<j\leq k_0} \|\theta_i^0-\theta_j^0\|_2$. If $G =\sum_{i=1}^{k_0}p_i\delta_{\theta_i}  \in \Ec_{k_0}(\Theta)$ satisfying $D_1(G,G_0)< \frac{1}{2}\rho  $, then there exists a unique $\tau\in S_{k_0}$ such that for all real number $ r\geq 1$
        $$
        D_r(G,G_0) = \sum_{i=1}^{k_0} \left(\sqrt{r}\|\theta_{\tau(i)} -\theta_i^0\|_2 + |p_{\tau(i)}-p_i^0|  \right).
        $$
    \end{lem}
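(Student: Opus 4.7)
The plan is to identify the special permutation $\tau$ as the unique minimizer of the atoms-only cost $A(\tau) := \sum_{i=1}^{k_0} \|\theta_{\tau(i)} - \theta_i^0\|_2$, and then exploit a convex-combination decomposition to show that it simultaneously minimizes the weighted cost $\sqrt{r}\, A(\tau) + B(\tau)$, where $B(\tau) := \sum_{i=1}^{k_0} |p_{\tau(i)} - p_i^0|$, for every $r \geq 1$.

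First I would pick any permutation $\sigma_0$ realizing the minimum in the definition of $D_1(G,G_0)$, so that $A(\sigma_0) + B(\sigma_0) = D_1(G,G_0) < \rho/2$. In particular $\|\theta_{\sigma_0(i)} - \theta_i^0\|_2 < \rho/2$ for each $i$. Applying the triangle inequality to the definition of $\rho$, for any $j \neq i$,
\[
\|\theta_{\sigma_0(i)} - \theta_j^0\|_2 \geq \|\theta_i^0 - \theta_j^0\|_2 - \|\theta_{\sigma_0(i)} - \theta_i^0\|_2 > \rho - \rho/2 = \rho/2.
\]
Thus every atom $\theta_{\sigma_0(i)}$ of $G$ is strictly closer to $\theta_i^0$ than to any other $\theta_j^0$. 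Consequently, for any permutation $\tau \neq \sigma_0$, every index $i$ with $\tau(i) \neq \sigma_0(i)$ satisfies $\|\theta_{\tau(i)} - \theta_i^0\|_2 \geq \rho/2 > \|\theta_{\sigma_0(i)} - \theta_i^0\|_2$, so $A(\tau) > A(\sigma_0)$. This proves that $\sigma_0$ is the unique minimizer of $A$, and in particular $\sigma_0$ is intrinsic to $G,G_0$ (independent of the original choice of $D_1$-minimizer); call it $\sigma$.

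Next I would verify that this $\sigma$ minimizes $\sqrt{r}\, A(\tau) + B(\tau)$ for every $r \geq 1$. Since $\sigma$ minimizes $A + B$ (it was chosen as a $D_1$-minimizer) and also minimizes $A$, for any permutation $\tau$ the inequality $A(\tau) + B(\tau) \geq A(\sigma) + B(\sigma)$ rearranges to $B(\tau) - B(\sigma) \geq A(\sigma) - A(\tau)$, whence
\[
\sqrt{r}\bigl(A(\tau) - A(\sigma)\bigr) + \bigl(B(\tau) - B(\sigma)\bigr) \geq (\sqrt{r} - 1)\bigl(A(\tau) - A(\sigma)\bigr) \geq 0 \quad \text{for } r \geq 1,
\]
using $A(\tau) \geq A(\sigma)$. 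This gives the desired representation of $D_r(G,G_0)$ via $\sigma$.

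Uniqueness follows from the same chain of inequalities: if some $\tau$ also attains $D_r$ for every $r \geq 1$, then equality must hold throughout, forcing $(\sqrt{r}-1)(A(\tau) - A(\sigma)) = 0$ for all $r > 1$, i.e.\ $A(\tau) = A(\sigma)$, which by uniqueness of the $A$-minimizer gives $\tau = \sigma$. The main (and only genuinely subtle) step is the $\rho/2$ triangle-inequality argument establishing uniqueness of the $A$-minimizer; the rest is a one-line convexity-style manipulation using $r \geq 1$.
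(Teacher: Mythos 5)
Your proof is correct. It shares with the paper the crucial geometric observation that any $D_1$-minimizing permutation $\sigma_0$ must match each atom of $G$ to the closest atom of $G_0$ (the $\rho/2$ triangle-inequality argument), and that this forces $A(\tau) > A(\sigma_0)$ for every $\tau\neq\sigma_0$. Where the two arguments diverge is the final step. The paper argues directly: for $\tau'\neq\tau$, some term in the $\tau'$-weighted sum already exceeds $\sqrt{r}\,\rho/2$, which itself exceeds $\sqrt{r}\,D_1(G,G_0)$, and $\sqrt{r}\,D_1(G,G_0)$ upper-bounds the $\tau$-weighted sum since $\sqrt{r}\geq 1$ — a single chain of three inequalities handling both optimality and uniqueness at once. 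Your route instead observes that $\sigma_0$ minimizes both $A$ and $A+B$ and deduces via the rearrangement $\sqrt{r}(A(\tau)-A(\sigma_0))+(B(\tau)-B(\sigma_0))\geq(\sqrt r-1)(A(\tau)-A(\sigma_0))\geq 0$ that it minimizes every $\sqrt r\,A+B$, $r\geq 1$. The paper's chain is shorter and requires no bookkeeping of $A$ and $B$ separately; your version is slightly more structural and makes explicit that what is really being used is simultaneous minimization of the two endpoints — an argument that would extend verbatim to any cost of the form $\alpha A + B$ with $\alpha\geq 1$, or indeed to monotone interpolations between $A$ and $A+B$. Both are sound and both correctly handle uniqueness.
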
	

\begin{lem}  \label{lem:exphellinger}
Consider a full rank exponential family's density function $f(x|\theta)$ with respect to a dominating measure $\mu$ on $\Xfrak$, which takes the form
	\begin{align*}
f(x|\thetaeta) = \exp\left(\thetaeta^\top  T(x)- A(\thetaeta)\right)h(x), 
\end{align*}
where $\Theta =\{\thetaeta|A(\thetaeta)<\infty\} \subset \R^s$ is the parameter space of $\thetaeta$.
	\begin{enumerate}[label=\alph*)]
		\item \label{item:exphellingera}
		For any $\thetaeta_0\in \Theta^\circ$
		$$\limsup_{\thetaeta\to\thetaeta_0} \frac{h(f(x|\thetaeta),f(x|\thetaeta_0))}{\|\thetaeta-\thetaeta_0\|_2} \leq \sqrt{\lambda_\text{max}(\nabla_\thetaeta^2A(\thetaeta_0))/8},$$
		where $\lambda_\text{max}(\cdot)$ is the maximum eigenvalue of a symmetric matrix.
		\item \label{item:exphellingerb}  
		For any compact subset $\Theta' \subset \Theta^\circ$, there exists $L_2>0$ such that 
		$$
		h(f(x|\thetaeta_1),f(x|\thetaeta_2))\leq L_2 \|\thetaeta_1-\thetaeta_2\|_2 \quad \forall \ \thetaeta_1,\thetaeta_2\in \operatorname{conv}(\Theta'), 
		$$
		where $\operatorname{conv}(\Theta')$ is the convex hull of $\Theta'$.
	\end{enumerate}
\end{lem}

\subsection{Calculation Details in Example \ref{exa:posnonexponential}}
\label{sec:detailsinposconratenonexp}
\textit{Details for the uniform probability kernel in Example \ref{exa:uniformcontinue}}.
Consider the uniform kernel in Example \ref{exa:uniform} and Example \ref{exa:uniformcontinue}. Write $G_0=\sum_{i=1  }^{k_0}p_i^0\delta_{\theta_i^0}$ with $\theta_1^0<\theta_2^0<\ldots<\theta_{k_0}^0$. Let $\Theta_1$ be a compact subset of $\Theta=(0,\infty)$ such that the condition \ref{item:prior} holds, and additionally satisfies $\max\Theta_1>\theta_{k_0}^0$. The reason for the additional condition will be discussed in the next paragraph. It is easy to establish that for any $\theta_1,\theta_2\in \Theta_1$
$$
h^2(f(x|\theta_1),f(x|\theta_2)) = 1- \sqrt{\frac{\min\{\theta_1,\theta_2\}}{\max\{\theta_1,\theta_2\}}} \leq 1-\frac{\min\{\theta_1,\theta_2\}}{\max\{\theta_1,\theta_2\}} \leq \frac{1}{\min\Theta_1}|\theta_2-\theta_1|,
$$
and thus \eqref{eqn:hellingerlipschitz} holds with $\beta_0=\frac{1}{2}$.

Additional care is needed for this example since the support of $f(x|\theta)$ depends on $\theta$ and $K(f(x|\theta_1),f(x|\theta_2))=\infty$ for $\theta_1>\theta_2$. In particular, the condition \eqref{eqn:KLlipschitz} does not hold for the uniform kernel. In view of that the condition \eqref{eqn:KLlipschitz} is only used to guarantee \eqref{eqn:piKlowdiflen}, we may directly verify \eqref{eqn:piKlowdiflen} for the uniform kernel so that the conclusions of Theorem \ref{thm:posconnotid} hold. Note that the additional condition $\max\Theta_1>\theta_{k_0}^0$ is necessary for \eqref{eqn:piKlowdiflen}, since $\max\Theta_1=\theta_{k_0}^0$ implies $\Pi\left( \frac{1}{\n}\sum_{i=1}^{\n}K(p_{G_0,\m_i}, p_{G,\m_i}) \leq \epsilon^2     \right)=0$. (Actually the condition $\max\Theta_1>\theta_{k_0}^0$ is necessary for a common condition called Kullback-Leibler property \cite[Definition 6.15]{ghosal2017fundamentals}.)  

 We now verify \eqref{eqn:piKlowdiflen}. Denote $\theta_{k_0+1}^0:=\max\Theta_1$ and
$\rho:=\frac{1}{2}\min_{1\leq i\leq k_0}(\theta_{i+1}^0-\theta_i^0)$. In what follows for this example we always write $G=\sum_{i=1}^{k_0}p_i\delta_{\theta_i}\in \Ec_{k_0}(\Theta_1)$ in its increasing representation w.r.t. $\theta$, i.e. $\theta_1<\theta_2<\ldots<\theta_{k_0}$. Consider the following set 
$$
A(G_0) := \left\{ \left.G=\sum_{i=1}^{k_0}p_i\delta_{\theta_i}\in \Ec_{k_0}(\Theta_1)\right| \theta_i\in [\theta_i^0,\ \theta_i^0+\rho], p_j\geq p_j^0, \ \forall i\in [k_0], j\geq 2   \right\}.
$$
For any $G = \sum_{i=1}^{k_0}p_i\delta_{\theta_i} \in A(G_0)$, let $Q$ be a coupling between $G_0$ and $G$ specified as below:
$$
Q:=\sum_{(\alpha,\beta)\in I} q_{\alpha\beta} \delta_{(\theta_\alpha^0,\theta_\beta)}, 
$$
where
\begin{align*}
\quad I:=&I_1\bigcup I_2, \quad 
I_1:=\bigcup_{i= 2}^{k_0} \{(i,i)\}, \quad I_2:= \bigcup_{\beta=1}^{k_0} \{(1,\beta)\}, \\
\quad q_{\alpha\beta} :=& \begin{cases} p_\beta^0, &  (\alpha,\beta)\in I_1\\  
p_\beta-p_\beta^0, &  (\alpha,\beta)\in I_2, \beta\geq 2 \\
p_1, & (\alpha,\beta)=(1,1)
\end{cases}.
\end{align*}
Then for any $N\geq 1$,
\begin{align*}
K(p_{G_0,N},p_{G,N})
= & 
K\left(\sum_{(\alpha,\beta)\in I} q_{\alpha\beta} \prod_{j=1}^N f(x_j|\theta_\alpha^0)  ,\sum_{(\alpha,\beta)\in I} q_{\alpha\beta} \prod_{j=1}^N f(x_j|\theta_\beta) \right)\\
\leq & 
\sum_{(\alpha,\beta)\in I} q_{\alpha\beta} K\left(  \prod_{j=1}^N f(x_j|\theta_\alpha^0)  ,  \prod_{j=1}^N f(x_j|\theta_\beta) \right) \\
= & 
\sum_{(\alpha,\beta)\in I} q_{\alpha\beta}NK\left(f(x_1|\theta_\alpha^0),  f(x_1|\theta_\beta) \right), 
\numberthis
\label{eqn:KLuppermixture} 
\end{align*}
where the first inequality follows by the joint convexity of the Kullback-Leibler divergence. For any $\theta_1\leq\theta_2\in \Theta_1 $,
\begin{equation}
K(f(x|\theta_1),f(x|\theta_2)) = \ln \left(\frac{\theta_2}{\theta_1}\right) \leq \frac{\theta_2-\theta_1}{\theta_1} \leq  \frac{\theta_2-\theta_1}{\min \Theta_1}. \label{eqn:KLdivergencelipuniform}
\end{equation}
By our choice of $I$, $\theta_\alpha^0\leq \theta_\beta$ for any $(\alpha,\beta)\in I$. Then plug \eqref{eqn:KLdivergencelipuniform} into \eqref{eqn:KLuppermixture},
\begin{align*}
K(p_{G_0,N},p_{G,N}) \leq & \frac{N}{\min \Theta_1} \sum_{(\alpha,\beta)\in I} q_{\alpha\beta} (\theta_\beta-\theta_\alpha^0) \\ 
\leq & \frac{N}{\min \Theta_1} \min\{1,diam(\Theta_1)\} \left( \sum_{\beta=1}^{k_0} \left(\theta_\beta-\theta_\beta^0\right) + \sum_{\beta=2}^{k_0} \left(p_\beta-p_\beta^0\right) \right). \numberthis \label{eqn:KLuppbou2}
\end{align*}
In fact, one can show $\sum_{(\alpha,\beta)\in I} q_{\alpha\beta} (\theta_\beta-\theta_\alpha^0)=W_1(G_0,G)$ but we do not have to use this fact here. Now by \eqref{eqn:KLuppbou2}, for any $G\in A(G_0)$,
$$
\frac{1}{\n}\sum_{i=1}^{\n}K(p_{G_0,\m_i}, p_{G,\m_i}) \leq C(\Theta_1) \bar{N}_m \left( \sum_{\beta=1}^{k_0} \left(\theta_\beta-\theta_\beta^0\right) + \sum_{\beta=2}^{k_0} \left(p_\beta-p_\beta^0\right) \right).
$$
Thus
\begin{align*}
	&\Pi\left( \frac{1}{\n}\sum_{i=1}^{\n}K(p_{G_0,\m_i}, p_{G,\m_i}) \leq \epsilon^2     \right) \\
	\geq & \Pi\left(A(G_0) \bigcap  \left\{C(\Theta_1) \bar{N}_m \left( \sum_{\beta=1}^{k_0} \left(\theta_\beta-\theta_\beta^0\right) + \sum_{\beta=2}^{k_0} \left(p_\beta-p_\beta^0\right) \right) \leq \epsilon^2 \right\} \right) \\
	\gtrsim &\left( \frac{\epsilon^2}{\bar{\m}_{\n} C(\Theta_1)}\right)^{k_0} \left( \frac{\epsilon^2}{\bar{\m}_{\n} C(\Theta_1)}\right)^{k_0-1},
	\end{align*}
which is \eqref{eqn:piKlowdiflen} for the example of $f(x|\theta)$ being uniform kernels.

As a result, the conclusions of Theorem \ref{thm:posconnotid} hold. Moreover by Example \ref{exa:uniformcontinue} $n_1(G_0)=1$ and one can directly verify that $n_0(G_0,\cup_{k\leq k_0}\Ec_k(\Theta_1))=1$.\\

\textit{Details for the location-scale exponential kernel in Example \ref{exa:exponentialcontinuerevision}}. By similar calculations as above, one may show that the conclusion of Theorem \ref{thm:posconnotid} holds even though the KL-divergence is not Lipschitz as in assumption \ref{item:kernel}. By Example \ref{exa:exponentialcontinuerevision} $n_1(G_0)=1$ and one can directly verify that $n_0(G_0,\cup_{k\leq k_0}\Ec_k(\Theta_1))=1$.

\textit{Details for the case that kernel is location-mixture Gaussian in Example \ref{exa:kernelmixture}}. It suffices to verify assumption \ref{item:kernel} such that Theorem \ref{thm:posconnotid} can be applied. 

Note that 
\begin{align*}
h(f(x|\theta_1),f(x|\theta_2))=& h\left(\sum_{i=1}^k\pi_{i1} f_{\Nc}(x|\mu_{i1},\sigma^2),\sum_{i=1}^k\pi_{i2} f_{\Nc}(x|\mu_{i2},\sigma^2)\right) \\
 \overset{(*)}{\leq} & 
 \max_{1\leq i \leq k} h\left(f_{\Nc}(x|\mu_{i1},\sigma^2), f_{\Nc}(x|\mu_{i2},\sigma^2)\right) + \sqrt{\frac{1}{2}\sum_{i=1}^{k}\left|\pi_{i1}-\pi_{i2}\right|} \\
 \overset{(**)}{\leq} &  \frac{1}{2\sqrt{2}\sigma}\max_{1\leq i \leq k} |\mu_{i1}-\mu_{i2}| + \sqrt{\frac{1}{2}\sum_{i=1}^{k}\left|\pi_{i1}-\pi_{i2}\right|}\\
 \leq & C(\sigma,k,\Theta_1) \sqrt{\|\theta_1-\theta_2\|_2},
\end{align*}
where step $(*)$ follows from Lemma \ref{lem:hellingeruppbou}, and step $(**)$ follows from the formula of Hellinger distance between two Gaussian distributions. We also have
\begin{align*}
K(f(x|\theta_1),f(x|\theta_2))=& K\left(\sum_{i=1}^k\pi_{i1} f_{\Nc}(x|\mu_{i1},\sigma^2),\sum_{j=1}^k\pi_{j2} f_{\Nc}(x|\mu_{j2},\sigma^2)\right) \\
\overset{(*)}{=}& \min_{q} K\left(\sum_{i,j=1}^k q_{ij} f_{\Nc}(x|\mu_{i1},\sigma^2),\sum_{i,j=1}^k q_{ij} f_{\Nc}(x|\mu_{j2},\sigma^2)\right) \\
 \overset{(**)}{\leq} & \min_{q} \sum_{i,j=1}^k q_{ij} K\left( f_{\Nc}(x|\mu_{i1},\sigma^2), f_{\Nc}(x|\mu_{j2},\sigma^2)\right)\\
 \overset{(***)}{=} &  \min_{q} \sum_{i,j=1}^k q_{ij} \frac{|\mu_{i1}-\mu_{j2}|^2}{2\sigma^2}  \\
 \leq & C(\sigma,k,\Theta_1) \|\theta_1-\theta_2\|_2,
\end{align*}
where in step $(*)$ $(q_{ij})_{i,j\in [k]}$ is any coupling between $(\pi_{i1})_{i\in [k]}$ and $(\pi_{j2})_{j\in [k]}$ and the minimization is taken among all such couplings, step $(**)$ follows from the joint convexity of KL-divergence, 
Lemma \ref{lem:hellingeruppbou}, and step $(***)$ follows from the formula of KL-divergence between two Gaussian distributions. 

Thus assumption \ref{item:kernel} is satisfied. Moreover, by Appendix \ref{sec:locationscalmixturegaussian} $n_1(G_0)<\infty$. Hence Theorem \ref{thm:posconnotid} holds. The calculations of $n_0(G_0,\cup_{k\leq k_0}\Ec_k(\Theta_1))$ and $n_1(G_0)$ are left as exercises to interested readers. 

\section{Proofs and calculation details for Section 7}	
\subsection{Proofs for Section \ref{sec:fourieranalysis}}
\begin{proof} [Proof of Lemma \ref{eqn:smallsigularvalueA(x)}]
Let $c(\cdot)$ be a positive constant that depends on its parameters in this proof.\\
\textbf{Claim 1}: There exists $c_6>0$ that depends only on $d,j_1,\ldots,j_d$ such that $S_{\text{min}}(A(x)) \geq c_6|x|^{-(j_d-j_1)(d-1)}$ for any $|x|>1$. Suppose this is not true, then there is $\{x_{\n}\}_{\n=1}^\infty$ such that $|x_{\n}| > 1$, and as $m\to\infty$,
	\begin{equation}
	|x_{\n}|^{(j_d-j_1)(d-1)}S_{\text{min}}(A(x_{\n})) \to 0. \label{eqn:xnsmato0} 
	\end{equation}
	Let $B(x,t)=|x|^t A(x)$ with $t$ being some positive number to be specified. The characteristic polynomial of $B(x,t)B^\top (x,t)$ is 
	$$\text{det}\left(\lambda I - B(x,t)B^\top (x,t)\right) = \lambda^d + \sum_{i=0}^{d-1}\gamma_i(x,t)\lambda^i.$$
	When $|x|>1$,  since $|A_{\alpha\beta}(x)|\leq c_4(d,j_1,\cdots,j_d) |x|^{j_d-j_1}$ for any $\alpha,\beta \in [d]$, the entries of $B(x,t)B^\top (x,t)$ are bounded by $d\left(c_4(d,j_1,\cdots,j_d) |x|^{(j_d-j_1+t)}\right)^2$. Thus $|\gamma_i(x,t)|\leq c_8(d,j_1,\cdots,j_d)\left( |x|^{(j_d-j_1+t)}\right)^{2(d-i)}$ for $1\leq i \leq d-1$. Moreover, 
	$$
	|\gamma_0(x,t)|=\left|x^{dt}\text{det}(A(x))\right|^2=\left(\prod_{i=1}^d j_i!\right)^2 |x|^{2dt} =c_5(d,j_1,\cdots,j_d)|x|^{2dt},
	$$ 
	with $c_5(d,j_1,\cdots,j_d) = (\prod_{i=1}^d j_i!)^2>0 $. Let $\lambda_{\text{min}}(x,t)\geq 0$ be the smallest eigenvalue of $B(x,t)B^\top (x,t)$. Then 
	$$
	\lambda_{\text{min}}^d(x,t) + \sum_{i=0}^{d-1}\gamma_i(x,t)\lambda_{\text{min}}^i(x,t) =0.
	$$    
	When $x\not = 0$, $\lambda_{\text{min}}(x,t) > 0$ since $\gamma_0(x,t)\not = 0$. Thus when $x\not = 0$,
	\begin{equation}
	\frac{1}{\lambda_{\text{min}}(x,t)} = -\frac{1}{\gamma_0(x,t)} \lambda_{\text{min}}^{d-1}(x,t) - \sum_{i=1}^{d-1}\frac{\gamma_i(x,t)}{\gamma_0(x,t)}\lambda_{\text{min}}^{i-1}(x,t). \label{eqn:lambdaminequ} 
	\end{equation} 
	Moreover, when $|x|>1$, $\left|\frac{\gamma_i(x,t)}{\gamma_0(x,t)}\right|\leq \frac{c_8(d,j_1,\cdots,j_d)}{c_5(d,j_1,\cdots,j_d)} \frac{|x|^{2(j_d-j_1)(d-i)}}{|x|^{2t i}} \leq \frac{c_8(d,j_1,\cdots,j_d)}{c_5(d,j_1,\cdots,j_d)} \frac{|x|^{2(j_d-j_1)(d-1)}}{|x|^{2t}} $ for any $1\leq i \leq d-1$. Then by \eqref{eqn:xnsmato0}, $\lambda_{\text{min}}(x_{\n},t_0) = \left(|x_{\n}|^{(j_d-j_1)(d-1)}S_{\text{min}}(A(x_{\n}))\right)^2 \to 0 $, where $t_0=(j_d-j_1)(d-1)$, so $\frac{1}{\lambda_{\text{min}}(x,t_0)} \to \infty$. On the other hand, since $|\frac{1}{\gamma_0(x_{\n},t_0)}|$ and $\frac{\gamma_i(x_{\n},t_0)}{\gamma_0(x_{\n},t_0)}$ are bounded and $\lambda_{\text{min}}(x_{\n},t_0)\to 0$,
	\begin{multline*}
	\limsup_{\n\to \infty}\left|-\frac{1}{\gamma_0(x_{\n},t_0)} \lambda_{\text{min}}^{d-1}(x_{\n},t_0) - \sum_{i=1}^{d-1}\frac{\gamma_i(x_{\n},t_0)}{\gamma_0(x_{\n},t_0)}\lambda_{\text{min}}^{i-1}(x_{\n},t_0)\right| \\ =\limsup_{\n\to \infty} \left|\frac{\gamma_1(x_{\n},t_0)}{\gamma_0(x_{\n},t_0)}\right| \leq  \frac{c_8(d,j_1,\cdots,j_d)}{c_5(d,j_1,\cdots,j_d)}.
	\end{multline*}
	These contradict with \eqref{eqn:lambdaminequ} and hence the claim at the beginning of this paragraph is established.
	
	 Since $S_\text{min}(A(x))>0$ on $|x|\leq 1$ and $S_\text{min}(A(x))$ is continuous, 
	 $$
	 \min_{x\in[-1,1]}S_\text{min}(A(x))\geq c_7>0.
	 $$
	 Then take $c_3=\min\{c_6,c_7\}$ and the proof is complete.
\end{proof}

\begin{proof}[Proofs of Lemma \ref{lem:radialupp}]
Let $\psi_w(x)=w^\top Tx$. Then 
	$$
	\left(\frac{d^{j_1} \psi_w(x)}{d x^{j_1}},\frac{d^{j_2} \psi_w(x)}{d x^{j_2}},\ldots,\frac{d^{j_d} \psi_w(x)}{d x^{j_d}}\right)^\top  = A(x)w
	$$
	where $A(x)\in \R^{d\times d}$ with entries $A_{\alpha\beta}(x)=0$ for $\alpha>\beta$ and $A_{\alpha\beta}(x)=\frac{j_\beta!}{(j_\beta-j_\alpha)! }x^{j_\beta-j_\alpha}$ for $\alpha \leq \beta$. Then for any $w \in S^{d-1}$, 
	\begin{equation}
	\max_{1\leq i \leq d}\left|\frac{d^{j_i} \psi_w(x)}{d x^{j_i}}\right|= \|A(x)w\|_\infty \geq \frac{1}{\sqrt{d}}\|A(x)w\|_2\geq \frac{1}{\sqrt{d}}S_{\text{min}}(A(x)) \geq \frac{1}{\sqrt{d}} c_3 \max\{1,|x|\}^{-\alpha_0} ,  \label{eqn:derlowbou}
	\end{equation}
	where $\alpha_0=(j_d-j_1)(d-1)$ and the last inequality follows from Lemma \ref{eqn:smallsigularvalueA(x)}.
		
 	\noindent
	\textbf{Case 1}: ($j_1>1$). Partition the real line according to the increasing sequence $\{a_t\}_{t=-\infty}^{\infty}$ where $$a_t= \begin{cases} 
 	2 a_{t+1} & t \leq -1 \\
 	\lfloor -c_1 \rfloor -1 & t = 0  \\ 
 	 b_t & 1\leq t \leq \ell \\
 	\lceil c_1 \rceil +1 & t = \ell+1 \\
 	2 a_{t-1} & t \geq \ell +2 \end{cases}.$$
 	For $t\leq -1$, by  \eqref{eqn:derlowbou} we know $\max\limits_{1\leq i \leq d}\left|\frac{d^{j_i} \psi_w(x)}{d x^{j_i}}\right| \geq \frac{1}{\sqrt{d}}c_3 |a_t|^{-\alpha_0} $ for all $x\in [a_t,a_{t+1}]$. In order to appeal to Lemma~\ref{lem:corput}, we need to specify the points $\{t_{\beta}\}_{\beta=0}^{\beta_0}$ with $t_0=a_t<t_1<\ldots<t_{\beta_0} = a_{t+1}$, where $\{t_{\beta}\}_{\beta=1}^{\beta_0-1}$ is defined as the set of roots in $(a_t,a_{t+1})$ of any of the following $d-1$ equations,
 	$$
 	\left|\frac{d^{j_i} \psi_w(x)}{d x^{j_i}}\right| = \frac{1}{\sqrt{d}}c_3 |a_t|^{-\alpha_0}, \quad i\in [d-1].
 	$$
 	Thus $\{t_{\beta}\}_{\beta=0}^{\beta_0}$ is a partition of $[a_t,a_{t+1}]$ such that for each $0\leq \beta \leq \beta_0-1$, $\left|\frac{d^{j_{k_\beta}} \psi_w(x)}{d x^{j_{k_\beta}}}\right| \geq \frac{1}{\sqrt{d}}c_3 |a_i|^{-\alpha_0}$ holds for some index $k_\beta \in [d]$ and for all $x\in [t_\beta,t_{\beta+1}]$. Since $\frac{d^{j_{m}} \psi_w(x)}{d x^{j_{m}}}$ is polynomial of degree $j_d - j_{m}$, it follows that $\beta_0 -1 \leq 2 \sum_{m=1}^{d}(j_d-j_m)$.
 	Let $\tilde{c}_0$ be the maximum of $\{\tilde{c}_{j_{m}}\}_{m=1}^d$, where $\tilde{c}_{j_{m}}$ are the coefficients $c_k$ corresponding to $k=j_{m}$ in Lemma \ref{lem:corput}. Then by Lemma \ref{lem:corput}, for $\lambda>1$
 	\begin{align*}	
 	&\left|\int_{[t_\beta,t_{\beta+1}]}e^{\i\lambda\psi_w(x)} f(x) dx \right| \\
 	 \leq & \tilde{c}_0 \left(\frac{c_3|a_t|^{-\alpha_0}\lambda}{\sqrt{d}}\right)^{-\frac{1}{j_{k_\beta}}}\left(|f(t_{\beta+1})|+\int_{[t_\beta,t_{\beta+1}]} |f'(x)|dx\right) \\
 	\leq & \tilde{c}_0 \max\left\{c_3^{-\frac{1}{j_{1}}},c_3^{-\frac{1}{j_{d}}}\right\} (\sqrt{d})^{\frac{1}{j_1}}  \lambda^{-\frac{1}{j_d}} (|a_t|^{\alpha_0})^{\frac{1}{j_1}}  \left(f(a_{t+1})+\int_{[t_\beta,t_{\beta+1}]} |f'(x)|dx\right), \numberthis \label{eqn:rescorputbbeta}
 	\end{align*} 
 	where the last step follows from $f(x)$ being increasing on $(-\infty,-c_1)$. Then for $\lambda>1$
 	\begin{align*}
 	&\left|\int_{[a_t,a_{t+1}]}e^{\i\lambda\psi_w(x)} f(x) dx \right|\\
 	 \leq & \sum_{\beta=0}^{\beta_0-1}\left|\int_{[t_\beta,t_{\beta+1}]}e^{\i\lambda\psi_w(x)} f(x) dx \right|\\
 	 \overset{(*)}{\leq} & \tilde{c}_0 \max\left\{c_3^{-\frac{1}{j_{1}}},c_3^{-\frac{1}{j_{d}}}\right\} (\sqrt{d})^{\frac{1}{j_1}}  \lambda^{-\frac{1}{j_d}} (|a_t|^{\alpha_0})^{\frac{1}{j_1}} \left(\beta_0f(a_{t+1}) + \int_{[a_t,a_{t+1}]} |f'(x)|dx \right)\\
 	 \overset{(**)}{\leq} & \tilde{c}_0 \left(c_3^{-\frac{1}{j_{1}}}+c_3^{-\frac{1}{j_{d}}}\right) d^{\frac{1}{2j_1}}  \lambda^{-\frac{1}{j_d}}  2^{\frac{\alpha_0}{j_1}}\left( \beta_1  |a_{t+1}|^{\frac{\alpha_0}{j_1}}f(a_{t+1}) + |a_{t+1}|^{\frac{\alpha_0}{j_1}} \int_{[a_t,a_{t+1}]}  |f'(x)|dx \right) \\ 
 	 \leq & C(d,j_1,\cdots,j_d) \lambda^{-\frac{1}{j_d}} \left(  |a_{t+1}|^{\frac{\alpha_0}{j_1}}f(a_{t+1}) + \int_{[a_t,a_{t+1}]} |x|^{\frac{\alpha_0}{j_1}} |f'(x)|dx \right), \numberthis \label{eqn:rescorputai}
 	\end{align*}
 	where the step $(*)$ follows from $\eqref{eqn:rescorputbbeta}$, the step $(**)$ follows from $a_{t}=2a_{t+1}$ and $\beta_0\leq \beta_1 := 2 \sum_{m=1}^d (j_d - j_m)+1$, and the last step follows from $\beta_1 \geq 1$, $|a_t| \geq |x| \geq |a_{t+1}|$ for all $x\in [a_t,a_{t+1}]$ and $C(d,j_1,\cdots,j_d) = \tilde{c}_0 \max\left\{c_3^{-\frac{1}{j_{1}}},c_3^{-\frac{1}{j_{d}}}\right\} (\sqrt{d})^{\frac{1}{j_1}}    2^{\frac{\alpha_0}{j_1}} \beta_1$.
 	
 	For $t\geq \ell+1$, following similar steps as the case $t\leq -1$, one obtain
 	\begin{align*}
 	&\left|\int_{[a_t,a_{t+1}]}e^{\i\lambda\phi_w(x)} f(x) dx \right| \\
 	\leq & C(d,j_1,\cdots,j_d) \lambda^{-\frac{1}{j_d}} \left(  |a_{t}|^{\frac{\alpha_0}{j_1}}f(a_{t}) + \int_{[a_t,a_{t+1}]} |x|^{\frac{\alpha_0}{j_1}} |f'(x)|dx \right), \numberthis \label{eqn:rescorputaip}
 	\end{align*}
 	where $C(d,j_1,\cdots,j_d)$ is the same as in \eqref{eqn:rescorputai}. 	
	
	For $0\leq t \leq \ell$, since $f' $ is continuous on $(a_t,a_{t+1})$ and $f'$ is Lebesgue integrable on $[a_t,a_{t+1}]$, $\lim_{x\to a_{t+1}^-}f(x)$ and $\lim_{x\to a_{t}^+}f(x)$ exist. Define $\tilde{f}(x)=f(x)\bm{1}_{(a_t,a_{t+1})}(x)+\bm{1}_{\{a_{t+1}\}}(x)\lim_{x\to a_{t+1}^-}f(x) + \bm{1}_{\{a_{t}\}}(x)\lim_{x\to a_{t}^+}f(x)$. Then $\tilde{f}(x)$ is absolute continuous on $[a_t,a_{t+1}]$. Moreover, by  \eqref{eqn:derlowbou} we know $\max\limits_{1\leq i \leq d}\left|\frac{d^{j_i} \psi_w(x)}{d x^{j_i}}\right| \geq \frac{1}{\sqrt{d}}c_3  (c_1+2)^{-\alpha_0} $ for all $x\in [a_t,a_{t+1}]$. Following the same argument as in the case $t\leq -1$, let $\{\tilde{t}_{\beta}\}_{\beta=0}^{\tilde{\beta}_0}$ with $\tilde{t}_0=a_t<\tilde{t}_1<\ldots<\tilde{t}_{\beta_0} = a_{t+1}$, where $\{\tilde{t}_{\beta}\}_{\beta=1}^{\tilde{\beta}_0-1}$
		is the set of roots in $(a_t,a_{t+1})$ of the following $d-1$ equations
	$$
	\left|\frac{d^{j_i} \psi_w(x)}{d x^{j_i}}\right| = \frac{1}{\sqrt{d}}c_3 (c_1+2) ^{-\alpha_0}, \quad i\in [d-1].
	$$
	Then $\{\tilde{t}_{\beta}\}_{\beta=0}^{\tilde{\beta}_0}$ is a partition of $[a_t,a_{t+1}]$ such that for each $0\leq \beta \leq \tilde{\beta}_0-1$, $\left|\frac{d^{j_{k_\beta}} \psi_w(x)}{d x^{j_{k_\beta}}}\right| \geq \frac{1}{\sqrt{d}}c_3 (c_1+2)^{-\alpha_0}$ for some $k_\beta \in [d]$ and for all $x\in [\tilde{t}_\beta,\tilde{t}_{\beta+1}]$. Since $\frac{d^{j_{m}} \psi_w(x)}{d x^{j_{m}}}$ are polynomial of degree $j_d - j_{m}$, we have $\tilde{\beta}_0 -1 \leq 2 \sum_{m=1}^d (j_d - j_m)$. Thus by Lemma \ref{lem:corput}, for any $\lambda>1$
	 	\begin{align*}	
	&\left|\int_{[\tilde{t}_\beta,\tilde{t}_{\beta+1}]}e^{\i\lambda\psi_w(x)} f(x) dx \right| \\
	= & \left|\int_{[\tilde{t}_\beta,\tilde{t}_{\beta+1}]}e^{\i\lambda\psi_w(x)} \tilde{f}(x) dx \right|\\
	\leq & \tilde{c}_0 \left(\frac{c_3(c_1+2) ^{-\alpha_0}\lambda}{\sqrt{d}}\right)^{-\frac{1}{j_{k_\beta}}}\left(|\tilde{f}(\tilde{t}_{\beta+1})|+\int_{[\tilde{t}_\beta,\tilde{t}_{\beta+1}]} |f'(x)|dx\right) \\
	\leq & \tilde{c}_0 \max\left\{c_3^{-\frac{1}{j_{1}}},c_3^{-\frac{1}{j_{d}}}\right\} (\sqrt{d})^{\frac{1}{j_1}}  \lambda^{-\frac{1}{j_d}} ((c_1+2) ^{\alpha_0})^{\frac{1}{j_1}}  \left(\|f\|_{L^\infty}+\int_{[\tilde{t}_\beta,\tilde{t}_{\beta+1}]} |f'(x)|dx\right), \numberthis \label{eqn:rescorputbbeta2}
	\end{align*} 
	where the last step follows from $|\tilde{f}(\tilde{t}_{\beta+1})| \leq \|f\|_{L^\infty} $. Then for any $\lambda>1$
	\begin{align*}
	 &\left|\int_{[a_t,a_{t+1}]}e^{\i\lambda\psi_w(x)} f(x) dx \right|\\
	  \leq & \sum_{\beta=0}^{\tilde{\beta}_0-1} \left|\int_{[\tilde{t}_\beta,\tilde{t}_{\beta+1}]}e^{\i\lambda\psi_w(x)} f(x) dx \right| \\
	 \leq & \tilde{c}_0 \max\left\{c_3^{-\frac{1}{j_{1}}},c_3^{-\frac{1}{j_{d}}}\right\} (\sqrt{d})^{\frac{1}{j_1}}  \lambda^{-\frac{1}{j_d}} ((c_1+2)^{\alpha_0})^{\frac{1}{j_1}}  \left( \tilde{\beta}_0\|f\|_{L^\infty}+\int_{[a_t,a_{t+1}]} |f'(x)|dx\right)\\
	  \leq & C(d,j_1,\ldots,j_d) \lambda^{-\frac{1}{j_d}}  (c_1+2)^{\frac{\alpha_0}{j_1}} \left(\|f\|_{L^\infty}+\int_{[a_t,a_{t+1}]} |f'(x)|dx\right), \numberthis \label{eqn:rescorput3}
	\end{align*}
	where $C(d,j_1,\cdots,j_d)$ is the same as in \eqref{eqn:rescorputai}. 
	
Hence,
	\begin{align*}
	&\left|\int_{\R}e^{\i\lambda\psi_w(x)} f(x) dx \right|  \\
	 =& \left|\sum_{t=-\infty}^\infty \int_{[a_t,a_{t+1}]}e^{\i\lambda\psi_w(x)} f(x) dx \right|\\ \leq& \sum_{t=-\infty}^{\infty}\left|\int_{[a_t,a_{t+1}]}e^{\i\lambda\psi_w(x)} f(x) dx \right|  \\
	 \overset{(*)}{\leq} & C(d,j_1,\ldots,j_d) \lambda^{-\frac{1}{j_d}} (c_1+2)^{\frac{\alpha_0}{j_1}}  \left( \sum_{t\leq -1}|a_{t+1}|^{\frac{\alpha_0}{j_1}}f(a_{t+1}) + \right.\\
	 &\left. \sum_{t\geq \ell+1}|a_{t}|^{\frac{\alpha_0}{j_1}}f(a_{t}) + (\ell+1)\|f\|_{L^\infty}+ \left\|\left(|x|^{\frac{\alpha_0}{j_1}}+1\right)f'(x)\right\|_{L^1}\right) \\
	 \overset{(**)}{\leq}& C(d,j_1,\ldots,j_d) \lambda^{-\frac{1}{j_d}} (c_1+2)^{\frac{\alpha_0}{j_1}} \left( \int_{(\infty,-c_1]}|x|^{\frac{\alpha_0}{j_1}}f(x)dx +\right. \\  &\left. \int_{[c_1,\infty)}|x|^{\frac{\alpha_0}{j_1}}f(x)dx+(\ell+1)\|f\|_{L^\infty} +\left\|\left(|x|^{\frac{\alpha_0}{j_1}}+1\right)f'(x)\right\|_{L^1}\right)\\
	\leq & C(d,j_1,\ldots,j_d) \lambda^{-\frac{1}{j_d}} (c_1+2)^{\frac{\alpha_0}{j_1}}\times \\
	&\left( \left\||x|^{\frac{\alpha_0}{j_1}}f(x)\right\|_{L^1} +(\ell+1)\|f\|_{L^\infty} +\left\|\left(|x|^{\frac{\alpha_0}{j_1}}+1\right)f'(x)\right\|_{L^1}\right) \numberthis \label{eqn:case1last}
	\end{align*}
	where the first equality follows from the dominated convergence theorem, the step $(*)$ follows from \eqref{eqn:rescorputai}, \eqref{eqn:rescorputaip}, \eqref{eqn:rescorput3}, and the step $(**)$ follows from the monotonicity of $|x|^{\frac{\alpha_0}{j_1}}f$ when $x<-c_1$, $x>c_1$. \\

\noindent	
\textbf{Case 2}: ($j_1=1$). Fix $\forall w\in S^{d-1}$, $\exists x_1<x_2<\ldots<x_s$ partition $\R$ into $s+1$ disjoint open intervals such that $ \frac{d \psi_w(x)}{dx} $ is monotone on each of those interval. Notice $s\leq j_d-2$ since $ \frac{d \psi_w(x)}{dx} $ is a polynomial of degree $j_d-1$, and $x_1,x_2,\ldots,x_s$ depend on $w$. For $t\leq -1$, on $[a_t, a_{t+1}]$ when we subdivide the interval, besides the partition points $\{t_\beta\}_{\beta=0}^{\beta_0}$, $\{x_1,x_2,\ldots,x_s\} \cap [a_t,a_{t+1}] $ should also be added into the partition points. The new partition points set has at most $\beta_0+1+s \leq \beta_1 + j_d$ points and hence subdivide $[a_t,a_{t+1}]$ into at most $\beta_1 + j_d-1$ intervals such that on each subinterval $\max\limits_{1\leq i \leq d}\left|\frac{d^{j_i} \psi_w(x)}{d x^{j_i}}\right| \geq \frac{1}{\sqrt{d}}c_3 |a_t|^{-\alpha_0} $ and $\frac{d \psi_w(x)}{d x}$ is monotone. Hence Lemma \ref{lem:corput} (part ii)) can be applied on each subinterval. The rest of steps proceed similarly as in Case 1, and one will obtain
	 \begin{align*}
	 &\left|\int_{[a_t,a_{t+1}]}e^{\i\lambda\psi_w(x)} f(x) dx \right|\\
	 \leq & \tilde{C}(d,j_1,\cdots,j_d) \lambda^{-\frac{1}{j_d}} \left(  |a_{t+1}|^{\frac{\alpha_0}{j_1}}f(a_{t+1}) + \int_{[a_t,a_{t+1}]} |x|^{\frac{\alpha_0}{j_1}} |f'(x)|dx \right), \numberthis \label{eqn:rescorputai21}
	 \end{align*}
	 where $\tilde{C}(d,j_1,\cdots,j_d) = \tilde{c}_0 \max\left\{c_3^{-\frac{1}{j_{1}}},c_3^{-\frac{1}{j_{d}}}\right\} (\sqrt{d})^{\frac{1}{j_1}}  2^{\frac{\alpha_0}{j_1}} (\beta_1 + j_d-1)$, a constant that depends only on $d,j_1,\ldots,j_d$. Following the same reasoning one can obtain \eqref{eqn:rescorputaip} for $t\geq \ell+1$ and \eqref{eqn:rescorput3} for $0\leq t \leq \ell$, both with $C(d,j_1,\cdots,j_d)$ replaced by $\tilde{C}(d,j_1,\cdots,j_d)$. As a result, one obtains \eqref{eqn:case1last} with $C(d,j_1,\cdots,j_d)$ replaced by $\tilde{C}(d,j_1,\cdots,j_d)$. 
\end{proof}
	
\begin{proof}[Proof of Lemma \ref{lem:charintegrability}]
By Lemma \ref{lem:radialupp}, when $\|\zeta\|_2 >1$,
	$$
	|g(\zeta)|^r \leq C(f,d,j_1,\ldots,j_d) \|\zeta\|_2^{-\frac{r}{j_d}}.
	$$
	where 
	\begin{multline*}
	    C(f,r,d,j_1,\ldots,j_d)= \\
	    C^r(d,j_1,\ldots,j_d) (c_1+2)^{\alpha_1 r}\left( \left\||x|^{\alpha_1}f(x)\right\|_{L^1} +(\ell+1)\|f\|_{L^\infty} +\left\|\left(|x|^{\alpha_1}+1\right)f'(x)\right\|_{L^1}\right)^r.
	 \end{multline*} 
	 Let $|S^{d-1}|$ denote the area of $S^{d-1}$. Then 
	\begin{align*}
	& \int_{\|\zeta\|_2 > 1} |g(\zeta)|^r d\zeta \\
	 \leq &  C(f,r,d,j_1,\ldots,j_d) \int_{\|\zeta\|_2 > 1}  \|\zeta\|_2^{-\frac{r}{j_d}} d\zeta\\ 
	 \leq & C(f,r,d,j_1,\ldots,j_d)|S^{d-1}| \int_{\left( 1, \infty\right)}  \lambda^{-\frac{r}{j_d}} \lambda^{d-1} d\lambda\\	
	 = & C(r,d,j_1,\ldots,j_d) (c_1+2)^{{\alpha_1 r}}\left( \left\||x|^{\alpha_1}f(x)\right\|_{L^1} +(\ell+1)\|f\|_{L^\infty} +\left\|\left(|x|^{\alpha_1}+1\right)f'(x)\right\|_{L^1}\right)^r, \numberthis \label{eqn:charuppintlarge}
	\end{align*}
	where the last inequality follows from that $\int_{\left( 1, \infty\right)}  \lambda^{-\frac{r}{j_d}} \lambda^{d-1} d\lambda$ is a finite constant that depends on $d$ and $j_d$ for $r> dj_d$ and $C(r,d,j_1,\ldots,j_d) = C^r(d,j_1,\ldots,j_d)|S^{d-1}|\int_{\left( 1, \infty\right)}  \lambda^{-\frac{r}{j_d}} \lambda^{d-1} d\lambda $.
	
    In addition,	
	\begin{equation}
	\int_{\|\zeta\|_2 \leq 1} |g(\zeta)|^r d\zeta \leq \int_{\|\zeta\|_2 \leq 1} \|f\|_{L^1}^r d\zeta = C(d)\|f\|_{L^1}^r, \label{eqn:charuppintsmall}
	\end{equation}
	where $C(d)$ is a constant that depends on $d$.
	
	The proof is then completed by combining \eqref{eqn:charuppintlarge} and \eqref{eqn:charuppintsmall} and $(a^r+b^r)\leq (a+b)^r$ for any $a,b>0,r\geq 1$.
\end{proof}	


	\subsection{Calculation details for Section \ref{sec:mixofgaussian}}
	\label{sec:detailsgaussian}
	In this subsection we verify parts of \ref{item:genthmg} for the $T$ specified in Section \ref{sec:mixofgaussian}. It is easy to verify by the dominated convergence theorem or Pratt's Lemma:
\begin{align*}
\frac{\partial h(\zeta|\mu,\sigma)}{\partial \mu}& = \int_{\R} \exp\left(\bm{i}\sum_{i=1}^k \zeta^{(i)}x^i\right)\frac{\partial f_{\Nc}(x|\mu,\sigma)}{\partial \mu} dx , \\ 
\frac{\partial^2 h(\zeta|\mu,\sigma)}{\partial \mu^2} & = \int_{\R} \exp\left(\bm{i}\sum_{i=1}^k\zeta^{(i)}x^i\right)\frac{\partial^2 f_{\Nc}(x|\mu,\sigma)}{\partial \mu^2} dx,\\
\frac{\partial h(\zeta|\mu,\sigma)}{\partial \zeta^{(j)}}& = \int_{\R} \bm{i}x^j\exp\left(\bm{i}\sum_{i=1}^k\zeta^{(i)}x^i\right) f_{\Nc}(x|\mu,\sigma) dx , \quad j\in [k]
\end{align*}
and 
$$
\frac{\partial^2 h(\zeta|\mu,\sigma)}{\partial \zeta^{(j)} \partial \mu}  = \int_{\R} \bm{i}x^j\exp\left(\bm{i}\sum_{i=1}^k\zeta^{(i)}x^i\right)\frac{\partial f_{\Nc}(x|\mu,\sigma)}{\partial \mu} dx, \quad j\in [k].
$$

Then 
\begin{align}
\left|\frac{\partial h(\zeta|\mu,\sigma)}{\partial \mu}\right| & \leq \int_{\R} \left|\frac{\partial f_{\Nc}(x|\mu,\sigma)}{\partial \mu}\right|dx = \sqrt{\frac{2}{\pi }}\frac{1}{\sigma}, \label{eqn:1stderupp} \\
\left|\frac{\partial^2 h(\zeta|\mu,\sigma)}{\partial \mu^2}\right| & \leq \int_\R \left|\frac{\partial^2 f_{\Nc}(x|\mu,\sigma)}{\partial \mu^2}\right|dx \leq \frac{2}{\sigma^2}, \label{eqn:2ndderupp} \\
\max_{j\in [k]}\left|\frac{\partial h(\zeta|\mu,\sigma)}{\partial \zeta^{(j)}}\right|& \leq \max_{j\in [k]} \int_{\R} \left|x^j f_{\Nc}(x|\mu,\sigma)\right|dx :=h_1(\mu), \label{eqn:1stzetader}  \\
\max_{j\in [k]}\left|\frac{\partial^2 h(\zeta|\mu,\sigma)}{\partial \zeta^{(j)} \partial \mu }\right| & \leq \max_{j\in [k]}\int_{\R} \left|x^j\frac{\partial f_{\Nc}(x|\mu,\sigma)}{\partial \mu}\right|dx :=h_2(\mu), \label{eqn:2ndzetamuder}
\end{align}
where $h_1(\mu)$ and $h_2(\mu)$ are continuous functions of $\mu$ by the dominated convergence theorem, with their dependence on the constant $\sigma$ suppressed.

 It follows that the gradient of $\phi_T(\zeta|\theta)$ with respect to $\theta$ is
\begin{align*}
\nabla_{\theta} \phi_T(\zeta|\theta) =&\left(h(\zeta|\mu_1,\sigma) - h(\zeta|\mu_k,\sigma),\ldots,\right.\\
& h(\zeta|\mu_{k-1},\sigma) - h(\zeta|\mu_k,\sigma), \pi_1 \frac{\partial h(\zeta|\mu_1,\sigma)}{\partial \mu},\ldots, \pi_k \frac{\partial h(\zeta|\mu_k,\sigma)}{\partial \mu})^\top  \numberthis \label{eqn:gradienttheta}
\end{align*}
and Hessian with respect to $\theta$ with $(i,j)$ entry for $j\geq i$ given by
\begin{equation}
\frac{\partial^2}{\partial \theta^{(j)}\partial \theta^{(i)}} \phi_T(\zeta|\theta)  =  
\begin{cases}  
\frac{\partial h(\zeta|\mu_i,\sigma)}{\partial \mu} & i\in [k-1], j= k-1 + i \\
-\frac{\partial h(\zeta|\mu_k,\sigma)}{\partial \mu} & i\in [k-1], j= 2k-1 \\
 \pi_{i-(k-1)}\frac{\partial^2 h(\zeta|\mu_{i-(k-1)},\sigma)}{\partial \mu^2}   & k\leq  i\leq 2k-1, j=i \\
 0 & \text{otherwise}
  \end{cases}
\label{eqn:hessiantheta}
\end{equation}
and the lower part is symmetric to the upper part.

Then by \eqref{eqn:1stderupp}, \eqref{eqn:2ndderupp}, \eqref{eqn:1stzetader}, \eqref{eqn:2ndzetamuder}, \eqref{eqn:gradienttheta} and \eqref{eqn:hessiantheta}, for any $i,j\in [k]$:
 \begin{align*}
  \left|\frac{\partial \phi_T(\zeta|\theta)}{\partial \theta^{(i)}}\right| & \leq 2 +\sqrt{\frac{2}{\pi}}\frac{1}{\sigma},\\ 
\left|\frac{\partial^2 \phi_T(\zeta|\theta) }{\partial \theta^{(i)} \partial \theta^{(j)}}\right|& \leq \sqrt{\frac{2}{\pi}}\frac{1}{\sigma}+\frac{2}{\sigma^2},  \\ \left|\frac{\partial^2 \phi_T(\zeta|\theta)}{ \partial \zeta^{(j)} \partial \theta^{(i)}}\right|& \leq \sum_{i=1}^k \left(h_1(\mu_i)+h_2(\mu_i)\right),
\end{align*}
where the right hand side of the last display is a continuous function of $\theta$ since $h_1$ and $h_2$ are continuous. Hence to verify the condition \ref{item:genthmg} it remains to establish that there exists some $r\geq 1$ such that
$\int_{\R^{2k-1}}\left| \phi_T(\zeta|\theta)\right|^{r}    d\zetave$ on $\Theta$ is upper bounded by a finite continuous function of $\theta$. 


	



\subsection{Calculation details for Section \ref{sec:dirichlet}}
\label{sec:detailsdirichlet}

In this subsection we verify parts of \ref{item:genthmg} for the $T$ specified in Section \ref{sec:dirichlet}. This subsection is similar to the Appendix \ref{sec:detailsgaussian}.

It is easy to verify by the dominated convergence theorem or Pratt's Lemma:
\begin{align*}
\frac{\partial h(\zeta|\alpha,\xi)}{\partial \alpha}& = \int_{\R} \exp\left(\bm{i}\sum_{i=1}^3\zeta^{(i)}z^{i+1}\right)\frac{\partial g(z|\alpha,\xi)}{\partial \alpha} dz , \\ 
\frac{\partial^2 h(\zeta|\alpha,\xi)}{\partial \alpha^2} & = \int_{\R} \exp\left(\bm{i}\sum_{i=1}^3\zeta^{(i)}z^{i+1}\right)\frac{\partial^2 g(z|\alpha,\xi)}{\partial \alpha^2} dz,\\
\frac{\partial h(\zeta|\alpha,\xi)}{\partial \zeta^{(j)}}& = \int_{\R} \bm{i}z^{j+1}\exp\left(\bm{i}\sum_{i=1}^3\zeta^{(i)}z^{i+1}\right) g(z|\alpha,\xi) dz, \quad j=1,2,3
\end{align*}
and 
$$
\frac{\partial^2 h(\zeta|\alpha,\xi)}{\partial \zeta^{(j)}  \partial \alpha} =\frac{\partial^2 h(\zeta|\alpha,\xi)}{  \partial \alpha \partial \zeta^{(j)}}  = \int_{\R} \bm{i}z^{j+1}\exp\left(\bm{i}\sum_{i=1}^3\zeta^{(i)}z^{i+1}\right)\frac{\partial g(z|\alpha,\xi)}{\partial \alpha} dz, \quad j=1,2,3.
$$

From the preceding four displays,
\begin{align}
\left|\frac{\partial h(\zeta|\alpha,\xi)}{\partial \alpha}\right| & \leq \int_{\R} \left|\frac{\partial g(z|\alpha,\xi)}{\partial \alpha}\right|dz :=h_1(\alpha) \label{eqn:1stderuppdirichlet} \\
\left|\frac{\partial^2 h(\zeta|\alpha,\xi)}{\partial \alpha^2}\right| & \leq \int_\R \left|\frac{\partial^2 g(z|\alpha,\xi)}{\partial \alpha^2}\right|dz:=h_2(\alpha), \label{eqn:2ndderuppdirichlet} \\
\max_{j=1,2,3}\left|\frac{\partial h(\zeta|\alpha,\xi)}{\partial \zeta^{(j)}}\right|& \leq \max_{j=1,2,3} \int_{\R} \left|z^{j+1} g(z|\alpha,\xi)\right|dz :=h_3(\alpha) , \label{eqn:1stzetaderdirichlet}  \\
\max_{j=1,2,3}\left|\frac{\partial^2 h(\zeta|\alpha,\xi)}{\partial \zeta^{(j)} \partial \alpha }\right| & \leq \max_{j=1,2,3}\int_{\R} \left|z^{j+1}\frac{\partial g(z|\alpha,\xi)}{\partial \alpha}\right|dz :=h_4(\alpha), \label{eqn:2ndzetamuderdirichlet}
\end{align}
where $h_1(\alpha)$, $h_2(\alpha)$, $h_3(\alpha)$ and $h_4(\alpha)$ are continuous functions of $\alpha$ by the dominated convergence theorem, with their dependence on the constant $\xi$ suppressed.

 It follows that the gradient of $\phi_T(\zeta|\theta)$ with respect to $\theta$ is
\begin{equation}
\nabla_{\theta} \phi_T(\zeta|\theta) = \left(h(\zeta|\alpha_1,\xi) - h(\zeta|\alpha_2,\xi), \pi_1 \frac{\partial h(\zeta|\alpha_1,\xi)}{\partial \alpha}, \pi_2 \frac{\partial h(\zeta|\alpha_2,\xi)}{\partial \alpha}\right)^\top , \label{eqn:gradientthetadirichlet}
\end{equation}
and Hessian with respect to $\theta$ is
\begin{equation}
\textbf{Hess}_{\theta} \phi_T(\zeta|\theta)  = \begin{pmatrix}
0 & \frac{\partial h(\zeta|\alpha_1,\xi)}{\partial \alpha} & -\frac{\partial h(\zeta|\alpha_2,\xi)}{\partial \alpha} \\
\frac{\partial h(\zeta|\alpha_1,\xi)}{\partial \alpha} & \pi_1 \frac{\partial^2 h(\zeta|\alpha_1,\xi)}{\partial \alpha^2} & 0 \\
-\frac{\partial h(\zeta|\alpha_2,\xi)}{\partial \alpha} & 0 & \pi_2 \frac{\partial^2 h(\zeta|\alpha_2,\xi)}{\partial \alpha^2}
\end{pmatrix}. \label{eqn:hessianthetadirichlet}
\end{equation}
Then by \eqref{eqn:1stderuppdirichlet}, \eqref{eqn:2ndderuppdirichlet}, \eqref{eqn:1stzetaderdirichlet}, \eqref{eqn:2ndzetamuderdirichlet}, \eqref{eqn:gradientthetadirichlet} and \eqref{eqn:hessianthetadirichlet}, for any $i,j\in \{1,2,3\}$:
 \begin{align*}
  \left|\frac{\partial \phi_T(\zeta|\theta)}{\partial \theta^{(i)}}\right| & \leq 2 + h_1(\alpha_1)+ h_1(\alpha_2),\\ 
\left|\frac{\partial^2 \phi_T(\zeta|\theta) }{\partial \theta^{(i)} \partial \theta^{(j)}}\right|& \leq  \sum_{i=1}^2\left(h_1(\alpha_i)+h_2(\alpha_i)\right),  \\ \left|\frac{\partial^2 \phi_T(\zeta|\theta)}{ \partial \zeta^{(j)} \partial \theta^{(i)}}\right|& \leq \sum_{i=1}^2\left(h_3(\alpha_i)+h_4(\alpha_i)\right),
\end{align*}
where the right hand side of the preceding $3$ displays are continuous functions of $\theta$ since $h_1$, $h_2$, $h_3$ and $h_4$ are continuous. 

\section{Proofs for Section \ref{sec:minimax}}

	\begin{proof}[Proof of Lemma \ref{lem:hellingeruppbou}]
		\textbf{Step 1}: Suppose $p'_i = p_i$ for any $i\in [k_0]$. In this case, 
		\begin{align*}
		h^2(\P_{G,\m},\P_{G',\m}) =& h^2\left(\sum_{i=1}^{k_0}p_i\P_{\theta_i,\m}, \sum_{i=1}^{k_0}p_i\P_{\theta'_i,\m}\right)\\
		\leq & \sum_{i=1}^{k_0}p_i h^2\left(\P_{\theta_i,\m}, \P_{\theta'_i,\m}\right)\\
		\leq & \m\sum_{i=1}^{k_0} p_i h^2\left(\P_{\theta_i}, \P_{\theta'_i}\right)\\
		\leq & \m \max_{1\leq i \leq k_0} h^2\left(\P_{\theta_i}, \P_{\theta'_i}\right),
		\end{align*}
		where the first inequality follows from the joint convexity of any $f$-divergences (of which squared Hellinger distance is a member), and the second inequality follows from 
		$$
		h^2\left(\P_{\theta_i,\m}, \P_{\theta'_i,\m}\right) = 1-\left(1-h^2\left(\P_{\theta_i}, \P_{\theta'_i}\right) \right)^{\m} \leq \m h^2\left(\P_{\theta_i}, \P_{\theta'_i}\right).
		$$
		
		\noindent \textbf{Step 2}: Suppose $\theta'_i = \theta_i$ for any $i\in [k_0]$. Let $\bm{p}=(p_1,p_2,\ldots,p_{k_0})$ be the discrete probability distribution associated to the weights of $G$ and define $\bm{p}'$ similarly. Consider any $Q=(q_{ij})_{i,j=1}^{k_0}$ to be a coupling of $\bm{p}$ and $\bm{p'}$. 
		Then
		\begin{align*}
		h^2(\P_{G,\m},\P_{G',\m}) = &  h^2\left(\sum_{i=1}^{k_0}\sum_{j=1}^{k_0} q_{ij}\P_{\theta_i,\m},\sum_{i=1}^{k_0}\sum_{j=1}^{k_0} q_{ij} \P_{\theta_j,\m}\right)\\
		\leq & \sum_{i=1}^{k_0}\sum_{j=1}^{k_0} q_{ij} h^2\left(\P_{\theta_i,\m}, \P_{\theta_j,\m}\right)\\
		\leq & \sum_{i=1}^{k_0}\sum_{j=1}^{k_0} q_{ij} \bm{1}(\theta_i \not = \theta_j), \numberthis \label{eqn:hupperbouq}
		\end{align*}
		where the first inequality follows from the joint convexity of any $f$-divergence, and the second inequality follow from the Hellinger distance is upper bounded by $1$. Since \eqref{eqn:hupperbouq} holds for any coupling $Q$ of $\bm{p}$ and $\bm{p}'$, 
		$$
		h^2(\P_{G,\m},\P_{G',\m}) \leq \inf_{Q}\sum_{i=1}^{k_0}\sum_{j=1}^{k_0} q_{ij} \bm{1}(\theta_i \not = \theta_j) = V(\bm{p},\bm{p}') = \frac{1}{2}\sum_{i=1}^{k_0}|p_i-p_i'|.
		$$
		\textbf{Step 3}: General case. Let $G''= \sum_{i=1}^{k_0}{p_i}\delta_{\theta'_i}$. Then by triangular inequality, Step 1 and Step 2,
		\begin{align*}
		h(\P_{G,\m},\P_{G',\m}) \leq &h(\P_{G,\m},\P_{G'',\m}) + h(\P_{G'',\m},\P_{G',\m}) \\
		\leq &\sqrt{\m} \max_{1\leq i \leq k_0} h\left(\P_{\theta_i}, \P_{\theta'_i}\right) + \sqrt{\frac{1}{2}\sum_{i=1}^{k_0}|p_i-p_i'|}.
		\end{align*}
		Finally, notice that the above procedure does not depend on the specific order of atoms of $G$ and $G'$, and thus the proof is complete.
	\end{proof}	
	
	\begin{proof}[Proof of Lemma \ref{lem:optimalsquaretootN}]
		Since $\liminf\limits_{\theta \to\theta^0_j }\frac{h(\P_{\theta},\P_{\theta^0_j})}{\|{\theta}-{\theta^0_j}\|_2}<\infty$, there exists a sequences $\{\theta_j^k\}_{k=1}^\infty\subset \Theta\backslash \cup_{i=1}^{k_0}\{\theta_i^0\}$ such that $\theta_j^k \to \theta_j^0 $ and 
	\begin{equation}
		h(\P_{\theta_j^k},\P_{\theta^0_j}) \leq \gamma \|{\theta_j^k}-{\theta^0_j}\|_2
		\label{eqn:hthetaratio}
		\end{equation}
		 for some $\gamma\in (0,\infty)$.
		Supposing that 
		\begin{equation*}
		\limsup_{\m\to \infty}\liminf_{\substack{G\overset{W_1}{\to} G_0\\ G\in \Ecal_{k_0}(\Theta) }} \frac{h(\P_{G,\m},\P_{G_0,\m})}{\myD_{\psi(\m)}(G,G_0)} = \beta \in (0,\infty], 
		\end{equation*}
		then there exists subsequences $\m_{\ell} \rightarrow \infty$ such that for any $\ell$
		$$ \liminf_{\substack{G\overset{W_1}{\to} G_0\\ G\in \Ecal_{k_0}(\Theta) }} \frac{h(\P_{G,\m_{\ell}},\P_{G_0,\m_{\ell}})}{\myD_{\psi(\m_{\ell})}(G,G_0)} \geq \frac{3}{4}\beta.
		$$		
		Thus for each $\ell$, there exists $\theta_j^{k_\ell}$ such that
		 $G_\ell  =  p_j^0\delta_{\theta_j^{k_\ell} } + \sum\limits_{i =1,i\not=j}^{k_0}{p^0_i} \delta_{\theta^0_i}  \in \Ec_{k_0}(\Theta)\backslash\{G_0\} $,  and
		$$
		\frac{h(\P_{G_\ell,\m_{\ell}},\P_{G_0,\m_{\ell}})}{\myD_{\psi(\m_{\ell})}(G_\ell,G_0)} \geq \frac{\beta}{2}. 
		$$
		By our choice of $G_\ell$, for sufficiently large $\ell$
		$$
		h(\P_{G_\ell,\m_{\ell}},\P_{G_0,\m_{\ell}}) \geq \frac{\beta}{2}\myD_{\psi(\m_{\ell})}(G_\ell,G_0) = \frac{\beta}{2} \sqrt{\psi(\m_{\ell})}  \|\theta_j^{k_\ell} -\theta_j^0\|_2. 
		$$
		On the other hand, by Lemma \ref{lem:hellingeruppbou},
		$$
		 h(\P_{G_\ell,\m_{\ell}},\P_{G_0,\m_{\ell}}) \leq \sqrt{\m_{\ell}}  h(\P_{\theta^{k_\ell}_j},\P_{\theta^0_j}).
		$$
		Combining the last two displays, 
		$$
		\frac{\beta}{2} \leq  \sqrt{\frac{\m_{\ell}}{\psi(\m_{\ell})}} \frac{h(\P_{\theta^{k_\ell}_j},\P_{\theta^0_j})}{\|{\theta^{k_\ell}_j}-{\theta^0_j}\|_2}\leq \gamma  \sqrt{\frac{\m_{\ell}}{\psi(\m_{\ell})}}  \to 0, \quad \text{as  } \ell\to\infty,
		$$
		where the second inequality follows from \eqref{eqn:hthetaratio}. The last display contradicts with $\beta>0$.
	\end{proof}

	\begin{proof}[Proof of Theorem \ref{thm:minimaxlower}]
a) 
	Choose a set of distinct $k_0-1$ points $\{\theta_i\}_{i=1}^{k_0-1}\subset \Theta\backslash \{\theta_0\}$ satisfying 
	$$\rho_1:=\min_{0\leq i<j\leq k_0-1}h(\P_{\theta_i},\P_{\theta_j})>0.$$ 
	Let $\rho := \min_{0\leq i<j \leq k_0-1}\|\theta_i-\theta_j\|_2$. Since $\limsup\limits_{\theta \to\theta_0 }\frac{h\left(\P_{\theta},\P_{\theta_0}\right)}{\|{\theta}-{\theta_0}\|_2^{\beta_0}}<\infty$, there exist $\gamma\in(0,\infty)$ and $r_0\in(0,\min\{\rho,(\rho_1/\gamma)^{1/\beta_0}\})$ such that 
		\begin{equation}
		\frac{h\left(\P_{\theta},\P_{\theta_0}\right)}{\|{\theta}-{\theta_0}\|_2^{\beta_0}} < \gamma,\quad \forall 0<\|\theta-\theta_0\|_2<r_0. \label{eqn:h2normratioupp}
		\end{equation}
		  Consider $G_1= \sum_{i=1}^{k_0} \frac{1}{k_0} \delta_{\theta_i^1} \in \Ec_{k_0}(\Theta) $ and $G_2= \sum_{i=1}^{k_0} \frac{1}{k_0} \delta_{\theta_i^2} \in \Ec_{k_0}(\Theta)$ with $\theta_i^1=\theta_i^2=\theta_i \in \Theta\backslash\{\theta_0\}$ for $ i\in [k_0-1]$ and $\theta_{k_0}^1=\theta_0$, $\theta_{k_0}^2=\theta$ satisfying $\|\theta-\theta_0\|_2 = 2\epsilon<r_0$. Here $\epsilon\in (0,r_0/2) $ is a constant to be determined. Then $d_{\bm{\Theta}}(G_1,G_2) =2\epsilon$. Moreover, $h(\P_\theta,\P_{\theta_0})\leq \gamma\left(2\epsilon\right)^{\beta_0}<\rho_1$.

		  By two-point Le Cam bound (see \cite[(15.14)]{wainwright2019high})
		\begin{equation}
		\inf_{\hat{G}\in \Ec_{k_0}(\Theta)}\sup_{G\in \Ec_{k_0}(\Theta)}\E_{\bigotimes^\n\P_{G,\m}} d_{\bm{\Theta}}(G,\hat{G})\geq \frac{\epsilon}{2} \left(1- V\left(\bigotimes^\n\P_{G_1,\m}, \bigotimes^\n\P_{G_2,\m}\right) \right).  \label{eqn: minimaxtheta}
		\end{equation}
		Notice 
		$$
		V\left(\bigotimes^\n\P_{G_1,\m}, \bigotimes^\n\P_{G_2,\m}\right)  \leq h\left(\bigotimes^\n\P_{G_1,\m}, \bigotimes^\n\P_{G_2,\m}\right)  \leq \sqrt{\n} h\left(\P_{G_1,\m}, \P_{G_2,\m}\right). 
		$$
		With our choice of $G_1$ and $G_2$, by Lemma \ref{lem:hellingeruppbou}, the last display becomes
		\begin{align*}
		 V\left(\bigotimes^\n\P_{G_1,\m}, \bigotimes^\n\P_{G_2,\m}\right) 
		\leq & \sqrt{\n} \sqrt{\m} \min_{\tau\in S_{k_0}} \max_{1\leq i\leq k_0}   h\left(\P_{\theta^1_i}, \P_{\theta^2_{\tau(i)}}\right) \\
		= & \sqrt{\n} \sqrt{\m} h\left(\P_{\theta_0}, \P_{\theta}\right)\\
		\leq & \sqrt{\n} \sqrt{\m} \gamma \left(2\epsilon\right)^{\beta_0}, \numberthis \label{eqn:minimaxVuppbou}
		\end{align*}
		where the equality follows from $$
		 \min_{\tau\in S_{k_0}} \max_{1\leq i\leq k_0}   h(\P_{\theta^1_i}, \P_{\theta^2_{\tau(i)}})=h(\P_{\theta^1_{k_0}}, \P_{\theta^2_{k_0}}) =  h\left(\P_{\theta_0}, \P_{\theta}\right)
		$$
		due to $h\left(\P_{\theta_0}, \P_{\theta}\right)<\rho_1$. 
		Plug \eqref{eqn:minimaxVuppbou} into \eqref{eqn: minimaxtheta},
		\begin{equation}
		\inf_{\hat{G}\in \Ec_{k_0}(\Theta)}\sup_{G\in \Ec_{k_0}(\Theta)}\E_{\bigotimes^\n\P_{G,\m}} d_{\bm{\Theta}}(G,\hat{G}) \geq \frac{\epsilon}{2} \left(1-\gamma \sqrt{\n} \sqrt{\m} (2\epsilon)^{\beta_0}\right). \label{eqn:minimaxlowboueps}
		\end{equation}
		Consider any $a\in(0,1)$ satisfying $a>1-\gamma r_0^{\beta_0}$ and let $2\epsilon= \left(\frac{1-a}{\gamma\sqrt{\n}\sqrt{\m}}\right)^{\frac{1}{\beta_0}} $. Then $2\epsilon \in (0,r_0)$.  Plug the specified $\epsilon$ into \eqref{eqn:minimaxlowboueps}, then the right hand side in the above display becomes
		$$
	 \frac{a}{4}\left(\frac{1-a}{\gamma\sqrt{\n}\sqrt{\m}}\right)^{\frac{1}{\beta_0}} = C(\beta_0) \left(\frac{1}{\sqrt{\n}\sqrt{\m}}\right)^{\frac{1}{\beta_0}},
		$$
	   where $C(\beta_0)$ depends on $\beta_0$. Notice $a,\gamma,r_0$ are  constants that depends on the probability family $\{\P_\theta\}_{\theta\in \Theta}$ and $k_0$.
		
b) 
		Consider $k_0>3$. Let $0<\epsilon<(\frac{1}{3}-\frac{1}{3(k_0-2)})/2$ . Consider $G_1= \sum\limits_{i=1}^{2} \frac{1}{3} \delta_{\theta_i} + \sum\limits_{i=3}^{k_0} \frac{1}{3(k_0-2)} \delta_{\theta_i} \in \Ec_{k_0}(\Theta)$ and $G_2=  (\frac{1}{3}-\epsilon)\delta_{\theta_1} + (\frac{1}{3}+\epsilon)\delta_{\theta_2}  + \sum\limits_{i=3}^{k_0} \frac{1}{3(k_0-2)} \delta_{\theta_i} \in \Ec_{k_0}(\Theta)$. By the range of $\epsilon$, $G_2\in \Ec_{k_0}(\Theta)$ and
		 $d_{\bm{p}}(G_1,G_2) =2\epsilon$. Similar to the proof of \ref{item:minimaxlowera}, 
		$$
		\inf_{\hat{G}\in \Ec_{k_0}(\Theta)}\sup_{G\in \Ec_{k_0}(\Theta)}\E_{\bigotimes^\n\P_{G,\m}} d_{\bm{p}}(\hat{G},G) \geq \frac{\epsilon}{2} \left(1- \sqrt{\n} h\left(\P_{G_1,\m}, \P_{G_2,\m}\right) \right).
		$$
		With our choice of $G_1$ and $G_2$, by Lemma \ref{lem:hellingeruppbou}, 
		$$
		 h\left(\P_{G_1,\m}, \P_{G_2,\m}\right) \leq \sqrt{\frac{1}{2}\times 2\epsilon}=\sqrt{\epsilon}.
		$$
		Combining the last two displays,
		$$\inf_{\hat{G}\in \Ec_{k_0}(\Theta)}\sup_{G\in \Ec_{k_0}(\Theta)}\E_{\bigotimes^\n\P_{G,\m}} d_{\bm{p}}(\hat{G},G)\geq \frac{\epsilon}{2} \left(1- \sqrt{\n} \sqrt{\epsilon} \right).$$
		The proof is complete by specifying $\epsilon =  \frac{1}{\n} (\frac{1}{3}-\frac{1}{3(k_0-2)})/4 <(\frac{1}{3}-\frac{1}{3(k_0-2)})/2$. The case for $k_0=2$ and $k_0=3$ follow similarly. 
		
c) 
		The conclusion follows immediately from \ref{item:minimaxlowera},  \ref{item:minimaxlowerb} and ~\eqref{eqn:d1dthetadp}.
	\end{proof} 

\section{Proofs of Auxiliary Lemmas}

\subsection{Proofs for Section \ref{sec:auxiliarylemmas4}}

	\begin{proof}[Proof for Lemma \ref{lem:taylortwisted}]
a) 
	    \begin{align*}
	    &\lim_{x\neq y, x\to x_0,y\to x_0} \frac{|g(x)-g(y)-\langle\nabla g(x_0),x-y\rangle |}{\|x-y\|_2}\\
	    =& \lim_{x\neq y, x\to x_0,y\to x_0} \frac{|\langle\nabla g(\xi),x-y\rangle-\langle\nabla g(x_0),x-y\rangle |}{\|x-y\|_2} \\
	    \leq & \lim_{x\neq y, x\to x_0,y\to x_0} \|\nabla g(\xi)-\nabla g(x_0) \|_2\\
	    =&0,
	    \end{align*}
	    where the first step follows from mean value theorem with $\xi$ lie on the line segment connecting $x$ and $y$, the second step follows from Cauchy-Schwarz inequality, and the last step follows from the continuity of $\nabla g(x)$ at $x_0$ and $\xi\to x_0$ when $x,y\to x_0$. 
 
b) 
	For $x\not = y$ in $B$ specified in the statement,
	    \begin{align*}
	        &\frac{|g(x)-g(y)-\langle\nabla g(x_0),x-y\rangle |}{\|x-y\|_2} \\
	        = & \frac{|\int_{0}^1\langle\nabla g(y+t(x-y)),x-y\rangle dt -\langle\nabla g(x_0),x-y\rangle |}{\|x-y\|_2} \\
	        =& \frac{|\int_{0}^1\int_0^1\langle \langle  \nabla^2 g(x_0+s(y+t(x-y)-x_0)) ,y+t(x-y)-x_0\rangle,x-y\rangle ds dt  |}{\|x-y\|_2} \\
	        \leq &\frac{\int_{0}^1\int_0^1|\langle \langle  \nabla^2 g(x_0+s(y+t(x-y)-x_0)) ,y+t(x-y)-x_0\rangle,x-y\rangle   |ds dt }{\|x-y\|_2} \\
	        \leq & \int_{0}^1\int_0^1  \|\nabla^2 g(x_0+s(y+t(x-y)-x_0))\|_2 \|y+t(x-y)-x_0\|_2  ds dt \\
	         \leq & \int_{0}^1\int_0^1  \|\nabla^2 g(x_0+s(y+t(x-y)-x_0))\|_2 ds dt\ \max\{\|x-x_0\|_2, \|y-x_0\|_2 \} , \numberthis \label{eqn:twisttaylor1}
	    \end{align*}
	    where the first two equalities follow respectively form fundamental theorem of calculus for $\R$-valued functions and $\R^d$-valued functions. Observe that for any matrix $A\in \R^{d\times d}$, 
	    $$
	    \|A\|_2\leq \|A\|_F \leq d \max_{1\leq i,j\leq d}|A_{ij}|\leq d\sum_{1\leq i,j\leq d}|A_{ij}|
	    $$
	   where $\|\cdot\|_F$ is the Frobenius norm. Applying the preceding display to \eqref{eqn:twisttaylor1},
	   \begin{align*}
	   &\int_{0}^1\int_0^1  \|\nabla^2 g(x_0+s(y+t(x-y)-x_0))\|_2 ds dt \\
	   \leq & d\sum_{1\leq i,j\leq d} \int_{0}^1\int_0^1 \left|\frac{\partial^2 g}{\partial x^{(i)}x^{(j)}}(x_0+s(y+t(x-y)-x_0))\right|  ds dt
	   \end{align*}
	
	Following \eqref{eqn:twisttaylor1},
	$$
	\frac{|g(x)-g(y)-\langle\nabla g(x_0),x-y\rangle |}{\|x-y\|_2} \leq L \max\{\|x-x_0\|_2, \|y-x_0\|_2 \}.
	$$
	\end{proof}

\begin{proof}[Proof of Lemma \ref{lem:polexplinind}] 
a) 
    Define $F(x)=\sum_{i=1}^k h_i(x)e^{b_i x}$.  From the condition $F(x)=0$ on a dense subset of $I$. Then $F(x)=0$ on the closure of that subset, which contains $I$, since it is a continuous function on $\R$. Let $a\in I^\circ$ and consider its Taylor expansion $F(x)=\sum_{i=0}^\infty \frac{F^{(i)}(a)}{i!}(x-a)^i$ for any $x\in \R$. It follows from $F(x)=0$ on $I$ that $F^{(i)}(a)=0$ for any $i\geq 0$. Thus $F(x)\equiv 0$ on $\R$. Then $$0=\lim_{x\to\infty}e^{-b_k x}F(x) = \lim_{x\to\infty}h_k(x).$$ 
    This happen only when $h_k(x)\equiv 0$. Proceed in the same manner to show $h_i(x)\equiv 0$ for $i$ from $k-1$ to $1$.
    
b) 
    Define $H(x)=\sum_{i=1}^k (h_i(x)+g_i(x)\ln(x)) e^{b_i x}$. From the condition $H(x)=0$ on a dense subset of $I$. Then $H(x)=0$ on the closure of that subset excluding $0$, which contains $I$, since it is a continuous function on $(0,\infty)$. Let $a_1\in I^\circ$ and consider its Taylor expansion at $a_1$: $H(x)=\sum_{i=0}^\infty \frac{H^{(i)}(a_1)}{i!}(x-a_1)^i$ for $x\in (0,2a_1)$, since the Taylor series of $\ln(x)$, $x^{\gamma}$ at $a_1$ converges respectively to $\ln(x)$, $x^{\gamma}$ on $(0,2a_1)$ for any $\gamma$. It follows from $H(x)=0$ on $I$  that $H^{(i)}(a_1)=0$ for any $i\geq 0$.
    Thus $H(x)=0$ on $(0,2a_1)$. Now take $a_2=\frac{3}{2}a_1$ and repeat the above analysis with $a_1$ replaced by $a_2$, resulting in $H(x)=0$ on $(0,2a_2)=(0,3a_1)$. Then take $a_3=\frac{3}{2}a_2$ and keep repeating the process, and one obtains $H(x)=0$ on $(0,\infty)$ since $a_1>0$. Let $\gamma_0$ be the smallest power of all power functions that appear in $\{g_i(x)\}_{i=1}^k$, $\{h_i(x)\}_{i=1}^k$, and define $\tilde{H}(x)=x^{-\gamma_0}H(x)$. Then $\tilde{H}(x)=0$ on $(0,\infty)$. Then 
    $$
    0=\lim_{x\to\infty}e^{-b_k x}\tilde{H}(x) = \lim_{x\to\infty}(x^{-\gamma_0}h_k(x)+x^{-\gamma_0}g_k(x)\ln(x)),
    $$
    which happens only when $x^{-\gamma_0}h_k(x)\equiv 0$ and $x^{-\gamma_0}g_k(x)\equiv 0$. That is, when $x\neq 0$, $h_k(x)\equiv 0$ and $g_k(x)\equiv 0$. Proceed in the same manner to show when $x\neq 0$, $h_i(x)\equiv 0$ and $g_i(x)\equiv 0$ for $i$ from $k-1$ to $1$.
\end{proof}

\begin{proof}[Proof of Lemma \ref{lem:expdct}]
Let $\gamma>0$ be such that the line segment between $\theta-a\gamma$ and $\theta+a\gamma$ lie in $\Theta$ and $\int_\Xfrak e^{4\gamma^\top T(x)} f(x|\theta) d\mu<\infty$, $\int_\Xfrak e^{-4\gamma^\top T(x)}f(x|\theta) d\mu<\infty$ due to the fact that the moment generating function exists in a neighborhood of origin for any given $\theta\in \Theta^\circ$. Then for $\Delta\in (0,\gamma]$ and for any $x\in S$
 \begin{align*}
    &\left|\frac{f(x|\thetaeta+a\Delta)-f(x|\thetaeta)}{\Delta  \sqrt{f(x|\thetaeta)}} \right| \\
    = & \sqrt{f(x|\thetaeta)} \left| \frac{\exp(\langle  a \Delta , T(x)\rangle - (A(\thetaeta+a\Delta)-A(\thetaeta)) )-1}{\Delta}  \right| \\
    \overset{(*)}{\leq} & \sqrt{f(x|\thetaeta)} \left| \langle  a , T(x)\rangle- \frac{  A(\thetaeta+a\Delta)-A(\thetaeta)}{\Delta}  \right| e^{\langle  a \Delta , T(x)\rangle - (A(\thetaeta+a\Delta)-A(\thetaeta)) }\\
    \leq & \sqrt{f(x|\thetaeta)} \left( \left| \langle  a , T(x)\rangle \right|+ \|a\|_2 \max_{\Delta\in[0,\gamma]}\|\nabla_\thetaeta A(\thetaeta+a\Delta)\|_2  \right)\times \\
    &\quad e^{ \Delta | \langle  a  , T(x)\rangle|} 
    \max_{\Delta\in[0,\gamma]}e^{- (A(\thetaeta+a\Delta)-A(\thetaeta)) } \\
    \leq & \sqrt{f(x|\thetaeta)} \frac{1}{\gamma}e^{ \gamma\left| \langle  a , T(x)\rangle \right|+ \gamma\|a\|_2 \max_{\Delta\in[0,\gamma]}\|\nabla_\thetaeta A(\thetaeta+a\Delta)\|_2  } \ e^{ \gamma | \langle  a  , T(x)\rangle|} 
    \max_{\Delta\in[0,\gamma]}e^{- (A(\thetaeta+a\Delta)-A(\thetaeta)) }    \\
    = & C(\gamma,a,\thetaeta)\sqrt{f(x|\thetaeta)} e^{ 2\gamma | \langle  a  , T(x)\rangle|} \\
    \leq & \sqrt{ C^2(\gamma,a,\thetaeta)f(x|\thetaeta)  \left(e^{ 4\gamma  \langle  a  , T(x)\rangle} + e^{ -4\gamma  \langle  a  , T(x)\rangle}\right)}, \numberthis \label{eqn:hdctexp}
    \end{align*}
    where step $(*)$ follows from $|e^t-1|\leq |t|e^t$. Then the the first conclusion holds with $$\bar{f}=\sqrt{C^2(\gamma,a,\thetaeta)f(x|\thetaeta)  \left(e^{ 4\gamma  \langle  a  , T(x)\rangle} + e^{ -4\gamma  \langle  a  , T(x)\rangle}\right)}.$$
    Take $\tilde{f}(x)=\bar{f}(x)\sqrt{f(x|\theta)}$ and by Cauchy–Schwarz inequality $\int_\Xfrak \tilde{f}(x) d\mu \leq  \int_\Xfrak \bar{f}^2(x)d\mu <\infty$. Moreover by \eqref{eqn:hdctexp}
    $$
    \left| \frac{f(x|\theta_i^0+\Delta a_i)-f(x|\theta_i^0)}{\Delta} \right| \leq \tilde{f}(x) \quad \forall x\in \Xfrak.
    $$
\end{proof}

\subsection{Proofs for Section \ref{sec:auxiliarylemmas5}}

\begin{proof}[Proof of Lemma \ref{lem:nececondition2}]
    Note that $\sum_{i=1}^{k_0}b_i = \sum_{i=1}^{k_0}b_i P_{\theta_i^0}(\Xfrak)=0$. Construct $G_\ell=\sum_{i=1}^{k_0}p_i^\ell \delta_{\theta_i^0}$ with $p_i^\ell = p_i^0 + b_i/\ell$ for $i\in [k_0]$. For large $\ell$, $p_i^\ell\in (0,1)$ and $\sum_{i=1}^{k_0}p_i^\ell=1$. Then for large $\ell$, $G_\ell\in \Ec_{k_0}(\Theta)$ and $G_\ell\overset{W_1}{\to} G_0$. Then the proof is completed by observing that for large $\ell$
	 $$
	 V(P_{G_\ell},P_{G_0})=\sup_{A\in \Ac}|P_{G_\ell}(A)-P_{G_0}(A)|=\sup_{A\in \Ac}|1/\ell\sum_{i=1}^{k_0}b_iP_{\theta_i^0}(A)|=0, 
	 $$
	 and $ D_1(G_\ell,G_0)= \frac{1}{\ell} \sum_{i=1}^{k_0}|b_i| \neq 0.$
	\end{proof}

\begin{proof}[Proof of Lemma \ref{lem:conditioncprod}]
 By decomposing the difference as a telescoping sum,
    \begin{align*}
   & \left|\frac{\prod_{j=1}^{\m}f(x_j|\theta_i^0+a \Delta)-\prod_{j=1}^{\m}f(x_j|\theta_i^0)}{\Delta}\right| \\
   \leq &\sum_{\ell=1}^{\m} \left(\prod_{j=1}^{\ell-1} f(x_j|\theta_i^0+a \Delta)\right) \left|\frac{f(x_\ell|\theta_i^0+a \Delta)-f(x_\ell|\theta_i^0)}{\Delta}\right| \left(\prod_{j=\ell+1}^{\m}f(x_j|\theta_i^0)\right).
    \end{align*}
     Then the right hand side of the preceding display is upper bounded $\bigotimes^N\mu-a.e.\ \Xfrak^\m$ by 
    $$
    \tilde{f}_\Delta(\bar{x}|\theta_i^0,a,N):= \sum_{\ell=1}^{\m} \left(\prod_{j=1}^{\ell-1} f(x_j|\theta_i^0+a \Delta)\right) \bar{f}_\Delta(x_\ell|\theta_i^0,a) \left(\prod_{j=\ell+1}^{\m}f(x_j|\theta_i^0)\right).
    $$
    For clean presentation we write $\tilde{f}_\Delta(\bar{x}|\theta_i^0,a)$ for $\tilde{f}_\Delta(\bar{x}|\theta_i^0,a,N)$ in the remainder of the proof. Notice that $\tilde{f}_\Delta(\bar{x}|\theta_i^0,a)$ satisfies 
    $$
    \int_{\Xfrak^\m}\tilde{f}_\Delta(\bar{x}|\theta_i^0,a) d \bigotimes^\m \mu = \sum_{\ell=1}^{\m} \int_{\Xfrak} \bar{f}_{\Delta}(x_\ell|\theta_i^0,a) d\mu 
    \to \m \int_{\Xfrak} \lim_{\Delta\to 0^+} \bar{f}_{\Delta}(x|\theta_i^0,a) d\mu.
    $$
    Moreover, for $\bigotimes^N\mu-a.e.\ \bar{x}\in \Xfrak ^\m $
    $$\lim_{\Delta\to 0^+}\tilde{f}_{\Delta}(\bar{x}|\theta_i^0,a) = \sum_{\ell=1}^{\m} \left(\prod_{j=1}^{\ell-1} f(x_j|\theta_i^0)\right) \lim_{\Delta\to 0^+}\bar{f}_\Delta(x_\ell|\theta_i^0,a) \left(\prod_{j=\ell+1}^{\m}f(x_j|\theta_i^0)\right), $$
    and thus 
    $$
    \int_{\Xfrak^\m} \lim_{\Delta\to 0^+}\tilde{f}_{\Delta}(\bar{x}|\theta_i^0,a) d \bigotimes^\m \mu = \sum_{\ell=1}^{\m} \int_{\Xfrak}  \lim_{\Delta\to 0^+}\bar{f}_\Delta(x_\ell|\theta_i^0,a) d\mu = \m \int_{\Xfrak}  \lim_{\Delta\to 0^+}\bar{f}_\Delta(x|\theta_i^0,a) d\mu.
    $$
    \end{proof}

\subsection{Proofs for Section \ref{sec:alinversebound}}
\begin{proof}[lem:onecorsquintmin]
a)		
        It suffices to prove $b=0$ since one can  do the translation $x'=x-b$ to reduce the general case $b$ to the special case $b=0$. Let $f_1(x)=f(x)\1ve_{[-\frac{E}{2U},\frac{E}{2U}]}(x)$, $f_2(x)=f(x)\1ve_{[-\frac{E}{2U},\frac{E}{2U}]^c}(x)$ and $f_U(x)=U\1ve_{[-\frac{E}{2U},\frac{E}{2U}]}(x)-f_1(x)$. Then $$\int_{[-\frac{E}{2U},\frac{E}{2U}]}f_U(x)dx = E - \int_{[-\frac{E}{2U},\frac{E}{2U}]}f_1(x)dx = \int_{[-\frac{E}{2U},\frac{E}{2U}]^c}f_2(x)dx $$ and hence
		\begin{align*}
		\int_{\R}x^2f(x)dx  = & \int_{[-\frac{E}{2U},\frac{E}{2U}]}x^2f_1(x)dx + \int_{[-\frac{E}{2U},\frac{E}{2U}]^c}x^2f_2(x)dx\\
		\geq & \int_{[-\frac{E}{2U},\frac{E}{2U}]}x^2f_1(x)dx + \left(\frac{E}{2U}\right)^2 \int_{[-\frac{E}{2U},\frac{E}{2U}]^c}f_2(x)dx\\
		= & \int_{[-\frac{E}{2U},\frac{E}{2U}]}x^2f_1(x)dx + \left(\frac{E}{2U}\right)^2 \int_{[-\frac{E}{2U},\frac{E}{2U}]}f_U(x)dx\\
		\geq & \int_{[-\frac{E}{2U},\frac{E}{2U}]}x^2f_1(x)dx + \int_{[-\frac{E}{2U},\frac{E}{2U}]}x^2 f_U(x)dx\\
		=& \int_{[-\frac{E}{2U},\frac{E}{2U}]}x^2Udx\\
		= &\frac{E^3}{12U^2}.
		\end{align*}
		The equality holds if and only if the last two inequalities are attained, if and only if $f(x)=U\1ve_{[-\frac{E}{2U},\frac{E}{2U}]}(x)\ a.e.$.
		
b) 
        It suffices to prove $b=0$ since one can always do the translation $y^{(1)}=x^{(1)}-b$ and $y^{(i)}=x^{(i)}$ for all $2\leq i\leq d$ to reduce the general case $b$ to the special case $b=0$. By Tonelli's Theorem, $h(x^{(1)})= \int_{(-a,a)^{d-1}} f(x)dx^{(2)} \ldots dx^{(d)}$ exists for  $a.e.\ x^{(1)}$ and $\int_{\R}h(x^{(1)})dx^{(1)} = E$. Moreover $0\leq h(x^{(1)})\leq U(2a)^{d-1}\ a.e.$ . Then by Tonelli's Theorem and \ref{item:onecorsquintmina}
		$$
		\int_{G} (x^{(1)})^2f(x)dx = \int_{\R}(x^{(1)})^2h(x^{(1)})dx^{(1)}\geq \frac{E^3}{12U^2(2a)^{2(d-1)}}.
		$$
		The equality holds if and only if $h(x^{(1)})=U(2a)^{d-1}\1ve_{[-\frac{E}{2U(2a)^{d-1}},\frac{E}{2U(2a)^{d-1}}]}(x^{(1)})\ a.e.$, if and only if $f(x)=U \ a.e. x\in [-\frac{E}{2U(2a)^{d-1}},\frac{E}{2U(2a)^{d-1}}] \times (-a,a)^{d-1} $. 
\end{proof}

\subsection{Proofs for Section \ref{sec:auxiliarylemmas6}}

\begin{proof}[Proof of Lemma \ref{lem:exphellinger}]
a) It is easy to calculate 
		\begin{equation}
		1-h^2(f(x|\thetaeta_1),f(x|\thetaeta_2)) = \exp\left(A\left(\frac{\thetaeta_1+\thetaeta_2}{2}\right) -\frac{A(\thetaeta_1)+A(\thetaeta_2)}{2}  \right). \label{eqn:hclosed}
		\end{equation}
		Let $g(\thetaeta)=\exp\left(A\left(\frac{\thetaeta_0+\thetaeta}{2}\right) -\frac{A(\thetaeta_0)+A(\thetaeta)}{2}  \right)$. It is easy to verify that $g(\thetaeta_0)=1$, $\nabla g(\thetaeta_0)=0$ and $\nabla^2 g(\thetaeta_0) = -\frac{1}{4}\nabla^2 A(\thetaeta_0)$. Then by \eqref{eqn:hclosed}
		\begin{align*}
		\limsup_{\thetaeta\to\thetaeta_0}\frac{h^2(f(x|\thetaeta),f(x|\thetaeta_0))}{\|\thetaeta-\thetaeta_0\|_2^2} =& \limsup_{\thetaeta\to\thetaeta_0}-\frac{g(\thetaeta)-g(\thetaeta_0)-\langle \nabla g(\thetaeta_0), \thetaeta-\thetaeta_0\rangle }{\|\thetaeta-\thetaeta_0\|_2^2} \numberthis \label{eqn:expdifratio} \\
		=& \limsup_{\thetaeta\to\thetaeta_0}\frac{\frac{1}{8}(\thetaeta-\thetaeta_0)^\top \nabla^2A(\thetaeta_0) (\thetaeta-\thetaeta_0)  +o(\|\thetaeta-\thetaeta_0\|_2^2)}{\|\thetaeta-\thetaeta_0\|_2^2} \\
		\leq & \limsup_{\thetaeta\to\thetaeta_0} \left( \frac{1}{8}\lambda_{\text{max}}(\nabla^2A(\thetaeta_0)) +o(1)\right)\\
		=&\frac{1}{8}\lambda_{\text{max}}(\nabla^2A(\thetaeta_0)).
		\end{align*}
b) 
		First assume that $\Theta'$ is compact and convex.
		For each $\thetaeta,\thetaeta_0\in \Theta'$, by \eqref{eqn:expdifratio},    
		\begin{align*}
		\frac{h^2(f(x|\thetaeta),f(x|\thetaeta_0))}{\|\thetaeta-\thetaeta_0\|_2^2} =& -\frac{g(\thetaeta)-g(\thetaeta_0)-\langle \nabla g(\thetaeta_0), \thetaeta-\thetaeta_0\rangle }{\|\thetaeta-\thetaeta_0\|_2^2} \\
		= & -\frac{\frac{1}{8}(\thetaeta-\thetaeta_0)^\top \nabla^2g(\xi)(\thetaeta-\thetaeta_0) }{\|\thetaeta-\thetaeta_0\|_2^2}\\
		\leq & \frac{1}{8}\sup_{\thetaeta\in \Theta'}\lambda_{\text{max}}(-\nabla^2g(\thetaeta)),
		\end{align*}
		where the second equality follows from Taylor's theorem with $\xi$ in the line joining $\thetaeta$ and $\thetaeta_0$ due to the convexity of $\Theta'$ and Taylor's theorem. The result then follows with $L_2=\sqrt{\frac{1}{8}\sup_{\thetaeta\in \Theta'}\lambda_{\text{max}}(-\nabla^2g(\thetaeta))}$, which is finite since $\nabla^2g(\thetaeta)$, as a function of $A(\theta)$ and its gradient and hessian, is continuous on $\Theta^\circ$.
		
		If $\Theta'$ is compact but not necessarily convex, consider $\operatorname{conv}(\Theta')$. Note that $\operatorname{conv}(\Theta')$, as the convex hull of a compact set, is convex and compact. Moreover, $\operatorname{conv}(\Theta')\subset \Theta^\circ$ since $\Theta^\circ$, as the interior of a convex set, is convex. The proof is then complete by simply repeating the above proof for $\operatorname{conv}(\Theta')$.
\end{proof}

\begin{proof}[Proof of Lemma \ref{lem:Nuppboudiflen}]
	Consider an $\eta_1$-net $\Lambda_i$ with minimum cardinality of $\{\theta: \|\theta-\theta^0_i\|_2 \leq \frac{2\epsilon}{C(G_0,\text{diam}(\Theta_1))} \}$ and an $\eta_2$-net $\bar{\Lambda}$ with minimum cardinality  of $k_0$-probability simplex $\{p\in \R^{k_0}: \sum_{i=1}^{k_0}p_i =1, p_i\geq 0  \}$ under the $l_1$ distance. Construct a set $\tilde{\Lambda}=\{\tilde{G}=\sum_{i=1}^{k_0}p_i\delta_{\theta_i}: (p_1,\ldots,p_{k_0})\in \bar{\Lambda}, \theta_i\in \Lambda_i  \}$. Then for any  $G\in \Ec_{k_0}(\Theta)$ satisfying $ D_1(G,G_0)\leq \frac{2\epsilon}{C(G_0,\text{diam}(\Theta_1))}$, there exists some $\tilde{G}\in \tilde{\Lambda}$, such that by Lemma \ref{lem:hellingeruppbou}
	$$
	h^2(p_{G,\m_i},p_{\tilde{G},\m_i}) \leq \left(\sqrt{\m_i}L_2 \eta_1^{\beta_0} + \frac{1}{\sqrt{2}}\sqrt{\eta_2}\right)^2 \leq 2\left(\m_i L_2^2 \eta_1^{2\beta_0} + \frac{1}{2}\eta_2\right).
	$$
	Thus
	$
	d_{\n,h}(G,\tilde{G}) \leq \sqrt{ 2 L_2^2 \bar{\m}_{\n}\eta_1^{2\beta_0} +\eta_2}.
	$
	
	As a result $ \tilde{\Lambda} $ is a $\sqrt{ 2L_2^2 \bar{\m}_{\n}\eta_1^{2\beta_0} +\eta_2}$-net of $\left\{ G\in \Ec_{k_0}(\Theta): D_1(G,G_0)\leq \frac{2\epsilon}{C(G_0,\text{diam}(\Theta_1))}\right \}$. Since $\tilde{\Lambda}$ is not necessarily subset of $\Ec_{k_0}(\Theta)$,
	\begin{align*}
	\Nf\left(2\sqrt{ 2L_2^2 \bar{\m}_{\n}\eta_1^{2\beta_0} +\eta_2},\left\{ G\in \Ec_{k_0}(\Theta_1): D_1(G,G_0)\leq  \frac{2\epsilon}{C(G_0,\text{diam}(\Theta_1))}\right \}, d_{\n,h} \right)\leq& |\tilde{\Lambda}| \\
	=& |\bar{\Lambda}|\prod_{i=1}^{k_0}|\Lambda_i|.\numberthis \label{eqn:Nuppbou1diflen}
	\end{align*}
	Now specify $\eta_1=\left(\frac{\epsilon}{144L_2\sqrt{\bar{\m}_{\n}}}\right)^{\frac{1}{\beta_0}}$ and thus 
	$$
	|\Lambda_i|\leq \left(1+2\frac{2\epsilon}{C(G_0,\text{diam}(\Theta_1))}/\eta_1\right)^q = \left(1+\frac{4\times (144L_2)^{\frac{1}{\beta_0}}}{C(G_0,\text{diam}(\Theta_1))}  \bar{\m}_{\n}^{\frac{1}{2\beta_0}}\epsilon^{-(\frac{1}{\beta_0}-1)} \right)^q. 
	$$
	Moreover, specify $\eta_2 = \frac{1}{2}\left(\frac{\epsilon}{72}\right)^2$ and by \cite[Lemma A.4]{ghosal2001entropies}, $|\bar{\Lambda}| \leq \left(1+\frac{5}{\eta_2}\right)^{k_0-1}=\left(1+10\times 72^2 \epsilon^{-2}\right)^{k_0-1} $. Plug $\eta_1$ and $\eta_2$ into \eqref{eqn:Nuppbou1diflen} and the proof is complete.
\end{proof}

    \begin{proof}[Proof of Lemma \ref{lem:optimalpermutation}]
        Let $\tau$ be any one in $S_{k_0}$  such that 
        $$
        D_1(G,G_0)=\sum_{i=1}^{k_0}\left(\|\theta_{\tau(i)}-\theta_i^0\|_2+|p_{\tau(i)}-p_i^0|\right).
        $$
        For any $j\not =\tau(i)$, $\|\theta_j - \theta_i^0\|_2 \geq \|\theta_{\tau^{-1}(j)}^0 - \theta_i^0\|_2 - \|\theta_j - \theta_{\tau^{-1}(j)}^0\|_2  > \rho -\rho/2 = \frac{\rho}{2} $. Then
        for any $\tau'\in S_{k_0}$ that is not $\tau$ and for any real number $r\geq 1$
        \begin{multline*}
        \sum_{i=1}^{k_0} \left(\sqrt{r}\|\theta_{\tau'(i)} -\theta_i^0\|_2 + |p_{\tau'(i)}-p_i^0|  \right) > \sqrt{r}\frac{\rho}{2}\\ > \sqrt{r} D_1(G,G_0) \geq  \sum_{i=1}^{k_0} \left(\sqrt{r}\|\theta_{\tau(i)} -\theta_i^0\|_2 + |p_{\tau(i)}-p_i^0|  \right),
        \end{multline*}
        which with $r=1$ shows our choice of $\tau$ is unique and with $r\geq 1$ shows $\tau$ is the optimal permutation for $D_r(G,G_0)$. 
    \end{proof} 

\end{document}